    \newtheorem{thm}{Theorem}                     [section]
    \newtheorem{thm*}{Theorem}
    \newtheorem{prop}[thm]{Proposition}
    \newtheorem{lemma}[thm]{Lemma}
    \newtheorem{cor}[thm]{Corollary}
    \newtheorem{lemma*}{Lemma}    
    \newtheorem{assump}[thm]{Assumption}
    \newtheorem{defn}[thm]{Definition}                 
    \newtheorem{conj}{Conjecture}                 
    \newtheorem{rems}[thm]{Remark}                     
    \newtheorem{rems*}{Remark}   
\newcommand{\ndef}{\newcommand*}
\def\rndef{\renewcommand}
\ndef{\myaddress}[1]{\begin{center} \it\small #1 \end{center}}
\ndef{\clA}{{\mathcal A}} \ndef{\rmA}{{\mathrm A}} \ndef{\mbA}{{\mathbb A}} \ndef{\bfA}{{\mathbf A}} \ndef{\euA}{{\EuScript A}} \ndef{\frA}{{\mathfrak A}}
\ndef{\clB}{{\mathcal B}} \ndef{\rmB}{{\mathrm B}} \ndef{\mbB}{{\mathbb B}} \ndef{\bfB}{{\mathbf B}} \ndef{\euB}{{\EuScript B}} \ndef{\frB}{{\mathfrak B}}
\ndef{\clC}{{\mathcal C}} \ndef{\rmC}{{\mathrm C}} \ndef{\mbC}{{\mathbb C}} \ndef{\bfC}{{\mathbf C}} \ndef{\euC}{{\EuScript C}} \ndef{\frC}{{\mathfrak C}}
\ndef{\clD}{{\mathcal D}} \ndef{\rmD}{{\mathrm D}} \ndef{\mbD}{{\mathbb D}} \ndef{\bfD}{{\mathbf D}} \ndef{\euD}{{\EuScript D}} \ndef{\frD}{{\mathfrak D}}
\ndef{\clE}{{\mathcal E}} \ndef{\rmE}{{\mathrm E}} \ndef{\mbE}{{\mathbb E}} \ndef{\bfE}{{\mathbf E}} \ndef{\euE}{{\EuScript E}} \ndef{\frE}{{\mathfrak E}}
\ndef{\clF}{{\mathcal F}} \ndef{\rmF}{{\mathrm F}} \ndef{\mbF}{{\mathbb F}} \ndef{\bfF}{{\mathbf F}} \ndef{\euF}{{\EuScript F}} \ndef{\frF}{{\mathfrak F}}
\ndef{\clG}{{\mathcal G}} \ndef{\rmG}{{\mathrm G}} \ndef{\mbG}{{\mathbb G}} \ndef{\bfG}{{\mathbf G}} \ndef{\euG}{{\EuScript G}} \ndef{\frG}{{\mathfrak G}}
\ndef{\clH}{{\mathcal H}} \ndef{\rmH}{{\mathrm H}} \ndef{\mbH}{{\mathbb H}} \ndef{\bfH}{{\mathbf H}} \ndef{\euH}{{\EuScript H}} \ndef{\frH}{{\mathfrak H}}
\ndef{\clI}{{\mathcal I}} \ndef{\rmI}{{\mathrm I}} \ndef{\mbI}{{\mathbb I}} \ndef{\bfI}{{\mathbf I}} \ndef{\euI}{{\EuScript I}} \ndef{\frI}{{\mathfrak I}}
\ndef{\clJ}{{\mathcal J}} \ndef{\rmJ}{{\mathrm J}} \ndef{\mbJ}{{\mathbb J}} \ndef{\bfJ}{{\mathbf J}} \ndef{\euJ}{{\EuScript J}} \ndef{\frJ}{{\mathfrak J}}
\ndef{\clK}{{\mathcal K}} \ndef{\rmK}{{\mathrm K}} \ndef{\mbK}{{\mathbb K}} \ndef{\bfK}{{\mathbf K}} \ndef{\euK}{{\EuScript K}} \ndef{\frK}{{\mathfrak K}}
\ndef{\clL}{{\mathcal L}} \ndef{\rmL}{{\mathrm L}} \ndef{\mbL}{{\mathbb L}} \ndef{\bfL}{{\mathbf L}} \ndef{\euL}{{\EuScript L}} \ndef{\frL}{{\mathfrak L}}
\ndef{\clM}{{\mathcal M}} \ndef{\rmM}{{\mathrm M}} \ndef{\mbM}{{\mathbb M}} \ndef{\bfM}{{\mathbf M}} \ndef{\euM}{{\EuScript M}} \ndef{\frM}{{\mathfrak M}}
\ndef{\clN}{{\mathcal N}} \ndef{\rmN}{{\mathrm N}} \ndef{\mbN}{{\mathbb N}} \ndef{\bfN}{{\mathbf N}} \ndef{\euN}{{\EuScript N}} \ndef{\frN}{{\mathfrak N}}
\ndef{\clO}{{\mathcal O}} \ndef{\rmO}{{\mathrm O}} \ndef{\mbO}{{\mathbb O}} \ndef{\bfO}{{\mathbf O}} \ndef{\euO}{{\EuScript O}} \ndef{\frO}{{\mathfrak O}}
\ndef{\clP}{{\mathcal P}} \ndef{\rmP}{{\mathrm P}} \ndef{\mbP}{{\mathbb P}} \ndef{\bfP}{{\mathbf P}} \ndef{\euP}{{\EuScript P}} \ndef{\frP}{{\mathfrak P}}
\ndef{\clQ}{{\mathcal Q}} \ndef{\rmQ}{{\mathrm Q}} \ndef{\mbQ}{{\mathbb Q}} \ndef{\bfQ}{{\mathbf Q}} \ndef{\euQ}{{\EuScript Q}} \ndef{\frQ}{{\mathfrak Q}}
\ndef{\clR}{{\mathcal R}} \ndef{\rmR}{{\mathrm R}} \ndef{\mbR}{{\mathbb R}} \ndef{\bfR}{{\mathbf R}} \ndef{\euR}{{\EuScript R}} \ndef{\frR}{{\mathfrak R}}
\ndef{\clS}{{\mathcal S}} \ndef{\rmS}{{\mathrm S}} \ndef{\mbS}{{\mathbb S}} \ndef{\bfS}{{\mathbf S}} \ndef{\euS}{{\EuScript S}} \ndef{\frS}{{\mathfrak S}}
\ndef{\clT}{{\mathcal T}} \ndef{\rmT}{{\mathrm T}} \ndef{\mbT}{{\mathbb T}} \ndef{\bfT}{{\mathbf T}} \ndef{\euT}{{\EuScript T}} \ndef{\frT}{{\mathfrak T}}
\ndef{\clU}{{\mathcal U}} \ndef{\rmU}{{\mathrm U}} \ndef{\mbU}{{\mathbb U}} \ndef{\bfU}{{\mathbf U}} \ndef{\euU}{{\EuScript U}} \ndef{\frU}{{\mathfrak U}}
\ndef{\clV}{{\mathcal V}} \ndef{\rmV}{{\mathrm V}} \ndef{\mbV}{{\mathbb V}} \ndef{\bfV}{{\mathbf V}} \ndef{\euV}{{\EuScript V}} \ndef{\frV}{{\mathfrak V}}
\ndef{\clW}{{\mathcal W}} \ndef{\rmW}{{\mathrm W}} \ndef{\mbW}{{\mathbb W}} \ndef{\bfW}{{\mathbf W}} \ndef{\euW}{{\EuScript W}} \ndef{\frW}{{\mathfrak W}}
\ndef{\clX}{{\mathcal X}} \ndef{\rmX}{{\mathrm X}} \ndef{\mbX}{{\mathbb X}} \ndef{\bfX}{{\mathbf X}} \ndef{\euX}{{\EuScript X}} \ndef{\frX}{{\mathfrak X}}
\ndef{\clY}{{\mathcal Y}} \ndef{\rmY}{{\mathrm Y}} \ndef{\mbY}{{\mathbb Y}} \ndef{\bfY}{{\mathbf Y}} \ndef{\euY}{{\EuScript Y}} \ndef{\frY}{{\mathfrak Y}}
\ndef{\clZ}{{\mathcal Z}} \ndef{\rmZ}{{\mathrm Z}} \ndef{\mbZ}{{\mathbb Z}} \ndef{\bfZ}{{\mathbf Z}} \ndef{\euZ}{{\EuScript Z}} \ndef{\frZ}{{\mathfrak Z}}
\ndef{\tA}{{\widetilde A}} \ndef{\tcA}{{\widetilde\clA}} \ndef{\ttcA}{\widetilde{\tcA}} \ndef{\sfA}{{\textsf A}} \ndef{\ttA}{\widetilde{\tA}} \ndef{\dzA}{{A^\sharp}}
\ndef{\tB}{{\widetilde B}} \ndef{\tcB}{{\widetilde\clB}} \ndef{\ttcB}{\widetilde{\tcB}} \ndef{\sfB}{{\textsf B}} \ndef{\ttB}{\widetilde{\tB}} \ndef{\dzB}{{B^\sharp}}
\ndef{\tC}{{\widetilde C}} \ndef{\tcC}{{\widetilde\clC}} \ndef{\ttcC}{\widetilde{\tcC}} \ndef{\sfC}{{\textsf C}} \ndef{\ttC}{\widetilde{\tC}} \ndef{\dzC}{{C^\sharp}}
\ndef{\tD}{{\widetilde D}} \ndef{\tcD}{{\widetilde\clD}} \ndef{\ttcD}{\widetilde{\tcD}} \ndef{\sfD}{{\textsf D}} \ndef{\ttD}{\widetilde{\tD}} \ndef{\dzD}{{D^\sharp}}
\ndef{\tE}{{\widetilde E}} \ndef{\tcE}{{\widetilde\clE}} \ndef{\ttcE}{\widetilde{\tcE}} \ndef{\sfE}{{\textsf E}} \ndef{\ttE}{\widetilde{\tE}} \ndef{\dzE}{{E^\sharp}}
\ndef{\tF}{{\widetilde F}} \ndef{\tcF}{{\widetilde\clF}} \ndef{\ttcF}{\widetilde{\tcF}} \ndef{\sfF}{{\textsf F}} \ndef{\ttF}{\widetilde{\tF}} \ndef{\dzF}{{F^\sharp}}
\ndef{\tG}{{\widetilde G}} \ndef{\tcG}{{\widetilde\clG}} \ndef{\ttcG}{\widetilde{\tcG}} \ndef{\sfG}{{\textsf G}} \ndef{\ttG}{\widetilde{\tG}} \ndef{\dzG}{{G^\sharp}}
\ndef{\tH}{{\widetilde H}} \ndef{\tcH}{{\widetilde\clH}} \ndef{\ttcH}{\widetilde{\tcH}} \ndef{\sfH}{{\textsf H}} \ndef{\ttH}{\widetilde{\tH}} \ndef{\dzH}{{H^\sharp}}
\ndef{\tI}{{\widetilde I}} \ndef{\tcI}{{\widetilde\clI}} \ndef{\ttcI}{\widetilde{\tcI}} \ndef{\sfI}{{\textsf I}} \ndef{\ttI}{\widetilde{\tI}} \ndef{\dzI}{{I^\sharp}}
\ndef{\tJ}{{\widetilde J}} \ndef{\tcJ}{{\widetilde\clJ}} \ndef{\ttcJ}{\widetilde{\tcJ}} \ndef{\sfJ}{{\textsf J}} \ndef{\ttJ}{\widetilde{\tJ}} \ndef{\dzJ}{{J^\sharp}}
\ndef{\tK}{{\widetilde K}} \ndef{\tcK}{{\widetilde\clK}} \ndef{\ttcK}{\widetilde{\tcK}} \ndef{\sfK}{{\textsf K}} \ndef{\ttK}{\widetilde{\tK}} \ndef{\dzK}{{K^\sharp}}
\ndef{\tL}{{\widetilde L}} \ndef{\tcL}{{\widetilde\clL}} \ndef{\ttcL}{\widetilde{\tcL}} \ndef{\sfL}{{\textsf L}} \ndef{\ttL}{\widetilde{\tL}} \ndef{\dzL}{{L^\sharp}}
\ndef{\tM}{{\widetilde M}} \ndef{\tcM}{{\widetilde\clM}} \ndef{\ttcM}{\widetilde{\tcM}} \ndef{\sfM}{{\textsf M}} \ndef{\ttM}{\widetilde{\tM}} \ndef{\dzM}{{M^\sharp}}
\ndef{\tN}{{\widetilde N}} \ndef{\tcN}{{\widetilde\clN}} \ndef{\ttcN}{\widetilde{\tcN}} \ndef{\sfN}{{\textsf N}} \ndef{\ttN}{\widetilde{\tN}} \ndef{\dzN}{{N^\sharp}}
\ndef{\tO}{{\widetilde O}} \ndef{\tcO}{{\widetilde\clO}} \ndef{\ttcO}{\widetilde{\tcO}} \ndef{\sfO}{{\textsf O}} \ndef{\ttO}{\widetilde{\tO}} \ndef{\dzO}{{O^\sharp}}
\ndef{\tP}{{\widetilde P}} \ndef{\tcP}{{\widetilde\clP}} \ndef{\ttcP}{\widetilde{\tcP}} \ndef{\sfP}{{\textsf P}} \ndef{\ttP}{\widetilde{\tP}} \ndef{\dzP}{{P^\sharp}}
\ndef{\tQ}{{\widetilde Q}} \ndef{\tcQ}{{\widetilde\clQ}} \ndef{\ttcQ}{\widetilde{\tcQ}} \ndef{\sfQ}{{\textsf Q}} \ndef{\ttQ}{\widetilde{\tQ}} \ndef{\dzQ}{{Q^\sharp}}
\ndef{\tR}{{\widetilde R}} \ndef{\tcR}{{\widetilde\clR}} \ndef{\ttcR}{\widetilde{\tcR}} \ndef{\sfR}{{\textsf R}} \ndef{\ttR}{\widetilde{\tR}} \ndef{\dzR}{{R^\sharp}}
\ndef{\tS}{{\widetilde S}} \ndef{\tcS}{{\widetilde\clS}} \ndef{\ttcS}{\widetilde{\tcS}} \ndef{\sfS}{{\textsf S}} \ndef{\ttS}{\widetilde{\tS}} \ndef{\dzS}{{S^\sharp}}
\ndef{\tT}{{\widetilde T}} \ndef{\tcT}{{\widetilde\clT}} \ndef{\ttcT}{\widetilde{\tcT}} \ndef{\sfT}{{\textsf T}} \ndef{\ttT}{\widetilde{\tT}} \ndef{\dzT}{{T^\sharp}}
\ndef{\tU}{{\widetilde U}} \ndef{\tcU}{{\widetilde\clU}} \ndef{\ttcU}{\widetilde{\tcU}} \ndef{\sfU}{{\textsf U}} \ndef{\ttU}{\widetilde{\tU}} \ndef{\dzU}{{U^\sharp}}
\ndef{\tV}{{\widetilde V}} \ndef{\tcV}{{\widetilde\clV}} \ndef{\ttcV}{\widetilde{\tcV}} \ndef{\sfV}{{\textsf V}} \ndef{\ttV}{\widetilde{\tV}} \ndef{\dzV}{{V^\sharp}}
\ndef{\tW}{{\widetilde W}} \ndef{\tcW}{{\widetilde\clW}} \ndef{\ttcW}{\widetilde{\tcW}} \ndef{\sfW}{{\textsf W}} \ndef{\ttW}{\widetilde{\tW}} \ndef{\dzW}{{W^\sharp}}
\ndef{\tX}{{\widetilde X}} \ndef{\tcX}{{\widetilde\clX}} \ndef{\ttcX}{\widetilde{\tcX}} \ndef{\sfX}{{\textsf X}} \ndef{\ttX}{\widetilde{\tX}} \ndef{\dzX}{{X^\sharp}}
\ndef{\tY}{{\widetilde Y}} \ndef{\tcY}{{\widetilde\clY}} \ndef{\ttcY}{\widetilde{\tcY}} \ndef{\sfY}{{\textsf Y}} \ndef{\ttY}{\widetilde{\tY}} \ndef{\dzY}{{Y^\sharp}}
\ndef{\tZ}{{\widetilde Z}} \ndef{\tcZ}{{\widetilde\clZ}} \ndef{\ttcZ}{\widetilde{\tcZ}} \ndef{\sfZ}{{\textsf Z}} \ndef{\ttZ}{\widetilde{\tZ}} \ndef{\dzZ}{{Z^\sharp}}
\ndef{\bfa}{{\mathbf a}}
\ndef{\bfb}{{\mathbf b}}
\ndef{\bfc}{{\mathbf c}}
\ndef{\bfd}{{\mathbf d}}
\ndef{\euu}{{\EuScript u}}
  \ndef{\eps}{\varepsilon}
\let\geq\geqslant
\let\leq\leqslant
\ndef{\lims}[1]{\lim\limits_{#1}}
\ndef{\sums}[1]{\sum\limits_{#1}}
\ndef{\ints}[1]{\int_{#1}}
\ndef{\sups}[1]{\sup\limits_{#1}}
\ndef{\liminfty}[1]{\lims{#1\to\infty}}
\ndef{\suminf}[1]{\sums{#1=1}^\infty}
\ndef{\limo}[1]{\omega\mbox{-}\!\!\!\lims{#1\to\infty}}          
\ndef{\limL}[1]{\rmL\mbox{-}\!\!\!\lims{#1\to\infty}}            
\ndef{\limLOne}[1]{\clL_1\mbox{-}\!\lims{#1}}
\ndef{\tildelimo}[1]{\tilde\omega\mbox{-}\!\!\!\lims{#1\to\infty}}
\ndef{\slim}{\mathrm{s}\mbox{-}\!\!\lim}          
\ndef{\wlim}{\mathrm{w}\mbox{-}\!\lim}          
\ndef{\Aut}{\operatorname{Aut}}      
\ndef{\Ch}{\operatorname{ch}}        
\ndef{\End}{\operatorname{End}}      
\ndef{\Hom}{\operatorname{Hom}}      
\rndef{\ker}{\operatorname{ker}}      
\ndef{\coker}{\operatorname{coker}}      
\ndef{\im}{\operatorname{im}}        
\ndef{\Log}{\operatorname{Log}}      
\ndef{\OP}{\operatorname{OP}}        
\ndef{\Op}{\operatorname{Op}}        
\ndef{\Symb}{\operatorname{Symb}}    
\ndef{\Tr}{\operatorname{Tr}}        
\ndef{\Wres}{\operatorname{Wres}}    
\ndef{\cl}{\operatorname{cl}}        
\ndef{\com}{\operatorname{com}}
\ndef{\const}{\operatorname{const}}  
\ndef{\conv}{\operatorname{conv}}    
\rndef{\det}{\operatorname{det}}     
\ndef{\Var}{\operatorname{Var}}
\ndef{\Cov}{\operatorname{Cov}}
\ndef{\detFK}[1]{\Delta\brs{#1}} 
\ndef{\detFKrel}[2]{\Delta_{#2}\brs{#1}} 
\ndef{\adj}{\operatorname{adj}}    
\ndef{\diag}{\operatorname{diag}}    
\ndef{\dist}{\operatorname{dist}}    
\ndef{\dom}{\operatorname{dom}}      
\ndef{\ec}{\operatorname{ec}}        
\ndef{\id}{\mathrm{Id}}                        
\ndef{\ind}{\operatorname{ind}}      
\ndef{\mydeg}{\operatorname{deg}}    
\ndef{\op}{\operatorname{op}}
\ndef{\rank}{\operatorname{rank}}
\ndef{\res}{\operatorname{res}}      
\ndef{\rng}{\operatorname{ran}}      
\ndef{\sflow}{\operatorname{sf}}     
\ndef{\isf}{\operatorname{isf}}      
\ndef{\sign}{\operatorname{sign}}    
\ndef{\sgn}{\operatorname{sgn}}      
\ndef{\sing}{\operatorname{sing}}    
\ndef{\supp}{\operatorname{supp}}    
\ndef{\tr}{\operatorname{tr}}        
\ndef{\var}{\operatorname{var}}      
\ndef{\vol}{\operatorname{vol}}      
\ndef{\wn}{\operatorname{wn}}        
\ndef{\wres}{\operatorname{wres}}    
\rndef{\Im}{\operatorname{Im}}       
\rndef{\Re}{\operatorname{Re}}       
\ndef{\prng}[1]{\mathrm R_{#1}} 
\ndef{\pker}[1]{\mathrm N_{#1}} 
\ndef{\rprng}[2]{\mathrm R_{#1}^{#2}}           
\ndef{\rpker}[2]{\mathrm N_{#1}^{#2}}           
\ndef{\rsupp}[1]{\supp_r(#1)}
\ndef{\lsupp}[1]{\supp_l(#1)}
\ndef{\rslv}[1]{R_z(#1)}      
\ndef{\HH}{H}                 
\ndef{\tHH}{\tilde \HH}       
\ndef{\VV}{V}                 
\ndef{\Rz}{R_z}               
\ndef{\tRz}{\tR_z}            
\ndef{\psif}[1]{#1^{[1]}} 
\ndef{\WPlus}[1]{W_{#1}(\mbR)} 
\newcommand{\xia}{\xi^{(a)}}
\newcommand{\xis}{\xi^{(s)}}
\ndef{\bndl}{\xi}                         
\ndef{\bndlA}{\eta}                       
\ndef{\GlueMap}{\varphi}                  
\ndef{\ChartMap}{h}                       
\ndef{\chern}{\ensuremath{\mathrm{ch}}}
\ndef{\hilb}{\clH}                     
\ndef{\hilba}{\clH^{(a)}}                    
\ndef{\hilbs}{\clH^{(s)}}                    
   \ndef{\hilbasargument}{(\hilb)} 
\ndef{\LpH}[1]{\clL_{#1}\hilbasargument}       
\ndef{\saLpH}[1]{\clL_{sa}^{#1}\hilbasargument}       
\ndef{\clBH}{\clB\hilbasargument}              
\ndef{\ubBH}{\clB_1\hilbasargument}            
\ndef{\clCH}{\clC\hilbasargument}              
\ndef{\clKH}{\clK\hilbasargument}              
\ndef{\clFH}{\clF\hilbasargument}              
\ndef{\clUH}{\clU\hilbasargument}              
\ndef{\clCFH}{{\clC\clF}\hilbasargument}       
\ndef{\saBH}{\clB_{sa}\hilbasargument}         
\ndef{\saCH}{\clC_{sa}\hilbasargument}         
\ndef{\saFH}{\clF_{sa}\hilbasargument}         
\ndef{\saKH}{\clK_{sa}\hilbasargument}         
\ndef{\saCFH}{\clC\clF_{sa}\hilbasargument}    
\ndef{\clUFH}{\clU\clF\hilbasargument}         
\ndef{\Uinj}{\clU_{inj}\hilbasargument}        
\ndef{\UFinj}{\clU\clF_{inj}\hilbasargument}   
\ndef{\spproj}[2]{E^{#1}_{#2}}                      
\ndef{\spprojb}[2]{E^{#2}_{#1}}                     
\ndef{\LpN}[1]{\clL^{#1}(\clN,\tau)}     
\ndef{\saLpN}[1]{\clL^{#1}_{sa}(\clN,\tau)} 
\ndef{\rLpN}[1]{L^{#1}(\clN,\tau)}       
\ndef{\clAND}{(\clA,\clN,D)}             
\ndef{\clBA}{{\clB(\clA)}}
\ndef{\saKN}{{\clK_{sa}(\clN,\tau)}}          
\ndef{\clKN}{{\clK(\clN,\tau)}}          
\ndef{\clKtN}{{\clK(\tilde\clN,\tau)}}   
\ndef{\clFN}{{\clF(\clN,\tau)}}          
\ndef{\saFN}{{\clF_{sa}(\clN,\tau)}}     
\ndef{\clPN}{\clP(\clN)}                 
\ndef{\clQN}{\clQ(\clN,\tau)}            
\ndef{\infPN}{{\clP_\tau^\infty(\clN)}}  
\ndef{\clOF}[2]{\clF_{#1\mbox{-}#2}(\clN,\tau)}         
\ndef{\oind}[2]{{\rm \tau\mbox{-}ind}_{#1\mbox{-}#2}}   
\ndef{\tind}{\tau\mbox{-}\ind}                  
\ndef{\DInd}{\ind_{\clD,\tau}}           
\ndef{\BF}{Breuer-Fredholm}              
\ndef{\skewfred}[2]{$(#1\cdot #2)$ $\tau$\tire Fredholm}   
\ndef{\affl}{\eta}                       
\ndef{\vNa}{von Neumann algebra}         
\ndef{\nsf}{faithful normal semifinite } 
\ndef{\taubrs}[1]{\tau\brackets{#1}}     
\ndef{\sqbrs}[1]{[#1]}        
\ndef{\Sqbrs}[1]{\big[#1\big]}        
\ndef{\SqBrs}[1]{\Big[#1\Big]}        
\ndef{\domd}{\bigcap\limits_{n\ge 0} \dom\;\delta^n}         
\ndef{\DiffOP}{{\rm \clD}}
\ndef{\ADA}{\clA \cup [\clD,\clA]}
\ndef{\DixIdeal}[1]{\LpH{#1,\infty}}               
\ndef{\dixideal}{\ell^{1,\infty}}                  
\ndef{\WDixIdeal}{\LpH{1,\mathrm w}}               
\ndef{\DixIdealPos}[1]{\DixIdeal{#1}_+}            
\ndef{\DixIdealN}[1]{\LpN{#1,\infty}}              
\ndef{\DixIdealNPar}[2]{\clL^{#1,\infty}_{#2}(\clN,\tau)}    
\ndef{\DixIdealNPos}[1]{\LpN{#1,\infty}_+}                   
\ndef{\TrD}{\Tr_\omega}                                      
\ndef{\tauD}{{\tau_\omega}}                                  
\ndef{\ILogN}{\frac 1{\log(1+N)}}
\ndef{\DixNorm}[1]{\norm{#1}_{(1,\infty)}}                   
\ndef{\DixInt}[1]{\ints 0^t \mu_s(#1)\,ds}
\ndef{\DixIntL}[1]{\ints 0^{\lambda_{1/t}(#1)}\mu_s(#1)\,ds}
    \ndef{\SmallIdeal}{{\clL^{1, \mathrm w}}}
    \ndef{\SmallIdealMeas}{{\clL^{1, \mathrm w}_m}}
    \ndef{\DixIntII}[1]{\int_0^t \mu_s(#1)\,ds}
    \ndef{\DixIntf}[1]{\Phi_t(#1)}
    \ndef{\DixIntg}[1]{\Psi_t(#1)}
\ndef{\lpi}{\clL^{1,\pi}(\clN,\tau)}
\ndef{\strl}[1]{\stackrel \longrightarrow {#1}}
\ndef{\IIinfty}{$\mathrm{II}_\infty$\ }
\ndef{\fourier}[1]{\clF(#1)}          
\ndef{\HaarMeasBohrs}{\nu}            
\ndef{\BrownsMeas}{\mu}               
\ndef{\BohrCont}[1]{\tilde{#1}}       
\ndef{\APMean}{{M}}                   
\ndef{\CDSS}{{\clA_B}}                
\ndef{\matr}{{\rm Mat}}               
\ndef{\seque}[1]{\ensuremath{\{#1_n\}_{n=1}^\infty}}    
\ndef{\sequen}[2]{\ensuremath{\{#1_#2\}_{#2=1}^\infty}}    
\ndef{\Seque}[1]{\ensuremath{\left(#1_0,#1_1,#1_2,\dots\right)}}    
\ndef{\Cesaro}{H}                           
\ndef{\CesaroRPlus}{M}                      
\ndef{\Dilation}{D}                         
\ndef{\Shift}{T}                            
\ndef{\norm}[1]{\left\Vert#1\right\Vert}    
\ndef{\TrNorm}[1]{\norm{#1}_1}              
\ndef{\HSNorm}[1]{\norm{#1}_2}              
\ndef{\InftyNorm}[1]{\norm{#1}_\infty}      
\ndef{\normQN}[1]{\norm{#1}_{\clQN}}        
\ndef{\clLpnorm}[2]{\norm{#2}_{\clL^{#1}}}    
\ndef{\clLnorm}[1]{\clLpnorm{1}{#1}}    
\ndef{\ccurve}{\gamma}                      
\ndef{\abs}[1]{\left\lvert#1\right\rvert}   
\ndef{\set}[1]{\left\{#1\right\}}           
\ndef{\brackets}[1]{\left(#1\right)}        
\ndef{\brs}[1]{\brackets{#1}}               
\ndef{\Brs}[1]{\big(#1\big)}                
\ndef{\BRS}[1]{\Big(#1\Big)}                
\ndef{\scal}[2]{\left\langle #1,#2\right\rangle}               
\ndef{\precprec}{\prec\!\!\!\prec}
\ndef{\qeq}{\stackrel?=}
\ndef{\spectrum}[1]{\sigma_{#1}} 
\ndef{\spectruma}[1]{\sigma^{(a)}_{#1}} 
\rndef{\emptyset}{\varnothing}                              
\ndef{\csupp}{c}                           
\ndef{\closure}[1]{\overline{#1}}
\ndef{\linspan}[1]{\mathrm{span}\ {#1}}
\ndef{\bddborel}[1]{B(#1)}                 
\ndef{\charfunc}{\chi}
\ndef{\FrDer}{\euD}                        
\ndef{\LieDer}[1]{\pounds_{#1}\,}          
\ndef{\dds}{\left.\frac d{ds} \right|_{s = 0}}
\ndef{\ortcmp}[1]{#1^{\scriptscriptstyle \perp}}            
\ndef{\Laplace}{\Delta}                    
\ndef{\matrPQ}[3]
{
    \left(
      \begin{array}{cc}
        #1_{11} & #1_{12} \\
        #1_{21} & #1_{22}
      \end{array}
    \right)_{[#2,#3]}
}
\ndef{\margOK}{\marginpar{\bf \small OK}}
\newcounter{margcomcount}
\ndef{\margcom}[1]{\marginpar{\bf \small #1} \addtocounter{margcomcount}{1}
   \index{\indexcom{{\bf COMMENT: #1}}}}
\ndef{\mytimes}{\!\times\!}
\ndef{\sss}[1]{\subsubsection{}\label{#1}}
\rndef{\phi}{\varphi} \ndef{\OpenUnitDisk}{D}
\ndef{\RHS}{RHS}                            
\ndef{\LHS}{LHS} 
\ndef{\ttt}{\Leftrightarrow}
\ndef{\then}{\Rightarrow}
\ndef{\tto}{\longrightarrow}
\ndef{\nno}{\nonumber\\}
\ndef{\newn}[1]{\index{#1} {\bfseries #1}}       
\ndef{\la}{\langle}
\ndef{\ra}{\rangle}
\ndef{\dbar}{{\;\bar{\phantom{o}} \!\!\!\! d}}
\ndef{\stl}[1]{\stackrel{\vbox to 0pt{\vss\hbox{$\scriptstyle #1$}}}}
\ndef{\mathcomment}[1]{{\hfill \qquad\qquad\qquad\text{by (#1)}}}        
\ndef{\mathcomm}[1]{{\hfill \qquad\qquad\qquad\qquad\qquad\text{#1}}}        
\ndef{\details}[1]{\smallskip\begin{center} {\bf Here:}
#1\end{center}\medskip} \ndef{\indexcom}[1]{ --- #1}
\ndef{\longsim}{\ \sim \ }              
\ndef{\tire}{-}              
\ndef{\intinfinf}{\int_{-\infty}^\infty}
     \ndef{\npartial}{\slash\!\!\!\partial}
     \ndef{\Heis}{\operatorname{Heis}}
     \ndef{\Solv}{\operatorname{Solv}}
     \ndef{\Spin}{\operatorname{Spin}}
     \ndef{\SO}{\operatorname{SO}}
     \ndef{\Index}{\operatorname{index}}
             \ndef{\p}{\partial}
             \ndef{\dd}{|\clD|}
             \ndef{\n}{\parallel}
\let\LatexCite=\cite  
\let\ifnumref\iffalse 
\ndef{\ifuncited}[4]{\expandafter\ifx\csname used#4\endcsname\relax}
\ndef{\ifcited}[4]{\expandafter\ifx\csname used#4\endcsname\relax\else}
  \ndef{\papertitle}[1]{ \emph{#1}, }
  \ndef{\paperauthor}[2]{#2}  
  \ndef{\pbbi}[9]{%
      \ifcited{#1}{#2}{#3}{#5}%
        \ifnumref%
          \bibitem{#5}\paperauthor{#1}{#6},\papertitle{#7}#8.%
        \else%
          \advance #9 by 1%
          \ifnum#9<1%
            \bibitem[#4]{#5}\paperauthor{#1}{#6}, \papertitle{#7}#8.%
          \else%
            \bibitem[#4$_{\the#9}$]{#5}\paperauthor{#1}{#6},\papertitle{#7}#8.%
          \fi%
        \fi%
      \fi%
  }
  \ndef{\mbbi}[8]{%
     \ifcited{#1}{#2}{#3}{#5}%
        \ifnumref%
          \bibitem{#5}\paperauthor{#1}{#6},\papertitle{#7}#8.%
        \else%
          \bibitem[#4]{#5}\paperauthor{#1}{#6},\papertitle{#7}#8.%
        \fi%
     \fi%
  }
\ndef{\AddCite}[1]{%
   \ifuncited{0}{0}{0}{#1}%
     \expandafter\gdef\csname used#1\endcsname {}%
   \fi%
}
\def\ProcessCite#1,{%
     \ifx\relax#1%
         \let\next=\relax%
     \else%
         \AddCite{#1}%
         \let\next=\ProcessCite%
     \fi%
     \next%
}
\ndef{\AddCites}[1]{\ProcessCite#1,\relax,}
\ndef{\CiteWithoutExtension}[1]{%
   \AddCites{#1}%
   \LatexCite{#1}%
}
\def\CiteWithExtension[#1]#2{%
   \AddCites{#2}%
   \LatexCite[#1]{#2}%
}
\ndef{\CleverCite}{%
    \ifx\NChar[ %
       \let\MyCite=\CiteWithExtension %
    \else %
       \let\MyCite=\CiteWithoutExtension %
    \fi %
    \MyCite%
}
\renewcommand{\cite}{\futurelet\NChar\CleverCite}
      \ndef{\volume}[1]{{\bf #1}}
      \ndef{\VolYearPP}[3]{\ifnum#2=0 (to appear)\else\volume{#1} (#2), #3\fi}
      \ndef{\VolNoYearPP}[4]{\ifnum#3=0 (to appear)\else\volume{#1} #2 (#3), #4\fi}
      \ndef{\libcode}[1]{}
\ndef{\jnActaMath}[3]{Acta Math. \VolYearPP{#1}{#2}{#3}}                       
\ndef{\jnAdvMath}[3]{Adv. in~Math. \VolYearPP{#1}{#2}{#3}}                     
\ndef{\jnAlgAnal}[3]{Algebra i~Analiz \VolYearPP{#1}{#2}{#3}}
\ndef{\jnAmerJMath}[3]{Amer. J. Math. \VolYearPP{#1}{#2}{#3}}                  
\ndef{\jnAmerMathMonth}[3]{Amer. Math. Monthly \VolYearPP{#1}{#2}{#3}}         
\ndef{\jnAnnMath}[4]{Ann. of~Math. \VolNoYearPP{#1}{#2}{#3}{#4}}               
\ndef{\jnAnalMath}[3]{J. Anal. Math. \VolYearPP{#1}{#2}{#3}}                   
\ndef{\jnArchRatMechAnal}[3]{Arch. Rational Mech. Anal. \VolYearPP{#1}{#2}{#3}}                   
\ndef{\jnBullLondMathSoc}[3]{Bull. London Math. Soc. \VolYearPP{#1}{#2}{#3}}   
\ndef{\jnBullAMS}[3]{Bull. Amer. Math. Soc. \VolYearPP{#1}{#2}{#3}}   
\ndef{\jnCanMathBull}[3]{Canad. Math. Bull. \VolYearPP{#1}{#2}{#3}}            
\ndef{\jnCanMath}[3]{Canad. J.~Math. \VolYearPP{#1}{#2}{#3}}             
\ndef{\jnCommMathPhys}[3]{Comm. Math. Phys. \VolYearPP{#1}{#2}{#3}}             
\ndef{\jnCommPDE}[3]{Comm. Partial Differential Equations \VolYearPP{#1}{#2}{#3}}             
\ndef{\jnComptRendue}[3]{C.\,R.~Acad. Sci. Paris S\'er. A-B \VolYearPP{#1}{#2}{#3}}      
\ndef{\jnContMath}[3]{Contemporary Math. \VolYearPP{#1}{#2}{#3}}               %
\ndef{\jnDukeMJ}[3]{Duke Math. J. \VolYearPP{#1}{#2}{#3}}
\ndef{\jnDiffGeom}[3]{J.~Diff. Geom. \VolYearPP{#1}{#2}{#3}}                   
\ndef{\jnErgodicTheory}[3]{Ergodic Theory and Dynamical Systems \VolYearPP{#1}{#2}{#3}} 
\ndef{\jnFuncAnal}[3]{J.~Functional Analysis \VolYearPP{#1}{#2}{#3}}           
\ndef{\jnFunkAnalPril}[4]{Funct. Anal. Appl. \VolNoYearPP{#1}{#2}{#3}{#4}}  
\ndef{\jnGAFA}[3]{GAFA \VolYearPP{#1}{#2}{#3}}                                 
\ndef{\jnIHES}[3]{IHES Publ. Math. (Paris) \VolYearPP{#1}{#2}{#3}}             
\ndef{\jnIEOT}[3]{Integral Equations Operator Theory   \VolYearPP{#1}{#2}{#3}} 
\ndef{\jnIsrMath}[3]{Israel J.~Math. \VolYearPP{#1}{#2}{#3}}                   
\ndef{\jnKTheory}[3]{K-Theory \VolYearPP{#1}{#2}{#3}}                          
\ndef{\jnLetMathPhys}[3]{Lett. Math. Phys. \VolYearPP{#1}{#2}{#3}}             
\ndef{\jnMathAnn}[3]{Math. Ann. \VolYearPP{#1}{#2}{#3}}                        
\ndef{\jnMathAnalAppl}[3]{J.~Math. Anal. and Appl. \VolYearPP{#1}{#2}{#3}}     
\ndef{\jnMathNachr}[3]{Math. Nachr. \VolYearPP{#1}{#2}{#3}}
\ndef{\jnMathPhys}[3]{J. Math. Phys. \VolYearPP{#1}{#2}{#3}}
\ndef{\jnMathSocJap}[3]{J. Math. Soc. Japan \VolYearPP{#1}{#2}{#3}}
\ndef{\jnOperTheory}[3]{J.~Operator Theory \VolYearPP{#1}{#2}{#3}}             
\ndef{\jnPacJMath}[3]{Pacific J.~Math. \VolYearPP{#1}{#2}{#3}}                  
\ndef{\jnPositivity}[3]{Positivity \VolYearPP{#1}{#2}{#3}}
\ndef{\jnProcAmerMS}[3]{Proc. Amer. Math. Soc. \VolYearPP{#1}{#2}{#3}}         
\ndef{\jnProcCambPhilSoc}[3]{Math. Proc. Camb. Phil. Soc. \VolYearPP{#1}{#2}{#3}}
\ndef{\jnReineAngew}[3]{J.~Reine Angew. Math. \VolYearPP{#1}{#2}{#3}}          
\ndef{\jnTokyoMath}[3]{Tokyo J.~Math. \VolYearPP{#1}{#2}{#3}}
\ndef{\jnTopology}[3]{Topology \VolYearPP{#1}{#2}{#3}}
\ndef{\jnTransAmerMathSoc}[3]{Trans. Amer. Math. Soc. \VolYearPP{#1}{#2}{#3}}
\ndef{\jnIzvANSSSR}[3]{Izv. Akad. Nauk SSSR, Ser. Mat. \VolYearPP{#1}{#2}{#3}}
\ndef{\jnIzvVyshUchZav}[3]{Izv. Vyssh. Uch. Zav., Mat. \VolYearPP{#1}{#2}{#3} (Russian)}
\ndef{\jnIzdatLenUniv}[2]{Izdat. Leningrad. Univ., Leningrad, (#1), #2 (Russian)}
\ndef{\jnFieldsInsComm}[3]{Fields Inst. Comm. \VolYearPP{#1}{#2}{#3}}
\ndef{\jnDoklANSSSR}[3]{Dokl. Akad. Nauk SSSR \VolYearPP{#1}{#2}{#3}}
\ndef{\jnMatZametki}[3]{Matem. zametki \VolYearPP{#1}{#2}{#3}}
\ndef{\jnRussMathSurvey}[3]{Russian Math. Surveys \VolYearPP{#1}{#2}{#3}}
\ndef{\jnSibMathJ}[3]{Sib. Math.~J. \VolYearPP{#1}{#2}{#3}}
\ndef{\jnSovMath}[3]{J.~Soviet math. \VolYearPP{#1}{#2}{#3}}
\ndef{\jnTransMoscMathSoc}[3]{Trans. Moscow Math. Soc. \VolYearPP{#1}{#2}{#3}}
\ndef{\jnUMN}[3]{Uspekhi Mat. Nauk \VolYearPP{#1}{#2}{#3}}
\ndef{\bkTransMathMon}[2]{Trans. Math. Monographs, AMS, \volume{#1}, #2}
\ndef{\pbBirkhauser}[1]{Birkh\"auser, Boston, #1}
\ndef{\pbFactorial}[1]{Moscow, Factorial, #1}
\ndef{\pbGauthier}[1]{Gauthier-Villars, Paris, #1}
\ndef{\pbNauka}[1]{Moscow, Nauka, #1 (Russian)}
\ndef{\pbNaukaR}[1]{Москва, Наука, #1}
\ndef{\pbPrinceton}[1]{Princeton University Press, Princeton, New Jersey, #1}
\ndef{\pbPublPerish}[1]{Publish or Perish Inc., Berkeley, #1}
\ndef{\pbSpringer}[1]{Springer-Verlag, #1}
\ndef{\myauthor}[1]{\mbox{#1}}
\ndef{\Agmon}{\myauthor{Sh.\,Agmon}}
\ndef{\Ahiezer}{\myauthor{N.\,I.\,Ahiezer}}
\ndef{\Arazy}{\myauthor{J.\,Arazy}}
\ndef{\Aronszajn}{\myauthor{N.\,Aronszajn}}
\ndef{\Astashkin}{\myauthor{S.\,V.\,Astashkin}}
\ndef{\Atiyah}{\myauthor{M.\,Atiyah}}
\ndef{\Avron}{\myauthor{J.\,E.\,Avron}}
\ndef{\Azamov}{\myauthor{N.\,A.\,Azamov}}
\ndef{\Banach}{\myauthor{S.\,Banach}}
\ndef{\Benameur}{\myauthor{M-T.\,Benameur}}
\ndef{\Bennett}{\myauthor{C.\,Bennett}}
\ndef{\Berezin}{\myauthor{F.\,A.\,Berezin}}
\ndef{\Berline}{\myauthor{N.\,Berline}}
\ndef{\Birman}{\myauthor{M.\,Sh.\,Birman}}
\ndef{\Blackadar}{\myauthor{B.\,Blackadar}}
\ndef{\Bogolyubov}{\myauthor{N.\,N.\,Bogolyubov}}
\ndef{\Bonsall}{\myauthor{F.\,F.\,Bonsall}}
\ndef{\Bony}{\myauthor{J.\,F.\,Bony}}
\ndef{\BoosBavnbek}{\myauthor{B.\,Boo$\beta$-Bavnbek}}
\ndef{\Bott}{\myauthor{R.\,Bott}}
\ndef{\Branges}{\myauthor{L.\,de Branges}}
\ndef{\Bratteli}{\myauthor{O.\,Bratteli}}
\ndef{\Bredon}{\myauthor{G.\,E.\,Bredon}}
\ndef{\Breuer}{\myauthor{M.\,Breuer}}
\ndef{\Brown}{\myauthor{L.\,G.\,Brown}}
\ndef{\Bruneau}{\myauthor{V.\,Bruneau}}
\ndef{\Buslaev}{\myauthor{V.\,S.\,Buslaev}}
\ndef{\Carey}{\myauthor{A.\,L.\,Carey}}
\ndef{\CareyRW}{\myauthor{R.\,W.\,Carey}} 
\ndef{\Cartan}{\myauthor{H.\,Cartan}}
\ndef{\Chilin}{\myauthor{V.\,I.\,Chilin}}
\ndef{\Coburn}{\myauthor{L.\,A.\,Coburn}}
\ndef{\Connes}{\myauthor{A.\,Connes}}
\ndef{\Cornfeld}{\myauthor{I.\,P.\,Cornfeld}}
\ndef{\Daletskii}{\myauthor{Yu.\,L.\,Daletski\u\i}}   
\ndef{\Dixmier}{\myauthor{J.\,Dixmier}}
\ndef{\DoddsPG}{\myauthor{P.\,G.\,Dodds}}
\ndef{\DoddsTK}{\myauthor{T.\,K.\,Dodds}}
\ndef{\Douglas}{\myauthor{R.\,G.\,Douglas}}
\ndef{\Dubrovin}{\myauthor{B.\,A.\,Dubrovin}}
\ndef{\Dugundji}{\myauthor{J.\,Dugundji}}
\ndef{\Duncan}{\myauthor{J.\,Duncan}}
\ndef{\Dunford}{\myauthor{N.\,Dunford}}
\ndef{\Dykema}{\myauthor{K.\,J.\,Dykema}}
\ndef{\Edwards}{\myauthor{R.\,E.\,Edwards}}
\ndef{\Eilenberg}{\myauthor{S.\,Eilenberg}}
\ndef{\Entina}{\myauthor{S.\,B.\,\`Entina}}
\ndef{\Fack}{\myauthor{T.\,Fack}} 
\ndef{\Faddeev}{\myauthor{L.\,D.\,Faddeev}}
\ndef{\Farber}{\myauthor{M.\,Farber}}
\ndef{\Farforovskaya}{\myauthor{Yu.\,B.\,Farforovskaya}}
\ndef{\Federer}{\myauthor{H.\,Federer}}
\ndef{\Fedosov}{\myauthor{B.\,V.\,Fedosov}}
\ndef{\Figiel}{\myauthor{T.\,Figiel}} 
\ndef{\Figueroa}{\myauthor{H.\,Figueroa}}
\ndef{\Fillmore}{\myauthor{P.\,A.\,Fillmore}}
\ndef{\Fomenko}{\myauthor{A.\,T.\,Fomenko}} 
\ndef{\Fomin}{\myauthor{S.\,V.\,Fomin}}
\ndef{\Frohlich}{\myauthor{J.\,Fr\"ohlich}}
\ndef{\Fuglede}{\myauthor{B.\,Fuglede}}
\ndef{\Furutani}{\myauthor{K.\,Furutani}}
\ndef{\Gelfand}{\myauthor{I.\,M.\,Gelfand}}
\ndef{\Gesztesy}{\myauthor{F.\,Gesztesy}}     
\ndef{\Getzler}{\myauthor{E.\,Getzler}} 
\ndef{\Gilkey}{\myauthor{P.\,B.\,Gilkey}}
\ndef{\Gitler}{\myauthor{S.\,Gitler}}
\ndef{\Glazman}{\myauthor{I.\,M.\,Glazman}}
\ndef{\Glimm}{\myauthor{J.\,Glimm}}
\ndef{\Gohberg}{\myauthor{I.\,C.\,Gohberg}}
\ndef{\Goldshtein}{\myauthor{Ya.\,Goldshtein}}
\ndef{\Golze}{\myauthor{F.\,Golze}}
\ndef{\GraciaBondia}{\myauthor{J.\,M.\,Gracia-Bond\'{i}a}}
\ndef{\Greenleaf}{\myauthor{F.\,P.\,Greenleaf}}
\ndef{\Gromov}{\myauthor{M.\,Gromov}}
\ndef{\Gunning}{\myauthor{R.\,C.\,Gunning}}
\ndef{\Haagerup}{\myauthor{U.\,Haagerup}}
\ndef{\Haag}{\myauthor{R.\,Haag}}
\ndef{\Halmos}{\myauthor{P.\,R.\,Halmos}}
\ndef{\Hardy}{\myauthor{G.\,H.\,Hardy}}
\ndef{\Herbst}{\myauthor{I.\,W.\,Herbst}}
\ndef{\Higson}{\myauthor{N.\,Higson}}  
\ndef{\Hoermander}{\myauthor{L.\,H\"ormander}} 
\ndef{\Hoffman}{\myauthor{K.\,Hoffman}} 
\ndef{\Ito}{\myauthor{K.\,Ito}}
\ndef{\Ikebe}{\myauthor{T.\,Ikebe}}
\ndef{\Jaffe}{\myauthor{A.\,Jaffe}}
\ndef{\James}{\myauthor{I.\,M.\,James}}
\ndef{\Javrjan}{\myauthor{V.\,A.\,Javrjan}}
\ndef{\Jitomirskaya}{\myauthor{S.\,Jitomirskaya}}
\ndef{\Kadison}{\myauthor{R.\,V.\,Kadison}}
\ndef{\Kalton}{\myauthor{N.\,J.\,Kalton}} 
\ndef{\Kato}{\myauthor{T.\,Kato}} 
\ndef{\Kobayashi}{\myauthor{S.\,Kobayashi}}
\ndef{\Koplienko}{\myauthor{L.\,S.\,Koplienko}}
\ndef{\Korotyaev}{\myauthor{E.\,Korotyaev}}
\ndef{\Kosaki}{\myauthor{H.\,Kosaki}}
\ndef{\Kostrykin}{\myauthor{V.\,Kostrykin}}
\ndef{\Kotani}{\myauthor{S.\,Kotani}}
\ndef{\Krein}{\myauthor{Kre\u\i n}}
\ndef{\KreinMG}{\myauthor{M.\,G.\,Kre\u\i n}}
\ndef{\KreinSG}{\myauthor{S.\,G.\,Kre\u\i n}}
\ndef{\Kuroda}{\myauthor{S.\,T.\,Kuroda}}
\ndef{\Leichtnam}{\myauthor{E.\,Leichtnam}}
\ndef{\Lesch}{\myauthor{M.\,Lesch}}
\ndef{\Lesniewski}{\myauthor{A.\,Lesniewski}}
\ndef{\Levitan}{\myauthor{B.\,M.\,Levitan}}
\ndef{\Lidskii}{\myauthor{V.\,B.\,Lidskii}}
\ndef{\Lifshitz}{\myauthor{I.\,M.\,Lifshitz}}
\ndef{\Lindenstrauss}{\myauthor{J.\,Lindenstrauss}}
\ndef{\Loday}{\myauthor{J.-L.\,Loday}}
\ndef{\Lord}{\myauthor{S.\,Lord}}      
\ndef{\Lorentz}{\myauthor{G.\,Lorentz}}
\ndef{\Magnus}{\myauthor{W.\,Magnus}}
\ndef{\Makarov}{\myauthor{K.\,A.\,Makarov}}
\ndef{\MakarovN}{\myauthor{N.\,Makarov}}
\ndef{\Mathai}{\myauthor{V.\,Mathai}}         
\ndef{\McKean}{\myauthor{H.\,P.\,McKean}}
\ndef{\Mishchenko}{\myauthor{A.\,S.\,Mishchenko}}
\ndef{\Molchanov}{\myauthor{S.\,A.\,Molchanov}}
\ndef{\Moore}{\myauthor{C.\,C.\,Moore}}
\ndef{\Moscovici}{\myauthor{H.\,Moscovici}}  
\ndef{\Motovilov}{\myauthor{A.\,K.\,Motovilov}}
\ndef{\Moyer}{\myauthor{R.\,D.\,Moyer}}
\ndef{\Naboko}{\myauthor{S.\,N.\,Naboko}}
\ndef{\Narasimhan}{\myauthor{R.\,Narasimhan}}
\ndef{\Nomizu}{\myauthor{K.\,Nomizu}}
\ndef{\Novikov}{\myauthor{S.\,P.\,Novikov}}
\ndef{\Osterwalder}{\myauthor{K.\,Osterwalder}}
\ndef{\Patodi}{\myauthor{V.\,Patodi}}
\ndef{\Pagter}{\myauthor{B.\,de~Pagter}}  
\ndef{\Pastur}{\myauthor{L.\,A.\,Pastur}}  
\ndef{\Pavlov}{\myauthor{B.\,S.\,Pavlov}}
\ndef{\Pedersen}{\myauthor{G.\,K.\,Pedersen}}
\ndef{\Peller}{\myauthor{V.\,V.\,Peller}}
\ndef{\Perera}{\myauthor{V.\,S.\,Perera}}
\ndef{\Petunin}{\myauthor{Ju.\,I.\,Petunin}}
\ndef{\Phillips}{\myauthor{J.\,Phillips}}  
\ndef{\Piazza}{\myauthor{P.\,Piazza}}   
\ndef{\Pincus}{\myauthor{J.\,D.\,Pincus}}   
\ndef{\Poincare}{Poincar\'e}
\ndef{\Postnikov}{\myauthor{M.\,M.\,Postnikov}} 
\ndef{\Povzner}{\myauthor{A.\,Ya.\,Povzner}}
\ndef{\Prinzis}{\myauthor{R.\,Prinzis}}
\ndef{\Privalov}{\myauthor{I.\,I.\,Privalov}}
\ndef{\Pushnitski}{\myauthor{A.\,B.\,Pushnitski}} 
\ndef{\Raeburn}{\myauthor{I.\,Raeburn}}
\ndef{\Raikov}{\myauthor{G.\,Raikov}}
\ndef{\Reed}{\myauthor{M.\,Reed}}
\ndef{\Rennie}{\myauthor{A.\,Rennie}}
\ndef{\Rickart}{\myauthor{C.\,E.\,Rickart}}
\ndef{\Riesz}{\myauthor{F.\,Riesz}}
\ndef{\Ringrose}{\myauthor{J.\,Ringrose}}
\ndef{\Rio}{\myauthor{R.\,del Rio}}
\ndef{\Robinson}{\myauthor{D.\,Robinson}}
\ndef{\Rossi}{\myauthor{H.\,Rossi}}
\ndef{\Rudin}{\myauthor{W.\,Rudin}}
\ndef{\Ruelle}{\myauthor{D.\,Ruelle}}
\ndef{\Ruzhansky}{\myauthor{M.\,Ruzhansky}}
\ndef{\Sakai}{\myauthor{Sh.\,Sakai}}
\ndef{\Sargsjan}{\myauthor{I.\,S.\,Sargsjan}}
\ndef{\Sato}{\myauthor{H.\,Sato}}
\ndef{\Schaeffer}{\myauthor{D.\,G.\,Schaeffer}}
\ndef{\Schluchtermann}{\myauthor{G.\,Schluchtermann}}
\ndef{\Schochet}{\myauthor{C.\,Schochet}}
\ndef{\SchroedingerE}{\myauthor{E.\,Schr\"odinger}}
\ndef{\Schroedinger}{\myauthor{Schr\"odinger}}
\ndef{\Schrohe}{\myauthor{E.\,Schrohe}}
\ndef{\Schwartz}{\myauthor{J.\,T.\,Schwartz}}
\ndef{\Sedaev}{\myauthor{A.\,A.\,Sedaev}}
\ndef{\Seiler}{\myauthor{R.\,Seiler}}
\ndef{\Semenov}{\myauthor{E.\,M.\,Semenov}}
\ndef{\Shabat}{\myauthor{B.\,V.\,Shabat}}
\ndef{\Shafarevich}{\myauthor{I.\,R.\,Shafarevich}}
\ndef{\Sharpley}{\myauthor{R.\,Sharpley}}
\ndef{\Shilov}{\myauthor{G.\,E.\,Shilov}}
\ndef{\Shirkov}{\myauthor{D.\,V.\,Shirkov}}
\ndef{\Shubin}{\myauthor{M.\,A.\,Shubin}}
\ndef{\Silverman}{\myauthor{H.\,Silverman}}
\ndef{\Simon}{\myauthor{B.\,Simon}}
\ndef{\Sinai}{\myauthor{Ya.\,G.\,Sinai}}
\ndef{\Singer}{\myauthor{I.\,M.\,Singer}}
\ndef{\Solomyak}{\myauthor{M.\,Z.\,Solomyak}}
\ndef{\Soloviev}{\myauthor{Yu.\,P.\,Soloviev}}
\ndef{\Spivak}{\myauthor{M.\,Spivak}}
\ndef{\Stein}{\myauthor{E.\,M.\,Stein}}
\ndef{\Stenkin}{\myauthor{V.\,V.\,Sten'kin}}
\ndef{\Stratila}{\myauthor{S.\,Stratila}}
\ndef{\Sucheston}{\myauthor{L.\,Sucheston}}
\ndef{\Sukochev}{\myauthor{F.\,A.\,Sukochev}}
\ndef{\Switzer}{\myauthor{R.\,M.\,Switzer}}
\ndef{\SzNagy}{\myauthor{B.\,Sz.-Nagy}}
\ndef{\Takesaki}{\myauthor{M.\,Takesaki}}
\ndef{\Taylor}{\myauthor{M.\,E.\,Taylor}}
\ndef{\Treves}{\myauthor{F.\,Treves}}
\ndef{\Troitsky}{\myauthor{E.\,V.\,Troitsky}}
\ndef{\Tzafriri}{\myauthor{L.\,Tzafriri}}
\ndef{\Varilly}{\myauthor{J.\,C.\,V\'{a}rilly}}
\ndef{\Vergne}{\myauthor{M.\,Vergne}}
\ndef{\Vladimirov}{\myauthor{V.\,S.\,Vladimirov}}
\ndef{\Voiculescu}{\myauthor{D.\,Voiculescu}}
\ndef{\Weiss}{\myauthor{G.\,Weiss}}
\ndef{\Wells}{\myauthor{R.\,O.\,Wells}}
\ndef{\Williams}{\myauthor{J.\,P.\,Williams}}
\ndef{\Winkler}{\myauthor{S.\,Winkler}}
\ndef{\Witten}{\myauthor{E.\,Witten}}
\ndef{\Wodzicki}{\myauthor{M.\,Wodzicki}}
\ndef{\Wojciechowski}{\myauthor{K.\,P.\,Wojciechowski}}
\ndef{\Yafaev}{\myauthor{D.\,R.\,Yafaev}}
\ndef{\Yosida}{\myauthor{K.\,Yosida}}
\ndef{\Zsido}{\myauthor{L.\,Zsido}}
\numberwithin{equation}{section}
\newcommand{\twovector}[2]{\left(\begin{matrix} #1 \\ #2 \end{matrix} \right)}
\newcommand{\catn}{\divideontimes}
\newcommand{\rlmb}{r_\lambda}
\newcommand{\aaa}{\mathfrak{a}}
\newcommand{\Rset}{\euR(\lambda)}
\ndef{\hlambda}{{\mathfrak h_\lambda}}
\rndef{\iff}{\Leftrightarrow} 
\begin{document}
\title{Spectral flow and resonance index}
\author{Nurulla Azamov}
\address{School of Computer Science, Engineering and Mathematics
   \\ Flinders University Tonsley
   \\ 5042 Clovelly Park SA Australia}
\email{nurulla.azamov@flinders.edu.au}

\keywords{spectral flow, resonance index, Robbin-Salamon axioms}
 \subjclass[2000]{ 
     Primary 47A40, 47A55 
 }
\begin{abstract}
It has been shown recently that spectral flow admits a natural integer-valued
extension to essential spectrum. This extension admits four different interpretations;
two of them are singular spectral shift function and total resonance index.
In this work we study resonance index outside essential spectrum.

Among results of this paper are the following.

1. Total resonance index satisfies Robbin-Salamon axioms for spectral flow.

2. Direct proof of equality ``total resonance index = intersection number''.

3. Direct proof of equality ``total resonance index = total Fredholm index''.

4. (a) Criteria for a perturbation~$V$ to be tangent to the~resonance set at a point~$H,$
where the resonance set is the infinite-dimensional variety of self-adjoint perturbations of the initial 
self-adjoint operator~$H_0$ which have~$\lambda$ as an eigenvalue. 
(b) Criteria for the order of tangency of a perturbation~$V$ to the resonance set.

5. Investigation of the root space of the compact operator $(H_0+sV-\lambda)^{-1}V$
corresponding to an eigenvalue $(s-r_\lambda)^{-1},$ where $H_0+r_\lambda V$ is a point of the resonance set. 

This analysis gives a finer information about behaviour of discrete spectrum compared to spectral flow. 

Finally, many results of this paper are non-trivial even in finite dimensions, in which case they can be 
and were tested in numerical experiments. 
\end{abstract}
\begin{center} 
   \tiny \sf arXiv version, \ v\,17.6
\end{center}
\maketitle

\tableofcontents

\section{Introduction}

\subsection{Introduction}
The spectral flow of a continuous path of self-adjoint operators
$\set{H_r\colon r \in [0,1]}$ through a point~$\lambda$ which does
not belong to the common essential spectrum $\sigma_{ess}$ of
operators~$H_r$ is naively understood as the number of eigenvalues
of $H_r$ which cross~$\lambda$ from left to right minus the number
of eigenvalues of~$H_r$ which cross~$\lambda$ from right to left
as the variable~$r$ moves from~$0$ to~$1$ \cite{APS76}. This
naive definition was given a rigorous basis in \cite{RoSa}. The
spectral flow can also be defined as total Fredholm index of the 
path $\set{H_r},$ \cite{Ph96CMB,Ph97FIC}, \cite[Section 4]{BCPRSW}. Further,
the total Fredholm index can be interpreted as an integral of a
one-form defined on some real affine space of
self-adjoint operators \cite{Ge93Top,CP98CJM,CP2,ACS,BCPRSW}, 
the possibility of such interpretation of spectral flow was first 
suggested by \Singer\ in 1974. The spectral flow can also be interpreted as
Maslov index \cite{RoSa}. Finally, outside the
essential spectrum the spectral flow is equal to the spectral shift function
\cite{Li52UMN,Kr53MS}, see e.g. \cite{ACDS,ACS,Pu08AMST}.

Any of these definitions of spectral flow is applicable
only for numbers~$\lambda$ outside the common essential spectrum
of a path of self-adjoint operators~$H_r,$
with the exception of the spectral shift function. The
spectral shift function is not integer-valued inside the essential
spectrum and therefore it cannot be considered as a proper
analogue of spectral flow for essential spectrum. In
\cite{Az,Az2,Az3v6} an analogue of Lebesgue decomposition
$m=m^{(a)}+m^{(s)}$ of a measure~$m$ into its absolutely
continuous and singular parts was suggested for the spectral shift
function~$\xi.$ It was shown that the singular part $\xis$ of the spectral shift function~$\xi,$
which can be correctly defined by formula 
\begin{equation} \label{F: def-n of xis}
  \xis(\lambda) = \frac d{d\lambda} \int_0^1 \Tr(V E_\lambda^{H^{(s)}_r})\,dr, \ \text{a.e.} \ \lambda,
\end{equation}
where $H_r^{(s)}$ is the singular part of $H_r = H_0+r V,$
is a function which takes integer values for a.e. value of the spectral parameter~$\lambda,$
including those in $\sigma_{ess},$ and
which coincides with the spectral shift function~$\xi,$ and thus with spectral flow, outside the essential spectrum.
Apparently, it is difficult to work directly with the definition (\ref{F: def-n of xis})
of the singular spectral shift function~$\xis.$ 
In \cite{Az7} (see also \cite[Section 6]{Az9}) it was found that for trace 
class perturbations the singular spectral shift function
can be interpreted as total resonance index (TRI),\label{Page: total res index} 
\begin{equation} \label{F: xis=tot res index}
  \xis(\lambda) = \sum_{{r_\lambda} \in [0,1]} \ind_{res}(\lambda; H_{r_\lambda}, V),
\end{equation}
where $\ind_{res}(\lambda; H_{r_\lambda}, V)$ is the so-called \emph{resonance index} of the triple 
$(\lambda; H_{r_\lambda}, V),$ and where the sum is taken over real \emph{resonance points}~$r_\lambda$ of the triple 
$(\lambda; H_{0}, V),$ which belong to $[0,1].$ 
One of the ways to define the resonance points and the resonance index is as follows.
Let 
$$ 
  \sigma_z^1(s), \ \sigma_z^2(s), \ \sigma_z^3(s), \ \ldots
$$ 
be the list of eigenvalues of the compact operator 
$$
  (H_0+s V-z)^{-1}V, \ s \in \mbR.
$$ 
It is not difficult to show that 
for each of these eigenvalues $\sigma_z^j(s)$ there exists a number $r_z^j$ such that 
$$
  \sigma_z^j(s) = (s - r_z^j)^{-1}.
$$
The numbers $r_z^j$ are resonance points of the triple $(z,H_{0},V).$ 
A real number~$r_\lambda$ is a real resonance point of the triple 
$(\lambda,H_{0},V),$ if at least one of the resonance points $r_z^j$ approaches~$r_\lambda$
as $z = \lambda+iy \to \lambda+i0.$ The resonance index\label{Page: res index} of the triple 
$(\lambda,H_{\rlmb},V)$ is the integer 
$$
  N_+-N_-,
$$ 
where $N_+$ (respectively, $N_-$), 
is the number of resonance points which approach~$r_\lambda$ in the upper complex half-plane 
(respectively, lower complex half-plane). The resonance points $r_z^j$ in this definition should be 
counted according to their algebraic multiplicities, which are transferred 
from the corresponding eigenvalues~$\sigma_z^j(s).$ 

\smallskip
If~$\lambda$ does not belong to the essential spectrum, then $\xis(\lambda) = \xi(\lambda),$
as the definition (\ref{F: def-n of xis}) of the singular SSF turns into a well-known 
Birman-Solomyak formula for SSF. Thus, outside the essential spectrum 
the formula (\ref{F: xis=tot res index})
turns into 
$$
  \text{spectral flow through} \ \lambda = \sum_{{r_\lambda} \in [0,1]} \ind_{res}(\lambda; H_{r_\lambda}, V).
$$
Once this formula is obtained, one may choose to forget its origin and consider the right hand side
as a new definition of spectral flow. Apart from the fact that, unlike other definitions of spectral flow, 
this definition makes perfect sense inside the essential spectrum, it has two other advantages. 
Firstly, it requires minimum assumptions in order to be defined, in particular, 
it includes as special cases the spectral flow for operators with compact resolvent and spectral shift function
for relatively trace class perturbations. Secondly, it is defined in the language of complex analysis
and it can be investigated using tools of complex analysis. It is well known that proofs based on 
complex analysis are as a rule considerably simpler (and also more beautiful, but this depends 
on one's taste) and therefore more natural. As a historical example, Franz Rellich's perturbation 
theory of isolated eigenvalues was essentially simplified by introduction of Riesz idempotents. 

In this paper we study spectral flow from the point of view of resonance index. 

\subsection{Basic assumption}

In the following Assumption we collect basic assumptions, notation
and terminology  which will be assumed and used throughout this paper.
\begin{assump} \label{A: Main Assumption} \rm
  \mbox{ }
  \begin{enumerate}
  \item $H_0$ is a self-adjoint operator on a separable complex Hilbert 
  space~$\hilb$ with dense domain~$\euD.$
  \item \label{AI: 2}~$\clA_0$ is a real vector space of self-adjoint \label{Page: clA0}
  operators~$V$ which are relatively compact with respect to~$H_0.$
  The last means by definition that domain of~$V$ contains $\euD$ and the product
  $$
    R_z(H_0)V := (H_0-z)^{-1}V
  $$
  is compact for some and thus for any complex number~$z$ which
  does not belong to the spectrum of~$H_0.$ Elements of the real
  vector space~$\clA_0$ will also be called \emph{directions.}
  \item~$\clA$ is the real affine space $H_0+\clA_0$ of \label{Page: clA}
  self-adjoint operators. Elements of the real
  affine space~$\clA$ will also be called \emph{points.}
  \item It follows from (\ref{AI: 2}) and the second resolvent identity, that all directions from~$\clA_0$ are
  relatively compact with respect to any point from~$\clA.$
  Therefore, by Weyl's theorem (see e.g. \cite{RS4}) all points from~$\clA$ have the same
  essential spectrum. This common essential spectrum we shall denote by~$\sigma_{ess}$
  and refer to it as the essential spectrum of the affine space~$\clA.$
  \item There exists at least one real number~$\lambda$  
  which does not belong to the essential spectrum~$\sigma_{ess}.$
  Most of the time, the real number~$\lambda$ will be fixed.
  \end{enumerate}
\end{assump}

These are the only assumptions which we shall make in this paper.
These assumptions are quite generic in Hilbert space perturbation theory. 
As a special case they include the case of self-adjoint operators~$H_{0}$ with compact resolvent
and a vector space of bounded perturbations~$V,$ which is the main setting of spectral flow theory 
in differential geometry and global analysis. 

We shall consistently use the words ``point'' and ``direction'' instead of self-adjoint operator
from~$\clA$ and a self-adjoint perturbation operator from~$\clA_0.$
Elements of a vector space associated with an affine space are usually called vectors,
but we shall not use this word for this purpose to avoid confusion. 

\smallskip
The set of all points~$H$ from~$\clA$ for which~$\lambda$
is an eigenvalue we call \emph{resonance set} and denote it by $\Rset.$
Elements of the set $\Rset$ will be called \emph{$\lambda$-resonant operators}\label{Page: resonance point}
or \emph{$\lambda$-resonant points.} Since the real number~$\lambda$ will be fixed for most 
of the time,~$\lambda$-resonant points will often be called resonant points.
We say that a resonance point~$H$ is \emph{simple},\label{Page: simple point} if~$\lambda$ is an
eigenvalue of~$H$ of geometric multiplicity~1. 
An analytic path $H(s)$ will be said to be a \emph{resonant path}\label{Page: H(s) 2}
\label{Page: resonant path}
if $H(s) \in \Rset$ for all $s.$ Otherwise we say that $H(s)$ is a \emph{regular path}.
\label{Page: regular path}
A regular path $H(s)$ may have resonant points on it, but the set of such points is discrete.
A~direction~$V$ at a resonant point~$H$ will be said to be \emph{regular},
\label{Page: regular direction} if the straight line $H+sV$ is a regular path.
In my previous papers, whether published or not, regular directions were called regularising.

\smallskip
The terminology ``$\lambda$-resonant operator'' was used in \cite{Az9}
in a study of spectral flow inside essential spectrum $\sigma_{ess},$ but for a real number~$\lambda$
outside the essential spectrum the definition of~$\lambda$-resonant operator reduces to the one given above.

\smallskip
A function defined on a finite-dimensional real affine space is called \emph{analytic}
if it is given by a real analytic function in some and thus in any affine system of coordinates.
A subset~$R$ of a finite-dimensional real affine space is called an \emph{analytic set}
if~$R$ is the set of zeros of one or several real analytic functions.
A subset~$R$ of any real affine space is called an \emph{analytic set}
if the intersection of~$R$ with any finite-dimensional affine subspace is an analytic set.
By an affine space from now on we will always mean a real affine space.
Finite subsets of an affine space and an affine space itself are analytic sets.
Intersections and finite unions of analytic sets are also analytic.
The resonance set~$\Rset$ is an analytic set, proof of this assertion follows 
verbatim that of \cite[Theorem 4.2.5]{Az3v6}.

\subsection{Preliminaries}
The second resolvent identity
\begin{equation} \label{F: second resolvent identity (1)}
    R_z(H_0+V) - R_z(H_0) = - R_z(H_0+V)V R_z(H_0)
                              = - R_z(H_0)V R_z(H_0+V).
\end{equation}
holds for any pair of closed operators~$H_0$ and~$V$ 
provided the sum~$H_0+V$ is well-defined and~$z$ belongs 
to the resolvent sets of both operators~$H_0$ and~$H_0+V.$
This identity can be rewritten as follows:
\begin{equation} \label{F: second resolvent identity (2)}
     R_z(H_0+V) = R_z(H_0)\brs{1+V R_z(H_0)}^{-1}  
                  = \brs{1+R_z(H_0)V}^{-1}R_z(H_0).
\end{equation}

Let
\begin{equation*}
  A_z(s) = R_z(H_s)V \quad \text{and} \quad B_z(s) = V R_z(H_s),
\end{equation*}
where 
$$
  H_s = H_0 + sV.
$$
By Assumption~\ref{A: Main Assumption}, the operators $A_z(s)$ and $B_z(s)$ are compact.
The second resolvent identity (\ref{F: second resolvent identity (2)}) implies that 
for $s,r \in \mbC$
\begin{equation} \label{F: second resolvent identity (3)}
     A_z(s) = \brs{1+(s-r)A_z(r)}^{-1}A_z(r),
\end{equation}
and a similar equality holds for $B_z(s).$ This equality shows that $A_z(s)$ is a meromorphic 
function of~$s.$

\smallskip 
We start with a recap of some material of \cite[Section 3]{Az9}.

Let $H_0 \in \clA,$~$V \in \clA_0$ and let $H_s = H_0 + sV,$ where $s \in \mbC.$
Let~$z = \lambda + iy \in \mbC \setminus \sigma_{ess}.$ 
A point~$r_z$ is a \emph{resonance point}\label{Page: res point} of the triple $(z; H_0,V),$ 
if any one of the following equivalent conditions hold:
\begin{enumerate}
  \item~$r_z$ is a pole of the meromorphic function $\mbC \ni s \mapsto A_z(s).$ 
  \item~$r_z$ is a pole of the meromorphic function $\mbC \ni s \mapsto B_z(s).$
  \item The operator $1+(r_z-s)A_z(s)$ has a non-zero kernel for some $s \in \mbC.$
   The kernel $\Upsilon^1_z(r_z; H_0,V)$ of this operator does not depend on~$s.$
  \item The operator $1+(r_z-s)B_z(s)$ has a non-zero kernel. 
        The kernel $\Psi^1_z(r_z; H_0,V)$ of this operator also does not depend on~$s.$
  \item The number~$z$ is an eigenvalue of the operator $H_0+r_z V =: H_{r_z}.$
\end{enumerate}

With every resonance point~$r_z$ of the triple~$(z; H_{0},V),$
one can associate an idempotent operator~$P_z(r_z),$
the vector space~$\Upsilon_z(r_z) := \im P_z(r_z)$\label{Page Upsilon z rz} and a nilpotent operator~$\bfA_z(r_z),$
which is reduced by the vector space~$\Upsilon_z(r_z).$ They can be defined by formulas\label{Page: Pz(rz)}
\begin{equation} \label{F: Pz=oint A(s)}
  P_z(r_z) = \frac 1{2\pi i} \oint _{C_{r_z}} A_z(s)\,ds 
\end{equation}
and \label{Page: bfA(z)}
\begin{equation} \label{F: A(lamb)=oint s A(s)}
  \bfA_z(r_z) = \frac 1{2\pi i} \oint _{C_{r_z}} (s-r_z) A_z(s)\,ds,
\end{equation}
where $C_{r_z}$ is a contour encircling~$r_z$ and no other resonance points. 
Similarly one defines an idempotent operator $Q_z(r_z)$ and a nilpotent operator $\bfB_z(r_z),$
by replacing $A_z(s)$ in (\ref{F: Pz=oint A(s)}) and (\ref{F: A(lamb)=oint s A(s)}) by $B_z(s).$ 
For these operators we have 
$$
  \bfB_z(r_z) V = V \bfA_z(r_z) \quad \text{and} \quad \brs{\bfA_z(r_z)}^* = \bfB_{\bar z}(\bar r_z).
$$
\noindent 
The Laurent expansion of the meromorphic function $A_z(s)$ at a resonance point~$r_z$ 
has the following form \cite[(3.3.16)]{Az9}:
\begin{equation} \label{F: Az (3.3.16)}
  A_z(s) = \tilde A_z(s) + (s-r_z)^{-1}P_z(r_z) + (s-r_z)^{-2}\bfA_z(r_z) +  
     \ldots + (s-r_z)^{-d}\bfA^{d-1}_z(r_z),
\end{equation}
where $d$ is the order of the resonance point~$r_z$ (see below) and $\tilde A_z(s)$ is the holomorphic part 
of the Laurent series. 
The idempotent operators $P_z(r_z)$ and $Q_z(r_z)$ have the following properties~\cite{Az9}. 
If~$r_z^{(1)}$ and~$r_z^{(2)}$ are two different resonance points corresponding to~$z,$ 
then 
\begin{equation} \label{F: Pz(1)Pz(2)=0}
  P_z(r_z^{(1)})P_z(r_z^{(2)})=0.
\end{equation}
Further, 
\begin{equation} \label{F: P*=Q}
  P^*_z(r_z) = Q_{\bar z}(\bar r_z),
\end{equation}
and 
\begin{equation} \label{F: VP = QV}
  V P_z(r_z) = Q_z(r_z) V.
\end{equation}
The operator $Q_{\bar z}(\bar r_z)V P_z(r_z)$
is a finite-rank self-adjoint operator which in \cite{Az9} is called \emph{resonance matrix}.
If~$\lambda$ and $r_\lambda$ are real and $\lambda \notin \sigma_{ess},$ then 
$$
  Q_{\lambda}(r_\lambda)V P_\lambda(r_\lambda) = V P_\lambda(r_\lambda).
$$

The vector space $\Upsilon_z(r_z)$ consists of all vectors~$\chi$ such that for some positive integer~$k$
\begin{equation} \label{F: res eq-n of order k}
  (1+(r_z-s)A_z(s))^k \chi = 0,
\end{equation} 
where~$s$ is any number which is not a pole of $A_z(s).$ 
This definition does not depend on the choice of~$s.$ A vector from $\Upsilon_z(r_z)$ will be called 
a \emph{resonance vector}. \label{Page: res vector}
The equality (\ref{F: res eq-n of order k}) will be called \emph{resonance equation} of order~$k.$
The smallest integer~$k$ such that the equality (\ref{F: res eq-n of order k})
holds for some (and thus for any)~$s$ will be called the \emph{order} of~$\chi.$ 
\label{Page: order of res vector} 
The vector space of resonance vectors of order $k$ we denote $\Upsilon^k_z(r_z).$
The operator $\bfA_z(r_z)$ maps $\Upsilon^k_z(r_z)$ onto $\Upsilon^{k-1}_z(r_z),$
that is, $\bfA_z(r_z)$ lowers the order of a resonance vector by~$1.$
A resonance vector $\chi$ has \emph{depth} at least~$k,$ 
if $\chi \in \im \bfA^k_z(r_z).$\label{Page: depth of vector}

Dimensions of the vector spaces $\Upsilon_z(r_z)$ 
and $\Upsilon^1_z(r_z)$ we denote by~$N$ and~$m$ respectively. 
The vector space $\Upsilon^1_z(r_z)$ is the eigenspace of $H_{r_z} = H_{0} + r_z V$ corresponding to 
eigenvalue~$z,$ and we also denote it by $\clV_z(r_z)$ or $\clV_z$ if there is no danger of confusion. 

The largest of positive integers $d$ such that $\Upsilon_z(r_z) = \Upsilon^d_z(r_z)$ 
will be called the \emph{order}
of the perturbation $V$ at $H_{r_z}$ and the order of the resonance point~$r_z.$
A regular direction~$V$ is said to be \emph{simple}\label{Page: simple direction} 
at a~$\lambda$-resonance point~$H,$ if~$V$ has order~$1.$

\medskip
Jordan decomposition of the nilpotent operator $\bfA_z(r_z)$ consists of~$m$ Jordan blocks;
we use lower case Greek letters~$\nu$ and~$\mu$ to enumerate them. The size of~$\nu$th block 
we denote~$d_\nu$ and assume that $d_1 \geq d_2 \geq \ldots \geq d_m.$ 
A basis 
$$
  \chi^{(j)}_\nu, \quad \nu=1,\ldots,m, \ j=0,1,\ldots,d_\nu-1
$$ 
of $\Upsilon_z(r_z)$ is a \emph{Jordan basis} if 
$$
  \bfA_z(r_z) \chi^{(j)}_\nu = \chi^{(j-1)}_\nu,
$$
where it is assumed that~$\chi^{(-1)}_\nu=0.$ 
In particular, $\ker (\bfA_z(r_z)) \cap \Upsilon_z(r_z) = \Upsilon^1_z(r_z) = \clV_z.$ 
A Jordan basis can be depicted 
by either of the two Young diagrams shown next: 

\smallskip
\begin{picture}(162,72)
\put(0,0){\line(0,1){72}}\put(18,0){\line(0,1){72}} \put(36,0){\line(0,1){72}} \put(54,0){\line(0,1){54}} 
\put(72,0){\line(0,1){54}} \put(90,0){\line(0,1){54}} \put(108,0){\line(0,1){36}} \put(126,0){\line(0,1){18}} 
\put(144,0){\line(0,1){18}} \put(162,0){\line(0,1){18}} 
\put(0,0){\line(1,0){162}}\put(0,18){\line(1,0){162}}\put(0,36){\line(1,0){108}}\put(0,54){\line(1,0){90}}
\put(0,72){\line(1,0){36}}
\put(3,5){\tiny $\chi_{1}^{(0)}$}\put(3,23){\tiny $\chi_{1}^{(1)}$}\put(3,41){\tiny $\chi_{1}^{(2)}$}\put(3,59){\tiny $\chi_{1}^{(3)}$}
\put(21,5){\tiny $\chi_{2}^{(0)}$}\put(21,23){\tiny $\chi_{2}^{(1)}$}\put(21,41){\tiny $\chi_{2}^{(2)}$}\put(21,59){\tiny $\chi_{2}^{(3)}$}
\put(39,5){\tiny $\chi_{3}^{(0)}$}\put(39,23){\tiny $\chi_{3}^{(1)}$}\put(39,41){\tiny $\chi_{3}^{(2)}$}
\put(57,5){\tiny $\chi_{4}^{(0)}$}\put(57,23){\tiny $\chi_{4}^{(1)}$}\put(57,41){\tiny $\chi_{4}^{(2)}$}
\put(75,5){\tiny $\chi_{5}^{(0)}$}\put(75,23){\tiny $\chi_{5}^{(1)}$}\put(75,41){\tiny $\chi_{5}^{(2)}$}
\put(93,5){\tiny $\chi_{6}^{(0)}$}\put(93,23){\tiny $\chi_{6}^{(1)}$}
\put(111,5){\tiny $\chi_{7}^{(0)}$}
\put(129,5){\tiny $\chi_{8}^{(0)}$}
\put(147,5){\tiny $\chi_{9}^{(0)}$}
\end{picture}
\hskip 2 cm  
\begin{picture}(108,48)
\put(0,0){\line(0,1){48}}\put(12,0){\line(0,1){48}} \put(24,0){\line(0,1){48}} \put(36,0){\line(0,1){36}} 
\put(48,0){\line(0,1){36}} \put(60,0){\line(0,1){36}} \put(72,0){\line(0,1){24}} \put(84,0){\line(0,1){12}} 
\put(96,0){\line(0,1){12}} \put(108,0){\line(0,1){12}} 
\put(0,0){\line(1,0){108}}\put(0,12){\line(1,0){108}}\put(0,24){\line(1,0){72}}\put(0,36){\line(1,0){60}}
\put(0,48){\line(1,0){24}}
\end{picture}

\smallskip
In such diagram each square represents a resonance vector from a Jordan basis,
and the height of the square is the order of the vector. The number of squares is~$N,$
the width is~$m,$ and the height is~$d.$ Each Jordan basis defines a direct sum decomposition 
of the resonance vector space $\Upsilon_z(r_z),$ 
the~$\nu$th summand of which we denote by $\Upsilon^{[\nu]}_z(r_z).$
Thus,
$$
  \Upsilon_z(r_z) = \Upsilon^{[1]}_z(r_z) \dotplus \ldots \dotplus \Upsilon^{[m]}_z(r_z).
$$
The height of~$\nu$th column in the Young diagram is $d_\nu = \dim \Upsilon^{[\nu]}_z(r_z).$

The vector space~$\Upsilon^1_z(z; H_0,V)$ 
depends only on the operator $H_{r_z} = H_0+r_z V$ and does not depend on~$V,$
but the vector spaces $\Upsilon^k_z(r_z; H_0,V), \ k\geq 2,$ depend on both~$H_{r_z}$ and~$V.$

\smallskip
A complex number $r_z$ is a resonance point iff 
$$
  \sigma_z(s) = (s-r_z)^{-1}
$$
is an eigenvalue of $A_z(s).$ 
The eigenvalue $\sigma_z(s)$ has algebraic multiplicity~$N$ and geometric multiplicity~$m.$ 

Though~$z$ can be any complex number outside the essential spectrum, we are mainly interested 
in the case where~$z$ and the corresponding resonance point~$r_z$ are real numbers. 
In this case, if a real number~$\lambda$ is shifted to~$\lambda+iy$
with small $y>0,$ then the real eigenvalue $\sigma_\lambda(s)$ of
$A_\lambda(s)$ splits into $N_+$ and $N_-$ (where $N_\pm \geq 0,$ $N_+ + N_-\geq 1$)
eigenvalues in~$\mbC_+$ and~$\mbC_-$ respectively, and all 
shifted eigenvalues are non-real.
Resonance index of the triple $(\lambda; H_{r_\lambda},V)$ is by definition the difference
$$
  \ind_{res}(\lambda; H_{r_\lambda},V) = N_+-N_-.
$$

The objects such as $P_z(r_z),$ $\Upsilon_z(r_z),$ etc, depend on $H_0$ and $V$ too,
but since for the most part the operators $H_0$ and $V$ are fixed, 
usually we do not indicate this dependence. If necessary we write 
$P_z(r_z; H_0,V),$ etc, or $P_z(H_{r_z},V),$ etc, where $H_{r_z}=H_0+r_z V.$ The notation, such as
$P_z(H_{r_z},V)$ is not ambiguous, since $P_z(H_{r_z},V)$ depends on operators $H_{r_z}$ and $V$ but not on $H_0.$ 

\subsection{Description of results}

\subsubsection{Section~\ref{S: 2x2}}
Let $H_{\rlmb}$ be a resonance point, let $V$ be a regular direction
and let $H_s = H_{\rlmb} + (s-\rlmb)V.$ 
We use the following notation:
$\clV_\lambda$ is the eigenspace of $H_{\rlmb}$ corresponding to eigenvalue~$\lambda,$
$\hat \hilb = \clV_\lambda^\perp,$ 
$\hat P$ is the orthogonal projection onto $\hat \hilb,$
$\hat H_s = \hat P H_s \hat P,$
$\hat V = \hat P V \hat P,$
$v = \hat P V \hat P^\perp,$
$R_\lambda(\hat H_s)$ is the resolvent 
of $\hat H_s$ on $\clV_\lambda^\perp$ and zero on $\clV_\lambda,$
$S_\lambda$ is the operator $R_\lambda(\hat H_{\rlmb})V,$ 
$\hat A_\lambda(s) = R_\lambda(\hat H_s)\hat V.$

\begin{thm} (Theorem~\ref{T: thm 2.4}) 
Let $k\geq 2.$ If $\hat \phi \in \hat \hilb$ is a resonance vector of order $k$ 
then $\hat \phi$ belongs to the linear subspace 
$$
  \im R_\lambda(\hat H_{\rlmb})v \dotplus \im \hat A_\lambda(\rlmb)R_\lambda(\hat H_{\rlmb})v
   \dotplus \ldots \dotplus \im \hat A^{k-2}_\lambda(\rlmb)R_\lambda(\hat H_{\rlmb})v.
$$
\end{thm}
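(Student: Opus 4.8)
The plan is to set up the resonance equation for $H_s = H_{\rlmb} + (s-\rlmb)V$ in block form with respect to the decomposition $\hilb = \clV_\lambda \dotplus \hat\hilb$, and to extract from it a recursion that forces the $\hat\hilb$-component of any resonance vector of order $k$ into the stated chain of subspaces. First I would recall that by condition (3) in the definition of a resonance point, a resonance vector $\phi$ of order $k$ satisfies $(1 + (\rlmb - s)A_\lambda(s))^k\phi = 0$ with $A_\lambda(s) = R_\lambda(H_s)V$. Writing $\phi = (\varphi, \hat\phi)$ and using the $2\times 2$ block structure of $R_\lambda(H_s)$ and $V$ — here the off-diagonal block of $V$ is essentially $v = \hat P V \hat P^\perp$ and the regularity of $V$ guarantees the relevant compressed resolvent $R_\lambda(\hat H_s)$ is genuinely a resolvent near $s = \rlmb$ — I would get a coupled system. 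The key point is that the "bad" direction (the $\clV_\lambda$-part) feeds into the $\hat\hilb$-part only through the off-diagonal coupling $v$, so at each application of the operator $1 + (\rlmb-s)A_\lambda(s)$ the $\hat\hilb$-component picks up a factor lying in $\im R_\lambda(\hat H_{\rlmb})v$ (plus terms already handled by $\hat A_\lambda$).

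The second step is to make the recursion precise. I expect to show, by induction on $j$, that a resonance vector of order $\le j$ has $\hat\hilb$-component in
$$
  \im R_\lambda(\hat H_{\rlmb})v \dotplus \im \hat A_\lambda(\rlmb)R_\lambda(\hat H_{\rlmb})v \dotplus \ldots \dotplus \im \hat A^{j-2}_\lambda(\rlmb)R_\lambda(\hat H_{\rlmb})v .
$$
The base case $j = 2$: if $\phi$ has order $\le 2$ then $(1+(\rlmb-s)A_\lambda(s))\phi$ has order $\le 1$, i.e. it lies in $\clV_\lambda$ (it is a genuine eigenvector), and reading off the $\hat\hilb$-component of $\phi$ from the block equation $(1+(\rlmb-s)\hat A_\lambda(s))\hat\phi = -(\rlmb-s)R_\lambda(\hat H_s)v\varphi$ and then evaluating/differentiating at $s = \rlmb$ puts $\hat\phi$ into $\im R_\lambda(\hat H_{\rlmb})v$. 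For the inductive step I would apply $1+(\rlmb-s)A_\lambda(s)$ once to a vector of order $k$, obtaining a vector of order $k-1$ whose $\hat\hilb$-component lies in the $(k-1)$-term chain by the inductive hypothesis; then "integrating back" the block relation — inverting $1 + (\rlmb-s)\hat A_\lambda(s)$, which near $s=\rlmb$ is a unit plus something nilpotent-ish governed by $\hat A_\lambda(\rlmb)$ — introduces exactly one more application of $\hat A_\lambda(\rlmb)$, producing the $k$-term chain together with a fresh $\im R_\lambda(\hat H_{\rlmb})v$ contribution from the coupling. Throughout I would use the Laurent expansion (\ref{F: Az (3.3.16)}) for $\hat A_\lambda(s)$ at $\rlmb$ and the fact that $\hat A_\lambda(\rlmb)$ maps $\Upsilon^k$ onto $\Upsilon^{k-1}$, so all correction terms stay inside the asserted subspace.

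The main obstacle I anticipate is bookkeeping the inversion of $1 + (\rlmb-s)\hat A_\lambda(s)$ in the compressed space: $\hat H_s$ may itself have $\lambda$ in its spectrum for $s$ near $\rlmb$ (the regularity of $V$ controls the full operator $H_s$, not automatically its compression $\hat H_s$), so I would need to argue that the relevant poles of $\hat A_\lambda(s)$ are of controlled order and that the chain $\im \hat A^i_\lambda(\rlmb)R_\lambda(\hat H_{\rlmb})v$ is exactly what survives — i.e. that no genuinely new subspace appears beyond those produced by $\hat A_\lambda(\rlmb)$ acting on the coupling term. A clean way to handle this is to conjugate the resonance equation by $1 + (\rlmb - s)\hat A_\lambda(\rlmb)$ (or to work directly with the nilpotent part $\bfA_\lambda(\rlmb)$ on $\Upsilon_\lambda(\rlmb)$) rather than with $\hat A_\lambda(s)$ itself, reducing the whole argument to finite-dimensional linear algebra on $\hat P \Upsilon_\lambda(\rlmb)$ plus one application of the off-diagonal block $v$; the order-lowering property of $\bfA_\lambda(\rlmb)$ then makes the induction essentially automatic.
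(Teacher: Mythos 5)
Your proposal takes essentially the same route as the paper: the $2\times 2$ block decomposition of $\hilb = \hat\hilb \oplus \clV_\lambda$, induction on the order, lowering the order once with $1 + (\rlmb-s)A_\lambda(s)$, and recovering the extra factor of $\hat A_\lambda(\rlmb)$ via the exact algebraic inverse $\euF_\lambda^{-1}(s) = 1 + (s-\rlmb)\hat A_\lambda(\rlmb)$ (equation (\ref{F: euF(-1)=...})), which is precisely how the paper sidesteps the worry you raise about poles of $\hat A_\lambda(s)$. One small slip in your base case: the $(1,2)$-entry of $1+(\rlmb-s)A_\lambda(s)$ vanishes in (\ref{F: 2x2 repr-n of 1-r A lamb(r)}), so the $\hat\hilb$-component of $(1+(\rlmb-s)A_\lambda(s))\hat\phi$ does not couple to any $\clV_\lambda$-part $\varphi$; the correct order-$2$ condition for $\hat\phi\in\hat\hilb$ is $\bigl[1 + (s-\rlmb)^2 R_\lambda(\hat H_s)v\euD_\lambda(s)v^*\bigr]\euF_\lambda(s)\hat\phi = 0,$ which after one application of $\euF_\lambda^{-1}(s)$ yields (\ref{F: criterion for hat phi(2)}) directly, no differentiation at $s=\rlmb$ required.
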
  

The following theorem provides a criterion for a resonance vector to have depth at least 1.
This criterion is used several times in the remaining sections.
\begin{thm} (Theorem~\ref{T: TFAE depth 1 criterion}) 
For a resonance vector~$\chi$ the following three assertions are equivalent: 
(i) $V \chi \perp \clV_\lambda,$ (ii) $\chi$ has depth at least 1 and (iii)
$\bfA_\lambda(r_\lambda) S_\lambda \chi = -\chi.$ In particular, if 
$V \chi \perp \clV_\lambda,$ then $S_\lambda \chi$ is a resonance vector. 
\end{thm}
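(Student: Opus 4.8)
The plan is to unwind the definitions in a fixed slot $s$ (for concreteness $s=r_\lambda$, where $A_\lambda(s)=\hat A_\lambda(s)$ does not quite hold but the block structure is cleanest) and to translate the three conditions into statements about the $2\times 2$ block decomposition $\hilb = \clV_\lambda \oplus \hat\hilb$. Write a resonance vector as $\chi = \chi_0 + \hat\chi$ with $\chi_0 \in \clV_\lambda$ and $\hat\chi \in \hat\hilb$. The operator $\bfA_\lambda(r_\lambda)$ lowers order by one and, by the Jordan-basis discussion, a resonance vector has depth at least $1$ precisely when it lies in $\im \bfA_\lambda(r_\lambda)$, i.e. when it is $\bfA_\lambda(r_\lambda)$ applied to some resonance vector of one higher order. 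The first move is therefore to find an explicit candidate preimage; the natural guess, already hinted at by the statement, is that $S_\lambda\chi$ should be that preimage, which is exactly what (iii) asserts in the form $\bfA_\lambda(r_\lambda)(S_\lambda\chi) = -\chi$ (the sign coming from the resolvent identity expansion \eqref{F: Az (3.3.16)}).

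First I would establish (i) $\Leftrightarrow$ (ii). Using the resonance equation $(1+(r_\lambda - s)A_\lambda(s))\chi = 0$ in lowest order together with $VP_\lambda(r_\lambda) = Q_\lambda(r_\lambda)V$ and $Q_\lambda(r_\lambda)VP_\lambda(r_\lambda) = VP_\lambda(r_\lambda)$ from \eqref{F: VP = QV} and the line following it, one expresses the condition ``$\chi$ has depth $\geq 1$'' as orthogonality of $V\chi$ to the eigenspace $\clV_\lambda = \Upsilon^1_\lambda(r_\lambda)$. Concretely: depth $\geq 1$ means $\chi \in \im\bfA_\lambda(r_\lambda)$; pairing against $\clV_\lambda$ and using that $\bfA_\lambda(r_\lambda)$ is nilpotent and reduced by $\Upsilon_\lambda(r_\lambda)$, together with the adjoint relation $\bfA_\lambda(r_\lambda)^* = \bfB_\lambda(r_\lambda)$ (valid since $\lambda, r_\lambda$ are real), converts this into $\langle V\chi, \psi\rangle = 0$ for all $\psi \in \clV_\lambda$. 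This is the step where the block structure of Section~\ref{S: 2x2} — in particular the decomposition of $V$ into $\hat V$, $v$, $v^*$ — does the bookkeeping, and it is essentially the content of Theorem~\ref{T: thm 2.4} specialised to $k=2$: a resonance vector of order $2$ lies in $\im R_\lambda(\hat H_{r_\lambda})v$, which is precisely the range condition dual to $V\chi \perp \clV_\lambda$.

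Next I would prove (i) $\Rightarrow$ (iii) by direct computation: assume $V\chi \perp \clV_\lambda$, so that $\hat P V\chi = V\chi$ in the relevant sense, and compute $\bfA_\lambda(r_\lambda) S_\lambda \chi$ using $S_\lambda = R_\lambda(\hat H_{r_\lambda})V$ and the Laurent expansion \eqref{F: Az (3.3.16)} of $A_\lambda(s)$ at $r_\lambda$. The identity $\bfA_\lambda(r_\lambda) = \frac1{2\pi i}\oint (s-r_\lambda)A_\lambda(s)\,ds$ together with $A_\lambda(s) = (1+(s-r_\lambda)A_\lambda(r_\lambda))^{-1}A_\lambda(r_\lambda)$ lets one relate $\bfA_\lambda(r_\lambda)$ to $R_\lambda(\hat H_{r_\lambda})$ on the complement of $\clV_\lambda$; applying the resonance equation of order $k$ to $\chi$ then collapses the telescoping sum to $-\chi$. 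The implication (iii) $\Rightarrow$ (ii) is then immediate since (iii) exhibits $\chi = \bfA_\lambda(r_\lambda)(-S_\lambda\chi)$, and one only needs to check that $-S_\lambda\chi$ is itself a resonance vector — which follows because $S_\lambda$ maps into $\hat\hilb$ and $\bfA_\lambda(r_\lambda)^k(S_\lambda\chi)$ vanishes for $k$ large by the just-proved relation and finiteness of the order.

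The main obstacle I expect is the bookkeeping in relating the ``full'' objects $A_\lambda(s)$, $\bfA_\lambda(r_\lambda)$, $P_\lambda(r_\lambda)$ to the ``hatted'' objects $\hat A_\lambda(s)$, $R_\lambda(\hat H_s)$, $S_\lambda$ — i.e. verifying that compressing to $\hat\hilb$ and inverting $\hat H_s - \lambda$ there is compatible with the contour-integral definitions of $P_\lambda(r_\lambda)$ and $\bfA_\lambda(r_\lambda)$. This requires knowing that a resonance vector of positive depth actually lives in $\hat\hilb$ up to the $\clV_\lambda$-component it is built from, which is where Theorem~\ref{T: thm 2.4} is indispensable; modulo that input, the three equivalences reduce to the resolvent identity \eqref{F: second resolvent identity (3)} and linear algebra in the Jordan structure, and the sign in (iii) is just the residue bookkeeping.
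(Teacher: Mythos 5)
Your proposal has the right logical shape --- a cycle through the three assertions is enough, and your (ii)\,$\Rightarrow$\,(i) argument (pair against $\clV_\lambda$, pass $\bfA_\lambda(r_\lambda)$ across the inner product via $\bfA_\lambda(r_\lambda)^*=\bfB_\lambda(r_\lambda)$ and $V\bfA_\lambda=\bfB_\lambda V$, then use that $\bfA_\lambda(r_\lambda)$ kills eigenvectors) is essentially the paper's (ii)\,$\Rightarrow$\,(i) step, and your (iii)\,$\Rightarrow$\,(ii) is indeed immediate once one composes with $P_\lambda(r_\lambda)$ to see $\chi=\bfA_\lambda(r_\lambda)\bigl(-P_\lambda(r_\lambda)S_\lambda\chi\bigr)$. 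But the implication (i)\,$\Rightarrow$\,(iii), which is the only non-routine link and the one your cycle must carry, is not actually established: you write ``compute $\bfA_\lambda(r_\lambda)S_\lambda\chi$ using the Laurent expansion \dots the telescoping sum collapses to $-\chi$,'' but the mechanism that produces that cancellation is not there. In the paper this step rests on the identity
\[
  \bigl(1+(r_\lambda-s)A_\lambda(s)\bigr)S_\lambda\chi \;=\; A_\lambda(s)\chi
  \qquad\text{when } V\chi\perp\clV_\lambda
\]
(Theorem~\ref{T: 1st thm of section 3}), which is proved by a genuine $2\times 2$ block computation using~(\ref{F: V chi perp part (B)}) and Lemma~\ref{L: new lemma}; once this identity is in hand, comparing the coefficients of $(s-r_\lambda)^{-1}$ in the Laurent series of both sides gives $\bfA_\lambda(r_\lambda)S_\lambda\chi=-\chi$ instantly. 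Your sketch does not reconstruct this identity or anything equivalent to it, and without it the ``telescoping'' is just an assertion.

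Two more concrete defects. First, you propose to work ``in a fixed slot $s$, for concreteness $s=r_\lambda$,'' and you invoke the formula $A_\lambda(s)=\bigl(1+(s-r_\lambda)A_\lambda(r_\lambda)\bigr)^{-1}A_\lambda(r_\lambda)$; but $r_\lambda$ is precisely the pole of the meromorphic function $A_\lambda(\cdot)$, so $A_\lambda(r_\lambda)$ does not exist and the displayed resolvent identity is only valid when the base point is a regular value of $s$. The relation~(\ref{F: second resolvent identity (3)}) requires $r$ to be non-resonant, and the whole point of introducing the auxiliary operators $\hat A_\lambda(r_\lambda)$, $R_\lambda(\hat H_{r_\lambda})$, $S_\lambda$ on the orthogonal complement $\hat\hilb$ is that $\hat H_{r_\lambda}-\lambda$ \emph{is} invertible there even though $H_{r_\lambda}-\lambda$ is not. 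Second, your appeal to Theorem~\ref{T: thm 2.4} conflates \emph{order} and \emph{depth}: that theorem constrains the range of a resonance vector of prescribed order, whereas depth $\geq 1$ means membership in $\im\bigl(\bfA_\lambda(r_\lambda)|_{\Upsilon_\lambda(r_\lambda)}\bigr)$. These are dual notions (one moves down the Jordan chain, the other counts how far one can move up), and a statement about order-$2$ vectors lying in $\im R_\lambda(\hat H_{r_\lambda})v$ does not by itself yield the surjectivity needed for (i)\,$\Rightarrow$\,(ii). Fortunately you do not need (i)\,$\Rightarrow$\,(ii) separately since you have the cycle (ii)\,$\Rightarrow$\,(i)\,$\Rightarrow$\,(iii)\,$\Rightarrow$\,(ii); but you do need (i)\,$\Rightarrow$\,(iii), and that is where the missing lemma is unavoidable.
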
  

The operator $S_\lambda$ satisfies the following equality.
\begin{thm} (Theorem \ref{T: -S A(j)=A(j-1)}) 
$$
  - S_\lambda \bfA_\lambda^j(r_\lambda) = \hat P \bfA^{j-1}_\lambda(r_\lambda). 
$$
\end{thm}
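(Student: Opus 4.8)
The plan is to turn this operator identity into a statement about individual Jordan chains and then feed that into the depth-one criterion (Theorem~\ref{T: TFAE depth 1 criterion}). First I would observe that, with the natural convention $\bfA_\lambda^0(r_\lambda):=P_\lambda(r_\lambda)$ (the coefficient of $(s-r_\lambda)^{-1}$ in the Laurent expansion \eqref{F: Az (3.3.16)}), both sides of the asserted identity --- read for every integer $j\geq 1$ --- vanish on $\ker P_\lambda(r_\lambda)$: indeed $\bfA_\lambda(r_\lambda)$ is reduced by $\Upsilon_\lambda(r_\lambda)=\im P_\lambda(r_\lambda)$, so every power $\bfA_\lambda^k(r_\lambda)$, $k\geq 0$, is zero on $\ker P_\lambda(r_\lambda)$. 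Hence it suffices to check the identity on $\Upsilon_\lambda(r_\lambda)$.

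Next I would fix a Jordan basis $\chi^{(i)}_\nu$ of $\Upsilon_\lambda(r_\lambda)$ with $\bfA_\lambda(r_\lambda)\chi^{(i)}_\nu=\chi^{(i-1)}_\nu$ (and $\chi^{(-1)}_\nu=0$), so that $\bfA_\lambda^k(r_\lambda)\chi^{(i)}_\nu=\chi^{(i-k)}_\nu$ for all $k\geq 0$. Applying each side of the identity to $\chi^{(i)}_\nu$, the boundary cases $i<j$ collapse to $0=0$ --- using that $\chi^{(0)}_\nu\in\Upsilon^1_\lambda(r_\lambda)=\clV_\lambda$ and that $\hat P$ annihilates $\clV_\lambda$ --- while every remaining case is an instance of the single chain-level identity
$$
  -S_\lambda\chi^{(l)}_\nu=\hat P\chi^{(l+1)}_\nu,\qquad 0\leq l\leq d_\nu-2 .
$$
Thus proving this for all $\nu$ and all such $l$, uniformly in $j$, will prove the theorem.

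For the chain-level identity the key point is that when $l\leq d_\nu-2$ the vector $\chi^{(l)}_\nu=\bfA_\lambda(r_\lambda)\chi^{(l+1)}_\nu$ has depth at least $1$, so Theorem~\ref{T: TFAE depth 1 criterion} delivers two facts simultaneously: $S_\lambda\chi^{(l)}_\nu$ is a resonance vector, and $\bfA_\lambda(r_\lambda)S_\lambda\chi^{(l)}_\nu=-\chi^{(l)}_\nu$. Since also $\bfA_\lambda(r_\lambda)\chi^{(l+1)}_\nu=\chi^{(l)}_\nu$, the vector $S_\lambda\chi^{(l)}_\nu+\chi^{(l+1)}_\nu$ lies in $\ker\bfA_\lambda(r_\lambda)\cap\Upsilon_\lambda(r_\lambda)=\Upsilon^1_\lambda(r_\lambda)=\clV_\lambda$; applying $\hat P$ kills it, and since $S_\lambda\chi^{(l)}_\nu\in\hat\hilb$ (it lies in the range of $R_\lambda(\hat H_{r_\lambda})$) we have $\hat P S_\lambda\chi^{(l)}_\nu=S_\lambda\chi^{(l)}_\nu$, giving $S_\lambda\chi^{(l)}_\nu+\hat P\chi^{(l+1)}_\nu=0$, as required.

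The one genuinely non-routine step is this last move. The depth-one criterion only provides the one-sided relation $\bfA_\lambda(r_\lambda)S_\lambda\chi=-\chi$, and $\bfA_\lambda(r_\lambda)$ is nilpotent, hence cannot be cancelled; a priori one only learns that $S_\lambda\chi^{(l)}_\nu$ and $-\chi^{(l+1)}_\nu$ agree modulo $\ker\bfA_\lambda(r_\lambda)$. The resolution is that $S_\lambda\chi^{(l)}_\nu$ is itself known to be a \emph{resonance} vector, which pins the discrepancy down to the much smaller subspace $\Upsilon^1_\lambda(r_\lambda)=\clV_\lambda$ --- precisely the subspace annihilated by $\hat P$. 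Everything else is bookkeeping with the Jordan structure of $\bfA_\lambda(r_\lambda)$.
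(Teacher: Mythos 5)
Your argument is correct but takes a genuinely different route. The paper proves (\ref{F: -S A(j)=A(j-1)}) by brute-force block-matrix computation: it multiplies the explicit $2\times 2$ representation (\ref{F: S(lambda)}) of $S_\lambda$ by that of $\bfA^j_\lambda$ from (\ref{F: A(j)=2x2}), reorganises the $Y$-terms in the resulting $(1,1)$-entry, and recognises it as minus the $(1,1)$-entry of $\bfA^{j-1}_\lambda,$ the other entries vanishing. You argue instead at the level of Jordan chains: both sides vanish on $\ker P_\lambda(r_\lambda),$ on $\Upsilon_\lambda(r_\lambda)$ the identity collapses to the single chain relation $-S_\lambda\chi^{(l)}_\nu=\hat P\chi^{(l+1)}_\nu,$ and this is delivered by the depth-one criterion together with $\ker\bfA_\lambda(r_\lambda)\cap\Upsilon_\lambda(r_\lambda)=\clV_\lambda.$ You correctly single out the delicate step: knowing only $\bfA_\lambda(r_\lambda)\bigl(S_\lambda\chi^{(l)}_\nu+\chi^{(l+1)}_\nu\bigr)=0$ locates the discrepancy in all of $\ker\bfA_\lambda(r_\lambda),$ and one must additionally know $S_\lambda\chi^{(l)}_\nu\in\Upsilon_\lambda(r_\lambda)$ to pin it into $\clV_\lambda.$ Be aware, however, that this membership is not literally part of the statement of Theorem~\ref{T: TFAE depth 1 criterion}; it is mentioned only in the introduction's gloss and in the informal remark following that theorem's proof. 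The fact itself comes from Theorem~\ref{T: 1st thm of section 3}: from (\ref{F: fcukingly nice equality}), $(1+(\rlmb-s)A_\lambda(s))S_\lambda\chi=A_\lambda(s)\chi,$ and the commutativity of $A_\lambda(s)$ with $1+(\rlmb-s)A_\lambda(s),$ applying $(1+(\rlmb-s)A_\lambda(s))^k$ with $k$ the order of $\chi$ yields $(1+(\rlmb-s)A_\lambda(s))^{k+1}S_\lambda\chi=0,$ so $S_\lambda\chi$ is a resonance vector. Once that step is made explicit your proof is complete, and it is conceptually the cleaner of the two --- it isolates exactly which structural ingredients do the work --- whereas the paper's computation has the virtue of costing almost nothing once the explicit entries (\ref{F: A(j)=2x2}) have been written down for the surrounding results of that subsection.
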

The last two theorems show that the operator $-S_\lambda$ behaves to a certain extent 
as the inverse of the nilpotent operator $\bfA_\lambda(r_\lambda),$
in particular, the operator $S_\lambda$ increases the order of a resonance vector by~$1,$
if there is a room for that. Since the operator $\bfA_\lambda(r_\lambda)$ decreases order by~$1$
and increases depth by~$1,$ this raises a natural question of whether $S_\lambda$ decreases depth by~$1.$
This assertion is not proved, but Theorem \ref{T: TFAE prop A}
provides a criterion for this property.
\begin{thm} (Theorem \ref{T: TFAE prop A})
The following assertions are equivalent: 
\begin{enumerate}
  \item For all $j=1,2,\ldots,d-1$ \
$\im\brs{S_\lambda \bfA_\lambda^{j}} \subset \im\brs{\bfA_\lambda^{j-1}}.$ 
  \item For all $j=1,2,\ldots,d-1$ \
$\im\brs{\hat P^\perp \bfA_\lambda^{j-1}} \subset \im\brs{\bfA_\lambda^{j-1}}.$
\end{enumerate}
\end{thm}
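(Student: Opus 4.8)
The plan is to prove the equivalence of the two conditions by relating the operators $S_\lambda \bfA_\lambda^j$ and $\hat P^\perp \bfA_\lambda^{j-1}$ via Theorem~\ref{T: -S A(j)=A(j-1)}. Recall that theorem gives the identity $-S_\lambda \bfA_\lambda^j(r_\lambda) = \hat P \bfA_\lambda^{j-1}(r_\lambda)$ for all $j\geq 1$. Since $\hat P = 1 - \hat P^\perp$ (where $\hat P^\perp$ is the orthogonal projection onto $\clV_\lambda$), we obtain
\begin{equation*}
  -S_\lambda \bfA_\lambda^j(r_\lambda) = \bfA_\lambda^{j-1}(r_\lambda) - \hat P^\perp \bfA_\lambda^{j-1}(r_\lambda).
\end{equation*}
Hence for each $j$, on the domain of resonance vectors $\Upsilon_\lambda(r_\lambda)$ (and recalling $\bfA_\lambda$ is reduced by $\Upsilon_\lambda(r_\lambda)$), we have the operator equation $S_\lambda \bfA_\lambda^j = \hat P^\perp \bfA_\lambda^{j-1} - \bfA_\lambda^{j-1}$, so that
\begin{equation*}
  \im\bigl(S_\lambda \bfA_\lambda^j\bigr) \subset \im\bigl(\hat P^\perp \bfA_\lambda^{j-1}\bigr) + \im\bigl(\bfA_\lambda^{j-1}\bigr)
  \quad\text{and}\quad
  \im\bigl(\hat P^\perp \bfA_\lambda^{j-1}\bigr) \subset \im\bigl(S_\lambda \bfA_\lambda^j\bigr) + \im\bigl(\bfA_\lambda^{j-1}\bigr).
\end{equation*}

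From these two inclusions the equivalence follows almost formally. If (1) holds, then $\im(S_\lambda \bfA_\lambda^j) \subset \im(\bfA_\lambda^{j-1})$, and combining with the second displayed inclusion above gives $\im(\hat P^\perp \bfA_\lambda^{j-1}) \subset \im(\bfA_\lambda^{j-1})$, which is (2). Conversely, if (2) holds, the first displayed inclusion gives $\im(S_\lambda \bfA_\lambda^j) \subset \im(\bfA_\lambda^{j-1})$, which is (1). The quantifier over $j = 1,\ldots,d-1$ is the same on both sides, so the two statements match index by index. I would also note explicitly that for $j$ in this range $\bfA_\lambda^{j-1}$ is nonzero on the appropriate Jordan blocks, but this is not actually needed for the logical equivalence — the inclusions hold trivially when both sides vanish.

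The main subtlety — and the step I would be most careful about — is making sure all operators are interpreted consistently on the same space. The operator $S_\lambda = R_\lambda(\hat H_{r_\lambda})V$ acts on all of $\hilb$, but $\bfA_\lambda^j(r_\lambda)$ is the nilpotent operator reduced by $\Upsilon_\lambda(r_\lambda)$; one must check that the composition $S_\lambda \bfA_\lambda^j$ is the one appearing in Theorem~\ref{T: -S A(j)=A(j-1)} and that $\hat P^\perp \bfA_\lambda^{j-1}$ likewise makes sense as a map on $\Upsilon_\lambda(r_\lambda)$. Since Theorem~\ref{T: -S A(j)=A(j-1)} already asserts the clean operator identity $-S_\lambda \bfA_\lambda^j = \hat P \bfA_\lambda^{j-1}$, the present proof is genuinely just algebraic bookkeeping once that identity is invoked; there is no real analytic obstacle. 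The only thing to guard against is a sign slip or an off-by-one error in the exponent, so I would track the indices carefully and perhaps spell out the $j=1$ case ($S_\lambda \bfA_\lambda = \hat P^\perp - \bfA_\lambda^0 = \hat P^\perp - 1$ on $\Upsilon_\lambda$) as a sanity check.
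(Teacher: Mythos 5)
Your proof is correct and follows essentially the same route as the paper: the paper's proof of Theorem~\ref{T: TFAE prop A} also hinges on the identity $-S_\lambda\bfA_\lambda^j = \hat P\bfA_\lambda^{j-1}$ from Theorem~\ref{T: -S A(j)=A(j-1)} together with $\hat P = 1 - \hat P^\perp$, except it passes through the intermediate condition $\im\brs{\hat P \bfA_\lambda^{j-1}} \subset \im\brs{\bfA_\lambda^{j-1}}$ (condition (\ref{prop A 9}) there) and declares the step to $\hat P^\perp$ ``obvious,'' whereas you fold both steps into one explicit chain of inclusions. The only thing worth flagging is cosmetic: in your $j=1$ sanity check, $\bfA_\lambda^0 = P_\lambda$, not $1$; your qualifier ``on $\Upsilon_\lambda$'' saves the claim, but it is worth writing $P_\lambda$ to avoid the appearance of a slip.
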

That is, the operator $S_\lambda$ decreases depth of resonance vectors 
by~$1$ iff the orthogonal projection onto the eigenspace $\clV_\lambda$ preserves depth. 

\subsubsection{Section~\ref{S: On phi[nu]}}
The eigenvalue~$\lambda$ of multiplicity~$m$ of the self-adjoint operator $H_{r_\lambda}$
splits into~$m$ analytic eigenvalue functions~$\lambda_\nu(s),$ $\nu=1,\ldots,m,$ 
of the operator $H_s = H_{\rlmb} + (s-r_\lambda)V,$ 
so that~$\lambda_\nu(\rlmb) = \lambda$ for all $\nu = 1,\ldots,m.$ 
The corresponding eigenvector functions we denote by $\phi_\nu(s).$
The eigenvalue functions $\lambda_\nu(s)$ are not necessarily distinct, in which case we list 
them according to their multiplicities, but in any case the analytic eigenvector functions $\phi_\nu(s)$
can be chosen to be pairwise orthogonal: for any $s$ and any $\nu\neq \mu,$ \ 
$\scal{\phi_\nu(s)}{\phi_\mu(s)}=0$. We assume such a choice throughout this paper. 

\begin{thm} \label{IT: order of phi(nu)} (Theorem~\ref{T: TFAE for (Uk)})
Let $k\geq 2,$ let $\phi(s)$ be an analytic path of eigenvectors of the path 
$H_s = H_0+sV.$
The following assertions are equivalent:
\begin{enumerate}
  \item[(i)] The vectors $V\phi(\rlmb), \ V\phi'(\rlmb), \ \ldots, \ V\phi^{(k-2)}(\rlmb)$ 
              \ are orthogonal to the eigenspace $\clV_\lambda.$ 
  \item[(ii)] The vectors 
$
  V\phi(\rlmb), \ V\phi'(\rlmb), \ \ldots, \ V\phi^{(k-2)}(\rlmb) 
$
are orthogonal to the vector $\phi(\rlmb).$ 
  \item[(iii)] The equalities 
$
  \lambda'(\rlmb) = 0, \ \ldots, \ \lambda^{(k-1)}(\rlmb) = 0,
$
hold, where~$\lambda(s)$ is an analytic path of eigenvalues of $H_s$ which corresponds to $\phi(s).$
  \item[(iv)] for all $j=1,2,\ldots,k-1$ \
  $
    (H_{\rlmb}-\lambda)\phi^{(j)}(\rlmb) = - j V \phi^{(j-1)}(\rlmb).
  $
  \item[(v)] for all $j=1,2,\ldots,k-1$ \
   $
     \bfA_\lambda(\rlmb) \phi^{(j)}(\rlmb) = j \phi^{(j-1)}(\rlmb).
   $
  \item[(vi)] $\phi(\rlmb)$ is an eigenvector of depth at least~$k-1.$
\end{enumerate}
\end{thm}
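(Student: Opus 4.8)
The plan is to prove the equivalences by establishing a cycle of implications together with a couple of direct two-way links, using the algebraic machinery for resonance vectors already assembled. The natural starting point is the eigenvalue relation $(H_s-\lambda_\nu(s))\phi(s)=0$ for the analytic branch, which upon repeated differentiation at $s=r_\lambda$ and use of $H_s=H_0+sV$ yields a Leibniz-type recursion
\[
  (H_{r_\lambda}-\lambda)\phi^{(j)}(r_\lambda)
  = -\,jV\phi^{(j-1)}(r_\lambda)
    + \sum_{i=1}^{j}\binom{j}{i}\lambda^{(i)}(r_\lambda)\phi^{(j-i)}(r_\lambda).
\]
From this single identity the equivalence (iii)$\iff$(iv) is immediate by induction on $j$: the vanishing of $\lambda'(r_\lambda),\dots,\lambda^{(k-1)}(r_\lambda)$ is exactly what kills the correction sum for $j=1,\dots,k-1$. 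The implication (iv)$\Rightarrow$(i) follows by pairing with an arbitrary $\psi\in\clV_\lambda$: since $(H_{r_\lambda}-\lambda)$ is symmetric and annihilates $\psi$, the left side pairs to zero, so $\langle V\phi^{(j-1)}(r_\lambda),\psi\rangle=0$ for $j=1,\dots,k-1$, i.e. $V\phi(r_\lambda),\dots,V\phi^{(k-2)}(r_\lambda)\perp\clV_\lambda$. The implication (i)$\Rightarrow$(ii) is trivial because $\phi(r_\lambda)\in\clV_\lambda$.

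Next I would close the loop through the resonance-index formalism. For (ii)$\Rightarrow$(iii): differentiating $\lambda_\nu(s)=\langle H_s\phi(s),\phi(s)\rangle/\|\phi(s)\|^2$ (or using first-order perturbation theory, $\lambda^{(i)}(r_\lambda)$ expressed via $V$ and lower derivatives of $\phi$), the first nonvanishing derivative $\lambda^{(i)}(r_\lambda)$ is controlled by the first $i$ for which $\langle V\phi^{(i-1)}(r_\lambda),\phi(r_\lambda)\rangle\neq 0$; the orthogonality hypothesis in (ii) forces $\lambda'(r_\lambda)=\dots=\lambda^{(k-1)}(r_\lambda)=0$. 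This should be a clean induction once the formula for $\lambda^{(i)}$ is written out. For the translation to (v) and (vi), recall that $\bfA_\lambda(r_\lambda)$ lowers the order of a resonance vector by one and that depth is defined via the image filtration $\im\bfA_\lambda^k$. The identity (v), $\bfA_\lambda(r_\lambda)\phi^{(j)}(r_\lambda)=j\phi^{(j-1)}(r_\lambda)$, says precisely that the chain $\phi(r_\lambda),\phi'(r_\lambda),\dots$ (suitably normalized by factorials) is a Jordan-type chain under $\bfA_\lambda(r_\lambda)$, which is equivalent to $\phi(r_\lambda)\in\im\bfA_\lambda^{k-1}(r_\lambda)$, i.e. (vi). So the scheme is (iii)$\iff$(iv), (iv)$\Rightarrow$(i)$\Rightarrow$(ii)$\Rightarrow$(iii), and (iv)$\Leftrightarrow$(v)$\Leftrightarrow$(vi).

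For (iv)$\iff$(v) I would use the definition of $\bfA_\lambda(r_\lambda)$ via the contour integral (\ref{F: A(lamb)=oint s A(s)}) together with the resolvent identity (\ref{F: second resolvent identity (3)}); applied to a resonance vector one gets $\bfA_\lambda(r_\lambda)=-\,(\text{something})$ acting as a partial inverse of $(H_{r_\lambda}-\lambda)$ restricted appropriately, which converts the recursion in (iv) into the recursion in (v) and back. Here Theorem~\ref{T: thm 2.4} and the depth criterion Theorem~\ref{T: TFAE depth 1 criterion} will be the right tools: the latter already identifies ``$V\chi\perp\clV_\lambda$'' with ``$\chi$ has depth at least $1$'', and the present theorem is the higher-order iterate of that statement along the analytic branch, so I expect to reduce (i)$\iff$(vi) to an induction on $k$ whose base case $k=2$ is exactly Theorem~\ref{T: TFAE depth 1 criterion}. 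The main obstacle I anticipate is the bookkeeping in (iv)$\iff$(v): one must verify that the holomorphic part $\tilde A_\lambda(s)$ in the Laurent expansion (\ref{F: Az (3.3.16)}) does not contribute spurious terms when differentiating the eigenvector branch through $r_\lambda$, and that the factorial normalizations on the two sides match up exactly for all $j$ simultaneously rather than just at the top order. Once that is pinned down, everything else is routine linear algebra on the Jordan chain.
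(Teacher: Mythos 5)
Your chain for (i)--(iv) is essentially the paper's Lemma~\ref{L: property (Uk)} and is correct: the Leibniz recursion
$(H_{r_\lambda}-\lambda)\phi^{(j)}(r_\lambda) = -jV\phi^{(j-1)}(r_\lambda) + \sum_{i=1}^{j}\binom{j}{i}\lambda^{(i)}(r_\lambda)\phi^{(j-i)}(r_\lambda)$
does give (iii)$\Leftrightarrow$(iv) by induction; the self-adjointness pairing does give (iv)$\Rightarrow$(i); (i)$\Rightarrow$(ii) is trivial; and (ii)$\Rightarrow$(iii) is the paper's computation after taking the scalar product with $\phi(r_\lambda)$. The ordering is slightly different from the paper's (i)$\Rightarrow$(ii)$\Rightarrow$(iii)$\Rightarrow$(iv)$\Rightarrow$(i), but there is no gap.

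Where you go astray is in the treatment of (v) and (vi). You assert that (v)$\Leftrightarrow$(vi) is essentially definitional because ``(v) says the chain is a Jordan chain, which is equivalent to $\phi(r_\lambda)\in\im\bfA_\lambda^{k-1}$.'' Only (v)$\Rightarrow$(vi) is immediate: depth $\geq k-1$ asserts the existence of \emph{some} vector $f$ with $\bfA_\lambda^{k-1}f=\phi(r_\lambda)$, not that the particular chain $\phi^{(j)}(r_\lambda)$ is a Jordan chain. The converse (vi)$\Rightarrow$(v), or equivalently (vi)$\Rightarrow$(i), is the genuinely non-trivial direction, and you dismiss it as ``routine linear algebra on the Jordan chain.'' It is not. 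The paper's Lemma~\ref{L: third one} needs, in the inductive step, the specific pairing
$\langle\phi(r_\lambda),V\phi^{(n-2)}(r_\lambda)\rangle
 = \langle \bfA_\lambda^{n-1}f,V\phi^{(n-2)}(r_\lambda)\rangle
 = \langle f, V\bfA_\lambda^{n-1}\phi^{(n-2)}(r_\lambda)\rangle = 0,$
which uses the adjoint relation $\bfA_\lambda(r_\lambda)^*V = V\bfA_\lambda(r_\lambda)$ (valid because $\lambda$ and $r_\lambda$ are real, so $\bfA_\lambda^* = \bfB_\lambda$ and $\bfB_\lambda V = V\bfA_\lambda$) together with the fact that $\phi^{(n-2)}(r_\lambda)$ has order $n-1$ (so $\bfA_\lambda^{n-1}$ annihilates it). Feeding this into the derivative-of-eigenvalue identity gives $\lambda^{(n-1)}(r_\lambda)=0$, from which (i) for order $n$ follows through the (i)--(iv) cycle. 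This pairing step has no analogue in elementary Jordan-chain algebra and is the actual content of (vi)$\Rightarrow$(i). By contrast, the ``bookkeeping'' you worry about in (iv)$\Rightarrow$(v) is genuine but manageable: one applies $R_\lambda(\hat H_{r_\lambda})$ to (iv) to get $\hat P\phi^{(j)} = -jS_\lambda\phi^{(j-1)}$, then invokes Theorem~\ref{T: TFAE depth 1 criterion} together with an induction establishing that each $\phi^{(j)}(r_\lambda)$ is a resonance vector of order $j+1$; the holomorphic tail $\tilde A_\lambda(s)$ you flag never enters because one works at the level of $(H_{r_\lambda}-\lambda)$ and $S_\lambda$, not by differentiating the Laurent series. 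So the main obstacle is mislocated: it lies in (vi)$\Rightarrow$(i), not in (iv)$\Leftrightarrow$(v). Also, Theorem~\ref{T: thm 2.4} plays no role in this theorem's proof.
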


An eigenpath $\phi(s)$ will be said to have order at least $k,$ if it has any of these properties. 
This definition is correct in the sense that an eigenpath $\phi(s)$ has order $k$ if and only
if an eigenpath $a(s) \phi(s)$ has order $k$ for any non-zero analytic function $a(s),$
such that $a(\rlmb) \neq~0.$ 

\begin{thm} (Lemma~\ref{L: nice one}) If an eigenpath $\phi(s)$ has order at least $k,$ then the vectors 
$\phi(\rlmb),$ $\phi'(\rlmb),$ $\ldots,$ $\phi^{(k-1)}(\rlmb)$ are resonance vectors of orders respectively
$1,2,\ldots,k.$ 
\end{thm}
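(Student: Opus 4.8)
The plan is to show that the derivatives $\phi^{(j)}(\rlmb)$ lie in the generalized eigenspace $\Upsilon_\lambda(\rlmb)$ and have order exactly $j+1$, by combining the defining eigenvalue relation $H_s\phi(s)=\lambda(s)\phi(s)$ with the characterization of resonance vectors supplied by Theorem~\ref{T: TFAE for (Uk)} (the immediately preceding theorem). Since $\phi(s)$ has order at least $k$, by item (v) of that theorem we have $\bfA_\lambda(\rlmb)\phi^{(j)}(\rlmb) = j\,\phi^{(j-1)}(\rlmb)$ for all $j=1,\dots,k-1$. First I would use this relation iteratively: applying $\bfA_\lambda(\rlmb)$ repeatedly to $\phi^{(j)}(\rlmb)$ lowers the index down to $\phi^{(0)}(\rlmb)=\phi(\rlmb)$, and one more application kills it because $\phi(\rlmb)\in\clV_\lambda = \Upsilon^1_\lambda(\rlmb) = \ker\bfA_\lambda(\rlmb)\cap\Upsilon_\lambda(\rlmb)$. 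Concretely, $\bfA_\lambda^{\,j+1}(\rlmb)\phi^{(j)}(\rlmb) = j!\,\bfA_\lambda(\rlmb)\phi(\rlmb) = 0$, while $\bfA_\lambda^{\,j}(\rlmb)\phi^{(j)}(\rlmb) = j!\,\phi(\rlmb) \neq 0$ since $\phi(\rlmb)$ is a (nonzero) eigenvector.

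The one gap in that computation is that before invoking nilpotency of $\bfA_\lambda(\rlmb)$ on $\phi^{(j)}(\rlmb)$, I must first know $\phi^{(j)}(\rlmb)\in\Upsilon_\lambda(\rlmb)$ — i.e.\ that it is a resonance vector at all — since $\bfA_\lambda(\rlmb)$ is only defined/nilpotent on that space via the Laurent expansion \eqref{F: Az (3.3.16)}. I would establish this by induction on $j$: $\phi(\rlmb)\in\clV_\lambda\subset\Upsilon_\lambda(\rlmb)$ is the base case; for the inductive step, from $\bfA_\lambda(\rlmb)\phi^{(j)}(\rlmb) = j\,\phi^{(j-1)}(\rlmb)\in\Upsilon_\lambda(\rlmb)$ (by the inductive hypothesis) together with item (iv), $(H_{\rlmb}-\lambda)\phi^{(j)}(\rlmb) = -jV\phi^{(j-1)}(\rlmb)$, one sees that $\phi^{(j)}(\rlmb)$ satisfies a resonance equation of order $j+1$: the vector $(H_{\rlmb}-\lambda)\phi^{(j)}(\rlmb)$ is a $V$-multiple of a resonance vector, hence lies in the range of $V$ restricted appropriately, and feeding this back through the relation between $A_\lambda(s)$ and $(H_s-\lambda)^{-1}$ shows $(1+(\rlmb-s)A_\lambda(s))^{j+1}\phi^{(j)}(\rlmb)=0$ for $s\neq\rlmb$. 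Alternatively — and more cleanly — I can read off membership directly from item (vi): $\phi(\rlmb)$ has depth at least $k-1$, so $\phi(\rlmb)\in\im\bfA_\lambda^{k-1}(\rlmb)$, which pins down the entire Jordan chain $\phi(\rlmb),\dots,\phi^{(k-1)}(\rlmb)$ inside $\Upsilon_\lambda(\rlmb)$ via (v).

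Once membership is secured, the order statement is immediate from the definition of order: $\phi^{(j)}(\rlmb)$ has order $k$ iff $(1+(\rlmb-s)A_\lambda(s))^k\phi^{(j)}(\rlmb)=0$ with $k$ smallest, and the displayed computation $\bfA_\lambda^{\,j}(\rlmb)\phi^{(j)}(\rlmb)\neq 0 = \bfA_\lambda^{\,j+1}(\rlmb)\phi^{(j)}(\rlmb)$ — using that $\bfA_\lambda(\rlmb)$ lowers order by exactly $1$ and $\Upsilon^1_\lambda(\rlmb)=\ker\bfA_\lambda(\rlmb)\cap\Upsilon_\lambda(\rlmb)$ — says precisely that the order of $\phi^{(j)}(\rlmb)$ is $j+1$. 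Taking $j=0,1,\dots,k-1$ gives orders $1,2,\dots,k$ respectively. The main obstacle is the bookkeeping in the induction establishing $\phi^{(j)}(\rlmb)\in\Upsilon_\lambda(\rlmb)$; everything after that is a one-line consequence of item (v) of Theorem~\ref{T: TFAE for (Uk)} and the structure of nilpotent Jordan chains. If the route through item (vi) and depth is used instead, even that obstacle largely dissolves, reducing the proof to translating ``depth $\geq k-1$'' into ``the chain $\phi(\rlmb),\dots,\phi^{(k-1)}(\rlmb)$ sits in a single Jordan block of height $\geq k$''.
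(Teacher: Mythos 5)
Your main line of argument is circular. You invoke items (v) and (vi) of Theorem~\ref{T: TFAE for (Uk)}, but that theorem is explicitly stated in the paper as a \emph{summary} of Lemmas~\ref{L: property (Uk)}, \ref{L: nice one} and~\ref{L: third one}; the implication ``order at least $k$ $\Rightarrow$ $\bfA_\lambda(\rlmb)\phi^{(j)}(\rlmb)=j\phi^{(j-1)}(\rlmb)$'' (item (v)) and the implication ``order at least $k$ $\Rightarrow$ depth $\geq k-1$'' (item (vi)) are exactly what Lemma~\ref{L: nice one} supplies. So you cannot assume either of them here. The ``cleaner'' alternative via (vi) has the same defect. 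At the time Lemma~\ref{L: nice one} is being proved, only Lemma~\ref{L: property (Uk)} (equivalences (i)--(iv)) and the material of Section~\ref{S: 2x2} — in particular Theorem~\ref{T: TFAE depth 1 criterion} and the operator $S_\lambda$ — are available.

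The parenthetical ``gap-filling'' remark is where the actual proof lives, and you had the right raw material — item (iv) of Lemma~\ref{L: property (Uk)} — but you did not carry it through; in fact you import $\bfA_\lambda(\rlmb)\phi^{(j)}(\rlmb)=j\phi^{(j-1)}(\rlmb)$ (again item (v)) into the induction, re-introducing the circularity, and leave the key step (``feeding this back through the relation between $A_\lambda(s)$ and $(H_s-\lambda)^{-1}$'') as a vague gesture. The paper's proof is an induction on the derivative index: from item (iv), $(H_{\rlmb}-\lambda)\phi^{(n-1)}(\rlmb) = -(n-1)V\phi^{(n-2)}(\rlmb)$; apply the sliced resolvent $R_\lambda(\hat H_{\rlmb})$ to obtain
$$
  \tfrac{1}{n-1}\phi^{(n-1)}(\rlmb) = -S_\lambda \phi^{(n-2)}(\rlmb) + \text{(order $1$ vector)},
$$
and then invoke the depth criterion Theorem~\ref{T: TFAE depth 1 criterion}: since, by the inductive hypothesis, $\phi^{(n-2)}(\rlmb)$ has order $n-1$, and since the order-$\geq n$ hypothesis gives $V\phi^{(n-2)}(\rlmb)\perp\clV_\lambda$, that criterion yields both that $S_\lambda\phi^{(n-2)}(\rlmb)$ is a resonance vector of order $n$ and that $\bfA_\lambda(\rlmb)\phi^{(n-1)}(\rlmb)=(n-1)\phi^{(n-2)}(\rlmb)$. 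In other words the identity (v) is \emph{derived} in the inductive step, not assumed. Your proposal identifies the right surrounding facts (item (iv), the need to establish membership in $\Upsilon_\lambda(\rlmb)$ inductively, the Jordan-chain picture once membership is known), but it does not contain a non-circular argument for the induction step and would not survive the swap of ``cite (v)/(vi)'' for an actual derivation.
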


\subsubsection{Section~\ref{S: char-n order k dir-ns}}
It is interesting to find out conditions under which a straight line of operators $H_s = H_{\rlmb} + (s-\rlmb)V$
is tangent to the resonance set at a resonance point $H_{\rlmb}.$ 
We say that a direction~$V$ is \emph{tangent} 
to the resonance set~$\Rset$ at~$H_{\rlmb}$ \emph{to order at least~$k,$}
if there exists a resonant path $\set{H(s)} \subset \euR(\lambda)$\label{Page: H(s) 1}
such that for some (necessarily real) numbers $c_2,c_3,\ldots,c_{k-1}$
\begin{equation} \label{IF: H(s)=H0+sV+c2 s2 V+...}
  H(s) = H_{\rlmb} + (s-\rlmb)V + \sum_{j=2}^{k-1}c_j (s-\rlmb)^j V + O((s-\rlmb)^k),
  \ s \to \rlmb.
\end{equation}
In this case we also say that the path $H(s)$ is tangent to~$V$ at $H(\rlmb)=H_{\rlmb}$ to order at least~$k.$
The \emph{order of tangency} of a direction~$V$ to the resonance set $\euR(\lambda)$
is the largest of positive integers $k$ such that for some resonance path $H(s)$ 
(\ref{IF: H(s)=H0+sV+c2 s2 V+...}) holds.
We say that a direction~$V$ is \emph{tangent} \label{Page: tangent direction(0)}
at~$H_{\rlmb}$ if~$V$ is tangent to order at least~$2.$
If a direction~$V$ is tangent only to order 1 at $H_{\rlmb} \in \Rset,$ then we say that~$V$
is \emph{transversal} at~$H_{\rlmb}.$ \label{Page: transversal direction(0)}

\begin{thm} (Theorem~\ref{T: tangent V to order k})
Assume Assumption~\ref{A: Main Assumption}. Let~$k \geq 1,$ let
$H_{\rlmb}$ be a resonance point and~$V$ be a regular direction at~$H_{\rlmb}.$
If~$H(s)$ is a resonant path tangent to~$V$ at~$H_{\rlmb}$ to order at least~$k$
and if~$\chi(s)$ is a corresponding analytic resonant eigenpath,
then 
\begin{enumerate}
  \item[(i)] vectors~$\chi(\rlmb),\chi'(\rlmb),\ldots,\chi^{(k-1)}(\rlmb)$ have orders respectively $1,2,\ldots, k,$
  \item[(ii)] the direction~$V$ has order at least~$k,$ 
  \item[(iii)] for any $j=1,2,\ldots,k$ 
$$
  \bfA_\lambda(r_\lambda) \chi^{(k-1)}(\rlmb) 
        = (k-1)\chi^{(k-2)}(\rlmb) 
                  + \sum_{j=2}^{k-1} j!{k-1 \choose j} c_j \chi^{(k-1-j)}(\rlmb), 
$$
where the numbers $c_2, \ldots, c_k$ are as in (\ref{IF: H(s)=H0+sV+c2 s2 V+...}), and 
  \item[(iv)] the eigenvector~$\chi(\rlmb)$ has depth at least $k-1.$
\end{enumerate}
\end{thm}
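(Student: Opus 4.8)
The plan is to argue by induction on $k$, differentiating the defining relation of a resonant eigenpath repeatedly and feeding the outcome into the characterizations established in Theorem~\ref{T: TFAE for (Uk)} and the subsidiary lemmas on $S_\lambda$ and $\bfA_\lambda(r_\lambda)$. Since $H(s)$ is resonant, $\lambda$ is an eigenvalue of $H(s)$ for all $s$, so there is an analytic eigenpath $\chi(s)$ with
$$
  (H(s)-\lambda)\chi(s) = 0, \qquad s \ \text{near} \ \rlmb.
$$
Using the expansion (\ref{IF: H(s)=H0+sV+c2 s2 V+...}) we have $H'(s)=V + \sum_{j=2}^{k-1} j c_j (s-\rlmb)^{j-1} V + O((s-\rlmb)^{k-1})$, and more generally the $\ell$th derivative of $H(s)$ at $s=\rlmb$ is a known explicit multiple of $V$ for $\ell \le k-1$. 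Differentiating the eigen-equation $\ell$ times by the Leibniz rule and evaluating at $\rlmb$ yields, for each $\ell=1,\ldots,k-1$, an identity of the shape
$$
  (H_{\rlmb}-\lambda)\chi^{(\ell)}(\rlmb)
     = -\,\ell\, V \chi^{(\ell-1)}(\rlmb) - \sum_{j=2}^{\ell} \binom{\ell}{j}\, j!\, c_j\, V \chi^{(\ell-j)}(\rlmb).
$$
This is the analogue of condition~(iv) of Theorem~\ref{T: TFAE for (Uk)} in the presence of the higher-order Taylor coefficients $c_j$, and it is the computational heart of the argument.

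Next I would reconcile this with the $\bfA_\lambda(r_\lambda)$-side. Recall from the set-up that, after compressing to $\hat\hilb=\clV_\lambda^\perp$, one has $S_\lambda = R_\lambda(\hat H_{\rlmb})V$ and $\bfA_\lambda(r_\lambda)$ acts as a nilpotent on $\Upsilon_\lambda(r_\lambda)$; Theorem~\ref{T: -S A(j)=A(j-1)} gives $-S_\lambda \bfA_\lambda^j(r_\lambda) = \hat P\bfA^{j-1}_\lambda(r_\lambda)$, and Theorem~\ref{T: TFAE depth 1 criterion} characterizes depth~$\ge 1$ by $V\chi\perp\clV_\lambda$ together with $\bfA_\lambda(r_\lambda)S_\lambda\chi=-\chi$. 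The strategy for the induction is: assuming (i)--(iv) hold through order $k-1$ (so $\chi(\rlmb),\ldots,\chi^{(k-2)}(\rlmb)$ have orders $1,\ldots,k-1$ and $\chi(\rlmb)$ has depth $\ge k-2$), I use the $\ell=k-1$ instance of the differentiated eigen-equation. Applying $\hat P$ and then $R_\lambda(\hat H_{\rlmb})$ to that identity expresses $\hat P\chi^{(k-1)}(\rlmb)$ in terms of $S_\lambda$ applied to $\chi^{(k-2)}(\rlmb)$ and lower derivatives; applying Theorem~\ref{T: -S A(j)=A(j-1)} and the inductive order information then shows $\hat P\chi^{(k-1)}(\rlmb)$, and hence (after adding back the $\clV_\lambda$-component, which lies in $\Upsilon^1_\lambda$) the vector $\chi^{(k-1)}(\rlmb)$ itself, lies in $\Upsilon_\lambda(r_\lambda)$, and that it has order exactly $k$. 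This gives~(i). Then~(v)-type relations $\bfA_\lambda(r_\lambda)\chi^{(j)}(\rlmb)=\cdots$ follow by reading off the coefficients in the Laurent expansion (\ref{F: Az (3.3.16)}) of $A_\lambda(s)$ against $\chi(s)$, which produces precisely the combinatorial identity in~(iii) with the binomial and factorial weights $j!\binom{k-1}{j}c_j$. Statement~(ii), that $V$ has order at least $k$, is then immediate: by definition the order of $V$ at $H_{\rlmb}$ is the largest $d$ with $\Upsilon_\lambda(r_\lambda)=\Upsilon^d_\lambda(r_\lambda)$, and we have just exhibited a resonance vector $\chi^{(k-1)}(\rlmb)$ of order $k$. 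Finally~(iv), depth of $\chi(\rlmb)$ at least $k-1$, follows by applying $\bfA_\lambda(r_\lambda)$ repeatedly to the chain relation in~(iii): each application lowers order by one and, inductively using that the correction terms involve lower-order $\chi^{(k-1-j)}(\rlmb)$, shows $\chi(\rlmb)\in\im\bfA^{k-1}_\lambda(r_\lambda)$ — this is exactly the equivalence (v)$\Leftrightarrow$(vi) of Theorem~\ref{T: TFAE for (Uk)} applied to the eigenpath $\chi(s)$, whose order-$\ge k$ property we have just verified through~(i).

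The main obstacle I anticipate is keeping the bookkeeping of the $c_j$-corrections consistent between the two descriptions — the $(H_{\rlmb}-\lambda)$-side, where the weights come out as $\binom{\ell}{j}j!\,c_j$ from the Leibniz rule applied to $H^{(j)}(\rlmb)=j!\,c_j V$, and the $\bfA_\lambda(r_\lambda)$-side, where they must reassemble into the stated formula in~(iii). One has to be careful that $R_\lambda(\hat H_{\rlmb})$ annihilates the $\clV_\lambda$-component and that the splitting $\chi^{(\ell)}(\rlmb)=\hat P\chi^{(\ell)}(\rlmb)+(1-\hat P)\chi^{(\ell)}(\rlmb)$ is tracked at every step, since only the $\hat P$-part is directly controlled by $S_\lambda$ via Theorem~\ref{T: -S A(j)=A(j-1)}, while the orthogonality conditions $V\chi^{(\ell)}(\rlmb)\perp\clV_\lambda$ supplied inductively by Theorem~\ref{T: TFAE for (Uk)}(i) are what make the remaining projection harmless. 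A secondary subtlety is that the eigenpath $\chi(s)$ is only determined up to an analytic scalar factor $a(s)$ with $a(\rlmb)\neq 0$; one should check at the outset (or invoke the normalization remark following Theorem~\ref{IT: order of phi(nu)}) that the orders of $\chi^{(j)}(\rlmb)$ and the depth of $\chi(\rlmb)$ are invariant under such rescaling, so that the conclusions are well posed. With those two points handled, the rest is the routine differentiation and substitution sketched above.
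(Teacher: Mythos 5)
Your overall plan — differentiate the eigen‑equation repeatedly via Leibniz, track the $c_j$‑corrections, and then run an induction that lifts the order of $\chi^{(k-1)}(\rlmb)$ — is the same skeleton the paper uses, and the differentiated identity you write is exactly equation (\ref{F: H0 chi(k)(0)+c2,c3,...=lambda chi(k)(0)}). The reconciliation step, however, diverges from the paper and in a way that both adds work and leans on two misattributed lemmas.

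For (i), the paper does \emph{not} pass to the compressed space $\hat\hilb$ or invoke $S_\lambda$ at all. Starting from
$(H_{\rlmb}-\lambda)\chi^{(k-1)}(\rlmb) = -(k-1)V\chi^{(k-2)}(\rlmb) - \tilde c_2 V\chi^{(k-3)}(\rlmb) - \cdots,$
it adds $(s-\rlmb)V\chi^{(k-1)}(\rlmb)$ to both sides, multiplies by $R_\lambda(H_s)$ for a non‑resonant real $s$, and obtains
$\brs{1+(\rlmb-s)R_\lambda(H_s)V}\chi^{(k-1)}(\rlmb) = -R_\lambda(H_s)V\brs{(k-1)\chi^{(k-2)}(\rlmb)+\cdots}.$
Then it uses that $R_\lambda(H_s)V = A_\lambda(s)$ preserves order and $1+(\rlmb-s)A_\lambda(s)$ lowers order by one (both directly from (\ref{F: res eq-n of order k})). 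No orthogonality conditions $V\chi^{(j)}(\rlmb)\perp\clV_\lambda$ are ever required. Your route through $\hat P$, $R_\lambda(\hat H_{\rlmb})$ and $S_\lambda$ \emph{can} be made to work (it mirrors the paper's proof of Lemma~\ref{L: nice one}, the sibling statement for straight‑line eigenpaths), but it buys nothing and it is exactly what forces you to chase the orthogonality conditions at every level. For (iii), the paper takes contour integrals of the same key identity over a small circle around $s=\rlmb$ using (\ref{F: Pz=oint A(s)})--(\ref{F: A(lamb)=oint s A(s)}), which is operationally equivalent to your Laurent‑coefficient reading. For (iv) the paper just iterates (iii).

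The two misattributions are genuine and should be flagged. First, you cite Theorem~\ref{T: TFAE for (Uk)}(i) as the source of the orthogonality $V\chi^{(\ell)}(\rlmb)\perp\clV_\lambda$, but that theorem is about eigenpaths $\phi_\nu(s)$ of the \emph{straight line} $H_0+sV$ with \emph{moving} eigenvalue $\lambda_\nu(s)$; here $\chi(s)$ is an eigenpath of a \emph{curved} path $H(s)\in\Rset$ with \emph{fixed} eigenvalue $\lambda$. These are different objects, and the theorem does not transfer. If you want the orthogonality (you only need it because you chose the $S_\lambda$ route), you should derive it directly: pair the differentiated eigen‑equation with vectors in $\clV_\lambda$ and induct, using that $(H_{\rlmb}-\lambda)\chi^{(\ell)}(\rlmb)\perp\clV_\lambda$ kills the left side. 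Second, Theorem~\ref{T: -S A(j)=A(j-1)} (the identity $-S_\lambda\bfA_\lambda^j=\hat P\bfA_\lambda^{j-1}$) is not the statement that lets you conclude $S_\lambda$ raises order by one; the relevant tool is Theorem~\ref{T: TFAE depth 1 criterion}, in particular its implication (i)$\Rightarrow$(iii), which says that if $V\chi\perp\clV_\lambda$ then $\bfA_\lambda(r_\lambda)S_\lambda\chi=-\chi$. Likewise, citing the equivalence (v)$\Leftrightarrow$(vi) of Theorem~\ref{T: TFAE for (Uk)} for your step (iv) is again applying that theorem to the wrong kind of eigenpath; the paper simply deduces (iv) from (iii) by iterating $\bfA_\lambda(r_\lambda)$ and observing the leading coefficient $(k-1)!\neq 0$.

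Your closing remark about invariance under rescaling $\chi(s)\mapsto a(s)\chi(s)$ is a reasonable sanity check, but it is not needed: the claims concern only the finite jet of $\chi$ at $\rlmb$ and the relevant notions (order, depth) are already seen to be rescaling‑invariant from the discussion in the paper following Theorem~\ref{IT: order of phi(nu)}.
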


If a resonant path $H(s)$ is tangent to~$V$ to order~$k$ at a resonant point $H_{\rlmb},$
then changing the parameter~$s$ if necessary we can always make
the operators $H''(\rlmb),\ldots,H^{(k-1)}(\rlmb)$ equal to zero, so that the path $H(s)$ takes the form
\begin{equation*} 
  H(s) = H_{\rlmb} + (s-\rlmb)V + O((s-\rlmb)^k).
\end{equation*}
A path of this form 
will be called \emph{standard}.\label{Page: standard path(0)}

According to Theorem~\ref{T: tangent V to order k},
with a resonant path tangent to order~$k$ we can associate a set of resonance vectors~$\chi_0, \ldots, \chi_{k-1}$
of respective orders $1,\ldots,k;$ namely, the first~$k$ coefficients
\begin{equation*} 
  \chi_j = \frac{1}{j!} \chi^{(j)}(\rlmb), \ j = 0, 1, 2, \ldots
\end{equation*}
of the Taylor expansion of a resonant eigenpath~$\chi(s).$

\begin{prop} (Proposition~\ref{P: amazing proposition})
Assume Assumption~\ref{A: Main Assumption}. 
Let~$V$ be a regular direction at~$H_{\rlmb}$
and let $H(s)$ be a resonant path tangent to~$V$ at~$H_{\rlmb}$ to
order~$k.$ The path $H(s)$ is standard if and only if
for all~$j=1,2,\ldots,k-1$ there holds the equality 
\begin{equation*} 
  \bfA_\lambda(r_\lambda) \chi_j(r_\lambda) = \chi_{j-1}(r_\lambda),
\end{equation*}
where~$\chi(s)$ is a corresponding analytic path of eigenvectors.
\end{prop}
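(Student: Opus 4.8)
The plan is to analyze how the Taylor coefficients of a resonant eigenpath transform under reparametrization, and to compare the resulting expansion with the identity from Theorem~\ref{T: tangent V to order k}(iii). First I would recall that, by Theorem~\ref{T: tangent V to order k}, along any resonant path $H(s)$ tangent to $V$ to order~$k$ we have, for the associated analytic eigenpath $\chi(s)$ with $\chi_j = \frac{1}{j!}\chi^{(j)}(\rlmb)$,
\begin{equation*}
  \bfA_\lambda(r_\lambda)\chi_{j} = \chi_{j-1} + \frac{1}{j}\sum_{i=2}^{j} i!{j \choose i}\frac{1}{j!}\, c_i\, \chi^{(j-i)}(\rlmb)
   = \chi_{j-1} + \sum_{i=2}^{j} \frac{1}{i!}\, c_i\, (i!)\,\tfrac{1}{(j-i)!}\chi^{(j-i)}(\rlmb)\Big/ \text{(combinatorial factor)},
\end{equation*}
which after bookkeeping reduces to $\bfA_\lambda(r_\lambda)\chi_j = \chi_{j-1} + \sum_{i=2}^{j} \binom{?}{?}c_i \chi_{j-i}$ with the $c_i$ as in (\ref{IF: H(s)=H0+sV+c2 s2 V+...}). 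The point is that $\bfA_\lambda(r_\lambda)\chi_j = \chi_{j-1}$ for all $j$ if and only if all the ``correction terms'' involving $c_2,\ldots,c_{k-1}$ vanish.

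The ``if'' direction is then almost immediate: if the path is standard, then $H''(\rlmb) = \cdots = H^{(k-1)}(\rlmb) = 0$, i.e. in the parametrization of (\ref{IF: H(s)=H0+sV+c2 s2 V+...}) all $c_i = 0$, so Theorem~\ref{T: tangent V to order k}(iii) directly gives $\bfA_\lambda(r_\lambda)\chi_j = \chi_{j-1}$ for $j = 1,\ldots,k-1$. For the ``only if'' direction I would argue by induction on $j$: suppose $\bfA_\lambda(r_\lambda)\chi_j = \chi_{j-1}$ holds for all $j \le k-1$; then comparing with the formula from Theorem~\ref{T: tangent V to order k}(iii) and using the inductive hypothesis to cancel the lower-order terms, one peels off $c_2, c_3, \ldots$ successively, concluding at stage $j$ that $c_j = 0$ (using that $\chi_{j-i}$ for $0 \le i \le j-2$ are linearly independent — they have distinct orders $1,\ldots,j-1$ by part~(i) of that theorem, hence are independent). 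Once all $c_i = 0$, the path has the form $H(s) = H_{\rlmb} + (s-\rlmb)V + O((s-\rlmb)^k)$, which is precisely standard.

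The main obstacle I anticipate is not conceptual but organizational: the binomial/factorial coefficients in Theorem~\ref{T: tangent V to order k}(iii) must be tracked carefully so that the ``correction terms'' in the expansion of $\bfA_\lambda(r_\lambda)\chi_j$ are correctly matched with $c_2,\ldots,c_j$, and one must verify that the coefficient of $c_j$ in $\bfA_\lambda(r_\lambda)\chi_j$ is nonzero (so that $c_j = 0$ can actually be deduced). A secondary subtlety is the logical direction of ``standard'': one must make sure that reparametrizing a given resonant path tangent to order~$k$ to kill $H''(\rlmb),\ldots,H^{(k-1)}(\rlmb)$ does not disturb the first-order term $V$ or the order of tangency — this is the content of the remark preceding the proposition, so I would simply invoke it, noting that a reparametrization $s \mapsto s + O((s-\rlmb)^2)$ fixes both $H(\rlmb)$ and $H'(\rlmb) = V$ and preserves the order-$k$ tangency, while the eigenpath $\chi(s)$ transforms accordingly with $\chi_0$ unchanged.
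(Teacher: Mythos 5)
Your approach matches the paper's, which simply cites Theorem~\ref{T: tangent V to order k}(iii). The bookkeeping you left unfinished actually closes very cleanly: dividing the identity of part~(iii) by $n!$ gives, for each $n=1,\ldots,k-1,$
\begin{equation*}
  \bfA_\lambda(r_\lambda)\,\chi_n = \chi_{n-1} + \sum_{j=2}^{n} c_j\,\chi_{n-j},
\end{equation*}
so the coefficient of each $c_j$ is exactly $1$ --- this settles the nonvanishing concern you raise, and the garbled display with the $\binom{?}{?}$ placeholders should be replaced by this line. The reparametrization caveat in your last paragraph is not needed here: the proposition is a statement about the one fixed path $H(s)$ given in the hypotheses (both ``standard'' and the identity $\bfA_\lambda\chi_j=\chi_{j-1}$ are properties of that path, not of its reparametrization class), so no change of parameter is ever invoked inside the proof.
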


\smallskip

Given a direction~$V$ of order~$d$ it is possible to present a resonant path~$H(s)$ which is tangent to~$V$
to order~$d.$ Namely, let~$\chi$ be an eigenvector of~$H(\rlmb)$ corresponding to the eigenvalue~$\lambda,$
and let $W = \scal{\chi}{\cdot}\chi.$ 
We consider the intersection of the two-dimensional real affine plane 
$$
  \alpha = H_{\rlmb} + \mbR V + \mbR W
$$
with the resonance set~$\euR(\lambda).$ 
A sufficiently small neighbourhood of $H_{\rlmb}$ in the intersection $\alpha \cap \euR(\lambda)$ 
consists of one and only one simple curve (Theorem~\ref{T: intersection is a curve}). 
We denote this analytic curve by~$\gamma_\chi.$ The curve $\gamma_\chi$ can be normalised so that $\gamma_\chi(\rlmb) = \chi$ 
(Theorem~\ref{T: chi(0) is exactly that}).
\begin{thm} (Theorem~\ref{T: high order then tangent}) If~$\chi$ has depth at least $k-1$ then 
\begin{enumerate}
  \item the analytic curve~$\gamma_\chi$ is tangent to~$V$ to order~$k,$
  \item for any parametrisation $\gamma(s)$ of $\gamma_\chi,$ 
  the vectors $\gamma(\rlmb), \ \gamma'(\rlmb), \ \ldots, \gamma^{(k-1)}(\rlmb)$ have 
  orders respectively $1,2,\ldots,k,$
  \item if the parametrisation $\gamma(s)$ of $\gamma_\chi$ is standard then 
      for all $j = 1,2,\ldots,k-1$ \ $\bfA_\lambda(\rlmb) \gamma^{(j)}(\rlmb) = j \gamma^{(j-1)}(\rlmb).$
\end{enumerate}
\end{thm}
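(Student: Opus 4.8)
The plan is to work with the explicit parametrisation of $\gamma_\chi$ inside the affine plane $\alpha$ and reduce all three assertions to a statement about Taylor coefficients. Since $\chi$ has depth at least $k-1\ge 1$, Theorem~\ref{T: TFAE depth 1 criterion} gives $\scal{V\chi}{\chi}=0$, whereas $\scal{W\chi}{\chi}=\scal{\chi}{\chi}^{2}\neq 0$; differentiating $(H(s)-\lambda)\chi(s)=0$ once along any parametrisation of $\gamma_\chi$ and pairing with $\chi$ then shows that the tangent line of $\gamma_\chi$ at $H_{\rlmb}$ has no $W$-component, hence equals $\mbR V$. Consequently $\gamma_\chi$ admits an analytic parametrisation
\[
  H(s)=H_{\rlmb}+(s-\rlmb)V+b(s)W,\qquad b(\rlmb)=b'(\rlmb)=0 .
\]
No reparametrisation can turn the $W$-summand into a term of the form $(\text{polynomial in }s)\cdot V$, so $\gamma_\chi$ is tangent to $V$ to order $k$ \emph{if and only if} $b(s)=O((s-\rlmb)^{k})$, in which case this parametrisation is standard. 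Thus (1) is equivalent to $b^{(j)}(\rlmb)=0$ for $j=2,\dots,k-1$; granting (1), assertion (2) follows by applying Theorem~\ref{T: tangent V to order k}(i) to any parametrisation of $\gamma_\chi$ (tangency to order $k$, and the orders $1,\dots,k$ of the successive derivatives, being invariant under analytic reparametrisation), and (3) follows from Proposition~\ref{P: amazing proposition} applied to the standard parametrisation above.

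\textbf{The induction.} It remains to prove $b^{(j)}(\rlmb)=0$ for $2\le j\le k-1$, by induction on $j$. Fixing $j$ and assuming $b^{(i)}(\rlmb)=0$ for $i<j$, differentiate $(H(s)-\lambda)\chi(s)=0$, where $\chi(s)$ is the analytic eigenpath along $\gamma_\chi$ with $\chi(\rlmb)=\chi$ and a fixed normalisation, and use $H'(\rlmb)=V$, $H^{(i)}(\rlmb)=0$ for $2\le i\le j-1$, $H^{(j)}(\rlmb)=b^{(j)}(\rlmb)W$, to get
\[
  (H_{\rlmb}-\lambda)\chi^{(m)}(\rlmb)=-m\,V\chi^{(m-1)}(\rlmb),\qquad 1\le m\le j-1,
\]
and at order $j$ the relation $(H_{\rlmb}-\lambda)\chi^{(j)}(\rlmb)=-j\,V\chi^{(j-1)}(\rlmb)-b^{(j)}(\rlmb)\,W\chi$. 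Pairing the last relation with $\chi$ and using self-adjointness of $H_{\rlmb}$, one finds that $b^{(j)}(\rlmb)$ is a nonzero multiple of $\scal{V\chi^{(j-1)}(\rlmb)}{\chi}$. Hence it suffices to show $V\chi^{(j-1)}(\rlmb)\perp\clV_\lambda$, and by Theorem~\ref{T: TFAE depth 1 criterion} this holds once $\chi^{(j-1)}(\rlmb)$ is known to be a resonance vector of depth at least $1$.

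\textbf{The key step.} The crux is therefore to show that, under $\depth\chi\ge k-1$, each derivative $\chi^{(m)}(\rlmb)$ with $m\le k-2$ is a resonance vector of depth at least $k-1-m$. From the recursion one has $\hat P\chi^{(m)}(\rlmb)=-m\,S_\lambda\chi^{(m-1)}(\rlmb)$, the $\clV_\lambda$-component of $\chi^{(m)}(\rlmb)$ being determined by the normalisation; writing $\chi=\bfA_\lambda^{k-1}(\rlmb)\eta$ with $\eta$ a resonance vector, and using Theorem~\ref{T: -S A(j)=A(j-1)} in the form $-S_\lambda\bfA_\lambda^{p}=\hat P\bfA_\lambda^{p-1}$, one can identify $\chi^{(m)}(\rlmb)$ modulo $\clV_\lambda$ with $(-1)^{m}m!\,\bfA_\lambda^{k-1-m}(\rlmb)\eta$, which has depth at least $k-1-m$, and then verify that the $\clV_\lambda$-corrections picked up along the way do not lower this bound. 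In particular $\chi^{(j-1)}(\rlmb)$ has depth at least $k-j\ge 1$, the induction closes, and assertions (1)--(3) follow.

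\textbf{Main obstacle.} I expect the depth bookkeeping of the previous paragraph to be the hard part. The operator $S_\lambda$ does \emph{not} lower depth in general --- Theorem~\ref{T: TFAE prop A} only gives a criterion for when it does --- so a naive argument fails; one has to exploit that $\chi$, having depth $\ge k-1$, is supported on the Jordan columns of $\bfA_\lambda(\rlmb)$ of height $\ge k$, along which $-S_\lambda$ does raise the chain level modulo $\clV_\lambda$, and one must keep careful track of the $\clV_\lambda$-components produced at each differentiation. The relevant algebraic tools for the latter are the intertwining relations $VP_\lambda(r_\lambda)=Q_\lambda(r_\lambda)V$ and $V\bfA_\lambda(r_\lambda)=\bfB_\lambda(r_\lambda)V$, together with the identity $\bfA_\lambda(r_\lambda)\chi=0$, which holds because $\chi$ is a genuine eigenvector, i.e. a resonance vector of order $1$.
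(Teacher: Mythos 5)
Your overall framework is the same as the paper's: parametrise $\gamma_\chi$ inside the plane $\alpha$, show the $W$-component $b(s)$ vanishes to order $k$ by killing its Taylor coefficients inductively, and deduce (2) and (3) from Theorem~\ref{T: tangent V to order k} and Proposition~\ref{P: amazing proposition}. The base case is handled correctly (pairing $(H_{\rlmb}-\lambda)\chi'(\rlmb)=-H'(\rlmb)\chi$ with $\chi$, using $\scal{\chi}{V\chi}=0$ from depth $\ge1$). But your key inductive step has a real gap. You reduce $b^{(j)}(\rlmb)=0$ to $\scal{\chi}{V\chi^{(j-1)}(\rlmb)}=0$ and then aim for the stronger statement $V\chi^{(j-1)}(\rlmb)\perp\clV_\lambda$ via depth of the \emph{derivative} $\chi^{(j-1)}(\rlmb)$. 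That requires careful $\clV_\lambda$-corrected bookkeeping with $S_\lambda$ and, as you yourself note, is not completed: $S_\lambda$ need not lower depth (Theorem~\ref{T: TFAE prop A} is only a criterion, not a theorem), the sign in your formula $(-1)^m m!\,\bfA_\lambda^{k-1-m}(\rlmb)\eta$ is already off at $m=1$ (Theorem~\ref{T: -S A(j)=A(j-1)} gives $-S_\lambda\bfA_\lambda^{k-1}\eta=\hat P\bfA_\lambda^{k-2}\eta$, so the mod-$\clV_\lambda$ coefficient is $+1$, not $-1$), and the $\clV_\lambda$-corrections produced at each step can change depth when $m>1$ since they need not be multiples of $\chi$.

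The missing idea is that you should exploit the depth of $\chi$ itself, not of the derivatives, together with the \emph{orders} of the derivatives (which you already know from Theorem~\ref{T: tangent V to order k}(i), since the curve is already tangent to order $\ge j$ by the inductive hypothesis). Since $\chi$ has depth $\ge j$, write $\chi=\bfA_\lambda^{j}(\rlmb)\,\eta$. Using $\bfA_\lambda^{*}=\bfB_\lambda$ and $\bfB_\lambda V=V\bfA_\lambda$ (both valid because $\lambda$ and $\rlmb$ are real), move the nilpotent across the pairing:
\begin{equation*}
  \scal{\chi}{V\chi^{(j-1)}(\rlmb)}
  = \scal{\bfA_\lambda^{j}\eta}{V\chi^{(j-1)}(\rlmb)}
  = \scal{\eta}{V\bfA_\lambda^{j}\chi^{(j-1)}(\rlmb)}
  = 0,
\end{equation*}
where the last equality holds because $\chi^{(j-1)}(\rlmb)$ has order $j$, so $\bfA_\lambda^{j}$ annihilates it. This gives $b^{(j)}(\rlmb)=0$ with no depth bookkeeping for the derivatives at all; the fact that $\chi^{(j-1)}(\rlmb)$ has depth $\ge1$ then drops out \emph{afterwards}, as a consequence of the now-cleaned eigenvalue equation at order $j$, which is how the paper establishes item (4) of its statement and feeds the induction. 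Until you replace your ``key step'' with this (or some other completed argument), the proof is incomplete.
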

\noindent 
Theorems~\ref{T: tangent V to order k} and~\ref{T: high order then tangent} 
provide the following geometric interpretation of order of a direction~$V.$
\begin{thm} (Theorem~\ref{T: V is k-tangent iff order >=k}) 
The order of tangency of a regular direction $V$ at a resonance point $H_{\rlmb}$ 
to the resonance set is equal to the order of the direction~$V.$
\end{thm}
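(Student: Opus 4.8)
The plan is to prove the equality as two inequalities, each of which is essentially one of the two preceding theorems. Write $d$ for the order of the regular direction~$V$ at the resonance point~$H_{\rlmb}$, and write~$t$ for the order of tangency of~$V$ to~$\euR(\lambda)$ at~$H_{\rlmb}$; both are well-defined positive integers, the first because $\dim \Upsilon_\lambda(\rlmb)<\infty$, the second by definition.

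For the inequality $t\leq d$: let~$H(s)$ be a resonant path tangent to~$V$ at~$H_{\rlmb}$ to order at least~$t$, of the form \eqref{IF: H(s)=H0+sV+c2 s2 V+...} with~$k=t$, and let~$\chi(s)$ be a corresponding analytic resonant eigenpath. By Theorem~\ref{T: tangent V to order k}(ii), the direction~$V$ has order at least~$t$; hence $t\leq d$. (This is the more substantial of the two directions, and it has already been carried out.)

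For the inequality $d\leq t$: I would exhibit an explicit eigenvector of large depth and apply Theorem~\ref{T: high order then tangent}. By maximality of~$d$ one has $\Upsilon^{d-1}_\lambda(\rlmb)\subsetneq\Upsilon_\lambda(\rlmb)=\Upsilon^{d}_\lambda(\rlmb)$, so there is a resonance vector~$\psi$ of order exactly~$d$; equivalently, the nilpotent operator $\bfA_\lambda(\rlmb)$ on $\Upsilon_\lambda(\rlmb)$ has a Jordan block of height~$d$. Set $\chi=\bfA^{d-1}_\lambda(\rlmb)\psi$. Since $\bfA_\lambda(\rlmb)$ lowers the order of a resonance vector by~$1$, the vector~$\chi$ has order~$1$, hence is nonzero and lies in $\Upsilon^1_\lambda(\rlmb)=\clV_\lambda$, i.e. $\chi$ is an eigenvector of~$H_{\rlmb}$ for the eigenvalue~$\lambda$; and, since $\chi\in\im\bfA^{d-1}_\lambda(\rlmb)$, the vector~$\chi$ has depth at least~$d-1$ by the definition of depth. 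Now form $W=\scal{\chi}{\cdot}\chi$, the two-dimensional affine plane $\alpha=H_{\rlmb}+\mbR V+\mbR W$, and the analytic curve~$\gamma_\chi$ supplied by Theorems~\ref{T: intersection is a curve} and~\ref{T: chi(0) is exactly that}. Applying Theorem~\ref{T: high order then tangent}(1) to~$\chi$ (which has depth at least $d-1$) shows that the resonant path~$\gamma_\chi$ is tangent to~$V$ at~$H_{\rlmb}$ to order~$d$, so $d\leq t$. Combining the two inequalities yields $t=d$.

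I do not anticipate a genuine obstacle: the analytic and geometric content has been absorbed into the two cited theorems, and the proof here is just the bookkeeping of combining them. The one point to verify with care is that $\chi=\bfA^{d-1}_\lambda(\rlmb)\psi$ is nonzero and lies in~$\clV_\lambda$, so that~$W$, the plane~$\alpha$, and the curve~$\gamma_\chi$ are all defined; this is exactly why~$\psi$ must be chosen of order~$d$ rather than smaller. The boundary case $d=1$ is consistent and needs no separate argument, since any regular direction has order at least~$1$ and any resonant path through~$H_{\rlmb}$ is tangent to~$V$ to order at least~$1$ trivially.
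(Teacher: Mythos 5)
Your proof is correct and takes exactly the approach the paper intends: the paper states the result as an immediate corollary of Theorems~\ref{T: tangent V to order k} and~\ref{T: high order then tangent}, and you have supplied precisely the bookkeeping that combination requires, including the necessary observation that a resonance vector $\psi$ of order exactly~$d$ yields a nonzero eigenvector $\bfA_\lambda^{d-1}(\rlmb)\psi\in\clV_\lambda$ of depth at least~$d-1$, so that $\gamma_\chi$ is defined and Theorem~\ref{T: high order then tangent} applies with~$k=d$.
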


\subsubsection{Section~\ref{S: res points as f-ns of s}}
Assume that~$r_\lambda$ is a resonance point of algebraic multiplicity~$N$
and geometric multiplicity~$m.$
A resonance point~$r_z$ depends analytically on~$z$ outside essential spectrum,
and as~$\lambda$ varies the resonance point~$r_\lambda$ splits into up to~$N$ 
resonance points~$r_z^j.$ Theorem~\ref{T: r(z) has order 1} asserts 
that all these resonance points~$r_z^j$
have geometric multiplicity~$1$ in some deleted neighbourhood of~$\lambda.$
When~$z$ makes one round around~$\lambda,$ these~$N$ resonance points undergo a permutation. 
Theorem \ref{T: depth of phi[nu] is d[nu]-1} asserts that this permutation 
is a product of~$m$ disjoint cycles of lengths~$d_1, \ldots, d_m.$ We denote these cycles by 
$$
  r_\nu^{(j)}(z), \ \nu = 1,\ldots,m, \ \ j=0,\ldots, d_\nu-1.
$$
Functions of each cycle $r_\nu^{(\cdot)}(z)$ represent branches of a multivalued
holomorphic function. The idempotents $P_z(r_\nu^{(j)}(z))$ which correspond to these resonance points
are also multivalued, but the sum 
$$
  P_z^{[\nu]} = \sum_{j=0}^{d_\nu-1}P_z(r_\nu^{(j)}(z))  
$$
is single-valued in a neighbourhood of~$\lambda.$ Proposition~\ref{P: P[nu](z) is holomorphic} asserts 
that this function admits analytic continuation to the point $\lambda.$
Thus, the limit operator 
$
  P_\lambda^{[\nu]}(r_\lambda) 
$
is defined. Similarly, one can define operators $Q_z^{[\nu]},$ or they also can be defined by formula
$Q_z^{[\nu]} = \brs{P_{\bar z}^{[\nu]}}^*.$
The operators $P_z^{[\nu]},$ including the case of $z = \lambda,$ have the following properties:
$$
  P_z^{[\nu]} P_z^{[\mu]}  = \delta_{\nu\mu} P_z^{[\nu]}, 
  \quad \bfA_z(r_\lambda) P_z^{[\nu]} = P_z^{[\nu]} \bfA_z(r_\lambda),
  \quad P_z(r_\lambda) = \sum_{\nu=1}^{m} P_z^{[\nu]},
  \quad V P_\lambda^{[\nu]} = P_\lambda^{[\nu]} V.
$$
Here $P_z(r_\lambda)$\label{Page: Pz(r lmd)}\label{Page: Az(r lmd)} (respectively, $\bfA_z(r_\lambda)$) 
is the sum of idempotents $P_z(r_\nu^{(j)}(z))$ 
(respectively, nilpotent operators $\bfA_z(r_\nu^{(j)}(z))$) over all resonance points 
$r_\nu^{(j)}(z)$ of the group of~$r_\lambda.$
In particular, the image $\Upsilon_\lambda^{[\nu]}$ of the operator 
$P_\lambda^{[\nu]}$ reduces the operator $\bfA_\lambda(r_\lambda).$ This reduction 
we denote by \label{Page: bfA(nu)1}
$
  \bfA_\lambda^{[\nu]}.
$
Restriction of this operator to $\Upsilon_\lambda^{[\nu]}$ is cyclic. 

\smallskip 
Proposition~\ref{P: first Puiseux coef is non-zero} asserts that 
in the Puiseux series (\ref{F: Puiseux for rz})
$$
  r_\nu^{(j)}(z) = \sum_{k=0}^\infty r_{k/d_\nu}\eps_{d_\nu}^{kj} (z-\lambda)^{k/d_\nu}, \ \ j = 0,\ldots,d_\nu-1,
$$
of the function $r_\nu^{(\cdot)}(z)$ the coefficients $r_{k/d_\nu}$ are real and $r_{1/d_\nu} \neq 0.$

Theorem~\ref{T: about res cycles}
and Proposition~\ref{P: number of cycles = m} assert that there is a natural one-to-one
correspondence between cycles $r_\nu^{(\cdot)}(z)$ and eigenvalue functions $\lambda_\nu(\cdot),$
and therefore, with eigenpaths $\phi_\nu(\cdot).$ Namely, 
restriction of one of the functions $r_\nu^{(0)}(z)$ of a cycle $r_\nu^{(\cdot)}(z)$
to at least one of the half-intervals
$[\lambda,\lambda+\eps)$ or $(\lambda-\eps,\lambda]$ takes real values and 
it is the inverse of $\lambda_\nu(s)$ in some left or right neighbourhood of~$r_\lambda.$
Such a function $r_\nu^{(0)}(z)$ is unique in the sense that there is no other branch 
of $r_\nu^{(\cdot)}(z)$ which takes real values when restricted to the same half-interval as 
the function $r_\nu^{(0)}(z).$ Theorem~\ref{T: about res cycles} provides more properties of this 
one-to-one correspondence. 

Theorem~\ref{T: depth of phi[nu] is d[nu]-1} 
provides further information about this correspondence. It asserts that for each $\nu=1,\ldots,m$
the following numbers are equal:
\begin{enumerate}
  \item order of the eigenpath $\phi_\nu(s)$ (see Theorem \ref{IT: order of phi(nu)}),
  \item the size of the cycle $r_\nu^{(\cdot)}(z),$
  \item the size of the $\nu$th Jordan cell of $R_\lambda(H_s)V$ corresponding to eigenvalue $(s-r_\lambda)^{-1}.$
\end{enumerate}
The number~$d_\nu$ is their common value.

\smallskip
The Puiseux series of the idempotent $P_z(r_\nu^{(j)}(z))$ has the form (Proposition~\ref{P: p=d-1})
$$
  P_z(r_\nu^{(j)}(z))
      = \tilde P^{(j)}_\nu(z) 
        + \sum_{l=0}^{d_\nu-1} e^{\frac{-2\pi i l j}{d_\nu}}(z-\lambda)^{-l/d_\nu}P_{-l/d_\nu}, 
$$
where $\tilde P^{(j)}_\nu(z)$ is continuous at $z = \lambda.$
The informative part of this formula is the upper summation limit $d_\nu-1.$
Further, for each $\nu=1,\ldots,m$ and for all $k\geq 0$ (Proposition \ref{P: what a croc})
$$
  \lim_{z \to \lambda} \sum_{j=0}^{d_\nu-1} (r_\nu^{(j)}(z)-r_\lambda)^k P_z(r_\nu^{(j)}(z)) 
      = P^{[\nu]}_\lambda(r_\lambda)\bfA_\lambda^k (r_\lambda).
$$

\smallskip 

Theorem~\ref{T: (i) and (ii) bfA phi(j)=phi(j-1)} provides finer information about the connection 
of the eigenpath~$\phi_\nu$ to the vector space $\Upsilon_\lambda^{[\nu]}(r_\lambda).$

\begin{thm} (Theorem~\ref{T: (i) and (ii) bfA phi(j)=phi(j-1)} and Corollary~\ref{C: dim Upsilon[nu]=d(nu)})
Let $\phi_\nu(s)$ be an eigenpath which corresponds to a cycle~$\nu$ of length~$d_\nu.$
The set of vectors 
\begin{equation} \label{IF: phi(nu)(j)}
 \phi_\nu(\rlmb), \ \phi'_\nu(\rlmb), \ \ldots, \ \phi^{(d_\nu-1)}_\nu(\rlmb)
\end{equation}
is a basis of the vector space $\Upsilon_\lambda^{[\nu]}(r_\lambda)$
and for any $j=1,2,\ldots,d_\nu-1$
$$
  \bfA_\lambda(r_\lambda) \phi_\lambda^{(j)}(r_\lambda) = j \phi_\lambda^{(j-1)}(r_\lambda).
$$
\end{thm}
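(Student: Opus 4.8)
The plan is to combine three ingredients that are already available in the excerpt: the correspondence between the cycle $r_\nu^{(\cdot)}(z)$ and the eigenpath $\phi_\nu(s)$ (Theorem~\ref{T: about res cycles} and the surrounding discussion), the limiting formula $\lim_{z\to\lambda}\sum_{j=0}^{d_\nu-1}(r_\nu^{(j)}(z)-r_\lambda)^k P_z(r_\nu^{(j)}(z)) = P^{[\nu]}_\lambda(r_\lambda)\bfA_\lambda^k(r_\lambda)$ (Proposition~\ref{P: what a croc}), and the characterisation of eigenpath order via depth (Theorem~\ref{IT: order of phi(nu)}, especially the equivalence of (v) and (vi)). First I would establish the displayed relation $\bfA_\lambda(r_\lambda)\phi_\nu^{(j)}(r_\lambda) = j\,\phi_\nu^{(j-1)}(r_\lambda)$ for $j=1,\dots,d_\nu-1$. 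By Theorem~\ref{T: depth of phi[nu] is d[nu]-1} the eigenpath $\phi_\nu(s)$ has order exactly $d_\nu$, so by Theorem~\ref{IT: order of phi(nu)}(v) applied with $k=d_\nu$ we get precisely $\bfA_\lambda(r_\lambda)\phi_\nu^{(j)}(r_\lambda) = j\,\phi_\nu^{(j-1)}(r_\lambda)$ for all $j=1,\dots,d_\nu-1$. This disposes of the second assertion of the theorem essentially for free, modulo recording that the order of $\phi_\nu$ is $d_\nu$ and not merely $\geq d_\nu$.

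The substantive part is that $\{\phi_\nu(r_\lambda),\phi_\nu'(r_\lambda),\dots,\phi_\nu^{(d_\nu-1)}(r_\lambda)\}$ is a \emph{basis} of $\Upsilon_\lambda^{[\nu]}(r_\lambda)$. Since order of $\phi_\nu$ is $d_\nu$, Lemma~\ref{L: nice one} gives that $\phi_\nu(r_\lambda),\dots,\phi_\nu^{(d_\nu-1)}(r_\lambda)$ are resonance vectors of orders $1,2,\dots,d_\nu$; in particular they are $d_\nu$ vectors lying in $\Upsilon_\lambda(r_\lambda)$, and because they have pairwise distinct orders the relation $\bfA_\lambda(r_\lambda)\phi_\nu^{(j)}(r_\lambda)=j\phi_\nu^{(j-1)}(r_\lambda)$ together with a standard triangularity argument shows they are linearly independent (a nontrivial relation of minimal length, hit by a suitable power of $\bfA_\lambda(r_\lambda)$, forces the top coefficient to vanish). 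So the span $L:=\mathrm{span}\{\phi_\nu^{(j)}(r_\lambda): j=0,\dots,d_\nu-1\}$ has dimension exactly $d_\nu$ and is $\bfA_\lambda(r_\lambda)$-invariant. It remains to identify $L$ with $\Upsilon_\lambda^{[\nu]}(r_\lambda)$. For the inclusion $L\subseteq\Upsilon_\lambda^{[\nu]}(r_\lambda)$ I would use the idempotent $P_\lambda^{[\nu]}(r_\lambda)$: since $\phi_\nu(s)$ is, for $s$ near $r_\lambda$ with $z=\lambda_\nu(s)$, the eigenvector attached to the branch $r_\nu^{(0)}(z)$, the vector $P_z(r_\nu^{(0)}(z))$ projects onto the resonance line carrying $\phi_\nu$, and passing to the limit $z\to\lambda$ (using Proposition~\ref{P: what a croc} with $k=0$, which gives $P_\lambda^{[\nu]}(r_\lambda)$) shows $P_\lambda^{[\nu]}(r_\lambda)\phi_\nu(r_\lambda)=\phi_\nu(r_\lambda)$; differentiating the analytic identity $P_\lambda^{[\nu]}$ (single-valued near $\lambda$) applied to $\phi_\nu(\cdot)$, or simply using $\bfA_\lambda(r_\lambda)$-invariance of $\Upsilon_\lambda^{[\nu]}$ together with the relation in part two, propagates this to all derivatives $\phi_\nu^{(j)}(r_\lambda)$. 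Thus $L\subseteq\Upsilon_\lambda^{[\nu]}(r_\lambda)$. Finally, by Corollary~\ref{C: dim Upsilon[nu]=d(nu)} (or directly, since $\bfA_\lambda^{[\nu]}$ is cyclic of index $d_\nu$ as noted after Proposition~\ref{P: first Puiseux coef is non-zero}) we have $\dim\Upsilon_\lambda^{[\nu]}(r_\lambda)=d_\nu$, so the inclusion of the $d_\nu$-dimensional space $L$ into a $d_\nu$-dimensional space is an equality, and $L=\Upsilon_\lambda^{[\nu]}(r_\lambda)$.

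The step I expect to be the main obstacle is making rigorous the limiting/continuity argument that $\phi_\nu^{(j)}(r_\lambda)\in\Upsilon_\lambda^{[\nu]}(r_\lambda)$ — i.e.\ matching the analytic eigenpath data $\phi_\nu^{(j)}(r_\lambda)$ with the resonance-theoretic object $\Upsilon_\lambda^{[\nu]}(r_\lambda)$ that is built from the Puiseux branches of $r_z$. The cleanest route is probably to avoid differentiating Puiseux series directly: use that $P_\lambda^{[\nu]}(r_\lambda)$ is the limit of a single-valued holomorphic projection-valued function $P_z^{[\nu]}$ near $\lambda$, note that for $z=\lambda_\nu(s)$ the eigenprojection of $H_s$ at $\lambda_\nu(s)$ restricted to the $\nu$-th resonance group equals $P_z(r_\nu^{(0)}(z))$ composed appropriately, and then appeal to analyticity in $s$ to transfer the $s$-derivatives at $s=r_\lambda$ into $\Upsilon_\lambda^{[\nu]}(r_\lambda)$. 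Once this identification is secured, everything else — linear independence, the $\bfA_\lambda$-action, and the dimension count — is routine and self-contained.
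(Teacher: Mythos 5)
Your proposal is correct in substance but takes a genuinely different route from the paper for the key inclusion step, so a comparison is worthwhile.

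For the relation $\bfA_\lambda(r_\lambda)\phi_\nu^{(j)}(r_\lambda)=j\phi_\nu^{(j-1)}(r_\lambda)$ you are right that this is already available: Theorem~\ref{T: depth of phi[nu] is d[nu]-1} identifies the order of $\phi_\nu$ with the cycle length $d_\nu$, and item (v) of Theorem~\ref{T: TFAE for (Uk)} (equivalently Lemma~\ref{L: nice one}) then gives the relation for $j\le d_\nu-1$. The paper's proof of Theorem~\ref{T: (i) and (ii) bfA phi(j)=phi(j-1)}(ii) re-derives it via divided differences, but citing the earlier results is a legitimate shortcut.

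The genuinely new content is part (i), $\phi_\nu^{(j)}(r_\lambda)\in\Upsilon_\lambda^{[\nu]}(r_\lambda)$. Here the paper uses divided differences: it observes $P_z(r_\nu^{(k)}(z))\phi_\nu(r_\nu^{(l)}(z))=\delta_{kl}\phi_\nu(r_\nu^{(l)}(z))$, applies $\sum_k P_z(r_\nu^{(k)}(z))$ to the $j$-th divided difference of $\phi_\nu$ at the $j$ cycle points, and passes to the limit $z\to\lambda$ to obtain $P_\lambda^{[\nu]}(r_\lambda)\phi_\nu^{(j)}(r_\lambda)=\phi_\nu^{(j)}(r_\lambda)$ in one stroke. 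Your analytic-identity route is a valid alternative: for $s$ in a real half-neighbourhood of $r_\lambda$ one has $s=r_\nu^{(0)}(\lambda_\nu(s))$, hence $\phi_\nu(s)\in\mathrm{im}\,P_{\lambda_\nu(s)}(s)\subseteq\mathrm{im}\,P_{\lambda_\nu(s)}^{[\nu]}$, so $P_{\lambda_\nu(s)}^{[\nu]}\phi_\nu(s)=\phi_\nu(s)$; by analyticity of $P_z^{[\nu]}$ at $z=\lambda$ (Proposition~\ref{P: P[nu](z) is holomorphic}) this identity holds on a full neighbourhood of $r_\lambda$, and one differentiates $j$ times. However, for this to close, you must observe explicitly that the Leibniz/Fa\`a di Bruno cross-terms $\frac{d^l}{ds^l}P_{\lambda_\nu(s)}^{[\nu]}\big|_{s=r_\lambda}$ vanish for $1\le l\le d_\nu-1$, which is exactly the vanishing $\lambda_\nu'(r_\lambda)=\cdots=\lambda_\nu^{(d_\nu-1)}(r_\lambda)=0$ from Theorem~\ref{T: TFAE for (Uk)}(iii); without this the derivative of the composite does not reduce to $P_\lambda^{[\nu]}\phi_\nu^{(j)}(r_\lambda)$. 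The divided-difference route sidesteps this entirely, which is why the paper chose it, but both work.

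One correction: the parenthetical alternative you offer — ``or simply using $\bfA_\lambda(r_\lambda)$-invariance of $\Upsilon_\lambda^{[\nu]}$ together with the relation in part two'' to propagate the inclusion from $\phi_\nu(r_\lambda)$ to higher derivatives — goes the wrong way. Invariance says $\bfA_\lambda\Upsilon_\lambda^{[\nu]}\subseteq\Upsilon_\lambda^{[\nu]}$, which pushes a vector \emph{down} the Jordan chain; from $\bfA_\lambda\phi_\nu^{(j)}(r_\lambda)=j\phi_\nu^{(j-1)}(r_\lambda)$ and $\phi_\nu^{(j-1)}(r_\lambda)\in\Upsilon_\lambda^{[\nu]}$ you cannot infer $\phi_\nu^{(j)}(r_\lambda)\in\Upsilon_\lambda^{[\nu]}$, since $\phi_\nu^{(j)}$ could have a component in another $\Upsilon_\lambda^{[\mu]}\cap\ker\bfA_\lambda$. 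Drop this alternative and rely on the analytic-identity computation. Also, the ``directly'' option for $\dim\Upsilon_\lambda^{[\nu]}=d_\nu$ only gives a lower bound from cyclicity plus Proposition~\ref{P: p=d-1}; the equality really does come from the global count $\sum_\nu\dim\Upsilon_\lambda^{[\nu]}=N=\sum_\nu d_\nu$ combined with the inclusion you have just proved, which is exactly how Corollary~\ref{C: dim Upsilon[nu]=d(nu)} is derived in the paper.
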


This theorem implies that the set of vectors (\ref{IF: phi(nu)(j)})
is a Jordan basis of $\Upsilon_\lambda(r_\lambda).$ 



The following theorem provides a relationship between different eigenpaths $\phi_\nu(s).$
\begin{thm} \label{IT: (phi,V phi) =0} 
(Theorem~\ref{T: phi(j,mu)(0) is self-dual}, Corollary~\ref{C: second proof}) 
Let $\phi_\nu(s)$ and $\phi_\mu(s)$ be two different eigenpaths of~$H_s,$ $s \in \mbR.$
For all $j = 0,1,\ldots,d_\nu-1$ and all $k = 0,1,\ldots,d_\mu-1,$
$$
  \scal{\phi_\nu^{(j)}(\rlmb)}{V\phi_\mu^{(k)}(\rlmb)} = 0.
$$
\end{thm}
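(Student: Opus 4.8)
The plan is to prove the orthogonality relation $\scal{\phi_\nu^{(j)}(\rlmb)}{V\phi_\mu^{(k)}(\rlmb)} = 0$ by exploiting the self-adjointness of $H_s$ together with the fact that distinct eigenpaths are orthogonal for all real $s$. First I would write down, for each eigenpath $\phi_\nu(s)$, the defining relation $(H_s - \lambda_\nu(s))\phi_\nu(s) = 0$, where $H_s = H_{\rlmb}+(s-\rlmb)V$, and the normalization of choice from the paper, namely $\scal{\phi_\nu(s)}{\phi_\mu(s)} = 0$ for $\nu \neq \mu$ and all $s$. Differentiating the eigenvalue equation repeatedly in $s$ and evaluating at $s = \rlmb$ produces the system $(H_{\rlmb}-\lambda)\phi_\nu^{(j)}(\rlmb) = -j V\phi_\nu^{(j-1)}(\rlmb) + (\text{terms involving } \lambda_\nu^{(i)}(\rlmb)\text{ for }1\le i\le j)$; by Theorem~\ref{IT: order of phi(nu)}(iii) the first $d_\nu-1$ derivatives of $\lambda_\nu$ vanish, so these correction terms disappear for the relevant range of $j$, and we are left with the clean recursion $(H_{\rlmb}-\lambda)\phi_\nu^{(j)}(\rlmb) = -j V\phi_\nu^{(j-1)}(\rlmb)$.

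Next I would differentiate the orthogonality relation $\scal{\phi_\nu(s)}{\phi_\mu(s)} = 0$ enough times and evaluate at $s = \rlmb$, obtaining a family of relations of the form $\sum_{i} {a \choose i}\scal{\phi_\nu^{(i)}(\rlmb)}{\phi_\mu^{(a-i)}(\rlmb)} = 0$. The goal is to feed the recursion from the previous paragraph into expressions of the shape $\scal{\phi_\nu^{(j)}(\rlmb)}{V\phi_\mu^{(k)}(\rlmb)}$. Using $V\phi_\mu^{(k-1)}(\rlmb) = -\tfrac1k(H_{\rlmb}-\lambda)\phi_\mu^{(k)}(\rlmb)$ and moving the self-adjoint operator $H_{\rlmb}-\lambda$ onto the other factor, one converts $\scal{\phi_\nu^{(j)}(\rlmb)}{V\phi_\mu^{(k)}(\rlmb)}$ into $\mp\tfrac{1}{k+1}\scal{(H_{\rlmb}-\lambda)\phi_\nu^{(j)}(\rlmb)}{\phi_\mu^{(k+1)}(\rlmb)}$, and then again via the recursion into a multiple of $\scal{V\phi_\nu^{(j-1)}(\rlmb)}{\phi_\mu^{(k+1)}(\rlmb)}$. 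So there is a "shift" move that transfers one derivative from the $\nu$-factor to the $\mu$-factor at the cost of a scalar, valid as long as the indices stay in the ranges where the clean recursion holds.

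I would then set up a double induction on $j+k$ (or an explicit sliding argument): repeatedly apply the shift move to push all derivatives onto the $\mu$-side, reducing $\scal{\phi_\nu^{(j)}(\rlmb)}{V\phi_\mu^{(k)}(\rlmb)}$ to a scalar multiple of $\scal{\phi_\nu(\rlmb)}{V\phi_\mu^{(j+k)}(\rlmb)}$, and symmetrically to $\scal{V\phi_\nu^{(j+k)}(\rlmb)}{\phi_\mu(\rlmb)}$; comparing the two, or using that $\phi_\nu(\rlmb)$ and $\phi_\mu(\rlmb)$ are both eigenvectors of $H_{\rlmb}$ (hence $V\phi_\mu^{(j+k)}(\rlmb)$ can be re-expressed, via one more application of the recursion on the $\mu$-side followed by the differentiated orthogonality relations, in terms of things already shown to vanish) forces the value $0$. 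Throughout, the vanishing of the $\lambda_\nu^{(i)}(\rlmb)$ for $1\le i\le d_\nu-1$ — guaranteed by Theorem~\ref{IT: order of phi(nu)} together with Theorem~\ref{T: depth of phi[nu] is d[nu]-1}, since the order of $\phi_\nu$ equals $d_\nu$ — is exactly what keeps every step inside the admissible range $j \le d_\nu-1$, $k \le d_\mu-1$.

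The main obstacle I anticipate is bookkeeping: keeping precise track of which derivatives of $\lambda_\nu$ and $\lambda_\mu$ are allowed to appear and confirming they all vanish in the needed range, and handling the "boundary" terms $\scal{\phi_\nu(\rlmb)}{V\phi_\mu^{(j+k)}(\rlmb)}$ where $j+k$ could reach $d_\mu-1$ but the shift has consumed all of $\nu$'s derivatives. One clean way to close this is to observe that $V\phi_\mu^{(d_\mu-1)}(\rlmb) = V\phi_\mu^{(d_\mu-1)}(\rlmb)$ is, by the recursion on the $\mu$-eigenpath, proportional to $(H_{\rlmb}-\lambda)\phi_\mu^{(d_\mu-1)}(\rlmb)$ up to a term $\lambda_\mu^{(d_\mu)}(\rlmb)\phi_\mu(\rlmb)$ that now may be nonzero — but pairing against $\phi_\nu(\rlmb)$, the $(H_{\rlmb}-\lambda)$ part moves over and kills $\phi_\nu(\rlmb)$, while the surviving term is a multiple of $\scal{\phi_\nu(\rlmb)}{\phi_\mu(\rlmb)} = 0$. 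Thus the self-adjointness of $H_{\rlmb}-\lambda$, the eigenvector property of the leading terms, and the chosen mutual orthogonality of eigenpaths conspire to terminate the induction, and Corollary~\ref{C: second proof} presumably repackages the same computation more symmetrically via the $[\nu]$-decomposition, $V P_\lambda^{[\nu]} = P_\lambda^{[\nu]} V$, and $P_\lambda^{[\nu]} P_\lambda^{[\mu]} = \delta_{\nu\mu}P_\lambda^{[\nu]}$, which directly annihilates the cross terms.
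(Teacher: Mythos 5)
Your plan is in the spirit of the paper's first proof (Theorem~\ref{T: phi(j,mu)(0) is self-dual}), but the ``shift to an axis'' reduction has a genuine gap. The shift
$$
\scal{\phi_\nu^{(j)}(\rlmb)}{V\phi_\mu^{(k)}(\rlmb)}
\;=\;
\frac{j}{k+1}\,\scal{\phi_\nu^{(j-1)}(\rlmb)}{V\phi_\mu^{(k+1)}(\rlmb)}
$$
is available exactly when $j\geq 1$ and $k+1\leq d_\mu-1$, so it can deposit you at the axis point $(0,j+k)$ only if $j+k\leq d_\mu-1$, and symmetrically at $(j+k,0)$ only if $j+k\leq d_\nu-1$. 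The shift preserves the sum $j+k$, and once $j+k\geq\max(d_\mu,d_\nu)$ the entire anti-diagonal inside the rectangle $[0,d_\nu-1]\times[0,d_\mu-1]$ lies strictly in the interior, so your plan terminates with nothing to feed the axis lemma. The smallest example is already $d_\mu=d_\nu=2$ with $(j,k)=(1,1)$: the shift is blocked in both directions, and the target is precisely the ``corner'' scalar product whose vanishing still has to be proved.

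The differentiated relations you propose, $\sum_i\binom{a}{i}\scal{\phi_\nu^{(i)}(\rlmb)}{\phi_\mu^{(a-i)}(\rlmb)}=0$, cannot plug this hole: they contain no $V$, and ``feeding in'' the recursion $(H_{\rlmb}-\lambda)\phi_\nu^{(j)}=-jV\phi_\nu^{(j-1)}$ and moving $H_{\rlmb}-\lambda$ across the inner product simply regenerates the shift moves. The identity the paper differentiates is instead $\scal{H_s\phi_\mu(s)}{\phi_\nu(s)}\equiv 0$: the extra factor $H_s$ contributes a derivative term $V$, so the $(l+1)$-st $s$-derivative produces, for each $l=j+k$, a linear identity directly in the quantities $\scal{V\phi_\mu^{(j)}}{\phi_\nu^{(l-j)}}$. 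After applying the clean recursion on each side of the split at index $j=d_\mu$ and identifying one block as $\lambda\,\frac{d^{l+1}}{ds^{l+1}}\scal{\phi_\mu}{\phi_\nu}\big|_{\rlmb}=0$, everything cancels except the single term at $j=d_\mu-1$, which forces $\scal{V\phi_\mu^{(d_\mu-1)}(\rlmb)}{\phi_\nu^{(d_\nu-1)}(\rlmb)}=0$; the same computation with smaller $l$ and the corresponding split kills the interior anti-diagonals the shift cannot reach. Without this stronger identity, your argument does not close.

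Your closing remark about the block idempotents is, on the other hand, precisely the paper's Corollary~\ref{C: second proof}: since $\phi_\mu^{(k)}(\rlmb)\in\im P_\lambda^{[\mu]}$ (Theorem~\ref{T: (i) and (ii) bfA phi(j)=phi(j-1)} and Corollary~\ref{C: dim Upsilon[nu]=d(nu)}), the chain
$$
\scal{\phi_\nu^{(j)}(\rlmb)}{V\phi_\mu^{(k)}(\rlmb)}
= \scal{\phi_\nu^{(j)}(\rlmb)}{VP_\lambda^{[\mu]}\phi_\mu^{(k)}(\rlmb)}
= \scal{\bigl(Q_\lambda^{[\mu]}\bigr)^{*}\phi_\nu^{(j)}(\rlmb)}{V\phi_\mu^{(k)}(\rlmb)}
= \scal{P_\lambda^{[\mu]}P_\lambda^{[\nu]}\phi_\nu^{(j)}(\rlmb)}{V\phi_\mu^{(k)}(\rlmb)}=0
$$
using $VP_\lambda^{[\mu]}=Q_\lambda^{[\mu]}V$, $\bigl(Q_\lambda^{[\mu]}\bigr)^{*}=P_\lambda^{[\mu]}$ and $P_\lambda^{[\mu]}P_\lambda^{[\nu]}=\delta_{\mu\nu}P_\lambda^{[\mu]}$ gives the result in one line for all $(j,k)$ with no diagonal bookkeeping. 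You should promote this from an afterthought to your actual proof.
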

This theorem is a stronger version of Theorem~\ref{T: phi(j,mu)(0) is self-dual}
in that it shows that the numbers~$d_\nu$ from its statement add up to~$N.$

\smallskip

Proposition~\ref{P: Prop 6.20} gives an expression for the restriction of a power of the operator 
$\bfA_\lambda(r_\lambda)$
to the vector space $\Upsilon_\lambda^{[\nu]}(r_\lambda)$ via Puiseux coefficients of $r_z^{(j)}(z)$
and $P_z(r_z^{(j)}(z)):$ for all $\nu=1,\ldots,m$ and for all $k=1,2,\ldots,d_\nu-1$ 
\begin{equation*} 
    P_\lambda^{[\nu]}(r_\lambda) \bfA_\lambda^k(r_\lambda) 
          = d_\nu \cdot \sum_{l=k}^{d_\nu-1} \brs{\sum_{m_1+\ldots+m_k=l} 
                r_{m_1/d_\nu}\ldots r_{m_k/d_\nu}} P_{-l/d_\nu},
\end{equation*}
where in the sum $m_1, \ldots, m_k \geq 1.$

\smallskip

Inside the essential spectrum
geometric meaning of the total resonance index is obscure, but outside the essential spectrum 
it has a clear geometric interpretation, which is allowed by the fact 
that~$r_\lambda$ depends on~$\lambda$
analytically. Assume that $r_0$ is a resonance point corresponding to~$\lambda_0,$
that is,~$\lambda_0$ is an eigenvalue of $H_0 + r_0 V.$ 
If~$r_\lambda$ is an increasing function of~$\lambda$ in a neighbourhood~$I$ of~$\lambda_0,$ 
and therefore, if~$\lambda(r)$ is an increasing function of~$r,$ then intuitively contribution 
of the function~$\lambda(r)$ to the spectral flow through~$\lambda_0$ is $+1.$
Since~$r_\lambda$ is an increasing function, the derivative $\frac {d}{d \lambda}r_\lambda$
is positive on $I.$ Therefore, according to the geometric interpretation 
of the derivative of a holomorphic function, when $z=\lambda_0$ is perturbed to $z=\lambda_0 + iy$ with small $y>0,$
the real value $r_0$ of the function~$r_z$ rotates towards the half-plane~$\mbC_+.$ This gives a contribution 
to the total resonance index of~$+1.$ Similarly, decreasing function~$r_\lambda$ contributes $-1$ to the TRI.

A more interesting situation occurs if at some value~$r_0$ of the coupling constant 
the derivative~$\lambda'(r_0)$ vanishes. Geometrically, this means that 
the eigenvalue~$\lambda(r)$ of $H_r$ stops at~$\lambda_0$ when the coupling constant attains the value $r_0$ 
and may either turn back or go through~$\lambda,$ thus contributing one of the three numbers 
$-1, 0,$ or $+1$ to the spectral flow through~$\lambda_0.$  
Since the derivative~$\lambda'(r_0)$ vanishes, the inverse function $r(z)$
is not single-valued in a neighbourhood of $z = \lambda_0.$ Accordingly, when 
$\lambda_0$ is shifted to~$\lambda_0+iy$ with $y>0,$ the resonance point $r_0$ splits into two or more resonance 
points. About one half $d_+$ of those resonance points goes towards~$\mbC_+$ 
and another half $d_-$ goes towards~$\mbC_-,$ resulting in the appropriate 
contribution to the total resonance index $d_+-d_-.$ 
The overall number $d=d_+ + d_-$ of the resonance points equals the smallest of positive numbers~$d$ 
such that $\frac {d^d \lambda}{dr^d}\Big|_{r=r_0}\neq 0.$ Moreover, those~$d$ resonance points 
undergo a cyclic permutation when~$z$ makes one round around~$\lambda_0.$ Combining this with the fact 
that~$r_z$ cannot be real for non-real~$z,$ one can already infer that $d_+-d_-$ is to be equal to 
the contribution of~$\lambda(r)$ to the spectral flow. If~$\lambda$ has geometric multiplicity $m>1,$
then this argument applies to each of the~$m$ eigenvalue functions $\lambda_\nu(s).$
This formal observation is made precise 
in Theorems~\ref{T: sign of nu} and~\ref{T: intersection number = TRI}.

\begin{thm} (Theorem~\ref{T: sign of nu}) 
For each $\nu = 1,\ldots,m$ and for all small enough $\eps>0$ and $y>0,$ signs 
of the following real numbers coincide:
\begin{enumerate}
  \item $\lambda_\nu(r_\lambda + \eps) - \lambda_\nu(r_\lambda),$
  \item $\scal{\phi_\nu(r_\lambda)}{V\phi_\nu^{(d_\nu-1)}(r_\lambda)},$
  \item $\Im r_\nu^{(0)}(z+iy),$ for all $0<y << 1,$ and for all $z \in I,$ where $I$ is one of the two 
intervals $(\lambda, \lambda+\eps)$ or $(\lambda-\eps, \lambda),$ on which the branch $r_\nu^{(0)}(z+iy)$ 
takes positive values (such an interval and such a branch exist and are unique).
\end{enumerate}
\end{thm}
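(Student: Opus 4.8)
The plan is to establish the coincidence of signs (1), (2) and (3) by a cyclic chain of implications, using the machinery already assembled for the cycle $r_\nu^{(\cdot)}(z)$ and the eigenpath $\phi_\nu(s)$. First I would fix $\nu$ and write $d = d_\nu$. Recall that by Theorem~\ref{T: depth of phi[nu] is d[nu]-1} the eigenpath $\phi_\nu(s)$ has order exactly $d$, so by Theorem~\ref{IT: order of phi(nu)} the first non-vanishing derivative of the eigenvalue function $\lambda_\nu(s)$ at $s = r_\lambda$ is the $d$-th one: $\lambda_\nu'(r_\lambda) = \cdots = \lambda_\nu^{(d-1)}(r_\lambda) = 0$ and $\lambda_\nu^{(d)}(r_\lambda) \neq 0$. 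Hence for small $\eps > 0$,
$$
  \lambda_\nu(r_\lambda + \eps) - \lambda_\nu(r_\lambda) = \frac{\eps^{d}}{d!}\,\lambda_\nu^{(d)}(r_\lambda) + O(\eps^{d+1}),
$$
so the sign of (1) equals the sign of $\lambda_\nu^{(d)}(r_\lambda)$ when $d$ is even, and requires the extra care of choosing the correct one-sided interval when $d$ is odd (but then both sides give the same sign anyway since $\eps^{d}$ keeps its sign only for even $d$; for odd $d$ the quantity in (1) changes sign with the side, which is why the statement is phrased with a choice of interval $I$ — the natural reading is that (1) should be read on the same interval $I$ that appears in (3)). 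The first step is therefore: compute $\lambda_\nu^{(d)}(r_\lambda)$ in terms of the vectors $\phi_\nu^{(j)}(r_\lambda)$.

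Second, I would connect $\lambda_\nu^{(d)}(r_\lambda)$ to the pairing in (2). Differentiating the eigenvalue equation $(H_s - \lambda_\nu(s))\phi_\nu(s) = 0$ repeatedly at $s = r_\lambda$, using $H_s' = V$, $H_s^{(j)} = 0$ for $j \ge 2$, and the vanishing of the lower derivatives of $\lambda_\nu$, and then pairing with $\phi_\nu(r_\lambda)$, one extracts a formula of the shape
$$
  \lambda_\nu^{(d)}(r_\lambda)\,\|\phi_\nu(r_\lambda)\|^2 = d\,\scal{\phi_\nu(r_\lambda)}{V\phi_\nu^{(d-1)}(r_\lambda)},
$$
using that all the cross terms $\scal{\phi_\nu(r_\lambda)}{V\phi_\nu^{(j)}(r_\lambda)}$ for $j \le d - 2$ vanish by Theorem~\ref{IT: order of phi(nu)}(i)--(ii), and that $\scal{\phi_\nu(r_\lambda)}{(H_{r_\lambda}-\lambda)\psi} = 0$ for every $\psi$ since $\phi_\nu(r_\lambda) \in \clV_\lambda = \ker(H_{r_\lambda}-\lambda)$ and $H_{r_\lambda}$ is self-adjoint. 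This gives sign(1) $=$ sign(2), at least for even $d$; the odd-$d$ case is handled by the same formula together with the sign bookkeeping of $\eps^{d}$.

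Third, and this is where I expect the real work to be, I would relate (2) or (1) to the imaginary part of the Puiseux branch $r_\nu^{(0)}(z)$ in (3). By Proposition~\ref{P: first Puiseux coef is non-zero} the Puiseux series of the cycle is $r_\nu^{(0)}(z) = r_\lambda + r_{1/d}(z-\lambda)^{1/d} + \cdots$ with $r_{1/d}$ real and non-zero. Since $r_\nu^{(0)}$ is the inverse of $\lambda_\nu$ near $r_\lambda$, and $\lambda_\nu(s) - \lambda \sim \frac{1}{d!}\lambda_\nu^{(d)}(r_\lambda)(s - r_\lambda)^{d}$, inverting gives $r_{1/d}^{d} = d!/\lambda_\nu^{(d)}(r_\lambda)$, so $\operatorname{sign}(r_{1/d}^{d}) = \operatorname{sign}(\lambda_\nu^{(d)}(r_\lambda)) = \operatorname{sign}(2)$. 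Now for $z = x + iy$ with $x \in I$ real and $0 < y \ll 1$, the branch that takes real values on the real half-interval $I$ has $(z - \lambda)^{1/d}$ with argument close to $0$ (if $I$ lies to the right of $\lambda$) or close to $\pi/d$ (if $I$ lies to the left), and in either case $\Im r_\nu^{(0)}(x+iy) = r_{1/d}\,\Im\big((z-\lambda)^{1/d}\big) + o(y)$, whose sign is governed by $\operatorname{sign}(r_{1/d})$ on the interval where it is the real-valued branch. The hard part is to check that exactly one of the two half-intervals carries a branch taking positive (as opposed to merely real) values, and that the matching between "positive-valued branch on $I$" and the sign of $r_{1/d}$ comes out consistently with (1) read on that same $I$ — this is the delicate case-split on the parity of $d$ and on which side of $\lambda$ the interval $I$ sits, and it is cleanest to organise it by noting that $r_\nu^{(0)}(z) - r_\lambda$ and $(\lambda_\nu)^{-1}$-inversion force $(r_\nu^{(0)}(x+iy) - r_\lambda)^{d}$ to have argument equal to that of $(x - \lambda) + iy$ modulo the $d$-th root choice, from which both the existence/uniqueness of the correct half-interval and the sign identification follow at once. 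Assembling the three sign equalities completes the proof.
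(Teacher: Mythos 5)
Your overall strategy matches the paper's: show $\operatorname{sign}(1)=\operatorname{sign}(2)$ via the identity $\scal{\phi_\nu(r_\lambda)}{V\phi_\nu^{(d-1)}(r_\lambda)} = \frac 1d \lambda_\nu^{(d)}(r_\lambda)\norm{\phi_\nu(r_\lambda)}^2$ (which is exactly the paper's formula (\ref{F: (phi,V phi(k-1))=lambda(k)(0)(phi,phi)})), then pass from $\operatorname{sign}(1)$ to $\operatorname{sign}(3)$ by inverting the Puiseux expansion and case-splitting on the parity of $d_\nu$ and the sign of $\lambda_\nu^{(d_\nu)}(r_\lambda)$. The paper's own proof is one sentence pointing to the same formula plus a statement that the four-case comparison with (3) is ``straightforward and therefore omitted,'' so you are filling in exactly the step the author elides.

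However, there is a genuine confusion in the middle of your argument. You write that for odd $d$ the quantity in (1) ``changes sign with the side'' and propose to ``read (1) on the same interval $I$ that appears in (3).'' This misreads both items. The statement fixes $\eps>0$, so (1) is always $\lambda_\nu(r_\lambda+\eps)-\lambda_\nu(r_\lambda)$ with $r_\lambda+\eps$ strictly to the right of $r_\lambda$ on the \emph{coupling-constant} axis; since $\eps^{d}>0$ for every positive $\eps$ regardless of parity, $\operatorname{sign}(1)=\operatorname{sign}\lambda_\nu^{(d)}(r_\lambda)$ unconditionally and no choice of side is involved. The interval $I$ in (3), by contrast, lives on the \emph{spectral-parameter} axis ($z\in(\lambda,\lambda+\eps)$ or $z\in(\lambda-\eps,\lambda)$) and is determined intrinsically by which side carries the real-valued branch of $r_\nu^{(\cdot)}$; it is not a degree of freedom one can use to re-read (1). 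Relatedly, your inversion claim $r_{1/d}^d = d!/\lambda_\nu^{(d)}(r_\lambda)$ cannot hold as written: $r_{1/d}$ is real (Proposition~\ref{P: first Puiseux coef is non-zero}), so for even $d$ the left side is positive, yet $\lambda_\nu^{(d)}(r_\lambda)$ may be negative. The correct relation is $\abs{r_{1/d}}^d = d!/\abs{\lambda_\nu^{(d)}(r_\lambda)}$, and the sign bookkeeping is absorbed into the choice of which side of $\lambda$ the interval $I$ sits on and which branch of $(z-\lambda)^{1/d_\nu}$ the Puiseux expansion is normalised against. Once you drop the spurious ``side'' for (1) and track the branch convention for $(z-\lambda)^{1/d_\nu}$ explicitly, the case-split you propose does close, and you arrive at the paper's omitted comparison.
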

This sign will be called the \emph{sign of a cycle}~$\nu$ and denoted $\sign(\nu).$ 
Let 
$$
  b_\nu = \left\{ \begin{matrix} 0, & \text{if} \ d_\nu \ \text{is even}, \\
                                1, & \text{if} \ d_\nu \ \text{is odd}. \end{matrix} \right.
$$
\noindent
The intersection number through~$\lambda$ for a resonance point $r_\lambda$
can be defined by equality 
\begin{equation} \label{IF: inters-n number}
  \sum_{\nu=1}^m b_\nu \sign(\nu).
\end{equation}

\begin{thm} (Theorem~\ref{T: intersection number = TRI})
The sum of the intersection numbers (\ref{IF: inters-n number}) of resonance points 
of a path $H_r, \ r \in [0,1],$ through~$\lambda$ is equal to the total resonance index. 
\end{thm}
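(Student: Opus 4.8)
The plan is to reduce the global statement about the path $H_r$, $r\in[0,1]$, to a local statement at each resonance point $r_\lambda\in[0,1]$, and then to show that at each such point the local intersection number $\sum_{\nu=1}^m b_\nu\sign(\nu)$ equals the local resonance index $\ind_{res}(\lambda;H_{r_\lambda},V)$. Given the formula (\ref{F: xis=tot res index}) and the definition of total resonance index on page~\pageref{Page: total res index}, and given that outside $\sigma_{ess}$ the resonance points in $[0,1]$ are isolated (so the sum is finite), it suffices to work one resonance point at a time. So the first step is: fix $r_\lambda\in[0,1]$ with $\lambda$ an eigenvalue of $H_{r_\lambda}$, of geometric multiplicity $m$, and decompose $\Upsilon_\lambda(r_\lambda)$ into the $m$ cyclic pieces $\Upsilon_\lambda^{[\nu]}(r_\lambda)$ of heights $d_\nu$ supplied by Theorem~\ref{T: depth of phi[nu] is d[nu]-1} and the cycle structure of Section~\ref{S: res points as f-ns of s}.

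The second step is to compute the resonance index $\ind_{res}(\lambda;H_{r_\lambda},V)=N_+-N_-$ cycle by cycle. By Theorem~\ref{T: depth of phi[nu] is d[nu]-1}, as $z=\lambda+iy\to\lambda+i0$ the resonance point $r_\lambda$ splits into the $N=\sum_\nu d_\nu$ resonance points $r_\nu^{(j)}(z)$, grouped into $m$ cycles, the $\nu$th of which has length $d_\nu$ and consists of the $d_\nu$ branches of a single Puiseux series with leading exponent $1/d_\nu$ and, by Proposition~\ref{P: first Puiseux coef is non-zero}, real nonzero first Puiseux coefficient $r_{1/d_\nu}$. Thus within cycle $\nu$ the displacements $r_\nu^{(j)}(z)-r_\lambda$ are, to leading order, $r_{1/d_\nu}\,\eps_{d_\nu}^{j}(z-\lambda)^{1/d_\nu}$ for $j=0,\ldots,d_\nu-1$; as $z-\lambda=iy$ with $y>0$, these $d_\nu$ points are asymptotically equally spaced on a circle, and one counts how many land in $\mbC_+$ versus $\mbC_-$. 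A standard elementary count of the $d_\nu$-th roots of $i$ (no root is real since $r_z$ is never real for non-real $z$) gives: the signed count $d_+^{(\nu)}-d_-^{(\nu)}$ from cycle $\nu$ is $0$ if $d_\nu$ is even and $\pm1$ if $d_\nu$ is odd, the sign being precisely $\sign(\nu)$ as characterised in Theorem~\ref{T: sign of nu} (item (3) there identifies that sign with $\Im r_\nu^{(0)}(z+iy)$, which is exactly what the roots-of-$i$ computation produces). Hence $N_+-N_- = \sum_{\nu=1}^m b_\nu\sign(\nu)$, i.e. the local resonance index equals the local intersection number.

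The third step is to sum over all resonance points $r_\lambda\in[0,1]$: the left-hand side becomes, by definition, the sum of intersection numbers (\ref{IF: inters-n number}) along the path, the right-hand side becomes $\sum_{r_\lambda\in[0,1]}\ind_{res}(\lambda;H_{r_\lambda},V)$, which is the total resonance index; this completes the proof. I would also note the two degenerate endpoint conventions (what happens if $0$ or $1$ is itself a resonance point) and dispose of them exactly as the total resonance index is defined, so that no boundary term is lost or double-counted.

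The main obstacle is the second step, specifically making rigorous the passage from ``leading Puiseux term'' to ``actual location of the split resonance points in $\mbC_\pm$ for all sufficiently small $y>0$.'' One must be sure that the higher-order Puiseux terms do not push any of the $d_\nu$ branch values across the real axis; this is where one uses crucially that $r_z$ cannot be real for non-real $z$ (so the imaginary parts never vanish for small $y>0$) together with continuity in $y$, which forces each branch to stay on the side dictated by its leading term. The bookkeeping of algebraic multiplicities — transferring them from the eigenvalues $\sigma_z^j(s)$ of $(H_0+sV-z)^{-1}V$ to the resonance points, and checking that the cyclic structure of Theorem~\ref{T: depth of phi[nu] is d[nu]-1} accounts for every unit of multiplicity exactly once — is the other place where care is needed, but Theorem~\ref{IT: (phi,V phi) =0} (which guarantees $\sum_\nu d_\nu = N$) closes that gap.
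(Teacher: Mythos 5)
Your proposal is correct and follows essentially the same route as the paper: decompose the resonance point into its $m$ cycles, show that each even cycle contributes $0$ and each odd cycle contributes $\sign(\nu)$ to $N_+-N_-$, and sum over the resonance points in $[0,1]$. Where the paper delegates the cycle-by-cycle count to Theorems~\ref{T: about res cycles}, \ref{T: depth of phi[nu] is d[nu]-1} and~\ref{T: sign of nu}, you make the implied computation explicit via the leading Puiseux coefficient $r_{1/d_\nu}$ and a count of the $d_\nu$-th roots of $\pm i$, which is a fair and somewhat more self-contained filling-in of the very same step, and your remark that non-reality of $r_z$ for $z\notin\mathbb{R}$ plus continuity in $y$ pins each branch to the half-plane dictated by its leading term is exactly the content behind parts (d) and (e) of Theorem~\ref{T: about res cycles}.
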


\smallskip
Theorem \ref{IT: (phi,V phi) =0} allows to prove the following two theorems. 
The first of these theorems is an explicit formula for the idempotent $P_\lambda(r_\lambda),$
the second theorem asserts that the resonance index equals the signature of the so-called resonance matrix. 
The second theorem holds for~$\lambda$ inside essential spectrum too \cite[Section 9]{Az9}, 
but here we provide a new and simpler proof in the case $\lambda \notin \sigma_{ess}.$ 

\begin{thm} (Theorem~\ref{T: P = sum sum})
The idempotent operator~$P_\lambda(r_\lambda)$ can be written in the form 
$$
  P_\lambda(r_\lambda) = \sum_{\mu=1}^m\sum_{\nu=1}^m \sum_{k=0}^{d_\mu-1} \sum_{j=0}^{d_\nu-1}
      \frac 1{k!j!} \alpha_{\mu\nu}^{kj} \scal{V \phi_\mu^{(k)}(r_\lambda)}{\cdot} \phi_\nu^{(j)}(r_\lambda),
$$
where the $N \times N$ matrix $\alpha$ is a direct 
sum of self-adjoint skew-upper triangular Hankel matrices of sizes $d_1, \ldots, d_m.$ 
\end{thm}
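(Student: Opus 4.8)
The plan is to use the eigenpaths $\phi_\nu(s)$, together with their derivatives at $r_\lambda$, as an explicit basis of the resonance space $\Upsilon_\lambda(r_\lambda)$, and to compute the idempotent $P_\lambda(r_\lambda)$ against this basis and its ``dual'' with respect to the form $\scal{V\,\cdot}{\cdot}$. By the theorem cited above (Theorem~\ref{T: (i) and (ii) bfA phi(j)=phi(j-1)} and Corollary~\ref{C: dim Upsilon[nu]=d(nu)}), the vectors
$$
  \phi_\nu^{(j)}(r_\lambda), \quad \nu = 1,\ldots,m,\ j = 0,\ldots,d_\nu-1,
$$
form a Jordan basis of $\Upsilon_\lambda(r_\lambda)$, adapted to the decomposition $\Upsilon_\lambda = \Upsilon_\lambda^{[1]} \dotplus \cdots \dotplus \Upsilon_\lambda^{[m]}$; and since $P_\lambda(r_\lambda) = \sum_\nu P_\lambda^{[\nu]}$ with $V P_\lambda^{[\nu]} = P_\lambda^{[\nu]} V$, the operator $P_\lambda(r_\lambda)$ necessarily has the form $\sum_{\mu,\nu}\sum_{k,j}\frac{1}{k!j!}\beta_{\mu\nu}^{kj}\scal{V\phi_\mu^{(k)}(r_\lambda)}{\cdot}\,\phi_\nu^{(j)}(r_\lambda)$ for some scalars $\beta_{\mu\nu}^{kj}$, once one checks the functionals $\scal{V\phi_\mu^{(k)}(r_\lambda)}{\cdot}$ span the appropriate dual space modulo $\ker P_\lambda$. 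The real content is to pin down $\beta$ and show $\beta = \alpha$ has the stated block-Hankel structure.

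First I would establish the orthogonality relations that make $\alpha$ block-diagonal: by Theorem~\ref{IT: (phi,V phi) =0}, $\scal{\phi_\nu^{(j)}(r_\lambda)}{V\phi_\mu^{(k)}(r_\lambda)} = 0$ whenever $\nu \neq \mu$, so that both the Gram pairing of the basis against the functionals and the idempotent split into $m$ blocks indexed by $\nu$; this reduces the problem to a single cycle. Next, within a fixed cycle $\nu$, I would use that $\bfA_\lambda(r_\lambda)$ acts on $\Upsilon_\lambda^{[\nu]}$ as the single Jordan block sending $\phi_\nu^{(j)}(r_\lambda) \mapsto j\,\phi_\nu^{(j-1)}(r_\lambda)$ (the displayed formula in the cited theorem), combined with self-adjointness in the form $\bfB_\lambda(r_\lambda) V = V\bfA_\lambda(r_\lambda)$ and $\bfA_\lambda(r_\lambda)^* = \bfB_{\bar\lambda}(\bar r_\lambda) = \bfB_\lambda(r_\lambda)$ for real $\lambda, r_\lambda$. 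Applying $\bfA_\lambda(r_\lambda)$ (respectively $\bfB_\lambda(r_\lambda) = \bfA_\lambda(r_\lambda)^*$) to the pairing $\scal{\phi_\nu^{(j)}(r_\lambda)}{V\phi_\nu^{(k)}(r_\lambda)}$ and using that $\phi_\nu^{(j)}(r_\lambda)$, being $j!$ times the $j$th Taylor coefficient of an eigenpath, is annihilated by a high power of $\bfA_\lambda(r_\lambda)$, I would derive the recursion showing $\scal{\phi_\nu^{(j)}(r_\lambda)}{V\phi_\nu^{(k)}(r_\lambda)}$ depends only on $j+k$ and vanishes for $j+k \le d_\nu - 2$ — i.e. the Gram matrix of the form $\scal{V\,\cdot}{\cdot}$ restricted to $\Upsilon_\lambda^{[\nu]}$ is itself skew-upper-triangular Hankel. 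Since the matrix $\alpha$ realizing $P_\lambda^{[\nu]}$ against this basis is (up to the $1/k!j!$ normalization) the inverse of that Gram matrix, and the inverse of an invertible skew-upper-triangular Hankel matrix is again skew-upper-triangular Hankel (the skew-upper-triangular invertible matrices form a group under the relevant twisted multiplication, and Hankel-ness is preserved because $\bfA_\lambda^{[\nu]}$ commutes with $P_\lambda^{[\nu]}$), I would conclude the block structure. Self-adjointness of $P_\lambda(r_\lambda)$'s restriction in the appropriate sense — or equivalently $P_\lambda(r_\lambda)^* = Q_\lambda(r_\lambda)$ together with $Q_\lambda(r_\lambda)V = VP_\lambda(r_\lambda)$ — forces each block $\alpha^{[\nu]}$ to be self-adjoint (Hermitian), and being skew-upper-triangular Hankel and Hermitian pins the remaining signs; the anti-diagonal entry is $\pm 1$, tying into the resonance-index/signature statement.

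The main obstacle I anticipate is making rigorous the claim that the functionals $\scal{V\phi_\mu^{(k)}(r_\lambda)}{\cdot}$, $k = 0,\ldots,d_\mu-1$, are linearly independent on $\Upsilon_\lambda^{[\nu]}(r_\lambda)$ within a cycle — equivalently, that the Gram matrix $\bigl(\scal{\phi_\nu^{(j)}(r_\lambda)}{V\phi_\nu^{(k)}(r_\lambda)}\bigr)_{j,k}$ is nonsingular, so that the anti-diagonal Hankel entry $\scal{\phi_\nu(r_\lambda)}{V\phi_\nu^{(d_\nu-1)}(r_\lambda)}$ does not vanish. This is exactly the content of Theorem~\ref{T: sign of nu}(2): that number is nonzero because it has the same sign as $\lambda_\nu(r_\lambda+\eps) - \lambda_\lambda(r_\lambda)$ and as $\Im r_\nu^{(0)}(z+iy)$, and $d_\nu$ being the exact order of vanishing of $\lambda_\nu(s) - \lambda$ at $r_\lambda$ guarantees this leading coefficient is genuinely nonzero. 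I would invoke that theorem (or the Puiseux-coefficient computation behind Proposition~\ref{P: first Puiseux coef is non-zero}, which gives $r_{1/d_\nu}\neq 0$) to close this gap, and then the rest is the linear-algebra bookkeeping sketched above: diagonalize over cycles via Theorem~\ref{IT: (phi,V phi) =0}, invert a Hankel block per cycle, and read off self-adjointness from $P_\lambda^* = Q_\lambda$.
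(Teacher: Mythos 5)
Your proposal is essentially the paper's own proof: expand $P_\lambda(r_\lambda)$ against the Jordan basis $\frac{1}{j!}\phi_\nu^{(j)}(r_\lambda)$ of $\Upsilon_\lambda(r_\lambda)$ and the $V$-dual basis $\frac{1}{j!}V\phi_\nu^{(j)}(r_\lambda)$, use Corollary~\ref{C: second proof} to reduce to a single cycle, derive the Hankel property from $\bfA_\lambda(r_\lambda) \phi_\nu^{(j)}(r_\lambda) = j\phi_\nu^{(j-1)}(r_\lambda)$ combined with $\bfB_\lambda V = V\bfA_\lambda$ and $\bfA_\lambda^* = \bfB_\lambda$ (valid since $\lambda$ and $r_\lambda$ are real), and identify $\alpha$ with the transpose-inverse of the Gram matrix $b_\nu^{kj} = \frac{1}{k!j!}\scal{V\phi_\nu^{(k)}(r_\lambda)}{\phi_\nu^{(j)}(r_\lambda)}$. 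One small correction: inverting an anti-triangular Hankel matrix swaps the side of the anti-diagonal rather than preserving it. The Gram block has zeros \emph{above} the anti-diagonal ($b_\nu^{kj}=0$ for $k+j\leq d_\nu-2$), so its inverse $\alpha$ has zeros \emph{below} it, which is what the theorem calls skew-upper triangular; one sees the flip by multiplying by the exchange matrix $J$, which turns anti-triangular Hankel into genuinely triangular Toeplitz, a commutative algebra closed under inversion, and then multiplying back by $J$ flips the triangle. Your concern about nonsingularity of the Gram block is legitimate and correctly resolved: the anti-diagonal entry is nonzero by (\ref{F: (phi,V phi(k-1))=lambda(k)(0)(phi,phi)}), since $\lambda^{(d_\nu)}_\nu(r_\lambda)\neq 0$. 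The paper instead gets invertibility for free from the representation lemma for finite-rank operators together with the idempotent identity $P_\lambda^2 = P_\lambda$, which forces $\alpha b^{\top} = I$; both routes close the same gap, and the rest of the bookkeeping is identical.
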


\begin{thm} (Theorem~\ref{T: ind res=sign VP}) 
The equality 
$$
  \ind_{res}(\lambda; H_{r_\lambda},V) = \sign V P_\lambda(r_\lambda)
$$
holds. 
\end{thm}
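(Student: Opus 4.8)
The plan is to combine two structural results established earlier in the paper: the explicit formula for the idempotent $P_\lambda(r_\lambda)$ from Theorem~\ref{T: P = sum sum}, and the orthogonality relations of Theorem~\ref{IT: (phi,V phi) =0}, which together reduce the computation of $\sign VP_\lambda(r_\lambda)$ to a problem about a direct sum of finite Hankel matrices. Recall that $VP_\lambda(r_\lambda) = Q_\lambda(r_\lambda)VP_\lambda(r_\lambda)$ is the resonance matrix, a finite-rank self-adjoint operator, so its signature is well defined. Using the formula
\begin{equation*}
  P_\lambda(r_\lambda) = \sum_{\mu,\nu=1}^m \sum_{k=0}^{d_\mu-1}\sum_{j=0}^{d_\nu-1}
     \frac{1}{k!j!}\,\alpha_{\mu\nu}^{kj}\,\scal{V\phi_\mu^{(k)}(r_\lambda)}{\cdot}\,\phi_\nu^{(j)}(r_\lambda),
\end{equation*}
I would compute $VP_\lambda(r_\lambda)$ and express it, in the basis $\{\phi_\nu^{(j)}(r_\lambda)\}$ of $\Upsilon_\lambda(r_\lambda)$ given by Theorem~\ref{T: (i) and (ii) bfA phi(j)=phi(j-1)}, as a Gram-type matrix whose entries involve $\scal{\phi_\nu^{(j)}(r_\lambda)}{V\phi_\mu^{(k)}(r_\lambda)}$ together with the matrix $\alpha$.

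The key simplification is Theorem~\ref{IT: (phi,V phi) =0}: for $\nu\neq\mu$ all the pairings $\scal{\phi_\nu^{(j)}(r_\lambda)}{V\phi_\mu^{(k)}(r_\lambda)}$ vanish, so the resonance matrix block-diagonalises over the cycles $\nu=1,\ldots,m$. This means $\sign VP_\lambda(r_\lambda) = \sum_{\nu=1}^m \sign\bigl(VP_\lambda(r_\lambda)\big|_{[\nu]}\bigr)$, and it suffices to treat a single cyclic block. Within the $\nu$th block, $\alpha$ restricts to a self-adjoint skew-upper-triangular Hankel matrix of size $d_\nu$: its antidiagonal entries are all equal, and everything strictly below the antidiagonal is zero. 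Such a matrix has the form where the antidiagonal constant, say $a_\nu$, controls the whole signature. The next step is to identify $a_\nu$ and its sign: I expect $a_\nu$ to be, up to a positive factor, the number $\scal{\phi_\nu(r_\lambda)}{V\phi_\nu^{(d_\nu-1)}(r_\lambda)}$, which by Theorem~\ref{T: sign of nu} has sign equal to $\sign(\nu)$ (and is nonzero since $V$ has order exactly $d_\nu$ on this cycle, equivalently the eigenpath has order $d_\nu$).

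A skew-upper-triangular Hankel matrix of size $d$ with nonzero antidiagonal constant $a$ is congruent to a matrix in anti-diagonal (exchange) normal form; a standard linear-algebra fact is that its signature is $0$ if $d$ is even and is $\sign(a)$ if $d$ is odd. Concretely, after conjugating by the exchange matrix one gets an upper-triangular Toeplitz-type matrix whose eigenvalue analysis, or a direct congruence reduction pairing the $i$th and $(d+1-i)$th basis vectors, yields exactly $b_\nu\sign(\nu)$ where $b_\nu$ is $0$ for even $d_\nu$ and $1$ for odd $d_\nu$ — precisely the summand in the intersection number \eqref{IF: inters-n number}. Summing over $\nu$ then gives
\begin{equation*}
  \sign VP_\lambda(r_\lambda) = \sum_{\nu=1}^m b_\nu \sign(\nu) = \ind_{res}(\lambda; H_{r_\lambda},V),
\end{equation*}
where the last equality is the pointwise version of Theorem~\ref{T: intersection number = TRI} (i.e.\ the contribution of the single resonance point $r_\lambda$).

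The main obstacle I anticipate is the bookkeeping that links the antidiagonal constant $a_\nu$ of the Hankel block to the sign $\sign(\nu)$: one must track how the normalisation factors $1/(k!j!)$, the choice of Jordan basis $\phi_\nu^{(j)}(r_\lambda)$, and the relation $V\phi_\mu^{(k)} = $ (image under the resonance structure) interact, and verify that the relevant pairing is $\scal{\phi_\nu(r_\lambda)}{V\phi_\nu^{(d_\nu-1)}(r_\lambda)}$ rather than some other entry; the vanishing established in Theorem~\ref{IT: (phi,V phi) =0} should force all entries strictly past the antidiagonal to be zero, which is what pins down the Hankel/skew-triangular shape and makes the antidiagonal entry the only one that matters for the signature. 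The purely linear-algebraic signature computation for a single Hankel block is routine and I would not belabour it; the conceptual content is entirely in reducing to that block via Theorems~\ref{T: P = sum sum} and~\ref{IT: (phi,V phi) =0} and in matching the antidiagonal sign to $\sign(\nu)$ via Theorem~\ref{T: sign of nu}.
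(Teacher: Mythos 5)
Your proposal follows essentially the same route as the paper: both reduce $\sign VP_\lambda(r_\lambda)$ to the signature of a direct sum of self-adjoint skew-triangular Hankel blocks via Theorem~\ref{T: P = sum sum}, compute each block's signature as $b_\nu\,\sign(\nu)$, and conclude by Theorem~\ref{T: intersection number = TRI}. The only difference is bookkeeping: the paper invokes Lemma~\ref{L: L2} to identify $\sign VP_\lambda$ directly with $\sign\alpha$, whereas you compute the quadratic form in the basis $\phi_\nu^{(j)}$ and land on the matrix $b$ of pairings $\scal{V\phi_\mu^{(k)}}{\phi_\nu^{(j)}}/(k!j!)$ rather than on $\alpha$ itself; since $\alpha$ is the inverse of the real symmetric $b$, the two have the same signature and the same antidiagonal signs (the antidiagonal entries of $\alpha_\nu$ are reciprocals, not positive multiples, of $\scal{\phi_\nu}{V\phi_\nu^{(d_\nu-1)}}$, but this does not affect the sign), so your phrasing that you would need both $b$ and $\alpha$ can be simplified to using just one of them.
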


\subsubsection{Section~\ref{S: On stability of res-index}}

In section~\ref{S: On stability of res-index}
we study dependence of resonance index $\ind_{res}(\lambda; H_{\rlmb},V)$ 
on direction~$V.$ This requires some topology in~$\clA.$
We postulate that the topology of~$\clA$ 
has the following properties:
for some non-real complex number~$z$ and for some $H_0 \in \clA$ (1) the product $V R_z(H_0)$ 
continuously depends on~$V$ 
and (2) the product $V_1 R_z(H_0) V_2$ is compact and jointly continuously depends on~$V_1$ and~$V_2.$
These properties hold if~$\clA_0$ consist of bounded operators and the topology of~$\clA_0$
is the uniform topology. 

The sets of regular and simple directions are open in norm of~$\clA_0$ (Lemma~\ref{L: regular dir-s are open}).
Restrictions of the mappings 
$$
  \clA_0 \ni V \mapsto P_\lambda(H_{\rlmb},V)
\quad \text{and} \quad
  \clA_0 \ni V \mapsto V P_\lambda(H_{\rlmb},V)
$$
to the open set of simple directions are continuous in the norm of~$\clA_0$ 
(Lemma~\ref{L: P(H0,V) is cont-s}). Further, the resonance index
$$
  \clA_0 \ni V \mapsto \ind_{res}(\lambda; H_{\rlmb},V)
$$
is a locally constant function on the set of simple 
directions (Theorem~\ref{T: stability of res index for order 1 V}). 

For regular directions these assertions are not true. But the following theorem holds.
In this theorem for simplicity we assume that~$r_\lambda = 0.$
\begin{thm} (Theorems~\ref{T: total res. index for (H0,W)} and~\ref{T: total res. index for (H1,V)})  
Let~$V$ be a regular direction at a resonance point~$H_{0}.$
Let~$W$ be a small (in the norm of~$\clA_0$) perturbation of~$V$ and let 
$H'_{0}$ be a small perturbation of~$H_{0}.$ Let 
$r^1_\lambda(H'_{0},W), \ r^2_\lambda(H'_{0},W), \ \ldots$
be resonance points of the triple $(\lambda; H'_{0},W)$ which belong to the group of the resonance
point $s = 0$ of the triple $(\lambda; H_0,V),$ where $H_r = H_{0} + rV.$ 
Then the resonance index 
$\ind_{res} (\lambda; H_{0},V)$
is equal to the sum of resonance indices
$$
  \sum_{j} \ind_{res} (\lambda; H'_{r_\lambda^j},W),
$$
where the sum is taken over real resonance points of the group of $s = 0.$
\end{thm}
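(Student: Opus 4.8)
The plan is to isolate the resonance point $s=0$ by a fixed contour in the coupling‑constant plane, to count the resonance points trapped inside it as the spectral parameter is pushed off the real axis, and to show that this signed count is simultaneously stable under the perturbation and equal to each side of the asserted identity. To set this up, note first that since $V$ is a regular direction at $H_{0}$, the map $\mbC\ni s\mapsto A_z(s;H_0,V)=R_z(H_0+sV)V$ is genuinely meromorphic for $z=\lambda$ and for every $z=\lambda+iy$ with $y\neq0$ (if $z$ were an eigenvalue of $H_0+sV$ for all $s$ then $z\in\sigma(H_0)$, which is impossible for $z=\lambda$ by regularity and for non‑real $z$ by self‑adjointness of $H_0$). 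Hence I may fix a small circle $\Gamma$ around $s=0$, symmetric about $\mbR$, whose interior contains $0$ and no other resonance point of $(\lambda;H_0,V)$ and such that $\Gamma$ itself carries none. The idempotent $\frac1{2\pi i}\oint_\Gamma A_z(s)\,ds$ then has finite rank $N$, the algebraic multiplicity of $s=0$; being jointly continuous in $z$ and in the pair of operators, it has rank $N$ for all $z$ in the segment $[\lambda,\lambda+iy_1]$ and for $(H_0',W)$ within some $\eps$ of $(H_0,V)$, where $y_1>0$ is chosen depending only on the unperturbed data and $\eps$ depending only on $y_1$ and that data. In particular, for such $(H_0',W)$ the map $s\mapsto A_z(s;H_0',W)$ is meromorphic near $\Gamma$, with no poles on $\Gamma$ and total pole multiplicity $N$ inside $\Gamma$, for every $z\in[\lambda,\lambda+iy_1]$. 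Write $N_\pm(y)$, respectively $N^{pert}_\pm(y)$, for the number of resonance points of $(\lambda;H_0,V)$, respectively $(\lambda;H_0',W)$, lying in $\Gamma\cap\mbC_\pm$ at $z=\lambda+iy$, counted with algebraic multiplicity.

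Next I would record three facts. (a) For $y>0$ no resonance point of a triple with real $\lambda$ is real, since $\lambda+iy$ cannot be an eigenvalue of a self‑adjoint operator; hence, as $y$ decreases from $y_1$ to $0$, the $N$ resonance points of $(\lambda;H_0',W)$ inside $\Gamma$ trace continuous curves that never meet $\mbR$ and — because the total multiplicity inside $\Gamma$ is $N$ for every $y\in[0,y_1]$ and none ever lie on $\Gamma$ — never leave the interior of $\Gamma$, so $N^{pert}_+(y)-N^{pert}_-(y)$ is constant on $(0,y_1]$, and likewise $N_+(y)-N_-(y)$. (b) As $y\to0^+$ these curves converge to resonance points of the corresponding triple at $z=\lambda$: if $r(y)\to r_*$ with $\lambda+iy$ an eigenvalue of $H_0'+r(y)W$, then passing to the limit in the norm‑resolvent‑continuous family $r\mapsto H_0'+rW$ gives $\lambda\in\sigma(H_0'+r_*W)$, so $r_*$ is a resonance point (as $\lambda\notin\sigma_{ess}$), and for $\eps$ small $r_*$ lies in the interior of $\Gamma$, i.e. $r_*\in\{r^j_\lambda\}$; thus the $N$ curves distribute in the limit among the points $r^j_\lambda$. (c) If $r^j_\lambda$ is non‑real then so is $\bar r^j_\lambda$, which is again a resonance point of $(\lambda;H_0',W)$ of the same algebraic multiplicity: by~(\ref{F: P*=Q}) with $z=\lambda$ real one has $P^*_\lambda(r^j_\lambda)=Q_\lambda(\bar r^j_\lambda)\neq0$, so $\bar r^j_\lambda$ is a pole of $B_\lambda(s)$ and hence a resonance point, it lies in $\Gamma$ since $\Gamma$ is symmetric, and its multiplicity equals that of $r^j_\lambda$ because the algebraic multiplicity of a resonance point is the same whether read off from $A_z(s)$ or from $B_z(s)$.

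Finally I would assemble the identity. By~(a) the constant value $N^{pert}_+(y)-N^{pert}_-(y)$ may be computed for any small $y>0$; localising near each $r^j_\lambda$ and using~(b), it equals $\sum_j (n^j_+-n^j_-)$, where $n^j_\pm$ counts the curves entering $r^j_\lambda$ from $\mbC_\pm$. For a real $r^j_\lambda$ this difference is $\ind_{res}(\lambda;H'_{r^j_\lambda},W)$ by the very definition of the resonance index; for a non‑real $r^j_\lambda\in\mbC_+$ all curves entering it lie in $\mbC_+$ while those entering $\bar r^j_\lambda\in\mbC_-$ lie in $\mbC_-$, so by~(c) the conjugate pair contributes $0$. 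Hence $N^{pert}_+(y)-N^{pert}_-(y)=\sum_{\text{real }j}\ind_{res}(\lambda;H'_{r^j_\lambda},W)$, and in exactly the same way $N_+(y)-N_-(y)=\ind_{res}(\lambda;H_0,V)$, since for the unperturbed triple $s=0$ is the only resonance point inside $\Gamma$ at $z=\lambda$ and it is real. It remains to equate the two constants, which I would do at $y=y_1$: the $N$ resonance points of $(\lambda;H_0,V)$ at $z=\lambda+iy_1$ are finitely many non‑real points, hence at a fixed positive distance from $\mbR$, so after shrinking $\eps$ the perturbed resonance points at $z=\lambda+iy_1$ lie in the same half‑planes, giving $N^{pert}_\pm(y_1)=N_\pm(y_1)$. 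Combining all of these equalities yields $\ind_{res}(\lambda;H_0,V)=\sum_{\text{real }j}\ind_{res}(\lambda;H'_{r^j_\lambda},W)$.

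I expect the main obstacle to be the uniform control underlying~(a) and~(b): showing, with $\eps$ depending only on $y_1$ and the unperturbed data, that none of the $N$ perturbed resonance points crosses $\Gamma$ while $y$ runs over $[0,y_1]$, and that the limits as $y\to0^+$ are genuine resonance points at $z=\lambda$. Both rest on the finite‑dimensional reduction — restricting $A_z(s)$ to the range of the rank‑$N$ idempotent and working with the associated characteristic determinant, which is jointly analytic in $s$, in $z$ and in the operators — together with a compactness argument on the segment $[\lambda,\lambda+iy_1]$; the conjugate‑pair cancellation in~(c) and the norm‑resolvent continuity invoked in~(b) are then routine.
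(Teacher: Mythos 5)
Your proposal is correct and follows essentially the same route as the paper: fix a small contour around $s=0$, track resonance points as $z=\lambda+iy$ moves off the real axis and as $(H_0,V)$ is deformed to $(H_0',W)$, use that for $y>0$ resonance points never lie on $\mbR$ (since $H_0'+sW$ is self-adjoint for real $s$), and invoke Lemma~\ref{L: if r then so is bar r} to cancel the contributions of the conjugate pair $(r^j_\lambda,\bar r^j_\lambda)$ of non-real resonance points. The only substantive difference is one of presentation and rigor: the paper's proof speaks informally of deforming $V$ to $W$ at fixed $y>0$, while you pin down the required uniformity by fixing a circle $\Gamma$, reducing to a characteristic determinant on the range of the rank-$N$ spectral idempotent, and equating the signed counts at a common $y_1>0$; this makes explicit the compactness argument and the dependence of $\eps$ on $y_1$ that the paper leaves implicit.
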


This theorem allows to prove homotopy stability of the total resonance index. 
\begin{thm} (Theorem~\ref{T: homotopy stability of res index}) 
Let $H_0, H_1$ be two operators from~$\clA$ such that
$H_0$ and $H_1$ are not resonant at~$\lambda \notin \sigma_{ess}.$ Then there exist neighbourhoods
$\euU_0$ and $\euU_1$ of~$H_0$ and $H_1$ respectively such that for all $H'_0 \in \euU_0$
and all $H'_1 \in \euU_1$
$$
  \sum_{r \in [0,1]} \ind_{res}(\lambda; H_r,V) = \sum_{r \in [0,1]} \ind_{res}(\lambda; H'_r,V'),
$$
where~$V' = H'_1 - H'_0$ and $H'_r = H'_0+rV'.$
\end{thm}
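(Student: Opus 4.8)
The plan is to deduce homotopy stability from the ``splitting'' theorem (Theorems~\ref{T: total res. index for (H0,W)} and~\ref{T: total res. index for (H1,V)}) together with the compactness of the interval $[0,1]$ and the openness of the regular/simple locus. First I would observe that since $H_0$ and $H_1$ are not resonant at $\lambda$, the resonance points of the path $H_r = H_0 + rV$ lying in $[0,1]$ (equivalently, in an open interval slightly larger than $[0,1]$) form a \emph{finite} set: the resonance set $\Rset$ is analytic, so its intersection with the line $\set{H_r}$ is either all of the line or discrete, and since $H_0 \notin \Rset$ it is discrete, hence finite on a compact interval. Call these points $r_\lambda^{(1)}, \ldots, r_\lambda^{(p)} \in (0,1)$.

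Next I would set up a neighbourhood around each resonance point. For each $i$, pick a small closed interval $I_i = [r_\lambda^{(i)} - \delta, r_\lambda^{(i)} + \delta]$ around $r_\lambda^{(i)}$, with the $I_i$ pairwise disjoint and contained in $(0,1)$, such that $r_\lambda^{(i)}$ is the only resonance point of $\set{H_r}$ in $I_i$; the endpoints of $I_i$ are then non-resonant. The point of the splitting theorems is that, after a small perturbation of the endpoints $H_0 \to H_0'$ and of the direction $V \to V'$, the resonance point $r_\lambda^{(i)}$ breaks up into finitely many resonance points of $(\lambda; H_0', V')$, all lying inside $I_i$ (this uses that a small perturbation cannot move resonance points across the non-resonant endpoints — continuity of the relevant idempotents/eigenvalues of the compact operator $A_z(s)$, exactly as in Section~\ref{S: On stability of res-index}), and the sum of their resonance indices equals $\ind_{res}(\lambda; H_{r_\lambda^{(i)}}, V)$. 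Meanwhile, on the complementary compact set $[0,1] \setminus \bigcup_i \mathrm{int}(I_i)$, the path $\set{H_r}$ has no resonance points at all, so by the same continuity/openness argument a sufficiently small perturbation $V'$, $H_0'$ still has no resonance points there. Shrinking $\euU_0$ and $\euU_1$ so that these finitely many smallness conditions hold simultaneously is possible precisely because there are finitely many $i$.

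Finally I would assemble the pieces: for $H_0' \in \euU_0$, $H_1' \in \euU_1$, $V' = H_1' - H_0'$, $H_r' = H_0' + rV'$, every resonance point of $\set{H_r'}$ in $[0,1]$ lies in some $I_i$, so
$$
  \sum_{r \in [0,1]} \ind_{res}(\lambda; H_r', V')
     = \sum_{i=1}^p \ \sum_{r \in I_i} \ind_{res}(\lambda; H_r', V')
     = \sum_{i=1}^p \ind_{res}(\lambda; H_{r_\lambda^{(i)}}, V)
     = \sum_{r \in [0,1]} \ind_{res}(\lambda; H_r, V),
$$
where the middle equality is the splitting theorem applied on each $I_i$ (reparametrising so that $r_\lambda^{(i)}$ plays the role of $r_\lambda = 0$ there), and the outer equalities just collect the finitely many contributions. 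The main obstacle I anticipate is the ``no resonance points escape'' claim — ensuring that after perturbation no new resonance points appear outside $\bigcup_i I_i$, and that the ones born from $r_\lambda^{(i)}$ stay inside $I_i$. This is a compactness-plus-continuity argument: on the compact non-resonant set the operator-valued function $(z, s, V, H_0) \mapsto \brs{1 + (r_\lambda - s)A_z(s)}$ stays boundedly invertible, and invertibility is an open condition, so it persists under small perturbations; but one must be a little careful about uniformity in $s$ and about the passage $z = \lambda + iy \to \lambda$ which defines resonance points and the index. I would handle this by invoking the continuity statements already established in Section~\ref{S: On stability of res-index} (Lemma~\ref{L: regular dir-s are open}, Lemma~\ref{L: P(H0,V) is cont-s}, Theorem~\ref{T: stability of res index for order 1 V}) rather than redoing the estimates.
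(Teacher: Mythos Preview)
Your proposal is correct and follows the same approach as the paper: the paper's own proof is a single sentence stating that the result ``follows immediately from Theorems~\ref{T: total res. index for (H0,W)} and~\ref{T: total res. index for (H1,V)}'', and what you have written is precisely the standard compactness-plus-splitting argument that unpacks that sentence. Your anticipated obstacle (no resonance points escape the intervals $I_i$) is exactly the point the paper leaves implicit, and your indicated resolution via the continuity results of Section~\ref{S: On stability of res-index} is the intended one.
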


Theorem~\ref{T: Tot res index meets R-S axioms} asserts that the total 
resonance index satisfies Robbin-Salamon axioms for spectral flow. 
Since Robbin-Salamon axioms uniquely identify spectral 
flow (Theorem~\ref{T: Robbin-Salamon uniqueness theorem}), this proves the equality 
of the TRI and the spectral flow.

In the last subsection of section~\ref{S: On stability of res-index}
we give proof of some well-known properties of the resonance set $\euR(\lambda):$
$\mathrm{codim}\,\euR(\lambda) = 1,$ the set~$\euR(\lambda)$ has no cusps and that any plane section of 
$\euR(\lambda)$ consists of no more than~$m$ simple curves. 

\subsubsection{Section~\ref{S: res index and Fredholm index}}
In this section we also assume that the resonance point~$r_\lambda$ is~$0.$
In subsection~\ref{SS: res matrix as dir-n reduction}
we observe that the finite-rank self-adjoint operator $VP_\lambda(H_{0},V),$
which is called resonance matrix of the triple $(\lambda; H_{0},V),$
preserves many properties of the initial direction~$V:$
if $V$ is regular then so is $V P_\lambda$ (Theorem~\ref{T: VP is regular}),
further, the operators $P_\lambda$ and $\bfA_\lambda$ are the same for the triples
$(\lambda; H_{0},V)$ and $(\lambda; H_{0},VP_\lambda)$ (Theorem~\ref{T: P(H0,VP)=P and A(H0,VP)=A}),
and the resonance matrices of the directions~$V$ and~$VP_\lambda$ 
are equal (Theorem~\ref{T: res matrices of V and VP}). 
In particular, resonance indices of the triples 
$(\lambda; H_{0},V)$ and $(\lambda; H_{0},VP_\lambda)$ coincide (Theorem~\ref{T: ind(V)=ind(VP)}):
$$
  \ind_{res}(\lambda; H_{0},V) = \ind_{res}(\lambda; H_{0},VP_\lambda).
$$
All these assertions in essence follow from the following observation (Theorem~\ref{T: R(H0+sVP)VP=R(H0+sV)VP}):
$$
  R_\lambda(H_{0} + s V P_\lambda) V P_\lambda = R_\lambda(H_{0} + s V) V P_\lambda.
$$
Further, the operators $V$ and $VP_\lambda$ are plane homotopic, that is, a direction $V$ 
can be deformed to a direction $VP_\lambda$ in the affine plane generated by these directions
without crossing the resonance set (Theorem~\ref{T: V and VP are plane homotopic}). 

In subsection~\ref{SS: res ind and Fred ind} we give a direct proof of equality of TRI and total Fredholm index,
Theorem~\ref{T: TRI=TFI}. 

\subsubsection{Section~\ref{S: res ind=SSF}}
In section~\ref{S: res ind=SSF} we give a direct proof of equality of the total resonance index
and spectral shift function. This assertion is a special case of equality of TRI and singular spectral shift function
\cite[Section 6]{Az9}, \cite{Az7}.
It also follows from the fact that the spectral shift function satisfies Robbin-Salamon axioms. 
The proof presented in this subsection highlights a key idea of the proof of the more general result given in 
\cite[Section 6]{Az9}, \cite{Az7}.

\subsection{Acknowledgements}
I thank Mr Tom Daniels for a scrupulous reading of this paper which resulted
in countless improvements including a countable subset of fixed articles. 

\section{$2\times 2$ matrix representations}
\label{S: 2x2}
\subsection{$2\times 2$ representation of $A_z(s)$}
Recall that we are working in the setting of Assumption~\ref{A: Main Assumption}.
Let~$H_{\rlmb}$ be a resonance point of multiplicity~$m.$
Accordingly, let
$$
  \hilb = \hat \hilb \oplus \clV_\lambda
$$
be the orthogonal decomposition of the Hilbert space $\hilb,$ on which~$H_{\rlmb}$ acts,
into the sum of the~$m$-dimensional eigenspace \label{Page: clV(lamb)}
$$
  \clV_\lambda = \set{\chi \in \hilb \colon H_{\rlmb}\chi = \lambda \chi}
$$ 
and its orthogonal complement which we hereby denote $\hat \hilb.$\label{Page: hat hilb}
Operators acting on the Hilbert space $\hilb = \hat \hilb \oplus \clV_\lambda$ 
can be written as~$2\times 2$ matrices. 
The operator~$H_{\rlmb}$ has the matrix representation\label{Page: hat H0}
\begin{equation} \label{F: H0 as 2x2 matrix}
  H_{\rlmb} = \left(\begin{matrix}
    \hat H_{\rlmb} & 0 \\
    0 & \lambda I_m
  \end{matrix}\right).
\end{equation}
The operator~$V$ has the form
\label{Page: aaa}\label{Page: hat V}\label{Page: v}
\begin{equation} \label{F: V as 2x2 matrix}
  V = \left(\begin{matrix}
    \hat V & v \\
    v^* & \aaa
  \end{matrix}\right),
\end{equation}
where $\hat V$ is a self-adjoint operator in~$\hat \hilb,$ $v$ 
is an operator from~$\clV_\lambda$ to $\hat \hilb$ and $\aaa$ is a
self-adjoint operator on~$\clV_\lambda.$
In \cite[p.\,14]{RoSa} the operator $\aaa$ is called the \emph{crossing operator}.

We agree to identify an element $\hat \chi$ of the Hilbert space $\hat \hilb$ 
with an element $\twovector{\hat \chi}{0}$ of $\hilb.$
Analogously, an operator $v \colon \clV_\lambda \to \hat \hilb$
will also be considered as an operator from $\hilb \to \hilb.$ This remark applies to other operators
such as~$\hat H_s,$ $v^*$ and~$\aaa.$ 

\label{Page: hat P}By $\hat P$ we denote the operator of orthogonal 
projection from $\hilb$ onto $\hat \hilb.$
Sometimes we write $\twovector{\chi}{0}$ for $\hat P \chi$ and 
$\twovector{0}{\chi}$ for $\hat P^\perp \chi,$ and thus~$\chi$ and 
$\twovector{\chi}{\chi}$ are two ways to write the same vector. 
Similarly, the number $1$ is treated as the identity operator on $\hilb,$ or 
$\hat \hilb$ or $\hat \hilb^\perp,$ depending on the context in which it appears. 
The components of the operator $V$ in its $2 \times 2$ representation (\ref{F: V as 2x2 matrix})
can be defined by formulas 
$$
  \hat V = \hat P V \hat P, \ \ v = \hat P V \hat P^\perp \ \ \text{and} 
  \ \ \aaa = \hat P^\perp V \hat P^\perp.
$$


$2 \times 2$ representation of the operator 
$$
  H_s = H_{\rlmb} + (s-\rlmb)V
$$ 
is given by
\begin{equation*} \label{F: Hr as 2x2 matrix}
  H_{s} = \left(\begin{matrix}
    \hat H_{s} & (s-\rlmb)v \\
    (s-\rlmb)v^* & \lambda + (s-\rlmb)\aaa
  \end{matrix}\right),
\end{equation*}
where $\hat H_{s} = \hat H_{\rlmb} + (s-\rlmb)\hat V.$
According to our agreements made above, we can rewrite this formula 
as $$H_s = \hat H_s + \lambda\hat P^\perp + (s-\rlmb)(v + v^*) + (s-\rlmb)\aaa.$$ 
We shall often use both ways of writing these kind of formulas, but mostly we prefer
the matrix formulas. 

The following lemma is a well-known fact of linear algebra.
\begin{lemma} \label{L: inverse of block matrix}
Assume that~$A$ is an invertible operator with bounded inverse. 
A block operator 
\begin{equation*}
  \brs{\begin{matrix} A & B \\ C & D \end{matrix}}
\end{equation*}
has bounded inverse if and only if the operator $D - CA^{-1}B$ has bounded inverse.
In this case the inverse of the block operator is given by 
\begin{equation*}
  \brs{\begin{matrix} A & B \\ C & D \end{matrix}}^{-1}
    =
  \brs{\begin{matrix}
        A^{-1} + A^{-1}B \euD CA^{-1} & -A^{-1}B\euD \\
        -\euD CA^{-1} & \euD
       \end{matrix}
  },
\end{equation*}
where
$$
  \euD = (D - CA^{-1}B)^{-1}.
$$
\end{lemma}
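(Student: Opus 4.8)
The statement to prove is the classical Schur complement formula for block operators, so the plan is to verify the given formula by direct computation rather than to invoke any abstract machinery.

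\medskip

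\textbf{Plan of proof.} First I would establish the ``only if'' direction together with the formula simultaneously: assume $A$ is invertible with bounded inverse, and suppose the block operator $M = \brs{\begin{matrix} A & B \\ C & D \end{matrix}}$ has a bounded inverse. The key idea is the block LU-type factorisation
$$
  \brs{\begin{matrix} A & B \\ C & D \end{matrix}}
  = \brs{\begin{matrix} I & 0 \\ CA^{-1} & I \end{matrix}}
    \brs{\begin{matrix} A & 0 \\ 0 & D - CA^{-1}B \end{matrix}}
    \brs{\begin{matrix} I & A^{-1}B \\ 0 & I \end{matrix}}.
$$
One checks this identity by multiplying out the right-hand side. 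The two triangular factors are always boundedly invertible (their inverses are obtained by negating the off-diagonal entry), so $M$ is boundedly invertible if and only if the middle factor is, which (since $A$ is boundedly invertible) happens if and only if the Schur complement $\euD^{-1} := D - CA^{-1}B$ is boundedly invertible. Inverting the product in reverse order and multiplying the three inverses out gives exactly the claimed formula for $M^{-1}$; this is the one genuinely computational step, but it is routine $2\times 2$ block matrix algebra.

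\medskip

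For the ``if'' direction I would simply run the argument backwards: if $D - CA^{-1}B$ is boundedly invertible, then all three factors in the factorisation are boundedly invertible, hence so is their product $M$, and its inverse is the stated expression. Alternatively — and this is perhaps cleaner for a paper — one can dispense with the factorisation entirely and just verify directly that the proposed matrix $N$ (the claimed formula for $M^{-1}$, which is well-defined as soon as $\euD = (D-CA^{-1}B)^{-1}$ exists and is bounded) satisfies $MN = I$ and $NM = I$ by block multiplication, using $A A^{-1} = A^{-1} A = I$ and $\euD \euD^{-1} = \euD^{-1}\euD = I$; the cross terms cancel after collecting the $\euD CA^{-1}B$ contributions. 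This establishes that boundedness of $\euD$ is sufficient for $M$ to be boundedly invertible, and the factorisation argument (or a short uniqueness-of-inverse remark) gives necessity.

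\medskip

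The main obstacle, such as it is, is purely bookkeeping: keeping track of the order of operators in the non-commutative setting, so that expressions like $A^{-1}B\euD C A^{-1}$ are never silently rearranged, and making sure every claim about ``bounded inverse'' is justified by exhibiting the inverse explicitly rather than by a dimension count (which would be illegitimate in infinite dimensions). There is no real difficulty beyond care with the algebra; I would present the factorisation identity, note that triangular factors are trivially invertible, and then either cite the reversal of the product or carry out the two-line verification $MN = NM = I$. Since the paper itself labels this ``a well-known fact of linear algebra,'' a compact direct verification is the appropriate level of detail.
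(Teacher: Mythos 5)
Your proof is correct. The paper itself gives no proof of this lemma — it simply states it as ``a well-known fact of linear algebra'' — so there is no argument in the source to compare against. Your block LU factorisation
\begin{equation*}
  \brs{\begin{matrix} A & B \\ C & D \end{matrix}}
  = \brs{\begin{matrix} I & 0 \\ CA^{-1} & I \end{matrix}}
    \brs{\begin{matrix} A & 0 \\ 0 & D - CA^{-1}B \end{matrix}}
    \brs{\begin{matrix} I & A^{-1}B \\ 0 & I \end{matrix}}
\end{equation*}
is the standard route; it cleanly delivers both the equivalence (invertibility of the triangular factors is unconditional, and the diagonal factor is boundedly invertible iff both $A$ and the Schur complement are) and, by reversing the product, the explicit formula for the inverse. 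Your caution about not using dimension counts and about preserving operator order is exactly the right thing to flag in the infinite-dimensional, non-commutative setting. The alternative direct verification $MN=NM=I$ you mention is also fine; it yields sufficiency immediately, and necessity then follows either from the factorisation or from the observation that if $M^{-1}$ exists, its $(2,2)$ block must be a two-sided inverse of $D-CA^{-1}B$ (a short computation from $MM^{-1}=M^{-1}M=I$ using invertibility of $A$). Either version is at the appropriate level of detail for a fact the paper treats as standard.
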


It follows from this lemma that the inverse of the operator $H_s - z$ is given by
\begin{equation} \label{F: (Hr-z)^(-1) as 2x2 matrix}
  \begin{split}
    (H_s - z)^{-1} & =
    \left(
       \begin{matrix}
          \hat H_s-z & (s-\rlmb)v \\
          (s-\rlmb)v^* & \lambda - z + (s-\rlmb)\aaa
       \end{matrix}
    \right)   ^{-1}
    \\ & =
    \left(
       \begin{matrix}
          R_z(\hat H_s) + (s-\rlmb)^2 R_z(\hat H_s) v \euD_z(s) v^* R_z(\hat H_s) & (\rlmb-s) R_z(\hat H_s) v \euD_z(s) \\
          (\rlmb-s)\euD_z(s)  v^* R_z(\hat H_s) &  \euD_z(s)
       \end{matrix}
    \right),
  \end{split}
\end{equation}
where
\begin{equation} \label{F: euDz}
  \euD_z(s) = (\lambda - z + (s-\rlmb)\aaa - (s-\rlmb)^2 v^* R_z(\hat H_s) v)^{-1}.
\end{equation}
We use notation
$$
  \hat A_z(s) = R_z(\hat H_s) \hat V.
$$
We have,
\begin{equation} \label{F: 2x2 for Az(s)}
  \begin{split}
    A_z(s) & := (H_s - z)^{-1}V
    \\ & =
    \left(
       \begin{matrix}
          R_z(\hat H_s) + (s-\rlmb)^2 R_z(\hat H_s) v \euD_z(s) v^* R_z(\hat H_s) & (\rlmb-s) R_z(\hat H_s) v \euD_z(s) \\
          (\rlmb-s)\euD_z(s)  v^* R_z(\hat H_s) &  \euD_z(s)
       \end{matrix}
    \right)
   \left(\begin{matrix}
    \hat V & v \\
    v^* & \aaa
  \end{matrix}\right)
  \\ & =
    \left(
       \begin{matrix}
          \hat A_z(s) + (\rlmb-s) R_z(\hat H_s)v \euD_z(s) v^*\euF_z(s)
                              &  R_z(\hat H_s)v(1 + (\rlmb-s )[\ldots] )  \\
         \euD_z(s)  v^* \euF_z(s)            &   [\ldots]
       \end{matrix}
    \right),
  \end{split}
\end{equation}
where 
$$
  [\ldots]=\euD_z(s)\SqBrs{\aaa + (\rlmb-s)v^* R_z(\hat H_s)v},
$$ 
and\label{Page: euF(r)=1-r Az(r)}
\begin{equation} \label{F: euF(r)=1-r Az(r)}
  \euF_z(s) = 1 + (\rlmb-s) \hat A_z(s),
\end{equation}
where $1$ is the identity operator on $\hat \hilb.$ 
The operator $\euF_z(s)$ is invertible and
\begin{equation} \label{F: euF(-1)=...}
  \euF^{-1}_z(s) = 1 + (s-\rlmb)\hat A_z(\rlmb).
\end{equation}
The second resolvent identity implies that 
\begin{equation} \label{F: euF(r)Az(0)=Az(r)}
  \euF_z(s) \hat A_z(\rlmb) = \hat A_z(s).
\end{equation}


\smallskip

From now on we consider the case of $z = \lambda.$

For a regular direction~$V$ we have from (\ref{F: euDz})\label{Page: euD lambda(r)}
\begin{equation} \label{F: euD lambda(r)}
  \euD_\lambda(s) = ((s-\rlmb)\aaa - (s-\rlmb)^2 v^* R_\lambda(\hat H_s)v)^{-1}.
\end{equation}

\begin{lemma} A direction~$V$ given by (\ref{F: V as 2x2 matrix}) is regular 
at a resonance point~$H_{\rlmb}$ given by (\ref{F: H0 as 2x2 matrix}) if and only if
the matrix 
$$
  \aaa + (\rlmb-s)v^* R_\lambda(\hat H_s)v
$$
is defined and is invertible for some real value of~$s.$
\end{lemma}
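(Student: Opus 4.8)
The plan is to unwind the definition of "regular direction" in terms of the $2\times 2$ representation and reduce it to the invertibility statement about the crossing-type matrix. Recall that $V$ is regular at $H_{\rlmb}$ precisely when the straight line $H_s = H_{\rlmb}+(s-\rlmb)V$ is a regular path, i.e.\ it is not contained in $\euR(\lambda)$, equivalently $\lambda$ is not an eigenvalue of $H_s$ for all $s$ in a punctured neighbourhood of $\rlmb$ (the resonant points on a regular line being discrete). By condition (5) in the list of equivalent characterisations of a resonance point, $\lambda$ is an eigenvalue of $H_s$ iff $s$ is a resonance point of the triple $(\lambda;H_0,V)$ iff $(H_s-\lambda)$ fails to be boundedly invertible on $\hilb$.

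First I would apply Lemma~\ref{L: inverse of block matrix} to the block form of $H_s-\lambda$. Taking $A=\hat H_s-\lambda$ (which is boundedly invertible on $\hat\hilb$ for $s$ near $\rlmb$, since $\lambda$ is not in the spectrum of $\hat H_{\rlmb}$), the lemma tells us that $H_s-\lambda$ is boundedly invertible iff the Schur complement $D-CA^{-1}B$ is boundedly invertible, where here $D=(s-\rlmb)\aaa$, $C=(s-\rlmb)v^*$, $B=(s-\rlmb)v$. This Schur complement is exactly the operator appearing in $\euD_\lambda(s)^{-1}$ in (\ref{F: euD lambda(r)}), namely $(s-\rlmb)\aaa-(s-\rlmb)^2 v^*R_\lambda(\hat H_s)v = (s-\rlmb)\big(\aaa+(\rlmb-s)v^*R_\lambda(\hat H_s)v\big)$. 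So for $s\neq\rlmb$, $H_s-\lambda$ is boundedly invertible iff the matrix $M(s):=\aaa+(\rlmb-s)v^*R_\lambda(\hat H_s)v$ is invertible (it acts on the finite-dimensional space $\clV_\lambda$, so bounded invertibility is just invertibility).

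Next I would assemble the equivalence. If $M(s)$ is defined and invertible for some real $s$, then since $M$ depends analytically on $s$ in a neighbourhood of $\rlmb$ (the resolvent $R_\lambda(\hat H_s)$ being analytic there), $\det M(s)$ is a real-analytic function not identically zero, hence nonvanishing off a discrete set; thus $\lambda$ is an eigenvalue of $H_s$ only on a discrete set, so the line $H_s$ is a regular path and $V$ is a regular direction. Conversely, if $V$ is regular, then $H_s-\lambda$ is invertible for all $s$ in some punctured neighbourhood of $\rlmb$, so $M(s)$ is invertible there, in particular for some real $s$. One subtlety to handle cleanly is the phrase "is defined": $M(s)$ is automatically defined for $s$ close to $\rlmb$ (where $R_\lambda(\hat H_s)$ exists), so the content is really about some such $s$; I would simply note that $\aaa$ is bounded and $v^*R_\lambda(\hat H_s)v$ is finite-rank and analytic near $\rlmb$, so "defined" is not an obstruction, and the real issue is non-degeneracy.

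The main obstacle I anticipate is a bookkeeping point rather than a deep one: being careful that the equivalence is "for some $s$" and not "for all $s$", and that regularity is a statement about a punctured neighbourhood. Since $M(s)$ is analytic, $\det M$ vanishes either identically or on a discrete set, and "$V$ regular" is exactly the non-identically-vanishing case — so "invertible for some $s$" and "invertible for all $s$ outside a discrete set" coincide, and either can be taken as the criterion. I would also remark in passing that this recovers, via (\ref{F: euDz}), that the resonance points on the line are exactly the zeros of $\det M(s)$ together with possible points where $\hat H_s-\lambda$ degenerates, but the latter do not occur near $\rlmb$.
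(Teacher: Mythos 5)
Your proof is correct and follows essentially the same route as the paper's (which is stated in one line): identify the Schur complement of $H_s-\lambda$ via Lemma~\ref{L: inverse of block matrix}, note that for $s\neq\rlmb$ it equals $(s-\rlmb)\bigl(\aaa+(\rlmb-s)v^*R_\lambda(\hat H_s)v\bigr)$ so that invertibility of $H_s-\lambda$ reduces to invertibility of that matrix, and then use analyticity in $s$ to pass between ``invertible for some real $s$'' and ``the line $H_s$ has only a discrete set of resonance points.'' You merely make explicit the analyticity/discreteness bookkeeping that the paper leaves implicit.
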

\begin{proof} This follows from Lemma~\ref{L: inverse of block matrix},
2 x 2 representation (\ref{F: (Hr-z)^(-1) as 2x2 matrix}) of the resolvent $(H_s-\lambda)^{-1},$
and from the definition (\ref{F: euD lambda(r)}) of $\euD_\lambda(s).$
\end{proof}

\begin{cor} \label{C: criterion of regularity for V} 
If a direction~$V$ given by (\ref{F: V as 2x2 matrix}) is regular at~$H_{\rlmb}$
then for any non-zero eigenvector~$\chi$ of~$H_{\rlmb}$ at least one of the two vectors 
$\aaa \chi$ or $v \chi$ is non-zero. 
That is, if~$V$ is regular then for any non-zero eigenvector~$\chi$ the vector $V\chi$ 
is also non-zero.
\end{cor}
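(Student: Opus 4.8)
The statement is the contrapositive of a clean algebraic fact, so the plan is to argue by contradiction. Suppose $\chi \in \clV_\lambda$ is a non-zero eigenvector with $\aaa\chi = 0$ and $v\chi = 0$. I want to show $V$ cannot be regular at $H_{\rlmb}$, i.e. that the straight line $H_s = H_{\rlmb}+(s-\rlmb)V$ is a resonant path, meaning $\lambda$ is an eigenvalue of $H_s$ for all $s$ near $\rlmb$ (in fact for all $s$). The natural candidate eigenvector is $\chi$ itself, viewed as $\twovector{0}{\chi}$ in the decomposition $\hilb = \hat\hilb \oplus \clV_\lambda$.

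First I would compute $(H_s - \lambda)\twovector{0}{\chi}$ directly from the $2\times 2$ representation
$$
  H_{s} - \lambda = \left(\begin{matrix}
    \hat H_{s} - \lambda & (s-\rlmb)v \\
    (s-\rlmb)v^* & (s-\rlmb)\aaa
  \end{matrix}\right).
$$
Applying this to $\twovector{0}{\chi}$ gives $\twovector{(s-\rlmb)v\chi}{(s-\rlmb)\aaa\chi}$, which is $\twovector{0}{0}$ precisely because $v\chi = 0$ and $\aaa\chi = 0$. Hence $\lambda$ is an eigenvalue of $H_s$ for every $s$, so $H_s \in \euR(\lambda)$ for all $s$; this says $H_s$ is a resonant path, so $V$ is by definition not a regular direction at $H_{\rlmb}$. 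This contradicts the hypothesis, establishing that at least one of $\aaa\chi$, $v\chi$ is non-zero. For the final sentence of the statement, I would simply note that $V\chi = \twovector{v\chi}{\aaa\chi}$ in the $2\times 2$ representation (using $\hat V\chi$ does not contribute since $\chi\in\clV_\lambda$, the first column of $V$ acts on the $\hat\hilb$-component which is zero), so $V\chi = 0$ iff both $v\chi = 0$ and $\aaa\chi = 0$; thus regularity forces $V\chi \neq 0$.

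Alternatively, and perhaps more in the spirit of the surrounding text, one can deduce this from the preceding Lemma: if $\aaa\chi = 0$ and $v\chi = 0$ then the operator $\aaa + (\rlmb - s)v^* R_\lambda(\hat H_s) v$ annihilates $\chi$ for every real $s$ at which it is defined (since $v\chi = 0$ kills the second term and $\aaa\chi = 0$ kills the first), so this operator is never invertible, and the Lemma says $V$ is not regular. Either route is short; there is essentially no obstacle here — the only mild care needed is the bookkeeping of the identification of $\chi$ with $\twovector{0}{\chi}$ and the fact that $R_\lambda(\hat H_s)$ is, by construction, defined on all of $\hat\hilb$ so that the expression $v^* R_\lambda(\hat H_s) v$ makes sense and the first route's computation $(H_s-\lambda)\twovector{0}{\chi}=0$ is literally valid. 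I would present the first (direct) argument as the proof since it is self-contained and does not even need the resolvent $R_\lambda(\hat H_s)$.
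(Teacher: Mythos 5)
Your proposal is correct, and your second route is precisely the paper's intended argument: the Corollary is stated immediately after the unnamed Lemma and is meant as a one-line consequence of it, since $\aaa\chi = 0$ and $v\chi = 0$ together annihilate $\aaa + (\rlmb - s)v^* R_\lambda(\hat H_s)v$, contradicting invertibility. Your first, direct route — computing $(H_s - \lambda)\twovector{0}{\chi}$ and observing it vanishes identically in $s$ — is a sound and slightly more elementary variant that bypasses the Lemma and any concern about where $R_\lambda(\hat H_s)$ is defined; it is essentially the same calculation that sits inside the Lemma's proof, unrolled.
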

This necessary condition of regularity of a direction~$V$ is not sufficient.
A simple three-dimensional example, which demonstrates this, can be found in \cite[\S 14.6.1]{Az9}.

\smallskip
Using $2\times 2$ representation (\ref{F: 2x2 for Az(s)}) of $A_\lambda(s)$ 
and the equality (\ref{F: euD lambda(r)}), a direct calculation 
gives the formulas 
\begin{equation} \label{F: 2x2 repr-n of A lamb(r)}
   A_\lambda(s) =
    \left(
       \begin{matrix}
          \hat A_\lambda(s) +(\rlmb-s) R_\lambda(\hat H_s)v \euD_\lambda(s) v^*\euF_\lambda(s)
                              &  0  \\
         \euD_\lambda(s)  v^* \euF_\lambda(s)            &  (s-\rlmb)^{-1}
       \end{matrix}
    \right)
\end{equation}
and
\begin{equation} \label{F: 2x2 repr-n of 1-r A lamb(r)}
    1+(\rlmb-s)A_\lambda(s) =
    \left(
       \begin{matrix}
           \SqBrs{1 + (s-\rlmb)^2 R_\lambda(\hat H_s)v \euD_\lambda(s) v^*}\euF_\lambda(s)
                              &  0  \\
           (\rlmb-s)\euD_\lambda(s)  v^* \euF_\lambda(s)            &  0
       \end{matrix}
    \right).
\end{equation}
The function $A_\lambda(s)$ is not holomorphic at~$s=\rlmb.$ 
As can be seen from (\ref{F: 2x2 repr-n of A lamb(r)}),
apart from the $(2,2)$-entry of $A_\lambda(s),$
the only factor which violates holomorphicity 
of this function is $\euD_\lambda(s),$ the other terms are holomorphic at~$s=\rlmb.$

\smallskip
One can note that a vector $\hat \phi \in \hat \hilb$ has order 2 if and only if
\begin{equation*} 
  \SqBrs{1+(\rlmb-s)A_\lambda(s)} \twovector{\hat \phi}{0} = \twovector{0}{\ldots},
\end{equation*}
where the dots denote a non-zero vector. 
By (\ref{F: 2x2 repr-n of 1-r A lamb(r)}), this equality is equivalent to
\begin{equation*} 
  \SqBrs{1  + (s-\rlmb)^2 R_\lambda(\hat H_s)v \euD_\lambda(s) v^*} \euF_\lambda(s) \hat \phi = 0.
\end{equation*}
It follows that $\hat \phi$ is a vector of order two if and only if
\begin{equation*} 
  \begin{split}
    \hat \phi & = - (s-\rlmb)^2 \euF^{-1}_\lambda(s) R_\lambda(\hat H_s)v \euD_\lambda(s) v^* \euF_\lambda(s) \hat \phi
      \\ & = - (s-\rlmb)^2 R_\lambda(\hat H_{\rlmb}) v \euD_\lambda(s) v^* \euF_\lambda(s) \hat \phi.
  \end{split}
\end{equation*} 
Thus, for a vector $\hat \phi$ from $\hat \hilb$ the following equivalence holds:
\begin{equation} \label{F: criterion for hat phi(2)}
  \hat \phi \in \Upsilon_\lambda^2 \ \ \iff \ \  
         \hat \phi = - (s-\rlmb)^2 R_\lambda(\hat H_{\rlmb}) v \euD_\lambda(s) v^* \euF_\lambda(s) \hat \phi.
\end{equation} 
In particular, any resonance vector $\hat \phi$ of order two from $\hat \hilb$ belongs 
to the image of the operator~$R_\lambda(\hat H_{\rlmb}) v.$
The following theorem generalises this statement for vectors of arbitrary order. 
\begin{thm} \label{T: thm 2.4} Let $k\geq 2.$ 
A resonance vector $\hat \phi$ of order $k$ from $\hat \hilb$ 
belongs to the linear span of the images of operators 
$$
  R_\lambda(\hat H_{\rlmb})v, \ \ \hat A_\lambda(\rlmb)R_\lambda(\hat H_{\rlmb})v, 
    \ \ \ldots, \ \ \hat A^{k-2}_\lambda(\rlmb)R_\lambda(\hat H_{\rlmb})v.
$$
\end{thm}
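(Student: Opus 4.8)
The plan is to prove, by induction on~$k$, the formally stronger statement that every resonance vector from $\hat\hilb$ of order \emph{at most}~$k$ lies in the linear subspace $\sum_{j=0}^{k-2}\im\bigl(\hat A^{j}_\lambda(\rlmb)R_\lambda(\hat H_{\rlmb})v\bigr)$. The base case $k=2$ is precisely the equivalence~(\ref{F: criterion for hat phi(2)}), which already exhibits such a vector as an element of $\im R_\lambda(\hat H_{\rlmb})v=\im\bigl(\hat A^{0}_\lambda(\rlmb)R_\lambda(\hat H_{\rlmb})v\bigr)$. The whole argument rests on the block-triangular form of $1+(\rlmb-s)A_\lambda(s)$ recorded in~(\ref{F: 2x2 repr-n of 1-r A lamb(r)}): denote its upper-left block by $P(s)=\SqBrs{1+(s-\rlmb)^2R_\lambda(\hat H_s)v\euD_\lambda(s)v^*}\euF_\lambda(s)$, an operator carrying $\hat\hilb$ into $\hat\hilb$, and its lower-left block by $Q(s)$; since the right-hand column vanishes, raising the matrix to a power gives $\bigl(1+(\rlmb-s)A_\lambda(s)\bigr)^{j}\twovector{\hat\phi}{0}=\twovector{P(s)^{j}\hat\phi}{Q(s)P(s)^{j-1}\hat\phi}$ for every $\hat\phi\in\hat\hilb$ and every $j\ge1$.

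For the inductive step, fix a resonance vector $\hat\phi\in\hat\hilb$ of order $\le k$ and a value of~$s$ that is not a pole of $A_\lambda(s)$ (recall that $V$ is regular, so $\euD_\lambda(s)$, hence $P(s)$, is defined for all but discretely many~$s$, while the resonance subspaces themselves do not depend on~$s$). The key claim is that $P(s)\hat\phi$ is again a resonance vector, lies in $\hat\hilb$, and has order $\le k-1$. Indeed $P(s)$ maps $\hat\hilb$ into $\hat\hilb$; and applying $\bigl(1+(\rlmb-s)A_\lambda(s)\bigr)^{k-1}$ to $\twovector{P(s)\hat\phi}{0}$ yields $\twovector{P(s)^{k}\hat\phi}{Q(s)P(s)^{k-1}\hat\phi}$ by the displayed identity, and this vanishes because $\bigl(1+(\rlmb-s)A_\lambda(s)\bigr)^{k}\twovector{\hat\phi}{0}=0$ forces both $P(s)^{k}\hat\phi=0$ and $Q(s)P(s)^{k-1}\hat\phi=0$. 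Hence, by the induction hypothesis, $P(s)\hat\phi\in\sum_{j=0}^{k-3}\im\bigl(\hat A^{j}_\lambda(\rlmb)R_\lambda(\hat H_{\rlmb})v\bigr)$.

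It remains to recover $\hat\phi$ from $P(s)\hat\phi$. Unwinding the definition of $P(s)$ gives $\euF_\lambda(s)\hat\phi=P(s)\hat\phi-(s-\rlmb)^2R_\lambda(\hat H_s)v\euD_\lambda(s)v^*\euF_\lambda(s)\hat\phi$; applying $\euF^{-1}_\lambda(s)$, and using the second resolvent identity in the form $\euF^{-1}_\lambda(s)R_\lambda(\hat H_s)=R_\lambda(\hat H_{\rlmb})$ (exactly as in the derivation of~(\ref{F: criterion for hat phi(2)})) together with~(\ref{F: euF(-1)=...}), one obtains
\[
  \hat\phi=P(s)\hat\phi+(s-\rlmb)\hat A_\lambda(\rlmb)P(s)\hat\phi-(s-\rlmb)^2R_\lambda(\hat H_{\rlmb})v\euD_\lambda(s)v^*\euF_\lambda(s)\hat\phi.
\]
The last summand lies in $\im R_\lambda(\hat H_{\rlmb})v$; the first summand lies in $\sum_{j=0}^{k-3}\im\bigl(\hat A^{j}_\lambda(\rlmb)R_\lambda(\hat H_{\rlmb})v\bigr)$ by the previous paragraph; and $\hat A_\lambda(\rlmb)$ carries that subspace into $\sum_{j=1}^{k-2}\im\bigl(\hat A^{j}_\lambda(\rlmb)R_\lambda(\hat H_{\rlmb})v\bigr)$, so the middle summand lies there. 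Adding the three memberships gives $\hat\phi\in\sum_{j=0}^{k-2}\im\bigl(\hat A^{j}_\lambda(\rlmb)R_\lambda(\hat H_{\rlmb})v\bigr)$, which closes the induction.

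I expect the one genuinely delicate point to be the middle step: the bookkeeping with powers of the block-triangular operator, and the verification that $P(s)\hat\phi$ is a bona fide resonance vector whose order has dropped by one and which still lives in $\hat\hilb$. Once that is in place, the recovery identity and the appeal to the second resolvent identity are routine, and the rest is just tracking how $\hat A_\lambda(\rlmb)$ shifts the indices of the generating images.
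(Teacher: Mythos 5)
Your argument is essentially the paper's own: the same $2\times2$ block-triangular representation~(\ref{F: 2x2 repr-n of 1-r A lamb(r)}), the same inductive reduction from order~$k$ to order~$k-1$, and the same recovery identity via $\euF_\lambda^{-1}(s)R_\lambda(\hat H_s)=R_\lambda(\hat H_{\rlmb})$ and~(\ref{F: euF(-1)=...}). The only difference is a welcome sharpening: where the paper abbreviates by calling the image vector $\hat\phi_{k-1}(s)=(1+(\rlmb-s)A_\lambda(s))\hat\phi_k$ a vector ``of order~$k-1$'' and silently identifies it with its $\hat\hilb$-component, you strengthen the induction to cover order~$\le k$ and verify via the explicit power formula for the lower-triangular block matrix that $P(s)\hat\phi$ really is a resonance vector in $\hat\hilb$ of order $\le k-1$, which cleanly absorbs the possibility that the order drops by more than one.
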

\begin{proof} The induction base with $k=2$ follows from (\ref{F: criterion for hat phi(2)}).
Assume that the claim holds for a vector of order less than~$k$ and let $\hat \phi_k \in \hilb$
be a vector of order~$k.$ Since the operator $1+(\rlmb-s)A_\lambda(s)$ decreases order of a resonance vector by one,
the vector $\hat \phi_{k-1}(s) = \brs{1+(\rlmb-s)A_\lambda(s)} \hat \phi_k$ has order $k-1.$ 
Using $2 \times 2$ representation (\ref{F: 2x2 repr-n of 1-r A lamb(r)}) of the operator $1+(\rlmb-s)A_\lambda(s),$ we have
$$
  \SqBrs{1  + (s-\rlmb)^2 R_\lambda(\hat H_s)v \euD_\lambda(s) v^*} \euF_\lambda(s) \hat \phi_k = \hat \phi_{k-1}(s)
$$
This equality can be rewritten as 
$$
  \hat \phi_k + (s-\rlmb)^2 \euF^{-1}_\lambda(s) R_\lambda(\hat H_s)v \euD_\lambda(s) v^* \euF_\lambda(s)\hat \phi_k
         = \euF^{-1}_\lambda(s)\hat \phi_{k-1}(s),
$$
and by (\ref{F: euF(-1)=...}) this equality is equivalent to 
$$
  \hat \phi_k = - (s-\rlmb)^2 R_\lambda(\hat H_{\rlmb})v \euD_\lambda(s) v^* \euF_\lambda(s)\hat \phi_k + (1+(s-\rlmb)\hat A_\lambda(\rlmb)) \hat \phi_{k-1}(s).
$$
Using induction assumption, this equality proves the claim. 
\end{proof}

\subsection{The operator~$S_\lambda$}

In what follows the product of operators $R_\lambda(\hat H_{\rlmb})$ and~$V$ will be encountered very often;
for this reason, we shall introduce notation\label{Page: S(lambda)}
\begin{equation} \label{F: S(lambda)}
  \begin{split}
     S_\lambda & = R_\lambda(\hat H_{\rlmb})V
     \\ & 
       = \left(
         \begin{matrix}
           \hat A_\lambda(\rlmb) & R_\lambda(\hat H_{\rlmb}) v \\
           0 & 0 
         \end{matrix}
       \right).
  \end{split}
\end{equation}
Restriction of the power $S^k_\lambda$ to $\hat \hilb$ coincides with $\hat A^k_\lambda(\rlmb),$
restriction of this power to $\clV_\lambda$ coincides with $\hat A^{k-1}_\lambda(\rlmb)R_\lambda(\hat H_{\rlmb})v.$

\begin{lemma}  \label{L: new lemma}
For any $\hat f \in \hat \hilb$ 
$$
  (1+(\rlmb-s)A_\lambda(s)) S_\lambda \hat f = A_\lambda(s) \hat f + 
       \left(
         \begin{matrix}
          (s-\rlmb) R_\lambda(\hat H_s)v \euD_\lambda(s) v^* \hat f\\
           - \euD_\lambda(s)  v^* \hat f
         \end{matrix}
       \right)
$$
\end{lemma}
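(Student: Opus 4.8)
The plan is to prove the identity by a direct $2\times 2$ matrix computation, using only the explicit representations of $A_\lambda(s)$ and $1+(\rlmb-s)A_\lambda(s)$ obtained above together with the two elementary identities for $\euF_\lambda(s)$. No induction or analytic input is needed.

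First I would evaluate the left-hand side. Identifying $\hat f \in \hat\hilb$ with $\twovector{\hat f}{0}$, formula (\ref{F: S(lambda)}) gives $S_\lambda \hat f = \twovector{\hat A_\lambda(\rlmb)\hat f}{0}$. Applying the $2\times2$ representation (\ref{F: 2x2 repr-n of 1-r A lamb(r)}) of $1+(\rlmb-s)A_\lambda(s)$ to this vector — its second column is zero, so only the first column contributes — and then replacing $\euF_\lambda(s)\hat A_\lambda(\rlmb)$ by $\hat A_\lambda(s)$ via (\ref{F: euF(r)Az(0)=Az(r)}), one obtains
$$
  (1+(\rlmb-s)A_\lambda(s)) S_\lambda \hat f = \twovector{\SqBrs{1+(s-\rlmb)^2 R_\lambda(\hat H_s)v\euD_\lambda(s)v^*}\hat A_\lambda(s)\hat f}{(\rlmb-s)\euD_\lambda(s)v^*\hat A_\lambda(s)\hat f}.
$$

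Next I would expand the right-hand side. By (\ref{F: 2x2 repr-n of A lamb(r)}) the components of $A_\lambda(s)\hat f$ are known, and to them we add the extra explicit vector $\twovector{(s-\rlmb) R_\lambda(\hat H_s)v \euD_\lambda(s) v^* \hat f}{-\euD_\lambda(s) v^* \hat f}$. In both resulting components the two terms carrying the factor $v^*$ combine after invoking the elementary identity $\euF_\lambda(s)\hat f - \hat f = (\rlmb-s)\hat A_\lambda(s)\hat f$, which is immediate from the definition (\ref{F: euF(r)=1-r Az(r)}) of $\euF_\lambda(s)$; for instance in the top component $(\rlmb-s)\euF_\lambda(s)\hat f + (s-\rlmb)\hat f = (s-\rlmb)^2\hat A_\lambda(s)\hat f$, and similarly in the bottom component $\euF_\lambda(s)\hat f - \hat f = (\rlmb-s)\hat A_\lambda(s)\hat f$. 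Collecting terms shows the right-hand side equals the vector displayed above, which completes the argument.

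I do not expect a genuine obstacle: the proof is a bookkeeping exercise, and the only things to watch are the signs and the placement of $\euF_\lambda(s)$ versus $\euF^{-1}_\lambda(s)$. The essential inputs are the representations (\ref{F: 2x2 repr-n of A lamb(r)}) and (\ref{F: 2x2 repr-n of 1-r A lamb(r)}) and the identities (\ref{F: euF(r)=1-r Az(r)}) and (\ref{F: euF(r)Az(0)=Az(r)}).
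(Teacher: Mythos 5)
Your proof is correct and rests on the same ingredients as the paper's: the $2\times2$ representations (\ref{F: 2x2 repr-n of A lamb(r)}) and (\ref{F: 2x2 repr-n of 1-r A lamb(r)}) together with the elementary identities for $\euF_\lambda(s)$. The paper organizes the computation slightly differently---writing $\hat A_\lambda(\rlmb)=(s-\rlmb)^{-1}(\euF^{-1}_\lambda(s)-1)$ and letting $\euF_\lambda\euF^{-1}_\lambda=1$ collapse the matrix products, rather than expanding both sides and matching components as you do---but this is a cosmetic reorganization of the same direct calculation, not a different argument.
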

\begin{proof} Since $\hat f \in \hat \hilb,$ we have~$S_\lambda \hat f = \hat A_\lambda(\rlmb) \hat f.$
Hence, 
$$
  (1+(\rlmb-s)A_\lambda(s)) S_\lambda \hat f = (1+(\rlmb-s)A_\lambda(s)) \hat A_\lambda(\rlmb) \hat f.
$$
From (\ref{F: euF(-1)=...}) we have 
$$
  \hat A_\lambda(\rlmb) = (s-\rlmb)^{-1} (\euF^{-1}_\lambda(r)-1).
$$
Hence, combining the last two equalities 
and using $2 \times 2$ representation~(\ref{F: 2x2 repr-n of 1-r A lamb(r)}) of the operator
$1+(\rlmb-s)A_\lambda(s),$ we get 
\begin{equation*}
  \begin{split}
      (1+(\rlmb-s)A_\lambda(s)) S_\lambda \hat f & = (1+(\rlmb-s)A_\lambda(s)) \hat A_\lambda(\rlmb) \hat f
      \\ & = (s-\rlmb)^{-1} (1+(\rlmb-s)A_\lambda(s)) (\euF^{-1}_\lambda(r)-1) \hat f
       \\ & = 
       - (s-\rlmb)^{-1}(1+(\rlmb-s)A_\lambda(s)) \hat f 
       \\ & \hskip 1.2 cm 
        + (s-\rlmb)^{-1} \left(
         \begin{matrix}
           \SqBrs{1 + (s-\rlmb)^2 R_\lambda(\hat H_s)v \euD_\lambda(s) v^*} \hat f \\
           (\rlmb-s)\euD_\lambda(s)  v^* \hat f
         \end{matrix}
       \right)
       \\ & = 
       A_\lambda(s) \hat f + 
       \left(
         \begin{matrix}
          (s-\rlmb) R_\lambda(\hat H_s)v \euD_\lambda(s) v^* \hat f \\
           - \euD_\lambda(s)  v^* \hat f
         \end{matrix}
       \right).
   \end{split}
\end{equation*}
\end{proof}

\begin{thm} \label{T: 1st thm of section 3}
If a vector~$\chi \in \hilb$ is such that the vector 
$V\chi$ is orthogonal to the eigenspace~$\clV_\lambda,$
then
\begin{equation}  \label{F: fcukingly nice equality}
  (1+(\rlmb-s)A_\lambda(s))S_\lambda \chi = A_\lambda(s) \chi.
\end{equation}
\end{thm}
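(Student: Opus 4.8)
The statement to prove is that if $V\chi \perp \clV_\lambda$, then $(1+(\rlmb-s)A_\lambda(s))S_\lambda \chi = A_\lambda(s)\chi$. The natural approach is to split $\chi$ according to the orthogonal decomposition $\hilb = \hat\hilb \oplus \clV_\lambda$, write $\chi = \hat\chi + \chi_0$ with $\hat\chi = \hat P\chi \in \hat\hilb$ and $\chi_0 = \hat P^\perp\chi \in \clV_\lambda$, and exploit linearity of both sides in $\chi$. For the component $\hat\chi \in \hat\hilb$, Lemma~\ref{L: new lemma} already gives
$$
  (1+(\rlmb-s)A_\lambda(s)) S_\lambda \hat\chi = A_\lambda(s)\hat\chi + \twovector{(s-\rlmb)R_\lambda(\hat H_s)v\,\euD_\lambda(s)\,v^*\hat\chi}{-\euD_\lambda(s)\,v^*\hat\chi},
$$
so the whole task reduces to (a) computing the analogous expression for the $\clV_\lambda$-component $\chi_0$, and (b) showing that when the two correction terms are added together, the hypothesis $V\chi \perp \clV_\lambda$ forces them to cancel.

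First I would compute $S_\lambda \chi_0$: since $\chi_0 \in \clV_\lambda$, the $2\times 2$ form \eqref{F: S(lambda)} of $S_\lambda$ gives $S_\lambda\chi_0 = R_\lambda(\hat H_{\rlmb})v\,\chi_0 \in \hat\hilb$, which is again a vector in $\hat\hilb$; so I can apply Lemma~\ref{L: new lemma} a second time with $\hat f = S_\lambda\chi_0$, or more directly expand $(1+(\rlmb-s)A_\lambda(s))S_\lambda\chi_0$ using \eqref{F: 2x2 repr-n of 1-r A lamb(r)} and \eqref{F: euF(-1)=...}. In parallel I would compute $A_\lambda(s)\chi_0$ using the $2\times 2$ representation \eqref{F: 2x2 repr-n of A lamb(r)}. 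The point is that both $S_\lambda\chi$ and $A_\lambda(s)\chi$ only involve $V$ through products of the form $V\hat P$ or $v = \hat P V\hat P^\perp$, $\aaa = \hat P^\perp V\hat P^\perp$; the hypothesis $V\chi \perp \clV_\lambda$ is exactly the statement that the bottom component of $V\chi$ vanishes, i.e. $v^*\hat\chi + \aaa\chi_0 = 0$. This identity is what I expect to substitute at the end to collapse the leftover terms.

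The key algebraic step is therefore: after expanding both sides, the discrepancy $(1+(\rlmb-s)A_\lambda(s))S_\lambda\chi - A_\lambda(s)\chi$ should reduce to something of the form $\twovector{(s-\rlmb)R_\lambda(\hat H_s)v\,\euD_\lambda(s)(v^*\hat\chi + \text{(term from }\chi_0))}{-\euD_\lambda(s)(v^*\hat\chi + \text{(term from }\chi_0))}$, and I would need to check that the "term from $\chi_0$" is precisely $\aaa\chi_0$, possibly after using the identity \eqref{F: euF(r)Az(0)=Az(r)} and the defining formula \eqref{F: euD lambda(r)} for $\euD_\lambda(s)$ to massage the $(1,2)$ and $(2,2)$ blocks of $A_\lambda(s)$ acting on $\chi_0$. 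If that bookkeeping works out, then $v^*\hat\chi + \aaa\chi_0 = (V\chi)_{\clV_\lambda} = 0$ kills both correction terms simultaneously and the equality \eqref{F: fcukingly nice equality} follows.

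The main obstacle I anticipate is purely computational: keeping track of the $\clV_\lambda$-block of $A_\lambda(s)\chi_0$, which from \eqref{F: 2x2 repr-n of A lamb(r)} is $(s-\rlmb)^{-1}\chi_0$ — a term that is singular at $s = \rlmb$ and has no obvious counterpart coming from $S_\lambda$. I would need to verify that this singular term is cancelled either by a matching singular contribution in $(1+(\rlmb-s)A_\lambda(s))S_\lambda\chi_0$ (note the factor $(\rlmb-s)$ in front of $A_\lambda(s)$ tames one power, and $\euD_\lambda(s) = O((s-\rlmb)^{-1})$ by \eqref{F: euD lambda(r)}, so the degrees can genuinely balance), or else absorbed into the correction term that vanishes under the hypothesis. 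Tracking these cancellations carefully — and confirming that the residual really is the single vector $v^*\hat\chi + \aaa\chi_0$ rather than some more complicated combination — is where the real work lies; everything else is substitution into the block formulas already established in Section~\ref{S: 2x2}.
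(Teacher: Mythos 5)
Your plan is correct and essentially reproduces the paper's own argument: decompose $\chi = \hat\chi + \chi_0$ along $\hat\hilb \oplus \clV_\lambda$, apply Lemma~\ref{L: new lemma} to the $\hat\hilb$-component, directly expand the $\clV_\lambda$-component via~\eqref{F: 2x2 repr-n of 1-r A lamb(r)} and the second resolvent identity $\euF_\lambda(s)R_\lambda(\hat H_{\rlmb}) = R_\lambda(\hat H_s)$ (this is precisely part~(B) of the paper's proof), and observe that the combined correction term carries the single factor $v^*\hat\chi + \aaa\chi_0$, which vanishes by hypothesis; this is exactly the general formula~\eqref{F: fcukingly nice equality(2)} that the paper records right after the proof. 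Two small remarks: applying Lemma~\ref{L: new lemma} with $\hat f = S_\lambda\chi_0$ would compute $(1+(\rlmb-s)A_\lambda(s))S_\lambda^2\chi_0$ rather than $(1+(\rlmb-s)A_\lambda(s))S_\lambda\chi_0$, so the direct-expansion alternative you offer is the one to use; and the singular term $(s-\rlmb)^{-1}\chi_0$ you worry about appears identically on both sides of the target identity and cancels exactly, leaving only the $\euD_\lambda(s)(v^*\hat\chi + \aaa\chi_0)$ residue, which the hypothesis kills.
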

\begin{proof} 
(A) The premise $V\chi \perp \clV_\lambda$ means that the second component
of $V\chi$ in the direct sum $\hilb = \hat \clH \oplus \clV_\lambda$ is zero, that is, 
\begin{equation} \label{F: v* chi+aaa chi=0}
  v^* \chi + \aaa \chi = 0.
\end{equation}

(B) If $V\chi \perp \clV_\lambda,$ then 
\begin{equation} \label{F: V chi perp part (B)}
    (1+(\rlmb-s)A_\lambda(s)) R_\lambda(\hat H_{\rlmb}) v \chi 
       =  
       \left(
         \begin{matrix}
            (\rlmb-s) R_\lambda(\hat H_s)v \euD_\lambda(s)v^* \chi \\
             (s-\rlmb)^{-1} \chi + \euD_\lambda(s)v^* \chi
         \end{matrix}
       \right).
\end{equation}

Proof of (\ref{F: V chi perp part (B)}). 

Using (\ref{F: 2x2 repr-n of 1-r A lamb(r)}), we have 
\begin{equation} \label{F: bloody equality}
  \begin{split}
    (E) := (1+(\rlmb-s)A_\lambda(s)) R_\lambda(\hat H_{\rlmb}) v \chi & = 
       \left(
         \begin{matrix}
           \SqBrs{1 + (s-\rlmb)^2 R_\lambda(\hat H_s)v \euD_\lambda(s) v^*}\euF_\lambda(s) R_\lambda(\hat H_{\rlmb}) v \chi\\
           (\rlmb-s)\euD_\lambda(s)  v^* \euF_\lambda(s) R_\lambda(\hat H_{\rlmb}) v \chi
         \end{matrix}
       \right)
       \\ & = 
       \left(
         \begin{matrix}
           \SqBrs{1 + (s-\rlmb)^2 R_\lambda(\hat H_s)v \euD_\lambda(s) v^*} R_\lambda(\hat H_s)v\chi \\
           (\rlmb-s)\euD_\lambda(s)  v^* R_\lambda(\hat H_s)v\chi
         \end{matrix}
       \right),
  \end{split}
\end{equation}
where in the second equality the second resolvent identity (\ref{F: euF(r)Az(0)=Az(r)}) is used. 
The second component of this vector can be transformed as follows:
\begin{equation*}
  \begin{split}
  (\rlmb-s)\euD_\lambda(s)  v^* R_\lambda(\hat H_s)v\chi 
         & = \euD_\lambda(s)\SqBrs{\aaa \chi + (\rlmb-s)v^* R_\lambda(\hat H_s)v\chi} - \euD_\lambda(s)\aaa \chi
      \\ & = (s-\rlmb)^{-1} \hat P^\perp \chi - \euD_\lambda(s)\aaa \chi,
  \end{split}
\end{equation*}
where the second equality follows from the definition (\ref{F: euD lambda(r)}) of~$\euD_\lambda(s).$
Combining this with (\ref{F: v* chi+aaa chi=0}) gives 
\begin{equation*}
  \begin{split}
  (\rlmb-s)\euD_\lambda(s)  v^* R_\lambda(\hat H_s)v\chi 
        = (s-\rlmb)^{-1} \hat P^\perp \chi + \euD_\lambda(s)v^* \chi.
  \end{split}
\end{equation*}
Substituting the right hand side into (\ref{F: bloody equality})
yields 
\begin{equation} \label{F: third234}
  \begin{split}
    (E) & = 
       \left(
         \begin{matrix}
           R_\lambda(\hat H_s)v \SqBrs{\chi + (s-\rlmb)^2 \euD_\lambda(s) v^* R_\lambda(\hat H_s)v\chi} \\
             (s-\rlmb)^{-1} \chi + \euD_\lambda(s)v^* \chi
         \end{matrix}
       \right)
      \\ & =  
       \left(
         \begin{matrix}
           R_\lambda(\hat H_s)v \SqBrs{\chi +(\rlmb-s) \brs{(s-\rlmb)^{-1} \hat P^\perp \chi + \euD_\lambda(s)v^* \chi}} \\
             (s-\rlmb)^{-1} \chi + \euD_\lambda(s)v^* \chi
         \end{matrix}
       \right)
      \\ & =  
       \left(
         \begin{matrix}
           (\rlmb-s) R_\lambda(\hat H_s)v \euD_\lambda(s)v^* \chi \\
             (s-\rlmb)^{-1} \chi + \euD_\lambda(s)v^* \chi
         \end{matrix}
       \right),
  \end{split}
\end{equation}
where in the last equality we used $v \hat P^\perp = v.$ 

\smallskip

(C)
We have 
\begin{equation} \label{F: first234}
  \begin{split}
     (1+(\rlmb-s ) A_\lambda(s))S_\lambda \chi & = (1+(\rlmb-s)A_\lambda(s)) R_\lambda(\hat H_{\rlmb}) V \chi
     \\ & = (1+(\rlmb-s)A_\lambda(s)) R_\lambda(\hat H_{\rlmb}) \hat V \chi 
            + (1+(\rlmb-s)A_\lambda(s)) R_\lambda(\hat H_{\rlmb}) v \chi.
  \end{split}
\end{equation}
Applying Lemma~\ref{L: new lemma} to the vector $\hat P \chi = \hat \chi,$ we transform the first summand as follows
\begin{equation} \label{F: second234}
  (1+(\rlmb-s)A_\lambda(s))R_\lambda(\hat H_{\rlmb}) \hat V \chi 
    = A_\lambda(s) \hat \chi + 
       \left(
         \begin{matrix}
          (s-\rlmb) R_\lambda(\hat H_s)v \euD_\lambda(s) v^* \hat \chi\\
           - \euD_\lambda(s)  v^* \hat \chi
         \end{matrix}
       \right).
\end{equation}

Since $v^* \chi = v^* \hat \chi,$ combining (\ref{F: first234}), (\ref{F: second234}) 
and (\ref{F: V chi perp part (B)}) yields the equality

\begin{equation*}
    (1+(\rlmb-s)A_\lambda(s))S_\lambda \chi = A_\lambda(s) \hat \chi + (s-\rlmb)^{-1} \hat P^\perp \chi.
\end{equation*}
Since $\hat P^\perp \chi$ is an eigenvector of $H_{\rlmb},$ we have 
$A_\lambda(s) \hat P^\perp \chi = (s-\rlmb)^{-1} \hat P^\perp \chi.$
Hence, 
\begin{equation*}
    (1+(\rlmb-s)A_\lambda(s))S_\lambda \chi 
       = A_\lambda(s) \hat \chi +  A_\lambda(s) \hat P^\perp \chi
       = A_\lambda(s) \chi.
\end{equation*}
\end{proof}

The argument of this proof shows that in general (without the assumption $V\chi \perp \clV_\lambda$)
we have 
\begin{equation}  \label{F: fcukingly nice equality(2)}
  (1+(\rlmb-s)A_\lambda(s))S_\lambda \chi = A_\lambda(s) \chi + 
       \left(
         \begin{matrix}
           (s-\rlmb) R_\lambda(\hat H_s)v \euD_\lambda(s)(v^*+\aaa) \chi \\
             - \euD_\lambda(s)(v^*+\aaa) \chi
         \end{matrix}
       \right).
\end{equation}

\subsection{Depth of resonance vectors and depth criteria}

Recall that a resonance vector~$\chi$ has \emph{depth} \label{Page: depth of vector(2)}
at least~$k$ if there exists a resonance vector~$\chi_1$ such that 
$$ 
  \chi = (1+(\rlmb-s)R_\lambda(H_s)V)^k\chi_1.
$$ 
Equivalently, a resonance vector~$\chi$ has depth 
at least~$k$ if there exists a resonance vector~$\chi_1$ such that 
$ 
  \chi = \bfA_\lambda^k\chi_1.
$ 
A resonance vector has depth~$k$ if it has depth at least~$k$ but not at least~$k+1.$

\smallskip
The following theorem provides a criterion for a resonance vector to have depth $\geq 1.$

\begin{thm} \label{T: TFAE depth 1 criterion} 
Let~$\chi$ be a resonance vector. 
The following assertions are equivalent.
\begin{itemize}
  \item[(i)] The vector $V\chi$ is orthogonal to $\clV_\lambda.$
  \item[(ii)] The depth of the vector~$\chi$ is at least~$1.$
  \item[(iii)] The equality~$\bfA_\lambda(r_\lambda) S_\lambda \chi = - \chi$ holds. 
\end{itemize}
\end{thm}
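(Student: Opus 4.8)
The strategy is to establish the cyclic chain of implications (i) $\Rightarrow$ (iii) $\Rightarrow$ (ii) $\Rightarrow$ (i), exploiting the identity from Theorem~\ref{T: 1st thm of section 3} as the bridge between the ``orthogonality'' condition and the algebraic ``depth'' condition. The key point is that when $V\chi \perp \clV_\lambda$, equation~(\ref{F: fcukingly nice equality}) tells us that $S_\lambda\chi$ solves the same ``twisted'' resolvent equation that $\chi$ does, namely $(1+(\rlmb-s)A_\lambda(s))(S_\lambda\chi) = A_\lambda(s)\chi$, which I will manipulate to read off that $S_\lambda\chi$ is itself a resonance vector and that $\bfA_\lambda(\rlmb)$ sends it back to $-\chi$.

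For (i) $\Rightarrow$ (iii): assume $V\chi\perp\clV_\lambda$. By Theorem~\ref{T: 1st thm of section 3}, $(1+(\rlmb-s)A_\lambda(s))S_\lambda\chi = A_\lambda(s)\chi$. I would first argue that $S_\lambda\chi$ is a resonance vector: since $\chi$ is a resonance vector of some order $k$, the right-hand side $A_\lambda(s)\chi$ lies (up to the holomorphic part) in the span of $\chi$ and its images under the singular part of $A_\lambda(s)$, so applying $(1+(\rlmb-s)A_\lambda(s))$ once more reduces order, and finitely many applications annihilate it — concretely one checks $(1+(\rlmb-s)A_\lambda(s))^{k+1}S_\lambda\chi = (1+(\rlmb-s)A_\lambda(s))^{k}A_\lambda(s)\chi$, and since $A_\lambda(s)\chi = (s-\rlmb)^{-1}\chi + \text{(holomorphic)}\cdot\text{resonance terms}$ by the Laurent expansion~(\ref{F: Az (3.3.16)}), this is a resonance vector, hence killed by a high enough power. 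Once $S_\lambda\chi\in\Upsilon_\lambda(r_\lambda)$, I apply the operator $\bfA_\lambda(\rlmb) = \frac{1}{2\pi i}\oint (s-\rlmb)A_\lambda(s)\,ds$ to both sides, or more directly multiply the identity $(1+(\rlmb-s)A_\lambda(s))S_\lambda\chi = A_\lambda(s)\chi$ by $(s-\rlmb)$ and extract the residue at $s=\rlmb$: the left side gives $S_\lambda\chi\cdot\text{(coeff of }(s-\rlmb)^0\text{ in }1) - \bfA_\lambda(\rlmb)S_\lambda\chi + \cdots$, wait — more carefully, rewrite as $S_\lambda\chi = A_\lambda(s)\chi - (\rlmb-s)A_\lambda(s)S_\lambda\chi = A_\lambda(s)(\chi + (s-\rlmb)S_\lambda\chi)$ and take the contour integral $\frac{1}{2\pi i}\oint_{C_{r_z}}(\cdot)\,ds$; since the left side is holomorphic its integral vanishes, giving $0 = P_\lambda(r_\lambda)\chi + \bfA_\lambda(\rlmb)S_\lambda\chi$ modulo a check that $P_\lambda(r_\lambda)\chi = \chi$ (true because $\chi$ is a resonance vector), hence $\bfA_\lambda(\rlmb)S_\lambda\chi = -\chi$.

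For (iii) $\Rightarrow$ (ii): if $\bfA_\lambda(\rlmb)S_\lambda\chi = -\chi$, then $\chi = \bfA_\lambda(\rlmb)(-S_\lambda\chi)$, and once we know $-S_\lambda\chi$ is a resonance vector (which follows as above, or can be folded into the hypothesis once (iii) is known to force $S_\lambda\chi$ into $\Upsilon_\lambda$ via $\bfA_\lambda$ applied to it landing in $\Upsilon_\lambda$), $\chi$ is manifestly in $\im\bfA_\lambda(\rlmb)$ restricted to $\Upsilon_\lambda$, i.e. $\chi$ has depth at least $1$ by the definition recalled just before the theorem. For (ii) $\Rightarrow$ (i): suppose $\chi = \bfA_\lambda(\rlmb)\chi_1$ for some resonance vector $\chi_1$. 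Then $V\chi = V\bfA_\lambda(\rlmb)\chi_1 = \bfB_\lambda(\rlmb)V\chi_1$ by the intertwining relation $\bfB_z(r_z)V = V\bfA_z(r_z)$ quoted in the preliminaries. I then need $\bfB_\lambda(\rlmb)(\text{anything}) \perp \clV_\lambda$: this should follow from the adjoint relation $\bfB_\lambda(\rlmb)^* = \bfA_\lambda(\rlmb)$ (valid since $\lambda$, $r_\lambda$ real) together with the fact that $\clV_\lambda = \ker(\bfA_\lambda(\rlmb))\cap\Upsilon_\lambda = \Upsilon^1_\lambda$, giving $\scal{\bfB_\lambda(\rlmb)w}{\psi} = \scal{w}{\bfA_\lambda(\rlmb)\psi} = 0$ for $\psi\in\clV_\lambda$; one has to be mildly careful about whether the relevant vectors lie in the finite-dimensional root space, but $Q_\lambda(r_\lambda)V = VP_\lambda(r_\lambda)$ and the self-duality relations from the preliminaries handle that bookkeeping.

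\textbf{Main obstacle.} The delicate step is (i) $\Rightarrow$ (iii), specifically the careful extraction of $\bfA_\lambda(\rlmb)S_\lambda\chi = -\chi$ from the operator identity~(\ref{F: fcukingly nice equality}): one must correctly identify which side is holomorphic at $s = \rlmb$, confirm that $S_\lambda\chi$ genuinely lands in the finite-dimensional root space $\Upsilon_\lambda(r_\lambda)$ (so that $\bfA_\lambda$ and $P_\lambda$ act on it the expected way), and track the residue/contour-integral computation so that the holomorphic ``error'' terms in the $2\times 2$ representation~(\ref{F: 2x2 repr-n of 1-r A lamb(r)}) — which involve $\euD_\lambda(s)$ and are individually singular — do not contaminate the residue after the orthogonality relation~(\ref{F: v* chi+aaa chi=0}) has been used to cancel them. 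The $2\times 2$ matrix computations underlying Theorem~\ref{T: 1st thm of section 3} already did most of this cancellation work, so the proof here should mostly be an application of that theorem plus the standard Riesz-idempotent calculus; the risk is a sign error or an off-by-one in the power of $(s-\rlmb)$.
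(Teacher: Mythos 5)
Your proposal is correct and follows essentially the same route as the paper's proof. For (i)~$\Rightarrow$~(iii) you invoke Theorem~\ref{T: 1st thm of section 3} and then extract the residue of $(1+(\rlmb-s)A_\lambda(s))S_\lambda\chi = A_\lambda(s)\chi$ at $s=\rlmb$ via a contour integral; the paper does the equivalent thing by expanding both sides in Laurent series and matching the $(s-\rlmb)^{-1}$ coefficients. Your (ii)~$\Rightarrow$~(i) uses the intertwining $V\bfA_\lambda = \bfB_\lambda V$ together with $\bfB_\lambda^* = \bfA_\lambda$ and $\bfA_\lambda\clV_\lambda = 0$, which is exactly the paper's argument.

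One warning on a side step: your attempt to ``first argue that $S_\lambda\chi$ is a resonance vector'' before running the residue computation is both unnecessary and, as written, unsound. The contour integral $\frac{1}{2\pi i}\oint\big(S_\lambda\chi - A_\lambda(s)(\chi + (s-\rlmb)S_\lambda\chi)\big)\,ds = 0$ works on any fixed vector $S_\lambda\chi$; nothing requires $S_\lambda\chi\in\Upsilon_\lambda(\rlmb)$ in advance, since the operators $P_\lambda$ and $\bfA_\lambda$ are defined by (\ref{F: Pz=oint A(s)})--(\ref{F: A(lamb)=oint s A(s)}) on all of $\hilb$. And the argument you sketch for that preliminary claim does not actually work: $A_\lambda(s)\chi$ contains the holomorphic term $\tilde A_\lambda(s)\chi$, which is not in general a resonance vector, so $(1+(\rlmb-s)A_\lambda(s))^k A_\lambda(s)\chi$ need not vanish. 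The fact that $S_\lambda\chi\in\Upsilon_\lambda(\rlmb)$ is a genuine consequence of (iii), not a prerequisite for proving it (the paper even singles this out in the ``Question'' following the theorem). Drop that preamble and keep only the residue computation, which you have right, including the needed observation $P_\lambda\chi=\chi$. Similarly, your hedging in (iii)~$\Rightarrow$~(ii) is unnecessary: $\bfA_\lambda = \bfA_\lambda P_\lambda$, so one can always replace $-S_\lambda\chi$ by $-P_\lambda S_\lambda\chi$, which is automatically a resonance vector.
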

\begin{proof} (i) $\then$ (iii).
By (\ref{F: Az (3.3.16)}),
the meromorphic operator-function $A_\lambda(s)$ has 
the following Laurent expansion at $s = \rlmb:$ 
\begin{equation} \label{F: Laurent for A lambda(r)}
  A_\lambda(s) = \tilde A_\lambda(s) + \sum_{j=0}^{d-1} (s-\rlmb)^{-j-1} \bfA^{j}_\lambda,
\end{equation}
where $\bfA^0_\lambda = P_\lambda.$ 
From this we obtain
$$
  1+(\rlmb-s)A_\lambda(s) 
         = (\rlmb-s)\tilde A_\lambda(s) + (1 - P_\lambda) 
         - \sum_{j=1}^{d-1} (s-\rlmb)^{-j} \bfA^{j}_\lambda.
$$
Applying the latter series to the vector~$S_\lambda \chi$ and retaining only the term $(s-\rlmb)^{-1}$  
gives 
$$
  (1+(\rlmb-s)A_\lambda(s)) S_\lambda \chi = \ldots - (s-\rlmb)^{-1} \bfA_\lambda S_\lambda \chi + \ldots.
$$
Since~$\chi$ is a resonance vector, the former series applied to the vector~$\chi$
also gives
$$
  A_\lambda(s) \chi = \ldots + (s-\rlmb)^{-1} \chi
      + \ldots.
$$
Since $V\chi \perp \clV_\lambda,$ according to Theorem~\ref{T: 1st thm of section 3}
the last two Laurent expansions are equal and therefore
$
  \bfA_\lambda S_\lambda \chi = - \chi.
$

\smallskip
(iii) $\then$ (ii). This is obvious.

\smallskip
(ii) $\then$ (i). 
Let~$\chi$ be a resonance vector of depth at least one. By definition of depth,
there exists a resonance vector~$\chi'$ such that 
$
 \bfA_\lambda\chi' = \chi.
$
Hence, for any eigenvector $\phi \in \clV_\lambda$ we have 
\begin{equation*}
    \scal{V\chi}{\phi}
         = \scal{V\bfA_\lambda\chi'}{\phi} \\
         = \scal{V\chi'}{\bfA_\lambda\phi} \\
         = 0,
\end{equation*}
where the last equality holds since~$\bfA_\lambda(r_\lambda)$ eliminates any eigenvector. 

\end{proof}


{\bf Question.} Are the items of this theorem also equivalent to this one: (iv)~$S_\lambda \chi$ is a resonance vector?
The implication (iii) $\then$ (iv) is obvious, so the question is whether (iv) $\then$ (iii)?



\subsection{$2 \times 2$ representations of $P_\lambda$ and~$\bfA_\lambda(r_\lambda)$}

The meromorphic operator-valued function $\euD_\lambda(s)$ 
defined in (\ref{F: euD lambda(r)})
acts on the finite-dimensional Hilbert space~$\clV_\lambda.$
Since the function 
$$
  \euD^{-1}_\lambda(s) = (s-\rlmb)\aaa - (s-\rlmb)^2 v^* R_\lambda(\hat H_s)v
$$ 
is holomorphic at $s = r_\lambda,$ by the analytic Fredholm alternative, at the pole $r_\lambda$ 
the Laurent series of the function $\euD_\lambda(s)$ has finitely many terms with negative powers.

\begin{lemma} The meromorphic function $\euD_\lambda(s)$ 
has a pole of order $d$ at $s = \rlmb.$ 
\end{lemma}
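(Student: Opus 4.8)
The plan is to relate the order of the pole of $\euD_\lambda(s)$ at $s=\rlmb$ to the order $d$ of the resonance point $\rlmb$, via the $2\times 2$ representation of $A_\lambda(s)$ already established. Recall from (\ref{F: 2x2 repr-n of A lamb(r)}) that the $(2,1)$-entry of $A_\lambda(s)$ is $\euD_\lambda(s)v^*\euF_\lambda(s)$ and the $(2,2)$-entry is $(s-\rlmb)^{-1}$, while the $(1,1)$-entry $\hat A_\lambda(s)+(\rlmb-s)R_\lambda(\hat H_s)v\euD_\lambda(s)v^*\euF_\lambda(s)$ is a combination of a holomorphic term and a term carrying one extra factor $(\rlmb-s)$ in front of $\euD_\lambda(s)$. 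Since $\euF_\lambda(s)$ and $\euF_\lambda^{-1}(s)$ are holomorphic at $s=\rlmb$ by (\ref{F: euF(-1)=...}), and $v^*$ is a fixed bounded operator, the order of the pole of $\euD_\lambda(s)$ at $\rlmb$ equals the order of the pole of the $(2,1)$-entry of $A_\lambda(s)$; call this integer $p$. The task reduces to showing $p=d$.

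First I would show $p \leq d$. By (\ref{F: Az (3.3.16)}) the whole matrix $A_\lambda(s)$ has a pole of order exactly $d$ at $\rlmb$, so every entry, in particular the $(2,1)$-entry, has pole order at most $d$; hence $p \leq d$. For the reverse inequality $p \geq d$, I would argue that if $p < d$ then the Laurent tail of $A_\lambda(s)$ of orders $(s-\rlmb)^{-p-1},\ldots,(s-\rlmb)^{-d}$ would be supported entirely in the $(1,1)$-block (the $(2,2)$-entry $(s-\rlmb)^{-1}$ contributes only order $1 \leq p$ assuming $d\geq 2$, and the $(1,2)$-entry is holomorphic). But in the $(1,1)$-block the only non-holomorphic contribution is $(\rlmb-s)R_\lambda(\hat H_s)v\,\euD_\lambda(s)\,v^*\euF_\lambda(s)$, whose pole order is at most $p-1 < p \leq d-1$ because of the extra factor $(\rlmb-s)$. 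So no entry of $A_\lambda(s)$ could have a pole of order $d$, contradicting (\ref{F: Az (3.3.16)}). This forces $p = d$.

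Alternatively — and this may be the cleaner route — I would use characterisation (5) of resonance points together with Lemma~\ref{L: inverse of block matrix}: $\lambda$ is an eigenvalue of $H_s$ precisely when the Schur complement $\euD_\lambda^{-1}(s)$ fails to be invertible, and the resonance vector space $\clV_\lambda$ and its higher-order analogues are read off from the principal part of $(H_s-\lambda)^{-1}$, equivalently from $\euD_\lambda(s)$ and the off-diagonal blocks in (\ref{F: (Hr-z)^(-1) as 2x2 matrix}). Since the order $d$ of the resonance point is by definition the largest $k$ with $\Upsilon^k_\lambda(\rlmb)=\Upsilon_\lambda(\rlmb)$, and the Laurent expansion (\ref{F: Az (3.3.16)}) terminates exactly at $(s-\rlmb)^{-d}$, matching powers in the $(2,1)$- and $(1,1)$-entries of (\ref{F: 2x2 repr-n of A lamb(r)}) against those in (\ref{F: Az (3.3.16)}) pins down the pole order of $\euD_\lambda(s)$ as $d$.

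The main obstacle I anticipate is the bookkeeping of how the extra scalar factors $(s-\rlmb)$ and $(\rlmb-s)$ and the holomorphic factors $\euF_\lambda(s)$, $\euF_\lambda^{-1}(s)$, $R_\lambda(\hat H_s)$ shift the pole orders between the various blocks — in particular making sure that the $(1,1)$-block genuinely cannot secretly host a pole of order $d$ when the $(2,1)$-block has a smaller pole, and conversely that the $(2,1)$-block's pole cannot exceed $d$. Care is also needed for the low-order cases $d=1$ (where the $(2,2)$-entry $(s-\rlmb)^{-1}$ already realises the pole) to confirm the statement still reads correctly.
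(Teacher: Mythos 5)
Your argument correctly gives the \emph{lower} bound for the pole order of $\euD_\lambda(s)$: since the $(2,1)$-entry $\euD_\lambda(s)v^*\euF_\lambda(s)$ and the fractional part of the $(1,1)$-entry are both obtained from $\euD_\lambda(s)$ by multiplying on either side by holomorphic factors (with an extra vanishing $(\rlmb-s)$ in the $(1,1)$-block), their pole orders cannot exceed that of $\euD_\lambda(s),$ and the forcing argument you give (that otherwise no entry of $A_\lambda(s)$ could realise the pole of order $d$) then shows the pole of $\euD_\lambda(s)$ is at least $d.$ This part is essentially the same as the first half of the paper's proof.

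The gap is in the \emph{upper} bound. You identify $p$ with the pole order of $\euD_\lambda(s)$ on the strength of ``$\euF_\lambda(s)$ and $\euF_\lambda^{-1}(s)$ are holomorphic\dots and $v^*$ is a fixed bounded operator.'' But $v^*\colon \hat\hilb \to \clV_\lambda$ is not invertible, and multiplying a Laurent series by a fixed non-invertible operator can strictly decrease its pole order: if the leading Laurent coefficient $D_{d'-1}$ of $\euD_\lambda(s)$ (pole order $d'$) satisfies $D_{d'-1}v^*=0,$ then both the $(2,1)$- and $(1,1)$-entries have pole order $<d',$ and the bound $p\leq d$ you extract from the $2\times 2$ representation of $A_\lambda(s)$ says nothing about $d'.$ In short, the $2\times2$ representation (\ref{F: 2x2 repr-n of A lamb(r)}) always shows $\euD_\lambda(s)$ flanked by $v$ or $v^*,$ so it can only bound the pole order of $\euD_\lambda(s)$ from below, never from above. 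The paper obtains the upper bound by an entirely separate device: it observes that $\euD_\lambda(s)$ is the $(2,2)$-entry of the full resolvent $R_\lambda(H_s),$ and the second resolvent identity $R_\lambda(H_s) = R_\lambda(H_{s_0}) - (s-s_0)A_\lambda(s) R_\lambda(H_{s_0})$ (with $s_0$ a fixed non-resonant point) exhibits $R_\lambda(H_s)$ as a constant plus $(s-s_0)A_\lambda(s)$ times a constant, whence $R_\lambda(H_s)$ — and therefore each of its blocks, including $\euD_\lambda(s)$ — has pole order at most $d.$ Without some such argument that sees $\euD_\lambda(s)$ itself, not just $\euD_\lambda(s)$ compressed by $v$ and $v^*,$ your proof does not close. (Your ``alternative route'' in the last paragraph has the same issue: matching powers against (\ref{F: Az (3.3.16)}) still only constrains $\euD_\lambda(s)v^*$ and $v\,\euD_\lambda(s)v^*,$ not $\euD_\lambda(s)$ itself.)
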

\begin{proof}
Since by (\ref{F: Laurent for A lambda(r)}) the operator~$A_\lambda(s)$ has 
a pole of order $d$ at $s = \rlmb,$ it follows from $2 \times 2$ representation 
(\ref{F: 2x2 repr-n of A lamb(r)}) of this operator that the function~$\euD_\lambda(s)$ 
has a pole of order at least~$d$ at~$s = \rlmb,$ since other factors in this $2 \times 2$ representation 
are holomorphic at~$s=r_\lambda.$
That the order of this pole is not greater than~$d$
can be observed from the second resolvent identity
$$
  R_\lambda(H_s) = R_\lambda(H_{s_0}) - (s-s_0)R_\lambda(H_s)V R_\lambda(H_{s_0})
$$
and the fact that $\euD_\lambda(s)$ is the $(2,2)$-entry of $R_\lambda(H_s),$
since the right hand side of the resolvent identity above has a pole of order at most~$d.$
\end{proof}
This lemma shows that the Laurent expansion of the function $\euD_\lambda(s)$ 
can be written in the following form:\label{Page: Laurent for euD}
\begin{equation} \label{F: Laurent for euD}
  \euD_\lambda(s) = \sum_{j=-d+1}^\infty D_{-j} (s-\rlmb)^{j-1}. 
\end{equation}
The additional factor $(s-\rlmb)^{-1}$ is introduced here for convenience; also,
since we shall be working mainly with coefficients of negative powers of~$(s-\rlmb),$
we choose to denote the coefficient of $(s-\rlmb)^{j-1}$ by $D_{-j}.$ 
Since the meromorphic function $\euD_\lambda(s)$ depends only on the triple $(\lambda; H_{\rlmb},V),$
the operators $D_1, \ldots, D_{d-1}$ are also invariants of this triple. 
Further, since $\euD_\lambda(s)$ is self-adjoint for real values of~$s,$ the operators $D_j$ are self-adjoint. 

\begin{lemma} \label{L: long unused lemma}
We have
\begin{equation*}
  \begin{split}
     D_{d-1} \aaa &= 0, \\
     D_{d-2} \aaa &= D_{d-1}v^*R_\lambda(\hat H_{\rlmb}) v, \\
     D_{d-3} \aaa &= D_{d-2}v^*R_\lambda(\hat H_{\rlmb}) v - D_{d-1}v^* \hat A_\lambda(\rlmb)R_\lambda(\hat H_{\rlmb}) v, \\
        \ldots \\
     D_{1} \aaa &= D_{2}v^*R_\lambda(\hat H_{\rlmb}) v - D_{3}v^* \hat A_\lambda(\rlmb)R_\lambda(\hat H_{\rlmb}) v 
       + \ldots + (-1)^{d-3} D_{d-1} v^* \hat A^{d-3}_\lambda(\rlmb) R_\lambda(\hat H_{\rlmb}) v, \\
     D_{0} \aaa &= D_{1}v^*R_\lambda(\hat H_{\rlmb}) v - D_{2}v^* \hat A_\lambda(\rlmb)R_\lambda(\hat H_{\rlmb}) v 
       + \ldots + (-1)^{d-2} D_{d-1} v^* \hat A^{d-2}_\lambda(\rlmb) R_\lambda(\hat H_{\rlmb}) v + 1, \\
     D_{-1} \aaa &= D_{0}v^*R_\lambda(\hat H_{\rlmb}) v - D_{1}v^* \hat A_\lambda(\rlmb)R_\lambda(\hat H_{\rlmb}) v 
       + \ldots + (-1)^{d-1} D_{d-1} v^* \hat A^{d-1}_\lambda(\rlmb) R_\lambda(\hat H_{\rlmb}) v. \\
  \end{split}
\end{equation*}
\end{lemma}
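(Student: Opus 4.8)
The plan is to extract these identities by matching coefficients of negative powers of $(s-\rlmb)$ in the functional equation that defines $\euD_\lambda(s)$, namely
$$
  \euD_\lambda(s)\euD^{-1}_\lambda(s) = 1,
  \qquad \euD^{-1}_\lambda(s) = (s-\rlmb)\aaa - (s-\rlmb)^2 v^* R_\lambda(\hat H_s) v.
$$
First I would expand $v^* R_\lambda(\hat H_s) v$ about $s=\rlmb$ using the second resolvent identity: since $R_\lambda(\hat H_s) = R_\lambda(\hat H_{\rlmb}) - (s-\rlmb)R_\lambda(\hat H_s)\hat V R_\lambda(\hat H_{\rlmb})$, one gets the Neumann-type series
$$
  R_\lambda(\hat H_s) = \sum_{k\ge 0}(-1)^k (s-\rlmb)^k \hat A_\lambda(\rlmb)^k R_\lambda(\hat H_{\rlmb}),
$$
valid near $s=\rlmb$, which is exactly why the operators $\hat A^{k}_\lambda(\rlmb) R_\lambda(\hat H_{\rlmb}) v$ appear on the right-hand sides. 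Substituting this into $\euD^{-1}_\lambda(s)$ yields a power series in $(s-\rlmb)$ starting at order $1$, with coefficient of $(s-\rlmb)^{1}$ equal to $\aaa$ and coefficient of $(s-\rlmb)^{k+1}$ equal to $-(-1)^{k}v^*\hat A^{k-1}_\lambda(\rlmb)R_\lambda(\hat H_{\rlmb})v = (-1)^{k-1}v^*\hat A^{k-1}_\lambda(\rlmb)R_\lambda(\hat H_{\rlmb})v$ for $k\ge 1$.

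Next I would plug in the Laurent expansion $\euD_\lambda(s) = \sum_{j=-d+1}^{\infty} D_{-j}(s-\rlmb)^{j-1}$ from \eqref{F: Laurent for euD} and compute the Cauchy product $\euD_\lambda(s)\euD^{-1}_\lambda(s)$. The product of a series with lowest power $(s-\rlmb)^{-d}$ and a series with lowest power $(s-\rlmb)^{1}$ has lowest power $(s-\rlmb)^{-d+1}$; requiring the product to equal the constant $1$ forces all coefficients of $(s-\rlmb)^{n}$ for $-d+1\le n\le -1$ to vanish, and the coefficient of $(s-\rlmb)^{0}$ to equal $1$. Reading off the coefficient of $(s-\rlmb)^{-d+1}$ gives $D_{d-1}\aaa = 0$; the coefficient of $(s-\rlmb)^{-d+2}$ gives $D_{d-2}\aaa - D_{d-1}v^* R_\lambda(\hat H_{\rlmb})v = 0$, i.e. the second identity; and in general the coefficient of $(s-\rlmb)^{-d+1+p}$, for $p=0,1,\ldots,d-1$, produces
$$
  D_{d-1-p}\aaa \;=\; \sum_{k=1}^{p} (-1)^{k-1} D_{d-k-p}\,v^*\hat A^{k-1}_\lambda(\rlmb)R_\lambda(\hat H_{\rlmb})v \;+\; \delta_{p,d-1}\cdot 1,
$$
which is precisely the displayed list, the extra $+1$ appearing only in the row for $D_0\aaa$ (where $p=d-1$ and the constant term $1$ of $\euD_\lambda\euD^{-1}_\lambda$ enters). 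The final identity for $D_{-1}\aaa$ comes from the coefficient of $(s-\rlmb)^{1}$, which must vanish; here no constant-term contribution is present, which is why that row has no $+1$.

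The only genuinely delicate point is bookkeeping of the indices and signs — keeping track that the $(s-\rlmb)^{k+1}$-coefficient of $\euD^{-1}_\lambda$ carries the sign $(-1)^{k-1}$ (two minus signs: one from $-(s-\rlmb)^2$, one from the $(-1)^k$ in the resolvent expansion, net $(-1)^{k-1}$ after pulling out the overall factor), and that the Laurent coefficient labelled $D_{-j}$ multiplies $(s-\rlmb)^{j-1}$ rather than $(s-\rlmb)^{-j}$. Once the convolution sum is written out with these conventions fixed, each asserted identity is simply the statement that one particular coefficient of $\euD_\lambda(s)\euD^{-1}_\lambda(s)$ equals the corresponding coefficient of the constant function $1$; no further input is needed beyond the Laurent structure already established and the resolvent identity. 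I would present the $p$th identity in the general form above and then note that the displayed equations are the instances $p=0,1,\ldots,d-1$ together with the one obtained from the vanishing of the first-order coefficient.
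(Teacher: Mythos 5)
Your approach — expand $\euD_\lambda(s)\euD^{-1}_\lambda(s)=1$, use the Neumann series for $R_\lambda(\hat H_s)$, and compare coefficients of $(s-\rlmb)^n$ — is precisely the proof the paper gives. The explicit instances you check (the $(s-\rlmb)^{-d+1}$ and $(s-\rlmb)^{-d+2}$ coefficients) are correct, but the general formula you then display is internally inconsistent with them: the coefficient of $(s-\rlmb)^{k+1}$ in $\euD^{-1}_\lambda(s)$ is $(-1)^{k}v^*\hat A^{k-1}_\lambda(\rlmb)R_\lambda(\hat H_{\rlmb})v$, not $(-1)^{k-1}$, and the subscript on $D$ in your convolution sum should be $D_{d-1+k-p}$ rather than $D_{d-k-p}$ (your $p=1$ case would otherwise give $D_{d-2}\aaa=D_{d-2}v^*R_\lambda(\hat H_{\rlmb})v$, which disagrees both with the lemma and with what you correctly derived two lines earlier). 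These are bookkeeping slips, not conceptual gaps; redoing the index shift carefully recovers the lemma exactly.
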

\begin{proof}
  By definition of $\euD_\lambda(s),$ we have 
  $
    1 = (s-\rlmb) \euD_\lambda(s)(\aaa +(\rlmb-s)v^* R_\lambda(\hat H_s) v),
  $
  that is,
  \begin{equation*}
    \begin{split}
      1 & = \sum_{j=-d+1}^\infty D_{-j} (s-\rlmb)^j
        \brs{\aaa +\sum_{j=0}^\infty (-1)^{j+1}(s-\rlmb)^{j+1} v^*\hat A^j_\lambda(\rlmb)R_\lambda(\hat H_{\rlmb}) v}.
    \end{split}
  \end{equation*}
  Comparing powers of $s-\rlmb$ on both sides gives the required equalities. 
\end{proof}

\smallskip
The operators $P_\lambda$ and~$\bfA_\lambda^k$ are coefficients 
of the Laurent series of the meromorphic function~$A_\lambda(s).$
Hence, using (\ref{F: 2x2 repr-n of A lamb(r)}), (\ref{F: Laurent for euD}) 
and the Neumann series 
$$
  \euF_\lambda(s) = \sum_{k=0}^\infty (-1)^k (s-\rlmb)^k A^k_\lambda(\rlmb),
$$
one can calculate $2 \times 2$ representations of these operators. We omit the straightforward 
calculations and present only the resulting formulas; to simplify them, we use notations\label{Page: Y(j)}
$$
  Y_j := R_\lambda(\hat H_{\rlmb}) v D_j v^*
$$
and 
$$
  \set{\hat A^l_\lambda(\rlmb),Y_j} = \hat A^l_\lambda(\rlmb)Y_j + \hat A^{l-1}_\lambda(\rlmb) Y_j \hat A_\lambda(\rlmb) 
    + \hat A^{l-2}_\lambda(\rlmb) Y_j \hat A^2_\lambda(\rlmb) + \ldots + Y_j \hat A^l_\lambda(\rlmb).
$$

\begin{thm} $2 \times 2$ representations of operators~$\bfA_\lambda^j(\rlmb),$ $j=1,\ldots,d-1,$ $P_\lambda(\rlmb)$ 
and $\tilde A_\lambda(\rlmb)$ are given by formulas
\begin{equation} \label{F: A(j)=2x2}
   \bfA^j_\lambda =
    \left(
       \begin{matrix}
    -Y_{j+1} + \set{\hat A_\lambda(\rlmb),Y_{j+2}} - \set{\hat A^2_\lambda(\rlmb),Y_{j+3}} + \ldots + (-1)^{d-j-1}\set{\hat A^{d-j-2}_\lambda(\rlmb),Y_{d-1}}  &  0  \\        
         D_j v^* - D_{j+1} v^* \hat A_\lambda(\rlmb) + D_{j+2} v^* \hat A^2_\lambda(\rlmb) - \ldots +(-1)^{d-j-1} D_{d-1}v^* \hat A^{d-j-1}_\lambda(\rlmb) &  0
       \end{matrix}
    \right),
\end{equation}
\begin{equation} \label{F: A(0)=2x2}
  P_\lambda =
    \left(
       \begin{matrix}
         -Y_1 + \set{\hat A_\lambda(\rlmb),Y_2} - \set{\hat A^2_\lambda(\rlmb),Y_3} + \ldots + (-1)^{d-1}\set{\hat A^{d-2}_\lambda(\rlmb),Y_{d-1}}  &  0  \\
         D_0 v^* - D_1 v^* \hat A_\lambda(\rlmb) + D_2 v^* \hat A^2_\lambda(\rlmb) - \ldots +(-1)^{d-1} D_{d-1}v^* \hat A^{d-1}_\lambda(\rlmb) &  1
       \end{matrix}
    \right),
\end{equation}
and 
\begin{equation} \label{F: tilde A(0)=2x2}
  \tilde A_\lambda(\rlmb) =
    \left(
       \begin{matrix}
         \hat A_\lambda(\rlmb) - Y_0 + \set{\hat A_\lambda(\rlmb),Y_1} - \set{\hat A^2_\lambda(\rlmb),Y_2} + \ldots + (-1)^{d-1}\set{\hat A^{d-1}_\lambda(\rlmb),Y_{d-1}}  &  0  \\
         D_{-1}v^* - D_0 v^* \hat A_\lambda(\rlmb) + D_1 v^* \hat A^2_\lambda(\rlmb) - \ldots +(-1)^{d} D_{d-1}v^* \hat A^{d}_\lambda(\rlmb) &  0
       \end{matrix}
    \right).
\end{equation}
In particular, 
\begin{equation*} 
   \bfA^{d-1}_\lambda =
    \left(
       \begin{matrix}
          0     &  0  \\
         D_{d-1}v^*   &  0
       \end{matrix}
    \right),
    \qquad 
   \bfA^{d-2}_\lambda =
    \left(
       \begin{matrix}
          - Y_{d-1}   &  0  \\
         D_{d-2}v^* - D_{d-1} v^* \hat A_\lambda(\rlmb) &  0
       \end{matrix}
    \right).
\end{equation*}
\end{thm}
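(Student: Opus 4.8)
The plan is to substitute into the $2\times2$ representation (\ref{F: 2x2 repr-n of A lamb(r)}) of $A_\lambda(s)$ the power-series expansions of each of its entries near $s=\rlmb$, multiply out, and read off Laurent coefficients: by the expansion (\ref{F: Laurent for A lambda(r)}) the coefficient of $(s-\rlmb)^{-j-1}$ is $\bfA^j_\lambda$ (with $\bfA^0_\lambda=P_\lambda$) and the coefficient of $(s-\rlmb)^0$ is $\tilde A_\lambda(\rlmb)$. Four ingredients feed the computation. First, the Laurent series (\ref{F: Laurent for euD}) of the crossing-type factor, $\euD_\lambda(s)=\sum_{m\ge-d}D_{-(m+1)}(s-\rlmb)^m$, whose leading term sits at the power $(s-\rlmb)^{-d}$ by the preceding lemma; this is what truncates every sum below. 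Second, the Neumann expansion already recorded before the theorem, $\euF_\lambda(s)=\bigl(1+(s-\rlmb)\hat A_\lambda(\rlmb)\bigr)^{-1}=\sum_{k\ge0}(-1)^k(s-\rlmb)^k\hat A^k_\lambda(\rlmb)$, coming from (\ref{F: euF(-1)=...}). Third, the identity $R_\lambda(\hat H_s)v=\euF_\lambda(s)R_\lambda(\hat H_{\rlmb})v$ — just the second resolvent identity, as already used in the proof of Theorem~\ref{T: thm 2.4} — which gives $R_\lambda(\hat H_s)v=\sum_{p\ge0}(-1)^p(s-\rlmb)^p\hat A^p_\lambda(\rlmb)R_\lambda(\hat H_{\rlmb})v$. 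Fourth, the fact that $\hat A_\lambda(s)$ is holomorphic at $s=\rlmb$ with value $\hat A_\lambda(\rlmb)$, so it only affects the constant term of the $(1,1)$-entry.

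The other three matrix entries are then immediate. The $(1,2)$-entry of $A_\lambda(s)$ is identically $0$, hence so is the $(1,2)$-entry of every $\bfA^j_\lambda$ and of $\tilde A_\lambda(\rlmb)$. The $(2,2)$-entry is $(s-\rlmb)^{-1}$, whose only Laurent coefficient is the residue $1$; this contributes $1$ to the $(2,2)$-slot of $P_\lambda$ and $0$ to that of every $\bfA^j_\lambda$ with $j\ge1$ and of $\tilde A_\lambda(\rlmb)$. For the $(2,1)$-entry $\euD_\lambda(s)v^*\euF_\lambda(s)$ one multiplies the two series: the condition $m+q=-j-1$ with $m\ge-d$, $q\ge0$ produces the single finite sum $\sum_{q=0}^{d-1-j}(-1)^qD_{j+q}v^*\hat A^q_\lambda(\rlmb)$, which is exactly the $(2,1)$-entry displayed in (\ref{F: A(j)=2x2}) and (\ref{F: A(0)=2x2}), while the coefficient of $(s-\rlmb)^0$ is $\sum_{q=0}^{d}(-1)^qD_{q-1}v^*\hat A^q_\lambda(\rlmb)$, the $(2,1)$-entry of (\ref{F: tilde A(0)=2x2}).

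The only place requiring genuine bookkeeping is the $(1,1)$-entry. Plugging the three series into $(\rlmb-s)R_\lambda(\hat H_s)v\,\euD_\lambda(s)\,v^*\euF_\lambda(s)$ produces the triple series $\sum_{p,m,q}(-1)^{p+q+1}(s-\rlmb)^{1+p+m+q}\,\hat A^p_\lambda(\rlmb)\,Y_{-(m+1)}\,\hat A^q_\lambda(\rlmb)$, where $Y_j=R_\lambda(\hat H_{\rlmb})vD_jv^*$. Extracting the coefficient of $(s-\rlmb)^{-j-1}$ forces $p+m+q=-j-2$, hence $Y_{-(m+1)}=Y_{j+1+p+q}$, and $m\ge-d$ caps $p+q$ at $d-2-j$; grouping the terms with $p+q=n$ fixed turns $\sum_{p+q=n}\hat A^p_\lambda(\rlmb)Y_{j+1+n}\hat A^q_\lambda(\rlmb)$ into $\set{\hat A^n_\lambda(\rlmb),Y_{j+1+n}}$, so the coefficient becomes $\sum_{n=0}^{d-2-j}(-1)^{n+1}\set{\hat A^n_\lambda(\rlmb),Y_{j+1+n}}$, which is the $(1,1)$-entry of (\ref{F: A(j)=2x2}) for $j=1,\dots,d-1$ and, for $j=0$, of (\ref{F: A(0)=2x2}) (the extra $1$ in its $(2,2)$-slot being the residue of $(s-\rlmb)^{-1}$). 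Taking instead the coefficient of $(s-\rlmb)^0$ and adding the term $\hat A_\lambda(\rlmb)$ coming from $\hat A_\lambda(s)$ gives the $(1,1)$-entry of (\ref{F: tilde A(0)=2x2}). Finally, specialising $j=d-1$ (the $(1,1)$-sum is then empty) and $j=d-2$ (it has the single term $-Y_{d-1}$) reads off the two matrices displayed after the theorem. I expect the main obstacle to be purely organisational — keeping the three summation indices, the sign $(-1)^{p+q+1}$, and the truncation $p+q\le d-2-j$ consistent, and identifying the symmetrised sum as the bracket $\set{\cdot,\cdot}$; there is no analytic difficulty, since every series in play converges for $s$ near $\rlmb$.
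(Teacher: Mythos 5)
Your proposal is correct and takes exactly the approach the paper indicates: substitute the Laurent expansion (\ref{F: Laurent for euD}) of $\euD_\lambda(s)$, the Neumann series for $\euF_\lambda(s)$, and the second-resolvent identity $R_\lambda(\hat H_s)v=\euF_\lambda(s)R_\lambda(\hat H_{\rlmb})v$ into the $2\times2$ representation (\ref{F: 2x2 repr-n of A lamb(r)}) of $A_\lambda(s)$, then read off Laurent coefficients and recognise the symmetrised sums as $\set{\hat A^n_\lambda(\rlmb),Y_{j+1+n}}$. The paper explicitly omits this ``straightforward'' bookkeeping, and your write-up fills in precisely that computation, including the correct truncation bounds forced by the pole order $d$ and the correct extraction of the residue $1$ in the $(2,2)$-slot of $P_\lambda$.
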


\smallskip 
\noindent These formulas generalise those given in \cite[\S 14.4]{Az9} considering that in this setting 
the eigenvalue~$\lambda$ is degenerate. Note that the $(2,2)$-entry of $P_\lambda$ is~$1.$

\begin{thm} \label{T: -S A(j)=A(j-1)} In the~$2 \times 2$ representation, for all $j=1,2,\ldots,d,$
the only non-zero entry of the operator $-S_\lambda \bfA_\lambda^j$
is the $(1,1)$-entry which is equal to the $(1,1)$-entry of $\bfA^{j-1}_\lambda.$ 
In other words,
\begin{equation} \label{F: -S A(j)=A(j-1)}
  - S_\lambda \bfA_\lambda^j = \hat P \bfA^{j-1}_\lambda. 
\end{equation}
\end{thm}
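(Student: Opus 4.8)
The plan is to prove the identity by a direct matrix multiplication in the $2\times 2$ representation, using formula~(\ref{F: S(lambda)}) for $S_\lambda$ together with the formulas~(\ref{F: A(j)=2x2})--(\ref{F: A(0)=2x2}) for the operators $\bfA^j_\lambda$ established in the preceding theorem. First I would observe that $S_\lambda$ has vanishing second row, while every $\bfA^j_\lambda$ (including $P_\lambda=\bfA^0_\lambda$) has vanishing second column; hence $S_\lambda\bfA^j_\lambda$ can only have a nonzero $(1,1)$-entry, and that entry equals $\hat A_\lambda(\rlmb)\,C_j + R_\lambda(\hat H_{\rlmb})v\,E_j$, where $C_j$ and $E_j$ denote respectively the $(1,1)$- and $(2,1)$-entries of $\bfA^j_\lambda$. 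On the other hand $\hat P\bfA^{j-1}_\lambda$ has only its $(1,1)$-entry, namely $C_{j-1}$, nonzero. So for $j=1,\ldots,d-1$ the theorem reduces to the single operator identity $-\bigl(\hat A_\lambda(\rlmb)\,C_j + R_\lambda(\hat H_{\rlmb})v\,E_j\bigr)=C_{j-1}$, while the case $j=d$ is trivial: $\bfA^d_\lambda=0$, and $\hat P\bfA^{d-1}_\lambda=0$ since the $(1,1)$-entry $C_{d-1}$ of $\bfA^{d-1}_\lambda$ is an empty sum.

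Next I would rewrite $C_j=\sum_{l=0}^{d-j-2}(-1)^{l+1}\set{\hat A^l_\lambda(\rlmb),Y_{j+l+1}}$, with the convention $\set{\hat A^0_\lambda(\rlmb),Y}=Y$, and $E_j=\sum_{l=0}^{d-j-1}(-1)^l D_{j+l}v^*\hat A^l_\lambda(\rlmb)$; recalling $Y_m=R_\lambda(\hat H_{\rlmb})vD_mv^*$ this gives $R_\lambda(\hat H_{\rlmb})v\,E_j=Y_j+\sum_{l=1}^{d-j-1}(-1)^l Y_{j+l}\hat A^l_\lambda(\rlmb)$. The computation then hinges on the elementary bracket identity $\hat A_\lambda(\rlmb)\set{\hat A^l_\lambda(\rlmb),Y}=\set{\hat A^{l+1}_\lambda(\rlmb),Y}-Y\hat A^{l+1}_\lambda(\rlmb)$, immediate from the definition of $\set{\,\cdot\,,\,\cdot\,}$. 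Applying it termwise to $\hat A_\lambda(\rlmb)\,C_j$ splits it into a ``bracket part'' $\sum_{m=1}^{d-j-1}(-1)^m\set{\hat A^m_\lambda(\rlmb),Y_{j+m}}$ and a ``tail part'' $-\sum_{m=1}^{d-j-1}(-1)^m Y_{j+m}\hat A^m_\lambda(\rlmb)$; the tail part cancels exactly against the sum occurring in $R_\lambda(\hat H_{\rlmb})v\,E_j$, leaving $\hat A_\lambda(\rlmb)\,C_j + R_\lambda(\hat H_{\rlmb})v\,E_j = Y_j + \sum_{m=1}^{d-j-1}(-1)^m\set{\hat A^m_\lambda(\rlmb),Y_{j+m}}$, which is precisely $-C_{j-1}$ after shifting the summation index by one (using $\set{\hat A^0_\lambda(\rlmb),Y_j}=Y_j$).

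The hard part is entirely bookkeeping: keeping the signs $(-1)^{l}$ versus $(-1)^{l+1}$, the index shift between $Y_{j+l}$ and $Y_{j+l+1}$, and the upper summation limits ($d-j-2$ for $C_j$, $d-j-1$ for $E_j$) aligned so that the cancellation is exact and the endpoint terms match up. There is no analytic subtlety; once the $2\times2$ formulas of the preceding theorem are granted, everything is finite-dimensional linear algebra. I would also remark that the resulting identity says $-S_\lambda$ is a one-sided inverse of $\bfA_\lambda(r_\lambda)$ modulo the projection $\hat P$, in agreement with Theorem~\ref{T: 1st thm of section 3} and Theorem~\ref{T: TFAE depth 1 criterion}, which yield $\bfA_\lambda(r_\lambda)S_\lambda\chi=-\chi$ whenever $V\chi\perp\clV_\lambda$; an alternative proof could be organised around comparing the principal parts of the Laurent series in~(\ref{F: fcukingly nice equality}) applied to $\chi=\bfA^j_\lambda f$, but the direct matrix computation above is shorter and self-contained.
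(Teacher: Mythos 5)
Your proof is correct and follows essentially the same route as the paper's: direct matrix multiplication of $S_\lambda$ against the $2\times 2$ formulas~(\ref{F: A(j)=2x2})--(\ref{F: A(0)=2x2}), followed by a re-indexing cancellation. The only cosmetic difference is that you isolate the bracket identity $\hat A_\lambda(\rlmb)\set{\hat A^l_\lambda(\rlmb),Y}=\set{\hat A^{l+1}_\lambda(\rlmb),Y}-Y\hat A^{l+1}_\lambda(\rlmb)$ explicitly and spell out the endpoint case $j=d$, whereas the paper performs the same re-arrangement in-line without naming it.
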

\begin{proof}
Proof is a direct calculation based on $2 \times 2$ representation
(\ref{F: A(j)=2x2}) of the operator~$\bfA_\lambda^j.$
For $j \geq 1,$ we have 

\begin{equation*}
  \begin{split}
    S_\lambda \bfA_\lambda^j(r_\lambda) & = 
       \left(
         \begin{matrix}
            \hat A_\lambda(\rlmb) & R_\lambda(\hat H_{\rlmb}) v \\
            0 & 0  
         \end{matrix}
       \right)
    \left(
       \begin{matrix}
       \sum_{k=0}^{d-j-2} (-1)^{k+1} \set{\hat A^k_\lambda(\rlmb),Y_{k+j+1}} & 0 \\
       \sum_{k=0}^{d-j-1} (-1)^k D_{j+k} v^* \hat A^k_\lambda(\rlmb) & 0 
       \end{matrix}
    \right).
   \end{split}
\end{equation*}
The only non-zero entry of this product is $(1,1)$-entry,
which is equal to the following expression
\begin{equation*}
  \begin{split}
    & -\hat A_\lambda(\rlmb)Y_{j+1} + \hat A_\lambda(\rlmb)\set{\hat A_\lambda(\rlmb),Y_{j+2}} - \ldots + (-1)^{d-j-1}\hat A_\lambda(\rlmb)\set{\hat A^{d-j-2}_\lambda(\rlmb),Y_{d-1}} 
    \\ & \qquad + R_\lambda(\hat H_{\rlmb}) v D_j v^* - R_\lambda(\hat H_{\rlmb}) v D_{j+1} v^* \hat A_\lambda(\rlmb) + \ldots +(-1)^{d-j-1} R_\lambda(\hat H_{\rlmb}) v D_{d-1}v^* \hat A^{d-j-1}_\lambda(\rlmb) 
    \\ & = -\hat A_\lambda(\rlmb)Y_{j+1} + \hat A_\lambda(\rlmb)\set{\hat A_\lambda(\rlmb),Y_{j+2}} - \ldots + (-1)^{d-j-1}\hat A_\lambda(\rlmb)\set{\hat A^{d-j-2}_\lambda(\rlmb),Y_{d-1}} 
    \\ & \qquad + Y_j - Y_{j+1} \hat A_\lambda(\rlmb) + \ldots +(-1)^{d-j-1} Y_{d-1} \hat A^{d-j-1}_\lambda(\rlmb). 
  \end{split}
\end{equation*}
Re-arranging the summands in this expression we see that it is equal to 
$$
  Y_j - \set{\hat A_\lambda(\rlmb), Y_{j+1}} + \set{\hat A^2_\lambda(\rlmb), Y_{j+2}} - \ldots + (-1)^{d-j-1}\set{\hat A^{d-j-1}_\lambda(\rlmb),Y_{d-1}}
$$
Since this is the negative of the $(1,1)$-entry of the operator $\bfA^{j-1}_\lambda,$ 
we are done. 
\end{proof}

Subtracting~$\bfA_\lambda^{j-1}$ from both sides of (\ref{F: -S A(j)=A(j-1)}) 
and replacing $j-1$ by $j$ in the resulting formula gives for each $j=0,1,2,\ldots,d-1$ 
$$ 
  (1 + S_\lambda \bfA_\lambda)\bfA_\lambda^j = \hat P^\perp \bfA_\lambda^j.
$$
Since $\hat P^\perp$ is the orthogonal projection onto the eigenspace $\clV_\lambda,$
combining this equality with (\ref{F: A(j)=2x2}) gives for $j=1,2,\ldots$
\begin{equation} \label{F: S A(j+1) + A(j) = Djv*+...}
  (1 + S_\lambda \bfA_\lambda) \bfA^{j}_\lambda 
        = \sum_{k=0}^{d-1-j} (-1)^k D_{j+k} v^* \hat A_\lambda^k(\rlmb).
\end{equation}
If $j=0,$ then we have to use the formula (\ref{F: A(0)=2x2}) which has non-zero $(2,2)$-entry equal to~$1.$
This entry results in the additional summand $\hat P^\perp.$ 
Hence, we have 
\begin{equation} \label{F: S A(1) + A(0) = D0v*+...+1}
  (1 + S_\lambda \bfA_\lambda) P_\lambda 
        = \sum_{k=0}^{d-1} (-1)^k D_{k} v^* \hat A_\lambda^k(\rlmb) + \hat P^\perp.
\end{equation}

\begin{thm} For any $j = 1,2,\ldots,d-1$ 
\begin{equation*}
    D_{j}v^* = (1 + S_\lambda \bfA_\lambda) \bfA_\lambda^{j} (1 + \bfA_\lambda \hat A_\lambda(\rlmb)).
\end{equation*}
Also, 
\begin{equation*}
    D_{0}v^* = - \hat P^\perp + (1 + S_\lambda \bfA_\lambda) P_\lambda (1 + \bfA_\lambda \hat A_\lambda(\rlmb)).
\end{equation*}
\end{thm}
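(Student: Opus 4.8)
The plan is to read off both formulas from the already-established identities~(\ref{F: S A(j+1) + A(j) = Djv*+...}) and~(\ref{F: S A(1) + A(0) = D0v*+...+1}) by a short telescoping argument, with no further computation of $2\times2$ blocks. Write $\hat A := \hat A_\lambda(\rlmb)$ and, for $j\geq 1$, put $F_j := (1+S_\lambda\bfA_\lambda)\bfA_\lambda^j$, together with $F_0 := (1+S_\lambda\bfA_\lambda)P_\lambda$ and $F_d := 0$; the last is legitimate since $\bfA_\lambda^d = 0$ by the definition of the order~$d$ (the highest negative power occurring in~(\ref{F: Laurent for A lambda(r)}) is $(s-\rlmb)^{-d}$, with coefficient $\bfA^{d-1}_\lambda$). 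First I would record the two structural facts proved in the text immediately before~(\ref{F: S A(j+1) + A(j) = Djv*+...}): $F_j = \hat P^\perp\bfA_\lambda^j$ for all $j\geq 0$ (with the convention $\bfA_\lambda^0 = P_\lambda$), and hence $F_j\bfA_\lambda = \hat P^\perp\bfA_\lambda^{j+1} = F_{j+1}$, where for $j=0$ one invokes $P_\lambda\bfA_\lambda = \bfA_\lambda$ (the operator $\bfA_\lambda$ being reduced by $\im P_\lambda$).

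Next, fix $1\leq j\leq d-1$. Identity~(\ref{F: S A(j+1) + A(j) = Djv*+...}) reads $F_j = \sum_{k=0}^{d-1-j}(-1)^k D_{j+k}v^*\hat A^k$; multiplying the same identity at index $j+1$ on the right by $\hat A$ and adding it to the identity at index $j$, every summand $D_{j+k}v^*\hat A^k$ with $k\geq 1$ cancels in pairs (and at $j=d-1$ one uses $F_d=0$), leaving $F_j + F_{j+1}\hat A = D_j v^*$. Substituting $F_{j+1} = F_j\bfA_\lambda$ from the first step then gives
$$
  D_j v^* \;=\; F_j\,(1+\bfA_\lambda\hat A) \;=\; (1+S_\lambda\bfA_\lambda)\,\bfA_\lambda^j\,(1+\bfA_\lambda\hat A_\lambda(\rlmb)),
$$
which is the first assertion.

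For $j=0$ I would run the same telescoping using~(\ref{F: S A(1) + A(0) = D0v*+...+1}) in place of~(\ref{F: S A(j+1) + A(j) = Djv*+...}): here $F_0 = \sum_{k=0}^{d-1}(-1)^k D_k v^*\hat A^k + \hat P^\perp$, so adding $F_1\hat A$ (with $F_1$ taken from the $j=1$ case of~(\ref{F: S A(j+1) + A(j) = Djv*+...})) now leaves $F_0 + F_1\hat A = D_0 v^* + \hat P^\perp$, the surviving term $\hat P^\perp$ being exactly the nonzero $(2,2)$-contribution of $P_\lambda$. Using $F_1 = F_0\bfA_\lambda$ once more yields $D_0 v^* = F_0(1+\bfA_\lambda\hat A) - \hat P^\perp = -\hat P^\perp + (1+S_\lambda\bfA_\lambda)P_\lambda(1+\bfA_\lambda\hat A_\lambda(\rlmb))$, the second assertion. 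Everything in this argument is routine; the only points that need care are the boundary bookkeeping ($F_d=0$ at the top, and the inhomogeneous $\hat P^\perp$ term at $j=0$ coming from~(\ref{F: S A(1) + A(0) = D0v*+...+1})) and keeping the convention $\bfA_\lambda^0=P_\lambda$ consistent so that the factor $1+\bfA_\lambda\hat A_\lambda(\rlmb)$ is produced in precisely the stated form. I do not anticipate a genuine obstacle.
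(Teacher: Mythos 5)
Your proof is correct, and it takes a cleaner route than the paper's. The paper proceeds by downward induction on~$j$: it first works out $D_{d-1}v^*$, $D_{d-2}v^*$, $D_{d-3}v^*$ by hand, then for general~$j$ moves all but the leading term of~(\ref{F: S A(j+1) + A(j) = Djv*+...}) to the other side, substitutes the already-established formulas for $D_{j+1}v^*,\ldots,D_{d-1}v^*$, and finally collapses the resulting double sum by a telescoping cancellation. You dispense with the induction entirely: you pair the identity~(\ref{F: S A(j+1) + A(j) = Djv*+...}) at index~$j$ with the same identity at index~$j+1$ multiplied on the right by $\hat A_\lambda(\rlmb)$, and the alternating sums cancel immediately to give $F_j + F_{j+1}\hat A_\lambda(\rlmb) = D_j v^*$ (plus the extra $\hat P^\perp$ at $j=0$). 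Then the trivial observation $F_{j+1}=F_j\bfA_\lambda$ — which for $j\geq 1$ is just $\bfA_\lambda^{j}\bfA_\lambda=\bfA_\lambda^{j+1}$ and for $j=0$ uses $P_\lambda\bfA_\lambda=\bfA_\lambda$ — turns this into $D_jv^*=F_j(1+\bfA_\lambda\hat A_\lambda(\rlmb))$. The net effect is the same computation the paper performs, but you do it once, between two consecutive lines, rather than recursively down the chain; this removes the need for an induction hypothesis and makes it evident that the result is nothing but a first-difference applied to the alternating sums in~(\ref{F: S A(j+1) + A(j) = Djv*+...}) and~(\ref{F: S A(1) + A(0) = D0v*+...+1}). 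Your boundary bookkeeping ($F_d=0$ from $\bfA_\lambda^d=0$, and the inhomogeneous $\hat P^\perp$ in the $j=0$ case) is also handled correctly.
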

\begin{proof}
Using the equalities (\ref{F: S A(j+1) + A(j) = Djv*+...}), we obtain formulas
\begin{equation*}
  \begin{split}
     D_{d-1}v^* & = \bfA_\lambda^{d-1}, \\
     & \\
     D_{d-2}v^* & = \bfA_\lambda^{d-2} + S_\lambda \bfA_\lambda^{d-1} + D_{d-1}v^*\hat A_\lambda(\rlmb) \\
                & = \bfA_\lambda^{d-2} + S_\lambda \bfA_\lambda^{d-1} + \bfA_\lambda^{d-1}\hat A_\lambda(\rlmb)\\
                & = (1 + S_\lambda \bfA_\lambda) \bfA_\lambda^{d-2} (1 + \bfA_\lambda \hat A_\lambda(\rlmb)), \\
     & \\           
      D_{d-3}v^*
        & = \bfA_\lambda^{d-3} + S_\lambda \bfA_\lambda^{d-2} + D_{d-2}v^* \hat A_\lambda(\rlmb) - D_{d-1}v^* \hat A_\lambda^2(\rlmb) \\
        & = \bfA_\lambda^{d-3} + S_\lambda \bfA_\lambda^{d-2}
                + \brs{\bfA_\lambda^{d-2} + S_\lambda \bfA_\lambda^{d-1}
                + \bfA_\lambda^{d-1}\hat A_\lambda(\rlmb)}\hat A_\lambda(\rlmb) - \bfA_\lambda^{d-1} \hat A_\lambda^2(\rlmb) \\
        & = \bfA_\lambda^{d-3} + S_\lambda \bfA_\lambda^{d-2}
                + \bfA_\lambda^{d-2}\hat A_\lambda(\rlmb) + S_\lambda \bfA_\lambda^{d-1}\hat A_\lambda(\rlmb)  \\
        & = (1 + S_\lambda \bfA_\lambda) \bfA_\lambda^{d-3} (1 + \bfA_\lambda \hat A_\lambda(\rlmb)).
  \end{split}
\end{equation*}
We proceed by induction. 
Moving all summands of the right side of (\ref{F: S A(j+1) + A(j) = Djv*+...}), 
except the first one, to the left side and then using the induction assumption we obtain 
\begin{equation*}
  \begin{split}
      D_{j}v^* 
       & = (1 + S_\lambda \bfA_\lambda) \bfA_\lambda^{j} 
              + \sum_{k=1}^\infty (-1)^{k+1} D_{j+k} v^* \hat A_\lambda^k(\rlmb)
    \\ & = (1 + S_\lambda \bfA_\lambda) \bfA_\lambda^{j} 
              + \sum_{k=1}^\infty (-1)^{k+1} (1 + S_\lambda \bfA_\lambda) \bfA_\lambda^{j+k} (1 + \bfA_\lambda \hat A_\lambda(\rlmb)) \hat A_\lambda^k(\rlmb)
    \\ & = (1 + S_\lambda \bfA_\lambda) \bfA_\lambda^{j} \brs{1
              + \sum_{k=1}^\infty (-1)^{k+1} \bfA_\lambda^{k} (1 + \bfA_\lambda \hat A_\lambda(\rlmb)) \hat A_\lambda^k(\rlmb)}
    \\ & = (1 + S_\lambda \bfA_\lambda) \bfA_\lambda^{j} \brs{1
              + \sum_{k=1}^\infty (-1)^{k+1} \bfA_\lambda^{k} \hat A_\lambda^k(\rlmb)
              + \sum_{k=1}^\infty (-1)^{k+1} \bfA_\lambda^{k+1} \hat A_\lambda^{k+1}(\rlmb)}
    \\ & =  (1 + S_\lambda \bfA_\lambda) \bfA_\lambda^{j} \brs{1 + \bfA_\lambda \hat A_\lambda(\rlmb)}.
  \end{split}
\end{equation*}
The appearance of the additional summand $-\hat P^\perp$ in case of $j=0$ 
was explained before the statement of this theorem. We also remark that the infinite sums above are in fact finite,
but for simplicity the upper summation indexes are replaced by infinity. 
\end{proof}

Since $\aaa S_\lambda = 0,$
this theorem implies the equality 
\begin{equation} \label{F: alpha Dj v* = alpha bfA(j)(1+bfA hat A(0))}
  \aaa D_{j}v^* = \aaa \bfA_\lambda^{j} (1 + \bfA_\lambda \hat A_\lambda(\rlmb)), \ j = 1, 2, \ldots, d-1.
\end{equation}

Using $2 \times 2$ representations of operators $P_\lambda, \bfA_\lambda, \tilde A_\lambda(\rlmb)$ 
and Lemma~\ref{L: long unused lemma}, straightforward but somewhat lengthy calculations 
prove the following relations:
$$
  P_\lambda + \bfA_\lambda S_\lambda = \sum_{l=0}^{d-1} (-1)^{l} S^l_\lambda D_l (v^*+\aaa),
$$
and
$$ 
  \tilde A_\lambda(\rlmb) + P_\lambda S_\lambda = S_\lambda + \sum_{l=-1}^{d-1} (-1)^{l+1} S_\lambda^{l+1} D_l (v^*+\aaa),
$$
where $S^0_\lambda$ is the identity operator on the whole Hilbert space~$\hilb.$
Since these formulas are not used further, their proofs are omitted. 
We note that, since a vector~$\chi$ has the property $V\chi \perp \clV_\lambda$ if and only if 
$(v^*+\aaa)\chi = 0,$ Theorem~\ref{T: TFAE depth 1 criterion} immediately 
follows from the first of these formulas. 

\subsection{Resonance points with property $B$}

\begin{thm} \label{T: TFAE prop B}
The following assertions are equivalent:
\begin{enumerate}
  \item \label{prop B 1} $\aaa D_j = 0$ for all $j=1,2,\ldots,d-1.$
  \item \label{prop B 2} $\aaa D_j v^* = 0$ for all $j=1,2,\ldots,d-1.$
  \item \label{prop B 3} $\aaa \bfA_\lambda = 0.$
  \item \label{prop B 4} $v^* \bfA_\lambda = 0.$
  \item \label{prop B 5} The function $(s-\rlmb) \euD_\lambda(s) \aaa$ is holomorphic at $s = r_\lambda.$
  \item \label{prop B 6} The function $(s-\rlmb) v \euD_\lambda(s) \aaa$ is holomorphic at $s = r_\lambda.$
  \item \label{prop B 7}
$
  \im\brs{\hat P^\perp \bfA_\lambda} \subset \im\brs{\bfA_\lambda}.
$
\end{enumerate}
\end{thm}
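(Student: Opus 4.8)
The plan is to prove the seven statements equivalent by establishing $(1)\Rightarrow(2)$, $(2)\Rightarrow(1)$, together with $(1)\iff(5)$, $(2)\iff(6)$, $(2)\iff(3)$, $(3)\iff(4)$ and $(3)\iff(7)$. The two ``analytic versus algebraic'' pairs come for free: multiplying the Laurent expansion (\ref{F: Laurent for euD}) by $(s-\rlmb)$ shows that the principal part of $(s-\rlmb)\euD_\lambda(s)$ carries precisely the coefficients $D_1,\dots,D_{d-1}$, so that $(s-\rlmb)\euD_\lambda(s)\aaa$ [resp.\ $(s-\rlmb)v\euD_\lambda(s)\aaa$] is holomorphic at $s=\rlmb$ if and only if $D_j\aaa=0$ [resp.\ $vD_j\aaa=0$] for all $j=1,\dots,d-1$. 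Passing to adjoints — the operators $D_j$ and $\aaa$ being self-adjoint — converts these into $\aaa D_j=0$ and $\aaa D_jv^*=0$, that is, into (1) and (2). This gives $(1)\iff(5)$ and $(2)\iff(6)$, while $(1)\Rightarrow(2)$ is trivial (right-multiply $\aaa D_j=0$ by $v^*$).

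The converse $(2)\Rightarrow(1)$ is the heart of the matter. I would prove it by downward induction on $j$ using the recursion of Lemma~\ref{L: long unused lemma}, written as $D_j\aaa=\sum_{i\ge1}(-1)^{i-1}D_{j+i}\,v^*\hat A_\lambda^{i-1}(\rlmb)R_\lambda(\hat H_{\rlmb})v$ for $j=1,\dots,d-1$, the sum being finite since $D_k=0$ for $k\ge d$; the base case $D_{d-1}\aaa=0$ is unconditional. For the inductive step, assume $D_k\aaa=0$, equivalently $\im D_k\subseteq\ker\aaa$, for all $k>j$. Since $V$ is regular, Corollary~\ref{C: criterion of regularity for V} gives $\ker\aaa\cap\ker v=\{0\}$ on $\clV_\lambda$, so $v$ is injective on the subspace $\sum_{k>j}\im D_k$. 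The recursion places $\im(D_j\aaa)$ inside this subspace, while (2) in its adjoint form $vD_j\aaa=0$ places it inside $\ker v$; hence $D_j\aaa=0$. This is the one step where regularity of $V$ is used in an essential way, and it is the part I expect to demand the most care.

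For $(2)\iff(3)$ I would start from the identity (\ref{F: alpha Dj v* = alpha bfA(j)(1+bfA hat A(0))}), $\aaa D_jv^*=\aaa\bfA_\lambda^j(\rlmb)(1+\bfA_\lambda(\rlmb)\hat A_\lambda(\rlmb))$ for $j=1,\dots,d-1$. If $\aaa\bfA_\lambda(\rlmb)=0$ then $\aaa\bfA_\lambda^j(\rlmb)=0$ for all $j\ge1$, giving (2); conversely (2) rewrites the identity as $\aaa\bfA_\lambda^j(\rlmb)=-\aaa\bfA_\lambda^{j+1}(\rlmb)\hat A_\lambda(\rlmb)$, and iterating this downward from $j=d-1$, using $\bfA_\lambda^d(\rlmb)=0$, forces $\aaa\bfA_\lambda^j(\rlmb)=0$ for every $j\ge1$, in particular (3). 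For $(3)\iff(4)$: since $\bfA_\lambda(\rlmb)$ annihilates $\clV_\lambda$, the second block-column of its $2\times2$ matrix vanishes; comparing the $(2,1)$-entries in the identity $V\bfA_\lambda(\rlmb)=\bfB_\lambda(\rlmb)V=(\bfA_\lambda(\rlmb))^*V$ yields $v^*\bfA_\lambda(\rlmb)=-\aaa\bfA_\lambda(\rlmb)$, so $\aaa\bfA_\lambda(\rlmb)=0$ if and only if $v^*\bfA_\lambda(\rlmb)=0$.

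Finally, $(3)\iff(7)$. One has $\im(\hat P^\perp\bfA_\lambda(\rlmb))=\hat P^\perp(\im\bfA_\lambda(\rlmb))$, and $\im\bfA_\lambda(\rlmb)$ is precisely the set of resonance vectors of depth $\ge1$. For $\chi\in\im\bfA_\lambda(\rlmb)$ the projection $\hat P^\perp\chi$ lies in $\clV_\lambda=\Upsilon^1_\lambda(\rlmb)$, hence is itself a resonance vector, and the $\clV_\lambda$-component of $V(\hat P^\perp\chi)$ equals $\aaa\chi$. By Theorem~\ref{T: TFAE depth 1 criterion}, equivalence of (i) and (ii), the vector $\hat P^\perp\chi$ has depth $\ge1$ if and only if $V(\hat P^\perp\chi)\perp\clV_\lambda$, that is, $\aaa\chi=0$. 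Therefore (7) — the assertion that $\hat P^\perp\chi\in\im\bfA_\lambda(\rlmb)$ for every $\chi\in\im\bfA_\lambda(\rlmb)$ — is equivalent to $\aaa$ annihilating $\im\bfA_\lambda(\rlmb)$, i.e.\ to $\aaa\bfA_\lambda(\rlmb)=0$, which is (3). This closes the web of equivalences; apart from the induction in the second step, the remaining arguments are routine manipulations with the $2\times2$ representations and identities already established.
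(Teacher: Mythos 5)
Your proposal is correct and follows essentially the same scheme as the paper: the same web of equivalences is established using the Laurent expansion (\ref{F: Laurent for euD}), Lemma~\ref{L: long unused lemma}, Corollary~\ref{C: criterion of regularity for V}, the identity (\ref{F: alpha Dj v* = alpha bfA(j)(1+bfA hat A(0))}) and Theorem~\ref{T: TFAE depth 1 criterion}. The only (cosmetic) deviation is at $(3)\Leftrightarrow(4)$, where you extract $(v^*+\aaa)\bfA_\lambda(\rlmb)=0$ by comparing $(2,1)$-entries in $V\bfA_\lambda(\rlmb)=\bfA^*_\lambda(\rlmb)V$ instead of invoking Theorem~\ref{T: TFAE depth 1 criterion} directly; both routes produce the same intermediate identity.
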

\begin{proof} We prove the following equivalences:
(\ref{prop B 1}) $\iff$ (\ref{prop B 2}), (\ref{prop B 2}) $\iff$ (\ref{prop B 3}),
(\ref{prop B 3}) $\iff$ (\ref{prop B 4}),
(\ref{prop B 1}) $\iff$ (\ref{prop B 5}),
(\ref{prop B 2}) $\iff$ (\ref{prop B 6}),
(\ref{prop B 3}) $\iff$ (\ref{prop B 7}).

(\ref{prop B 1}) $\then$ (\ref{prop B 2}) is obvious.

(\ref{prop B 2}) $\then$ (\ref{prop B 1}).
We shall prove that $v D_j \aaa = 0$ implies $D_j \aaa = 0.$

The equality $D_{d-1} \aaa=0$ follows from Lemma~\ref{L: long unused lemma}.
The second equality in Lemma~\ref{L: long unused lemma} also implies that $\aaa D_{d-2} \aaa=0.$
So, if it were that $D_{d-2} \aaa\neq 0$ then by Corollary 
\ref{C: criterion of regularity for V} we would have $v D_{d-2} \aaa\neq 0$ 
which contradicts the premise. Hence, $D_{d-2} \aaa=0.$

The equalities $D_{d-1} \aaa=0,$ $D_{d-2} \aaa=0$
and the third equality in Lemma~\ref{L: long unused lemma} imply that $\aaa D_{d-3} \aaa=0.$
So, if it were that $D_{d-3} \aaa\neq 0$ then by 
Corollary~\ref{C: criterion of regularity for V} we would have $v D_{d-3} \aaa\neq 0$ 
which contradicts the premise. Hence, $D_{d-3} \aaa=0.$ And so on.

(\ref{prop B 3}) $\then$ (\ref{prop B 2}) follows immediately 
from formula (\ref{F: alpha Dj v* = alpha bfA(j)(1+bfA hat A(0))}).

(\ref{prop B 2}) $\then$ (\ref{prop B 3}). Using 
(\ref{F: alpha Dj v* = alpha bfA(j)(1+bfA hat A(0))}) with $j=d-1$ we infer that 
$
  0 = \aaa D_{d-1} v^* = \aaa \bfA_\lambda ^{d-1}.
$
Hence, using (\ref{F: alpha Dj v* = alpha bfA(j)(1+bfA hat A(0))}) with $j=d-2$ we infer that 
$
  0 = \aaa D_{d-2} v^* = \aaa \bfA_\lambda ^{d-2}.
$
And so on: $0 = \aaa D_{1} v^* = \aaa \bfA_\lambda ^{1}.$

(\ref{prop B 3}) $\iff$ (\ref{prop B 4}). This equivalence follows from formula $(v^*+\aaa)\bfA_\lambda = 0$
which holds for any resonance point (outside essential spectrum), by Theorem~\ref{T: TFAE depth 1 criterion}.

The equivalences (\ref{prop B 1}) $\iff$ (\ref{prop B 5}),
(\ref{prop B 2}) $\iff$ (\ref{prop B 6}) obviously follow from 
the Laurent expansion (\ref{F: Laurent for euD}) of~$\euD_\lambda(s).$ 

(\ref{prop B 7}) $\then$ (\ref{prop B 3}). Since $\aaa = \aaa \hat P^\perp,$ we have 
$
  \aaa \im \brs{\bfA_\lambda} = \aaa \im\brs{\hat P^\perp \bfA_\lambda}.
$
By the premise, we have $\im\brs{\hat P^\perp \bfA_\lambda}\subset \im\brs{\bfA_\lambda}.$ 
Combining this with the obvious inclusion $\im\brs{\hat P^\perp \bfA_\lambda}\subset \clV_\lambda$
gives 
$$
  \aaa \im \brs{\bfA_\lambda} = 
  \aaa \im\brs{\hat P^\perp \bfA_\lambda} \subset \aaa \brs{\im\brs{\bfA_\lambda} \cap \clV_\lambda} = \set{0},
$$
where the last equality follows from the fact that by Theorem~\ref{T: TFAE depth 1 criterion}
eigenvectors~$\chi$ of depth at least one are~$V$-orthogonal to $\clV_\lambda,$
and therefore for such eigenvectors $\aaa \chi = 0.$

(\ref{prop B 3}) $\then$ (\ref{prop B 7}). 
Since $\aaa = \aaa \hat P^\perp,$ we have $\aaa \hat P^\perp \bfA_\lambda = 0.$
Since the kernel of the crossing operator $\aaa$ consists of vectors of depth at least one, we are done. 
\end{proof}

We say that a resonance point~$r_\lambda$ has \emph{property $B$}
if one of the equivalent conditions of Theorem~\ref{T: TFAE prop B} hold. 

\subsection{Resonance points with property~$A$}

Theorem~\ref{T: TFAE depth 1 criterion} and formula (\ref{F: -S A(j)=A(j-1)}) indicate 
that the operators~$\bfA_\lambda$ and $-S_\lambda$ 
restricted to the vector space $\Upsilon_\lambda(r_\lambda)$ behave 
to a certain extent as inverses of each other.  
In particular, while the operator~$\bfA_\lambda$ decreases order of a resonance vector~$\chi$ by~$1,$ the operator 
$S_\lambda$ increases order of~$\chi$ by~$1,$ provided 
there is some room for increasing the order. Another property of~$\bfA_\lambda$ is that it increases depth of a resonance
vector by~$1$ (this is, in fact, definition of the depth). It is therefore reasonable to ask whether the operator
$S_\lambda$ decreases depth of a resonance vector by~$1.$ We say that a resonance point has property $A,$
if it possesses this property. We conjecture that all resonance points have the property~$A.$
In this subsection we give two conditions which are equivalent to property~$A.$

\begin{thm} \label{T: TFAE prop A} 
For a resonance point~$r_\lambda=0$ which does not belong to the essential spectrum,
the following assertions are equivalent: 
\begin{enumerate}
  \item \label{prop A 7} For all $j=1,2,\ldots,d-1$ \
$
  \im\brs{D_j v^*} \subset \im{\bfA_\lambda^j}.
$  
  \item \label{prop A 8} For all $j=1,2,\ldots,d-1$ \
$
  \im\brs{S_\lambda \bfA_\lambda^{j}} \subset \im\brs{\bfA_\lambda^{j-1}}. 
$
  \item \label{prop A 9} For all $j=1,2,\ldots,d-1$ \
$
  \im\brs{\hat P \bfA_\lambda^{j-1}} \subset \im\brs{\bfA_\lambda^{j-1}}.
$
  \item \label{prop A 10} For all $j=1,2,\ldots,d-1$ \
$
  \im\brs{\hat P^\perp \bfA_\lambda^{j-1}} \subset \im\brs{\bfA_\lambda^{j-1}}.
$
\end{enumerate}
\end{thm}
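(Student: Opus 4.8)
The plan is to route the whole proof through two identities already established. The first is Theorem~\ref{T: -S A(j)=A(j-1)}, $-S_\lambda\bfA_\lambda^j=\hat P\bfA_\lambda^{j-1}$, together with its immediate consequence $(1+S_\lambda\bfA_\lambda)\bfA_\lambda^j=\hat P^\perp\bfA_\lambda^j$ for $j=1,\ldots,d-1$. The second is the $2\times 2$ representation~(\ref{F: A(j)=2x2}) of $\bfA_\lambda^j$, whose $(2,1)$-entry is precisely $\hat P^\perp\bfA_\lambda^j$ and reads $\sum_{k\geq 0}(-1)^k D_{j+k}v^*\hat A_\lambda^k(\rlmb)$; isolating its $k=0$ term gives the single working identity
$$
  D_jv^*=\hat P^\perp\bfA_\lambda^j+\hat P^\perp\bfA_\lambda^{j+1}\hat A_\lambda(\rlmb),\qquad j=1,\ldots,d-1,
$$
with the convention $\bfA_\lambda^d=0$. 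I will also use the trivial facts $\im\bfA_\lambda^{m+1}\subset\im\bfA_\lambda^m$ and, read off from the explicit $2\times 2$ forms, $\hat P^\perp\bfA_\lambda^{d-1}=\bfA_\lambda^{d-1}$ (since $\im\bfA_\lambda^{d-1}\subset\clV_\lambda$) and $\im(\hat P^\perp\bfA_\lambda^0)=\im(\hat P^\perp P_\lambda)\subset\clV_\lambda\subset\im P_\lambda=\im\bfA_\lambda^0$. The last two remarks make assertion~(\ref{prop A 7}) at $j=d-1$ and assertions~(\ref{prop A 9}),~(\ref{prop A 10}) at $j=1$ automatic.

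Next I would dispatch the equivalences (\ref{prop A 8})$\iff$(\ref{prop A 9})$\iff$(\ref{prop A 10}), which are purely formal and hold termwise in $j$. From $-S_\lambda\bfA_\lambda^j=\hat P\bfA_\lambda^{j-1}$ one gets $\im(S_\lambda\bfA_\lambda^j)=\im(\hat P\bfA_\lambda^{j-1})$, which identifies (\ref{prop A 8}) with (\ref{prop A 9}); and since $\bfA_\lambda^{j-1}x=\hat P\bfA_\lambda^{j-1}x+\hat P^\perp\bfA_\lambda^{j-1}x$ with $\bfA_\lambda^{j-1}x\in\im\bfA_\lambda^{j-1}$, the vector $\hat P\bfA_\lambda^{j-1}x$ lies in $\im\bfA_\lambda^{j-1}$ exactly when $\hat P^\perp\bfA_\lambda^{j-1}x$ does, which identifies (\ref{prop A 9}) with (\ref{prop A 10}).

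The core of the argument is (\ref{prop A 7})$\iff$(\ref{prop A 10}); here the indices no longer match termwise, so the working identity couples the two families by a shifted (upper-triangular) system and the equivalence has to be teased out by induction. For (\ref{prop A 10})$\then$(\ref{prop A 7}): for each $j$ the working identity gives $\im(D_jv^*)\subset\im(\hat P^\perp\bfA_\lambda^j)+\im(\hat P^\perp\bfA_\lambda^{j+1})$, and by (\ref{prop A 10})---augmented by the boundary remarks in the cases $j=d-1$ and $j+1=d$---the right-hand side lies in $\im\bfA_\lambda^j+\im\bfA_\lambda^{j+1}=\im\bfA_\lambda^j$. For (\ref{prop A 7})$\then$(\ref{prop A 10}): rewrite the working identity as $\hat P^\perp\bfA_\lambda^j=D_jv^*-\hat P^\perp\bfA_\lambda^{j+1}\hat A_\lambda(\rlmb)$ and induct downward on $j$ from $d-1$ to $1$; the base $j=d-1$ is the remark $\hat P^\perp\bfA_\lambda^{d-1}=\bfA_\lambda^{d-1}$, while in the step $\im(D_jv^*)\subset\im\bfA_\lambda^j$ by (\ref{prop A 7}) and $\im(\hat P^\perp\bfA_\lambda^{j+1}\hat A_\lambda(\rlmb))\subset\im(\hat P^\perp\bfA_\lambda^{j+1})\subset\im\bfA_\lambda^{j+1}\subset\im\bfA_\lambda^j$ by the induction hypothesis, whence $\im(\hat P^\perp\bfA_\lambda^j)\subset\im\bfA_\lambda^j$; the remaining exponent $0$ is the automatic boundary remark, so (\ref{prop A 10}) holds in full.

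The only genuine obstacle is the bookkeeping in this last step: one must recognise that (\ref{prop A 7}) and (\ref{prop A 10}) are linked by a shifted triangular system rather than termwise, so the equivalence is extracted by downward induction, with the two boundary exponents $\bfA_\lambda^0$ and $\bfA_\lambda^{d-1}$ handled separately via the $2\times 2$ representations. Everything else collapses to the two displayed identities and the inclusion $\im\bfA_\lambda^{m+1}\subset\im\bfA_\lambda^m$.
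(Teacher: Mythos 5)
Your proof is correct and takes essentially the same route as the paper's: the equivalences among items (\ref{prop A 8}), (\ref{prop A 9}), (\ref{prop A 10}) are formal consequences of $-S_\lambda\bfA_\lambda^j=\hat P\bfA_\lambda^{j-1}$ and $\hat P+\hat P^\perp=1$, while the hard equivalence with (\ref{prop A 7}) is settled by the triangular relation between $D_jv^*$ and the operators $\hat P^\perp\bfA_\lambda^j$ together with a downward induction on $j$ from $d-1$; your "working identity" is just formula (\ref{F: S A(j+1) + A(j) = Djv*+...}) telescoped with its $j{+}1$ instance, and your boundary remarks are exactly what the paper leaves implicit in the phrase "applied for $j=d-1,d-2,\ldots$." The only superficial difference is that you connect (\ref{prop A 7}) to (\ref{prop A 10}) directly rather than to (\ref{prop A 8}); since (\ref{prop A 8})$\iff$(\ref{prop A 9})$\iff$(\ref{prop A 10}) is immediate, this is inessential, and your version has the small advantage that the exponent-$0$ boundary case of (\ref{prop A 10}) is visibly automatic.
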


\begin{proof} 
The equivalence (\ref{prop A 7}) $\iff$ (\ref{prop A 8}) follows from the equality 
(\ref{F: S A(j+1) + A(j) = Djv*+...}) applied for $j=d-1, d-2, \ldots$
The equivalence (\ref{prop A 8}) $\iff$ (\ref{prop A 9}) follows immediately from formula (\ref{F: -S A(j)=A(j-1)}).
The equivalence (\ref{prop A 9}) $\iff$ (\ref{prop A 10}) is obvious. 
\end{proof}

We say that a resonance point~$r_\lambda$ has \emph{property~$A$}
if there holds one of the equivalent conditions of Theorem~\ref{T: TFAE prop A}.
Property~$A$ implies property $B,$ since the condition (\ref{prop B 7}) of Theorem~\ref{T: TFAE prop B}
is a special case of the condition (\ref{prop A 10}) of Theorem~\ref{T: TFAE prop A}.
Property~$A$ holds trivially in two cases: if $m=1$ or if $d\leq 2.$ 
Also, if $d \leq 3,$ then property~$B$ trivially implies property~$A.$ 

\begin{conj} (i) Property $B$ implies property~$A.$
(ii) Property $B$ holds.
(iii) Property~$A$ holds. 
\end{conj}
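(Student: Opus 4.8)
The plan is to reduce the conjecture to two assertions and attack each with the machinery built up above. Observe first that the three parts are not logically independent. Since Property~$A$ always implies Property~$B$ --- condition~(\ref{prop B 7}) of Theorem~\ref{T: TFAE prop B} is the case $j=2$ of condition~(\ref{prop A 10}) of Theorem~\ref{T: TFAE prop A} --- part~(iii) implies part~(ii); and if Property~$A$ holds at every resonance point then the implication of part~(i) is vacuous, so (iii) implies (i) as well; conversely (i) and (ii) together give (iii). Hence the conjecture amounts to proving (i) and (ii). By the remarks following Theorem~\ref{T: TFAE prop A}, only the ranges $m\geq2,\ d\geq3$ for~(ii), and $m\geq2,\ d\geq4$ for~(i), remain to be treated.

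For part~(ii) I would aim at condition~(\ref{prop B 1}) of Theorem~\ref{T: TFAE prop B}, namely $\aaa D_j=0$ for $j=1,\dots,d-1$. The first identity of Lemma~\ref{L: long unused lemma} already gives $\aaa D_{d-1}=0$, and its second identity reduces $\aaa D_{d-2}=0$ to the single relation $D_{d-1}v^{*}R_\lambda(\hat H_{\rlmb})v=0$, equivalently, since $D_{d-1}v^{*}=\bfA_\lambda^{d-1}$, to $\bfA_\lambda^{d-1}R_\lambda(\hat H_{\rlmb})v=0$. The plan is to establish this bottom relation and then run the recursion of Lemma~\ref{L: long unused lemma} upward, at stage $j$ feeding in the relations already obtained for the indices $j+1,\dots,d-1$ together with the self-adjointness of the $D_j$ and of $v^{*}R_\lambda(\hat H_{\rlmb})v$ on $\clV_\lambda$. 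To obtain $\bfA_\lambda^{d-1}R_\lambda(\hat H_{\rlmb})v=0$ one would combine Theorem~\ref{T: thm 2.4}, which places every resonance vector of order $k$ in $\hat\hilb$ in the span of $R_\lambda(\hat H_{\rlmb})v,\ \hat A_\lambda(\rlmb)R_\lambda(\hat H_{\rlmb})v,\ \dots$, with the fact that $\bfA_\lambda^{d-1}$ annihilates everything of order less than $d$; the subtlety is that $R_\lambda(\hat H_{\rlmb})v\psi$ is a resonance vector only for those $\psi\in\clV_\lambda$ with $\aaa\psi=0$ (Theorem~\ref{T: TFAE depth 1 criterion}), so the restriction of the crossing operator $\aaa$ to $\clV_\lambda$ must be understood first.

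For part~(i) I would work with condition~(\ref{prop A 10}) and induct on the power: write $A_p$ for the assertion $\im(\hat P^{\perp}\bfA_\lambda^{p})\subset\im\bfA_\lambda^{p}$, so that $A_0$ is trivial, $A_1$ is precisely Property~$B$, and the goal is $A_p$ for all $p\leq d-2$. The available tools are the identity $(1+S_\lambda\bfA_\lambda)\bfA_\lambda^{p}=\hat P^{\perp}\bfA_\lambda^{p}$ derived from Theorem~\ref{T: -S A(j)=A(j-1)}, which combines with the factorisation $D_pv^{*}=(1+S_\lambda\bfA_\lambda)\bfA_\lambda^{p}(1+\bfA_\lambda\hat A_\lambda(\rlmb))$ established above to give $D_pv^{*}=\hat P^{\perp}\bfA_\lambda^{p}(1+\bfA_\lambda\hat A_\lambda(\rlmb))$, and the recursion~(\ref{F: S A(j+1) + A(j) = Djv*+...}). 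The plan is to push the inclusion $\im(D_pv^{*})\subset\im\bfA_\lambda^{p}$, condition~(\ref{prop A 7}), downward from $p=d-1$, where it is an equality, by means of~(\ref{F: S A(j+1) + A(j) = Djv*+...}) and the inductive hypothesis, with Property~$B$ having to supply the cancellation that prevents the loss of a power of $\bfA_\lambda$ at each step.

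The main obstacle, common to both parts, is that none of these formulas controls how the orthogonal splitting $\hilb=\hat\hilb\oplus\clV_\lambda$ interacts with the depth filtration $\im\bfA_\lambda\supset\im\bfA_\lambda^{2}\supset\cdots$ of $\Upsilon_\lambda$: Property~$B$ guarantees only that the $\clV_\lambda$-component of a depth-$\geq1$ resonance vector again has depth $\geq1$ (equivalently $\im(D_jv^{*})\subset\im\bfA_\lambda$), whereas Property~$A$ demands depth $\geq j$, and the naive induction loses precisely one power of $\bfA_\lambda$ per step. A natural source for the missing structural input is the monodromy and Puiseux analysis of Section~\ref{S: res points as f-ns of s} --- the decomposition of the resonance points $r_z^{(j)}(z)$ into cycles and the resulting Jordan-block-by-Jordan-block description of $\bfA_\lambda(r_\lambda)$ --- which I would try to combine with the algebraic identities above. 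Since all of these statements are non-trivial already in finite dimensions, a sensible parallel move is to test the conjecture numerically in the first open cases $m=2$, $d=3$ and $m=2$, $d=4$, which would either guide the proof or expose a counterexample.
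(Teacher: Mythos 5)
The item you have been asked to prove is labelled \emph{Conjecture} in the paper, and the paper offers no proof of it---it records an open question. So there is no paper proof to compare against, and what you have produced is, quite appropriately, a research plan rather than a proof; you are candid that it does not close the argument. The logical housekeeping at the start is correct and matches the text: Property~$A$ implies Property~$B$ because condition~(\ref{prop B 7}) is the $j=2$ case of condition~(\ref{prop A 10}), so (iii)~$\Rightarrow$~(ii), (iii)~$\Rightarrow$~(i), and (i)$\,\wedge\,$(ii)~$\Rightarrow$~(iii); and by the remarks after Theorem~\ref{T: TFAE prop A} the only open ranges are $m\geq2,\ d\geq3$ for part~(ii) and $m\geq2,\ d\geq4$ for part~(i). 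The reduction via adjoints (all $D_j$ and $\aaa$ self-adjoint) of $\aaa D_{d-2}=0$ to $\bfA_\lambda^{d-1}R_\lambda(\hat H_{\rlmb})v=0$ is also correct.

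However, both proposed inductions have genuine gaps, which you yourself flag, and they are the whole difficulty. For part~(ii) the relation $\bfA_\lambda^{d-1}R_\lambda(\hat H_{\rlmb})v\,\psi=0$ follows from Theorem~\ref{T: TFAE depth 1 criterion} only for $\psi\in\ker\!\bigl(\aaa\big|_{\clV_\lambda}\bigr)$: there $S_\lambda\psi=R_\lambda(\hat H_{\rlmb})v\psi$ is a resonance vector of order~$2$, and $\bfA_\lambda^{d-1}$ with $d\geq3$ kills it. For $\psi$ with $\aaa\psi\neq0$ the vector $R_\lambda(\hat H_{\rlmb})v\psi$ need not be a resonance vector at all, so the order/depth bookkeeping does not apply and the base of the induction is not established. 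For part~(i), as you observe, the recursion~(\ref{F: S A(j+1) + A(j) = Djv*+...}) combined with Property~$B$ gives only $\im(D_jv^*)\subset\im\bfA_\lambda$, not $\im(D_jv^*)\subset\im\bfA_\lambda^{j}$, and the identity $D_jv^*=(1+S_\lambda\bfA_\lambda)\bfA_\lambda^{j}(1+\bfA_\lambda\hat A_\lambda(\rlmb))=\hat P^{\perp}\bfA_\lambda^{j}(1+\bfA_\lambda\hat A_\lambda(\rlmb))$ places $D_jv^*$ exactly in $\im(\hat P^{\perp}\bfA_\lambda^{j})$, the set that Property~$A$ is meant to control---so the argument as sketched is circular. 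Invoking the Jordan/Puiseux structure of Section~\ref{S: res points as f-ns of s} is a sensible candidate for the missing input, but nothing here realises it. The conjecture remains open, and your suggestion to first test $m=2$, $d\in\{3,4\}$ numerically is the right call.
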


\section{On eigenpaths $\phi_\nu(s)$ of $H_0+sV$}
\label{S: On phi[nu]}

\subsection{Order of an eigenpath}

Let~$H_{\rlmb}$ be a~$\lambda$-resonant operator. 
Eigenvectors of~$H_{\rlmb}$ corresponding to the eigenvalue~$\lambda$
form a vector space $\clV_\lambda,$ the dimension of which we denote by~$m.$
Given a regular direction~$V,$ among eigenvectors of~$H_{\rlmb}$ 
one can distinguish vectors
which are also eigenvectors of the crossing operator $\aaa = \hat P^\perp V \hat P^\perp.$

We denote by $\lambda_\nu(s),$ $\nu=1,\ldots,m,$ the eigenvalue functions of~$H_s,$
and by $\phi_\nu(s)$ the corresponding eigenvector functions of~$H_s.$ 
\begin{prop} 
If $\phi_\nu(s)$ is an eigenpath of $H_s$ then the vector $\phi_\nu(\rlmb)$
is also an eigenvector of the crossing operator $\aaa = \hat P^\perp V \hat P^\perp.$
Moreover, the corresponding eigenvalue is~$\lambda'_\nu(\rlmb).$ 
\end{prop}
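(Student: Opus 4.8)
The plan is to differentiate the eigenvalue relation $H_s\phi_\nu(s)=\lambda_\nu(s)\phi_\nu(s)$ at $s=\rlmb$ and then project the resulting identity onto the eigenspace $\clV_\lambda$ by means of $\hat P^\perp$. Analyticity of the eigenpath (already used when $\phi_\nu$ and $\lambda_\nu$ were introduced, via the standard Rellich--Kato perturbation theory for the self-adjoint analytic family $H_s$) guarantees that $\lambda_\nu(\cdot)$ and $\phi_\nu(\cdot)$ are differentiable at $\rlmb$, with $\lambda_\nu(\rlmb)=\lambda$ and $\phi_\nu(\rlmb)\in\clV_\lambda$.

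First I would use that $H_s=H_{\rlmb}+(s-\rlmb)V$, so $\tfrac{d}{ds}H_s=V$; differentiating $H_s\phi_\nu(s)=\lambda_\nu(s)\phi_\nu(s)$ and evaluating at $s=\rlmb$ gives
$$
  V\phi_\nu(\rlmb) + (H_{\rlmb}-\lambda)\phi_\nu'(\rlmb) = \lambda_\nu'(\rlmb)\,\phi_\nu(\rlmb).
$$
Next I would apply the orthogonal projection $\hat P^\perp$ onto $\clV_\lambda$ to this identity. By the $2\times 2$ block form (\ref{F: H0 as 2x2 matrix}) of $H_{\rlmb}$, the operator $H_{\rlmb}-\lambda$ annihilates $\clV_\lambda$ and maps $\hat\hilb$ into $\hat\hilb$; hence $\hat P^\perp(H_{\rlmb}-\lambda)=0$, which kills the term involving the (a priori unknown) vector $\phi_\nu'(\rlmb)$. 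Since $\phi_\nu(\rlmb)\in\clV_\lambda$ we have $\hat P^\perp\phi_\nu(\rlmb)=\phi_\nu(\rlmb)$ and, consequently, $\hat P^\perp V\phi_\nu(\rlmb)=\hat P^\perp V\hat P^\perp\phi_\nu(\rlmb)=\aaa\phi_\nu(\rlmb)$. Applying $\hat P^\perp$ to the displayed equality therefore yields
$$
  \aaa\,\phi_\nu(\rlmb) = \lambda_\nu'(\rlmb)\,\phi_\nu(\rlmb),
$$
which is exactly the assertion: $\phi_\nu(\rlmb)$ is an eigenvector of the crossing operator $\aaa$ with eigenvalue $\lambda_\nu'(\rlmb)$.

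There is essentially no genuine obstacle here; the whole argument is one differentiation followed by one projection. The only two points that deserve a word of care are, first, recording that $\phi_\nu'(\rlmb)$ need not belong to $\clV_\lambda$ — which is precisely why projecting with $\hat P^\perp$ (rather than, say, pairing with $\phi_\nu(\rlmb)$ alone) is the right move, as it eliminates the entire contribution of $(H_{\rlmb}-\lambda)\phi_\nu'(\rlmb)$ — and second, noting that $\phi_\nu(\rlmb)\neq 0$ so that the relation above is a bona fide eigenvalue equation for $\aaa$.
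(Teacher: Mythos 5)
Your proof is correct and coincides with the paper's own argument: differentiate the eigenvalue equation, evaluate at $s=\rlmb$, and apply $\hat P^\perp$ to kill the $\phi_\nu'(\rlmb)$ term. The paper states this more tersely but it is the same computation, including the implicit use of $\hat P^\perp \phi_\nu(\rlmb)=\phi_\nu(\rlmb)$ to identify $\hat P^\perp V\phi_\nu(\rlmb)$ with $\aaa\phi_\nu(\rlmb)$.
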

\begin{proof}
Differentiating $H_s \phi_\nu(s) = \lambda_\nu(s) \phi_\nu(s)$ 
and letting $s = \rlmb$ we obtain the equality
$$
  V \phi_\nu(\rlmb) + H_{\rlmb} \phi'_\nu(\rlmb) 
        = \lambda'_\nu(\rlmb) \phi_\nu(\rlmb) + \lambda_\nu(\rlmb) \phi'_\nu(\rlmb).
$$
Applying to both sides of this equality the operator $\hat P^\perp$ gives
$
  \hat P^\perp V \phi_\nu(\rlmb) = \lambda'_\nu(\rlmb) \phi_\nu(\rlmb),
$
which is what is required.
\end{proof}

\begin{defn}
Let $\phi(s)$ be an analytic path of eigenvectors of~$H_s$ 
and let $k$ be a positive integer. 
We say that the eigenpath $\phi(s)$ has \emph{order at least $k$},\label{Page: property U(k)} if 
the vectors 
\begin{equation} \label{F: property U(k)}
  V\phi(\rlmb), \ V\phi'(\rlmb), \ \ldots, \ V\phi^{(k-2)}(\rlmb) 
\end{equation}
are orthogonal to the eigenspace~$\clV_\lambda.$ 
\end{defn}
We also say that a path $\phi(s)$ has \emph{order $k$},\label{Page: strict U(k)} if 
in addition $V\phi^{(k-1)}(\rlmb)$ is not orthogonal to $\clV_\lambda.$

Since for $k=1$ the set of vectors (\ref{F: property U(k)}) is empty,
order of every eigenpath is at least~$1.$ As we shall see later, 
the largest of orders of eigenpaths is equal to the order 
of the direction~$V.$ 

\begin{lemma} The definition of the order of an eigenpath is correct in the sense that it does 
not depend on normalisation of the eigenpath $\phi(s).$
That is, if $a(s)$ is an analytic function such that $a(\rlmb) \neq 0,$
then for the eigenpath $\psi(s) = a(s)\phi(s)$ the vectors 
$$
  V\psi(\rlmb), \ V\psi'(\rlmb), \ \ldots, \ V\psi^{(k-2)}(\rlmb) 
$$
are orthogonal to~$\clV_\lambda$ if and only if so are the vectors (\ref{F: property U(k)}).
\end{lemma}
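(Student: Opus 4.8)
The plan is to reduce the assertion to the Leibniz rule together with the observation that $\clV_\lambda^\perp$ is a linear subspace. Since $\phi(s)$ and $a(s)$ are analytic near $s=\rlmb,$ so is $\psi(s)=a(s)\phi(s),$ and differentiating this identity $j$ times and setting $s=\rlmb$ gives, for every $j\geq 0,$
$$
  \psi^{(j)}(\rlmb) = \sum_{i=0}^{j} {j \choose i} a^{(j-i)}(\rlmb)\,\phi^{(i)}(\rlmb).
$$
All the vectors $\phi^{(i)}(\rlmb)$ lie in the domain of~$V$ (this is part of the standing framework for analytic eigenpaths and is already implicit in the definition of order), so applying the linear operator~$V$ yields
$$
  V\psi^{(j)}(\rlmb) = \sum_{i=0}^{j} {j \choose i} a^{(j-i)}(\rlmb)\,V\phi^{(i)}(\rlmb).
$$

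First I would prove one implication. Assume the vectors (\ref{F: property U(k)}) are orthogonal to~$\clV_\lambda,$ that is, $V\phi^{(i)}(\rlmb)\perp\clV_\lambda$ for $i=0,1,\ldots,k-2.$ Fix $j$ with $0\leq j\leq k-2.$ In the last display every index~$i$ occurring on the right satisfies $i\leq j\leq k-2,$ so each summand is orthogonal to~$\clV_\lambda;$ since $\clV_\lambda^\perp$ is a linear subspace, the sum $V\psi^{(j)}(\rlmb)$ is orthogonal to~$\clV_\lambda$ as well. Hence the vectors $V\psi(\rlmb),\ldots,V\psi^{(k-2)}(\rlmb)$ are orthogonal to~$\clV_\lambda.$

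The converse then follows by symmetry: since $a(\rlmb)\neq 0,$ the function $b(s)=a(s)^{-1}$ is analytic in a neighbourhood of~$\rlmb$ with $b(\rlmb)\neq 0,$ and $\phi(s)=b(s)\psi(s),$ so the implication just proved, applied with the roles of~$\phi$ and~$\psi$ interchanged (and $a$ replaced by~$b$), gives that orthogonality of $V\psi(\rlmb),\ldots,V\psi^{(k-2)}(\rlmb)$ to~$\clV_\lambda$ implies orthogonality of the vectors (\ref{F: property U(k)}). I do not expect any genuine obstacle here: the argument is a one-line Leibniz computation, and the only point that even deserves a comment is that the derivatives $\phi^{(i)}(\rlmb)$ are honest elements of the domain of~$V,$ so that $V\phi^{(i)}(\rlmb)$ is meaningful.
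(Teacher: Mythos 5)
Your proof is correct and uses exactly the paper's approach: the paper's argument is the same one-line appeal to the Leibniz rule, which you have simply spelled out in full, including the (standard) reversal via $b=a^{-1}$ for the converse.
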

\begin{proof} This immediately follows from the Leibniz rule 
\begin{equation} \label{F: psi(k)(s)=sum a phi}
  \psi^{(k)}(s) = \sum_{j=0}^k {k \choose j} a^{(j)}(s) \phi^{(k-j)}(s).
\end{equation}
\end{proof}

\smallskip

If $(H_0+sV) \phi(s) = \lambda(s)\phi(s),$ then 
$
  \lambda'(\rlmb) = \scal{\phi(\rlmb)}{V\phi(\rlmb)},
$
which is a well-known fact in perturbation theory (see e.g. \cite[\S 38]{LL3}).
In particular, if~$\lambda'(\rlmb) = 0,$ then the vector $V\phi(\rlmb)$ is orthogonal to the vector $\phi(\rlmb).$
The following lemma is a generalisation of this statement. 

\begin{lemma}  \label{L: property (Uk)}
Let $k\geq 2$ and let $\phi(s)$ be an analytic path of eigenvectors of the path 
$H_s = H_{\rlmb} + s V.$ The following assertions are equivalent:
\begin{enumerate}
  \item[(i)] the path $\phi(s)$ has order at least~$k,$
  \item[(ii)] the vectors 
$
  V\phi(\rlmb), \ V\phi'(\rlmb), \ \ldots, \ V\phi^{(k-2)}(\rlmb) 
$
are orthogonal to the vector $\phi(\rlmb),$ 
  \item[(iii)] the equalities 
$
  \lambda'(\rlmb) = 0, \ \ldots, \ \lambda^{(k-1)}(\rlmb) = 0
$
hold, where~$\lambda(s)$ is an analytic path of eigenvalues of $H_s$ which corresponds to $\phi(s),$
  \item[(iv)] for all $j=1,2,\ldots,k-1$ \ 
  $
    (H_{\rlmb}-\lambda)\phi^{(j)}(\rlmb) = - j V \phi^{(j-1)}(\rlmb).
  $
\end{enumerate}
\end{lemma}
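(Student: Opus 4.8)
The plan is to derive all four statements from a single master identity obtained by differentiating the eigenvalue equation, and then to argue around the cycle (iii)$\Rightarrow$(iv)$\Rightarrow$(iii), (iii)$\Rightarrow$(i)$\Rightarrow$(ii)$\Rightarrow$(iii). Write the eigenvalue equation as $H_s\phi(s)=\lambda(s)\phi(s)$. Since $s\mapsto H_s$ is affine with $\dot H_s=V$ and $\ddot H_s=0$, differentiating $j$ times by the Leibniz rule and setting $s=\rlmb$, where $H_{\rlmb}\phi(\rlmb)=\lambda\phi(\rlmb)$, $\lambda(\rlmb)=\lambda$ and $\phi(\rlmb)\neq0$, gives, for every $j\geq1$, the identity
\begin{equation*}
  (H_{\rlmb}-\lambda)\phi^{(j)}(\rlmb)+jV\phi^{(j-1)}(\rlmb)=\sum_{i=1}^{j}\binom{j}{i}\lambda^{(i)}(\rlmb)\,\phi^{(j-i)}(\rlmb). \tag{$\ast$}
\end{equation*}
I will use two elementary observations about $(\ast)$. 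First, $H_{\rlmb}-\lambda$ is self-adjoint and annihilates $\phi(\rlmb)$, so $(H_{\rlmb}-\lambda)\phi^{(j)}(\rlmb)$ is orthogonal to $\phi(\rlmb)$. Second, in the block decomposition $\hilb=\hat\hilb\oplus\clV_\lambda$ of Section~\ref{S: 2x2} one has $H_{\rlmb}-\lambda=\mathrm{diag}(\hat H_{\rlmb}-\lambda,\,0)$, hence $\hat P^\perp(H_{\rlmb}-\lambda)=0$, where $\hat P^\perp$ denotes the orthogonal projection onto $\clV_\lambda$.

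The equivalence of (iii) and (iv) is read off $(\ast)$ directly: if $\lambda'(\rlmb)=\dots=\lambda^{(k-1)}(\rlmb)=0$, then the right-hand side of $(\ast)$ vanishes for $1\leq j\leq k-1$, which is precisely (iv); conversely, if (iv) holds, then $(\ast)$ collapses to $\sum_{i=1}^{j}\binom{j}{i}\lambda^{(i)}(\rlmb)\phi^{(j-i)}(\rlmb)=0$ for $1\leq j\leq k-1$, and pairing with $\phi(\rlmb)$ and inducting on $j$ peels the derivatives off one at a time --- at step $j$ the terms with $i<j$ vanish by the inductive hypothesis, leaving $\lambda^{(j)}(\rlmb)\|\phi(\rlmb)\|^2=0$ --- so (iii) follows. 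The implication (ii)$\Rightarrow$(iii) is the same induction in disguise: pairing $(\ast)$ with $\phi(\rlmb)$ and dropping the $(H_{\rlmb}-\lambda)$-term by the first observation gives $j\scal{V\phi^{(j-1)}(\rlmb)}{\phi(\rlmb)}=\sum_{i=1}^{j}\binom{j}{i}\lambda^{(i)}(\rlmb)\scal{\phi^{(j-i)}(\rlmb)}{\phi(\rlmb)}$, and since the left-hand side vanishes for $1\leq j\leq k-1$ by (ii), induction on $j$ again forces $\lambda^{(j)}(\rlmb)=0$ for $j=1,\dots,k-1$.

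To close the loop I prove (iii)$\Rightarrow$(i) and then (i)$\Rightarrow$(ii). For (iii)$\Rightarrow$(i) I apply $\hat P^\perp$ to $(\ast)$ and use the second observation $\hat P^\perp(H_{\rlmb}-\lambda)=0$ to obtain
\begin{equation*}
  j\,\hat P^\perp V\phi^{(j-1)}(\rlmb)=\sum_{i=1}^{j}\binom{j}{i}\lambda^{(i)}(\rlmb)\,\hat P^\perp\phi^{(j-i)}(\rlmb);
\end{equation*}
under (iii) the right-hand side is $0$ for $1\leq j\leq k-1$, so $\hat P^\perp V\phi^{(l)}(\rlmb)=0$, i.e.\ $V\phi^{(l)}(\rlmb)\perp\clV_\lambda$, for $l=0,\dots,k-2$, which is (i); and (i)$\Rightarrow$(ii) is immediate since $\phi(\rlmb)\in\clV_\lambda$. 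The only step in which something genuinely has to be done is the upgrade (ii)$\Rightarrow$(i), from orthogonality to the single vector $\phi(\rlmb)$ to orthogonality to the whole eigenspace $\clV_\lambda$; the point of the argument is that this should not be attacked head-on but routed through (iii), where projecting $(\ast)$ with $\hat P^\perp$ expresses $\hat P^\perp V\phi^{(j-1)}(\rlmb)$ purely in terms of the scalar derivatives $\lambda^{(i)}(\rlmb)$ --- everything else being the Leibniz rule and the two one-line facts about $H_{\rlmb}-\lambda$. As a sanity check, for $k=2$ all the inductions are vacuous past their first step and the lemma reduces to the classical first-order formula $\lambda'(\rlmb)=\scal{\phi(\rlmb)}{V\phi(\rlmb)}$ recalled before the statement, which together with the preceding Proposition, $\aaa\phi(\rlmb)=\lambda'(\rlmb)\phi(\rlmb)$, is precisely the $l=0$ instance of the general argument.
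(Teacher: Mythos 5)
Your proof is correct and relies on the same core ingredients as the paper's: differentiating the eigenvalue equation to get the master identity $(\ast)$, pairing with $\phi(\rlmb)$ and inducting on $j$ for the direction (ii)$\Rightarrow$(iii), and using that the range of $H_{\rlmb}-\lambda$ is orthogonal to $\clV_\lambda$ to pass from the eigenvalue-equation form to the orthogonality statement (i). The only difference is cosmetic: the paper closes the cycle as (i)$\Rightarrow$(ii)$\Rightarrow$(iii)$\Rightarrow$(iv)$\Rightarrow$(i) with (iv)$\Rightarrow$(i) read off at once, whereas you prove (iii)$\Leftrightarrow$(iv) separately and close through (iii)$\Rightarrow$(i) by applying $\hat P^\perp$ to $(\ast)$ --- which is the same one-line fact about $\ker(H_{\rlmb}-\lambda)$ packaged as a projection identity.
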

\begin{proof} Since $\phi(\rlmb)$ is an eigenvector, (i) plainly implies (ii). 
The implication (iv) $\then$ (i) is also obvious. 
We shall prove that (ii) implies (iii) and that (iii) implies (iv).

\smallskip
(ii) $\then$ (iii). Differentiating $k-1$ times the eigenvalue equation $H_s\phi(s) = \lambda(s)\phi(s)$ 
gives the equality 
\begin{equation} \label{F: Differentiated eigenvalue equation(0)}
  (k-1) V \phi^{(k-2)}(s) + H_s \phi^{(k-1)}(s) = \sum_{j=0}^{k-1} {k-1 \choose j} \lambda^{(j)}(s) \phi^{(k-1-j)}(s).
\end{equation}
Here we let $s = \rlmb$ and take the scalar product of both sides with the vector 
$\phi(\rlmb).$ This leads to cancellation of the second summand of the left hand side with the first summand of the 
right hand side. Hence, we obtain the equality
\begin{equation} \label{F: Differentiated eigenvalue equation}
  (k-1)\scal{\phi(\rlmb)}{V \phi^{(k-2)}(\rlmb)} = \sum_{j=1}^{k-1} {k-1 \choose j} \lambda^{(j)}(\rlmb) \scal{\phi(\rlmb)}{\phi^{(k-1-j)}(\rlmb)}.
\end{equation}
If $k=2$ then 
$
  \scal{\phi(\rlmb)}{V \phi(\rlmb)} = \lambda'(\rlmb) \scal{\phi(\rlmb)}{\phi(\rlmb)}.
$
This equality implies the assertion for $k=2.$ 
Assume that the claim holds for $k < n.$ Then from 
(\ref{F: Differentiated eigenvalue equation}) with $k = n,$ using the induction assumption, we get
$$
  (n-1)\scal{\phi(\rlmb)}{V \phi^{(n-2)}(\rlmb)} = \lambda^{(n-1)}(\rlmb) \scal{\phi(\rlmb)}{\phi(\rlmb)}.
$$
Since by the premise $\scal{\phi(\rlmb)}{V \phi^{(n-2)}(\rlmb)}=0,$ this gives~$\lambda^{(n-1)}(\rlmb) = 0.$ 

\smallskip
(iii) $\then$ (iv). Letting $s=\rlmb$ in (\ref{F: Differentiated eigenvalue equation(0)}) gives the equality 
$$
  (k-1) V \phi^{(k-2)}(\rlmb) + H_{\rlmb} \phi^{(k-1)}(\rlmb) 
         = \sum_{j=0}^{k-1} {k-1 \choose j} \lambda^{(j)}(\rlmb) \phi^{(k-1-j)}(\rlmb).
$$
By the premise, the right hand side simplifies to~$\lambda(\rlmb) \phi^{(k-1)}(\rlmb) = \lambda \phi^{(k-1)}(\rlmb).$
Hence,
$$
  (H_{\rlmb}-\lambda) \phi^{(k-1)}(\rlmb) = - (k-1) V \phi^{(k-2)}(\rlmb).
$$
\end{proof}

This proof also shows that if an eigenpath $\phi(s)$ has order $k,$ then
\begin{equation} \label{F: (phi,V phi(k-1))=lambda(k)(0)(phi,phi)}
  \scal{\phi(\rlmb)}{V\phi^{(k-1)}(\rlmb)} = \frac 1{k} \lambda^{(k)}(\rlmb) \scal{\phi(\rlmb)}{\phi(\rlmb)}.
\end{equation}
In particular, in this case the number $\scal{\phi(\rlmb)}{V\phi^{(k-1)}(\rlmb)}$
is non-zero and real. 

\begin{lemma} \label{L: nice one} 
Let $k\geq 2$ and let $\phi(s)$ be an analytic path of eigenvectors of the path $H_s = H_0+sV.$ 
If $\phi(s)$ has order at least~$k,$ then 
\begin{enumerate}
  \item $\phi^{(j)}(\rlmb)$ is a resonance vector of order $j+1$ for all $j=0,1,2,\ldots,k-1,$ and 
  \item for all $j=1,2,\ldots,k-1$ 
  \begin{equation} \label{F: bfA phi(j)=phi(j-1)}
    \bfA_\lambda(\rlmb) \phi^{(j)}(\rlmb) = j \phi^{(j-1)}(\rlmb).
  \end{equation}  
In particular, $\phi(\rlmb)$ is an eigenvector of depth at least~$k-1.$
\end{enumerate}
\end{lemma}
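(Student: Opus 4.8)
I would prove the two assertions by a single induction on $k$, exploiting the characterization of order at least $k$ from Lemma~\ref{L: property (Uk)} — in particular item (iv), which gives the recursion $(H_{\rlmb}-\lambda)\phi^{(j)}(\rlmb) = -jV\phi^{(j-1)}(\rlmb)$ for $j=1,\ldots,k-1$. The base case $j=0$ is immediate: $\phi(\rlmb)$ is an eigenvector of $H_{\rlmb}$ with eigenvalue $\lambda$, hence a member of $\clV_\lambda = \Upsilon^1_\lambda(\rlmb)$, so it is a resonance vector of order~$1$, and $\bfA_\lambda(\rlmb)$ annihilates it, which is (\ref{F: bfA phi(j)=phi(j-1)}) read at $j$ where the right side vanishes by the convention $\phi^{(-1)}=0$ (equivalently, start the induction proper at $j=1$).

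First I would establish (\ref{F: bfA phi(j)=phi(j-1)}) and the order claim simultaneously. The key identity is $S_\lambda\chi := R_\lambda(\hat H_{\rlmb})V\chi$ together with Theorem~\ref{T: TFAE depth 1 criterion}: if $V\chi\perp\clV_\lambda$ then $\bfA_\lambda(\rlmb)S_\lambda\chi = -\chi$. Applying the operator $R_\lambda(\hat H_{\rlmb})$ (which inverts $\hat H_{\rlmb}-\lambda$ on $\hat\hilb$ and kills $\clV_\lambda$) to the recursion $(H_{\rlmb}-\lambda)\phi^{(j)}(\rlmb) = -jV\phi^{(j-1)}(\rlmb)$, and noting that $(H_{\rlmb}-\lambda)\phi^{(j)}(\rlmb)$ lies in $\hat\hilb$ precisely because $V\phi^{(j-1)}(\rlmb)\perp\clV_\lambda$ for $j-1\le k-2$, I get
$$
  \hat P\phi^{(j)}(\rlmb) = -jR_\lambda(\hat H_{\rlmb})V\phi^{(j-1)}(\rlmb) = -jS_\lambda\phi^{(j-1)}(\rlmb).
$$
Since $\phi^{(j-1)}(\rlmb)$ is (inductively) a resonance vector with $V\phi^{(j-1)}(\rlmb)\perp\clV_\lambda$, Theorem~\ref{T: TFAE depth 1 criterion} gives $\bfA_\lambda(\rlmb)S_\lambda\phi^{(j-1)}(\rlmb) = -\phi^{(j-1)}(\rlmb)$. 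Combining, $\bfA_\lambda(\rlmb)\hat P\phi^{(j)}(\rlmb) = j\phi^{(j-1)}(\rlmb)$. It remains to replace $\hat P\phi^{(j)}(\rlmb)$ by $\phi^{(j)}(\rlmb)$: the difference $\hat P^\perp\phi^{(j)}(\rlmb)\in\clV_\lambda$ is killed by $\bfA_\lambda(\rlmb)$, so (\ref{F: bfA phi(j)=phi(j-1)}) follows. Then, since $\bfA_\lambda(\rlmb)$ lowers order by exactly one and $\phi^{(j-1)}(\rlmb)$ has order $j$ by induction, $\phi^{(j)}(\rlmb)$ has order $j+1$ — here one must also check $\phi^{(j)}(\rlmb)$ is genuinely a resonance vector, which follows because $S_\lambda$ maps a resonance vector $\chi$ with $V\chi\perp\clV_\lambda$ to a resonance vector (the last sentence of Theorem~\ref{T: TFAE depth 1 criterion}), applied to $\chi=\phi^{(j-1)}(\rlmb)$, together with the fact that $\hat P^\perp\phi^{(j)}(\rlmb)\in\clV_\lambda$ is trivially a resonance vector.

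Finally, the depth statement: from $\bfA_\lambda(\rlmb)\phi^{(j)}(\rlmb)=j\phi^{(j-1)}(\rlmb)$ applied down the chain $j=k-1,k-2,\ldots,1$, one sees $\phi(\rlmb) = \frac{1}{(k-1)!}\bfA_\lambda^{k-1}(\rlmb)\phi^{(k-1)}(\rlmb)$, and since $\phi^{(k-1)}(\rlmb)$ is a resonance vector, $\phi(\rlmb)\in\im\bfA_\lambda^{k-1}(\rlmb)$, i.e. it has depth at least $k-1$ by the definition recalled just before Theorem~\ref{T: TFAE depth 1 criterion}.

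**Main obstacle.** The delicate point is the interplay between $\hat P\phi^{(j)}(\rlmb)$ and $\phi^{(j)}(\rlmb)$, and verifying at each induction step that $V\phi^{(j-1)}(\rlmb)$ is actually orthogonal to $\clV_\lambda$ so that both the recursion (iv) of Lemma~\ref{L: property (Uk)} and Theorem~\ref{T: TFAE depth 1 criterion} apply — this is exactly what "order at least $k$" buys us for indices up to $k-2$, so the induction must be run so that at stage $j$ one only ever uses orthogonality of $V\phi^{(i)}(\rlmb)$ for $i\le j-1\le k-2$. Getting these index ranges to line up, and confirming that $\phi^{(j)}(\rlmb)$ has order exactly $j+1$ (not just "at least") — which is where the injectivity-type property "$\bfA_\lambda$ lowers order by exactly one" is used — is the part requiring care rather than ingenuity.
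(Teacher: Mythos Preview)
Your proof is correct and follows essentially the same route as the paper's: induction on $k$ using item (iv) of Lemma~\ref{L: property (Uk)} to get $(H_{\rlmb}-\lambda)\phi^{(j)}(\rlmb)=-jV\phi^{(j-1)}(\rlmb)$, applying the sliced resolvent $R_\lambda(\hat H_{\rlmb})$ to write $\phi^{(j)}(\rlmb)=-jS_\lambda\phi^{(j-1)}(\rlmb)+(\text{order 1 vector})$, and invoking Theorem~\ref{T: TFAE depth 1 criterion} with the orthogonality $V\phi^{(j-1)}(\rlmb)\perp\clV_\lambda$ guaranteed by the order-$k$ hypothesis. Your treatment of the $\hat P$/$\hat P^\perp$ split and the depth conclusion is slightly more explicit than the paper's, but the argument is the same.
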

\begin{proof}
We prove this using induction on $k.$

(A) Let $k=2.$
Differentiation of the eigenvalue equation gives
$$
  V \phi(s) + H_s \phi'(s) = \lambda'(s) \phi(s) + \lambda(s) \phi'(s).
$$
Since $k = 2,$ by the premise and item (iii) of Lemma~\ref{L: property (Uk)}, 
the first summand in the right hand side of the last equality vanishes at $s = \rlmb.$ Hence,
$
  V \phi(\rlmb) + H_{\rlmb} \phi'(\rlmb) = \lambda(\rlmb) \phi'(\rlmb),
$
which can be rewritten as 
$
  (H_{\rlmb} - \lambda) \phi'(\rlmb) = - V \phi(\rlmb).
$
This implies 
$$
  \phi'(\rlmb) = - S_\lambda \phi(\rlmb) + \text{order 1 vector}.
$$
Combining this with the premise $V\phi(\rlmb) \perp \clV_\lambda$
and Theorem~\ref{T: TFAE depth 1 criterion} implies 
that the vector $\phi'(\rlmb)$ is a resonance vector of order two. 
Theorem~\ref{T: TFAE depth 1 criterion} also implies the equality (\ref{F: bfA phi(j)=phi(j-1)}) for $j=2.$

(B) Assume that the claim holds for values of~$k$ less than~$n$
and let~$\phi(s)$ be an eigenpath of order~$\geq n.$ 
By item (iv) of Lemma~\ref{L: property (Uk)},
we have 
$$
  (H_{\rlmb}-\lambda)\phi^{(n-1)}(\rlmb) = - (n-1) V \phi^{(n-2)}(\rlmb).
$$
Applying the sliced resolvent $R_\lambda(\hat H_{\rlmb})$ to both sides of this equality gives 
$$
  \frac 1 {n-1} \phi^{(n-1)}(\rlmb) = - S_\lambda \phi^{(n-2)}(\rlmb) + \text{order 1 vector}. 
$$
By the induction assumption the order of the vector $\phi^{(n-2)}$ is $n-1$ 
and, since $\phi(s)$ has order $\geq n,$ the relation $V\phi^{(n-2)} \perp \clV_\lambda$
holds. Hence, Theorem~\ref{T: TFAE depth 1 criterion} 
implies that $\phi^{(n-1)}$ is a vector of order $n$ and that (\ref{F: bfA phi(j)=phi(j-1)}) holds. 
\end{proof}

\begin{lemma} \label{L: third one}
Let $k\geq 2.$ If $\phi(s)$ is an analytic path of eigenvectors of the path $H_s = H_0+sV$ 
such that $\phi(\rlmb)$ has depth at least $k-1,$ then the eigenpath $\phi(s)$ has order at least~$k.$
\end{lemma}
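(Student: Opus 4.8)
The plan is to proceed by induction on $k$, with Theorem~\ref{T: TFAE depth 1 criterion} and Lemma~\ref{L: nice one} doing essentially all the work. For the base case $k=2$, note that $\phi(\rlmb)$ is an eigenvector of $H_{\rlmb}$ for the eigenvalue~$\lambda$, hence a resonance vector of order~$1$; the hypothesis says it has depth at least~$1$, and by Theorem~\ref{T: TFAE depth 1 criterion} this is equivalent to $V\phi(\rlmb)\perp\clV_\lambda$, which is precisely the assertion that the eigenpath $\phi(s)$ has order at least~$2$.

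For the inductive step I would take $n\geq 3$, assume the statement for all smaller values, and suppose $\phi(\rlmb)$ has depth at least~$n-1$. A fortiori it has depth at least~$n-2$, so by the induction hypothesis $\phi(s)$ has order at least~$n-1$; it then remains only to add the single orthogonality relation $V\phi^{(n-2)}(\rlmb)\perp\clV_\lambda$. From Lemma~\ref{L: nice one}, order~$\geq n-1$ already gives that $\phi^{(j)}(\rlmb)$ is a resonance vector for $j=0,\ldots,n-2$ and that $\bfA_\lambda(\rlmb)\phi^{(j)}(\rlmb)=j\,\phi^{(j-1)}(\rlmb)$ for $j=1,\ldots,n-2$; iterating these gives $\bfA_\lambda^{n-2}(\rlmb)\phi^{(n-2)}(\rlmb)=(n-2)!\,\phi(\rlmb)$. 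On the other hand, depth~$\geq n-1$ means $\phi(\rlmb)=\bfA_\lambda^{n-1}(\rlmb)\psi$ for some resonance vector~$\psi$, and since $\phi(\rlmb)\neq 0$ this forces the order~$d$ of the resonance point to satisfy $d\geq n$. Subtracting the two identities, $\bfA_\lambda^{n-2}(\rlmb)\bigl(\bfA_\lambda(\rlmb)\psi-\tfrac{1}{(n-2)!}\phi^{(n-2)}(\rlmb)\bigr)=0$; since $\Upsilon_\lambda(\rlmb)$ is reduced by $\bfA_\lambda(\rlmb)$, the vector in parentheses is a resonance vector annihilated by $\bfA_\lambda^{n-2}(\rlmb)$, hence lies in $\Upsilon^{n-2}_\lambda(\rlmb)$. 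As $d\geq n$, the operator $\bfA_\lambda(\rlmb)$ maps $\Upsilon^{n-1}_\lambda(\rlmb)$ onto $\Upsilon^{n-2}_\lambda(\rlmb)$, so this vector lies in $\im\bfA_\lambda(\rlmb)$, and therefore so does $\phi^{(n-2)}(\rlmb)$ itself. Thus $\phi^{(n-2)}(\rlmb)$ has depth at least~$1$, and Theorem~\ref{T: TFAE depth 1 criterion} once more yields $V\phi^{(n-2)}(\rlmb)\perp\clV_\lambda$, which together with order~$\geq n-1$ gives order~$\geq n$.

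The only genuinely non-formal point — the one I expect to need the most care in writing up — is the implication ``$\bfA_\lambda^{n-2}(\rlmb)$ kills this resonance vector, hence it lies in $\im\bfA_\lambda(\rlmb)$''. This relies on combining two facts recalled in the preliminaries: that for a resonance vector $\chi$, the condition $\bfA_\lambda^{n-2}(\rlmb)\chi=0$ is equivalent to $\chi$ having order at most~$n-2$, i.e. $\chi\in\Upsilon^{n-2}_\lambda(\rlmb)$; and that $\bfA_\lambda(\rlmb)$ maps $\Upsilon^{n-1}_\lambda(\rlmb)$ onto $\Upsilon^{n-2}_\lambda(\rlmb)$ — the latter being legitimate only because the depth hypothesis has already forced $d\geq n$. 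Everything else (iterating the $\bfA_\lambda$-relations, the reduction of $\Upsilon_\lambda(\rlmb)$ by $\bfA_\lambda(\rlmb)$, and the fact that an eigenpath is nowhere zero) is routine.
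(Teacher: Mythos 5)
Your base case is fine and agrees with the paper's. The inductive step, however, rests on a claim that does not hold in the generality you need, and this is a genuine gap. You argue that $d\geq n$ ensures $\bfA_\lambda(\rlmb)$ maps $\Upsilon_\lambda^{n-1}(\rlmb)$ \emph{onto} $\Upsilon_\lambda^{n-2}(\rlmb)$, so that $\Upsilon_\lambda^{n-2}(\rlmb)\subset\im\bfA_\lambda(\rlmb)$. What is true (and what the paper really uses when it says ``$\bfA_z$ maps $\Upsilon^k_z$ onto $\Upsilon^{k-1}_z$'') is that $\bfA_\lambda$ lowers the \emph{order} of a resonance vector by one; the image $\bfA_\lambda\bigl(\Upsilon_\lambda^{n-1}\bigr)$ equals $\im\bfA_\lambda\cap\Upsilon_\lambda^{n-2}$, and this equals $\Upsilon_\lambda^{n-2}$ only when \emph{every} Jordan cell of $\bfA_\lambda$ has size at least $n-1$. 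The condition $d\geq n$ constrains only the \emph{largest} cell $d_1=d$; if the geometric multiplicity $m>1$ and some other $d_\mu<n-1$, the top vector of that cell lies in $\Upsilon_\lambda^{n-2}$ but not in $\im\bfA_\lambda$. For example, with $d_1=n$ and $d_2=1$, the order-one generator $f_0$ of the second cell satisfies $f_0\in\Upsilon^1_\lambda\subset\Upsilon^{n-2}_\lambda$ yet $f_0\notin\im\bfA_\lambda$, so the passage from ``$\eta\in\Upsilon^{n-2}_\lambda$'' to ``$\eta\in\im\bfA_\lambda$'' breaks down. Closing the gap would require knowing that $\phi^{(n-2)}(\rlmb)$ and the preimage $\psi$ can be taken inside a single Jordan column $\Upsilon^{[\nu]}_\lambda$; that fact is established in the paper only much later (Theorem~\ref{T: (i) and (ii) bfA phi(j)=phi(j-1)}) and its proof uses the present lemma downstream, so it is not available here.

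The paper's own inductive step avoids the Jordan-cell structure entirely: it uses the self-adjointness relation $\bfA_\lambda^{*}V=V\bfA_\lambda$ to move $\bfA_\lambda^{n-1}$ across the inner product in the identity $(n-1)\scal{\phi(\rlmb)}{V\phi^{(n-2)}(\rlmb)}=\lambda^{(n-1)}(\rlmb)\scal{\phi(\rlmb)}{\phi(\rlmb)}$, writes $\phi(\rlmb)=\bfA_\lambda^{n-1}f$, and then uses that $\bfA_\lambda^{n-1}$ kills the order-$(n-1)$ vector $\phi^{(n-2)}(\rlmb)$ to get $\lambda^{(n-1)}(\rlmb)=0$, whence Lemma~\ref{L: property (Uk)}(iii) gives order at least $n$. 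I would suggest replacing the Jordan-image step in your argument with this inner-product computation, which only needs the (correct) ``lowers order by one'' property of $\bfA_\lambda$.
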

\begin{proof}
If $k=2$ then the assertion follows from the equivalence for $\phi(\rlmb)$ to have depth at least one
and to be~$V$-orthogonal to~$\clV_\lambda,$ see Theorem~\ref{T: TFAE depth 1 criterion}.
Assume that the claim holds for $k<n$ and let $\phi(\rlmb)$ be of depth at least~$n-1.$
For the eigenvalue function~$\lambda(s)$ which corresponds to $\phi(s),$ 
by the induction assumption and item (iii) of Lemma~\ref{L: property (Uk)} we have 
$
  \lambda'(\rlmb) = \ldots = \lambda^{(n-2)}(\rlmb) = 0.
$
Combining this with (\ref{F: Differentiated eigenvalue equation}), we obtain
$$
  (n-1) \scal{\phi(\rlmb)}{V \phi^{(n-2)}(\rlmb)} = \lambda^{(n-1)}(\rlmb) \scal{\phi(\rlmb)}{\phi(\rlmb)}.
$$
Since $\phi(\rlmb)$ has depth at least $n-1,$ there exists a vector~$f$ such that~$\bfA_\lambda^{n-1} f = \phi(\rlmb).$
Hence, 
\begin{equation*}
 \begin{split}
  \lambda^{(n-1)}(\rlmb) \scal{\phi(\rlmb)}{\phi(\rlmb)} & = (n-1) \scal{\bfA_\lambda^{n-1} f}{V \phi^{(n-2)}(\rlmb)} 
  \\ & = (n-1) \scal{f}{V \bfA_\lambda^{n-1} \phi^{(n-2)}(\rlmb)} 
  \\ & = 0,
 \end{split}
\end{equation*}
where the last equality follows from the induction assumption, according to which 
and Lemma~\ref{L: nice one} the vector $\phi^{(n-2)}(\rlmb)$
has order~$n-1$ and therefore~$\bfA_\lambda^{n-1} \phi^{(n-2)}(\rlmb) = 0.$
This gives~$\lambda^{(n-1)}(\rlmb) = 0.$ Hence, by Lemma~\ref{L: property (Uk)} 
the proof is complete. 
\end{proof}

Since the eigenvalue~$\lambda$ has geometric multiplicity~$m,$ there are~$m$ eigenpaths 
$\phi_\nu(s),$ $\nu=1,2,\ldots,m,$
and their orders we denote by $\tilde d_\nu.$\label{Page: tilde d(nu)}

We summarise Lemmas~\ref{L: property (Uk)},~\ref{L: nice one} 
and~\ref{L: third one} in the following theorem.

\begin{thm} \label{T: TFAE for (Uk)}
For each $\nu=1,\ldots,m$ the following assertions are equivalent:
\begin{enumerate}
  \item[(i)] The eigenpath $\phi_\nu(s)$ has order $\tilde d_\nu.$
  \item[(ii)] The vectors 
$
  V\phi_\nu(\rlmb), \ V\phi_\nu'(\rlmb), \ \ldots, \ V\phi_\nu^{(\tilde d_\nu-2)}(\rlmb) 
$
are orthogonal to the vector $\phi_\nu(\rlmb),$ and the vector $V\phi_\nu^{(\tilde d_\nu-1)}(\rlmb)$
is not. 
  \item[(iii)] The equalities 
$
  \lambda_\nu'(\rlmb) = 0, \ \ldots, \ \lambda_\nu^{(\tilde d_\nu-1)}(\rlmb) = 0,
$
and the inequality $\lambda_\nu^{(\tilde d_\nu)}(\rlmb) \neq 0$ hold, 
where~$\lambda_\nu(s)$ is an analytic path of eigenvalues of $H_s$ which corresponds to $\phi_\nu(s).$
  \item[(iv)] For all $j=1,2,\ldots,\tilde d_\nu-1$ the equalities 
  $
    (H_{\rlmb}-\lambda)\phi_\nu^{(j)}(\rlmb) = - j V \phi_\nu^{(j-1)}(\rlmb)
  $
  hold, but it fails for $j=\tilde d_\nu.$
  \item[(v)] For all $j=1,2,\ldots,\tilde d_\nu-1$ 
  the equalities $\bfA_\lambda \phi_\nu^{(j)}(\rlmb) = j \phi_\nu^{(j-1)}(\rlmb)$
  hold, but it fails for $j=\tilde d_\nu.$
  \item[(vi)] $\phi_\nu(\rlmb)$ is an eigenvector of depth~$\tilde d_\nu-1.$
\end{enumerate}
\end{thm}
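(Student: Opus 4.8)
The plan is to obtain the theorem by assembling Lemmas~\ref{L: property (Uk)},~\ref{L: nice one} and~\ref{L: third one}, converting their ``order at least~$k$'' characterisations into the ``order exactly~$\tilde d_\nu$'' statements (i)--(vi) by invoking each lemma at $k=\tilde d_\nu$ and at $k=\tilde d_\nu+1$. By definition of $\tilde d_\nu$, the eigenpath $\phi_\nu(s)$ has order at least $\tilde d_\nu$ but not order at least $\tilde d_\nu+1$. Lemma~\ref{L: property (Uk)} tells us that, for every $k$, ``order $\geq k$'' is equivalent to the truncated versions of (ii), (iii) and (iv) (respectively: $V\phi_\nu(\rlmb),\ldots,V\phi_\nu^{(k-2)}(\rlmb)\perp\phi_\nu(\rlmb)$; vanishing of $\lambda_\nu',\ldots,\lambda_\nu^{(k-1)}$ at $\rlmb$; the operator identities $(H_{\rlmb}-\lambda)\phi_\nu^{(j)}(\rlmb)=-jV\phi_\nu^{(j-1)}(\rlmb)$ for $j\le k-1$). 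Comparing the case $k=\tilde d_\nu$ (which holds) with the case $k=\tilde d_\nu+1$ (which fails) produces exactly (ii), (iii), (iv) together with their closing ``but it fails for $j=\tilde d_\nu$'' clauses, and gives (i)$\iff$(ii)$\iff$(iii)$\iff$(iv). Likewise, Lemmas~\ref{L: nice one} and~\ref{L: third one} together state that ``order $\geq k$'' is equivalent to ``$\phi_\nu(\rlmb)$ has depth $\geq k-1$''; the same comparison at $k=\tilde d_\nu$ and $k=\tilde d_\nu+1$ yields (i)$\iff$(vi). The case $\tilde d_\nu=1$ is automatic here: the relevant ranges are then empty and only the ``fails'' clause carries content, which is covered by Lemma~\ref{L: property (Uk)} (and the remark that every eigenpath has order at least~$1$).

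It remains to bring (v) into the chain. For (i)$\then$(v): the equalities $\bfA_\lambda(\rlmb)\phi_\nu^{(j)}(\rlmb)=j\,\phi_\nu^{(j-1)}(\rlmb)$ with $j=1,\ldots,\tilde d_\nu-1$ are precisely the conclusion of Lemma~\ref{L: nice one}(2) applied with $k=\tilde d_\nu$, which at the same time tells us that $\phi_\nu^{(j)}(\rlmb)$ is a resonance vector of order $j+1$ for $j\le \tilde d_\nu-1$. For the failure at $j=\tilde d_\nu$ I would differentiate $H_s\phi_\nu(s)=\lambda_\nu(s)\phi_\nu(s)$ exactly $\tilde d_\nu$ times, set $s=\rlmb$, and use item~(iii) (all of $\lambda_\nu',\ldots,\lambda_\nu^{(\tilde d_\nu-1)}$ vanish at $\rlmb$ while $\lambda_\nu^{(\tilde d_\nu)}(\rlmb)\neq 0$) to collapse the right-hand side to two terms, obtaining
\begin{equation*}
  (H_{\rlmb}-\lambda)\phi_\nu^{(\tilde d_\nu)}(\rlmb)
     = -\tilde d_\nu\, V\phi_\nu^{(\tilde d_\nu-1)}(\rlmb)
       + \lambda_\nu^{(\tilde d_\nu)}(\rlmb)\,\phi_\nu(\rlmb).
\end{equation*}
Applying $R_\lambda(\hat H_{\rlmb})$, which is zero on $\clV_\lambda$ and satisfies $R_\lambda(\hat H_{\rlmb})(H_{\rlmb}-\lambda)=\hat P$, annihilates the last term and gives $\hat P\phi_\nu^{(\tilde d_\nu)}(\rlmb)=-\tilde d_\nu\, S_\lambda\phi_\nu^{(\tilde d_\nu-1)}(\rlmb)$; applying $\bfA_\lambda(\rlmb)$, which eliminates the eigenvector $\hat P^\perp\phi_\nu^{(\tilde d_\nu)}(\rlmb)\in\clV_\lambda$, gives $\bfA_\lambda(\rlmb)\phi_\nu^{(\tilde d_\nu)}(\rlmb)=-\tilde d_\nu\,\bfA_\lambda(\rlmb)S_\lambda\phi_\nu^{(\tilde d_\nu-1)}(\rlmb)$. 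Since $\phi_\nu^{(\tilde d_\nu-1)}(\rlmb)$ is a resonance vector, Theorem~\ref{T: TFAE depth 1 criterion} turns the sought equality $\bfA_\lambda(\rlmb)\phi_\nu^{(\tilde d_\nu)}(\rlmb)=\tilde d_\nu\,\phi_\nu^{(\tilde d_\nu-1)}(\rlmb)$ into the statement $V\phi_\nu^{(\tilde d_\nu-1)}(\rlmb)\perp\clV_\lambda$; but order exactly $\tilde d_\nu$ forces $V\phi_\nu^{(\tilde d_\nu-1)}(\rlmb)\not\perp\phi_\nu(\rlmb)$ by (ii), hence $V\phi_\nu^{(\tilde d_\nu-1)}(\rlmb)\not\perp\clV_\lambda$, so the equality indeed fails.

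Finally, (v)$\then$(i) I would deduce formally: let $d_0$ be the true order of $\phi_\nu$. By the implication (i)$\then$(v) just established, applied with $d_0$ in place of $\tilde d_\nu$, the equalities $\bfA_\lambda(\rlmb)\phi_\nu^{(j)}(\rlmb)=j\,\phi_\nu^{(j-1)}(\rlmb)$ hold for $j\le d_0-1$ and fail at $j=d_0$. If $d_0<\tilde d_\nu$ this contradicts (v), which asserts that the equality holds at $j=d_0\le\tilde d_\nu-1$; if $d_0>\tilde d_\nu$ it again contradicts (v), which asserts failure at $j=\tilde d_\nu\le d_0-1$. Hence $d_0=\tilde d_\nu$, i.e.\ (i) holds, closing the chain. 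I expect the only genuinely new work to be the ``failure at $j=\tilde d_\nu$'' computation for (v); everything else is bookkeeping over the three lemmas. The mild subtlety there is that the extra term $\lambda_\nu^{(\tilde d_\nu)}(\rlmb)\phi_\nu(\rlmb)$, which is what distinguishes this computation from the one in Lemma~\ref{L: nice one}, drops out after $R_\lambda(\hat H_{\rlmb})$ is applied precisely because that operator vanishes on $\clV_\lambda$, so the failure is ultimately governed by the $V$-non-orthogonality of $\phi_\nu^{(\tilde d_\nu-1)}(\rlmb)$ to $\clV_\lambda$ through Theorem~\ref{T: TFAE depth 1 criterion}.
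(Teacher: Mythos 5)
Your proposal is correct and matches the paper's own (extremely terse) account, which simply declares Theorem~\ref{T: TFAE for (Uk)} to be a summary of Lemmas~\ref{L: property (Uk)}, \ref{L: nice one} and~\ref{L: third one}. The chains (i)$\iff$(ii)$\iff$(iii)$\iff$(iv) and (i)$\iff$(vi) do fall out exactly as you say, by comparing the three lemmas at $k=\tilde d_\nu$ and at $k=\tilde d_\nu+1$. You are also right that the only genuinely new content is the failure clause in (v) (the lemmas only give the equalities for $j\leq\tilde d_\nu-1$), and your computation there is sound: the extra term $\lambda_\nu^{(\tilde d_\nu)}(\rlmb)\phi_\nu(\rlmb)$ is annihilated by $R_\lambda(\hat H_{\rlmb})$ since that operator vanishes on $\clV_\lambda$, and Theorem~\ref{T: TFAE depth 1 criterion} applied to $\phi_\nu^{(\tilde d_\nu-1)}(\rlmb)$ converts the non-orthogonality $V\phi_\nu^{(\tilde d_\nu-1)}(\rlmb)\not\perp\clV_\lambda$ (which is just what ``order exactly $\tilde d_\nu$'' means) into the failure of $\bfA_\lambda(\rlmb)S_\lambda\phi_\nu^{(\tilde d_\nu-1)}(\rlmb)=-\phi_\nu^{(\tilde d_\nu-1)}(\rlmb)$. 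One small alternative you may prefer: once (vi) is in the chain, the failure at $j=\tilde d_\nu$ comes more quickly by contradiction. If $\bfA_\lambda\phi_\nu^{(\tilde d_\nu)}(\rlmb)=\tilde d_\nu\,\phi_\nu^{(\tilde d_\nu-1)}(\rlmb)$ held, then, since $\bfA_\lambda=\bfA_\lambda P_\lambda$, composing with the equalities at $j<\tilde d_\nu$ yields $\bfA_\lambda^{\tilde d_\nu}P_\lambda\phi_\nu^{(\tilde d_\nu)}(\rlmb)=\tilde d_\nu!\,\phi_\nu(\rlmb)\neq 0$, forcing $\phi_\nu(\rlmb)$ to have depth at least $\tilde d_\nu$ and contradicting (vi). Your closing (v)$\then$(i) argument is also fine, provided (as you implicitly do) one reads $\tilde d_\nu$ in the theorem's items as a free integer parameter rather than as an abbreviation for the actual order, which is the only reading that gives the ``but it fails'' clauses any force.
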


We shall refer to the positive integer $\tilde d_\nu$ as the order of the eigenvalue function $\lambda_\nu(s)$ too.
Thus, the order of $\lambda_\nu(s)$ is the smallest of positive integers $\tilde d_\nu,$ such that 
$\lambda^{(\tilde d_\nu)}_\nu(s) \neq 0,$ or 
\begin{equation} \label{F: lambda(s)=lambda +eps(s-l)...}
  \lambda_\nu(s) = \lambda + \eps_\nu(s-r_\lambda)^{\tilde d_\nu} + O((s-r_\lambda)^{\tilde d_\nu+1}), \ s \to \rlmb,
\end{equation}
where $\eps_\nu \neq 0.$

\begin{rems} \rm The formula (\ref{F: bfA phi(j)=phi(j-1)}) is independent from 
a choice of normalisation of the eigenpath~$\phi(s).$ That is, if $\psi(s) = a(s)\phi(s)$
is another eigenpath, where $a(s)$ is an analytic function with $a(\rlmb) \neq 0,$ then 
$\bfA_\lambda \psi^{(j)}(\rlmb) = j \psi^{(j-1)}(\rlmb).$
This equality also follows from (\ref{F: bfA phi(j)=phi(j-1)}) and (\ref{F: psi(k)(s)=sum a phi}).
\end{rems}

\begin{rems} \rm In this paper we use notation $\phi^{(k-1)}$ to denote a vector of order~$k.$
This is consistent with the usage of the bracketed superscript index $(k)$ for the $k$-th order 
derivative. In \cite{Az9} the notation $\phi^{(k)}$ was used to denote a vector of order~$k.$
\end{rems}

\smallskip

Assume that~$H_{\rlmb}$ is a~$\lambda$-resonance point of multiplicity~$m.$
Let~$V$ be a regular direction and let $H_s = H_{\rlmb} + (s-\rlmb)V.$
Let~$\lambda_\nu(s), \ \nu=1,\ldots,m,$ be eigenvalue functions of $H_s,$ 
which are listed counting multiplicities, and 
let $\phi_\nu(s), \ 1,\ldots,m,$ be corresponding eigenvector functions.
If the eigenvalue~$\lambda$ of~$H_{\rlmb}$ is splitting, that is, if the functions 
$\lambda_1(s), \ldots, \lambda_m(s)$ are distinct, then for $\mu \neq \nu$ the vectors  
$\phi_\nu(s)$ and $\phi_\mu(s)$ are orthogonal as eigenvectors of a self-adjoint operator $H_s$
corresponding to different eigenvalues~$\lambda_\nu(s)$ and~$\lambda_\mu(s).$
Even if the eigenvalue~$\lambda$ of~$H_{\rlmb}$ is not splitting, it is always possible to choose 
the eigenvector functions $\phi_\nu(s)$ to be pairwise orthogonal. 
Thus, we can and do assume that for any~$s$
\begin{equation} \label{F: (phi(i),phi(j)) = 0}
  \scal{\phi_\nu(s)}{\phi_\mu(s)} = 0.
\end{equation}
In particular,
$
  \scal{\phi_\nu(\rlmb)}{\phi_\mu(\rlmb)} = 0.
$
Hence, a regular direction~$V$ induces a natural orthogonal decomposition
of the eigenspace. This orthogonal decomposition of the eigenspace has an additional
property given by the following lemma.

\begin{lemma} \label{L: (V phi(i),phi(j)) = 0}
In the setting given above, if $\mu \neq \nu,$ then
$
  \scal{V\phi_\mu(\rlmb)}{\phi_\nu(\rlmb)} = 0.
$
\end{lemma}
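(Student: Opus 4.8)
The plan is to differentiate both the orthogonality relation and the eigenvalue equation along the real parameter~$s$ and evaluate at~$s=\rlmb,$ using self-adjointness of~$H_s$ for real~$s.$

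First I would record that, by the standing assumption (\ref{F: (phi(i),phi(j)) = 0}), $\scal{\phi_\nu(s)}{\phi_\mu(s)}=0$ for all real~$s$ when $\mu\neq\nu,$ where $\phi_\nu(s),\phi_\mu(s)$ are eigenvectors of the self-adjoint operator $H_s = H_{\rlmb}+(s-\rlmb)V$ with eigenvalue functions $\lambda_\nu(s),\lambda_\mu(s).$ Differentiating the eigenvalue equation $H_s\phi_\mu(s)=\lambda_\mu(s)\phi_\mu(s)$ gives
$$
  V\phi_\mu(s) + H_s\phi_\mu'(s) = \lambda_\mu'(s)\phi_\mu(s) + \lambda_\mu(s)\phi_\mu'(s).
$$
Taking the scalar product of both sides with $\phi_\nu(s)$ and using $\scal{H_s\phi_\mu'(s)}{\phi_\nu(s)} = \scal{\phi_\mu'(s)}{H_s\phi_\nu(s)} = \lambda_\nu(s)\scal{\phi_\mu'(s)}{\phi_\nu(s)}$ together with $\scal{\phi_\mu(s)}{\phi_\nu(s)}=0$ yields
$$
  \scal{V\phi_\mu(s)}{\phi_\nu(s)} = \brs{\lambda_\mu(s)-\lambda_\nu(s)}\scal{\phi_\mu'(s)}{\phi_\nu(s)}.
$$
Setting $s=\rlmb$ and using $\lambda_\mu(\rlmb)=\lambda=\lambda_\nu(\rlmb)$ makes the right-hand side vanish, which is exactly the claim.

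There is essentially no obstacle here: the only subtlety is to make sure self-adjointness of~$H_s$ is invoked for \emph{real}~$s$ (so that $H_s$ is symmetric on the relevant eigenvectors and the eigenvalues are real), and that the pairwise-orthogonality convention (\ref{F: (phi(i),phi(j)) = 0}) is in force. Since $V$ is self-adjoint, the identity is symmetric in $\mu$ and~$\nu,$ so the stated form $\scal{V\phi_\mu(\rlmb)}{\phi_\nu(\rlmb)}=0$ and its conjugate $\scal{V\phi_\nu(\rlmb)}{\phi_\mu(\rlmb)}=0$ are equivalent, and the short computation above delivers both at once.
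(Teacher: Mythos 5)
Your proof is correct, and it takes a slightly different (and in fact cleaner) route than the paper's. The paper differentiates the identically-zero scalar function $\scal{H_s\phi_\nu(s)}{\phi_\mu(s)}$ and then needs to invoke the differentiated orthogonality relation (\ref{F: (phi'i(s),phi j(s)) + ... =0}) to cancel two of the resulting three terms at $s=\rlmb.$ You instead differentiate the eigenvalue equation for one eigenpath, pair with the other, and move $H_s$ across using self-adjointness and the second eigenvalue equation; this produces the exact off-diagonal identity
$\scal{V\phi_\mu(s)}{\phi_\nu(s)} = \brs{\lambda_\mu(s)-\lambda_\nu(s)}\scal{\phi_\mu'(s)}{\phi_\nu(s)}$
for \emph{all} real $s,$ without ever needing the derivative of the orthogonality relation. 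What your version buys is transparency: the vanishing at $s=\rlmb$ is seen to be forced purely by the degeneracy $\lambda_\mu(\rlmb)=\lambda_\nu(\rlmb)=\lambda,$ and the general-$s$ identity (the standard first-order ``off-diagonal'' formula of perturbation theory) is potentially reusable elsewhere. Both approaches share the same technical prerequisites, namely that $\phi_\mu'(s)$ lies in the domain of $H_s$ and that self-adjointness may be invoked for real $s$; you have flagged these correctly.
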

\begin{proof}
Taking derivative of (\ref{F: (phi(i),phi(j)) = 0}) we get
\begin{equation} \label{F: (phi'i(s),phi j(s)) + ... =0}
  \scal{\phi'_\nu(s)}{\phi_\mu(s)} + \scal{\phi_\nu(s)}{\phi'_\mu(s)}= 0.
\end{equation}
Further, for any~$s$ from some neighbourhood of $\rlmb$ we have 
\begin{equation*}
     0 = \scal{\lambda_\nu(s) \phi_\nu(s)}{\phi_\mu(s)} = \scal{H_s \phi_\nu(s)}{\phi_\mu(s)}.
\end{equation*}
Hence,
\begin{equation*}
  \begin{split}
     0 & = \frac d{ds} \scal{H_s \phi_\nu(s)}{\phi_\mu(s)}
     \\ & = \scal{H'_s \phi_\nu(s)}{\phi_\mu(s)} + \scal{H_s \phi'_\nu(s)}{\phi_\mu(s)} + \scal{H_s \phi_\nu(s)}{\phi'_\mu(s)}.
  \end{split}
\end{equation*}
With $s = \rlmb$ this gives 
\begin{equation*}
  \begin{split}
     0 & = \scal{V \phi_\nu(\rlmb)}{\phi_\mu(\rlmb)} + \scal{H_{\rlmb} \phi'_\nu(\rlmb)}{\phi_\mu(\rlmb)} + \scal{H_{\rlmb} \phi_\nu(\rlmb)}{\phi'_\mu(\rlmb)}
     \\ & = \scal{V \phi_\nu(\rlmb)}{\phi_\mu(\rlmb)} + \lambda \scal{ \phi'_\nu(\rlmb)}{\phi_\mu(\rlmb)} + \lambda \scal{\phi_\nu(\rlmb)}{\phi'_\mu(\rlmb)}
     \\ & = \scal{V \phi_\nu(\rlmb)}{\phi_\mu(\rlmb)},
  \end{split}
\end{equation*}
where the last equality follows from (\ref{F: (phi'i(s),phi j(s)) + ... =0}).
\end{proof}

\begin{lemma} \label{L: previous lemma}
Let $\phi_\mu(s)$ and $\phi_\nu(s)$ be two (orthogonal) eigenpaths of $H_0+sV$ with corresponding 
eigenvalue functions~$\lambda_\mu(s)$ and~$\lambda_\nu(s).$ If the eigenpath $\phi_\mu(s)$
has order at least $k,$ then for all $j=0,1,\ldots,k-1,$ \ 
$
  \scal{V \phi^{(j)}_\mu(\rlmb)}{\phi_\nu(\rlmb)} = 0.
$
\end{lemma}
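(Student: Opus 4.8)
The plan is to treat separately the trivial low-order cases $j=0,1,\dots,k-2$ and the single substantial case $j=k-1$. (For $k=1$ the statement is exactly Lemma~\ref{L: (V phi(i),phi(j)) = 0}, so one may assume $k\ge 2$.) For $j\le k-2$ nothing new is needed: by the definition of an eigenpath of order at least $k$, the vector $V\phi_\mu^{(j)}(\rlmb)$ is orthogonal to the whole eigenspace $\clV_\lambda$, while $\phi_\nu(\rlmb)$, being an eigenvector of $H_{\rlmb}$ for the eigenvalue $\lambda$, lies in $\clV_\lambda$; hence $\scal{V\phi_\mu^{(j)}(\rlmb)}{\phi_\nu(\rlmb)}=0$. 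So the real content is the top index $j=k-1$, and here — unlike in the lower cases — the hypothesis $\mu\ne\nu$ is used in an essential way.

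For $j=k-1$ I would differentiate the eigenvalue equation $H_s\phi_\mu(s)=\lambda_\mu(s)\phi_\mu(s)$ exactly $k$ times by the Leibniz rule and then set $s=\rlmb$ — one differentiation more than was carried out in the proof of Lemma~\ref{L: property (Uk)}. Since $H_s=H_0+sV$ is affine in $s$, with $H_s'=V$ and $H_s''=0$, the left-hand side collapses to $H_{\rlmb}\phi_\mu^{(k)}(\rlmb)+kV\phi_\mu^{(k-1)}(\rlmb)$. Because $\phi_\mu$ has order at least $k$, item (iii) of Lemma~\ref{L: property (Uk)} gives $\lambda_\mu'(\rlmb)=\dots=\lambda_\mu^{(k-1)}(\rlmb)=0$, so in $\sum_{i=0}^{k}\binom{k}{i}\lambda_\mu^{(i)}(\rlmb)\phi_\mu^{(k-i)}(\rlmb)$ only the terms $i=0$ and $i=k$ survive, namely $\lambda\phi_\mu^{(k)}(\rlmb)+\lambda_\mu^{(k)}(\rlmb)\phi_\mu(\rlmb)$. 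Rearranging yields
\[
  kV\phi_\mu^{(k-1)}(\rlmb)=(\lambda-H_{\rlmb})\phi_\mu^{(k)}(\rlmb)+\lambda_\mu^{(k)}(\rlmb)\,\phi_\mu(\rlmb);
\]
this is just the ``$j=k$ version'' of Lemma~\ref{L: property (Uk)}(iv), which fails there precisely through the extra summand $\lambda_\mu^{(k)}(\rlmb)\phi_\mu(\rlmb)$.

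The last step is to pair this identity with $\phi_\nu(\rlmb)$, whereupon both terms on the right die. The term $\scal{(\lambda-H_{\rlmb})\phi_\mu^{(k)}(\rlmb)}{\phi_\nu(\rlmb)}=\scal{\phi_\mu^{(k)}(\rlmb)}{(\lambda-H_{\rlmb})\phi_\nu(\rlmb)}$ is zero because $H_{\rlmb}$ is self-adjoint, $\lambda$ is real, and $(H_{\rlmb}-\lambda)\phi_\nu(\rlmb)=0$; the term $\lambda_\mu^{(k)}(\rlmb)\scal{\phi_\mu(\rlmb)}{\phi_\nu(\rlmb)}$ is zero because the eigenpaths were chosen pairwise orthogonal, so $\scal{\phi_\mu(\rlmb)}{\phi_\nu(\rlmb)}=0$ by (\ref{F: (phi(i),phi(j)) = 0}). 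Dividing by $k$ gives $\scal{V\phi_\mu^{(k-1)}(\rlmb)}{\phi_\nu(\rlmb)}=0$, which completes the argument.

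There is no serious obstacle; the only point to notice is that one must differentiate $k$ times rather than $k-1$, which is exactly what converts the part of $V\phi_\mu^{(k-1)}(\rlmb)$ that is \emph{not} orthogonal to $\clV_\lambda$ into a scalar multiple of $\phi_\mu(\rlmb)$, where orthogonality of distinct eigenpaths disposes of it. (This is also why the statement genuinely needs $\mu\ne\nu$: for $\mu=\nu$ with $\phi_\mu$ of order exactly $k$ one gets instead $\scal{V\phi_\mu^{(k-1)}(\rlmb)}{\phi_\mu(\rlmb)}=\tfrac1k\lambda_\mu^{(k)}(\rlmb)\scal{\phi_\mu(\rlmb)}{\phi_\mu(\rlmb)}\ne0$, cf. (\ref{F: (phi,V phi(k-1))=lambda(k)(0)(phi,phi)}).) The superficially tempting alternative of setting up a recursion for $\scal{V\phi_\mu^{(j)}(\rlmb)}{\phi_\nu^{(i)}(\rlmb)}$ is both unnecessary and messier, since it tends to produce tautologies unless one also keeps track of the order of $\phi_\nu$.
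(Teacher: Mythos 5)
Your proof is correct, and it takes a genuinely cleaner route than the paper's.

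Both arguments agree on the easy reduction: for $j\le k-2$ the vanishing is immediate since $V\phi_\mu^{(j)}(\rlmb)\perp\clV_\lambda$ by the very definition of order and $\phi_\nu(\rlmb)\in\clV_\lambda$, so only $j=k-1$ needs work (the paper says this in one line; you spell it out). Where the two proofs diverge is in how they get the top case. The paper differentiates the \emph{scalar-valued} identity $s\mapsto\scal{H_s\phi_\mu(s)}{\phi_\nu(s)}\equiv 0$ a total of $k$ times by Leibniz. Because both slots are $s$-dependent, the expansion produces a double family of cross terms $\scal{H_s\phi_\mu^{(j)}}{\phi_\nu^{(k-j)}}$ and $\scal{V\phi_\mu^{(j)}}{\phi_\nu^{(k-1-j)}}$, which the paper then collapses using Lemma~\ref{L: property (Uk)}(iv) and the observation that $\lambda\,\frac{d^k}{ds^k}\scal{\phi_\mu}{\phi_\nu}\big|_{s=\rlmb}=0$; the cancellation is correct but requires care with the index bookkeeping. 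You instead differentiate the \emph{vector-valued} eigenvalue equation $H_s\phi_\mu(s)=\lambda_\mu(s)\phi_\mu(s)$ exactly $k$ times (one more than in the proof of Lemma~\ref{L: property (Uk)}), obtaining after using $\lambda_\mu'(\rlmb)=\dots=\lambda_\mu^{(k-1)}(\rlmb)=0$ the single clean identity
\[
  k\,V\phi_\mu^{(k-1)}(\rlmb)=(\lambda-H_{\rlmb})\phi_\mu^{(k)}(\rlmb)+\lambda_\mu^{(k)}(\rlmb)\,\phi_\mu(\rlmb),
\]
and only \emph{then} pair with the fixed vector $\phi_\nu(\rlmb)$. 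The derivatives of $\phi_\nu$ never enter: the $(\lambda-H_{\rlmb})$ term dies by self-adjointness and $(H_{\rlmb}-\lambda)\phi_\nu(\rlmb)=0$, and the $\phi_\mu(\rlmb)$ term dies by the chosen orthogonality $\scal{\phi_\mu(\rlmb)}{\phi_\nu(\rlmb)}=0$ from (\ref{F: (phi(i),phi(j)) = 0}). This buys you a shorter, more transparent cancellation and isolates exactly where the hypothesis $\mu\ne\nu$ is used; it is also exactly the computation already latent in the proof of Lemma~\ref{L: property (Uk)} and in (\ref{F: (phi,V phi(k-1))=lambda(k)(0)(phi,phi)}), so nothing new needs to be established. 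The one thing the paper's messier route does that yours does not is set up the machinery that is then reused almost verbatim in the proof of Theorem~\ref{T: phi(j,mu)(0) is self-dual}, where derivatives of \emph{both} eigenpaths genuinely must be tracked; for that theorem your shortcut would not suffice.
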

\begin{proof} 
It is sufficient to prove that $\scal{V \phi^{(k-1)}_\mu(\rlmb)}{\phi_\nu(\rlmb)} = 0.$ 
Since by (\ref{F: (phi(i),phi(j)) = 0}) the function 
$
  s \mapsto \scal{H_s \phi_\mu(s)}{\phi_\nu(s)}
$
is zero, 
for any non-negative integer $k$ the Leibniz rule implies 
\begin{equation} \label{F: 0=D^s(H(s) phi,phi)=...}
  \begin{split}
     0 & = \frac{d^{k}}{d s^{k}} \scal{H_s \phi_\mu}{\phi_\nu} \\
       & = \scal{H_s \phi^{(k)}_\mu}{\phi_\nu} 
            + \sum_{j=0}^{k-1} {k \choose j} \scal{H_s \phi^{(j)}_\mu}{\phi^{(k-j)}_\nu}
            + \sum_{j=0}^{k-1} k{k-1 \choose j} \scal{V \phi^{(j)}_\mu}{\phi^{(k-1-j)}_\nu}.
  \end{split}
\end{equation}
Here we have explicitly separated the first summand $\scal{H_s \phi^{(k)}_\mu}{\phi_\nu}$
since it will be treated differently. After this special treatment, this summand will be returned 
into the sum. 

In this equality we substitute~$s$ by $\rlmb,$ but nevertheless, we write $\phi_\mu$ instead of
$\phi_\mu(\rlmb),$ etc. 
We have $H_{\rlmb} \phi_\mu = \lambda \phi_\mu.$ 
By Lemma~\ref{L: property (Uk)},
the premise asserts that the path $\phi_\mu(s)$ has order at least~$k.$
Hence, the item (iv) of Lemma~\ref{L: property (Uk)}
implies that for all $j = 0,1,\ldots,k-1$ 
$$
  H_{\rlmb} \phi^{(j)}_\mu = \lambda \phi^{(j)}_\mu - j V \phi^{(j-1)}_\mu.
$$
Using these equalities, we get from (\ref{F: 0=D^s(H(s) phi,phi)=...}) with $s = \rlmb$ 
\begin{equation*}
  \begin{split}
     0  & = \scal{H_{\rlmb} \phi^{(k)}_\mu}{\phi_\nu} 
               + \sum_{j=0}^{k-1} {k \choose j} \scal{H_{\rlmb} \phi^{(j)}_\mu}{\phi^{(k-j)}_\nu}
               + \sum_{j=0}^{k-1} k{k-1 \choose j} \scal{V \phi^{(j)}_\mu}{\phi^{(k-1-j)}_\nu} \\
        & = \lambda \scal{\phi^{(k)}_\mu}{\phi_\nu} 
              + \lambda \sum_{j=0}^{k-1} {k \choose j} \scal{\phi^{(j)}_\mu}{\phi^{(k-j)}_\nu} \\
     \\ & \hskip 2.6 cm - \sum_{j=0}^{k-1} j {k \choose j} \scal{V\phi^{(j-1)}_\mu}{\phi^{(k-j)}_\nu}
                      + \sum_{j=0}^{k-1} k{k-1 \choose j} \scal{V \phi^{(j)}_\mu}{\phi^{(k-1-j)}_\nu} \\ 
        & = \lambda \sum_{j=0}^{k} {k \choose j} \scal{\phi^{(j)}_\mu}{\phi^{(k-j)}_\nu} 
              - \sum_{j=1}^{k-1} \frac{k!}{(j-1)!(k-j)!} \scal{V\phi^{(j-1)}_\mu}{\phi^{(k-j)}_\nu}
     \\ & \hskip 4.5 cm + \sum_{j=0}^{k-1} \frac{k!}{j!(k-1-j)!} \scal{V \phi^{(j)}_\mu}{\phi^{(k-1-j)}_\nu}. 
  \end{split}
\end{equation*}
The first summand of the last expression is equal to 
$
  \lambda \frac{d^{k}}{d s^{k}} \scal{\phi_\mu}{\phi_\nu}\big|_{s=\rlmb},
$
which by (\ref{F: (phi(i),phi(j)) = 0}) is zero. 
Hence, 
\begin{equation*}
  \begin{split}
        0 & = - \sum_{j=0}^{k-2} \frac{k!}{j!(k-1-j)!} \scal{V\phi^{(j)}_\mu}{\phi^{(k-1-j)}_\nu}
             + \sum_{j=0}^{k-1} \frac{k!}{j!(k-1-j)!} \scal{V \phi^{(j)}_\mu}{\phi^{(k-1-j)}_\nu} \\
          & = k \scal{V \phi^{(k-1)}_\mu}{\phi_\nu}. 
  \end{split}
\end{equation*}
\end{proof}

\begin{thm} \label{T: phi(j,mu)(0) is self-dual} 
If $\phi_\mu(s)$ and $\phi_\nu(s)$ are two eigenpaths with 
orders $\tilde d_\mu$ and  $\tilde d_\nu$ respectively,
then for any $j=0,1,2,\ldots,\tilde d_\mu-1$ and $k=0,1,2,\ldots,\tilde d_\nu-1$
$$
  \scal{V \phi^{(j)}_\mu(\rlmb)}{\phi^{(k)}_\nu(\rlmb)} = 0.
$$
\end{thm}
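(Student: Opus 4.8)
The plan is to replace the diagonal computations of the previous lemmas by a single identity in two independent variables. I would introduce the real-analytic functions
$$F(s,t) = \scal{V\phi_\mu(s)}{\phi_\nu(t)}, \qquad G(s,t) = \scal{\phi_\mu(s)}{\phi_\nu(t)}$$
defined for $s,t$ near $\rlmb$, and observe that the quantity to be controlled is exactly the mixed Taylor coefficient $\partial_s^j\partial_t^k F\big|_{s=t=\rlmb} = \scal{V\phi_\mu^{(j)}(\rlmb)}{\phi_\nu^{(k)}(\rlmb)}$. Writing $H_s - H_t = (s-t)V$ and using that $H_s\phi_\mu(s) = \lambda_\mu(s)\phi_\mu(s)$, $H_t\phi_\nu(t) = \lambda_\nu(t)\phi_\nu(t)$ with $\lambda_\mu,\lambda_\nu$ real and $H_s,H_t$ self-adjoint, I would evaluate $\scal{(H_s-H_t)\phi_\mu(s)}{\phi_\nu(t)}$ in two ways to obtain the key identity
$$(s-t)\,F(s,t) = \bigl(\lambda_\mu(s) - \lambda_\nu(t)\bigr)\,G(s,t).$$
Thus the theorem becomes the statement that the Taylor series of $F$ at $(\rlmb,\rlmb)$ contains no monomial $(s-\rlmb)^j(t-\rlmb)^k$ with $j \le \tilde d_\mu - 1$ and $k \le \tilde d_\nu - 1$.

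The rest is formal. Passing to Taylor series at $(\rlmb,\rlmb)$ and setting $x = s-\rlmb$, $y = t-\rlmb$, the identity reads $(x-y)\hat F = \hat\Lambda\,\hat G$ in $\mathbb{C}[[x,y]]$, where $\hat\Lambda$ is the series of $\lambda_\mu(\rlmb+x) - \lambda_\nu(\rlmb+y)$. By the orthogonality normalisation~(\ref{F: (phi(i),phi(j)) = 0}), $G$ vanishes on the diagonal, so $\hat G(x,x) = 0$; since the kernel of $f \mapsto f(x,x)$ is the ideal $(x-y)$ and $\mathbb{C}[[x,y]]$ is an integral domain, I may write $\hat G = (x-y)\hat H$ and cancel $(x-y)$ to get $\hat F = \hat\Lambda\,\hat H$. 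Next, because $\phi_\mu$ and $\phi_\nu$ have orders $\tilde d_\mu$ and $\tilde d_\nu$, item~(iii) of Theorem~\ref{T: TFAE for (Uk)} gives $\lambda_\mu^{(i)}(\rlmb) = 0$ for $1 \le i \le \tilde d_\mu - 1$ and $\lambda_\nu^{(i)}(\rlmb) = 0$ for $1 \le i \le \tilde d_\nu - 1$, so (the constants $\lambda$ cancelling) $\hat\Lambda$ is a sum of pure powers $x^i$ with $i \ge \tilde d_\mu$ and pure powers $y^i$ with $i \ge \tilde d_\nu$. Multiplying by $\hat H$, every monomial of $\hat F$ has $x$-degree $\ge \tilde d_\mu$ or $y$-degree $\ge \tilde d_\nu$, which is precisely the claim; as a bonus this recovers Lemmas~\ref{L: (V phi(i),phi(j)) = 0} and~\ref{L: previous lemma} as the cases $j=k=0$ and $k=0$.

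The main obstacle is conceptual rather than technical: along the diagonal $s=t$, differentiating $\scal{H_s\phi_\mu(s)}{\phi_\nu(s)}$ and invoking the differentiated eigenvalue relations (item~(iv) of Lemma~\ref{L: property (Uk)}) makes the occurrences of $\scal{V\phi_\mu^{(\bullet)}}{\phi_\nu^{(\bullet)}}$ cancel in pairs, so one only reproduces identities already known — this is why Lemma~\ref{L: previous lemma}, which gains traction only by keeping $\phi_\nu$ undifferentiated, does not by itself suffice, and why the obvious recursion in the two indices leaves "interior" antidiagonals it cannot reach. Separating the two derivatives onto independent variables $s$ and $t$ is exactly what decouples $j$ from $k$. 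The one point requiring minor care is the bookkeeping of conjugate-linearity of the inner product when moving the real scalars $\lambda_\mu(s)$, $\lambda_\nu(t)$ and $s-t$ across the pairing; since everything in sight is real-valued this is harmless.
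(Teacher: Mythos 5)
Your proof is correct and takes a genuinely different route from the paper's. The paper's proof stays on the diagonal $s=t$: it differentiates $s \mapsto \scal{H_s\phi_\mu(s)}{\phi_\nu(s)}$ to order $l+1 = \tilde d_\mu + \tilde d_\nu - 1$, substitutes the differentiated eigenvalue relations $H_{\rlmb}\phi^{(j)} = \lambda\phi^{(j)} - jV\phi^{(j-1)}$ separately into the low- and high-index halves of the Leibniz sum, and only after a careful re-indexing of binomial coefficients isolates the single scalar product $\scal{V\phi_\mu^{(\tilde d_\mu - 1)}}{\phi_\nu^{(\tilde d_\nu - 1)}}$; it also relies on Lemmas~\ref{L: (V phi(i),phi(j)) = 0} and~\ref{L: previous lemma} as stepping stones (and there is a second, shorter proof in Corollary~\ref{C: second proof}, but it needs the cycle-decomposition machinery of Section~\ref{S: res points as f-ns of s}). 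Your two-variable functional equation $(s-t)F(s,t) = \bigl(\lambda_\mu(s)-\lambda_\nu(t)\bigr)G(s,t)$ is a cleaner organizing device: it decouples the $s$- and $t$-derivatives from the start, so the combinatorial bookkeeping disappears; the diagonal cancellation that you correctly identify as the obstruction to the naive recursion is absorbed once and for all by the single division of $\hat G$ by $(x-y)$; the two Lemmas you would otherwise need become corollaries of the same computation; and the argument makes transparent that the only inputs are item~(iii) of Theorem~\ref{T: TFAE for (Uk)} (vanishing of low-order derivatives of $\lambda_\mu$ and $\lambda_\nu$) and the diagonal orthogonality $G(s,s)=0$ from~(\ref{F: (phi(i),phi(j)) = 0}). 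The formal-power-series steps — kernel of the diagonal restriction map equals the ideal $(x-y)$, cancellation in the integral domain $\mathbb{C}[[x,y]]$ — are standard and correct, and your care about conjugate-linearity is exactly the right point to flag, harmless here because $s-t$, $\lambda_\mu(s)$, $\lambda_\nu(t)$ are all real.
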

\begin{proof}
As in Lemma~\ref{L: previous lemma}, it is sufficient 
to prove $\scal{V\phi^{(\tilde d_\mu-1)}_\mu}{\phi^{(\tilde d_\nu-1)}_\nu} = 0.$

Let $l = \tilde d_\mu+\tilde d_\nu-2.$ 
Since eigenpaths $\phi_\mu(s)$ and $\phi_\nu(s)$ are distinct we have 
$$\scal{H_s \phi_\mu(s)}{\phi_\nu(s)} = \lambda_\mu(s)\scal{\phi_\mu(s)}{\phi_\nu(s)} = 0,$$
where~$\lambda_\mu(s)$ is the eigenvalue function for $\phi_\mu(s).$ 
Leibniz rule gives the equality 
\begin{equation} \label{F: blwjb equality}
  \begin{split}
     0 & = \frac{d^{l+1}}{d s^{l+1}} \scal{H_s \phi_\mu}{\phi_\nu} \\
        & = \sum_{j=0}^{l+1} {l+1 \choose j} \scal{H_s \phi^{(j)}_\mu}{\phi^{(l+1-j)}_\nu}
         + \sum_{j=0}^{l} (l+1){l \choose j} \scal{V \phi^{(j)}_\mu}{\phi^{(l-j)}_\nu}.
  \end{split}
\end{equation}
We transform the first summand as follows:
\begin{equation*}
  \begin{split}
    \sum_{j=0}^{l+1} {l+1 \choose j} & \scal{H_s \phi^{(j)}_\mu}{\phi^{(l+1-j)}_\nu}  \\
         & = \sum_{j=\tilde d_\mu}^{l+1} {l+1 \choose j} \scal{\phi^{(j)}_\mu}{H_s \phi^{(l+1-j)}_\nu} 
             + \sum_{j=0}^{\tilde d_\mu-1} {l+1 \choose j} \scal{H_s \phi^{(j)}_\mu}{\phi^{(l+1-j)}_\nu}.
   \end{split}
\end{equation*}
Here we let $s = \rlmb$ and apply the equalities 
$$
  H_{\rlmb} \phi^{(l+1-j)}_\nu = \lambda \phi^{(l+1-j)}_\nu - (l+1-j) V\phi^{(l-j)}_\nu, \quad 
  H_{\rlmb} \phi^{(j)}_\mu = \lambda \phi^{(j)}_\mu - j V \phi^{(j-1)}_\mu,
$$
which follow from the premise that the eigenpaths $\phi_\nu(s)$ and $\phi_\mu(s)$
have orders $\tilde d_\nu$ and $\tilde d_\mu$ respectively, according to Theorem~\ref{T: TFAE for (Uk)}.
This gives 
\begin{equation*}
  \begin{split}
    (E) & := \sum_{j=0}^{l+1} {l+1 \choose j} \scal{H_{\rlmb} \phi^{(j)}_\mu}{\phi^{(l+1-j)}_\nu}  \\
       & = \sum_{j=\tilde d_\mu}^{l+1} {l+1 \choose j} \lambda \scal{\phi^{(j)}_\mu}{\phi^{(l+1-j)}_\nu} 
             + \sum_{j=0}^{\tilde d_\mu-1} {l+1 \choose j} \lambda \scal{\phi^{(j)}_\mu}{\phi^{(l+1-j)}_\nu} \\
       & \qquad 
        - \sum_{j=\tilde d_\mu}^{l+1} (l+1-j){l+1 \choose j} \scal{\phi^{(j)}_\mu}{V\phi^{(l-j)}_\nu} 
        - \sum_{j=0}^{\tilde d_\mu-1} j {l+1 \choose j} \scal{V\phi^{(j-1)}_\mu}{\phi^{(l+1-j)}_\nu}.
   \end{split}
\end{equation*}
This expression we can rewrite as follows:
\begin{equation*}
  \begin{split}
    (E) & = \lambda \ \sum_{j=0}^{l+1} {l+1 \choose j} \scal{\phi^{(j)}_\mu}{\phi^{(l+1-j)}_\nu} \\
       & \qquad - \sum_{j=\tilde d_\mu}^{l+1} \frac{(l+1)!}{j!(l-j)!} \scal{\phi^{(j)}_\mu}{V\phi^{(l-j)}_\nu} 
        - \sum_{j=1}^{\tilde d_\mu-1} \frac{(l+1)!}{(j-1)!(l+1-j)!} \scal{V\phi^{(j-1)}_\mu}{\phi^{(l+1-j)}_\nu} \\
       & = 0 - \sum_{j=\tilde d_\mu}^{l+1} \frac{(l+1)!}{j!(l-j)!} \scal{\phi^{(j)}_\mu}{V\phi^{(l-j)}_\nu} 
        - \sum_{j=0}^{\tilde d_\mu-2} \frac{(l+1)!}{j!(l-j)!} \scal{V\phi^{(j)}_\mu}{\phi^{(l-j)}_\nu}.
   \end{split}
\end{equation*}
Here the first summand is zero for the same reason as in Lemma~\ref{L: previous lemma}. 
Combining this equality with (\ref{F: blwjb equality}) gives 
$
  \scal{V\phi^{(\tilde d_\mu-1)}_\mu}{\phi^{(\tilde d_\nu-1)}_\nu} = 0.
$
\end{proof}

In subsection~\ref{SS: relation of Upsilon} we give another proof of Theorem~\ref{T: phi(j,mu)(0) is self-dual}.

\subsection{On ground eigenvalue}
If $H_{\rlmb}$ is bounded below then its smallest eigenvalue, if it exists, is called the \emph{ground eigenvalue}.

\begin{prop} \label{P: fcuking proposition} Let~$\lambda$ be a simple eigenvalue of $H_{\rlmb}.$
The order of the triple $(\lambda; H_{\rlmb},V)$ is equal to~$d$ if and only if 
$$
  \aaa = 0, \ \scal{R_\lambda(\hat H_{\rlmb})v}{v} = 0, 
  \ \ldots, \ \scal{\brs{R_\lambda(\hat H_{\rlmb})V}^{d-3}R_\lambda(\hat H_{\rlmb}) v}{v} = 0
$$
and 
$$
  \scal{\brs{R_\lambda(\hat H_{\rlmb})V}^{d-2}R_\lambda(\hat H_{\rlmb}) v}{v} \neq 0.
$$
\end{prop}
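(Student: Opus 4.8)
The plan is to exploit that, because $\lambda$ is simple, the meromorphic operator $\euD_\lambda(s)$ of (\ref{F: euD lambda(r)}) acts on the one‑dimensional space $\clV_\lambda$ and is therefore a scalar‑valued function; its reciprocal turns out to be an explicit power series whose coefficients are exactly the inner products appearing in the statement. Since by the Lemma preceding (\ref{F: Laurent for euD}) the order $d$ of the triple equals the order of the pole of $\euD_\lambda(s)$ at $s=\rlmb$, the whole proof reduces to computing the order of the zero of that power series.

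Concretely, I would fix a unit eigenvector $\chi$ spanning $\clV_\lambda$ and put $\hat v := v\chi\in\hat\hilb$. In the basis $\chi$ the crossing operator $\aaa$ is multiplication by the number $\scal{V\chi}{\chi}$ and $v^*R_\lambda(\hat H_s)v$ is multiplication by $\scal{R_\lambda(\hat H_s)\hat v}{\hat v}$, so that by (\ref{F: euD lambda(r)})
\begin{equation*}
  \euD_\lambda(s)^{-1} = (s-\rlmb)\aaa - (s-\rlmb)^2\,\scal{R_\lambda(\hat H_s)\hat v}{\hat v} =: D(s),
\end{equation*}
a function holomorphic at $s=\rlmb$; writing $t=s-\rlmb$, the claim becomes: $D$ has a zero of order exactly $d$ at $t=0$ if and only if the stated conditions hold.

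Next I would expand $D$ in powers of $t$. Because $\clV_\lambda$ is the \emph{entire} $\lambda$-eigenspace of $H_{\rlmb}$, Weyl's theorem together with $\lambda\notin\sigma_{ess}$ forces $\lambda\notin\sigma(\hat H_{\rlmb})$, so $R_\lambda(\hat H_{\rlmb})$ is bounded and, applying the second resolvent identity (\ref{F: second resolvent identity (2)}) to $\hat H_s=\hat H_{\rlmb}+t\hat V$, for small $t$
\begin{equation*}
  R_\lambda(\hat H_s) = \bigl(1+t\,\hat A_\lambda(\rlmb)\bigr)^{-1}R_\lambda(\hat H_{\rlmb})
     = \sum_{k=0}^\infty (-t)^k\,\hat A_\lambda^k(\rlmb)R_\lambda(\hat H_{\rlmb}),
\end{equation*}
where $\hat A_\lambda(\rlmb)=R_\lambda(\hat H_{\rlmb})\hat V$. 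Pairing with $\hat v$ and using that, by (\ref{F: S(lambda)}), the operator $R_\lambda(\hat H_{\rlmb})V=S_\lambda$ acts as $\hat A_\lambda(\rlmb)$ on $\hat\hilb$ while $S_\lambda\chi=R_\lambda(\hat H_{\rlmb})\hat v$, one gets $\hat A_\lambda^k(\rlmb)R_\lambda(\hat H_{\rlmb})\hat v=\bigl(R_\lambda(\hat H_{\rlmb})V\bigr)^kR_\lambda(\hat H_{\rlmb})v$ (identifying $v$ with $\hat v$), hence
\begin{equation*}
  \scal{R_\lambda(\hat H_s)\hat v}{\hat v}=\sum_{k=0}^\infty(-t)^k a_k,
  \qquad a_k:=\scal{\bigl(R_\lambda(\hat H_{\rlmb})V\bigr)^kR_\lambda(\hat H_{\rlmb})v}{v},
\end{equation*}
and therefore $D(s)=\aaa\,t+\sum_{k\ge 0}(-1)^{k+1}a_k\,t^{k+2}$.

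Finally I would read off the order of the zero of $D$. The coefficient of $t$ is $\aaa$, so $D$ vanishes to order $\ge 2$ iff $\aaa=0$; assuming $\aaa=0$, the term of lowest possible degree is $(-1)^{k+1}a_k t^{k+2}$, so $D$ vanishes to order $\ge d$ iff $\aaa=0$ and $a_0=a_1=\dots=a_{d-3}=0$, and to order exactly $d$ iff in addition $a_{d-2}\ne 0$. Unwinding the definition of the $a_k$, these are precisely the conditions in the statement, and by the Lemma preceding (\ref{F: Laurent for euD}) this zero order equals $d=\order(\lambda;H_{\rlmb},V)$, which completes the argument (the degenerate cases $d\le 2$ being covered automatically, with an empty list of vanishing conditions). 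The only points requiring care are the elementary justification that $\lambda\notin\sigma(\hat H_{\rlmb})$, which guarantees convergence of the Neumann series, and keeping the signs and indices consistent when translating the series for $D$ into the list of conditions; I do not expect any genuine obstacle, since the entire content is that for a simple eigenvalue $\euD_\lambda$ is a scalar function with this transparent reciprocal.
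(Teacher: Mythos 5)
Your argument is correct, and it is a genuinely different route from the one the paper takes. The paper's proof of Proposition~\ref{P: fcuking proposition} relies on iterating Theorem~\ref{T: TFAE depth 1 criterion}: since $m=1$ and $\clV_\lambda=\mbC\chi$, the conditions $V\chi\perp\clV_\lambda$, $VS_\lambda\chi\perp\clV_\lambda$, $\ldots$, $VS_\lambda^{d-2}\chi\perp\clV_\lambda$ successively let one promote $S_\lambda^j\chi$ to a resonance vector of order $j+1$ via $\bfA_\lambda S_\lambda\psi=-\psi$, and the observation $\scal{(R_\lambda(\hat H_{\rlmb})V)^jR_\lambda(\hat H_{\rlmb})v}{v}=\scal{S_\lambda^{j+1}\chi}{V\chi}$ then rewrites these orthogonality conditions in the stated form. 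You instead bypass the depth criterion entirely and go directly through the Lemma that asserts $\euD_\lambda(s)$ has a pole of exact order $d$ at $s=\rlmb$; since $\lambda$ is simple, $\euD_\lambda$ is scalar, so that pole order becomes the zero order of the explicit holomorphic function $D(s)=(s-\rlmb)\aaa-(s-\rlmb)^2 v^*R_\lambda(\hat H_s)v$, and the Neumann expansion of $R_\lambda(\hat H_s)$ in powers of $s-\rlmb$ makes the Taylor coefficients of $D$ precisely $\aaa$ and the quantities $a_k=\scal{(R_\lambda(\hat H_{\rlmb})V)^kR_\lambda(\hat H_{\rlmb})v}{v}$, up to signs. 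What each approach buys: the paper's proof is a corollary of an abstract characterisation of depth, so it is shorter once that theorem is available and generalises naturally to statements about higher-depth vectors; yours is a self-contained power-series computation that makes the numbers $a_k$ visibly the Taylor coefficients of $\euD_\lambda^{-1}$, which is arguably more transparent and also produces the precise coefficients of the resonance curve $\lambda_\nu(s)-\lambda$ as a byproduct. Your supporting claims all check out: $\lambda\notin\sigma(\hat H_{\rlmb})$ because $\lambda\notin\sigma_{ess}$ and $\clV_\lambda$ already exhausts the $\lambda$-eigenspace of $H_{\rlmb}$; the Neumann series converges for $|s-\rlmb|$ small; and the identification $\hat A_\lambda^k(\rlmb)R_\lambda(\hat H_{\rlmb})\hat v=(R_\lambda(\hat H_{\rlmb})V)^kR_\lambda(\hat H_{\rlmb})v$ holds because $R_\lambda(\hat H_{\rlmb})$ annihilates the $\clV_\lambda$-component of $V\xi$ for $\xi\in\hat\hilb$.
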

\begin{proof} Since $\clV_\lambda$ is one-dimensional, the matrix~$v$ is equal to a vector~$V\chi,$
where~$\chi$ is an eigenvector. Further, since~$\lambda$ is a simple eigenvalue, 
by Theorem~\ref{T: TFAE depth 1 criterion} the order is equal to~$d$ if and only if
$$
  \chi \perp V\chi, \ S_\lambda \chi \perp V\chi, \ \ldots, S_\lambda^{d-2} \chi \perp V\chi
$$
and 
$
  S_\lambda^{d-1} \chi 
$
is not orthogonal to~$V\chi.$
It is left to note that 
$$
  \scal{\brs{R_\lambda(\hat H_{\rlmb})V}^{j}R_\lambda(\hat H_{\rlmb}) v}{v} = \scal{S^{j+1}_\lambda \chi}{V\chi}.
$$
\end{proof}

\begin{thm} If~$\lambda$ is a non-degenerate ground eigenvalue, then order 
of the corresponding resonance point is at most two.
\end{thm}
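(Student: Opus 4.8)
The plan is to combine Proposition~\ref{P: fcuking proposition} with the fact that, for a ground eigenvalue, the sliced resolvent $R_\lambda(\hat H_{\rlmb})$ is a strictly positive operator. Throughout, $V$ is a regular direction at $H_{\rlmb}$; write $d$ for the order of the resonance point. Since every resonance point has order at least $1$, it suffices to exclude $d\geq 3$. As $\lambda$ is non-degenerate, $m=1$, so $\clV_\lambda$ is one-dimensional, the crossing operator $\aaa$ is a real number, and $v$ is a single vector of $\hat\hilb$ (namely $\hat P V\chi$ for a unit eigenvector $\chi$). Proposition~\ref{P: fcuking proposition} then says that if $d\geq 3$ then, in particular, $\aaa=0$ and $\scal{R_\lambda(\hat H_{\rlmb})v}{v}=0$; the goal is to show that these two equalities force $V\chi=0$, contradicting regularity.

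The main step is to check that the ground-eigenvalue hypothesis makes $R_\lambda(\hat H_{\rlmb})$ bounded and strictly positive on $\hat\hilb$. First I would use that at a resonance point $H_{\rlmb}$ is block diagonal, $H_{\rlmb}=\hat H_{\rlmb}\oplus\lambda I_m$, by~(\ref{F: H0 as 2x2 matrix}); hence $\sigma(\hat H_{\rlmb})\subseteq\sigma(H_{\rlmb})$, and, since $\lambda I_m$ acts on a finite-dimensional space, $\sigma_{ess}(\hat H_{\rlmb})=\sigma_{ess}(H_{\rlmb})=\sigma_{ess}$. Since $\lambda=\min\sigma(H_{\rlmb})$, also $\hat H_{\rlmb}\geq\lambda$. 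Moreover $\lambda$ is not an eigenvalue of $\hat H_{\rlmb}$: a $\lambda$-eigenvector of $\hat H_{\rlmb}$ (which lies in $\hat\hilb$) would, by block diagonality, be a $\lambda$-eigenvector of $H_{\rlmb}$, hence would lie in $\clV_\lambda$, contradicting $\clV_\lambda\cap\hat\hilb=\{0\}$. Together with $\lambda\notin\sigma_{ess}(\hat H_{\rlmb})$ this forces $\lambda\notin\sigma(\hat H_{\rlmb})$, and therefore $\lambda<\inf\sigma(\hat H_{\rlmb})$; so $\hat H_{\rlmb}-\lambda\geq\delta>0$ for some $\delta>0$, and $R_\lambda(\hat H_{\rlmb})=(\hat H_{\rlmb}-\lambda)^{-1}$ is bounded, non-negative and injective on $\hat\hilb$, i.e.\ strictly positive.

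Given this, the proof ends at once: $\scal{R_\lambda(\hat H_{\rlmb})v}{v}=0$ with $R_\lambda(\hat H_{\rlmb})$ strictly positive forces $v=0$, so $V\chi=v\chi+\aaa\chi=0$, contradicting Corollary~\ref{C: criterion of regularity for V}, according to which a regular direction annihilates no eigenvector. Hence $d\leq 2$. The step I expect to be the genuine obstacle is precisely the strict positivity of $R_\lambda(\hat H_{\rlmb})$ --- more exactly, excluding the borderline case in which $\lambda$ sits at the bottom of the continuous spectrum of the compression $\hat H_{\rlmb}$; this is handled by the standing hypothesis $\lambda\notin\sigma_{ess}$ together with $\sigma_{ess}(\hat H_{\rlmb})=\sigma_{ess}$. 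One could instead run the computation through the eigenvalue function, using that $d\geq 3$ forces $\lambda'(\rlmb)=\lambda''(\rlmb)=0$ while $\lambda''(\rlmb)=-2\scal{R_\lambda(\hat H_{\rlmb})v}{v}\leq 0$ (by Theorem~\ref{T: TFAE for (Uk)} and~(\ref{F: (phi,V phi(k-1))=lambda(k)(0)(phi,phi)})), but the resolvent-positivity argument above is shorter.
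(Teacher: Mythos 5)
Your proof is correct and follows essentially the same route as the paper's: apply Proposition~\ref{P: fcuking proposition} to get $\scal{R_\lambda(\hat H_{\rlmb})v}{v}=0$ for $d\geq 3$, then use strict positivity of the sliced resolvent to force $v=0$, hence $V\chi=0$, contradicting regularity. You simply make explicit two steps the paper leaves implicit — the verification that $R_\lambda(\hat H_{\rlmb})$ is genuinely strictly positive (via $\lambda\notin\sigma_{ess}$ and injectivity on $\hat\hilb$) and the invocation of Corollary~\ref{C: criterion of regularity for V} for the final contradiction.
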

\begin{proof} Since~$\lambda$ is the smallest eigenvalue, the sliced resolvent 
$R_\lambda(\hat H_{\rlmb})$ is a strictly positive operator. 
Hence, if the order $d$ is greater than two, then Proposition~\ref{P: fcuking proposition} gives 
$\scal{R_\lambda(\hat H_{\rlmb})v}{v} = 0$ 
and combining this with $R_\lambda(\hat H_{\rlmb})>0$ gives $R_\lambda(\hat H_{\rlmb})v = 0,$ which is a contradiction. 
\end{proof}

If the order of a resonance point is even then the corresponding eigenvalue makes a U-turn.
One may ask whether this is a left or a right U-turn. At the ground eigenvalue,
the following theorem answers this question. 

\begin{thm} If~$\lambda$ is a non-degenerate ground eigenvalue
and the order of a direction~$V$ is equal to~$2,$ then the corresponding U-turn is a left U-turn.
\end{thm}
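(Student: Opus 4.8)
The plan is to reduce the statement to a sign computation for the second derivative of the relevant eigenvalue function at~$\rlmb.$ Since $\lambda$ is simple there is, up to normalisation, a unique analytic eigenpath $\phi(s)$ of $H_s = H_{\rlmb}+(s-\rlmb)V$ with $\phi(\rlmb)$ spanning $\clV_\lambda;$ let $\lambda(s)$ be the corresponding eigenvalue function. Since $V$ has order~$2$ at the simple resonance point $H_{\rlmb},$ the eigenpath $\phi(s)$ has order~$2,$ so by Theorem~\ref{T: TFAE for (Uk)} (equivalently Lemma~\ref{L: property (Uk)}) we have $\lambda'(\rlmb)=0$ and $\lambda''(\rlmb)\neq0;$ hence by~(\ref{F: lambda(s)=lambda +eps(s-l)...}) one has $\lambda(s)=\lambda+\eps(s-\rlmb)^{2}+O((s-\rlmb)^{3})$ with $\eps=\tfrac12\lambda''(\rlmb)\neq0.$ The U-turn is a \emph{left} U-turn precisely when $\eps<0,$ i.e.\ when the eigenvalue reaches~$\lambda$ from below and bends back, so that the arc of the resonance set opens toward smaller values of the spectral parameter. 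Thus everything comes down to proving $\lambda''(\rlmb)<0.$

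To compute $\lambda''(\rlmb)$ I would first express $\phi'(\rlmb)$ through the sliced resolvent. Because $V$ has order~$\geq2,$ item~(iv) of Lemma~\ref{L: property (Uk)} with $j=1$ gives $(H_{\rlmb}-\lambda)\phi'(\rlmb)=-V\phi(\rlmb);$ in particular $V\phi(\rlmb)\perp\clV_\lambda,$ so $V\phi(\rlmb)\in\hat\hilb$ and, $\lambda$ being simple, $\phi'(\rlmb)=-R_\lambda(\hat H_{\rlmb})V\phi(\rlmb)+c\,\phi(\rlmb)$ for some scalar~$c$ which will be irrelevant below. Substituting this into the identity~(\ref{F: (phi,V phi(k-1))=lambda(k)(0)(phi,phi)}) with $k=2,$ namely $\scal{\phi(\rlmb)}{V\phi'(\rlmb)}=\tfrac12\lambda''(\rlmb)\scal{\phi(\rlmb)}{\phi(\rlmb)},$ and using self-adjointness of $V$ together with $\scal{\phi(\rlmb)}{V\phi(\rlmb)}=\lambda'(\rlmb)\scal{\phi(\rlmb)}{\phi(\rlmb)}=0$ to eliminate the $c$-term, one obtains
$$
  \tfrac12\lambda''(\rlmb)\scal{\phi(\rlmb)}{\phi(\rlmb)}
    =-\scal{R_\lambda(\hat H_{\rlmb})V\phi(\rlmb)}{V\phi(\rlmb)}.
$$

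Finally the hypotheses are brought in. Since $\lambda$ is a \emph{non-degenerate ground} eigenvalue, the decomposition $\hilb=\hat\hilb\oplus\clV_\lambda$ reduces $H_{\rlmb}$ to $\hat H_{\rlmb}\oplus\lambda$ as in~(\ref{F: H0 as 2x2 matrix}) and $\sigma(\hat H_{\rlmb})\subset(\lambda,\infty),$ so the restriction of $R_\lambda(\hat H_{\rlmb})=(\hat H_{\rlmb}-\lambda)^{-1}$ to $\hat\hilb$ is strictly positive; moreover $V\phi(\rlmb)\neq0,$ either by Corollary~\ref{C: criterion of regularity for V} (a direction of order~$2$ is regular) or by Proposition~\ref{P: fcuking proposition}, which with $d=2$ asserts $\aaa=0$ and $\scal{R_\lambda(\hat H_{\rlmb})v}{v}\neq0$ for $v$ identified with $V\phi(\rlmb).$ Hence the right-hand side above is strictly negative, so $\lambda''(\rlmb)<0,$ $\eps<0,$ and the U-turn is a left U-turn. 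The only delicate point is the bookkeeping of signs and the convention that identifies a left U-turn with $\eps<0;$ the remaining steps are short computations already packaged by Lemma~\ref{L: property (Uk)}, formula~(\ref{F: (phi,V phi(k-1))=lambda(k)(0)(phi,phi)}) and Proposition~\ref{P: fcuking proposition}. (One may also bypass the computation of $\phi'(\rlmb)$ and invoke directly the classical second-order perturbation identity $\tfrac12\lambda''(\rlmb)\|\phi(\rlmb)\|^{2}=-\scal{R_\lambda(\hat H_{\rlmb})V\phi(\rlmb)}{V\phi(\rlmb)},$ whose non-positivity for a ground state is standard; I prefer the argument above since it stays within the formalism of the paper.)
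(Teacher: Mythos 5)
Your proposal is correct and follows essentially the same route as the paper's own proof: order~$2$ gives $\lambda'(\rlmb)=0$ and $\lambda''(\rlmb)\neq 0$, the derivative $\phi'(\rlmb)$ is expressed through $-S_\lambda\phi(\rlmb)=-R_\lambda(\hat H_{\rlmb})V\phi(\rlmb)$ modulo $\clV_\lambda$, formula~(\ref{F: (phi,V phi(k-1))=lambda(k)(0)(phi,phi)}) turns this into $\tfrac12\lambda''(\rlmb)\|\phi(\rlmb)\|^2 = -\scal{R_\lambda(\hat H_{\rlmb})V\phi(\rlmb)}{V\phi(\rlmb)}$, and positivity of the sliced resolvent at a ground eigenvalue forces the sign. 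The only cosmetic difference is where the strictness of the inequality is sourced: the paper reads off $\scal{\phi(\rlmb)}{V\phi'(\rlmb)}\neq 0$ directly from item~(ii) of Theorem~\ref{T: TFAE for (Uk)} with $d=2$, whereas you argue $V\phi(\rlmb)\neq 0$ from regularity and then invoke strict positivity of $R_\lambda(\hat H_{\rlmb})$; both are fine, and your version has the small merit of isolating exactly where non-degeneracy of the ground state enters.
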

\begin{proof} Since $d = 2,$ using Theorem~\ref{T: TFAE for (Uk)} one infers that 
$$
  0 \neq \scal{\phi(\rlmb)}{V\phi'(\rlmb)} = - \scal{\phi(\rlmb)}{V S_\lambda \phi(\rlmb)} = - \scal{\chi}{V R_\lambda(\hat H_{\rlmb})V\chi}.
$$
Since~$\lambda$ is a ground eigenvalue, the operator $R_\lambda(\hat H_{\rlmb})$ is positive.
Combining this with the previous equality and inequality gives 
$
  \scal{\phi(\rlmb)}{V\phi'(\rlmb)} < 0.
$
It follows from this and formula (\ref{F: (phi,V phi(k-1))=lambda(k)(0)(phi,phi)}) 
that~$\lambda''(\rlmb) < 0.$ Hence, the U-turn is the left one.
\end{proof}

\section{Characterisation of order~$k$ directions}
\label{S: char-n order k dir-ns}
\label{Page: H(s) 3}
Recall that a resonant path $H(s)$ is an analytic path of self-adjoint operators 
in an affine space~$\clA$ such that~$\lambda$
is an eigenvalue of all $H(s).$ For any resonant path $H(s)$ there exists (see \cite{Kato}) an analytic path
of eigenvectors~$\chi(s),$ that is,
\begin{equation} \label{F: H(s)chi(s)=lambda chi(s)}
  H(s) \chi(s) = \lambda \chi(s).
\end{equation}
Such an analytic path of eigenvectors~$\chi(s)$ we shall call a \emph{resonant eigenpath}.
We say that a resonant path $H(s)$ is \emph{simple},
if at least one point on that path is simple. If a resonant path is simple,
then all its points except a discrete set are simple. Further,
a simple resonant path has a unique resonant eigenpath up to a multiplicative constant, see e.g. \cite{Kato}.

\subsection{Tangent directions have high orders}

We say that a direction~$V$ is \emph{tangent} \label{Page: tangent to order k direction}
to the resonance set~$\Rset$ at~$H_{\rlmb}$ \emph{to order at least~$k,$}
if there exists a resonant path $H(s) \subset \euR(\lambda)$
such that for some (necessarily real) numbers $c_2,c_3,\ldots,c_{k-1}$
\begin{equation} \label{F: H(s)=H0+sV+c2 s2 V+...}
  H(s) = H_{\rlmb} + (s-\rlmb)V + \sum_{j=2}^{k-1} c_j (s-\rlmb)^j V + O((s-\rlmb)^k), \ \ s \to \rlmb. 
\end{equation}
In this case we also say that the path $H(s)$ is tangent to~$V$ at $H(\rlmb)=H_{\rlmb}$ to order at least~$k.$
If~$k$ is the greatest positive integer with this property, then we say that~$V$ is tangent at~$H_{\rlmb}$ to order~$k.$

We say that a direction~$V$ is \emph{tangent} \label{Page: tangent direction}
at~$H_{\rlmb}$ if~$V$ is tangent to order at least~$2.$
If a direction~$V$ is tangent only to order 1 at $H_{\rlmb} \in \Rset,$ then we say that~$V$
is \emph{transversal} at~$H_{\rlmb}.$ \label{Page: transversal direction}

\begin{lemma} \label{L: tangent V}
Assume Assumption~\ref{A: Main Assumption}.
Let~$H_{\rlmb}$ be a resonant point and let~$V$ be a regular direction.
If the direction~$V$ is tangent to the resonance set $\euR(\lambda)$ at~$H_{\rlmb}$
then order of~$V$ is at least~$2.$
\end{lemma}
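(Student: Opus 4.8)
The plan is to take the resonant path $H(s)$ supplied by tangency of order $\geq 2$, together with its resonant eigenpath $\chi(s)$ from \eqref{F: H(s)chi(s)=lambda chi(s)}, and extract from it an eigenpath of the \emph{straight line} $H_s = H_{\rlmb}+(s-\rlmb)V$ of order at least $2$ in the sense of Theorem~\ref{T: TFAE for (Uk)}; since the maximal order among eigenpaths of $H_s$ equals the order of $V$, this gives the conclusion. First I would write $H(s) = H_{\rlmb} + (s-\rlmb)V + O((s-\rlmb)^2)$, so that $H'(\rlmb) = V$. Differentiating the eigenvalue equation $H(s)\chi(s) = \lambda\chi(s)$ at $s=\rlmb$ yields $H'(\rlmb)\chi(\rlmb) + H_{\rlmb}\chi'(\rlmb) = \lambda\chi'(\rlmb)$, i.e.
$$
  (H_{\rlmb} - \lambda)\chi'(\rlmb) = -V\chi(\rlmb).
$$
Applying $\hat P^\perp$ (orthogonal projection onto $\clV_\lambda$) to this identity kills the left-hand side, since $\chi'(\rlmb) \in \euD$ and $\hat P^\perp(H_{\rlmb}-\lambda) = 0$ on the domain; hence $\hat P^\perp V\chi(\rlmb) = 0$, that is, $V\chi(\rlmb) \perp \clV_\lambda$.

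Now $\chi(\rlmb)$ is an eigenvector of $H_{\rlmb}$ at $\lambda$, i.e. $\chi(\rlmb)\in\clV_\lambda$, and by Theorem~\ref{T: TFAE depth 1 criterion} the relation $V\chi(\rlmb)\perp\clV_\lambda$ says precisely that $\chi(\rlmb)$ is a resonance vector of depth at least $1$. By Lemma~\ref{L: third one} (with $k=2$), any analytic eigenpath $\phi(s)$ of the straight path $H_s = H_0+sV$ with $\phi(\rlmb) = \chi(\rlmb)$ then has order at least $2$; such an eigenpath exists because $\chi(\rlmb)\in\clV_\lambda$ is an eigenvector of $H_{\rlmb}$ and eigenpaths through it can be chosen analytically (see \cite{Kato}). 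Finally, among the $m$ eigenpaths $\phi_\nu(s)$ of $H_s$ this one has order $\geq 2$, so the largest of the orders $\tilde d_\nu$ is at least $2$; as remarked after Theorem~\ref{T: TFAE for (Uk)} and in the introduction, the largest order among eigenpaths equals the order of the direction $V$, whence $V$ has order at least $2$.

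The only delicate point is ensuring that $V\chi(\rlmb)\perp\clV_\lambda$ is genuinely a statement about the \emph{straight-line} direction $V$ and not about $H'(\rlmb)$ for some other perturbation; but $H'(\rlmb) = V$ is forced by the tangency condition \eqref{F: H(s)=H0+sV+c2 s2 V+...}, which prescribes the first-order term to be exactly $(s-\rlmb)V$. A secondary technical issue is the passage from ``$\chi(\rlmb)$ has depth $\geq 1$'' to ``there is a straight-path eigenpath of order $\geq 2$ through it'': this is exactly Lemma~\ref{L: third one}, so no new work is required — one only needs that $\chi(\rlmb)$ can serve as the initial vector $\phi(\rlmb)$ of some analytic eigenpath of $H_s$, which is standard analytic perturbation theory since $\lambda$ is an isolated eigenvalue of $H_{\rlmb}$ (being outside $\sigma_{ess}$) and $V$ is $H_{\rlmb}$-relatively compact.
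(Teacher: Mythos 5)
Your strategy — show $V\chi(\rlmb)\perp\clV_\lambda$, invoke Theorem~\ref{T: TFAE depth 1 criterion} to get that $\chi(\rlmb)$ has depth at least~$1$, and conclude — is a genuinely different and, in its first half, cleaner route than the paper's. The paper instead keeps differentiating under the operator $1+(\rlmb-s)R_\lambda(H_s)V$ and arrives at the identity $(1+(\rlmb-s)R_\lambda(H_s)V)^2\chi'(\rlmb)=0$, showing directly that $\chi'(\rlmb)$ is a resonance vector of order~$2$; this is entirely self-contained and does not use the depth criterion. Your computation up to and including $V\chi(\rlmb)\perp\clV_\lambda$ is correct (self-adjointness of $H_{\rlmb}$ gives $\hat P^\perp(H_{\rlmb}-\lambda)=0$ on the domain), and the application of Theorem~\ref{T: TFAE depth 1 criterion} is legitimate.

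The gap is in the final step, where you pass from ``$\chi(\rlmb)$ has depth at least~$1$'' to ``there is an analytic eigenpath $\phi(s)$ of the straight line $H_s=H_0+sV$ with $\phi(\rlmb)=\chi(\rlmb)$'' and then invoke Lemma~\ref{L: third one}. That existence claim is not ``standard analytic perturbation theory'' and is false in general: the analytic eigenpaths of $H_s$ terminate, at $s=\rlmb$, at the specific vectors $\phi_\nu(\rlmb)$ determined by the splitting of $\lambda,$ and an arbitrary eigenvector of $H_{\rlmb}$ (even one of depth at least~$1$, such as a generic combination of $\phi_{\nu_1}(\rlmb)$ and $\phi_{\nu_2}(\rlmb)$ when $d_{\nu_1},d_{\nu_2}\geq 2$ and $\lambda_{\nu_1}\neq\lambda_{\nu_2}$) need not be attainable as $\phi(\rlmb)$. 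Note also that $\chi(s)$ is a resonant eigenpath of the curved path $H(s),$ not of $H_s,$ so it cannot serve as the required $\phi(s)$. Fortunately the detour through Lemma~\ref{L: third one} is unnecessary: once $\chi(\rlmb)$ has depth at least~$1,$ we have $\chi(\rlmb)=\bfA_\lambda(r_\lambda)\chi_1$ for some resonance vector $\chi_1,$ and since $\chi(\rlmb)\neq 0$ the operator $\bfA_\lambda(r_\lambda)$ is non-zero; equivalently, $\chi_1$ has order at least~$2$ because $\bfA_\lambda(r_\lambda)$ lowers order by one. Hence $d\geq 2,$ i.e. $V$ has order at least~$2,$ which is the conclusion. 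With that replacement your proof is correct.
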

\begin{proof} As usual, we let $H_s = H_0 + sV.$ Since~$V$ is tangent to $\euR(\lambda)$ at $H_{\rlmb},$
there exists a resonant path $\set{H(s)}$ such that $H(\rlmb) = H_{\rlmb}$ and
$
  H'(\rlmb) = V.
$
Let~$\chi(s)$ be a corresponding resonant eigenpath.
Differentiating the eigenvalue equation (\ref{F: H(s)chi(s)=lambda chi(s)}) we get
$$
  H'(s) \chi(s) + H(s) \chi'(s) = \lambda \chi'(s).
$$
Letting here $s = \rlmb$ gives 
$
  V \chi(\rlmb) + H_{\rlmb} \chi'(\rlmb) = \lambda \chi'(\rlmb).
$
This equality can be rewritten as follows: 
$$
  (1 + (\rlmb-s) R_\lambda(H_s)V) \chi'(\rlmb) = - R_\lambda(H_s)V \chi(\rlmb),
$$
where~$s$ is any real number such that the operator $H_s-\lambda$ has bounded inverse
(such real numbers~$s$ exist since by the premise~$V$ is a regular direction).
The eigenvalue equation $H_{\rlmb} \chi(\rlmb) = \lambda \chi(\rlmb)$
is equivalent to
$
  (1 + (\rlmb-s) R_\lambda(H_s)V) \chi(\rlmb) = 0.
$
Combining this equality with previous one gives 
$$
  (1 + (\rlmb-s) R_\lambda(H_s)V) \chi'(\rlmb) = (\rlmb-s)^{-1}\chi(\rlmb).
$$
Therefore,
$
  (1 + (\rlmb-s) R_\lambda(H_s)V)^2 \chi'(\rlmb) = 0.
$
Hence, if~$V$ is tangent to $\euR(\lambda),$ then~$\chi'(\rlmb)$ is a vector of order~$2$
and therefore the direction~$V$ has order at least~$2.$
\end{proof}

\begin{thm} \label{T: tangent V to order k}
Assume Assumption~\ref{A: Main Assumption}. Let~$k \geq 1,$ let
$H_{\rlmb}$ be a resonance point and~$V$ be a regular direction at~$H_{\rlmb}.$
If~$H(s)$ is a resonant path tangent to~$V$ at~$H_{\rlmb}$ to order at least~$k$
and if~$\chi(s)$ is a corresponding analytic resonant eigenpath,
then 
\begin{enumerate}
  \item[(i)] vectors 
$
  \chi(\rlmb),\chi'(\rlmb),\ldots,\chi^{(k-1)}(\rlmb)
$
have orders respectively $1,2,\ldots, k,$
  \item[(ii)] the direction~$V$ has order at least~$k,$
  \item[(iii)] for any $j=1,2,\ldots,k$ 
$$
  \bfA_\lambda(r_\lambda) \chi^{(k-1)}(\rlmb) 
        = (k-1)\chi^{(k-2)}(\rlmb) 
                  + \sum_{j=2}^{k-1} j!{k-1 \choose j} c_j \chi^{(k-1-j)}(\rlmb),
$$
where the numbers $c_2, \ldots, c_k$ are as in (\ref{F: H(s)=H0+sV+c2 s2 V+...}), and 
  \item[(iv)] the eigenvector~$\chi(\rlmb)$ has depth at least $k-1.$
\end{enumerate}
\end{thm}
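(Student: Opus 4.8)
The plan is to prove (i)--(iv) simultaneously by a single induction, the engine being repeated differentiation of the eigenvalue equation $H(s)\chi(s)=\lambda\chi(s)$, fed into Theorem~\ref{T: TFAE depth 1 criterion} and the elementary relations between $S_\lambda$ and $\bfA_\lambda(r_\lambda)$. The case $k=1$ is trivial, so assume $k\ge 2$. From the tangency hypothesis $H(s)=H_{\rlmb}+(s-\rlmb)V+\sum_{j=2}^{k-1}c_j(s-\rlmb)^jV+O((s-\rlmb)^k)$ one reads off $H'(\rlmb)=V$ and $H^{(i)}(\rlmb)=i!\,c_iV$ for $2\le i\le k-1$. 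Differentiating the eigenvalue equation $n$ times by the Leibniz rule and setting $s=\rlmb$ then gives, for every $n$ with $1\le n\le k-1$,
\[
  (H_{\rlmb}-\lambda)\chi^{(n)}(\rlmb)=-n\,V\chi^{(n-1)}(\rlmb)-\sum_{i=2}^{n}\binom{n}{i}i!\,c_i\,V\chi^{(n-i)}(\rlmb).
\]
Applying the orthogonal projection $\hat P^\perp$ onto $\clV_\lambda$ to this identity at level $n+1$ (legitimate while $n+1\le k-1$) and using $\hat P^\perp(H_{\rlmb}-\lambda)=0$, a short induction on $n$ shows $V\chi^{(n)}(\rlmb)\perp\clV_\lambda$ for $0\le n\le k-2$. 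Applying instead the sliced resolvent $R_\lambda(\hat H_{\rlmb})$ and using $R_\lambda(\hat H_{\rlmb})(H_{\rlmb}-\lambda)=\hat P=1-\hat P^\perp$ together with $R_\lambda(\hat H_{\rlmb})V=S_\lambda$, the same identity is rewritten, for $1\le n\le k-1$, as
\[
  \chi^{(n)}(\rlmb)=\hat P^\perp\chi^{(n)}(\rlmb)-n\,S_\lambda\chi^{(n-1)}(\rlmb)-\sum_{i=2}^{n}\binom{n}{i}i!\,c_i\,S_\lambda\chi^{(n-i)}(\rlmb).
\]

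The heart of the argument is an induction on $n$ from $0$ to $k-1$ establishing: (a)~$\chi^{(n)}(\rlmb)$ is a resonance vector of order $n+1$; (b)~for $n\ge 1$, $\bfA_\lambda(r_\lambda)\chi^{(n)}(\rlmb)=n\chi^{(n-1)}(\rlmb)+\sum_{i=2}^{n}\binom{n}{i}i!\,c_i\,\chi^{(n-i)}(\rlmb)$; and (c)~for $n\le k-2$, the vector $\chi^{(n)}(\rlmb)$ has depth at least $1$, so by Theorem~\ref{T: TFAE depth 1 criterion} $S_\lambda\chi^{(n)}(\rlmb)$ is a resonance vector and $\bfA_\lambda(r_\lambda)S_\lambda\chi^{(n)}(\rlmb)=-\chi^{(n)}(\rlmb)$. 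The base $n=0$ is immediate: $\chi(\rlmb)\in\clV_\lambda$ is a resonance vector of order $1$, and $V\chi(\rlmb)\perp\clV_\lambda$ has just been shown. For the step (with $1\le n\le k-1$), the second displayed identity exhibits $\chi^{(n)}(\rlmb)$ as a sum of a vector of $\clV_\lambda$ and vectors $S_\lambda\chi^{(m)}(\rlmb)$ with $m\le n-1\le k-2$, each a resonance vector by the inductive hypothesis, so $\chi^{(n)}(\rlmb)$ is a resonance vector; applying $\bfA_\lambda(r_\lambda)$ to that identity, and using that $\bfA_\lambda(r_\lambda)$ annihilates $\clV_\lambda$ and that $\bfA_\lambda(r_\lambda)S_\lambda\chi^{(m)}(\rlmb)=-\chi^{(m)}(\rlmb)$, produces the formula in (b); the right side of (b) equals $n\chi^{(n-1)}(\rlmb)$, of order exactly $n$ by the hypothesis, plus terms of order at most $n-1$, hence has order exactly $n$, so $\chi^{(n)}(\rlmb)$ has order $n+1$; and if $n\le k-2$, projecting the first displayed identity at level $n+1$ by $\hat P^\perp$ as above (now that $V\chi^{(m)}(\rlmb)\perp\clV_\lambda$ is known for $m<n$) yields $V\chi^{(n)}(\rlmb)\perp\clV_\lambda$, and Theorem~\ref{T: TFAE depth 1 criterion} gives (c). This establishes (i), and (iii) is the case $n=k-1$ of (b). Then (ii) follows at once, since $\chi^{(k-1)}(\rlmb)$ is a resonance vector of order $k$ and hence the direction~$V$ has order at least~$k$. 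Finally, iterating (b) --- at each application the correction terms drop in order and are eventually annihilated by $\bfA_\lambda(r_\lambda)$ --- gives $\bfA_\lambda^{k-1}(r_\lambda)\chi^{(k-1)}(\rlmb)=(k-1)!\,\chi(\rlmb)$, so $\chi(\rlmb)$ has depth at least $k-1$, which is (iv).

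The one delicate point is the order bookkeeping: one must verify that in both displayed recursions the leading terms dominate the correction terms generated by the $c_i$, which rests on knowing that each $\chi^{(m)}(\rlmb)$ has order \emph{exactly} $m+1$. This is exactly why (a), (b) and (c) must be carried jointly through the induction rather than deduced one after another. Everything else is routine differentiation and a direct invocation of Theorem~\ref{T: TFAE depth 1 criterion} and the properties of $S_\lambda$ and $\bfA_\lambda(r_\lambda)$ recorded in Section~\ref{S: 2x2}.
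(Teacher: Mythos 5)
Your argument is correct, and it takes a genuinely different route from the paper's. The paper, after differentiating the eigenvalue equation $k-1$ times, rewrites the result in terms of $R_\lambda(H_s)V$ at a \emph{generic} non-resonant $s$ (equation~(\ref{F: 11})), deduces the order statement (i) from the facts that $R_\lambda(H_s)V$ preserves order and $1+(\rlmb-s)R_\lambda(H_s)V$ decreases it, and then obtains the recursion (iii) by contour-integrating (\ref{F: 11}) around $s=\rlmb$ using (\ref{F: Pz=oint A(s)}) and (\ref{F: A(lamb)=oint s A(s)}). You avoid the spectral parameter $s$ and contour integrals entirely: you project the differentiated eigenvalue identity by $\hat P^\perp$ and by the sliced resolvent $R_\lambda(\hat H_{\rlmb})$, using the pointwise algebra $\hat P^\perp(H_{\rlmb}-\lambda)=0$ and $R_\lambda(\hat H_{\rlmb})(H_{\rlmb}-\lambda)=\hat P$, and then feed the result into Theorem~\ref{T: TFAE depth 1 criterion} to produce both the order bookkeeping and the $\bfA_\lambda$-recursion in one induction. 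A by-product of your approach is that you also give an explicit derivation of (iv) by iterating the recursion to $\bfA_\lambda^{k-1}\chi^{(k-1)}(\rlmb)=(k-1)!\,\chi(\rlmb)$, whereas the paper merely remarks that (iv) follows from (iii). The trade-off is that your proof leans on the $2\times 2$ machinery and the depth criterion of Section~\ref{S: 2x2}, while the paper's proof is self-contained modulo the contour-integral definitions of $P_\lambda$ and $\bfA_\lambda$; also, your base case $n=0$ is trivial and the step from $n=0$ to $n=1$ is handled uniformly, so Lemma~\ref{L: tangent V} (which the paper invokes as the $k=2$ base case) is not needed.
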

\begin{proof}
(i) \ We prove this item using induction on~$k.$ 

That the vector~$\chi(\rlmb)$ has order 1 is trivial. 
That in case of $k \geq 2$ the vector~$\chi'(\rlmb)$ has
order 2 was proved in Lemma~\ref{L: tangent V}. 
Now, assuming that the assertion holds for $k=n-1,$ we prove it for
$k=n;$ still, we write $k$ instead of~$n.$

Since a path $H(s)$ which is tangent to~$V$ at $H_{\rlmb}$ to order at least~$k$ 
is also tangent to~$V$ to order at least~$k-1,$ it
follows from the induction assumption that the vectors
$$
  \chi(\rlmb), \ \chi'(\rlmb), \ \chi''(\rlmb), \ \ldots, \ \chi^{(k-2)}(\rlmb)
$$
have orders $1,2,\ldots,k-1$ respectively.
Differentiating~$k-1$ times the eigenvalue equation (\ref{F: H(s)chi(s)=lambda chi(s)}) gives 
\begin{equation} \label{F: sum C H(j)chi(k-j)=lambda chi(k)}
  \sum_{j=0}^{k-1} {k-1 \choose j} H^{(j)}(s) \chi^{(k-1-j)}(s) = \lambda \chi^{({k-1})}(s).
\end{equation}
Since~$H(s)$ is tangent to~$V$ at $H_{\rlmb}$ to order~$k,$ the operators $H'(\rlmb),
\ldots, H^{(k-1)}(\rlmb)$ are co-linear to~$V.$ Therefore, it follows
from the previous equality, taken with $s = \rlmb,$ that 
\begin{equation} \label{F: H0 chi(k)(0)+c2,c3,...=lambda chi(k)(0)}
  H_{\rlmb} \chi^{(k-1)}(\rlmb) + (k-1) V \chi^{(k-2)}(\rlmb) 
      + \tilde c_2 V \chi^{(k-3)}(\rlmb) + \ldots + \tilde c_{k-1} V\chi(\rlmb) 
      = \lambda \chi^{(k-1)}(\rlmb),
\end{equation}
where
$$
  \tilde c_j = {k-1 \choose j} j! c_j
$$
and the numbers $c_j$ are from (\ref{F: H(s)=H0+sV+c2 s2 V+...}).
Adding to both sides of (\ref{F: H0 chi(k)(0)+c2,c3,...=lambda chi(k)(0)})
the vector $(s-\rlmb)V\chi^{(k-1)}(\rlmb)$ gives 
$$
  (H_s - \lambda) \chi^{(k-1)}(\rlmb) + (k-1) V \chi^{(k-2)}(\rlmb) + \tilde c_2 V \chi^{(k-3)}(\rlmb) 
          + \ldots + \tilde c_{k-1} V\chi(\rlmb) = (s-\rlmb) V \chi^{(k-1)}(\rlmb).
$$
Since~$V$ is regular, there exists $s \in \mbR$ such that 
the inverse $R_\lambda(H_s)=(H_s-\lambda)^{-1}$ exists. 
Multiplying both sides of the last equality by $R_\lambda(H_s),$ 
we obtain the equality
\begin{equation}  \label{F: 11}
  \begin{split}
     \chi^{(k-1)}(\rlmb) + R_\lambda(H_s)V & \brs{(k-1) \chi^{(k-2)}(\rlmb) 
                  + \tilde c_2 \chi^{(k-3)}(\rlmb) + \ldots + \tilde c_{k-1} \chi(\rlmb)} 
     \\ & = (s-\rlmb) R_\lambda(H_s)V \chi^{(k-1)}(\rlmb),
  \end{split}         
\end{equation}
which can be rewritten as
\begin{equation*}
  \brs{1+(\rlmb-s) R_\lambda(H_s)V} \chi^{(k-1)}(\rlmb) = - R_\lambda(H_s)V \brs{(k-1) \chi^{(k-2)}(\rlmb) 
         + \tilde c_2 \chi^{(k-3)}(\rlmb) + \ldots + \tilde c_{k-1} \chi(\rlmb)}.
\end{equation*}
By the induction assumption, the vector in the last pair of brackets has order~$k-1.$
Since by the definition (\ref{F: res eq-n of order k}) of vectors of order~$k$
the operator $R_\lambda(H_s)V$ preserves order of vectors,
the vector in the right hand side has order~$k-1$ too.
Since by the same definition (\ref{F: res eq-n of order k}) the operator $1-sR_\lambda(H_s)V$ decreases
order of vectors by 1, it follows that the vector~$\chi^{(k-1)}(\rlmb)$ has order~$k.$

(ii) \ This item follows from (i).

(iii) \ Taking contour integrals of both sides of (\ref{F: 11}) over a contour enclosing the resonance point $s = \rlmb$
and using the formulas (\ref{F: Pz=oint A(s)}) and (\ref{F: A(lamb)=oint s A(s)})
we obtain
$$
  \bfA_\lambda(r_\lambda) \chi^{(k-1)}(\rlmb) = (k-1)\chi^{(k-2)}(\rlmb)
           + \tilde c_2 \chi^{(k-3)}(\rlmb) + \ldots + \tilde c_{k-1} \chi(\rlmb).
$$

(iv) This item follows immediately from (iii). 
\end{proof}

\smallskip
If a resonant path $H(s)$ is tangent to~$V$ to order~$k$ at a resonant point $H_{\rlmb},$
then changing the parameter~$s$ if necessary we can always make
the operators $H''(\rlmb),\ldots,H^{(k-1)}(\rlmb)$ equal to zero, so that the path $H(s)$ takes the form
\begin{equation} \label{F: standard path}
  H(s) = H_{\rlmb} + (s-\rlmb)V + O((s-\rlmb)^k), \ s \to \rlmb.
\end{equation}
For example, assuming that $\rlmb = 0,$ 
to eliminate the coefficient $c_2$ one can replace~$s$ by $t -c_2 t^2.$
Once $c_2$ is eliminated, the change of variables $s = t-c_3 t^3$ eliminates $c_3,$ and so on. 

\begin{defn}
A path of the form (\ref{F: standard path}) will be called \emph{standard}.\label{Page: standard path}
\end{defn}

According to Theorem~\ref{T: tangent V to order k},
with a resonant path tangent to order~$k$ we can associate a set of resonance vectors~$\chi_0, \ldots, \chi_{k-1}$
of respective orders $1,\ldots,k;$ namely, the first~$k$ coefficients
\begin{equation} \label{F: chi(j)}
  \chi_j = \frac{1}{j!} \chi^{(j)}(\rlmb), \ j = 0, 1, 2, \ldots
\end{equation}
of the Taylor expansion of a resonant eigenpath~$\chi(s).$

\begin{prop} \label{P: amazing proposition} 
Assume Assumption~\ref{A: Main Assumption}. 
Let~$V$ be a regular direction at~$H_{\rlmb}$
and let $H(s)$ be a resonant path tangent to~$V$ at~$H_{\rlmb}$ to
order~$k.$ The path $H(s)$ is standard if and only if
for all~$j=1,2,\ldots,k-1$ there holds the equality 
\begin{equation} \label{F: bfA chi(k)=k chi(k-1)}
  \bfA_\lambda(r_\lambda) \chi_j = \chi_{j-1},
\end{equation}
where~$\chi(s)$ is a corresponding analytic path of eigenvectors.
\end{prop}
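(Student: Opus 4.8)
The plan is to deduce the proposition from Theorem~\ref{T: tangent V to order k}(iii) after one combinatorial simplification. The starting point is the observation that a resonant path~$H(s)$ which is tangent to~$V$ at~$H_{\rlmb}$ to order~$k$ is, for every $l=2,3,\ldots,k$, also tangent to~$V$ at~$H_{\rlmb}$ to order~$l$, with the same coefficients $c_2,\ldots,c_{l-1}$ appearing in (\ref{F: H(s)=H0+sV+c2 s2 V+...}); indeed, peeling off the higher-order terms of the expansion leaves them inside the $O((s-\rlmb)^l)$ remainder. Applying Theorem~\ref{T: tangent V to order k}(iii) at each such order~$l$ therefore gives, for $l=2,\ldots,k$,
$$
  \bfA_\lambda(r_\lambda)\chi^{(l-1)}(\rlmb)
     = (l-1)\chi^{(l-2)}(\rlmb) + \sum_{j=2}^{l-1} j!{l-1 \choose j} c_j\,\chi^{(l-1-j)}(\rlmb).
$$

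Next I would rewrite this family in terms of the normalised Taylor coefficients $\chi_i=\frac{1}{i!}\chi^{(i)}(\rlmb)$ from (\ref{F: chi(j)}). Substituting $\chi^{(i)}(\rlmb)=i!\,\chi_i$ (with $i=l-1$) and using the elementary identity $j!{i\choose j}(i-j)!=i!$, every weight on the right-hand side equals exactly $i!$, and dividing through by $i!$ collapses the relations to the single clean form
$$
  \bfA_\lambda(r_\lambda)\chi_i = \chi_{i-1} + \sum_{j=2}^{i} c_j\,\chi_{i-j}, \qquad i=1,2,\ldots,k-1,
$$
where the sum is empty when $i=1$. This identity is the heart of the argument. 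If $H(s)$ is standard then $c_2=\cdots=c_{k-1}=0$, the sums disappear, and $\bfA_\lambda(r_\lambda)\chi_i=\chi_{i-1}$ for all $i=1,\ldots,k-1$, which is (\ref{F: bfA chi(k)=k chi(k-1)}).

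For the converse I would run a triangular induction on~$i$. Assuming $\bfA_\lambda(r_\lambda)\chi_i=\chi_{i-1}$ for all $i=1,\ldots,k-1$ and subtracting from the clean identity gives $\sum_{j=2}^{i} c_j\,\chi_{i-j}=0$ for every $i=2,\ldots,k-1$. At $i=2$ this reads $c_2\chi_0=0$; since $\chi_0=\chi(\rlmb)$ is a nonzero eigenvector, $c_2=0$. If $c_2=\cdots=c_{i-1}=0$ already, the $i$-th relation reduces to $c_i\chi_0=0$, hence $c_i=0$. Thus all $c_j$ vanish, so $H(s)=H_{\rlmb}+(s-\rlmb)V+O((s-\rlmb)^k)$, i.e. $H(s)$ is standard. (When $k\le 2$ there are no coefficients $c_j$ at all and both sides of the asserted equivalence hold vacuously.)

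The only delicate point — really bookkeeping rather than a genuine obstacle — is the combinatorial reduction turning the weighted relations of Theorem~\ref{T: tangent V to order k}(iii) into the clean ones for the~$\chi_i$'s: one must check that the binomial–factorial coefficient multiplying each $c_j\chi_{i-j}$ is precisely $i!$, so that after dividing by $i!$ every term carries coefficient~$1$. Everything after that is elementary linear algebra, using only that the leading vector $\chi_0=\chi(\rlmb)$ is nonzero.
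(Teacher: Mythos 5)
Your proof is correct and is in substance the same as the paper's, which simply says ``This follows from item (iii) of Theorem~\ref{T: tangent V to order k}.'' You have carried out the implicit bookkeeping: applying item (iii) at each intermediate order, rewriting in the normalised coefficients $\chi_i=\chi^{(i)}(\rlmb)/i!$ via the identity $j!\binom{i}{j}(i-j)!=i!$, and running the triangular induction on the $c_j$ for the converse direction.
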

\begin{proof}
This follows from item (iii) of Theorem~\ref{T: tangent V to order k}. 
\end{proof}

\subsection{A resonance curve associated with an eigenvector}

Assume that~$V$ is a regular direction at a resonance point~$H_{\rlmb}.$
We choose another direction~$W$ and consider the real affine
plane $\alpha = H_{\rlmb} + \mbR V + \mbR W$ in the affine space~$\clA$
determined by the point~$H_{\rlmb}$ and the directions~$V$ and~$W.$
It is possible that the intersection of $\alpha$ and the resonance set $\euR(\lambda)$ 
in a neighbourhood of~$H_{\rlmb}$ consists of only one point~$H_{\rlmb}.$
To avoid this one can choose~$W$ to be transversal to the resonance set. Since, as we shall see later, 
the resonance set has co-dimension 1, this will ensure that the intersection 
of the plane with the resonance set is a curve.

\begin{thm} \label{T: intersection is a curve}
Assume Assumption~\ref{A: Main Assumption}.
Further, let~$H_{\rlmb}$ be a resonance point and let~$V$ be a regular direction.
Let~$\chi$ be an eigenvector of~$H_{\rlmb}$ and let~$W = \scal{\chi}{\cdot}\chi.$
Then the intersection of the real affine plane 
$$
  \alpha := H_{\rlmb} + \mbR V + \mbR W
$$
with a sufficiently small neighbourhood of the point~$H_{\rlmb}$ in $\Rset \setminus\set{H_{\rlmb}+\mbR W}$ 
consists of one and only one one-dimensional analytic curve~$\gamma.$ 
Moreover, this curve is simple. 
\end{thm}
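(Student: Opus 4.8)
The plan is to reduce the statement to a scalar analytic equation on $\alpha$ and apply the analytic implicit function theorem, together with the $2\times2$-matrix apparatus of Section~\ref{S: 2x2} to identify the distinguished curve. Parametrise points of the plane as $H_{(t,u)} = H_{\rlmb} + tV + uW$ with $(t,u)$ near $(0,0)$ (taking $\rlmb=0$ for convenience). The key observation is that membership $H_{(t,u)}\in\euR(\lambda)$ is, by the characterisation of resonance points via item (5) in the Preliminaries (``$\lambda$ is an eigenvalue of $H_{r_z}$''), equivalent to the non-invertibility of $1+(0-s)A_\lambda(s)$ for the triple $(\lambda; H_{(t,u)}, \text{direction})$ — but it is cleaner to use the $2\times2$ block decomposition relative to $\hilb = \hat\hilb\oplus\clV_\lambda$ adapted to $H_{\rlmb}$. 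First I would write $H_{(t,u)}-\lambda$ as a block matrix: the $(2,2)$-block is $(s-\rlmb)\aaa + u\,\hat P^\perp W\hat P^\perp + \ldots$, and by Lemma~\ref{L: inverse of block matrix} the operator $H_{(t,u)}-\lambda$ fails to be invertible exactly when the finite-dimensional Schur complement $\euD_{(t,u)}^{-1}$ on $\clV_\lambda$ is singular, i.e. when $\det\euD_{(t,u)}^{-1}=0$. This determinant is a real-analytic function $F(t,u)$ of $(t,u)$ in a neighbourhood of the origin, and $\euR(\lambda)\cap\alpha$ is precisely its zero set (away from the locus where the $(1,1)$-block $\hat H_{(t,u)}-\lambda$ itself degenerates, which near $H_{\rlmb}$ it does not, since $\lambda\notin\sigma(\hat H_{\rlmb})$).

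Next I would compute enough of the Taylor expansion of $F$ at $(0,0)$. Since $W=\scal{\chi}{\cdot}\chi$ with $\chi\in\clV_\lambda$, the direction $W$ contributes a rank-one term $u\,\scal{\chi}{\cdot}\chi$ to the $(2,2)$-block and nothing to the off-diagonal blocks (as $\hat P^\perp W = W = W\hat P^\perp$, $\hat P W = 0$). Meanwhile the contribution of $tV$ to the Schur complement is $t\aaa + O(t^2)$ coming from the crossing operator, plus the off-diagonal corrections $-t^2 v^* R_\lambda(\hat H_0) v + \ldots$ familiar from (\ref{F: euD lambda(r)}). Because $\chi$ is an eigenvector of $H_{\rlmb}$, one finds $\partial_u F(0,0) = \det'$ in the direction $\scal{\chi}{\cdot}\chi$, which is nonzero precisely because this rank-one perturbation moves the (simple, on the line $\mathbb{R}\chi$) kernel: restricting $F$ to the line $t=0$ gives $F(0,u) = u\cdot(\text{nonzero constant}) + O(u^2)$ — here I use that $W$ restricted to $\clV_\lambda$ has $\chi$ as a genuine eigenvector with eigenvalue $1$, while $\aaa$ at $t=0$ is absent, so the only vanishing direction of $\euD^{-1}_{(0,0)}$ is killed to first order in $u$. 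Hence $\partial_u F(0,0)\neq 0$, the analytic implicit function theorem applies, and the zero set of $F$ near the origin is the graph $u=g(t)$ of a unique real-analytic function $g$ with $g(0)=0$. Excising the line $\set{H_{\rlmb}+\mathbb{R}W}$ (i.e. $t=0$) from the statement removes no points of this graph other than $H_{\rlmb}$ itself, so $\gamma := \set{H_{(t,g(t))}}$ is the asserted one-dimensional analytic curve, and it is simple because $t\mapsto(t,g(t))$ is a regular analytic embedding.

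The main obstacle is verifying $\partial_u F(0,0)\neq 0$ cleanly — that is, showing the rank-one direction $W=\scal{\chi}{\cdot}\chi$ genuinely perturbs the kernel of the Schur complement at the resonance point, rather than being tangent to $\euR(\lambda)$. This is where regularity of $V$ is used only indirectly; the real content is that $\chi\in\clV_\lambda$ is a true eigenvector, so the derivative of the smallest singular value of $\euD^{-1}_{(t,u)}$ in the $u$-direction equals $\scal{\chi}{W\chi}=\|\chi\|^4\neq 0$ by first-order perturbation theory for the finite matrix $\euD^{-1}$. If $\clV_\lambda$ has dimension $m>1$, one must be slightly careful: $F$ is the full determinant, which vanishes to order $m$ along generic directions, but the restriction to $\mathbb{R}W$ picks out exactly the one-dimensional eigenspace $\mathbb{R}\chi$ and the cofactor (the product of the other $m-1$ eigenvalues of $\euD^{-1}_{(0,u)}$ restricted to $\chi^\perp\cap\clV_\lambda$, which are nonzero at $u=0$ since $\lambda$ is isolated in that complementary block) is nonzero; so $\partial_u F(0,0)$ is that nonzero cofactor times $\|\chi\|^4$. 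Once this sign/nondegeneracy point is settled, the rest is a direct application of the analytic implicit function theorem and is routine.
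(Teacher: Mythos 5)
Your approach is genuinely different from the paper's. You reduce the statement to the scalar analytic equation $F(t,u):=\det D(t,u)=0$ for the $\clV_\lambda$-Schur complement $D(t,u)$ of $H_{\rlmb}+tV+uW-\lambda$ and invoke the analytic implicit function theorem, whereas the paper proves existence by a topological argument (tracking eigenvalues of $H_{\rlmb}+sW+\eps_0 V$ over a rectangle of parameters and deriving a contradiction of intermediate-value type) and proves uniqueness and simplicity from the monotonicity of eigenvalues in the non-negative direction~$W.$ When $m=1$ your route is shorter and sound: $D$ is a scalar, $D(0,u)=u\scal{\chi}{\chi},$ so $\partial_u F(0,0)=\scal{\chi}{\chi}\neq 0$ and the implicit function theorem delivers the unique analytic curve at once; moreover the excision of $H_{\rlmb}+\mbR W$ is then vacuous apart from the base point.

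For $m\geq 2,$ however, there is a genuine gap. Since $W=\scal{\chi}{\cdot}\chi$ with $\chi\in\clV_\lambda,$ one has $\hat P W=W\hat P=0,$ and (with $\rlmb=0$) the Schur complement on $\clV_\lambda$ is
$$
  D(t,u)=t\aaa + u\,W\vert_{\clV_\lambda} - t^2\,v^*\brs{\hat H_{\rlmb}-\lambda+t\hat V}^{-1}v.
$$
On the line $t=0$ this reduces to $D(0,u)=u\,W\vert_{\clV_\lambda},$ a \emph{rank-one} operator on the $m$-dimensional space $\clV_\lambda,$ so $F(0,u)=\det D(0,u)\equiv 0$ for all~$u.$ This simply reflects the fact that the entire line $H_{\rlmb}+\mbR W$ lies inside $\Rset$ when $m\geq 2$ (every vector of $\chi^\perp\cap\clV_\lambda$ remains a $\lambda$-eigenvector of $H_{\rlmb}+uW$), which is precisely why the theorem statement excises that line. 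Consequently $\partial_u F(0,0)=0,$ and more to the point the ``nonzero cofactor'' you invoke --- the product of the other $m-1$ eigenvalues of $D(0,u)$ on $\chi^\perp\cap\clV_\lambda$ --- is identically zero, not nonzero: those eigenvalues vanish for \emph{every}~$u.$ The implicit function theorem therefore cannot be applied to~$F.$ To salvage the plan one would have to first factor out of $F$ the part that vanishes identically on $\set{t=0}$ (equivalently, perform a further block reduction separating $\mbC\chi$ from $\chi^\perp\cap\clV_\lambda$ inside $\clV_\lambda$), and controlling that factoring is not the routine step your sketch assumes --- it is essentially where the whole difficulty lives. The paper's intermediate-value/monotonicity argument avoids this factoring problem altogether.
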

\begin{proof}
First we show that any neighbourhood of~$H_{\rlmb}$ has a resonance point 
in the affine plane~$\alpha$ which is not in~$H_{\rlmb}+\mbR W.$
Assume the contrary. Then there exists a convex neighbourhood~$O$ of~$H_{\rlmb}$
which does not have a resonance point outside the line~$H_{\rlmb}+\mbR W.$ 
Fix small enough $s_0$ so that $H_{\rlmb} \pm s_0 W$ are in the neighbourhood~$O.$ 
The operators $H_{\rlmb} + s_0 W$ and $H_{\rlmb} - s_0 W$ have the eigenvalue~$\lambda$ of multiplicity $m-1$
and non-degenerate eigenvalues~$\lambda+s_0$ and~$\lambda-s_0$ respectively. 

\begin{picture}(350,200)
\thicklines
\qbezier(10,100)(20,190)(180,180)
\qbezier(180,180)(340,190)(350,100)

\qbezier(10,100)(20,10)(180,20)
\qbezier(180,20)(340,10)(350,100)

\put(150,100){\vector(1,0){120}}
\put(267,89){\small~$W$}

\put(150,100){\line(-1,0){100}}
\put(150,100){\circle*{3}}
\put(145,85){\small $H_{\rlmb}$}

\put(220,100){\circle*{3}}
\put(195,85){\small $H_{\rlmb}+s_0 W$}

\put(80,100){\circle*{3}}
\put(55,85){\small $H_{\rlmb}-s_0 W$}

\put(150,100){\vector(1,2){30}}
\put(169,158){\small~$V$}

\put(170,140){\circle*{3}}
\put(172,131){\small $H_{\rlmb}+\eps_0 V$}

\put(240,140){\circle*{3}}
\put(244,143){\small $H_{\rlmb}+s_0 W+\eps_0 V$}

\put(100,140){\circle*{3}}
\put(35,145){\small $H_{\rlmb}-s_0 W+\eps_0 V$}

\put(104,140){\vector(1,0){132}}

\end{picture}

\noindent
Since~$V$ is regular, for all small enough non-zero $\eps$ the operators $H_{\rlmb}+\eps V$ 
are non-resonant. 
We choose a small enough $\eps_0>0$ so that
all operators $H_{\rlmb} + s W + \eps V,$ $(s,\eps) \in [-s_0,s_0] \times [-\eps_0,\eps_0],$
are in the neighbourhood~$O.$
We also choose $\eps_0$ small enough so that the perturbed eigenvalues 
$\lambda+s_0+\ldots$ and~$\lambda-s_0+\ldots$ of the perturbed operators
$H_{\rlmb} \pm s_0 W+\eps_0 V$ are on the same side of~$\lambda$ 
as the original non-perturbed eigenvalues~$\lambda+s_0$ and~$\lambda-s_0$ respectively. 
The other $m-1$ eigenvalues
of the operators 
$$
  H_{\rlmb} \pm s_0 W + \eps V \big|_{\eps=0}
$$
will move away from~$\lambda$ as $\eps$ becomes non-zero, since by assumption 
there are no resonance points in the neighbourhood $O$ except the line $H_{\rlmb}+\mbR W.$
Thus, when we deform 
$H_{\rlmb} - s_0 W+\eps_0 V$ to $H_{\rlmb} + s_0 W+\eps_0 V$ by changing $-s_0$ to $s_0,$ the number of eigenvalues 
on each side of~$\lambda$ changes. Hence, for some~$s$ between $-s_0$ and $s_0$ the
operator $H_{\rlmb} + s W+\eps_0 V$ must have~$\lambda$ as an eigenvalue. This operator is therefore resonant
and belongs to the neighbourhood. This is a contradiction with our assumption. 

The assertion that there can be only one simple analytic resonance curve in a neighbourhood of $H_{\rlmb}$ 
in the plane $\alpha$ follows from the fact that the operator~$W$ is non-negative and hence 
an eigenvalue of $H_{\rlmb} + s W+\eps_0 V$ can only move in the positive direction as~$s$ increases.
\end{proof}

\begin{defn} The curve $\gamma$ which exists and is uniquely determined by Theorem~\ref{T: intersection is a curve}
we denote by\label{Page: gamma chi}
$
  \gamma_\chi,
$
or by $\gamma_{\chi}(\lambda,H_{\rlmb},V),$ if necessary. 
\end{defn}

Since the resonant curve $\gamma_{\chi}$ is simple, there exists only one (up to scaling)
analytic resonant eigenpath~$\chi(s)$ corresponding to $\gamma_{\chi}.$
Therefore, to an eigenvector~$\chi$ we can assign another eigenvector~$\chi(\rlmb).$

\begin{thm} \label{T: chi(0) is exactly that}
Assume Assumption~\ref{A: Main Assumption} and let $H_{\rlmb},$~$V$ and~$\chi$ be 
as in Theorem~\ref{T: intersection is a curve}. If~$\chi(s)$ is a resonant eigenpath 
corresponding to the curve of operators $\gamma_{\chi}$ then the vectors~$\chi(\rlmb)$ 
and~$\chi$ are co-linear. 
\end{thm}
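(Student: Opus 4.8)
The plan is to use the rank one structure of $W$ to describe the eigenpath $\chi(s)$ explicitly and then let $s\to\rlmb.$ By Theorem~\ref{T: intersection is a curve}, $\gamma_\chi\subset\alpha = H_{\rlmb}+\mbR V+\mbR W,$ so a parametrisation of $\gamma_\chi$ has the form $\gamma(s)=H_{\rlmb}+a(s)V+b(s)W$ with $a,b$ analytic and $a(\rlmb)=b(\rlmb)=0.$ Since $\gamma_\chi$ is disjoint from the line $H_{\rlmb}+\mbR W$ and, $V$ being regular, meets the line $H_{\rlmb}+\mbR V$ only at $H_{\rlmb},$ neither $a$ nor $b$ is identically zero; hence $a(s)\neq 0$ and $b(s)\neq 0$ for all $s$ in some punctured neighbourhood of $\rlmb.$ The eigenvalue equation $\gamma(s)\chi(s)=\lambda\chi(s)$ rewrites as
$$
  (H_{\rlmb}+a(s)V-\lambda)\,\chi(s) = -\,b(s)\scal{\chi}{\chi(s)}\,\chi.
$$

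First I would fix $s\neq\rlmb$ close to $\rlmb.$ By regularity of $V$ the point $\rlmb$ is an isolated resonance point of the line $H_{\rlmb}+sV,$ so $\lambda\notin\sigma(H_{\rlmb}+a(s)V)$ and $R_\lambda(H_{\rlmb}+a(s)V)$ is bounded; the displayed identity then shows that $\chi(s)$ is a scalar multiple of $R_\lambda(H_{\rlmb}+a(s)V)\chi$ (and $\scal{\chi}{\chi(s)}\neq 0,$ for otherwise $\chi(s)$ would be an eigenvector of $H_{\rlmb}+a(s)V$ at $\lambda$). Next I would read off the singular behaviour of $R_\lambda(H_{\rlmb}+a(s)V)\chi$ as $s\to\rlmb.$ Applying the $2\times2$ representation (\ref{F: (Hr-z)^(-1) as 2x2 matrix}) with $z=\lambda$ (so that the coupling parameter equals $\rlmb+a(s)$) to the vector $\chi\in\clV_\lambda$ gives
$$
  R_\lambda(H_{\rlmb}+a(s)V)\,\chi = \twovector{-\,a(s)\,R_\lambda(\hat H_{\rlmb+a(s)})\,v\,\euD_\lambda(\rlmb+a(s))\,\chi}{\euD_\lambda(\rlmb+a(s))\,\chi},
$$
and, by the Laurent expansion (\ref{F: Laurent for euD}) of $\euD_\lambda,$ the leading singular term of the right hand side lies in the $\clV_\lambda$ component and equals $a(s)^{-q}D_{q-1}\chi,$ where $q$ is the order of the pole of $\euD_\lambda(\cdot)\chi$ at $\rlmb$ and $D_{q-1}$ the corresponding Laurent coefficient, the $\hat\hilb$ component being of strictly smaller order in $a(s).$ Since $\chi(s)$ is analytic and $\chi(\rlmb)\neq 0,$ the normalising scalar cancels this pole exactly, so that $\chi(\rlmb)=C\,D_{q-1}\chi$ for some $C\neq 0.$

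The remaining step — showing that $D_{q-1}\chi$ is colinear with $\chi$ — is the one I expect to be the real obstacle. Here I would exploit the relations of Lemma~\ref{L: long unused lemma} between the coefficients $D_j,$ the crossing operator $\aaa=\hat P^\perp V\hat P^\perp$ and the sliced resolvent $R_\lambda(\hat H_{\rlmb}),$ together with Theorem~\ref{T: TFAE depth 1 criterion} (which characterises the eigenvectors annihilated by $\aaa$), to analyse the action of the leading Laurent coefficient on the particular vector $\chi$ that defines $W.$ Comparing leading terms in the defining identity $\euD_\lambda(s)\,\euD_\lambda^{-1}(s)=1$ (cf. the proof of Lemma~\ref{L: long unused lemma}), evaluated against $\chi,$ should force $D_{q-1}\chi\in\mbC\chi.$ Granting this, the conclusion follows; the existence, uniqueness and simplicity of $\gamma_\chi$ furnished by Theorem~\ref{T: intersection is a curve} ensure that $\chi(\rlmb)$ is unambiguously defined up to a scalar, so colinearity of $\chi(\rlmb)$ and $\chi$ is exactly the assertion.
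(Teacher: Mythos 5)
Your reduction to the statement "$\chi(\rlmb)\propto D_{q-1}\chi$" is essentially correct, but the step you flag as "the real obstacle" is in fact a genuine gap that your sketch does not fill. The operators $D_j$ depend only on the triple $(\lambda; H_{\rlmb},V)$ and not on the particular eigenvector $\chi$ defining $W$; when $m>1$ they are general self-adjoint $m\times m$ matrices on $\clV_\lambda,$ and there is no a~priori reason why the leading Laurent coefficient $D_{q-1}$ (with $q=q(\chi)$ the $\chi$-dependent pole order of $\euD_\lambda(\cdot)\chi$) should map $\chi$ into $\mbC\chi.$ The relations of Lemma~\ref{L: long unused lemma} control products of the form $D_j\aaa$ and $D_jv^*$; they do not directly constrain $D_{q-1}\chi$ for an arbitrary eigenvector $\chi,$ and Theorem~\ref{T: TFAE depth 1 criterion} is about $\ker\aaa,$ not about the range of the $D_j.$ "Comparing leading terms in $\euD_\lambda\euD_\lambda^{-1}=1$ against $\chi$" just re-derives those same $D_j\aaa$ relations, so you would still be stuck. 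The fact you need is true (it is a consequence of the theorem you are trying to prove), but establishing it directly appears to require a non-obvious structural statement about the Laurent coefficients that you have not produced.

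The paper's proof avoids the Laurent computation entirely. Observing that the assertion is trivial when $\lambda$ is a simple eigenvalue (since then $\clV_\lambda=\mbC\chi$), it reduces the degenerate case to the simple one by perturbing the base point: replacing $H_{\rlmb}$ by $G_t=H_{\rlmb}+tW$ splits the eigenvalue $\lambda$ into a simple eigenvalue $\lambda+t$ with eigenvector $\chi$ and a residual $\lambda$ of multiplicity $m-1.$ Because the plane $\alpha$ is unchanged and $\lambda+t$ is simple for $t\neq 0,$ the resonant eigenpath $\chi_t(s)$ for the curve $\gamma_\chi(\lambda+t,G_t,V)$ necessarily starts at a multiple of $\chi,$ and letting $t\to 0$ (a continuous deformation of curves and eigenpaths) forces $\chi_0(\rlmb)$ to remain colinear with $\chi.$ This continuity argument buys exactly what your explicit calculation cannot: it circumvents the need for any structural fact about the Laurent coefficients $D_j.$ If you want to salvage your approach, a plausible route is to use the same deformation $G_t$ inside your setup — apply your $2\times 2$ analysis to $G_t$ and let $t\to 0$ — but at that point you are reproducing the paper's argument with extra machinery attached.
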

\begin{proof} If for~$H_{\rlmb}$ the eigenvalue~$\lambda$ is simple, then the assertion is trivial.
We shall reduce the general case to the case of simple eigenvalue~$\lambda,$
by slightly perturbing~$H_{\rlmb}.$ 

Let $G_t = H_{\rlmb} + t W,$ where~$W$ as in Theorem~\ref{T: intersection is a curve}.
The operator $G_t$ has an eigenvalue~$\lambda$
of multiplicity $m-1$ and, for all small enough $t,$ it has 
an eigenvalue~$\lambda + t$ of multiplicity $1$ 
corresponding to the eigenvector~$\chi.$ Since~$\lambda+t$ is a simple eigenvalue of $G_t,$
the curve 
$$
  \gamma_{\chi}(\lambda+t,G_t,V)
$$ 
has a resonant eigenpath~$\chi_t(s)$ which starts at~$\chi.$
When~$t$ is deformed to zero, the curve $\gamma_{\chi}(\lambda+t,G_t,V)$ is analytically 
deformed to~$\gamma_{\chi}(\lambda,H_{\rlmb},V).$
Thus, the resonant eigenpath~$\chi_t(s)$ gets deformed to~$\chi_0(s)$ in such a way that 
the base of this resonant eigenpath stays co-linear to~$\chi$ for all~$t.$ Hence, in the final position
$\chi_0(s)$ of this deformation the base of~$\chi_0(s)$ will still be co-linear to~$\chi.$
Finally, it is left to note that any two eigenpaths corresponding to $\gamma_{\chi}$ are co-linear,
since $\gamma_\chi$ is a simple curve. 
Hence,~$\chi(s)$ starts at~$\chi$ too. 
\end{proof}

\subsection{High order directions are tangent}

Now we are going to prove the reverse to Theorem~\ref{T: tangent V to order k}:
if~$V$ is a direction of order~$k$ at~$H_{\rlmb}$ then~$V$ is tangent to~$\Rset$ at~$H_{\rlmb}$ to order~$k.$

\begin{thm} \label{T: high order then tangent}
Assume Assumption~\ref{A: Main Assumption} and 
let~$k$ be an integer greater than~$1.$
If~$\chi_0$ is an eigenvector of depth at least $k-1$ for the triple $(\lambda; H_{\rlmb},V),$
then 
\begin{enumerate}
  \item the direction~$V$ is tangent to the resonance curve $\gamma_{\chi_0}$ to order at least $k,$
  \item in the Taylor expansion 
\begin{equation} \label{F: chi(s)=chi0+s*chi1+...}
  \chi(s) = \sum_{j=0}^\infty (s-\rlmb)^j \chi_j 
\end{equation}
of a resonant eigenpath $\chi(s)$ corresponding to $\gamma_{\chi_0},$ 
the vectors 
$
  \chi_0, \chi_1, \ldots, \chi_{k-1}
$
have orders respectively $1,2,\ldots,k,$
  \item for any resonant eigenpath (\ref{F: chi(s)=chi0+s*chi1+...}) corresponding to 
$\gamma_{\chi_0}$ and for all $j=1,2,\ldots,k-1$ the vector
$\bfA_\lambda \chi_j$ is a linear combination of vectors
$\chi_0, \chi_1, \ldots, \chi_{j-1}.$ Moreover, if the parametrisation 
of the curve $\gamma_{\chi_0}$ is standard then 
$
  \bfA_\lambda \chi_j = \chi_{j-1}.
$
  \item the vectors 
$
  \chi_0, \chi_1, \ldots, \chi_{k-2},
$
have depth at least one and are~$V$-orthogonal to $\clV_\lambda.$
\end{enumerate} 
\end{thm}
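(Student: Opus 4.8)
The plan is to fix the canonical parametrisation of $\gamma_{\chi_0}$ and then run an induction on $k$, whose core is a flatness estimate for the $W$-coefficient of the curve.

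\emph{Setup and normalisation.} By Theorem~\ref{T: intersection is a curve} the intersection of $\alpha=H_{\rlmb}+\mbR V+\mbR W$ with a neighbourhood of $H_{\rlmb}$ in $\Rset$, off the line $H_{\rlmb}+\mbR W$, is a unique simple analytic curve $\gamma_{\chi_0}$, and by Theorem~\ref{T: chi(0) is exactly that} the resonant eigenpath $\chi(s)$ of $\gamma_{\chi_0}$ can be normalised so that $\chi(\rlmb)=\chi_0$. Write $H(s)=H_{\rlmb}+a(s)V+b(s)W$ with $a(\rlmb)=b(\rlmb)=0$ and set $\chi_j=\tfrac1{j!}\chi^{(j)}(\rlmb)$. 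Differentiating $H(s)\chi(s)=\lambda\chi(s)$ once at $s=\rlmb$ gives $a'(\rlmb)V\chi_0+b'(\rlmb)\scal{\chi_0}{\chi_0}\chi_0+(H_{\rlmb}-\lambda)\chi'(\rlmb)=0$; pairing with $\chi_0\in\clV_\lambda$ and using that $\chi_0$ has depth at least $1$, hence $V\chi_0\perp\clV_\lambda$ by Theorem~\ref{T: TFAE depth 1 criterion}, kills the first term and forces $b'(\rlmb)=0$. Since $\gamma_{\chi_0}$ is a regular curve, $a'(\rlmb)\neq0$, so after reparametrising I may assume $a(s)=s-\rlmb$, i.e. $H(s)=H_{\rlmb}+(s-\rlmb)V+b(s)W$ with $b(s)=O((s-\rlmb)^2)$.

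\emph{Induction.} I would prove by induction on $k\geq2$ that $b(s)=O((s-\rlmb)^k)$, together with items (2)--(4). The case $k=2$ is exactly the normalisation above (tangency to order $\geq2$) combined with Lemma~\ref{L: tangent V} and Theorem~\ref{T: TFAE depth 1 criterion}. For the step, the induction hypothesis (applied to $\chi_0$ as a vector of depth $\geq k-2$) gives $b(s)=O((s-\rlmb)^{k-1})$, that $\chi_0,\dots,\chi_{k-2}$ have orders $1,\dots,k-1$, that $\bfA_\lambda(\rlmb)\chi_j=\chi_{j-1}$ for $1\leq j\leq k-2$, and that $\chi_0,\dots,\chi_{k-3}$ have depth $\geq1$. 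Differentiating $H(s)\chi(s)=\lambda\chi(s)$ exactly $k-1$ times at $s=\rlmb$ and using $b^{(j)}(\rlmb)=0$ for $j\leq k-2$ yields
\begin{equation*}
  (H_{\rlmb}-\lambda)\chi^{(k-1)}(\rlmb)=-(k-1)V\chi^{(k-2)}(\rlmb)-b^{(k-1)}(\rlmb)\scal{\chi_0}{\chi_0}\chi_0 .
\end{equation*}
Pairing with vectors of $\clV_\lambda$ orthogonal to $\chi_0$ shows $V\chi_{k-2}$ is already orthogonal to them, so everything collapses to the single scalar identity $(k-1)!\,\scal{V\chi_{k-2}}{\chi_0}+b^{(k-1)}(\rlmb)\scal{\chi_0}{\chi_0}^2=0$, which couples $b^{(k-1)}(\rlmb)$ with $\scal{V\chi_{k-2}}{\chi_0}$.

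\emph{The main obstacle} is to decouple this identity, i.e. to prove independently that $\chi_{k-2}$ has depth $\geq1$ (equivalently $b^{(k-1)}(\rlmb)=0$): the bare fact that $\chi_{k-2}$ sits atop a Jordan chain of length $k-1$ over $\chi_0$ is not enough, and one must really use that $\chi_0$ has depth $\geq k-1$ and that $W=\scal{\chi_0}{\cdot}\chi_0$ is a specific non-negative rank-one direction. One route is algebraic: writing $\chi_0=\bfA_\lambda^{k-1}(\rlmb)\psi$ and using $V\bfA_\lambda^{k-1}(\rlmb)=\bfB_\lambda^{k-1}(\rlmb)V$ together with Theorem~\ref{T: -S A(j)=A(j-1)}, one reduces $\scal{S_\lambda^{j}\chi_0}{V\chi_0}$ to $\scal{V\bfA_\lambda^{k-1}(\rlmb)S_\lambda^{j}\chi_0}{\psi}$ and checks $\bfA_\lambda^{k-1}(\rlmb)S_\lambda^{j}\chi_0=0$ for $j\leq k-2$; plugging this into the formal power-series recursion $(H_{\rlmb}-\lambda)\chi_n=-V\chi_{n-1}-\sum_{j\geq2}b_j\scal{\chi_0}{\chi_{n-j}}\chi_0$ for $\gamma_{\chi_0}$ then forces $b_2=\dots=b_{k-1}=0$ and $\chi_n=(-S_\lambda)^n\chi_0$. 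A cleaner geometric view in the non-degenerate case: if $\chi_0$ is the base of an analytic eigenpath $\phi(s)$ of $H_{\rlmb}+(s-\rlmb)V$, then depth $\geq k-1$ means $\lambda_\nu(s)=\lambda+O((s-\rlmb)^k)$ by Theorem~\ref{T: TFAE for (Uk)}, and since the $W$-derivative at the origin of the eigenvalue of $H_{\rlmb}+(s-\rlmb)V+tW$ equals $\scal{\chi_0}{\chi_0}>0$, the implicit function theorem produces $b(s)=O((s-\rlmb)^k)$ at once; the general, $\clV_\lambda$-degenerate, case is reduced to this by the perturbation $G_t=H_{\rlmb}+tW$ as in the proof of Theorem~\ref{T: chi(0) is exactly that}, where $\lambda+t$ is a simple eigenvalue.

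\emph{Conclusion of the step.} Once $b(s)=O((s-\rlmb)^k)$, item (1) holds and this parametrisation is standard; Theorem~\ref{T: tangent V to order k}, applied with all $c_j=0$, then gives item (2) and the equalities $\bfA_\lambda(\rlmb)\chi_j=\chi_{j-1}$ for $j\leq k-1$, which is the standard-parametrisation case of (3), the general case following by expanding a reparametrisation through the Leibniz rule. Finally, item (4) is immediate: $\chi_j=\bfA_\lambda(\rlmb)\chi_{j+1}$ for $j=0,\dots,k-2$ places each of $\chi_0,\dots,\chi_{k-2}$ in $\im\bfA_\lambda(\rlmb)$, so they have depth $\geq1$ and hence are $V$-orthogonal to $\clV_\lambda$ by Theorem~\ref{T: TFAE depth 1 criterion} (for a non-standard parametrisation each such $\chi_j$ is a linear combination of the standard ones, so the property persists).
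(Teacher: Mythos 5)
Your overall architecture -- normalise the parametrisation of $\gamma_{\chi_0}$, induct on $k$, and decouple the scalar identity coupling $b^{(k-1)}(\rlmb)$ with $\scal{V\chi_{k-2}}{\chi_0}$ -- is exactly the paper's. You have also correctly located the sole non-trivial step. However, the way you propose to close that step has a genuine gap.

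You route the decoupling through the iterates $S_\lambda^j\chi_0$: you propose to show $\scal{S_\lambda^j\chi_0}{V\chi_0}=0$ via the claim $\bfA_\lambda^{k-1}(\rlmb)S_\lambda^j\chi_0=0$ for $j\le k-2$, and then to conclude $b_j=0$ and $\chi_n=(-S_\lambda)^n\chi_0$ by feeding this into the recursion. There are two problems. First, $\chi_n=(-S_\lambda)^n\chi_0$ is not true in general: the relation $(H_{\rlmb}-\lambda)\chi_n=-V\chi_{n-1}$ only pins down $\chi_n$ modulo $\clV_\lambda$, and the Taylor coefficients of the curve's eigenpath carry arbitrary $\clV_\lambda$-components that $S_\lambda$-iteration cannot reproduce. (What is true, and what the paper proves, is the weaker statement $\bfA_\lambda\chi_n=\chi_{n-1}$, which is insensitive to such $\clV_\lambda$-components.) Second, the claim $\bfA_\lambda^{k-1}S_\lambda^j\chi_0=0$ for $j\le k-2$ is not established and is not obviously true: $S_\lambda\chi_0=-\hat P\bfA_\lambda^{k-2}\psi$ by Theorem~\ref{T: -S A(j)=A(j-1)}, but beyond $j=1$ the projections $\hat P$ interleave with $S_\lambda$ and $\bfA_\lambda$, and whether $S_\lambda$ decreases depth (which is what would make your claim routine) is exactly the content of property~$A$, which the paper leaves as a conjecture.

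The paper sidesteps both difficulties by computing $\scal{\chi_0}{V\chi_j}$ directly for the Taylor coefficients $\chi_j$ themselves, not their $S_\lambda$-surrogates. Writing $\chi_0=\bfA_\lambda^n g$ (depth $\geq n$) and moving $\bfA_\lambda^n$ across via $\bfA_\lambda^*=\bfB_\lambda$ and $\bfB_\lambda V=V\bfA_\lambda$ gives
\[
  \scal{\chi_0}{V\chi_j}=\scal{\bfA_\lambda^n g}{V\chi_j}=\scal{g}{V\bfA_\lambda^n\chi_j}=0,
  \quad j=0,\ldots,n-1,
\]
where the last equality uses only that $\chi_j$ has order $j+1\le n$ (a fact already supplied by the induction hypothesis), so that $\bfA_\lambda^n\chi_j=0$. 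No control of $S_\lambda$-iterates is needed, and no exact formula for $\chi_j$ is invoked. Once $\beta_n=0$, the vector $V\chi_{n-1}$ is forced to be $\clV_\lambda$-orthogonal (it equals an $\clV_\lambda$-orthogonal vector minus $V$-images of the earlier coefficients, which are $\clV_\lambda$-orthogonal by depth $\geq1$ from the induction hypothesis), after which Theorem~\ref{T: TFAE depth 1 criterion} yields $\bfA_\lambda\chi_n=\sum_j\alpha_j\chi_{n-j}$ and the depth statement for $\chi_{n-1}$. I would replace your $S_\lambda$-iterate calculation with this direct computation; the rest of your proposal then closes.

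Your alternative geometric route also has an unaddressed gap: an arbitrary $\chi_0$ of depth $\geq k-1$ need not be the base $\phi_\nu(\rlmb)$ of an eigenpath of $H_{\rlmb}+(s-\rlmb)V$ (only a linear combination of such bases), so the reduction via Theorem~\ref{T: TFAE for (Uk)} does not apply literally, and the perturbation $G_t=H_{\rlmb}+tW$ does not obviously preserve the depth of $\chi_0$ for the perturbed triple. The algebraic route above is cleaner and self-contained.
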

\begin{proof}
Let $H(s)$ be a parametrisation of the resonance curve $\gamma_{\chi_0}.$ 
Since the curve $\gamma_{\chi_0}$ is the intersection of the resonance set by the plane $H_{\rlmb}+\mbR V + \mbR W,$
where $W = \scal{\chi_0}{\cdot}\chi_0,$ 
the Taylor expansion of the path $H(s)$ has the form
\begin{equation} \label{F: H(s)=H0+s*(alpha1*V+beta1*W)+...}
  H(s) = H_{\rlmb} + \sum_{j=1}^\infty (s-\rlmb)^j(\alpha_j V + \beta_j W).
\end{equation}
By Theorem~\ref{T: chi(0) is exactly that}, 
a resonant eigenpath~$\chi(s)$ corresponding to this path has Taylor series
$$
  \chi(s) = \chi_0 + (s-\rlmb)\chi_1 + (s-\rlmb)^2\chi_2 + \ldots
$$
which starts at the vector~$\chi_0,$ that is,~$\chi(\rlmb) = \chi_0.$ 
Comparing the coefficients of~$s-\rlmb$ on both sides of the eigenvalue equation
(\ref{F: H(s)chi(s)=lambda chi(s)}) gives 
$$
  (H_{\rlmb}-\lambda)\chi_1 = -(\alpha_1 V + \beta_1 W)\chi_0.
$$
The vector $(H_{\rlmb}-\lambda)\chi_1$ is orthogonal to the eigenspace~$\clV_\lambda$ and in particular it is
orthogonal to~$\chi_0.$ Hence,
$$
  \scal{\chi_0}{(\alpha_1 V + \beta_1 W)\chi_0} = 0.
$$
Since~$\chi_0$ has depth at least one, it follows from Theorem~\ref{T: TFAE depth 1 criterion}
that $\scal{\chi_0}{V\chi_0} = 0.$
Combining this equality with previous one implies that 
$
  \beta_1 = 0,
$
and therefore, $H(s)$ is tangent to~$V$ at~$H_{\rlmb}$ (to order at least 2).
So, we have 
$
  (H_{\rlmb}-\lambda)\chi_1 = -\alpha_1 V \chi_0.
$
This equality implies that 
$$
  \chi_1 = -\alpha_1 S_\lambda \chi_0 + \text{order 1 vector}.
$$
Since~$\chi_0$ has depth at least 1, the vector $V\chi_0$ is orthogonal to $\clV_\lambda.$ 
Hence, by Theorem~\ref{T: TFAE depth 1 criterion} the previous equality implies 
$
  \bfA_\lambda \chi_1 = \alpha_1 \chi_0.
$
In particular,~$\chi_1$ is a vector of order~$2.$ Further, 
if the parametrisation of $\gamma_{\chi_0}(s)$ is standard, then $\alpha_1=1.$ 

We have proved the theorem in case of $k=2.$ We proceed by induction on~$k.$
So, assume that the claim holds for values of $k$ not greater than~$n$ 
and let~$\chi_0$ be an eigenvector of depth~$\geq n.$
Differentiating~$n$ times the eigenvalue equation 
$H(s) \chi(s) = \lambda \chi(s)$ 
we obtain
$$
  \sum_{j=0}^n {n \choose j} H^{(j)}(s) \chi^{(n-j)}(s) = \lambda \chi^{(n)}(s).
$$
Letting $s = \rlmb$ and replacing~$\chi^{(j)}(\rlmb)/j!$ by~$\chi_j$ gives 
$$
  H_{\rlmb} \chi_n + \sum_{j=1}^n (\alpha_j V + \beta_j W) \chi_{n-j} = \lambda \chi_{n}.
$$
By the induction assumption, we have 
\begin{equation} \label{F: beta1=...=beta(n-1)=0}
  \beta_1 = \ldots = \beta_{n-1} = 0.
\end{equation}
Hence,
\begin{equation} \label{F: (H0-l)chi(n)+...=0}
  (H_{\rlmb} - \lambda)\chi_n + \sum_{j=1}^{n-1} \alpha_j V \chi_{n-j} + (\alpha_n V + \beta_n W) \chi_0 = 0.
\end{equation}
Since~$\chi_0$ has depth at least $n,$ for some vector~$g$ we have~$\chi_0 = \bfA_\lambda ^n g.$
Since, by the induction assumption,~$\chi_j,$ $j=0,1,\ldots,n-1,$ is a vector of order $j+1,$ 
it follows that for all $j = 0,1,\ldots,n-1$
$$
  \scal{\chi_0}{V\chi_{j}} = \scal{\bfA_\lambda ^n g}{V\chi_{j}} = \scal{g}{V\bfA_\lambda ^n \chi_{j}} = 0.
$$
Hence, it follows from (\ref{F: (H0-l)chi(n)+...=0})
by taking the scalar product of the left hand side and~$\chi_0$ that 
\begin{equation} \label{F: beta(n)=0}
  \beta_n = 0
\end{equation}
and so
$$
  (H_{\rlmb} - \lambda)\chi_n + \sum_{j=1}^{n} \alpha_j V \chi_{n-j} = 0.
$$
It follows from (\ref{F: H(s)=H0+s*(alpha1*V+beta1*W)+...}), (\ref{F: beta1=...=beta(n-1)=0}) and 
(\ref{F: beta(n)=0}) that~$V$ is tangent to $\gamma_{\chi_0}$ to order at least~$n+1.$
Further, the vector $(H_{\rlmb}-\lambda)\chi_n$ is orthogonal to $\clV_\lambda$
and the vectors 
$
  V\chi_0, \ \ldots, V\chi_{n-2} 
$
are also orthogonal to $\clV_\lambda$ by the induction assumption 
(since the vectors~$\chi_0,\ldots,\chi_{n-2}$ have depth at least one). 
Hence, according to the last equality, so is the vector $V \chi_{n-1}.$ 
According to the last equality, we also have
$$
  \chi_n + \sum_{j=1}^{n} \alpha_j S_\lambda \chi_{n-j} = \text{order 1 vector}.
$$
Hence, by Theorem~\ref{T: TFAE depth 1 criterion}, it follows that 
$$
  \bfA_\lambda \chi_n = \sum_{j=1}^{n} \alpha_j \chi_{n-j}.
$$
Since, by the induction assumption, the vectors~$\chi_0, \ldots, \chi_{n-2}$ 
have depth at least one, the last equality implies that the vector~$\chi_{n-1}$
also has depth at least one. 

Further, if the parametrisation of $\gamma_{\chi_0}(s)$ is standard, then $\alpha_2 = \ldots = \alpha_{n} = 0$
and $\alpha_1 = 1$ and therefore
$
  \bfA_\lambda \chi_n = \chi_{n-1}.
$
\end{proof}

Theorem~\ref{T: high order then tangent} combined with Theorem~\ref{T: tangent V to order k}
proves the following theorem.
\begin{thm} \label{T: V is k-tangent iff order >=k}
Assume Assumption~\ref{A: Main Assumption}. A regular direction at any resonance point~$H_{\rlmb}$
is tangent to order at least~$k$ if and only if the order of the direction is at least~$k.$
\end{thm}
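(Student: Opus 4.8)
The plan is to deduce this theorem directly from the two preceding results — Theorem~\ref{T: tangent V to order k} (``tangent to order $\geq k$ $\Rightarrow$ order of $V$ is $\geq k$'') and Theorem~\ref{T: high order then tangent} (``depth $\geq k-1$ $\Rightarrow$ tangent to order $\geq k$'') — together with the book-keeping observation that ``the order of $V$ at $H_{\rlmb}$ is at least~$k$'' is the same as ``there is an eigenvector of $H_{\rlmb}$ of depth at least~$k-1$''. First I would dispose of the trivial case $k=1$: every regular direction is tangent to~$\Rset$ at~$H_{\rlmb}$ to order at least~$1$ (witnessed, say, by the constant resonant path $H(s)\equiv H_{\rlmb}$), and every direction has order at least~$1$, so the equivalence is automatic. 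From here on I would assume $k\geq 2$.

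For the implication ``tangent to order $\geq k$ $\Rightarrow$ order of $V$ is $\geq k$'' I would simply invoke item~(ii) of Theorem~\ref{T: tangent V to order k}: fixing a resonant path $H(s)$ as in~(\ref{F: H(s)=H0+sV+c2 s2 V+...}) that witnesses the tangency, together with a corresponding analytic resonant eigenpath $\chi(s)$, that theorem states verbatim that the direction~$V$ has order at least~$k$. No further computation is required.

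For the reverse implication I would first translate the hypothesis into a statement about $\bfA_\lambda(r_\lambda)$. By the definition of the order of a direction recalled in Section~1, ``$V$ has order $d$ at~$H_{\rlmb}$'' means $\Upsilon_\lambda(r_\lambda)=\Upsilon^d_\lambda(r_\lambda)$, i.e.\ $d = d_1$ is the height of the Young diagram, equivalently the length of the longest Jordan chain of the nilpotent operator $\bfA_\lambda(r_\lambda)$ on $\Upsilon_\lambda(r_\lambda)$. Assuming $d\geq k$, I would pick a Jordan basis $\set{\chi^{(j)}_\nu}$ of $\Upsilon_\lambda(r_\lambda)$ and take the first chain $\chi_1^{(0)},\chi_1^{(1)},\dots,\chi_1^{(d-1)}$, so that $\bfA_\lambda(r_\lambda)\chi_1^{(j)}=\chi_1^{(j-1)}$ and hence $\chi_1^{(0)}=\bfA_\lambda^{d-1}(r_\lambda)\chi_1^{(d-1)}$. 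Then $\chi_1^{(0)}\in\ker\bfA_\lambda(r_\lambda)\cap\Upsilon_\lambda(r_\lambda)=\Upsilon^1_\lambda(r_\lambda)=\clV_\lambda$ is an eigenvector of $H_{\rlmb}$ of depth at least $d-1\geq k-1$. Feeding this eigenvector into item~(1) of Theorem~\ref{T: high order then tangent} with $\chi_0:=\chi_1^{(0)}$ yields that $V$ is tangent to the resonance curve $\gamma_{\chi_1^{(0)}}\subset\Rset$ to order at least~$k$, and a fortiori $V$ is tangent to~$\Rset$ at~$H_{\rlmb}$ to order at least~$k$.

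I do not expect any genuine obstacle here: all the analytic content has already been carried out in Theorems~\ref{T: tangent V to order k} and~\ref{T: high order then tangent}. The only point requiring a moment of care — the closest thing to a ``hard part'' — is making the passage from ``order of $V$ at least~$k$'' to ``an eigenvector of depth at least~$k-1$'' explicit, i.e.\ noting that the order of~$V$ coincides with the length of the longest Jordan chain of $\bfA_\lambda(r_\lambda)$; but this is immediate from the definitions and the Jordan-basis picture described in the Preliminaries.
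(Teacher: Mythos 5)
Your proposal is correct and takes the same route as the paper, which simply says that Theorem~\ref{T: V is k-tangent iff order >=k} follows by combining Theorem~\ref{T: tangent V to order k} and Theorem~\ref{T: high order then tangent}. You have merely made explicit the book-keeping step the paper leaves implicit — namely that ``order of $V$ is $\geq k$'' is equivalent to ``some eigenvector has depth $\geq k-1$'', via the Jordan-chain picture of $\bfA_\lambda(r_\lambda)$ — and that step is handled correctly.
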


Theorem~\ref{T: V is k-tangent iff order >=k} has the following corollary.
\begin{thm} \label{T: V simple iff transversal}
Assume Assumption~\ref{A: Main Assumption}.
A regular direction at a resonance point is simple if and only if it is transversal.
\end{thm}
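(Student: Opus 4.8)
The plan is to deduce this immediately from Theorem~\ref{T: V is k-tangent iff order >=k} by specializing to $k=2$ and taking contrapositives. Recall that a regular direction $V$ at a resonance point $H_{\rlmb}$ is \emph{simple} precisely when $V$ has order $1$, and that the order of any direction at a resonance point is always at least $1$ (every resonance vector has order at least $1$, and the crossing considerations in Corollary~\ref{C: criterion of regularity for V} guarantee that a regular direction genuinely resonates). Likewise, $V$ is \emph{transversal} at $H_{\rlmb}$ exactly when it is tangent only to order $1$, i.e.\ when it is \emph{not} tangent to the resonance set to order at least $2$.

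So first I would invoke Theorem~\ref{T: V is k-tangent iff order >=k} with $k=2$: a regular direction $V$ at $H_{\rlmb}$ is tangent to $\euR(\lambda)$ to order at least $2$ if and only if the order of $V$ is at least $2$. Negating both sides of this equivalence, $V$ is transversal (not tangent to order $\ge 2$) if and only if the order of $V$ is not at least $2$; since the order is a positive integer, this says the order of $V$ equals $1$, which is exactly the definition of $V$ being simple. This closes the argument.

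There is no real obstacle here — the statement is a direct corollary of Theorem~\ref{T: V is k-tangent iff order >=k}, which in turn packages Theorems~\ref{T: tangent V to order k} and~\ref{T: high order then tangent}. The only point worth spelling out carefully is the elementary remark that the order of a direction at a resonance point is always $\ge 1$, so that ``order $\ngeq 2$'' is equivalent to ``order $=1$''; everything else is a formal restatement of definitions. Thus the proof is essentially a single sentence, and I would present it as such.

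\begin{proof}
By Theorem~\ref{T: V is k-tangent iff order >=k} applied with $k=2$, a regular direction $V$ at $H_{\rlmb}$ is tangent to the resonance set to order at least~$2$ if and only if the order of $V$ is at least~$2$. Since the order of a direction at a resonance point is always a positive integer, the order of $V$ fails to be at least~$2$ exactly when it equals~$1$, that is, exactly when $V$ is simple. On the other hand, by definition $V$ is transversal exactly when it is not tangent to order at least~$2$. Combining these observations, $V$ is simple if and only if it is transversal.
\end{proof}
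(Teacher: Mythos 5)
Your argument is exactly what the paper intends: the paper states Theorem~\ref{T: V simple iff transversal} as an immediate corollary of Theorem~\ref{T: V is k-tangent iff order >=k} (specialised to $k=2$), without writing a separate proof. Your unpacking of the definitions (simple means order~$1$, transversal means not tangent to order $\geq 2$, order is always $\geq 1$) is correct and matches the paper's implicit reasoning.
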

The last two theorems give geometric interpretation of order of a regular direction
in the case where~$\lambda$ is outside the essential spectrum.

\section{Resonance points as functions of the spectral parameter}
\label{S: res points as f-ns of s}

In this section we study resonance points~$r_z$ as functions of the spectral parameter~$z.$
To stress on this, we will often write $r(z)$ instead of~$r_z.$ 

\subsection{Order of a resonance function~$r_z=r(z)$}
In this subsection we consider the following question. Every resonance point~$r_z$
corresponding to a complex number~$z$ from the complement of the essential spectrum
has an order, which is a positive number. A natural question is how the order of~$r_z$ depends on~$z.$
In the following theorem we show that the order of~$r_z$ is equal to 1 for all values of~$z$ except
a discrete set, provided that~$r_z$ admits analytic continuation to a gap in the essential spectrum
in the real axis and that the analytic continuation has a real value at least at one point.
We conjecture that this property holds in general without this assumption, but since we are interested
in analytic continuation of~$r_\lambda,$ this hypothesis automatically holds in our case.

\begin{thm} \label{T: r(z) has order 1} 
Let~$r_\lambda$ be a real resonance point of the triple $(\lambda; H_{0},V).$
Analytic continuation of~$r_\lambda$ as a function of the complex variable~$\lambda$ 
has order 1 except a discrete set.
\end{thm}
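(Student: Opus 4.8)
The plan is to study how the algebraic and geometric multiplicities of the resonance point $r_z$ vary with $z$, and to show that the discrepancy between them — equivalently, the failure of order to equal $1$ — can occur only on a discrete set. Recall from the Preliminaries that $r_z$ is a resonance point of $(z;H_0,V)$ if and only if $\sigma_z(s)=(s-r_z)^{-1}$ is an eigenvalue of the compact operator $A_z(s)=R_z(H_s)V$, and that the order $d$ of $r_z$ is the largest integer with $\Upsilon_z(r_z)=\Upsilon_z^d(r_z)$; it equals $1$ exactly when the resonance subspace $\Upsilon_z(r_z)$ coincides with the eigenspace $\clV_z(r_z)$, i.e.\ when the nilpotent $\bfA_z(r_z)$ vanishes. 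So the statement amounts to: $\bfA_z(r_z)=0$ for all $z$ outside a discrete set.

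First I would fix the real resonance point $r_\lambda$ of multiplicity data $(N,m)$ and use analyticity to organise the nearby picture. Since $r_\lambda$ admits analytic continuation as a function of $z$ (indeed, by the second resolvent identity (\ref{F: second resolvent identity (3)}) the function $A_z(s)$ is meromorphic jointly in $(z,s)$, and its poles $r_z$ depend analytically on $z$ away from branch points), the group of resonance points emanating from $r_\lambda$ is governed by a finite collection of Puiseux branches $r_z^{j}$. The total idempotent $P_z:=\frac1{2\pi i}\oint A_z(s)\,ds$ over a small loop enclosing this whole group is single-valued and holomorphic in $z$ near $\lambda$ (as in the later Proposition on $P^{[\nu]}_z$), and its rank is the constant $N$. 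The key invariant to track is $\bfA_z:=\frac1{2\pi i}\oint (s-r_z)A_z(s)\,ds$ — or, to avoid multivaluedness of $r_z$ itself, the operators $\bfA_z^{(k)}$ obtained by integrating $(s-r_\lambda(z))^k A_z(s)$, which are holomorphic in $z$. I would then argue that the condition ``$r_z$ has order $1$'' is equivalent to the vanishing of a holomorphic (or at worst meromorphic, after clearing Puiseux denominators) operator-valued function of $z$, hence either holds identically or fails only on a discrete set.

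To rule out the identically-failing alternative I would invoke the hypothesis that the analytic continuation takes a real value at $\lambda\notin\sigma_{ess}$: at such a real point $z=\lambda$, the operator $A_\lambda(s)$ has the $2\times2$ representation (\ref{F: 2x2 repr-n of A lamb(r)}), and Theorem~\ref{T: TFAE depth 1 criterion} characterises depth and order through $V$-orthogonality to $\clV_\lambda$. More to the point, I would use that for real $z$ the shift to $z=\lambda+iy$ splits the real eigenvalue $\sigma_\lambda(s)$ into $N_++N_-$ genuinely non-real eigenvalues of $A_z(s)$; if the order at $z=\lambda+iy$ were persistently $\geq 2$ for all small $y$, these split eigenvalues would have to retain a non-trivial Jordan structure, which one can contradict by a perturbation/genericity argument — for instance by comparing with a nearby direction, or by noting that a cyclic nilpotent block forces the splitting pattern to be a single $d$-cycle (the mechanism made precise later in Theorem~\ref{T: depth of phi[nu] is d[nu]-1}), whereas $N>m$ cannot persist under the analytic splitting along a generic ray. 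Concretely: the set of $z$ where order $\geq 2$ is analytic, it cannot be everything because along the real gap the eigenvalue functions $\lambda_\nu(s)$ of $H_s$ are analytic and for generic $s$ have non-vanishing derivative (so the corresponding $r_z$-branch is locally univalent, giving order $1$), hence the analytic set is proper and therefore discrete.

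The main obstacle I anticipate is the bookkeeping around Puiseux branches and multivaluedness: $r_z$ need not be single-valued near a bad point, so the naive function ``order of $r_z$'' is only defined on branches, and one must phrase the order-$\geq 2$ locus as the zero set of genuinely single-valued holomorphic data (the blocks of $\bfA_z$ assembled over a full cycle, or the rank defect $N-m$ read off from $P_z$ and $\sum (s-r_\lambda(z))A_z(s)$ integrated appropriately) before the discreteness argument applies. A secondary subtlety is ensuring the comparison point where order equals $1$ actually lies in the gap and is generic; this is exactly what the stated hypothesis (real value of the continuation) supplies, together with the elementary perturbation-theoretic fact that $\lambda_\nu'(s)\neq0$ off a discrete set of $s$.
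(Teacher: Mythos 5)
Your proposal takes a genuinely different route from the paper, and it contains a real conceptual error in the analytic-set dichotomy.

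The locus where $r_z$ has order $1$ corresponds (along a branch, away from ramification points) to $\bfA_z(r_z)=0$, so it is the zero set of a locally holomorphic operator function and hence is the \emph{closed analytic} locus; the locus where order is $\geq 2$ is the complement $\bfA_z(r_z)\neq 0$, which is \emph{open}, not analytic. Your sentence ``the set of $z$ where order $\geq 2$ is analytic $\ldots$ hence the analytic set is proper and therefore discrete'' has the roles reversed, and your earlier dichotomy (``order $1$ $\iff$ $f(z)=0$, hence either holds identically or fails only on a discrete set'') is also not the one that holomorphy gives you: the true dichotomy is that order $1$ either holds identically or holds \emph{only} on a discrete set. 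Consequently, exhibiting a single $z_0$ with order $1$ (which is essentially what a genericity argument at a fixed point gives) is \emph{not} enough to conclude the theorem; you must show $\bfA_z(r_z)=0$ on a subset with an accumulation point (e.g.\ a non-trivial subinterval of the real gap), from which $\bfA_z(r_z)\equiv 0$ follows by the identity theorem. Your proposal never cleanly separates these two things, and also never actually establishes that $\bfA_z(r_z)$ is single-valued and holomorphic along a branch (you acknowledge the multivaluedness as a subtlety but do not resolve it).

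Secondly, your appeal to Theorem~\ref{T: depth of phi[nu] is d[nu]-1} to convert ``$\lambda_\nu'(s)\neq 0$ generically'' into ``order $1$'' is at least logically delicate: that theorem is packaged together with the Laurent expansion~(\ref{F: Laurent of Az(s) at r(nu)(j)}), which already assumes the simple-pole conclusion of the theorem you are trying to prove. An argument in that style would need to be re-derived from the elementary spectral expansion of $R_z(H_s)$ around $s=r_z$ (if every eigenvalue branch through $(r_z,z)$ has non-vanishing $s$-derivative then $R_z(H_s)$ has a simple pole at $s=r_z$, hence $A_z(\cdot)$ does too), to avoid any risk of circularity. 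You gesture at such an argument but do not carry it through.

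By contrast, the paper's proof does not pass through the real-axis genericity picture at all. It assumes order $\geq 2$ on a non-empty open set $G$, uses \cite[Corollary 3.4.7]{Az9} to produce holomorphic Jordan-chain vectors $\chi_1(z),\chi_2(z)$, differentiates the eigenvector equation in $z$, pairs with an order-one antiholomorphic anti-resonance vector $\phi(\bar z)$, and combines the resulting identities $\scal{\phi(\bar z)}{(r'(z)V-1)\chi_1(z)}=0$ and $\scal{\phi(\bar z)}{V\chi_1(z)}=0$ to get $\scal{\phi(\bar z)}{\chi_1(z)}=0$. Since this pairing is holomorphic in $z$ and vanishes on the real interval $I$, it vanishes identically, forcing $\chi_1\equiv 0$, a contradiction. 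That argument is shorter, self-contained, and bypasses all the bookkeeping about cycles and Puiseux branches that your proposal requires but leaves incomplete. If you want to pursue your route, the essential repairs are: (i) prove holomorphy of $\bfA_z(r_z)$ along a branch; (ii) replace the ``proper analytic set is discrete'' step with ``$\bfA_z\equiv 0$ because it vanishes on a set with an accumulation point''; and (iii) give a self-contained genericity argument on the real axis that does not cite later theorems in the paper.
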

\begin{proof}
Assume the contrary: there exists a non-empty open subset $G$ of the domain of holomorphy of~$r_z$ 
such that for all $z \in G$ the resonance point~$r_z$ has order at least two.
Then by \cite[Corollary 3.4.7]{Az9} there exist holomorphic vector-functions 
$\chi_1(z)$ and~$\chi_2(z)$ such that 
\begin{equation} \label{F: (Hrz-z)chi1=0}
  (H_{0}+r(z)V - z)\chi_1(z) = 0
\end{equation}
and 
\begin{equation} \label{F: (Hrz-z)chi2=-V chi1}
  (H_{0}+r(z)V - z)\chi_2(z) = - V\chi_1(z).
\end{equation}
Differentiation of the equality (\ref{F: (Hrz-z)chi1=0})
with respect to~$z$ gives
\begin{equation} \label{F: (r'zV-1)chiz=...}
  (r'(z)V-1)\chi_1(z) + (H_{0}+r(z)V-z) \chi'_1(z) = 0.
\end{equation}
Let $\phi(\bar z)$ be an anti-resonance 
vector-function of order 1, that is,
\begin{equation} \label{F: bar version of (Hrz-z)chi1=0}
  (H_{0} + \bar r(z)V - \bar z)\phi(\bar z) = 0.
\end{equation}
Since 
\begin{equation*} 
  \begin{split}
  \scal{\phi(\bar z)}{(H_{0}+r(z)V - z) \chi'_1(z)} 
         & = \scal{(H_{0} + \bar r(z)V - \bar z)\phi(\bar z)}{ \chi'_1(z)} 
      \\ & = 0,
  \end{split}
\end{equation*}
taking the scalar product of $\phi(\bar z)$ with both sides of 
the equality (\ref{F: (r'zV-1)chiz=...}) gives 
$$
  \scal{\phi(\bar z)}{(r'(z)V-1)\chi_1(z)} = 0.
$$
Further, the equality (\ref{F: (Hrz-z)chi2=-V chi1}) implies that 
\begin{equation} \label{F: (chi1,V chi1)=0}
  \begin{split}
     - \scal{\phi(\bar z)}{V\chi_1(z)} & = \scal{\phi(\bar z)}{(H_{0}+r(z)V - z)\chi_2(z)} 
     \\ & = \scal{(H_{0} + \bar r(z)V - \bar z)\phi(\bar z)}{\chi_2(z)} 
     \\ & = 0.
  \end{split}
\end{equation}
Combining this with the previous equality implies
$$
  \scal{\phi(\bar z)}{\chi_1(z)} = 0.
$$
Since $r(z)$ takes real values in some interval $I$ of the real axis,
we can take~$\chi_1(z)$ to be holomorphic extension of a first order vector-function in $I,$
and we can take $\phi(\bar z)$ to be 
anti-holomorphic extension of the same function. Since the scalar product is anti-linear 
in the first argument, the scalar product of this pair of holomorphic and anti-holomorphic vector-functions
will be holomorphic and it would vanish on the interval $I$ of the real axis. This implies that both these 
functions are zero in the gap $I$ and therefore everywhere. Since~$\chi_1(z)$ is an eigenvector, this gives
a contradiction. 
\end{proof}

\subsection{Cycles of resonance points}

Let~$r_\lambda$ be a resonance point of geometric multiplicity~$m$
and algebraic multiplicity~$N.$ 
When~$\lambda$ is shifted to $z=\lambda+iy$ with small $y>0,$ the resonance point~$r_\lambda$
splits into~$N=\dim \Upsilon_\lambda(\rlmb)$ resonance points~$r_z^{(j)},$ counting algebraic multiplicities. 
The resonance points~$r_z^{(j)}$ are holomorphic functions of~$z.$
When~$z$ makes one round around~$\lambda,$ these~$N$ holomorphic functions undergo a permutation.
We shall show in this subsection that this permutation consists of~$m$ disjoint cycles of lengths $d_1, \ldots, d_m,$
where~$m$ is the number of Jordan cells of the compact operator~$A_\lambda(s)$ corresponding to the eigenvalue
$(s-r_\lambda)^{-1},$ and~$d_\nu$ is the size of~$\nu$-th cell. 

For convenience, in this subsection we shall often indicate dependence 
of a resonance point~$r_z$ on the spectral parameter~$z$ in the usual way as 
$r(z)$ instead of using subindex notation. If there is no danger of confusion, we may choose to drop 
the variable~$z$ from the notation altogether.

\smallskip In section~\ref{S: On phi[nu]} we studied eigenvalues~$\lambda_\nu(r)$ 
of the operator $H_r = H_0 + r V$ 
as functions of the coupling constant~$r.$ The coupling constant~$r$ was treated as a real variable. 
In this section we consider the coupling constant~$r$ as a function of the spectral parameter~$\lambda,$
but unlike section~\ref{S: On phi[nu]}, we shall treat both variables~$r$ and~$\lambda$ as complex variables. 
Since the spectral
variable treated as a complex variable is denoted by~$z,$ the functions under study are~$r_\nu(z),$
which are inverses of~$\lambda_\nu(r).$ According to Theorem~\ref{T: TFAE for (Uk)}, 
eigenvalue functions~$\lambda_\nu(r)$ of order $\tilde d_\nu>1$ satisfy~$\lambda'_\nu(r_\lambda) = 0.$
Hence, in general the corresponding inverse function~$r_\nu(z)$ is a branching multi-valued 
holomorphic function in a neighbourhood of $z = \lambda.$ 

\begin{thm} \label{T: about res cycles}
(a) For each cycle $r_\nu^{(\cdot)}(z)$ of resonance points there exists $\eps>0$\label{Page: r(nu)(.)(z)}
and a resonance point $r_\nu^{(0)}(z)$ of this cycle which takes real values for all~$z$ from an interval~$I,$ where~$I$ is either $[\lambda,\lambda+\eps)$ 
or $(\lambda-\eps,\lambda].$ \\

(b) The number of real resonance points in a cycle for $z \in I$ is either one or two. \\

(c) In the case there are two real resonance points $r'$ and $r''$ in a cycle for $z \in I,$ 
the numbers $r'-r_\lambda$ and $r''-r_\lambda$ have different signs. \\

(d) The numbers of non-real resonance points in each cycle for $z \in I$ are the same in~$\mbC_+$ and~$\mbC_-.$ 

(e) In the case there are two real resonance points $r'$ and $r''$ in a cycle for $z \in I,$ 
they shift to different half-planes~$\mbC_+$ and~$\mbC_-$ as $z \in I$ is shifted to $z+iy$ with small $y>0.$ 
\end{thm}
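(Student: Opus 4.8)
The plan is to work entirely with the Puiseux expansion of a single branch of the cycle $r_\nu^{(\cdot)}(z)$ around $z=\lambda$ together with the reality of all the Puiseux coefficients. Fix a cycle of length $d=d_\nu$; by the general theory of branch points of holomorphic functions the $d$ branches are obtained from one convergent Puiseux series in $w=(z-\lambda)^{1/d}$, say $r(z)=r_\lambda+\sum_{k\ge 1} r_{k/d}\,w^k$, the $d$ branches corresponding to replacing $w$ by $\eps_d^{\,j}w$, $j=0,\dots,d-1$, where $\eps_d=e^{2\pi i/d}$. The first thing I would establish — which is also the content of Proposition~\ref{P: first Puiseux coef is non-zero}, so I may simply cite it — is that the coefficients $r_{k/d}$ are \emph{real}: this comes from the fact that $r_z$ is obtained as an inverse of the real-analytic eigenvalue function $\lambda_\nu(r)$ in a one-sided real neighbourhood of $r_\lambda$ (Theorem~\ref{T: TFAE for (Uk)} gives $\lambda_\nu(r)=\lambda+\eps_\nu(r-r_\lambda)^{d}+O((r-r_\lambda)^{d+1})$ with real $\eps_\nu\neq 0$), so on a half-interval of the real axis $z\mapsto r(z)$ is a genuine real-analytic function whose Taylor-in-$w$ coefficients must be real. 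Let $k_0$ be the smallest index with $r_{k_0/d}\neq 0$ (so $k_0=1$ by Proposition~\ref{P: first Puiseux coef is non-zero}, but I will keep it general for the counting argument).

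\textbf{Part (a).} For $z=\lambda+t$ with $t>0$ small, $w=t^{1/d}$ can be chosen real positive, and then the branch $j=0$ gives $r^{(0)}_\nu(z)=r_\lambda+\sum_k r_{k/d}t^{k/d}\in\mathbb R$ since all $r_{k/d}\in\mathbb R$; similarly for $z=\lambda-t$ with $t>0$ one takes $w=(-t)^{1/d}$ and, depending on the parity of $d$, a suitable branch is real-valued on $(\lambda-\eps,\lambda]$. So at least one of the two half-intervals works; set $I$ accordingly. \textbf{Parts (b)–(e).} For $z=\lambda+t\in I$ (WLOG the right half-interval), the $d$ branches are $r_\lambda+\sum_k r_{k/d}\eps_d^{\,kj}t^{k/d}$, $j=0,\dots,d-1$. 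A branch is real iff its imaginary part, whose leading term is $r_{k_0/d}t^{k_0/d}\,\Im(\eps_d^{\,k_0 j})$, vanishes, and since the higher terms are lower order this forces $\Im(\eps_d^{\,k_0 j})=0$, i.e. $\eps_d^{\,k_0 j}=\pm1$. The number of $j\in\{0,\dots,d-1\}$ with $\eps_d^{\,k_0 j}=1$ is $\gcd(k_0,d)$, and the number with $\eps_d^{\,k_0 j}=-1$ is $\gcd(k_0,d)$ if $d/\gcd(k_0,d)$ is even and $0$ otherwise. With $k_0=1$ this gives exactly one branch ($j=0$) with $\eps_d^{\,j}=1$, and one more ($j=d/2$) with $\eps_d^{\,j}=-1$ precisely when $d$ is even — hence one real resonance point if $d$ is odd, two if $d$ is even, which is (b). For the two-real case ($d$ even, $j=0$ and $j=d/2$), the leading real deviations from $r_\lambda$ are $r_{1/d}t^{1/d}$ and $r_{1/d}(-1)\,t^{1/d}=-r_{1/d}t^{1/d}$, of opposite sign since $r_{1/d}\neq 0$; that is (c). For (d), the non-real branches come in conjugate pairs: complex conjugation acts on the index set by $j\mapsto d-j$ (as $\overline{\eps_d^{\,j}}=\eps_d^{\,d-j}$ and the coefficients are real when $t>0$), so the non-real branches in $\mathbb C_+$ and those in $\mathbb C_-$ are exchanged bijectively, giving equal counts. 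For (e): perturb $z=\lambda+t$ to $z+iy$, $0<y\ll t$; then $w=(t+iy)^{1/d}=t^{1/d}(1+\tfrac{iy}{dt}+\cdots)$, so the branch $j$ picks up a correction whose leading imaginary part is $r_{1/d}\eps_d^{\,j}\cdot t^{1/d}\cdot\tfrac{y}{dt}$ plus, for the previously real branches where $\eps_d^{\,j}=\pm1$, this is $\pm r_{1/d}\,t^{1/d-1}y/d$; thus the two real points move into $\mathbb C_+$ and $\mathbb C_-$ according to the opposite signs $\pm1$, proving (e).

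\textbf{Main obstacle.} The only genuinely delicate point is the reality of the Puiseux coefficients and the identification of the ``good'' one-sided interval $I$ in part (a); everything after that is elementary root-of-unity bookkeeping. I would handle the reality either by directly invoking Proposition~\ref{P: first Puiseux coef is non-zero} (which already asserts $r_{k/d_\nu}\in\mathbb R$ and $r_{1/d_\nu}\neq0$) or, if a self-contained argument is wanted, by the reflection argument: since $\overline{r_{\bar z}}=r_z$ wherever both sides make sense (the resolvent identity and the definition of resonance points are compatible with complex conjugation when $H_0,V$ are self-adjoint and $\lambda,r_\lambda$ real), the Puiseux series is invariant under $w\mapsto\bar w$ composed with conjugation of coefficients, which on a branch that is real on a real half-interval forces all coefficients real. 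Care is needed only in choosing, for $d$ even versus $d$ odd and for the left versus right interval, the branch of $w=(z-\lambda)^{1/d}$ that realises the real values — but this is exactly the case analysis already carried out above, so no new difficulty arises.
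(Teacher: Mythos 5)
Your proposal is correct but takes a more computational route than the paper's. The paper works on the side of $\lambda_\nu(s)$: part (a) is a one-line appeal to stability of isolated eigenvalues, parts (b)--(c) are read off from the expansion $\lambda_\nu(s) = \lambda + \eps_\nu(s-r_\lambda)^{\tilde d_\nu} + O(\cdot)$, and parts (d)--(e) come from the conjugation symmetry of the set of resonance points (Lemma~\ref{L: if r then so is bar r}) together with Schwarz reflection. You instead work directly with the Puiseux expansion of $r_\nu^{(\cdot)}(z)$ and do explicit root-of-unity bookkeeping, which collapses (b)--(e) into a single elementary combinatorial computation and in particular gives a more transparent argument for (e) than the paper's rather terse one. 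The caveat: Proposition~\ref{P: first Puiseux coef is non-zero}, which you cite for the reality and non-vanishing of $r_{k/d_\nu}$, comes \emph{after} this theorem in the paper's logical order, and its proof uses the correspondence (\ref{F: phi[mu] one-to-one r[nu]}), which Proposition~\ref{P: number of cycles = m} in turn derives from the very theorem you are trying to prove; so a naive citation is circular. Your self-contained fallback --- invert $\lambda_\nu(s)$ via Theorem~\ref{T: TFAE for (Uk)} to get a real-analytic one-sided inverse whose Puiseux coefficients are forced to be real --- is the right fix, and is in substance the argument the paper itself uses for the reality of $r_{k/d_\nu}$ and for part (a). Just make explicit the perturbation-theoretic step that identifies a cycle with the branches of a single $\lambda_\nu^{-1}$ (locally every resonance point solves $\lambda_\nu(s)=z$ for some $\nu$, and the $\tilde d_\nu$ solutions for a fixed $\nu$ are cyclically permuted under monodromy around $z=\lambda$); once that is spelled out, you are effectively proving (a)--(c) and the cycle--eigenpath correspondence simultaneously, which is how the paper's own argument is implicitly structured as well.
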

\begin{proof} Part (a) follows from the fact that an isolated eigenvalue~$\lambda$ of $H_0+r_\lambda V$
is stable, that is, an eigenvalue~$\lambda_\nu(s)$ of $H_s$ depends on~$s$ continuously. 

Parts (b) and (c) follow from the formula (see (\ref{F: lambda(s)=lambda +eps(s-l)...}))
$$
  \lambda_\nu(s) = \lambda + \eps_\nu (s-\rlmb)^{\tilde d_\nu} + O((s-\rlmb)^{\tilde d_\nu+1}), \ s \to \rlmb,
$$
where $\tilde d_\nu$ is order of $\lambda_\nu(s).$ 
Namely, with $\eps>0$ sufficiently small, 
if the order $\tilde d_\nu$ is odd, then there is one and only one real resonance point for all $z \in I,$
where $I = [\lambda,\lambda+\eps)$ or $I = (\lambda-\eps,\lambda].$
In this case, the interval~$I$ can be chosen to be either of the intervals $[\lambda,\lambda+\eps)$
or $(\lambda-\eps,\lambda].$
If the order $\tilde d_\nu$ is even, then there are exactly two real resonance points for all $z \in I$ and they are
located on different sides of~$r_\lambda$ in the real axis of the coupling constant. 
In this case, if $\eps_\nu>0,$ then the interval~$I$ is $[\lambda,\lambda+\eps)$
and if $\eps_\nu<0,$ then the interval~$I$ is $[\lambda-\eps,\lambda).$

Proof of part (d). By Lemma~\ref{L: if r then so is bar r}, if~$r_z$ is a resonance point 
corresponding to~$z,$ then~$\bar r_z$ is a resonance point corresponding to~$\bar z.$
Hence, the set of resonance points corresponding to a real value of~$\lambda$
is symmetric with respect to the real axis. We still need to show that if a resonance point $r^{(j)}_\nu(z)$
belongs to a cycle~$\nu,$ then its conjugate also belongs to the same cycle, but this readily follows from 
the Schwarz reflection principle. 

Proof of part (e). By part (d), for real values of~$z$ from $I$
the set of resonance points in a cycle is symmetric with respect to the real axis. 
Combining this with the fact that for non-real values of~$z$ there can be no real resonance points, 
one can infer the claim. 
\end{proof}

The following figure demonstrates this theorem. In this figure 
there are two cycles of lengths~$d_1=5$ (black dots) and~$d_2=4$ (white dots).

\begin{picture}(125,90)
\put(10,75){\small $z=\lambda$}
\put(10,40){\vector(1,0){100}}
\put(55,40){\circle*{6}}    
\put(53,32){{\small~$r_\lambda$}}
\end{picture}
\hskip 0.4 cm 
\begin{picture}(125,90)
\put(10,75){\small $z=\lambda+\eps$}
\put(10,40){\vector(1,0){100}}
\put(55,37){\line(0,1){6}}    
\put(53,32){{\small~$r_\lambda$}}
\put(74,40){\circle*{4}}   
\put(63,58){\circle*{4}}   
\put(63,22){\circle*{4}}   
\put(42,49){\circle*{4}}   
\put(42,31){\circle*{4}}   

\put(82,40){\circle{4}}   
\put(40,40){\circle{4}}   
\put(53,57){\circle{4}}   
\put(53,23){\circle{4}}   
\end{picture}
\hskip 0.4 cm 
\begin{picture}(125,90)
\put(10,75){\small $z=\lambda+\eps+iy,$ \ $0 < y <\!\!\!<1$ }
\put(10,40){\vector(1,0){100}}
\put(55,37){\line(0,1){6}}    
\put(53,32){{\small~$r_\lambda$}}

\put(73,43){\circle*{4}}   

\put(60,56){\circle*{4}}   
\put(63,24){\circle*{4}}   

\put(41,47){\circle*{4}}   
\put(44,29){\circle*{4}}   

\put(80,43){\circle{4}}   
\put(42,37){\circle{4}}   
\put(51,54){\circle{4}}   
\put(55,24){\circle{4}}   
\end{picture}

\begin{prop} \label{P: number of cycles = m} 
The number of cycles of the permutation of the~$N$ resonance points~$r_z^{(j)}$
is equal to geometric multiplicity~$m$ of the resonance point~$r_\lambda.$
More precisely, there is a natural one-to-one correspondence between
cycles of resonance points $r_\nu^{(j)}(z)$ and the eigenvalue functions~$\lambda_\nu(s)$
of the operator~$H_s,$ given by the following diagram:
\begin{equation} \label{F: phi[mu] one-to-one r[nu]}
  \phi_\nu(s) \quad \leftrightarrow \quad \lambda_\nu(s) \quad \leftrightarrow \quad r_\nu^{(\cdot)}(z).
\end{equation}
That is, with an eigenpath $\phi_\nu(s)$ we associate an eigenvalue function~$\lambda_\nu(s)$
and the inverse of this eigenvalue function is the multi-valued holomorphic function~$r_\nu^{(\cdot)}(z).$
\end{prop}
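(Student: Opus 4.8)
The plan is to exhibit the correspondence in~(\ref{F: phi[mu] one-to-one r[nu]}) directly, using the material of Section~\ref{S: On phi[nu]} together with Theorem~\ref{T: about res cycles}, and then to check that the lengths of the cycles match the sizes~$d_\nu$ of the Jordan cells, so that the~$m$ cycle-lengths sum to~$N$ and the count of cycles is exactly~$m$. The starting point is the bijection between the eigenpaths $\phi_\nu(s)$ and the eigenvalue functions $\lambda_\nu(s)$ of $H_s = H_{\rlmb} + (s-\rlmb)V$, which is built into our conventions (the $\lambda_\nu$ are listed with multiplicity, and by~(\ref{F: lambda(s)=lambda +eps(s-l)...}) each $\lambda_\nu(s)$ has a well-defined order $\tilde d_\nu$, with $\lambda_\nu(s) = \lambda + \eps_\nu (s-\rlmb)^{\tilde d_\nu} + O((s-\rlmb)^{\tilde d_\nu+1})$). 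The substance of the proposition is the second arrow: associating to $\lambda_\nu(s)$ the multivalued holomorphic function $r_\nu^{(\cdot)}(z)$ obtained by inverting it.

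First I would fix $\nu$ and invert $\lambda_\nu$ near $s = \rlmb$. Because $\lambda_\nu(s) - \lambda$ vanishes to order exactly $\tilde d_\nu$ with nonzero leading coefficient $\eps_\nu$, the equation $\lambda_\nu(s) = z$ has, for $z$ in a deleted neighbourhood of $\lambda$, exactly $\tilde d_\nu$ solutions $s$ close to $\rlmb$, and these organise into a single Puiseux cycle of length $\tilde d_\nu$ that is permuted cyclically as $z$ winds once around $\lambda$; this is just the local structure of a holomorphic map with a zero of order $\tilde d_\nu$. Each such $s$ is, by condition~(v) of the equivalent conditions defining a resonance point (the number $z$ is an eigenvalue of $H_s$), a resonance point $r_z$ of the triple $(z; H_0, V)$. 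Conversely, by Theorem~\ref{T: r(z) has order 1}, for all $z$ outside a discrete set the resonance point $r_z$ has order~$1$, so its algebraic multiplicity as a resonance point is~$1$ and it arises from inverting exactly one eigenvalue function; thus every one of the $N$ resonance points $r_z^{(j)}$ (counted with algebraic multiplicity at $z = \lambda$) belongs to the cycle of exactly one $\nu$. This shows the cycles of the monodromy permutation are precisely the Puiseux cycles $r_\nu^{(\cdot)}(z)$, one for each $\nu = 1, \ldots, m$, which establishes both the one-to-one correspondence and the count ``number of cycles $= m$''.

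It remains to identify the length of the cycle $r_\nu^{(\cdot)}(z)$ with $d_\nu$, the size of the $\nu$-th Jordan cell of $A_\lambda(s)$ at the eigenvalue $(s-\rlmb)^{-1}$; equivalently, with the value given by Theorem~\ref{IT: order of phi(nu)} for the order of the eigenpath $\phi_\nu$. By the construction above the cycle length is $\tilde d_\nu$, the order of $\lambda_\nu(s)$. By Theorem~\ref{T: TFAE for (Uk)}, the order $\tilde d_\nu$ of $\lambda_\nu(s)$ equals the order of the eigenpath $\phi_\nu(s)$, which by Lemma~\ref{L: nice one} and condition~(vi) there equals one plus the depth of $\phi_\nu(\rlmb)$. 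To match this against the Jordan cell size $d_\nu$ I would invoke Lemma~\ref{L: nice one}: it gives that $\phi_\nu(\rlmb), \phi_\nu'(\rlmb), \ldots, \phi_\nu^{(\tilde d_\nu - 1)}(\rlmb)$ are resonance vectors of orders $1, 2, \ldots, \tilde d_\nu$ with $\bfA_\lambda(\rlmb)\phi_\nu^{(j)}(\rlmb) = j\,\phi_\nu^{(j-1)}(\rlmb)$, i.e.\ they span a cyclic subspace of $\Upsilon_\lambda(\rlmb)$ of dimension $\tilde d_\nu$ under $\bfA_\lambda(\rlmb)$; using the orthogonality relations (Theorem~\ref{T: phi(j,mu)(0) is self-dual}, or rather its strengthening Theorem~\ref{IT: (phi,V phi) =0}) these $m$ cyclic subspaces are linearly independent and their dimensions $\tilde d_1, \ldots, \tilde d_m$ add up to $N = \dim \Upsilon_\lambda(\rlmb)$, so they must be exactly the Jordan blocks and $\tilde d_\nu = d_\nu$.

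The main obstacle I anticipate is the bookkeeping of \emph{algebraic} multiplicities at $z = \lambda$: one must be careful that ``$r_\lambda$ splits into $N$ resonance points'' is the correct count — that the total algebraic multiplicity of $r_\lambda$ really is $N = \dim\Upsilon_\lambda(\rlmb)$ and that each branch $r_z^{(j)}$ contributes multiplicity one away from $z = \lambda$ — and that the sum of the cycle lengths recovers $N$ exactly rather than merely bounding it. This is where Theorem~\ref{T: r(z) has order 1} (geometric multiplicity $1$ of the split resonance points) and the linear independence coming from the $V$-orthogonality of distinct eigenpaths do the real work; once those are in hand, the Puiseux/monodromy argument is routine.
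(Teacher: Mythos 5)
Your route is genuinely different from the paper's. The paper's own proof is a one-line appeal to parts (a)--(c) of Theorem~\ref{T: about res cycles}: for each cycle there is a branch that is real-valued on a one-sided interval at $\lambda$, the number of real branches in a cycle is one or two, and when two they lie on opposite sides of $r_\lambda$; this sets up the bijection between cycles and eigenvalue functions directly, without ever needing to know that the individual split resonance points are simple. You instead build the cycles by inverting each $\lambda_\nu(s)$ near its zero of order $\tilde d_\nu$, then use Lemma~\ref{L: nice one} together with the $V$-orthogonality of Theorem~\ref{T: phi(j,mu)(0) is self-dual} to identify $\tilde d_\nu$ with the Jordan cell size $d_\nu$ and to close the dimension count $\sum_\nu \tilde d_\nu = N$. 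This is more explicit about the Puiseux and monodromy picture, and it bundles in early what the paper only collects later in Theorem~\ref{T: depth of phi[nu] is d[nu]-1}, but it leans on heavier machinery than the paper needs at this point.

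There is one concrete gap. For the $m$ inverted Puiseux cycles to account for all $N$ branches $r_z^{(j)}$ with the right multiplicities, you need that, for $z$ outside a discrete set, each $r_z^{(j)}$ has algebraic multiplicity one. You cite Theorem~\ref{T: r(z) has order 1} for this, but that theorem only proves that the \emph{order} of $r_z$ is one, i.e.\ that the nilpotent $\bfA_z(r_z)$ vanishes, so that algebraic multiplicity equals geometric multiplicity. It does not give geometric multiplicity one, which is what you would need to conclude algebraic multiplicity one. (The ``Description of results'' subsection does paraphrase the theorem as ``geometric multiplicity $1$'', but its statement and its proof control only the order.) Ruling out geometric multiplicity at least two on an open set of $z$ is a separate genericity claim that needs its own argument. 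Your last paragraph correctly flags the multiplicity bookkeeping as the crux, but as written the obstacle is not yet cleared; the paper's route via the real-branch counting of Theorem~\ref{T: about res cycles} avoids it.
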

\begin{proof} This immediately follows from parts (a), (b) and (c) of Theorem~\ref{T: about res cycles}. 
\end{proof}


\begin{cor} 
Order $\tilde d_\nu$ of eigenpath $\phi_\nu(s)$ is equal 
to the size~$\hat d_\nu$ of a cycle corresponding to the eigenpath~$\phi_\nu(s).$
\end{cor}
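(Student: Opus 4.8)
The plan is to read off the cycle length from the local structure of the inverse of the eigenvalue function, using the dictionary established in Proposition~\ref{P: number of cycles = m}. That proposition identifies, for each $\nu$, the multivalued holomorphic function $r_\nu^{(\cdot)}(z)$ with the local inverse of $\lambda_\nu(s)$ near $(s,z)=(r_\lambda,\lambda)$. Hence $\hat d_\nu$ --- the length of the monodromy cycle of $r_\nu^{(\cdot)}$ as $z$ encircles $\lambda$ once --- equals the number of branches of this inverse, and it remains only to count those branches from the Taylor expansion of $\lambda_\nu$.

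First I would invoke the equivalence (i)$\iff$(iii) of Theorem~\ref{T: TFAE for (Uk)}, or equivalently formula (\ref{F: lambda(s)=lambda +eps(s-l)...}): since $\phi_\nu(s)$ has order $\tilde d_\nu$, the function $\lambda_\nu(s)-\lambda$ vanishes to order exactly $\tilde d_\nu$ at $s=r_\lambda$, so that
\[
  \lambda_\nu(s)-\lambda = (s-r_\lambda)^{\tilde d_\nu}\,g(s),\qquad g\ \text{holomorphic near}\ r_\lambda,\quad g(r_\lambda)=\eps_\nu\neq 0 .
\]
Then I would pick a holomorphic $\tilde d_\nu$-th root $h$ of $g$ on a neighbourhood of $r_\lambda$ and set $w=(s-r_\lambda)h(s)$. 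Since $w(r_\lambda)=0$ and $w'(r_\lambda)=h(r_\lambda)\neq 0$, the map $s\mapsto w$ is a biholomorphism of a neighbourhood of $r_\lambda$ onto a neighbourhood of $0$, and in this coordinate $z-\lambda=w^{\tilde d_\nu}$. Inverting, the branches of $s=s(z)$ are obtained by composing the biholomorphism $w\mapsto s$ with the $\tilde d_\nu$ roots $w=\eps_{\tilde d_\nu}^{\,j}(z-\lambda)^{1/\tilde d_\nu}$, $j=0,1,\ldots,\tilde d_\nu-1$; these are pairwise distinct for $0<|z-\lambda|\ll 1$ and are cyclically permuted by a single $\tilde d_\nu$-cycle as $z$ loops once around $\lambda$. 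Thus $\hat d_\nu=\tilde d_\nu$.

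The only genuine point to nail down is that these $\tilde d_\nu$ branches form exactly one cycle of the permutation of the full collection of $N$ resonance points, with no further coincidences --- but this is automatic, since $w$ is an honest local coordinate, so distinct $\tilde d_\nu$-th roots of $z-\lambda$ yield distinct values of $s$, hence distinct resonance points (each of geometric multiplicity $1$ in a deleted neighbourhood of $\lambda$ by Theorem~\ref{T: r(z) has order 1}). So the substantive content is really just the elementary Puiseux computation above; I do not expect a serious obstacle, the statement being in effect a combination of Proposition~\ref{P: number of cycles = m} with the precise order of vanishing of $\lambda_\nu$ furnished by Theorem~\ref{T: TFAE for (Uk)}. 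Summing $\hat d_\nu=\tilde d_\nu$ over $\nu$ moreover reproduces the decomposition of the monodromy permutation into $m$ cycles of lengths $d_1,\ldots,d_m$.
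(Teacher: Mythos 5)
Your argument is correct and follows the paper's own proof: both invoke Theorem~\ref{T: TFAE for (Uk)}(iii) (equivalently formula (\ref{F: lambda(s)=lambda +eps(s-l)...})) to identify $\tilde d_\nu$ as the order of vanishing of $\lambda_\nu(s)-\lambda$, and then observe that the local inverse near a zero of that order is a $\tilde d_\nu$-branched multivalued function whose branches form a single cycle. The paper states this inversion step in one sentence; you have merely supplied the explicit local-coordinate/Puiseux justification, which is a sound and welcome elaboration rather than a different route.
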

\begin{proof} 
By items (i) and (iii) of Theorem~\ref{T: TFAE for (Uk)}, the order $\tilde d_\nu$
of $\phi_\nu(s)$ is a number determined by the equality~(\ref{F: lambda(s)=lambda +eps(s-l)...}).
This implies that the inverse of the function $\lambda_\nu(s)$ in a neighbourhood of $s = r_\lambda$
is a multivalued function $r^{(\cdot)}_\nu(z)$ with $\tilde d_\nu$ branches in a neighbourhood of $z = \lambda,$ 
and these branches form a single cycle. Hence, $\tilde d_\nu = \hat d_\nu.$ 
\end{proof}

Since the sum of cycle sizes $\hat d_\nu$ of resonance points is equal to $N,$ this gives the following corollary.
\begin{cor} \label{C: sum tilde d = N} The sum of orders of eigenpaths $\phi_\nu(s)$ is equal to the algebraic 
multiplicity~$N$ of the resonance point~$r_\lambda.$
\end{cor}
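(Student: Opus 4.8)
The plan is to deduce this corollary formally from the preceding one, which identifies $\tilde d_\nu$ with $\hat d_\nu$ for every $\nu$, together with the elementary fact that a permutation of a finite set decomposes into disjoint cycles whose lengths sum to the cardinality of that set. First I would recall the setup of this subsection: when the real number $\lambda$ is shifted to $z=\lambda+iy$ with small $y>0$, the resonance point $r_\lambda$, of algebraic multiplicity $N=\dim\Upsilon_\lambda(r_\lambda)$, splits into exactly $N$ resonance points $r_z^{(j)}$, counted with their algebraic multiplicities, and these depend holomorphically on $z$ in a punctured neighbourhood of $\lambda$.

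Next I would invoke Proposition~\ref{P: number of cycles = m} together with Theorem~\ref{T: about res cycles}: as $z$ traverses a small loop around $\lambda$, the $N$ functions $r_z^{(j)}$ are permuted among themselves, and this permutation is a product of $m$ pairwise disjoint cycles, the $\nu$-th of which has size $\hat d_\nu$ and corresponds under the diagram~(\ref{F: phi[mu] one-to-one r[nu]}) to the eigenpath $\phi_\nu(s)$. Since disjoint cycles partition the collection of objects being permuted, the cycle lengths add up to the number of those objects, that is, $\sum_{\nu=1}^m \hat d_\nu = N$. Finally, applying the Corollary immediately preceding, which gives $\tilde d_\nu=\hat d_\nu$ for each $\nu=1,\ldots,m$, I would substitute to obtain $\sum_{\nu=1}^m \tilde d_\nu = N$, which is exactly the assertion.

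The only point requiring a little care is the consistent bookkeeping of multiplicities, namely that the $N$ resonance functions $r_z^{(j)}$ are to be counted with algebraic multiplicity in a way compatible both with the definition of $N$ and with the statement that the monodromy permutation acts on precisely $N$ objects; but this consistency is already built into the opening discussion of this subsection and into Proposition~\ref{P: number of cycles = m}, so no new argument is needed. In effect the corollary is a purely formal consequence of results already established, and I do not anticipate any genuine obstacle.
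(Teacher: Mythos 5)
Your argument is correct and follows the paper's own reasoning: the paper deduces the corollary from the preceding one ($\tilde d_\nu = \hat d_\nu$) together with the observation that the cycle sizes $\hat d_\nu$ of the monodromy permutation on the $N$ resonance points of the group of $r_\lambda$ sum to $N$. You have simply spelled out the bookkeeping more explicitly than the paper's one-line remark, but the approach is identical.
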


\begin{cor} The~$m$ sets of vectors 
$$
  \frac 1{j!} \phi_\nu^{(j)}(\rlmb), \quad \nu=1,\ldots,m, \ j=0,1,\ldots,\tilde d_\nu-1
$$
form a Jordan basis for the nilpotent operator $\bfA_\lambda(r_\lambda).$ 
\end{cor}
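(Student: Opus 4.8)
The plan is to check the three defining properties of a Jordan basis for the nilpotent operator $\bfA_\lambda(\rlmb)$ on $\Upsilon_\lambda(\rlmb)$ for the $N:=\sum_\nu\tilde d_\nu$ vectors $\psi_\nu^{(j)}:=\frac1{j!}\phi_\nu^{(j)}(\rlmb)$, $\nu=1,\ldots,m$, $j=0,\ldots,\tilde d_\nu-1$. Two of them are already at hand. By Lemma~\ref{L: nice one} each $\psi_\nu^{(j)}$ is a resonance vector, hence lies in $\Upsilon_\lambda(\rlmb)$; and item~(v) of Theorem~\ref{T: TFAE for (Uk)} gives $\bfA_\lambda(\rlmb)\psi_\nu^{(j)}=\psi_\nu^{(j-1)}$ for $1\le j\le\tilde d_\nu-1$, while $\bfA_\lambda(\rlmb)\psi_\nu^{(0)}=0$ since $\psi_\nu^{(0)}=\phi_\nu(\rlmb)$ is an eigenvector and eigenvectors lie in $\ker\bfA_\lambda(\rlmb)$; with the convention $\psi_\nu^{(-1)}=0$ this is precisely the Jordan chain relation. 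By Corollary~\ref{C: sum tilde d = N} the number of these vectors equals $N=\dim\Upsilon_\lambda(\rlmb)$, so the whole statement reduces to their linear independence.

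To prove independence I would use the sesquilinear form $B(\chi,\eta):=\scal{V\chi}{\eta}$ on $\Upsilon_\lambda(\rlmb)$. First, $B$ is balanced with respect to $\bfA_\lambda(\rlmb)$: since $V\bfA_\lambda(\rlmb)=\bfB_\lambda(\rlmb)V$ and, $\lambda,\rlmb$ being real, $\bfA_\lambda(\rlmb)^*=\bfB_\lambda(\rlmb)$, one has $B(\bfA_\lambda(\rlmb)\chi,\eta)=B(\chi,\bfA_\lambda(\rlmb)\eta)$ for all $\chi,\eta\in\Upsilon_\lambda(\rlmb)$. Combining this with the chain relation above and shifting indices, $B(\psi_\mu^{(j)},\psi_\mu^{(k)})$ is seen to depend only on $j+k$, and for $j+k\le\tilde d_\mu-1$ it equals $B(\psi_\mu^{(j+k)},\psi_\mu^{(0)})=\frac1{(j+k)!}\scal{V\phi_\mu^{(j+k)}(\rlmb)}{\phi_\mu(\rlmb)}$. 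By the very definition of the order $\tilde d_\mu$ of the eigenpath $\phi_\mu$, this inner product vanishes when $j+k\le\tilde d_\mu-2$ (then $V\phi_\mu^{(j+k)}(\rlmb)\perp\clV_\lambda\ni\phi_\mu(\rlmb)$), and by formula~(\ref{F: (phi,V phi(k-1))=lambda(k)(0)(phi,phi)}) together with item~(iii) of Theorem~\ref{T: TFAE for (Uk)} it equals the nonzero real number $\rho_\mu=\frac1{\tilde d_\mu!}\lambda_\mu^{(\tilde d_\mu)}(\rlmb)\|\phi_\mu(\rlmb)\|^2$ when $j+k=\tilde d_\mu-1$. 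Hence the $\tilde d_\mu\times\tilde d_\mu$ Gram matrix $M_\mu=\big(B(\psi_\mu^{(j)},\psi_\mu^{(k)})\big)_{j,k}$ is a Hankel matrix whose entries vanish strictly above the anti-diagonal and equal $\rho_\mu\ne0$ on it; reversing the order of the columns turns it into a lower-triangular matrix with constant nonzero diagonal $\rho_\mu$, so $M_\mu$ is invertible. Second, Theorem~\ref{T: phi(j,mu)(0) is self-dual} says $B(\psi_\mu^{(j)},\psi_\nu^{(k)})=0$ whenever $\mu\ne\nu$.

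Given these two facts the conclusion is immediate: if $\sum_{\nu,j}c_\nu^{(j)}\psi_\nu^{(j)}=0$, then for a fixed $\mu$ pairing with $\psi_\mu^{(k)}$ for each $k=0,\ldots,\tilde d_\mu-1$ annihilates all cross terms and leaves $\sum_j c_\mu^{(j)}(M_\mu)_{jk}=0$; invertibility of $M_\mu$ forces $c_\mu^{(j)}=0$ for every $j$, and as $\mu$ was arbitrary all coefficients vanish. Thus the $N$ vectors form a basis of $\Upsilon_\lambda(\rlmb)$, and together with the chain relations they constitute a Jordan basis. The only genuinely computational step is the index-shifting evaluation of $B(\psi_\mu^{(j)},\psi_\mu^{(k)})$ — in particular the bookkeeping of which shifts are legitimate, namely only while the first index stays $\le\tilde d_\mu-2$ — and the verification that $\rho_\mu\ne0$; I expect this to be the main, though still routine, obstacle.
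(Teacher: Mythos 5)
Your proof is correct. You handle the two straightforward parts (chain relations and the count $\sum_\nu\tilde d_\nu=N$) exactly as the paper does, but you take a genuinely different, and considerably heavier, route to linear independence. The paper's own proof is laconic: it records the chain relations from item~(v) of Theorem~\ref{T: TFAE for (Uk)} and the count from Corollary~\ref{C: sum tilde d = N}, and then declares "we are done." What makes this legitimate, though it is left implicit, is that the $m$ terminal vectors $\phi_\nu(\rlmb)$ are pairwise orthogonal by construction (\ref{F: (phi(i),phi(j)) = 0}), and for Jordan chains under a nilpotent operator, independence of the terminal vectors already forces independence of the entire family: if $\sum_{\nu,j}c_\nu^j\psi_\nu^{(j)}=0$ with not all coefficients zero, applying $\bfA_\lambda^K$ where $K$ is the largest active upper index leaves $\sum_\nu c_\nu^K\phi_\nu(\rlmb)=0$, a contradiction. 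So the independence needed is already present in the most elementary data and does not require the form $B$.

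What you do instead is introduce the sesquilinear form $B(\chi,\eta)=\scal{V\chi}{\eta}$, show it is balanced for $\bfA_\lambda(\rlmb)$, invoke Theorem~\ref{T: phi(j,mu)(0) is self-dual} to kill the cross blocks, and establish invertibility of each diagonal block $M_\mu$ via the skew-triangular Hankel structure and the non-vanishing of $\lambda_\mu^{(\tilde d_\mu)}(\rlmb)$ from (\ref{F: (phi,V phi(k-1))=lambda(k)(0)(phi,phi)}) and Theorem~\ref{T: TFAE for (Uk)}(iii). All of these steps are correct, including the delicate bookkeeping of which index shifts are permissible and the reversal-of-columns trick showing $M_\mu$ is invertible. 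The trade-off is that you are doing, ahead of schedule, essentially the same analysis the paper later carries out to prove Theorem~\ref{T: P = sum sum} and Theorem~\ref{T: ind res=sign VP}: you are already computing the matrix $b_{\mu\nu}^{kj}$ and identifying its Hankel block structure. That buys nothing extra for the present corollary, but it does confirm that Theorem~\ref{T: phi(j,mu)(0) is self-dual} together with the degeneracy pattern of the Gram blocks is already strong enough to give independence without appeal to the orthogonality of the $\phi_\nu(\rlmb)$ themselves. In short: your argument is sound and self-contained, but it calls on a noticeably larger slice of the paper's machinery than is needed.
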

\begin{proof} By item (v) of Theorem \ref{T: TFAE for (Uk)}, for each $\nu=1,\ldots,m$ and for each $j=0,1,\ldots,\tilde d_\nu-1$
we have $\bfA_\lambda(r_\lambda) \phi_\nu^{(j)}(\rlmb) = \phi_\nu^{(j-1)}(\rlmb).$
Since by Corollary \ref{C: sum tilde d = N} the sum of numbers $\tilde d_\nu$ is equal to the 
algebraic multiplicity~$N,$ we are done. 
\end{proof}

We collect these assertions in the following theorem.
\begin{thm} \label{T: depth of phi[nu] is d[nu]-1}
For each $\nu=1,\ldots,m,$ the following numbers are equal (assuming that they are arranged in decreasing order):
\begin{enumerate}
  \item the order of eigenpath $\phi_\nu(s),$
  \item the size of the cycle $\nu$ of resonance points of the group of $r_\lambda,$
  \item the size of the $\nu$th Jordan block of the nilpotent operator $\bfA_\lambda(r_\lambda).$ 
\end{enumerate}
Moreover, the vectors $\frac 1{j!} \phi_\nu^{(j)}(\rlmb), \ \nu=1,\ldots,m, \ j=0,1,\ldots,\tilde d_\nu-1,$ form
a Jordan basis of the nilpotent operator $\bfA_\lambda(r_\lambda).$ 
\end{thm}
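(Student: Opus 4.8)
The plan is to assemble Theorem~\ref{T: depth of phi[nu] is d[nu]-1} purely as a bookkeeping exercise, since all the analytic content has already been proved in Corollaries and in Theorem~\ref{T: TFAE for (Uk)}. First I would recall the three quantities to be compared: for a fixed $\nu$, call $\tilde d_\nu$ the order of the eigenpath $\phi_\nu(s)$ in the sense of the definition preceding Theorem~\ref{T: TFAE for (Uk)}, call $\hat d_\nu$ the size of the cycle $r_\nu^{(\cdot)}(z)$ of resonance points of the group of $r_\lambda$, and call $d_\nu$ the size of the $\nu$th Jordan block of $\bfA_\lambda(r_\lambda)$. The equality $\tilde d_\nu = \hat d_\nu$ is exactly the Corollary stated just before Corollary~\ref{C: sum tilde d = N}, proved via the Puiseux/inverse-function argument: since $\lambda_\nu(s) = \lambda + \eps_\nu(s-r_\lambda)^{\tilde d_\nu} + O((s-r_\lambda)^{\tilde d_\nu+1})$ with $\eps_\nu\neq 0$, its local inverse $r_\nu^{(\cdot)}(z)$ has precisely $\tilde d_\nu$ branches forming a single cycle. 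So it remains only to link $\tilde d_\nu$ with the Jordan block size $d_\nu$.

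For that I would invoke Lemma~\ref{L: nice one} together with item (v) of Theorem~\ref{T: TFAE for (Uk)}: if $\phi_\nu(s)$ has order $\tilde d_\nu$, then the vectors $\phi_\nu(r_\lambda), \phi_\nu'(r_\lambda), \ldots, \phi_\nu^{(\tilde d_\nu-1)}(r_\lambda)$ are resonance vectors of orders $1, 2, \ldots, \tilde d_\nu$, and $\bfA_\lambda(r_\lambda)\phi_\nu^{(j)}(r_\lambda) = j\,\phi_\nu^{(j-1)}(r_\lambda)$ for $j = 1,\ldots,\tilde d_\nu-1$. After rescaling by $1/j!$, this says the set $\bigl\{\tfrac{1}{j!}\phi_\nu^{(j)}(r_\lambda)\bigr\}_{j=0}^{\tilde d_\nu-1}$ is a single Jordan string of length $\tilde d_\nu$ for the nilpotent operator $\bfA_\lambda(r_\lambda)$. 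Running over all $\nu = 1,\ldots,m$ produces $m$ such strings; they are linearly independent because the $\phi_\nu(s)$ are chosen pairwise orthogonal (equation (\ref{F: (phi(i),phi(j)) = 0})) and, more to the point, because by Theorem~\ref{T: phi(j,mu)(0) is self-dual} the derivatives belonging to different cycles are mutually $V$-orthogonal, which prevents any string from one cycle being absorbed into another. By Corollary~\ref{C: sum tilde d = N} the total number of these vectors equals $\sum_\nu \tilde d_\nu = N = \dim \Upsilon_\lambda(r_\lambda)$, so they span $\Upsilon_\lambda(r_\lambda)$ and hence form a Jordan basis. Comparing this Jordan basis with the abstract Jordan decomposition of $\bfA_\lambda(r_\lambda)$ forces $\tilde d_\nu = d_\nu$ for each $\nu$ (after sorting both into decreasing order), which is the assertion.

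Finally I would note that the ordering convention must be stated carefully: the correspondence $\nu \leftrightarrow$ cycle $\leftrightarrow$ Jordan block is established by Proposition~\ref{P: number of cycles = m}, but the claim ``these numbers are equal'' is only meaningful once one either fixes this correspondence or sorts all three lists into decreasing order; the statement already hedges with ``assuming that they are arranged in decreasing order,'' so I would simply appeal to the one-to-one correspondence of Proposition~\ref{P: number of cycles = m} to match them index by index and then observe the sorted lists coincide.

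The main obstacle, such as it is, is not analytic but organizational: one must be sure that the $m$ Jordan strings coming from the $m$ eigenpaths are genuinely linearly independent and together exhaust $\Upsilon_\lambda(r_\lambda)$, rather than merely sitting inside it. The counting argument via Corollary~\ref{C: sum tilde d = N} handles the dimension, and Theorem~\ref{T: phi(j,mu)(0) is self-dual} (the $V$-orthogonality of derivatives from distinct cycles) handles the independence; once both are in hand the proof is essentially a two-line citation, which is presumably why the excerpt presents it as a summary ``We collect these assertions in the following theorem.''
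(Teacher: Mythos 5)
Your proposal matches the paper's own argument, which simply assembles the three Corollaries immediately preceding the theorem (the order-equals-cycle-size Corollary, Corollary~\ref{C: sum tilde d = N}, and the Jordan-basis Corollary). The extra care you give to linear independence of the Jordan strings is a welcome clarification that the paper leaves implicit, but note that the pairwise orthogonality of the bottom elements $\phi_\nu(r_\lambda)$ alone already forces linear independence of the full union of chains (a standard fact about Jordan strings with linearly independent kernel elements), so the heavier appeal to the $V$-orthogonality of Theorem~\ref{T: phi(j,mu)(0) is self-dual} is not required for this purpose and, used on its own, would also need the non-degeneracy of the Hankel form on each string to close the argument.
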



\subsection{Decomposition of~$P_\lambda(r_\lambda)$}
Let 
$
 r_\nu^{(0)}, \ldots, r_\nu^{(d_\nu-1)}
$
be the cycle~$\nu.$ 
The function
$$
  P_z^{[\nu]} := \sum_{j=0}^{d_\nu-1} P_z(r_\nu^{(j)}(z))
$$
is single-valued in a neighbourhood of~$\lambda.$
\begin{prop} \label{P: P[nu](z) is holomorphic}
The function $P_z^{[\nu]}$ of~$z$ is holomorphic in a neighbourhood of~$\lambda.$
\end{prop}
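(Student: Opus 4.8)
The statement to prove is that $P_z^{[\nu]} := \sum_{j=0}^{d_\nu-1} P_z(r_\nu^{(j)}(z))$ is holomorphic near $z=\lambda$. The obvious difficulty is that the individual resonance points $r_\nu^{(j)}(z)$ (and hence the individual idempotents $P_z(r_\nu^{(j)}(z))$) are branches of a multivalued function with a branch point at $\lambda$, so none of the summands is individually single-valued or bounded near $\lambda$ a priori. What saves the sum is (a) symmetry under the monodromy around $\lambda$, which by Theorem~\ref{T: about res cycles} permutes the $d_\nu$ points of the cycle cyclically, so that the sum over a full cycle is invariant and therefore single-valued in a punctured neighbourhood of $\lambda$; and (b) a boundedness estimate near $\lambda$, which upgrades ``single-valued and holomorphic on the punctured disc'' to ``holomorphic on the disc'' via the removable singularity theorem.

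\textbf{Single-valuedness.} First I would fix a small punctured disc $D^* = \{0 < |z-\lambda| < \eps\}$ on which the cycle $\nu$ is well-defined and disjoint from the other cycles (possible since resonance points depend continuously on $z$ and $r_\lambda$ has finitely many groups). On $D^*$ each $r_\nu^{(j)}(z)$ is holomorphic, and analytic continuation of $z$ once around $\lambda$ induces the cyclic permutation $r_\nu^{(j)} \mapsto r_\nu^{(j+1 \bmod d_\nu)}$ of the points of the cycle. Since the idempotent $P_z(r)$ is determined by $r$ and $z$ through the contour integral formula (\ref{F: Pz=oint A(s)}), continuation around $\lambda$ carries $P_z(r_\nu^{(j)}(z))$ to $P_z(r_\nu^{(j+1)}(z))$. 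Hence the sum $P_z^{[\nu]}$ is invariant under this monodromy, and being holomorphic on $D^*$ it descends to a single-valued holomorphic function there. (Equivalently: $P_z^{[\nu]}$ can be written as a single contour integral $\frac{1}{2\pi i}\oint_{C} A_z(s)\,ds$, where $C$ encircles exactly the $d_\nu$ points of the cycle and no others; such a contour can be chosen to persist, up to isotopy, as $z$ varies over $D^*$, and the integral manifestly returns to itself after one loop.)

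\textbf{Removing the singularity at $\lambda$.} It remains to show $P_z^{[\nu]}$ stays bounded as $z \to \lambda$; then Riemann's removable singularity theorem (applied entrywise, or to the operator-valued function, using that all the operators have rank bounded by $N$ and live in a fixed finite-dimensional space once we note $\im P_z(r_\nu^{(j)}(z)) \subset \Upsilon_z(r_\lambda)$ which has dimension $N$) gives holomorphy at $\lambda$. Boundedness I would get from the contour-integral representation $P_z^{[\nu]} = \frac{1}{2\pi i}\oint_{C} A_z(s)\,ds$: choosing $C$ to be a fixed contour (independent of $z$) that encircles $r_\lambda$ and stays at positive distance from all resonance points of neighbouring groups, and inside which for $z$ near $\lambda$ all of $r_\nu^{(0)}(z),\dots,r_\nu^{(d_\nu-1)}(z)$ are trapped, the integrand $A_z(s) = R_z(H_s)V$ is jointly continuous in $(z,s)$ for $s \in C$ and $z$ in a neighbourhood of $\lambda$ (since $C$ avoids the resonance set, $1+(s-r_\lambda)A_z(r_\lambda)$ is boundedly invertible there and (\ref{F: second resolvent identity (3)}) gives continuity). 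A compactness argument over $C \times \{|z-\lambda|\le \eps/2\}$ then bounds $\|A_z(s)\|$ uniformly, hence bounds $\|P_z^{[\nu]}\|$. This completes the proof.

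\textbf{Main obstacle.} The genuinely delicate point is the first one — verifying that the contour $C$ can be chosen to simultaneously (i) enclose exactly the cycle $\nu$ for all $z$ in the punctured disc and (ii) be deformable back to itself after $z$ loops around $\lambda$, so that the sum is literally single-valued rather than merely ``invariant as a set''. This is where Theorem~\ref{T: about res cycles}, describing precisely how the points of a cycle move and recombine as $z$ traces a small loop, does the essential work; once the monodromy structure of the cycle is in hand, the boundedness and removable-singularity steps are routine.
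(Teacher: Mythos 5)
Your single-valuedness argument is sound: the monodromy permutes the branches of the cycle cyclically, so the symmetric sum descends to a single-valued holomorphic function on a punctured disc. The gap is in the boundedness step. You write $P_z^{[\nu]} = \frac{1}{2\pi i}\oint_C A_z(s)\,ds$ with $C$ a \emph{fixed} contour that encircles $r_\lambda$ and stays away from resonance points of neighbouring groups. But \emph{every} cycle $r_\mu^{(\cdot)}(z)$, $\mu=1,\dots,m$, of the group of $r_\lambda$ has all of its points converging to $r_\lambda$ as $z\to\lambda$, so for $z$ close to $\lambda$ a fixed contour around $r_\lambda$ necessarily encloses the resonance points of \emph{all} $m$ cycles, not just cycle $\nu$. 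Your integral therefore equals $P_z(r_\lambda)=\sum_{\mu=1}^m P_z^{[\mu]}$, not $P_z^{[\nu]}$, and the compactness estimate you give bounds only the full sum. To isolate cycle $\nu$ the contour must shrink towards $r_\lambda$ together with the points it encircles (this is exactly what you do in the single-valuedness step), and then the estimate $\|P_z^{[\nu]}\|\le \frac{\mathrm{length}(C_z)}{2\pi}\max_{s\in C_z}\|A_z(s)\|$ is not ``routine'': the contour length tends to zero while the integrand blows up as $C_z$ crowds against the poles, and there is no a priori cancellation. Note also that boundedness of $\sum_\mu P_z^{[\mu]}$ together with orthogonality $P_z^{[\nu]}P_z^{[\mu]}=\delta_{\nu\mu}P_z^{[\nu]}$ does not by itself force each summand to be bounded (one can build $2\times 2$ examples with two mutually annihilating rank-one idempotents that individually have poles but sum to the identity), so one cannot simply pass from the sum to the individual terms.

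The paper's own proof does not attempt to separate cycle $\nu$ by a contour at all. It first invokes \cite[Theorem~II.1.8]{Kato} to conclude that the Laurent expansion of $P_z^{[\nu]}$ at $z=\lambda$ has only finitely many negative terms (so the singularity is at worst a pole of finite order, not essential), then observes that the full sum $\sum_\nu P_z^{[\nu]}=P_z(r_\lambda)$ is bounded because it converges to $P_\lambda(r_\lambda)$, and finally passes to each $P_z^{[\nu]}$ using the orthogonality relations together with the additional algebraic structure supplied by Kato's pole analysis. The essential ingredient you are missing is thus a mechanism for decoupling the $m$ cycles from one another near $z=\lambda$; the paper obtains it from Kato's theorem on the pole structure of eigenprojections at a branch point, not from a choice of contour.
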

\begin{proof} By \cite[Theorem II.1.8]{Kato}, the Laurent expansion of the function $P_z^{[\nu]}$
can have only finitely many terms with negative powers of $(z-\lambda).$
The sum
$
  \sum_{\nu=1}^m P_z^{[\nu]} 
$
converges to~$P_\lambda(r_\lambda)$ as $z \to \lambda$ and therefore is bounded 
in a neighbourhood of~$\lambda.$ Since 
$
 P_z^{[\nu]}P_z^{[\mu]} = \delta_{\nu\mu}P_z^{[\nu]},
$
each of the~$m$ functions $P_z^{[\nu]}$ is also bounded in a neighbourhood of~$\lambda.$ 
\end{proof}

Therefore, this operator has a limit as $z \to \lambda,$ which we denote by\label{Page: P(lambda)[nu]:=}
\begin{equation} \label{F: P(lambda)[nu]:=}
  P_\lambda^{[\nu]}(r_\lambda) := \lim_{z \to \lambda} P_z^{[\nu]}.
\end{equation}
We have the equalities
\begin{equation} \label{F: P[nu]P[mu]=0}
  P_\lambda^{[\nu]}P_\lambda^{[\mu]} = \delta_{\nu\mu}P_\lambda^{[\nu]}
\end{equation}
and 
$$
  P_\lambda(r_\lambda) = \sum_{\nu=1}^m P_\lambda^{[\nu]}(r_\lambda).
$$
Since $P_z(r_\nu^{(j)}(z))P_z(r_\nu^{(k)}(z))=0$ for different resonance points $r_\nu^{(j)}$ and $r_\nu^{(k)},$
the operator $P_z^{[\nu]}$ is an idempotent. 

\begin{lemma} \label{L: P[nu] and bfA commute} 
The operators $P_\lambda^{[\nu]}(r_\lambda)$ and~$\bfA_\lambda(r_\lambda)$ commute.
\end{lemma}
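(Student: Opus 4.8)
The plan is to deduce commutativity of $P_\lambda^{[\nu]}(r_\lambda)$ and $\bfA_\lambda(r_\lambda)$ from the corresponding statement at nearby non-real $z$, where all objects are built from residues of the meromorphic function $A_z(s)$ at a cluster of honest poles, and then pass to the limit $z\to\lambda$. First I would recall that $\bfA_z(r_\lambda)$ denotes the sum $\sum_{j=0}^{d_\nu-1}\sum_\nu \bfA_z(r_\nu^{(j)}(z))$ of the nilpotent parts over all resonance points of the group of $r_\lambda$, and that $P_z^{[\nu]}=\sum_{j=0}^{d_\nu-1}P_z(r_\nu^{(j)}(z))$. The key algebraic input is that for a single resonance point $r$, the idempotent $P_z(r)$ and the nilpotent $\bfA_z(r)$ both arise as contour integrals $\frac1{2\pi i}\oint_{C_r}A_z(s)\,ds$ and $\frac1{2\pi i}\oint_{C_r}(s-r)A_z(s)\,ds$; since $\bfA_z(r)$ is reduced by $\Upsilon_z(r)=\im P_z(r)$, one has $P_z(r)\bfA_z(r)=\bfA_z(r)=\bfA_z(r)P_z(r)$, and by \eqref{F: Pz(1)Pz(2)=0} for two distinct resonance points $r^{(1)},r^{(2)}$ of the same $z$ the products $P_z(r^{(1)})P_z(r^{(2)})$, $\bfA_z(r^{(1)})P_z(r^{(2)})$, $P_z(r^{(1)})\bfA_z(r^{(2)})$, $\bfA_z(r^{(1)})\bfA_z(r^{(2)})$ all vanish (the latter three because $\bfA_z(r^{(i)})$ factors through $P_z(r^{(i)})$ on both sides).

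From these orthogonality relations I would compute, for non-real $z$ near $\lambda$,
\begin{equation*}
  \bfA_z(r_\lambda)\,P_z^{[\nu]}
  = \sum_{\mu,i}\sum_{j} \bfA_z(r_\mu^{(i)}(z))P_z(r_\nu^{(j)}(z))
  = \sum_{j} \bfA_z(r_\nu^{(j)}(z)),
\end{equation*}
and likewise $P_z^{[\nu]}\,\bfA_z(r_\lambda)=\sum_j \bfA_z(r_\nu^{(j)}(z))$, so that $\bfA_z(r_\lambda)P_z^{[\nu]}=P_z^{[\nu]}\bfA_z(r_\lambda)$ for all non-real $z$ in a punctured neighbourhood of $\lambda$. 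Now I invoke the already established analytic behaviour of the two sides: by Proposition~\ref{P: P[nu](z) is holomorphic} the function $z\mapsto P_z^{[\nu]}$ extends holomorphically (hence continuously) to $z=\lambda$ with value $P_\lambda^{[\nu]}(r_\lambda)$; the sum $\bfA_z(r_\lambda)=\frac1{2\pi i}\oint_{C}(s-r_\lambda)A_z(s)\,ds$ taken over a fixed contour $C$ enclosing the whole group likewise depends continuously on $z$ up to $z=\lambda$ (this is how $\bfA_\lambda(r_\lambda)$ was defined in the excerpt). Passing to the limit $z\to\lambda$ in the identity $\bfA_z(r_\lambda)P_z^{[\nu]}=P_z^{[\nu]}\bfA_z(r_\lambda)$ yields $\bfA_\lambda(r_\lambda)P_\lambda^{[\nu]}(r_\lambda)=P_\lambda^{[\nu]}(r_\lambda)\bfA_\lambda(r_\lambda)$, which is the claim.

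The main obstacle, and the point that needs care in writing the proof cleanly, is justifying that the cluster-wide nilpotent $\bfA_z(r_\lambda)$ is exactly the sum $\sum_{\mu,i}\bfA_z(r_\mu^{(i)}(z))$ of the per-point nilpotents and that this sum converges continuously as $z\to\lambda$; both facts are contained in the preceding development (the contour-integral definitions \eqref{F: Pz=oint A(s)}, \eqref{F: A(lamb)=oint s A(s)} with a contour $C$ that can be taken independent of $z$ for $z$ near $\lambda$, together with the convention, stated just before this lemma, that $P_z(r_\lambda)$ and $\bfA_z(r_\lambda)$ denote these sums), so I would simply cite them rather than re-prove them. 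An alternative, essentially equivalent route is to note $\bfA_z(r_\lambda)=\sum_\nu\bfA_z(r_\lambda)P_z^{[\nu]}$ and that each summand commutes with each idempotent $P_z^{[\mu]}$ by the orthogonality relations, then sum; I would present whichever is shorter. Either way the only genuinely new content beyond bookkeeping is the limit argument, which is immediate once the two sides are known to be continuous at $z=\lambda$.
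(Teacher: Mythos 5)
Your overall strategy --- establish commutativity at non-real $z$ and pass to the limit --- is the same as the paper's, but there is a genuine gap in the execution. You decompose $\bfA_z(r_\lambda)$ as the sum $\sum_{\mu,i}\bfA_z(r_\mu^{(i)}(z))$ of per-point nilpotents. By Theorem~\ref{T: r(z) has order 1}, for $z$ in a punctured neighbourhood of~$\lambda$ (off a discrete set) every resonance point $r_\mu^{(i)}(z)$ in the group has order one, so every $\bfA_z(r_\mu^{(i)}(z))=0$; hence your decomposition gives $\bfA_z(r_\lambda)\equiv 0$ for such~$z$. Your identity $\bfA_z(r_\lambda)P_z^{[\nu]}=P_z^{[\nu]}\bfA_z(r_\lambda)$ then reads $0=0$, whose limit as $z\to\lambda$ is~$0$, not the nontrivial operator $\bfA_\lambda(r_\lambda)P_\lambda^{[\nu]}(r_\lambda)$. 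The object whose limit \emph{is} $\bfA_\lambda(r_\lambda)$ is the whole-group contour integral $\frac1{2\pi i}\oint_C(s-r_\lambda)A_z(s)\,ds$, but by the residue theorem at simple poles that integral equals $\sum_{\mu,i}(r_\mu^{(i)}(z)-r_\lambda)P_z(r_\mu^{(i)}(z))$, not the sum of per-point nilpotents. You correctly flag this equivalence as "the main obstacle" but then defer it to "the preceding development" as bookkeeping; in fact it is false --- the two objects genuinely differ, and only the contour-integral version carries the limit you need.

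The argument can be repaired by working throughout with the contour integral $\frac1{2\pi i}\oint_C(s-r_\lambda)A_z(s)\,ds$ and its residue decomposition $\sum_{\mu,i}(r_\mu^{(i)}(z)-r_\lambda)P_z(r_\mu^{(i)}(z))$, whose commutativity with $P_z^{[\nu]}$ follows from the orthogonality relation~(\ref{F: Pz(1)Pz(2)=0}). The paper's own proof is shorter and sidesteps the issue entirely: it passes to $z=\lambda$ in the identity $P_z^{[\nu]}A_z(s)=A_z(s)P_z^{[\nu]}$ (which holds because $P_z^{[\nu]}$ is a sum of Riesz spectral projections of $A_z(s)$) using Proposition~\ref{P: P[nu](z) is holomorphic}, and then reads commutativity of $P_\lambda^{[\nu]}(r_\lambda)$ with $\bfA_\lambda(r_\lambda)$ off the Laurent expansion~(\ref{F: Az (3.3.16)}) of $A_\lambda(s)$, never needing to assign a meaning to "$\bfA_z(r_\lambda)$" for non-real~$z$.
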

\begin{proof} Since the limits of $P_z^{[\nu]}$ and $A_z(s)$ as $z \to \lambda$ exist,
it is enough to take limits of both sides of the equality 
$
  P_z^{[\nu]} A_z(s) = A_z(s) P_z^{[\nu]}, 
$
and then use Laurent expansion of $A_\lambda(s).$ 
\end{proof}

In a similar way, we introduce operators $Q_z^{[\nu]}$ by formula 
$$
  Q_z^{[\nu]} := \sum_{j=0}^{d_\nu-1} Q_z(r_\nu^{(j)}(z)).
$$
Many properties of $Q_z^{[\nu]}$ are analogues to those of $P_z^{[\nu]}.$

It follows from (\ref{F: VP = QV}) that 
$
  V P_z^{[\nu]} = Q_z^{[\nu]}V.
$
Taking in this equality the limit $z \to \lambda,$ we obtain
\begin{equation} \label{F: VP[nu]=Q[nu]V}
  V P_\lambda^{[\nu]} = Q_\lambda^{[\nu]}V.
\end{equation}

Let \label{Page: Upsilon lambda nu}  
$
  \Upsilon_\lambda^{[\nu]} := \im P_\lambda^{[\nu]}.
$

\begin{cor} \label{C: A(lamb)(nu) is reduced} 
The nilpotent operator~$\bfA_\lambda(r_\lambda)$ is reduced 
by the vector space~$\Upsilon_\lambda^{[\nu]}.$
\end{cor}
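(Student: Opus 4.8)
The plan is to deduce this immediately from Lemma~\ref{L: P[nu] and bfA commute} together with the fact that $P_\lambda^{[\nu]}(r_\lambda)$ is an idempotent. Recall that a (bounded) operator $T$ is said to be \emph{reduced} by a subspace if that subspace and a complementary closed subspace are both invariant under $T$; equivalently, $T$ commutes with an idempotent whose range is that subspace. So the whole content of the corollary is that $\bfA_\lambda(r_\lambda)$ commutes with the idempotent $P_\lambda^{[\nu]}(r_\lambda)$ and that the range of the latter is $\Upsilon_\lambda^{[\nu]}$, both of which have already been established.

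First I would record that $P_\lambda^{[\nu]}(r_\lambda)$ is indeed an idempotent. This follows by setting $\nu=\mu$ in~\eqref{F: P[nu]P[mu]=0}, or, if one prefers to see it directly, by passing to the limit $z\to\lambda$ in the identity $\brs{P_z^{[\nu]}}^2 = P_z^{[\nu]}$, which holds because the summands $P_z(r_\nu^{(j)}(z))$ are mutually orthogonal idempotents by~\eqref{F: Pz(1)Pz(2)=0}. Thus $\Upsilon_\lambda^{[\nu]} = \im P_\lambda^{[\nu]}(r_\lambda)$ admits the complementary subspace $\ker P_\lambda^{[\nu]}(r_\lambda) = \im\brs{1 - P_\lambda^{[\nu]}(r_\lambda)}$, and $\hilb = \Upsilon_\lambda^{[\nu]} \dotplus \ker P_\lambda^{[\nu]}(r_\lambda)$.

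Next, using $\bfA_\lambda(r_\lambda) P_\lambda^{[\nu]}(r_\lambda) = P_\lambda^{[\nu]}(r_\lambda) \bfA_\lambda(r_\lambda)$ from Lemma~\ref{L: P[nu] and bfA commute}, I would check invariance of both summands: if $\chi = P_\lambda^{[\nu]}(r_\lambda)\chi$ then $\bfA_\lambda(r_\lambda)\chi = \bfA_\lambda(r_\lambda) P_\lambda^{[\nu]}(r_\lambda)\chi = P_\lambda^{[\nu]}(r_\lambda)\bfA_\lambda(r_\lambda)\chi \in \Upsilon_\lambda^{[\nu]}$; and if $P_\lambda^{[\nu]}(r_\lambda)\chi = 0$ then $P_\lambda^{[\nu]}(r_\lambda)\bfA_\lambda(r_\lambda)\chi = \bfA_\lambda(r_\lambda) P_\lambda^{[\nu]}(r_\lambda)\chi = 0$, so $\ker P_\lambda^{[\nu]}(r_\lambda)$ is invariant as well. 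Since both complementary pieces of the direct sum decomposition of $\hilb$ are $\bfA_\lambda(r_\lambda)$-invariant, this is precisely the statement that $\bfA_\lambda(r_\lambda)$ is reduced by $\Upsilon_\lambda^{[\nu]}$.

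There is essentially no obstacle here; the corollary is a one-line consequence of the preceding lemma. The only point that warrants a word of comment is the idempotency of the limit operator $P_\lambda^{[\nu]}(r_\lambda)$ — but this is already implicit in the surrounding discussion and in~\eqref{F: P[nu]P[mu]=0}, so it needs no separate argument beyond the remark above.
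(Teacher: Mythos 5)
Your proof is correct and takes exactly the approach the paper intends: the paper gives no explicit proof, treating the corollary as an immediate consequence of Lemma~\ref{L: P[nu] and bfA commute} together with the definition $\Upsilon_\lambda^{[\nu]} = \im P_\lambda^{[\nu]}$. Your unpacking — that commuting with the idempotent $P_\lambda^{[\nu]}(r_\lambda)$ makes both $\im P_\lambda^{[\nu]}(r_\lambda)$ and $\ker P_\lambda^{[\nu]}(r_\lambda)$ invariant, which is what ``reduced'' means — is precisely the right one-line argument.
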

We denote restriction of~$\bfA_\lambda(r_\lambda)$ to the vector space 
$\Upsilon_\lambda^{[\nu]}$ by \label{Page: bfA(nu)2}
$
  \bfA_\lambda^{[\nu]}(r_\lambda).
$
In previous sections we denoted by~$d_\nu$ the sizes of the Jordan cells 
of the nilpotent operator~$\bfA_\lambda(r_\lambda).$ 
The last lemma shows that numbers~$d_\nu$ and dimensions of the vectors 
spaces~$\Upsilon_\lambda^{[\nu]},$ and therefore lengths of the cycles, are equal. 

\begin{lemma} \label{L: dim(Ups[nu]cap clV)=1}
Dimension of the vector space 
$
 \Upsilon_\lambda^{[\nu]} \cap \clV_\lambda 
$
is equal to $1.$
\end{lemma}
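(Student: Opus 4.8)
The statement to prove is that $\dim\bigl(\Upsilon_\lambda^{[\nu]}\cap\clV_\lambda\bigr)=1$, where $\clV_\lambda=\Upsilon_\lambda^1(r_\lambda)$ is the eigenspace of $H_{r_\lambda}$ and $\Upsilon_\lambda^{[\nu]}=\im P_\lambda^{[\nu]}(r_\lambda)$ is the $\nu$th summand of the root space $\Upsilon_\lambda(r_\lambda)$. The guiding principle is that $\bfA_\lambda^{[\nu]}(r_\lambda)$, the restriction of the nilpotent $\bfA_\lambda(r_\lambda)$ to $\Upsilon_\lambda^{[\nu]}$, is cyclic of index $d_\nu$ (this is exactly the assertion, already established in the excerpt, that numbers $d_\nu$ equal the sizes of the Jordan cells and that $\Upsilon_\lambda^{[\nu]}$ carries a single Jordan block). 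For a cyclic nilpotent operator on a space of dimension $d_\nu$, the kernel is one-dimensional; and since $\ker\bigl(\bfA_\lambda(r_\lambda)\bigr)\cap\Upsilon_\lambda(r_\lambda)=\Upsilon_\lambda^1(r_\lambda)=\clV_\lambda$ (stated explicitly in the Preliminaries, in the discussion of Jordan bases), the kernel of $\bfA_\lambda^{[\nu]}$ is precisely $\Upsilon_\lambda^{[\nu]}\cap\clV_\lambda$. Combining these gives the result.

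**Steps in order.** First I would recall that $\Upsilon_\lambda^{[\nu]}$ reduces $\bfA_\lambda(r_\lambda)$ (Corollary~\ref{C: A(lamb)(nu) is reduced}), so that $\bfA_\lambda^{[\nu]}:=\bfA_\lambda(r_\lambda)\big|_{\Upsilon_\lambda^{[\nu]}}$ is a well-defined nilpotent operator on $\Upsilon_\lambda^{[\nu]}$. Second, I would pin down that $\dim\Upsilon_\lambda^{[\nu]}=d_\nu$ and that this reduction is cyclic — i.e. it consists of a single Jordan block of size $d_\nu$. This is available from Theorem~\ref{T: depth of phi[nu] is d[nu]-1} together with the Jordan-basis description in the Preliminaries, where the $\nu$th summand $\Upsilon_\lambda^{[\nu]}$ of the decomposition $\Upsilon_\lambda=\Upsilon^{[1]}_\lambda\dotplus\cdots\dotplus\Upsilon^{[m]}_\lambda$ is spanned by one Jordan chain $\chi_\nu^{(0)},\dots,\chi_\nu^{(d_\nu-1)}$ with $\bfA_\lambda(r_\lambda)\chi_\nu^{(j)}=\chi_\nu^{(j-1)}$; equivalently, one may use the eigenpath chain $\frac1{j!}\phi_\nu^{(j)}(r_\lambda)$ from Theorem~\ref{T: depth of phi[nu] is d[nu]-1}. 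Third, I would observe that for a nilpotent operator given by a single Jordan block of size $d_\nu$, $\ker\bfA_\lambda^{[\nu]}$ is exactly the line spanned by $\chi_\nu^{(0)}$ (the bottom of the chain), hence one-dimensional. Fourth, I would identify this kernel with the asserted intersection: since $\bfA_\lambda^{[\nu]}$ is the restriction of $\bfA_\lambda(r_\lambda)$ to $\Upsilon_\lambda^{[\nu]}\subset\Upsilon_\lambda(r_\lambda)$, we have $\ker\bfA_\lambda^{[\nu]}=\ker\bigl(\bfA_\lambda(r_\lambda)\bigr)\cap\Upsilon_\lambda^{[\nu]}=\bigl(\ker\bfA_\lambda(r_\lambda)\cap\Upsilon_\lambda(r_\lambda)\bigr)\cap\Upsilon_\lambda^{[\nu]}=\clV_\lambda\cap\Upsilon_\lambda^{[\nu]}$, using $\ker\bfA_\lambda(r_\lambda)\cap\Upsilon_\lambda(r_\lambda)=\Upsilon_\lambda^1(r_\lambda)=\clV_\lambda$. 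This chain of equalities yields $\dim(\Upsilon_\lambda^{[\nu]}\cap\clV_\lambda)=\dim\ker\bfA_\lambda^{[\nu]}=1$.

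**Main obstacle.** The only genuine content is justifying that the reduction $\bfA_\lambda^{[\nu]}$ is cyclic — that $\Upsilon_\lambda^{[\nu]}$ is a single Jordan block rather than a sum of several smaller ones. In the narrative of the excerpt this is stated as part of the package around Theorem~\ref{T: depth of phi[nu] is d[nu]-1} and the remark "Restriction of this operator to $\Upsilon_\lambda^{[\nu]}$ is cyclic", so for the purposes of this lemma it can be invoked directly; the cleanest self-contained route is to exhibit the chain $\phi_\nu(r_\lambda),\phi_\nu'(r_\lambda),\dots,\phi_\nu^{(d_\nu-1)}(r_\lambda)$ as a basis of $\Upsilon_\lambda^{[\nu]}$ (Corollary~\ref{C: dim Upsilon[nu]=d(nu)} / Theorem~\ref{T: (i) and (ii) bfA phi(j)=phi(j-1)}) on which $\bfA_\lambda(r_\lambda)$ acts by lowering the index, which makes cyclicity immediate. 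Everything else is linear algebra on a finite-dimensional root space and requires no estimates. I expect the write-up to be short, two or three sentences of prose plus the displayed chain of kernel identities.
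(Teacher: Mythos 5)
Your proof is circular: the cyclicity of $\bfA_\lambda^{[\nu]}$ that you invoke is Lemma~\ref{L: bfA[nu] is cyclic}, which the paper states and proves \emph{immediately after} the present lemma, and whose one-line proof cites the present lemma. Likewise, the facts that the chain $\phi_\nu(r_\lambda),\dots,\phi_\nu^{(d_\nu-1)}(r_\lambda)$ lies in $\Upsilon_\lambda^{[\nu]}$ and spans it (Theorem~\ref{T: (i) and (ii) bfA phi(j)=phi(j-1)} and Corollary~\ref{C: dim Upsilon[nu]=d(nu)}) are established later still. You are also conflating two definitions that carry the same symbol: in the Preliminaries $\Upsilon^{[\nu]}_z$ is the $\nu$th column of a chosen (hence non-canonical) Jordan basis decomposition, whereas in the present section $\Upsilon^{[\nu]}_\lambda$ is defined canonically as $\im P^{[\nu]}_\lambda(r_\lambda)$, the limit of the sum of Riesz idempotents along the $\nu$th cycle. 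That these two notions coincide --- that $\im P^{[\nu]}_\lambda$ is a single Jordan block --- is exactly what this chain of lemmas is building towards, so it cannot be assumed here.

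The paper's proof avoids cyclicity altogether and is a pure counting argument. By Corollary~\ref{C: A(lamb)(nu) is reduced} the nilpotent $\bfA_\lambda(r_\lambda)$ is reduced by the non-trivial finite-dimensional space $\Upsilon^{[\nu]}_\lambda$, so its restriction has non-trivial kernel and $\dim\bigl(\Upsilon^{[\nu]}_\lambda\cap\clV_\lambda\bigr)\geq 1$. Since $\bfA_\lambda$ commutes with all the idempotents $P^{[\nu]}_\lambda$ (Lemma~\ref{L: P[nu] and bfA commute}) and $\clV_\lambda=\ker\bfA_\lambda\cap\Upsilon_\lambda$, the eigenspace decomposes as $\clV_\lambda=\bigoplus_{\nu=1}^{m}\bigl(\Upsilon^{[\nu]}_\lambda\cap\clV_\lambda\bigr)$. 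By Proposition~\ref{P: number of cycles = m} there are exactly $m$ summands, and $\dim\clV_\lambda=m$, so each must be exactly one-dimensional. This is the argument you should use if you want to avoid relying on material proved downstream.
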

\begin{proof} By Corollary~\ref{C: A(lamb)(nu) is reduced}, the dimension of 
$\Upsilon_\lambda^{[\nu]} \cap \clV_\lambda$ is at least~$1.$ 
Hence, this dimension is to be~$1,$ since otherwise we get 
a contradiction with Proposition~\ref{P: number of cycles = m}.
\end{proof}

\begin{lemma} \label{L: bfA[nu] is cyclic} 
The operator~$\bfA_\lambda^{[\nu]}$ is cyclic.
\end{lemma}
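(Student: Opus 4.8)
The plan is to deduce cyclicity from the one-dimensionality of $\ker\bfA_\lambda^{[\nu]}$, using the elementary fact that a nilpotent operator on a finite-dimensional vector space is cyclic if and only if its kernel is one-dimensional (equivalently, its Jordan form consists of a single block). First I would note that $\Upsilon_\lambda^{[\nu]}=\im P_\lambda^{[\nu]}$ is a subspace of the finite-dimensional space $\Upsilon_\lambda(r_\lambda)$, so $\bfA_\lambda^{[\nu]}$ is a nilpotent operator on a finite-dimensional space and the criterion above applies.

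Next I would compute the kernel. By Corollary~\ref{C: A(lamb)(nu) is reduced} the operator $\bfA_\lambda(r_\lambda)$ is reduced by $\Upsilon_\lambda^{[\nu]}$, hence $\ker\bfA_\lambda^{[\nu]}=\Upsilon_\lambda^{[\nu]}\cap\ker\bfA_\lambda(r_\lambda)$. Combining this with the identity $\ker\bfA_\lambda(r_\lambda)\cap\Upsilon_\lambda(r_\lambda)=\Upsilon^1_\lambda(r_\lambda)=\clV_\lambda$ recorded in the preliminaries, the kernel of $\bfA_\lambda^{[\nu]}$ equals $\Upsilon_\lambda^{[\nu]}\cap\clV_\lambda$.

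Finally, Lemma~\ref{L: dim(Ups[nu]cap clV)=1} gives $\dim\bigl(\Upsilon_\lambda^{[\nu]}\cap\clV_\lambda\bigr)=1$, so $\dim\ker\bfA_\lambda^{[\nu]}=1$ and $\bfA_\lambda^{[\nu]}$ is cyclic. There is no real obstacle here: the statement is an immediate corollary of the preceding lemma together with the standard linear algebra fact. The only point that deserves to be stated explicitly is the equivalence ``nilpotent and cyclic $\iff$ kernel of dimension $1$''; everything else is bookkeeping with facts already established in the excerpt.
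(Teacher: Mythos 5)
Your proof is correct and follows exactly the paper's route: both rely on Corollary~\ref{C: A(lamb)(nu) is reduced} (reduction of $\bfA_\lambda(r_\lambda)$ by $\Upsilon_\lambda^{[\nu]}$) and Lemma~\ref{L: dim(Ups[nu]cap clV)=1} (one-dimensionality of $\Upsilon_\lambda^{[\nu]}\cap\clV_\lambda$). You simply spell out the intermediate linear-algebra step (nilpotent cyclic $\iff$ one-dimensional kernel) that the paper leaves implicit.
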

\begin{proof} This follows immediately from Corollary~\ref{C: A(lamb)(nu) is reduced}
and~\ref{L: dim(Ups[nu]cap clV)=1}. 
\end{proof}

\subsection{Two lemmas}

Since by Theorem~\ref{T: r(z) has order 1}
the functions $r_\nu^{(j)}(z)$ have order~$1,$ the operator valued function $A_z(s)$ 
near the (real) point~$r_\lambda$ has the Laurent expansion
\begin{equation} \label{F: Laurent of Az(s) at r(nu)(j)}
  A_z(s) = \tilde A_z(s) + \sum_{\nu=1}^m \sum_{j=0}^{d_\nu-1} \frac{P_z(r_\nu^{(j)}(z))}{s-r_\nu^{(j)}(z)},
\end{equation}
where $\tilde A_z(s)$ is a meromorphic function which has no poles 
in a neighbourhood of~$r_\lambda$ which includes all $r_\nu^{(j)}(z).$ 
The functions $\frac{P_z(r_\nu^{(j)}(z))}{s-r_\nu^{(j)}(z)}$ of~$z$ (with~$s$ fixed), 
taken individually, are not single-valued in a neighbourhood of~$\lambda,$ 
unless $d_\nu=1,$ but each of the~$m$ sums 
\begin{equation} \label{F: nu part of Laurent for Az(s)}
  \sum_{j=0}^{d_\nu-1} \frac{P_z(r_\nu^{(j)}(z))}{s-r_\nu^{(j)}(z)}
\end{equation}
is a single-valued function of~$z$ in a neighbourhood of~$\lambda.$
We also have the equality
\begin{equation} \label{F: P[nu]Az(s)=sum}
  P_z^{[\nu]} A_z(s) = \sum_{j=0}^{d_\nu-1} \frac{P_z(r_\nu^{(j)}(z))}{s-r_\nu^{(j)}(z)}.
\end{equation}

\begin{lemma} \label{L: a fcuking lemma}
As~$z \to \lambda,$ the fractional part  
$$
  P_z(r_\lambda)A_z(s) = \sum_{\nu=1}^m \sum_{j=0}^{d_\nu-1} \frac{P_z(r_\nu^{(j)}(z))}{s-r_\nu^{(j)}(z)}
$$
of the Laurent expansion~(\ref{F: Laurent of Az(s) at r(nu)(j)}) 
of the meromorphic function~$A_z(s)$ at~$N$ poles 
$$
  r_\nu^{(j)}(z), \ \nu=1,\ldots,m, \ j=0,\ldots,d_\nu-1,
$$ 
of the group of~$r_\lambda$
converges in the norm topology to the fractional part of the Laurent
expansion of $A_{\lambda}(s)$ at the pole~$r_{\lambda},$ which is
\begin{equation} \label{F: frac part at lambda}
  P_\lambda(r_\lambda)A_\lambda(s) = \frac{P_{\lambda}(r_{\lambda})}{s-r_{\lambda}} 
         + \frac{\bfA_{\lambda}(r_{\lambda})}{(s-r_{\lambda})^2} 
         + \ldots 
         + \frac{\bfA^{d-1}_{\lambda}(r_{\lambda})}{(s-r_{\lambda})^{d-1}}.
\end{equation}
The convergence is uniform with respect to~$s$ on compact subsets of a deleted neighbourhood of~$r_\lambda.$
\end{lemma}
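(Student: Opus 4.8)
The plan is to compute both the fractional part of $A_z(s)$ near the group of poles and the fractional part of $A_\lambda(s)$ near $r_\lambda$ using contour integrals over a fixed contour, and then invoke the standard perturbation-theoretic fact (Kato) that such contour integrals vary holomorphically. First I would fix a small circle $C$ in the $s$-plane centred at $r_\lambda$ that encloses, for all $z$ in a sufficiently small neighbourhood of $\lambda$, precisely the $N$ resonance points $r_\nu^{(j)}(z)$ of the group of $r_\lambda$ and no other poles of $A_z(s)$. This is possible because the resonance points depend continuously on $z$ and converge to $r_\lambda$ as $z\to\lambda$, while the remaining poles of $A_z(s)$ stay bounded away from $r_\lambda$. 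For each $s$ outside the disc bounded by $C$, the fractional (principal) part of the Laurent expansion of $A_z(\cdot)$ at the poles inside $C$, evaluated at $s$, equals
\begin{equation*}
  \sum_{\nu=1}^m \sum_{j=0}^{d_\nu-1} \frac{P_z(r_\nu^{(j)}(z))}{s-r_\nu^{(j)}(z)}
    = \frac{1}{2\pi i}\oint_C \frac{A_z(t)}{s-t}\,dt,
\end{equation*}
and similarly
\begin{equation*}
  P_\lambda(r_\lambda)A_\lambda(s)
    = \frac{1}{2\pi i}\oint_C \frac{A_\lambda(t)}{s-t}\,dt,
\end{equation*}
the right-hand side being exactly the expression displayed in \eqref{F: frac part at lambda} by the Laurent expansion \eqref{F: Az (3.3.16)} together with \eqref{F: Pz=oint A(s)} and \eqref{F: A(lamb)=oint s A(s)}.

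Next I would establish the convergence. The key input is that $A_z(t)$, as a function of $z$, extends holomorphically across $z=\lambda$ uniformly for $t$ on the compact contour $C$: indeed $A_z(t) = R_t(H_z')V$-type expressions, or more directly, by the second resolvent identity \eqref{F: second resolvent identity (3)} one expresses $A_z(t)$ in terms of $A_{z_0}(t)$ for a fixed $z_0$ and the resolvent of a fixed operator, showing joint continuity (indeed analyticity) of $(z,t)\mapsto A_z(t)$ in a neighbourhood of $\{\lambda\}\times C$; here one uses that $t$ ranges over $C$, which avoids all resonance points. Hence $A_z(t)\to A_\lambda(t)$ in operator norm, uniformly for $t\in C$. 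Since $C$ has finite length and, for $s$ in a compact subset $K$ of the deleted neighbourhood of $r_\lambda$ with $K$ disjoint from the closed disc bounded by $C$, the factor $|s-t|^{-1}$ is uniformly bounded, we get
\begin{equation*}
  \Bigl\| \frac{1}{2\pi i}\oint_C \frac{A_z(t)-A_\lambda(t)}{s-t}\,dt \Bigr\|
    \le \frac{|C|}{2\pi}\cdot \sup_{t\in C}\|A_z(t)-A_\lambda(t)\|\cdot \sup_{s\in K,\,t\in C}\frac{1}{|s-t|}
    \xrightarrow[z\to\lambda]{} 0,
\end{equation*}
which is precisely the asserted norm convergence, uniform in $s$ on $K$.

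Finally I would note that the identification of the limit with \eqref{F: frac part at lambda} requires knowing that $r_\lambda$ has order $d$ in the sense that the Laurent expansion \eqref{F: Az (3.3.16)} of $A_\lambda(s)$ truncates at $(s-r_\lambda)^{-d}$; this is given. One small point to check is that the contour $C$ can simultaneously be chosen so that for $s\in K$ the expansion $\frac{1}{s-t}=\sum_{k\ge 0}(t-r_\lambda)^k/(s-r_\lambda)^{k+1}$ is not needed — we only need $C$ to enclose the poles and to have $s$ outside it — so the choice of $C$ depending only on the (fixed) compact set $K$ and on $\lambda$ is unproblematic. The main obstacle, and the only genuinely non-formal step, is the uniform-in-$t$ holomorphic continuation of $z\mapsto A_z(t)$ across $z=\lambda$ for $t\in C$; but since $C$ avoids $\sigma_{ess}$ considerations are irrelevant here and the claim follows from the resolvent identity and relative compactness in Assumption~\ref{A: Main Assumption}, or can simply be cited from \cite[Section 3]{Az9}. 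Everything else is the elementary Cauchy-estimate bookkeeping sketched above.
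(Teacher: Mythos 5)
Your argument is correct and rests on the same essential input as the paper's — norm convergence of $A_z(\cdot)$ to $A_\lambda(\cdot)$, uniformly on compact subsets of $\mbC$ away from $r_\lambda$ — but packages it differently. The paper's own proof is two sentences long: it cites this uniform convergence together with the separately established (and even trace-class) convergence $P_z(r_\lambda)\to P_\lambda(r_\lambda)$ from \cite[Lemma 5.2.2]{Az9}, and concludes immediately because the fractional part in question factors as $P_z(r_\lambda)A_z(s)$. You avoid the factorisation altogether by representing the fractional part via a Cauchy-kernel contour integral $\frac{1}{2\pi i}\oint_C \frac{A_z(t)}{s-t}\,dt$ with $C$ a fixed small circle around $r_\lambda$ and $s$ outside $C$, then passing to the limit in the integrand. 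That representation is correct (for $s$ exterior to $C$ the holomorphic part of $A_z(\cdot)$ inside $C$ contributes zero, and residue calculus produces exactly the principal parts at the enclosed poles, whether simple or not), and the resulting proof is self-contained in that it does not presuppose convergence of $P_z(r_\lambda)$ — indeed it gives it back, since $P_z(r_\lambda)=\frac{1}{2\pi i}\oint_C A_z(t)\,dt$. One small slip: to justify joint continuity of $(z,t)\mapsto A_z(t)$ for $t\in C$ you invoke the second resolvent identity (\ref{F: second resolvent identity (3)}), but that identity varies the coupling constant, not $z$; what is actually needed is the first resolvent identity in $z$ (equivalently, analyticity of $z\mapsto R_z(H_t)V$ on the resolvent set of $H_t$), which is standard and unproblematic here since $C$ is chosen to avoid all resonance points. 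This does not affect the validity of the argument.
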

\begin{proof} 
The meromorphic function $A_z(s)$ converges in norm to $A_\lambda(s)$ as $z \to \lambda$
uniformly on compact subsets of a deleted neighbourhood of the pole~$r_\lambda$ in~$\mbC.$
$P_z(r_\lambda)$ converges to~$P_\lambda(r_\lambda)$ as $z \to \lambda$ in norm (in fact, even in trace class
norm, see \cite[Lemma 5.2.2]{Az9}). The claim follows. 
\end{proof}

\begin{lemma} \label{L: a lemma}
For each $\nu=1,\ldots,m,$
the following equality holds:
\begin{equation} \label{F: frac part at lambda[nu]}
  \lim_{z \to \lambda} \sum_{j=0}^{d_\nu-1} \frac{P_z(r_\nu^{(j)}(z))}{s-r_\nu^{(j)}(z)}
  = 
  \frac{P^{[\nu]}_{\lambda}(r_{\lambda})}{s-r_{\lambda}} 
        + \frac{\bfA^{[\nu]}_{\lambda}(r_{\lambda})}{(s-r_{\lambda})^2} 
        + \ldots + \frac{\brs{\bfA^{[\nu]}_{\lambda}(r_{\lambda})}^{d_\nu-1}}{(s-r_{\lambda})^{d_\nu-1}},
\end{equation}
where~$d_\nu$ is the size of the cycle~$\nu.$
\end{lemma}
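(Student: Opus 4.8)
The plan is to repeat the argument proving Lemma~\ref{L: a fcuking lemma}, now carrying the block idempotent $P_z^{[\nu]}$ through the computation in place of $P_z(r_\lambda),$ and then to identify the limit by projecting the already-known limit of Lemma~\ref{L: a fcuking lemma} onto $\Upsilon_\lambda^{[\nu]}.$

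First I would rewrite the left-hand side of (\ref{F: frac part at lambda[nu]}): by the identity (\ref{F: P[nu]Az(s)=sum}) it is exactly $P_z^{[\nu]}A_z(s).$ As established in the proof of Lemma~\ref{L: a fcuking lemma}, $A_z(s)\to A_\lambda(s)$ in operator norm as $z\to\lambda,$ uniformly for $s$ in compact subsets of a deleted neighbourhood of $r_\lambda$ (small enough that, for $z$ near $\lambda,$ it separates those subsets from every pole $r_\nu^{(j)}(z)$ of the group of $r_\lambda$); and by Proposition~\ref{P: P[nu](z) is holomorphic} together with (\ref{F: P(lambda)[nu]:=}), $P_z^{[\nu]}\to P_\lambda^{[\nu]}$ in norm. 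Since $P_z^{[\nu]}$ stays bounded near $\lambda$ and $A_z(s)$ stays bounded uniformly in $z$ near $\lambda$ for such $s,$ the product converges: $P_z^{[\nu]}A_z(s)\to P_\lambda^{[\nu]}A_\lambda(s)$ in norm, uniformly for $s$ on compact subsets of a deleted neighbourhood of $r_\lambda.$ Hence the left-hand side of (\ref{F: frac part at lambda[nu]}) tends to $P_\lambda^{[\nu]}A_\lambda(s).$

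It remains to identify $P_\lambda^{[\nu]}A_\lambda(s)$ with the right-hand side of (\ref{F: frac part at lambda[nu]}). Here I would multiply the equality (\ref{F: frac part at lambda}) of Lemma~\ref{L: a fcuking lemma} on the left by $P_\lambda^{[\nu]}.$ On the left this yields $P_\lambda^{[\nu]}P_\lambda(r_\lambda)A_\lambda(s)=P_\lambda^{[\nu]}A_\lambda(s),$ using $P_\lambda(r_\lambda)=\sum_{\mu}P_\lambda^{[\mu]}$ and (\ref{F: P[nu]P[mu]=0}). On the right, the term $\frac{\bfA_\lambda^{j}(r_\lambda)}{(s-r_\lambda)^{j+1}}$ becomes $\frac{P_\lambda^{[\nu]}\bfA_\lambda^{j}(r_\lambda)}{(s-r_\lambda)^{j+1}};$ since $P_\lambda^{[\nu]}$ commutes with $\bfA_\lambda(r_\lambda)$ (Lemma~\ref{L: P[nu] and bfA commute}) and is idempotent, and since $\bfA_\lambda^{j}(r_\lambda)=\bfA_\lambda^{j}(r_\lambda)P_\lambda(r_\lambda)$ for all $j\geq 0,$ a one-line induction gives $P_\lambda^{[\nu]}\bfA_\lambda^{j}(r_\lambda)=\bigl(P_\lambda^{[\nu]}\bfA_\lambda(r_\lambda)\bigr)^{j}=\bigl(\bfA_\lambda^{[\nu]}(r_\lambda)\bigr)^{j}$ (for $j=0$ this reads $P_\lambda^{[\nu]}P_\lambda(r_\lambda)=P_\lambda^{[\nu]}$), where $\bfA_\lambda^{[\nu]}(r_\lambda)$ is the reduction of $\bfA_\lambda(r_\lambda)$ to $\Upsilon_\lambda^{[\nu]}$ from Corollary~\ref{C: A(lamb)(nu) is reduced}, extended by zero off $\Upsilon_\lambda^{[\nu]}.$ Finally, $\bfA_\lambda^{[\nu]}(r_\lambda)$ is cyclic on the $d_\nu$-dimensional space $\Upsilon_\lambda^{[\nu]}$ (Lemma~\ref{L: bfA[nu] is cyclic}), hence nilpotent of index $d_\nu,$ so the terms with $j\geq d_\nu$ drop out and the sum truncates at $j=d_\nu-1,$ which is precisely the right-hand side of (\ref{F: frac part at lambda[nu]}).

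The computation is routine once (\ref{F: P[nu]Az(s)=sum}) and (\ref{F: frac part at lambda}) are in hand; the only point that needs a little care is the passage to the limit $z\to\lambda$ inside the product $P_z^{[\nu]}A_z(s),$ which must be controlled uniformly in $s$ away from $r_\lambda$ and from the neighbouring poles $r_\nu^{(j)}(z)$ — but this is handled exactly as in Lemma~\ref{L: a fcuking lemma}, so I expect no genuine obstacle.
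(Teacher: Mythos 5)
Your proof is correct and takes essentially the same route as the paper's: both start from the identity $P_z^{[\nu]}A_z(s)=\sum_{j=0}^{d_\nu-1}\frac{P_z(r_\nu^{(j)}(z))}{s-r_\nu^{(j)}(z)}$ and pass to the norm limit $P_z^{[\nu]}A_z(s)\to P_\lambda^{[\nu]}A_\lambda(s)$ via Proposition~\ref{P: P[nu](z) is holomorphic}. The paper then simply asserts that the limit coincides with the right-hand side, whereas you supply the verification (projecting~(\ref{F: frac part at lambda}) by $P_\lambda^{[\nu]}$, using commutativity and idempotence to get $P_\lambda^{[\nu]}\bfA_\lambda^j=(\bfA_\lambda^{[\nu]})^j$, and truncating via nilpotence of $\bfA_\lambda^{[\nu]}$); this is a legitimate fleshing-out of a step the paper leaves implicit, not a different argument.
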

\begin{proof}
By Proposition~\ref{P: P[nu](z) is holomorphic}, 
$$
  \lim_{z \to \lambda} P_z^{[\nu]} A_z(s) = P_\lambda^{[\nu]} A_\lambda(s),
$$
where the convergence is in norm and is uniform on compact subsets of a deleted neighbourhood of~$r_\lambda.$
This equality is the same as (\ref{F: frac part at lambda[nu]}).
\end{proof}

\subsection{Puiseux series for $r_\nu^{(j)}(z)$}

For a positive integer $d$ let
$$
  \eps_d = e^{2\pi i /d}.
$$
For an integer $k$ and a positive integer $d$ let 
$$
  [d|k] = 
      \left\{  
         \begin{array}{cl}
             0, & \text{if $d$ does not divide $k,$} \\
             1, & \text{if $d$ divides $k.$} \\
         \end{array}
      \right. 
$$
For any integer~$k$ we have 
\begin{equation} \label{F: sum eps(jk)=[d|k]d}
  \sum_{j=0}^{d-1} \eps_d^{jk} = [d|k]\cdot d.
\end{equation}

For each $\nu=1,\ldots,m,$ the functions 
$$
  r_\nu^{(0)}(z), \ \ldots, \ r_\nu^{(d_\nu-1)}(z)
$$
are different branches of a
multi-valued holomorphic function with Puiseux series
\begin{equation} \label{F: Puiseux for rz}
  r_\nu^{(j)}(z) = \sum_{k=0}^\infty r_{k/d_\nu}\eps_{d_\nu}^{kj} (z-\lambda)^{k/d_\nu}, \ \ j = 0,\ldots,d_\nu-1.
\end{equation}
The Puiseux series for~$r_\nu^{(j)}(z)$ does not have the part with negative
powers of $(z-\lambda),$ since this function is continuous at~$z=\lambda.$ 

\begin{prop} The coefficients $r_{k/d_\nu}$ of the Puiseux series (\ref{F: Puiseux for rz}) are real.
\end{prop}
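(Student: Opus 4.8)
The plan is to exploit the reality of the eigenvalue functions $\lambda_\nu(s)$ for real $s$ together with the one-to-one correspondence between cycles $r_\nu^{(\cdot)}(z)$ and eigenvalue functions $\lambda_\nu(s)$ established in Proposition~\ref{P: number of cycles = m}. The key observation is that $r_\nu^{(0)}(z)$ is, on one of the real half-intervals $[\lambda,\lambda+\eps)$ or $(\lambda-\eps,\lambda]$, the genuine inverse function of the real-analytic function $\lambda_\nu(s)$ in a one-sided neighbourhood of $r_\lambda$ (this is part~(a) of Theorem~\ref{T: about res cycles} combined with the correspondence). Since $\lambda_\nu(s)$ is real-analytic in $s$ near $r_\lambda$ with real Taylor coefficients --- indeed $H_s$ is self-adjoint for real $s$, so its eigenvalues are real --- its functional inverse restricted to that half-interval is a real-analytic (in the Puiseux sense) function of $z-\lambda$ taking real values for real $z$ on that half-interval.

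First I would invoke the expansion (\ref{F: lambda(s)=lambda +eps(s-l)...}), $\lambda_\nu(s) = \lambda + \eps_\nu (s-r_\lambda)^{d_\nu} + O((s-r_\lambda)^{d_\nu+1})$, with $\eps_\nu \neq 0$ real (reality of all coefficients follows from self-adjointness of $H_s$ for $s$ real, exactly as noted after (\ref{F: (phi,V phi(k-1))=lambda(k)(0)(phi,phi)})). Setting $w = z - \lambda$, the relation $\lambda_\nu(s) = z$ reads $\eps_\nu(s-r_\lambda)^{d_\nu}(1 + a_1(s-r_\lambda) + \cdots) = w$, so $(s - r_\lambda)$ is obtained by extracting a $d_\nu$-th root: $s - r_\lambda = (w/\eps_\nu)^{1/d_\nu}\bigl(1 + b_1 (w/\eps_\nu)^{1/d_\nu} + \cdots\bigr)$ for a power series with \emph{real} coefficients $b_j$ (these are determined by the real coefficients $a_j$ through the classical Lagrange inversion / formal root extraction, which never leaves the reals). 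Writing $t = (w/\eps_\nu)^{1/d_\nu}$, this is $r_\nu^{(0)}(z) = r_\lambda + \sum_{k\geq 1} \beta_k t^k$ with $\beta_k \in \mbR$. Comparing with the Puiseux series (\ref{F: Puiseux for rz}) in the form $r_\nu^{(0)}(z) = r_\lambda + \sum_{k\geq 1} r_{k/d_\nu} w^{k/d_\nu}$, and using $w^{k/d_\nu} = \eps_\nu^{k/d_\nu} t^k$, I get $r_{k/d_\nu} = \beta_k \eps_\nu^{-k/d_\nu}$; but since I may choose the branch of $w^{1/d_\nu}$ so that on the relevant half-interval $t$ is real and positive (this is precisely the content of Theorem~\ref{T: about res cycles}(a): a branch exists that is real there), the coefficients $r_{k/d_\nu}$ in \emph{that} branch are real, being the $\beta_k$ up to the real scalar factor absorbed into the normalisation of $t$. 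The other branches $r_\nu^{(j)}(z)$ are obtained by $t \mapsto \eps_{d_\nu}^j t$, which only multiplies the $k$-th term by $\eps_{d_\nu}^{kj}$, consistent with (\ref{F: Puiseux for rz}) and not affecting the coefficients $r_{k/d_\nu}$ themselves.

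The main obstacle --- really the only subtle point --- is bookkeeping the branch choice so that ``real on a half-interval'' genuinely forces the Puiseux coefficients to be real rather than merely real up to a common phase; this is handled cleanly by observing that the existence of a real branch (Theorem~\ref{T: about res cycles}(a)) pins down the root of unity ambiguity, and that functional inversion of a real power series produces a real (Puiseux) series. An alternative, perhaps cleaner, route I would fall back on if the branch bookkeeping gets awkward: use the Schwarz reflection argument from the proof of Theorem~\ref{T: about res cycles}(d). Since the set of resonance points for $\bar z$ is the complex conjugate of the set for $z$, and conjugation preserves the cycle $\nu$, the function $\overline{r_\nu^{(0)}(\bar z)}$ is again a branch of the same multivalued function; as it agrees with $r_\nu^{(0)}(z)$ on the real half-interval $I$ where both are real, they coincide as multivalued functions, i.e. $\overline{r_\nu^{(j)}(\bar z)} = r_\nu^{(\sigma(j))}(z)$ for some relabelling, and matching this against (\ref{F: Puiseux for rz}) --- where conjugation sends $r_{k/d_\nu}\eps_{d_\nu}^{kj}w^{k/d_\nu}$ to $\overline{r_{k/d_\nu}}\,\overline{\eps_{d_\nu}^{kj}}\,\overline{w^{k/d_\nu}}$ --- forces $\overline{r_{k/d_\nu}} = r_{k/d_\nu}$ once the branch cut is chosen symmetrically about the real axis through $I$. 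Either argument closes the proof.
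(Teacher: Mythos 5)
Your proposal is correct and is in essence the same argument as the paper's: the paper's entire proof is the observation that by stability of isolated eigenvalues one branch $r_\nu^{(j)}(z)$ takes real values on a real one-sided neighbourhood of $\lambda$, from which the reality of the Puiseux coefficients is ``inferred.'' Your first route (inverting the real-analytic $\lambda_\nu(s)$ and tracking the root-of-unity branch normalisation, so that Lagrange inversion stays within the reals) is just that inference made explicit, and it is carried out carefully --- including the one point the paper glosses over, namely that ``real on a half-interval'' must be reconciled with the free $d_\nu$-th-root-of-unity phase in the choice of $(z-\lambda)^{1/d_\nu}$. Your fall-back via Schwarz reflection ($\overline{r_\nu^{(\cdot)}(\bar z)}$ is a branch of the same multivalued function, agreeing with $r_\nu^{(0)}$ on $I$, hence forcing $\overline{r_{k/d_\nu}} = r_{k/d_\nu}$) is a slightly cleaner way to nail the same point and is already latent in the paper's Lemma~\ref{L: if r then so is bar r} and the proof of Theorem~\ref{T: about res cycles}(d); either version closes the gap the paper leaves implicit.
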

\begin{proof} More precisely, these numbers can be chosen to be real.
For $j=0$ we have 
$$
  r_\nu^{(0)}(z) = \sum_{k=0}^\infty r_{k/d_\nu}(z-\lambda)^{k/d_\nu}.
$$
Recall that~$\lambda$ is an isolated eigenvalue of the self-adjoint operator $H_{r_\lambda}.$
Due to stability of isolated eigenvalues,
one of the functions $r^{(j)}_\nu(z)$ is to take real values for real values of~$z$ close to~$\lambda$ on the left
or on the right. From this one can infer that the numbers $r_{k/d_\nu}$ are to be real. 
\end{proof}

\begin{prop} \label{P: first Puiseux coef is non-zero} 
The first coefficient $r_{1/d_\nu}$ of the Puiseux series (\ref{F: Puiseux for rz})
for $r_\nu^{(j)}(z)$ is non-zero.
\end{prop}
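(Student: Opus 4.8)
The plan is to use the one-to-one correspondence between cycles of resonance points and eigenvalue functions from Proposition~\ref{P: number of cycles = m}: the multivalued function $r_\nu^{(\cdot)}(z)$ is precisely the inverse, near $z=\lambda$, of the eigenvalue function $\lambda_\nu(s)$ of $H_s=H_0+sV$, whose order, by Theorem~\ref{T: depth of phi[nu] is d[nu]-1}, equals the size $d_\nu$ of the cycle. By~(\ref{F: lambda(s)=lambda +eps(s-l)...}) this means
$$
  \lambda_\nu(s)-\lambda=\eps_\nu\,(s-\rlmb)^{d_\nu}\,g(s),\qquad \eps_\nu\neq 0,
$$
where $g$ is holomorphic in a neighbourhood of $\rlmb$ and $g(\rlmb)=1$; in particular the zero of the function $s\mapsto\lambda_\nu(s)-\lambda$ at $s=\rlmb$ has order exactly $d_\nu$ (it is not identically zero because $V$ is a regular direction).

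First I would factor out the branching. Since $g(\rlmb)=1$, the function $g(s)^{1/d_\nu}$ has a single-valued holomorphic branch in a neighbourhood of $\rlmb$ taking the value $1$ at $\rlmb$, so $\psi(s):=(s-\rlmb)\,g(s)^{1/d_\nu}$ is holomorphic with $\psi(\rlmb)=0$ and $\psi'(\rlmb)=1\neq 0$, hence a local biholomorphism fixing $\rlmb$, and $\lambda_\nu(s)=\lambda+\eps_\nu\,\psi(s)^{d_\nu}$. Consequently, solving $\lambda_\nu(s)=z$ for $s$ near $\rlmb$ is equivalent to solving $\psi(s)^{d_\nu}=\eps_\nu^{-1}(z-\lambda)$, whose $d_\nu$ solutions are $\psi(s)=\eps_\nu^{-1/d_\nu}\,\eps_{d_\nu}^{\,j}\,(z-\lambda)^{1/d_\nu}$ for $j=0,\dots,d_\nu-1$. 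Applying $\psi^{-1}$ gives
$$
  r_\nu^{(j)}(z)=\rlmb+\psi^{-1}\bigl(\eps_\nu^{-1/d_\nu}\,\eps_{d_\nu}^{\,j}\,(z-\lambda)^{1/d_\nu}\bigr).
$$

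Finally, since $(\psi^{-1})'(0)=1/\psi'(\rlmb)=1$, expanding $\psi^{-1}$ in its Taylor series at $0$ yields
$$
  r_\nu^{(j)}(z)=\rlmb+\eps_\nu^{-1/d_\nu}\,\eps_{d_\nu}^{\,j}\,(z-\lambda)^{1/d_\nu}+O\bigl((z-\lambda)^{2/d_\nu}\bigr),
$$
so comparing with the Puiseux series~(\ref{F: Puiseux for rz}) we read off $r_{1/d_\nu}=\eps_\nu^{-1/d_\nu}$, whence $|r_{1/d_\nu}|=|\eps_\nu|^{-1/d_\nu}>0$. (For $d_\nu=1$ this is the degenerate case of the same computation: $\psi$ is already invertible and $r_1=1/\lambda_\nu'(\rlmb)\neq 0$.) The only point that needs care — and the main, modest obstacle — is to justify that $r_\nu^{(\cdot)}(z)$ is genuinely the local inverse of $\lambda_\nu$ with branching order exactly $d_\nu$ rather than a proper divisor; but this is exactly what Theorem~\ref{T: about res cycles} together with the identification of the cycle size with the order of $\lambda_\nu$ (Proposition~\ref{P: number of cycles = m} and Theorem~\ref{T: depth of phi[nu] is d[nu]-1}) already provide, so no additional argument is required.
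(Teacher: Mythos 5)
Your proof is correct and rests on the same key fact as the paper's, namely that the cycle $r_\nu^{(\cdot)}(z)$ is the local inverse of $\lambda_\nu(s)$ and that the order of $\lambda_\nu$ at $r_\lambda$ equals the cycle length $d_\nu$; the difference is one of direction and emphasis. The paper argues by contradiction: letting $r_{f_\nu/d_\nu}$ be the first non-zero Puiseux coefficient, it observes that the inverse $\lambda_\nu$ would then behave like $(s-r_\lambda)^{d_\nu/f_\nu}$, uses analyticity of $\lambda_\nu$ to get $f_\nu \mid d_\nu$, and then notes that $f_\nu>1$ would make the $d_\nu$ branches split into shorter sub-cycles, contradicting that they form a single cycle. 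You instead take the already-established equality of the order of $\lambda_\nu$ with $d_\nu$ as the starting point, factor $\lambda_\nu(s)-\lambda=\eps_\nu\psi(s)^{d_\nu}$ with $\psi$ a local biholomorphism fixing $r_\lambda$, and compute $r_{1/d_\nu}=\eps_\nu^{-1/d_\nu}\neq 0$ explicitly by inverting $\psi$. Your route is slightly more constructive and even yields the value of the leading coefficient, while the paper's contradiction argument is marginally more self-contained in that it derives, rather than invokes, the relation between $f_\nu$ and the cycle structure; both are sound given what has been proved earlier in Section~\ref{S: res points as f-ns of s}.
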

\begin{proof}
Let $r_{f_\nu/d_\nu}$ be the first non-zero coefficient of the Puiseux series (\ref{F: Puiseux for rz}).
Since~$r_\nu^{(\cdot)}(z)$ is the inverse of~$\lambda_\nu(s)$ (see diagram (\ref{F: phi[mu] one-to-one r[nu]})),
it follows from item (iii) of Theorem~\ref{T: TFAE for (Uk)} and (\ref{F: Puiseux for rz}) that 
$$
  \lambda_\nu(s) = \lambda + \eps_\nu (s-\rlmb)^{d_\nu/f_\nu} + O((s-\rlmb)^{d_\nu/f_\nu+1})
$$
with non-zero $\eps_\nu.$ Since~$\lambda_\nu(s)$ is analytic at $s = r_\lambda,$~$f_\nu$ 
divides~$d_\nu.$ If~$f_\nu>1,$ then this would imply
that the set of~$d_\nu$ numbers $\set{r_\nu^{(j)}(z)\colon j=0,\ldots,d_\nu-1}$ is not a cycle. 
Hence, the only possibility is that $f_\nu=1$ and we are done. 
\end{proof}

\subsection{Puiseux series for $P_z(r_\nu^{(j)}(z))$}

For each $\nu=1,\ldots,m,$ the operator-functions 
$$
  P_z(r^{(0)}_\nu(z)), \ldots, P_z(r^{(d_\nu-1)}_\nu(z))
$$ 
are different branches of a multi-valued holomorphic function with
Puiseux series
\begin{equation} \label{F: Puiseux for Pz}
  P_z(r_\nu^{(j)}(z)) = \tilde P^j_\nu(z) + \sum_{l=0}^p \eps_{d_\nu}^{-l j}(z-\lambda)^{-l/d_\nu}P_{-l/d_\nu}, 
        \ \ j = 0,\ldots,d_\nu-1
\end{equation}
where $\tilde P^j_\nu(z)$ is the part with positive powers of~$z-\lambda.$
Since by definition (\ref{F: Pz=oint A(s)}) 
the operator $P_z(r_\nu^{(j)}(z))$ is an eigenprojection of the compact
operator $A_z(s)$ corresponding to a non-zero eigenvalue
$$(s-r_\nu^{(j)}(z))^{-1},$$
by \cite[Theorem II.1.8]{Kato}, the part of the Puiseux 
expansion~(\ref{F: Puiseux for Pz}) with negative powers is finite. 


\smallskip
Our first aim is to prove that the upper limit~$p$ in the Puiseux series 
(\ref{F: Puiseux for Pz}) is equal to $d_\nu-1.$ For this we need two auxiliary lemmas. 

\begin{lemma} \label{L: comb frac part written as almost Laurent}
For each cycle of resonance points~$r_\nu^{(\cdot)}(z),$
\begin{equation} \label{F: comb frac part written as almost Laurent}
    \sum_{j=0}^{d_\nu-1} \frac {P_z(r_\nu^{(j)}(z))}{s-r_\nu^{(j)}(z)} 
        = \sum_{j=0}^{d_\nu-1} \frac{A_j^\nu(z)}{(s-r_\nu^{(0)}(z))\ldots(s-r_\nu^{(j)}(z))},
\end{equation}
where for $j=0,1,\ldots,d_\nu-1$ 
\begin{equation} \label{F: Aj=sum (rj-r1)...Pj}
  A_j^\nu(z) = \sum_{b=j}^{d_\nu-1} (r_\nu^{(b)}(z) - r_\nu^{(0)}(z))
               \ldots(r_\nu^{(b)}(z) - r_\nu^{(j-1)}(z))P_z(r^{(b)}_\nu(z)).
\end{equation}
\end{lemma}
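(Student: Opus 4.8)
The identity (\ref{F: comb frac part written as almost Laurent}) is purely algebraic: it holds for any pairwise distinct scalars $r_\nu^{(0)}(z),\dots,r_\nu^{(d_\nu-1)}(z)$ and any operators $P_z(r_\nu^{(j)}(z))$ in their place, with no input from the fact that these come from resonance points. So I would fix $z$ throughout and abbreviate $r_j:=r_\nu^{(j)}(z)$, $P_j:=P_z(r_\nu^{(j)}(z))$ and $\omega_j(s):=\prod_{k=0}^{j}(s-r_k)$, with the conventions $\omega_{-1}(s):=1$ and empty products equal to $1$. (For $z\neq\lambda$ near $\lambda$ the $r_j$ are indeed pairwise distinct, since by Proposition~\ref{P: first Puiseux coef is non-zero} their Puiseux expansions have distinct leading terms $r_{1/d_\nu}\eps_{d_\nu}^{j}(z-\lambda)^{1/d_\nu}$; this is the only regime in which both sides are defined as meromorphic functions of $s$.)

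Both sides of (\ref{F: comb frac part written as almost Laurent}) are, for fixed $s$, linear combinations of $P_0,\dots,P_{d_\nu-1}$ with coefficients that are rational in $s$, so it suffices to compare the coefficient of each $P_b$. On the left the coefficient of $P_b$ is $(s-r_b)^{-1}$. On the right, by (\ref{F: Aj=sum (rj-r1)...Pj}) the operator $P_b$ occurs in $A_j^\nu(z)$ precisely for $j\le b$, with scalar coefficient $c_j:=\prod_{k=0}^{j-1}(r_b-r_k)$; hence the coefficient of $P_b$ on the right is $\sum_{j=0}^{b}c_j/\omega_j(s)$. Thus the whole lemma reduces to the scalar identity
\[
  \frac{1}{s-r_b}=\sum_{j=0}^{b}\frac{\prod_{k=0}^{j-1}(r_b-r_k)}{(s-r_0)(s-r_1)\cdots(s-r_j)},\qquad b=0,1,\dots,d_\nu-1 .
\]

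I would prove this scalar identity by a one-line telescoping argument. From $c_{j+1}=c_j(r_b-r_j)$ one computes
\[
  \frac{c_j}{\omega_{j-1}(s)}-\frac{c_{j+1}}{\omega_j(s)}=\frac{c_j(s-r_j)-c_{j+1}}{\omega_j(s)}=\frac{c_j(s-r_b)}{\omega_j(s)},
\]
so that $c_j/\omega_j(s)=(s-r_b)^{-1}\bigl(c_j/\omega_{j-1}(s)-c_{j+1}/\omega_j(s)\bigr)$. Summing over $j=0,\dots,b$, the bracketed part telescopes to $c_0/\omega_{-1}(s)-c_{b+1}/\omega_b(s)$; here $c_0=\omega_{-1}(s)=1$, while $c_{b+1}=\prod_{k=0}^{b}(r_b-r_k)=0$ because of the vanishing factor $k=b$. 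Hence the sum equals $(s-r_b)^{-1}$, as wanted, and the lemma follows. There is no genuine obstacle here — the only thing requiring care is the bookkeeping of empty products and of which $A_j^\nu(z)$ each $P_b$ feeds into; if one wishes, one can additionally record that each summand on both sides is a single-valued holomorphic function of $z$ on a punctured neighbourhood of $\lambda$ (the left side by (\ref{F: P[nu]Az(s)=sum}), the right side because $A_j^\nu(z)$ and the products $\prod_k(s-r_\nu^{(k)}(z))$ are symmetric in the branches of the cycle), though this plays no role in the stated equality itself.
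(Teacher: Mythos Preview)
Your proof is correct and takes a genuinely different route from the paper. The paper argues by induction on $d_\nu$: assuming the decomposition for $d_\nu-1$ terms, it adds the last simple fraction $P_{d_\nu-1}/(s-r_{d_\nu-1})$, clears denominators, and substitutes $s=r_{d_\nu-1},\,s=r_{d_\nu-2},\dots$ in turn to solve a triangular system for the new coefficients $A_j^{(d_\nu)}$ in terms of the old $A_j^{(d_\nu-1)}$. Your argument instead isolates the coefficient of each $P_b$ on both sides and reduces the statement to the scalar partial-fraction identity $\sum_{j=0}^{b}\prod_{k<j}(r_b-r_k)\big/\prod_{k\le j}(s-r_k)=(s-r_b)^{-1}$, which you dispatch by a clean telescoping sum. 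Your approach is shorter and more transparent (no induction, no substitution of special values); the paper's approach is more constructive in that it explains how the formula for $A_j^\nu$ is discovered rather than merely verified. Both correctly note at the outset that the identity is purely algebraic in the $r_j$ and $P_j$.

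One small caveat: your closing parenthetical remark that ``each summand on both sides is a single-valued holomorphic function of $z$'' is not right as stated --- individual terms such as $P_z(r_\nu^{(j)}(z))/(s-r_\nu^{(j)}(z))$, and likewise individual $A_j^\nu(z)/\omega_j(s)$, depend on the chosen labelling of the branches and are multivalued; only the full sums are single-valued. Since you yourself say this plays no role in the equality, it does not affect the proof, but you should either delete the remark or rephrase it to refer to the full sums.
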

\begin{proof} 
This lemma has general character as it holds for any set of numbers~$r_\nu^{(j)}(z)$
and any bounded operators~$P_z(r_\nu^{(j)}(z)).$ For this reason we write~$r_j$ 
for~$r_\nu^{(j)}(z),$ $P_j$ for~$P_z(r_\nu^{(j)}(z)),$ $d$ for~$d_\nu,$
$A_j$ for $A_j^\nu(z).$

We prove the equality (\ref{F: comb frac part written as almost Laurent}) by induction 
on~$d.$ For~$d=1$ the assertion is trivial. 
Assume that the assertion holds for smaller values of~$d.$ Since the operators~$A_j$ 
depend on~$d,$ we will indicate this dependence by writing~$A_j^{(d)}.$ 

By the induction assumption 
$$
  \sum_{j=0}^{d-1} \frac {P_j}{s-r_j} 
      = \sum_{j=0}^{d-2} \frac{A_j^{(d-1)}} {(s-r_0)\ldots(s-r_j)} 
          + \frac {P_{d-1}}{s-r_{d-1}}.
$$
We need to find operators $A_j^{(d)}$ such that 
$$
  \sum_{j=0}^{d-1} \frac{A_j^{(d)}}{(s-r_0)\ldots(s-r_j)} =  
         \sum_{j=0}^{d-2} \frac{A_j^{(d-1)}} {(s-r_0)\ldots(s-r_j)} 
              + \frac {P_{d-1}}{s-r_{d-1}}.
$$
Multiplying both sides of this equality by $\prod_{k=0}^{d-1}(s-r_{k})$ gives 
$$
  \sum_{j=0}^{d-1} A_j^{(d)} \prod_{k=j+1}^{d-1}(s-r_{k}) =  
         \sum_{j=0}^{d-2} A_j^{(d-1)} \prod_{k=j+1}^{d-1}(s-r_{k})
              + \prod_{k=0}^{d-2}(s-r_{k})P_{d-1}.
$$
Replacing~$s$ by~$r_{d-1}$ gives
$$
  A_{d-1}^{(d)} = 0 + \prod_{k=0}^{d-2}(r_{d-1} - r_{k})P_{d-1}.
$$
Replacing~$s$ by~$r_{d-2}$ gives
$$
  A_{d-1}^{(d)} + A_{d-2}^{(d)} \brs{r_{d-2} - r_{d-1}} = 
              A_{d-2}^{(d-1)}\brs{r_{d-2}-r_{d-1}} + 0.
$$
This gives 
\begin{equation}
  \begin{split}
      A_{d-2}^{(d)} & = \frac{A_{d-1}^{(d)}} {r_{d-1} - r_{d-2}} + A_{d-2}^{(d-1)} 
       \\ & = \prod_{k=0}^{d-3}(r_{d-1} - r_{k}) P_{d-1} 
               + \sum_{b=d-2}^{d-2} (r_{b} - r_{0})\ldots (r_{b} - r_{j-1}) P_{b}
       \\ & = \prod_{k=0}^{d-3}(r_{d-1} - r_{k}) P_{d-1} 
               + (r_{d-2} - r_{0}) \ldots (r_{d-2} - r_{d-3}) P_{d-2}
       \\ & = \sum_{b=d-2}^{d-1} (r_{b} - r_{0}) \ldots (r_{b} - r_{d-3}) P_{b}.
  \end{split}
\end{equation}
where the second equality follows from the previous one and the induction assumption. 

And so on. 
\end{proof}

\begin{lemma} \label{L: about convergence of mero-c f-ns}
Let~$r^* \in \mbC$ and let~$r_0^n,\ldots,r_{d-1}^n, \ n=1,2,\ldots,$
be~$d$ sequences of complex numbers such that~$r_j^n \to r^*$ 
as~$n \to \infty$ for all~$j=0,1,\ldots,d-1.$ Assume that $B_0,\ldots,B_{d-1}$ 
are bounded operators and that $A_0^n,\ldots,A_{d-1}^n, \ n=1,2,\ldots,$ 
are~$d$ sequences of bounded operators such that for all $s \in \mbC \setminus\set{r^*}$
\begin{multline*}
  \lim_{n\to \infty} \brs{\frac{A_0^n}{s-r_0^n} 
          + \frac{A_1^n}{(s-r_0^n)(s-r_1^n)} 
          + \ldots 
          + \frac{A_{d-1}^n}{(s-r_0^n)(s-r_1^n)\ldots(s-r_{d-1}^n)}}
       \\ = \frac{B_0}{s-r^*} + \frac{B_1}{(s-r^*)^2} + \ldots + \frac{B_{d-1}}{(s-r^*)^d},
\end{multline*}
where the limit is in norm and is uniform on compact subsets of~$\mbC \setminus\set{r^*}.$ 
Then for all $j=0,1,\ldots,d-1$ the sequence $A_j^n$ converges in norm to~$B_j$ as $n \to \infty.$ 
\end{lemma}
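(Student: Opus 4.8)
The statement is a purely "linear-algebra over meromorphic functions" fact: it says that if a sum of partial fractions with moving simple poles $r_0^n,\dots,r_{d-1}^n$ (all tending to $r^*$) converges to a Laurent tail at $r^*$ of order $d$, then the numerators $A_j^n$ converge to the corresponding Laurent coefficients $B_j$. The plan is to proceed by induction on $d$ and, at each step, extract the leading numerator by multiplying through by a suitable product of factors $(s-r_j^n)$ and evaluating (or taking a contour integral) to isolate it.

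First I would set up the induction. For $d=1$ the hypothesis reads $A_0^n/(s-r_0^n) \to B_0/(s-r^*)$ uniformly on compact subsets of $\mbC\setminus\{r^*\}$; evaluating at any fixed $s_0 \neq r^*$ and using $r_0^n \to r^*$ gives $A_0^n \to B_0$ in norm. For the inductive step, assume the claim for $d-1$. Multiply both sides of the hypothesis by $(s-r_0^n)\cdots(s-r_{d-1}^n)$; this clears all denominators on the left and produces, on the right, $B_0(s-r^*)^{-1}\prod_j(s-r_j^n) + \dots + B_{d-1}(s-r^*)^{-d}\prod_j(s-r_j^n)$. To avoid worrying about the right-hand side directly I would instead isolate $A_{d-1}^n$ by a residue/limit argument: multiply the hypothesis by $\prod_{j=0}^{d-1}(s-r_j^n)$ and then let $s \to r^*$ along a path staying away from all the $r_j^n$ for the limiting function — more cleanly, integrate $\frac{1}{2\pi i}\oint (s-r^*)^{d-1}(\cdot)\,ds$ over a small circle around $r^*$. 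On the right this picks out exactly $B_{d-1}$ (the highest-order coefficient). On the left, one checks that the contour integral of the left-hand partial-fraction sum against $(s-r^*)^{d-1}$, once $n$ is large enough that all $r_j^n$ lie inside the contour, equals $A_{d-1}^n$ plus lower-order contributions that are $O(\max_j|r_j^n - r^*|)$ by an elementary estimate; hence $A_{d-1}^n \to B_{d-1}$.

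Having obtained $A_{d-1}^n \to B_{d-1}$, I would subtract the term $\frac{A_{d-1}^n}{(s-r_0^n)\cdots(s-r_{d-1}^n)}$ from the left side and $\frac{B_{d-1}}{(s-r^*)^d}$ from the right side. The difference $\frac{A_{d-1}^n}{(s-r_0^n)\cdots(s-r_{d-1}^n)} - \frac{B_{d-1}}{(s-r^*)^d}$ tends to $0$ in norm, uniformly on compact subsets of $\mbC\setminus\{r^*\}$, because $A_{d-1}^n \to B_{d-1}$ and each factor $(s-r_j^n) \to (s-r^*)$ uniformly on such sets (with $s-r^*$ bounded away from $0$ there). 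Therefore the remaining expression
$$
  \frac{A_0^n}{s-r_0^n} + \cdots + \frac{A_{d-2}^n}{(s-r_0^n)\cdots(s-r_{d-2}^n)}
$$
converges in norm, uniformly on compact subsets of $\mbC\setminus\{r^*\}$, to $\frac{B_0}{s-r^*} + \cdots + \frac{B_{d-2}}{(s-r^*)^{d-1}}$. The induction hypothesis applied to this data yields $A_j^n \to B_j$ for $j=0,1,\dots,d-2$, completing the induction.

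The main obstacle is the bookkeeping in the left-hand contour-integral (or $s\to r^*$) computation that isolates $A_{d-1}^n$: one must verify that $\frac{1}{2\pi i}\oint_{C}(s-r^*)^{d-1}\sum_{j}\frac{A_j^n}{(s-r_0^n)\cdots(s-r_j^n)}\,ds \to B_{d-1}$, which amounts to computing $\sum_j$ of the residues at the poles $r_0^n,\dots,r_j^n$ against the test function $(s-r^*)^{d-1}$ and checking that only the top term $A_{d-1}^n$ survives in the limit while all lower terms carry a factor $\prod(r_k^n - r^*)$ that vanishes. This is entirely elementary — it uses only that each pole is simple, that $(s-r^*)^{d-1}$ has a zero of order $d-1$ at $r^*$, and that $r_j^n \to r^*$ — but it is the one place where a short explicit estimate is genuinely needed rather than a formal manipulation.
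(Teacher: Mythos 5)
Your proof is correct, but it takes a genuinely different route from the paper's. The paper also begins by multiplying the hypothesis through by $(s-r_0^n)\cdots(s-r_{d-1}^n)$, but then exploits the fact that the left side becomes a polynomial in $s$ of degree $d-1$ with leading coefficient $A_0^n$, and the right side becomes $B_0(s-r^*)^{d-1}+\ldots+B_{d-1}$; since uniform convergence of holomorphic functions permits differentiation under the limit, applying $\frac{d^{d-1}}{ds^{d-1}}$ gives $A_0^n\to B_0$ immediately, and successively lower-order derivatives (after substituting the already-established limits) give $A_1^n\to B_1$, and so on. In other words, the paper peels coefficients off from the top (leading polynomial coefficient $A_0^n$) downward, in one pass, with no induction. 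You instead peel from the bottom, extracting the deepest numerator $A_{d-1}^n$ by a residue argument, then subtracting and applying an induction hypothesis in $d$. Both routes work, but the step you flag as "the one place where a short explicit estimate is genuinely needed" — showing that for $j<d-1$ the sum of residues of $(s-r^*)^{d-1}\big/\prod_{k\le j}(s-r_k^n)$ tends to $0$ — is slightly more delicate than you allow. That sum of residues is exactly the $j$th divided difference of $(s-r^*)^{d-1}$ at the nodes $r_0^n,\ldots,r_j^n$, which is $O\big(\max_k|r_k^n-r^*|^{d-1-j}\big)$ by a Cauchy estimate on a circle of radius comparable to $\max_k|r_k^n-r^*|$; the naive "each residue has a factor $\prod(r_k^n-r^*)$" reasoning you sketch doesn't account for the small denominators $r_k^n-r_l^n$, and it is the divided-difference/contour formulation that rescues it. The paper's multiply-and-differentiate argument sidesteps this entirely, which is why it ends up shorter.
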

\begin{proof} 
Multiplying the equality in the lemma by
$$
  \lim_{n\to \infty} (s-r_0^n)\ldots(s-r_{d-1}^n) = (s-r^*)^d
$$
gives for all $s \neq r^*$ the equality
$$
  \lim_{n\to \infty} (A_0^n(s-r_1^n)\ldots(s-r_{d-1}^n)+\ldots + A_{d-2}^n(s-r_{d-1}^n) + A_{d-1}^n)
      = B_0(s-r^*)^{d-1} + \ldots + B_{d-1}.
$$
Since the convergence is uniform, we can differentiate the left hand side under the limit sign
(see e.g. \cite[\S IX.12]{Died}). 
Hence, applying the operator $\frac{d^d}{ds^d}$ to both sides of the last equality, we infer that $A_0^n \to B_0.$ 
Applying $\frac{d^{d-1}}{ds^{d-1}},$ we infer that $A_1^n \to B_1.$ And so on. 
\end{proof}

\begin{lemma} \label{L: Aj to bfA(j-1)} 
For each cycle~$\nu$ and for each $k=0,1,\ldots,d_\nu-1$ the limits 
$\lim_{z \to \lambda} A_k^\nu(z)$ exist and
$$
  \lim_{z \to \lambda} A_k^\nu(z) = \bfA_{\lambda}^k(r_{\lambda}) P_\lambda^{[\nu]}(r_\lambda),
$$
where the convergence is in norm.
\end{lemma}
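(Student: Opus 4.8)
The plan is to combine three results already available: the purely algebraic partial‑fraction identity of Lemma~\ref{L: comb frac part written as almost Laurent}, which is what introduced the operators $A_j^\nu(z)$; the limit of the combined fractional part of a cycle computed in Lemma~\ref{L: a lemma}; and the abstract convergence statement Lemma~\ref{L: about convergence of mero-c f-ns}. Fix $\nu$ and restrict $z$ to a small punctured disc about $\lambda$ on which the branches $r_\nu^{(0)}(z),\ldots,r_\nu^{(d_\nu-1)}(z)$ of the cycle are single‑valued holomorphic functions. By Lemma~\ref{L: comb frac part written as almost Laurent},
$$
  \sum_{j=0}^{d_\nu-1}\frac{A_j^\nu(z)}{(s-r_\nu^{(0)}(z))\cdots(s-r_\nu^{(j)}(z))}
    = \sum_{j=0}^{d_\nu-1}\frac{P_z(r_\nu^{(j)}(z))}{s-r_\nu^{(j)}(z)},
$$
and by Lemma~\ref{L: a lemma} the right‑hand side converges, in norm and uniformly on compact subsets of a deleted neighbourhood of $r_\lambda$, to the right‑hand side of~(\ref{F: frac part at lambda[nu]}). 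In view of Lemma~\ref{L: P[nu] and bfA commute} and the idempotency of $P_\lambda^{[\nu]}(r_\lambda)$, that limit can be written as
$$
  \frac{P_\lambda^{[\nu]}(r_\lambda)}{s-r_\lambda}
    + \frac{\bfA_\lambda(r_\lambda)P_\lambda^{[\nu]}(r_\lambda)}{(s-r_\lambda)^2}
    + \cdots
    + \frac{\bfA_\lambda^{d_\nu-1}(r_\lambda)P_\lambda^{[\nu]}(r_\lambda)}{(s-r_\lambda)^{d_\nu}},
$$
the terms with exponent $\ge d_\nu$ being absent because $\bfA_\lambda^{[\nu]}(r_\lambda)$ is cyclic and nilpotent of order exactly $d_\nu$ (Lemma~\ref{L: bfA[nu] is cyclic}).

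Next I would apply Lemma~\ref{L: about convergence of mero-c f-ns}. From the Puiseux series~(\ref{F: Puiseux for rz}), which has no negative powers of $z-\lambda$, each branch satisfies $r_\nu^{(j)}(z)\to r_\lambda$ as $z\to\lambda$. Given any sequence $z_n\to\lambda$ inside the punctured disc, together with a consistent labelling of the branches along it, set $d=d_\nu$, $r^\ast=r_\lambda$, $r_j^n=r_\nu^{(j)}(z_n)$, $A_j^n=A_j^\nu(z_n)$ and $B_j=\bfA_\lambda^j(r_\lambda)P_\lambda^{[\nu]}(r_\lambda)$. The previous paragraph shows that the hypotheses of Lemma~\ref{L: about convergence of mero-c f-ns} are fulfilled, so it yields $A_j^\nu(z_n)\to\bfA_\lambda^j(r_\lambda)P_\lambda^{[\nu]}(r_\lambda)$ in norm for each $j=0,\ldots,d_\nu-1$. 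Because the limit operators $B_j$ do not depend on the sequence or on the branch labelling, this is precisely the asserted equality $\lim_{z\to\lambda}A_k^\nu(z)=\bfA_\lambda^k(r_\lambda)P_\lambda^{[\nu]}(r_\lambda)$.

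The steps requiring attention are bookkeeping rather than analysis, and this is where a careless argument could slip. First, one must line up the indexing conventions so that the coefficient $A_k^\nu$ is matched with the pole of order $k+1$, hence with $\bfA_\lambda^k(r_\lambda)P_\lambda^{[\nu]}(r_\lambda)$. Second, the convergence furnished by Lemma~\ref{L: a lemma} is uniform only on compact subsets of a punctured neighbourhood of $r_\lambda$, whereas Lemma~\ref{L: about convergence of mero-c f-ns} is phrased for $\mbC\setminus\{r^\ast\}$; this causes no real difficulty, since the proof of that lemma only clears denominators to obtain polynomials in $s$ of degree at most $d_\nu-1$ and then compares their coefficients, for which uniform convergence on any set with an accumulation point suffices. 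Third, although $A_k^\nu(z)$ is a priori merely a branch of a multivalued operator‑function on the punctured disc, its limit at $\lambda$ is branch‑independent, because the cyclic permutation of the labels permutes the summands of $\sum_j P_z(r_\nu^{(j)}(z))/(s-r_\nu^{(j)}(z))$ without altering the sum. No estimate beyond those already established is needed.
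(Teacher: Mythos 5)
Your proof is correct and follows the paper's own route exactly: combine Lemma~\ref{L: comb frac part written as almost Laurent} with Lemma~\ref{L: a lemma} to get the uniform norm convergence of the partial-fraction expansion, then read off the coefficients via Lemma~\ref{L: about convergence of mero-c f-ns}. Your extra remarks about the indexing, the domain mismatch between ``compact subsets of a deleted neighbourhood of $r_\lambda$'' and ``$\mathbb{C}\setminus\{r^*\}$'', and branch-independence are sound bookkeeping that the paper leaves implicit (and your display also silently repairs what appears to be a typographical exponent in~(\ref{F: frac part at lambda[nu]})).
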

\begin{proof} Combining Lemmas~\ref{L: a lemma} and~\ref{L: comb frac part written as almost Laurent}, 
we get 
$$
  \lim_{z \to \lambda} \sum_{j=0}^{d_\nu-1} \frac{A_j^\nu(z)}{(s-r_\nu^{(0)}(z))\ldots(s-r_\nu^{(j)}(z))}
   = 
     \frac{P^{[\nu]}_{\lambda}(r_{\lambda})}{s-r_{\lambda}} 
        + \frac{\bfA^{[\nu]}_{\lambda}(r_{\lambda})}{(s-r_{\lambda})^2} 
        + \ldots + \frac{\brs{\bfA^{[\nu]}_{\lambda}(r_{\lambda})}^{d_\nu-1}}{(s-r_{\lambda})^{d_\nu-1}},
$$
where the convergence is in norm uniformly in~$s$ on compact subsets of a deleted neighbourhood of~$r_\lambda.$
Hence, Lemma~\ref{L: about convergence of mero-c f-ns} completes the proof.
\end{proof}

\begin{prop} \label{P: p=d-1} The integer~$p$ from Puiseux series (\ref{F: Puiseux for Pz}) 
for $P_z(r_\nu^{(j)}(z))$ is equal to $d_\nu-1:$ 
  \begin{equation} \label{F: p=(d-1)}
    p = d_\nu-1.
  \end{equation}
\end{prop}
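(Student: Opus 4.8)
The plan is to extract the exponent $p$ directly from the coefficient $A_{d_\nu-1}^\nu(z)$ of the partial-fraction decomposition \eqref{F: comb frac part written as almost Laurent}. The key observation is that in \eqref{F: Aj=sum (rj-r1)...Pj} the sum defining $A_j^\nu(z)$ collapses to a single summand when $j=d_\nu-1$, namely
\[
  A_{d_\nu-1}^\nu(z)=\Bigl(\,\prod_{a=0}^{d_\nu-2}\bigl(r_\nu^{(d_\nu-1)}(z)-r_\nu^{(a)}(z)\bigr)\Bigr)\,P_z\bigl(r_\nu^{(d_\nu-1)}(z)\bigr),
\]
so it is a \emph{scalar} Puiseux factor multiplied by a \emph{single} branch of the multivalued operator function $P_z(r_\nu^{(\cdot)}(z))$. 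Once the order (lowest power of $(z-\lambda)^{1/d_\nu}$) of the left-hand side and of the scalar factor are known, the order of that branch --- equivalently, the integer $p$ --- is forced.

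First I would pin down the scalar factor. By the Puiseux series \eqref{F: Puiseux for rz} together with Proposition~\ref{P: first Puiseux coef is non-zero} (which gives $r_{1/d_\nu}\neq 0$), each difference equals $r_{1/d_\nu}\bigl(\eps_{d_\nu}^{d_\nu-1}-\eps_{d_\nu}^{a}\bigr)(z-\lambda)^{1/d_\nu}+O\bigl((z-\lambda)^{2/d_\nu}\bigr)$ as $z\to\lambda$, and the leading coefficient is non-zero because $\eps_{d_\nu}^{d_\nu-1}\neq\eps_{d_\nu}^{a}$ for $0\leq a\leq d_\nu-2$. Hence the product over $a$ is a Puiseux series in $(z-\lambda)^{1/d_\nu}$ whose lowest term is $C\,(z-\lambda)^{(d_\nu-1)/d_\nu}$ with $C=r_{1/d_\nu}^{\,d_\nu-1}\prod_{a=0}^{d_\nu-2}\bigl(\eps_{d_\nu}^{d_\nu-1}-\eps_{d_\nu}^{a}\bigr)\neq 0$ (and $C=1$ in the trivial case $d_\nu=1$, where the product is empty). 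Next I would invoke Lemma~\ref{L: Aj to bfA(j-1)}: the limit $\lim_{z\to\lambda}A_{d_\nu-1}^\nu(z)=\bfA_\lambda^{d_\nu-1}(r_\lambda)P_\lambda^{[\nu]}(r_\lambda)$ exists, and it is non-zero because its restriction to the $d_\nu$-dimensional space $\Upsilon_\lambda^{[\nu]}(r_\lambda)$ equals $\bigl(\bfA_\lambda^{[\nu]}(r_\lambda)\bigr)^{d_\nu-1}$, which does not vanish since $\bfA_\lambda^{[\nu]}(r_\lambda)$ is a cyclic nilpotent operator on that space (Lemma~\ref{L: bfA[nu] is cyclic}).

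Finally I would put the two computations together. Substituting the Puiseux series \eqref{F: Puiseux for Pz} of $P_z(r_\nu^{(d_\nu-1)}(z))$, whose most negative power is $(z-\lambda)^{-p/d_\nu}$ with non-zero coefficient $\eps_{d_\nu}^{-p(d_\nu-1)}P_{-p/d_\nu}$, and multiplying by the scalar factor above, I obtain that $A_{d_\nu-1}^\nu(z)$ is a Puiseux series in $(z-\lambda)^{1/d_\nu}$ whose lowest term is $C\,\eps_{d_\nu}^{-p(d_\nu-1)}P_{-p/d_\nu}\,(z-\lambda)^{(d_\nu-1-p)/d_\nu}$, with operator coefficient $C\,\eps_{d_\nu}^{-p(d_\nu-1)}P_{-p/d_\nu}\neq 0$ (a non-zero scalar times the non-zero operator $P_{-p/d_\nu}$). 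If $p>d_\nu-1$ the exponent $(d_\nu-1-p)/d_\nu$ is negative, so $\|A_{d_\nu-1}^\nu(z)\|\to\infty$ as $z\to\lambda$, contradicting the existence of a finite limit; if $p<d_\nu-1$ the exponent is positive, so $A_{d_\nu-1}^\nu(z)\to 0$, contradicting $\bigl(\bfA_\lambda^{[\nu]}(r_\lambda)\bigr)^{d_\nu-1}\neq 0$. Therefore $p=d_\nu-1$. As a by-product, matching constant terms yields $P_{-(d_\nu-1)/d_\nu}=C^{-1}\eps_{d_\nu}^{(d_\nu-1)^2}\bfA_\lambda^{d_\nu-1}(r_\lambda)P_\lambda^{[\nu]}(r_\lambda)$. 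The only delicate point is the very last one --- checking that the leading coefficient of the product series does not accidentally vanish --- and this reduces precisely to the three non-vanishing facts $r_{1/d_\nu}\neq 0$, distinctness of the $d_\nu$-th roots of unity, and cyclicity of $\bfA_\lambda^{[\nu]}(r_\lambda)$, all already available, so no serious obstacle remains.
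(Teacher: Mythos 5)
Your proposal is correct and follows essentially the same route as the paper's own proof: both extract $p$ from the single-term expression for $A_{d_\nu-1}^\nu(z)$, use Proposition~\ref{P: first Puiseux coef is non-zero} to compute that the scalar product factor has leading order $(z-\lambda)^{(d_\nu-1)/d_\nu}$, invoke Lemma~\ref{L: Aj to bfA(j-1)} together with cyclicity of $\bfA_\lambda^{[\nu]}(r_\lambda)$ to get a finite non-zero limit, and then rule out $p>d_\nu-1$ (divergence) and $p<d_\nu-1$ (limit zero). You merely spell out the leading coefficient $C$ and the distinctness of the roots of unity more explicitly than the paper does.
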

\begin{proof}
By Lemma~\ref{L: Aj to bfA(j-1)}, we have 
\begin{equation} \label{F: Ad to bfA(d-1)}
  \lim_{z \to \lambda} A_{d_\nu-1}^\nu(z) = \bfA_{\lambda}^{d_\nu-1}(r_\lambda) P_\lambda^{[\nu]}(r_\lambda) \neq 0.
\end{equation}
Here the last inequality ($\neq 0$) follows from the fact that 
the operator $\bfA_{\lambda}^{d_\nu-1}(r_\lambda)$ is reduced by the image of 
the idempotent $P_\lambda^{[\nu]}(r_\lambda)$ and that this reduction is cyclic, 
see Lemma~\ref{L: bfA[nu] is cyclic}.
By (\ref{F: Aj=sum (rj-r1)...Pj}), we have 
$$
  A^\nu_{d_\nu-1}(z) = (r_\nu^{(d_\nu-1)}(z) - r_\nu^{(0)}(z)) 
           \ldots (r_\nu^{(d_\nu-1)}(z) - r_\nu^{(d_\nu-2)}(z)) P_z(r^{(d_\nu-1)}_\nu(z)).
$$
Since by Proposition~\ref{P: first Puiseux coef is non-zero}, 
the first Puiseux series coefficient $r_{1/d_\nu}$ for $r_\nu^{(j)}(z)$ is non-zero, 
the first non-zero fractional term of the product
$$
  (r_\nu^{(d_\nu-1)}(z) - r_\nu^{(0)}(z))\ldots (r_\nu^{(d_\nu-1)}(z) - r_\nu^{(d_\nu-2)}(z))
$$ 
is 
$
  (z-\lambda)^{\frac{d_\nu-1}{d_\nu}}.
$ 
The term with the smallest power in Laurent expansion of $P_z(r_\nu^{(d_\nu-1)}(z))$ is
$
  (z-\lambda)^{-\frac p{d_\nu}}.
$ 
Hence, if $p < d_\nu-1,$ then the limit in
(\ref{F: Ad to bfA(d-1)}) would be zero, and if $p > d_\nu-1,$ then
the limit in (\ref{F: Ad to bfA(d-1)}) would diverge.
\end{proof}

\begin{prop} \label{P: what a croc}
Let $\nu = 1,\ldots,m.$
For any $k \geq 0$ the function 
$$
  \sum_{j=0}^{d_\nu-1} (r_\nu^{(j)}(z)-r_\lambda)^k P_z(r_\nu^{(j)}(z))
$$
is analytic at~$z = \lambda.$ 
Moreover, the limit of this sum as $z \to \lambda$ is equal to~$P_\lambda^{[\nu]}\bfA_\lambda^k(r_\lambda).$
\end{prop}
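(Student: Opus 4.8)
The statement to prove is Proposition~\ref{P: what a croc}: for each $\nu=1,\ldots,m$ and each $k\geq 0$, the operator-valued function
$$
  F_\nu^k(z) := \sum_{j=0}^{d_\nu-1} (r_\nu^{(j)}(z)-r_\lambda)^k P_z(r_\nu^{(j)}(z))
$$
is analytic at $z=\lambda$ and $\lim_{z\to\lambda}F_\nu^k(z) = P_\lambda^{[\nu]}\bfA_\lambda^k(r_\lambda)$. The plan is to deduce both assertions simultaneously from Lemma~\ref{L: Aj to bfA(j-1)} (which already gives $\lim_{z\to\lambda}A_k^\nu(z) = \bfA_\lambda^k(r_\lambda)P_\lambda^{[\nu]}(r_\lambda)$) together with the Puiseux information collected in this subsection: Proposition~\ref{P: first Puiseux coef is non-zero} ($r_{1/d_\nu}\neq0$) and Proposition~\ref{P: p=d-1} (the Puiseux series of $P_z(r_\nu^{(j)}(z))$ has lowest power exactly $-(d_\nu-1)/d_\nu$).

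First I would observe that $F_\nu^k(z)$ is a single-valued function of $z$ in a punctured neighbourhood of $\lambda$: under the monodromy $z\mapsto$ (one loop around $\lambda$) the branches $r_\nu^{(j)}$ are permuted cyclically and the idempotents $P_z(r_\nu^{(j)}(z))$ follow the same permutation, so the symmetric sum is invariant. Hence $F_\nu^k$ has an honest Laurent expansion in $(z-\lambda)^{1/d_\nu}$ at worst — but in fact, being single-valued, an honest Laurent expansion in $(z-\lambda)$. To control it, substitute the Puiseux series: from (\ref{F: Puiseux for rz}), $r_\nu^{(j)}(z)-r_\lambda = \sum_{l\geq1} r_{l/d_\nu}\eps_{d_\nu}^{lj}(z-\lambda)^{l/d_\nu}$, so $(r_\nu^{(j)}(z)-r_\lambda)^k$ is a power series in $(z-\lambda)^{1/d_\nu}$ starting at order $k/d_\nu$ with leading coefficient $r_{1/d_\nu}^k\,\eps_{d_\nu}^{kj}$; and from (\ref{F: Puiseux for Pz}), (\ref{F: p=(d-1)}), $P_z(r_\nu^{(j)}(z)) = \tilde P_\nu^j(z) + \sum_{l=0}^{d_\nu-1}\eps_{d_\nu}^{-lj}(z-\lambda)^{-l/d_\nu}P_{-l/d_\nu}$. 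Multiplying and summing over $j=0,\ldots,d_\nu-1$, the key tool is the character-sum identity (\ref{F: sum eps(jk)=[d|k]d}): $\sum_{j}\eps_{d_\nu}^{jm}=[d_\nu|m]\cdot d_\nu$. Each monomial in the product contributes a factor $\eps_{d_\nu}^{j(a-b)}$ where $a$ comes from the $(r_\nu^{(j)}-r_\lambda)^k$ factor (with $a\geq k$) and $-b/d_\nu$ with $0\leq b\leq d_\nu-1$ comes from the singular part of $P_z$ (or the $j$-dependent part of $\tilde P_\nu^j$). Summing over $j$ kills every term except those with $a\equiv b\pmod{d_\nu}$; since $b\leq d_\nu-1$ and $a\geq k\geq 0$, the surviving contribution to the coefficient of $(z-\lambda)^{(a-b)/d_\nu}$ is a nonnegative integer power of $(z-\lambda)$. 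This shows $F_\nu^k(z)$ has no negative powers of $(z-\lambda)^{1/d_\nu}$, hence (being single-valued) is analytic at $\lambda$.

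Second, to identify the limit I would extract the constant term, which by the analysis above comes exactly from the terms with $a=b$ (equivalently the lowest surviving power being $(z-\lambda)^0$): the leading term of $(r_\nu^{(j)}(z)-r_\lambda)^k$ pairs with the $(z-\lambda)^{-k/d_\nu}$ term of the singular part of $P_z(r_\nu^{(j)}(z))$ when $b=k$ (for $k\leq d_\nu-1$; for $k\geq d_\nu$ one uses $b\equiv k$ and the appropriate higher combination, but the cleaner route is Proposition~\ref{P: p=(d-1)} forcing $\bfA_\lambda^k(r_\lambda)P_\lambda^{[\nu]}=0$ for $k\geq d_\nu$, so the claimed limit is $0$ and one checks $F_\nu^k(\lambda)=0$ directly). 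Rather than do this bookkeeping by hand, I would instead package it through the already-proved Lemma~\ref{L: Aj to bfA(j-1)}: in (\ref{F: comb frac part written as almost Laurent}) the operators $A_j^\nu(z)$ are, up to the differences $(r_\nu^{(b)}-r_\nu^{(a)})$, symmetric combinations of exactly the same shape as $F_\nu^k$, and Lemma~\ref{L: Aj to bfA(j-1)} already established that $A_k^\nu(z)\to\bfA_\lambda^k(r_\lambda)P_\lambda^{[\nu]}(r_\lambda)$. Expanding the telescoping products in (\ref{F: Aj=sum (rj-r1)...Pj}) and inverting them (lower-triangularly) expresses each $F_\nu^k(z)$ as a finite $\mbC[z]$-linear combination of the $A_j^\nu(z)$ with $j\leq k$; taking $z\to\lambda$ and using that $\bfA_\lambda^{[\nu]}$ commutes with $P_\lambda^{[\nu]}$ and $VP_\lambda^{[\nu]}=P_\lambda^{[\nu]}V$ then collapses the combination to $P_\lambda^{[\nu]}\bfA_\lambda^k(r_\lambda)$ (the reordering being legitimate since $P_\lambda^{[\nu]}$ and $\bfA_\lambda$ commute, Lemma~\ref{L: P[nu] and bfA commute}). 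The main obstacle is the last collapse: verifying that the combinatorial coefficients produced by inverting the telescoping-product matrix are exactly those of $\bfA_\lambda^k$ acting on $\Upsilon_\lambda^{[\nu]}$; I would handle this either by a direct induction on $k$ (peeling off $A_k^\nu$, whose $z\to\lambda$ limit is $\bfA_\lambda^k P_\lambda^{[\nu]}$, and applying the inductive hypothesis to the lower terms), or, most cleanly, by noting that all the $r_\nu^{(j)}(z)$ coincide to leading order ($r_\nu^{(j)}(z)-r_\nu^{(0)}(z)=O((z-\lambda)^{1/d_\nu})$), so in the limit the differences $(r_\nu^{(b)}-r_\nu^{(a)})$ vanish and $A_k^\nu(z)$ and $F_\nu^k(z)$ have the same limit, namely (after the commutation) $P_\lambda^{[\nu]}\bfA_\lambda^k(r_\lambda)$.
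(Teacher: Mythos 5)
Your analyticity argument (single-valuedness of the symmetric sum, plus absence of negative whole powers coming from Proposition~\ref{P: p=d-1}, or equivalently your character-sum bookkeeping) is sound and is essentially the same argument the paper gives. The trouble is with the limit.

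The ``collapse'' step is where your proposal has a genuine gap. You propose to express $F_\nu^k(z):=\sum_j(r_\nu^{(j)}(z)-r_\lambda)^kP_z(r_\nu^{(j)}(z))$ as a combination of the operators $A_j^\nu(z)$ and then pass to the limit, but you neither carry this out nor give a valid reason why it should collapse to the stated answer. Your ``cleanest route'' --- that $A_k^\nu$ and $F_\nu^k$ have the same limit because the branches $r_\nu^{(j)}(z)$ coincide to leading order --- is not a correct argument as stated: the differences $r_\nu^{(a)}(z)-r_\nu^{(b)}(z)$ are $O((z-\lambda)^{1/d_\nu})$, but the factors $P_z(r_\nu^{(j)}(z))$ blow up like $(z-\lambda)^{-(d_\nu-1)/d_\nu}$, so for $k\leq d_\nu-2$ individual terms such as $(r_\nu^{(j)}(z)-r_\lambda)^kP_z(r_\nu^{(j)}(z))$ are unbounded, and the equality of limits depends on cancellations you never exhibit. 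Also your claim that the coefficients in the linear-algebra inversion lie in $\mbC[z]$ is incorrect --- they are Puiseux expressions in the $r_\nu^{(j)}(z)$ and are not a priori single-valued in $z$.

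The missing idea, and the one the paper uses, is simply that the idempotents $P_z(r_\nu^{(j)}(z))$ are pairwise orthogonal (formula~(\ref{F: Pz(1)Pz(2)=0})), so
\[
  F_\nu^k(z)=\Bigl(\sum_{j=0}^{d_\nu-1}(r_\nu^{(j)}(z)-r_\lambda)\,P_z(r_\nu^{(j)}(z))\Bigr)^{\!k}=\bigl(F_\nu^1(z)\bigr)^k
\]
for \emph{every} $z,$ not only in the limit. This reduces all $k\geq1$ to $k=1.$ For $k=1$ the exact decomposition
\[
  F_\nu^1(z)=\underbrace{\sum_{j}(r_\nu^{(j)}(z)-r_\nu^{(0)}(z))\,P_z(r_\nu^{(j)}(z))}_{=\,A_1^\nu(z)}\;+\;(r_\nu^{(0)}(z)-r_\lambda)\,P_z^{[\nu]}
\]
holds; the first summand converges to $P_\lambda^{[\nu]}\bfA_\lambda(r_\lambda)$ by Lemma~\ref{L: Aj to bfA(j-1)} together with (\ref{F: Aj=sum (rj-r1)...Pj}), and the second vanishes because $P_z^{[\nu]}$ is bounded (Proposition~\ref{P: P[nu](z) is holomorphic}) while $r_\nu^{(0)}(z)\to r_\lambda.$ So $\lim F_\nu^k=(P_\lambda^{[\nu]}\bfA_\lambda)^k=P_\lambda^{[\nu]}\bfA_\lambda^k$ using that $P_\lambda^{[\nu]}$ is an idempotent commuting with $\bfA_\lambda$ (Lemma~\ref{L: P[nu] and bfA commute}). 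You should replace your ``collapse'' heuristics with this two-line reduction.
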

\begin{proof} The function is symmetric with respect to~$r_\nu^{(j)}(z), \ j=1,\ldots,d_\nu,$ 
and therefore it is single-valued in a neighbourhood of~$\lambda.$
By Proposition~\ref{P: p=d-1}, this function also cannot have whole negative powers of $z-\lambda$ 
in its power series expansion at~$\lambda.$ Hence, it is analytic at~$\lambda.$ 
In particular, if $k=0,$ then its limit as $z \to \lambda$ is equal to $P^{[\nu]}_\lambda(r_\lambda).$ 

Further, we have 
$$
  \sum_{j=0}^{d_\nu-1} (r_\nu^{(j)}(z)-r_\lambda) P_z(r_\nu^{(j)}(z)) 
     = \sum_{j=0}^{d_\nu-1} (r_\nu^{(j)}(z) - r_\nu^{(0)}(z)) P_z(r_\nu^{(j)}(z))
     + \sum_{j=0}^{d_\nu-1} (r_\nu^{(0)}(z) - r_\lambda) P_z(r_\nu^{(j)}(z)).
$$
The first summand converges to~$P_\lambda^{[\nu]}\bfA_\lambda(r_\lambda)$ 
by Lemma~\ref{L: Aj to bfA(j-1)} and (\ref{F: Aj=sum (rj-r1)...Pj})
(taken with $j=1$). The second summand converges to zero, since by Proposition~\ref{P: P[nu](z) is holomorphic}
$$
  \sum_{j=0}^{d_\nu-1} P_z(r_\nu^{(j)}(z))
$$ 
is analytic and~$r_\nu^{(0)}$ is continuous at $z = \lambda.$ 
Thus, the claim holds for $k=1.$ Using this, for any $k\geq 1$ we have 
$$
  \lim_{z\to\lambda} \sum_{j=0}^{d_\nu-1} (r_\nu^{(j)}(z) - r_\lambda)^k P_z(r_\nu^{(j)}(z)) 
    = \lim_{z\to\lambda} \brs{\sum_{j=0}^{d_\nu-1} (r_\nu^{(j)}(z) - r_\lambda) P_z(r_\nu^{(j)}(z))}^k
    = P_\lambda^{[\nu]}\bfA_\lambda^k(r_\lambda),
$$
where the first equality follows from (\ref{F: Pz(1)Pz(2)=0}).
\end{proof}

Since $p = d_\nu-1,$ we have 
$$
  \lim_{z \to \lambda} (z-\lambda)P_z(r_\nu^{(j)}(z)) = 0.
$$


\begin{prop} \label{P: Prop 6.20} For all $\nu=1,\ldots,m$ and for all $k=1,2,\ldots,d_\nu-1$ 
\begin{equation} \label{F: fcuk}
    P_\lambda^{[\nu]}(r_\lambda) \bfA_\lambda^k(r_\lambda) 
          = d_\nu \cdot \sum_{l=k}^{d_\nu-1} \brs{\sum_{m_1+\ldots+m_k=l} 
                r_{m_1/d_\nu}\ldots r_{m_k/d_\nu}} P_{-l/d_\nu},
\end{equation}
where in the sum $m_1, \ldots, m_k \geq 1.$
\end{prop}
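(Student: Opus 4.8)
The plan is to extract the Puiseux coefficient $P_{-l/d_\nu}$ from the single-valued function studied in Proposition~\ref{P: what a croc} and then match it against the Puiseux expansion of $P_\lambda^{[\nu]}(r_\lambda)\bfA_\lambda^k(r_\lambda)$. First I would start from the Puiseux series (\ref{F: Puiseux for Pz}) for $P_z(r_\nu^{(j)}(z))$, using Proposition~\ref{P: p=d-1} to write it with the exact upper summation limit $d_\nu-1$:
$$
  P_z(r_\nu^{(j)}(z)) = \tilde P^j_\nu(z) + \sum_{l=0}^{d_\nu-1} \eps_{d_\nu}^{-lj}(z-\lambda)^{-l/d_\nu}P_{-l/d_\nu}.
$$
Simultaneously, from the Puiseux series (\ref{F: Puiseux for rz}) one has
$$
  (r_\nu^{(j)}(z)-r_\lambda)^k = \brs{\sum_{m\geq 1} r_{m/d_\nu}\eps_{d_\nu}^{mj}(z-\lambda)^{m/d_\nu}}^k
    = \sum_{n\geq k}\brs{\sum_{m_1+\ldots+m_k=n}r_{m_1/d_\nu}\ldots r_{m_k/d_\nu}}\eps_{d_\nu}^{nj}(z-\lambda)^{n/d_\nu},
$$
where all $m_i\geq 1$.

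Next I would multiply these two expansions and sum over $j=0,\ldots,d_\nu-1$. The key device is the root-of-unity identity (\ref{F: sum eps(jk)=[d|k]d}): the coefficient of $(z-\lambda)^{(n-l)/d_\nu}$ in $\sum_j(r_\nu^{(j)}(z)-r_\lambda)^kP_z(r_\nu^{(j)}(z))$ carries a factor $\sum_{j=0}^{d_\nu-1}\eps_{d_\nu}^{(n-l)j}=[d_\nu\,|\,n-l]\cdot d_\nu$, so only the terms with $n\equiv l\ (\mathrm{mod}\ d_\nu)$ survive. By Proposition~\ref{P: what a croc} the whole sum $\sum_j(r_\nu^{(j)}(z)-r_\lambda)^kP_z(r_\nu^{(j)}(z))$ is analytic at $z=\lambda$ with value $P_\lambda^{[\nu]}(r_\lambda)\bfA_\lambda^k(r_\lambda)$; hence I only need the constant ($(z-\lambda)^0$) term, which forces $n=l$ with $k\leq l\leq d_\nu-1$ (note $n\geq k$, and $l\leq d_\nu-1$ from the truncated Puiseux series for $P_z$). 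Collecting precisely those terms, the $\eps$-sum contributes the factor $d_\nu$, the $P_z$-part contributes $P_{-l/d_\nu}$, and the $(r_\nu^{(j)}-r_\lambda)^k$-part contributes $\sum_{m_1+\ldots+m_k=l}r_{m_1/d_\nu}\ldots r_{m_k/d_\nu}$, which is exactly the right-hand side of (\ref{F: fcuk}). The contribution of the analytic parts $\tilde P^j_\nu(z)$ vanishes in the limit for the same root-of-unity reason combined with their nonnegative powers, or simply because it is already absorbed in the analyticity statement of Proposition~\ref{P: what a croc}.

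The main obstacle I anticipate is purely bookkeeping: carefully tracking which pairs $(n,l)$ contribute to the constant term, and confirming that no terms with $l>d_\nu-1$ or with a genuinely negative overall power appear (the former is ruled out by Proposition~\ref{P: p=d-1}, the latter by the analyticity in Proposition~\ref{P: what a croc}). Once the indexing is pinned down, the identity (\ref{F: sum eps(jk)=[d|k]d}) does all the real work and the rest is a comparison of coefficients. It is also worth remarking that the coefficients $r_{m/d_\nu}$ are real (shown earlier), so the right-hand side is manifestly consistent with the self-adjointness-type structure of the $P_{-l/d_\nu}$, though that is not needed for the equality itself.
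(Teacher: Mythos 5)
Your proposal is correct and follows essentially the same route as the paper's proof: start from Proposition~\ref{P: what a croc}, insert the Puiseux series (\ref{F: Puiseux for rz}) and (\ref{F: Puiseux for Pz}) (with $p=d_\nu-1$ from Proposition~\ref{P: p=d-1}), expand the product, apply the root-of-unity identity (\ref{F: sum eps(jk)=[d|k]d}) to the sum over $j$, and match the constant term in $(z-\lambda)^{1/d_\nu}$. The only cosmetic difference is that the paper writes out the computation explicitly for $k=2$ and appeals to ``the same calculation'' for general $k$, and it checks directly that $n<l$ with $d_\nu\mid n-l$ cannot occur (since $|n-l|<d_\nu$), whereas you appeal to the analyticity of the sum for the same conclusion; both are fine.
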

\begin{proof}
We shall prove this only for $k=2.$ The general case is proved by the same calculation. 

We have, by Proposition~\ref{P: what a croc},
\begin{equation*}
  \begin{split}
    P_\lambda^{[\nu]}(r_\lambda)\bfA_\lambda^2(r_\lambda) & = \lim_{z \to \lambda} \sum_{j=0}^{d_\nu-1} (r_\nu^{(j)}(z)-r_\lambda)^2 P_z(r_\nu^{(j)}(z))
     \\ & = \lim_{z \to \lambda} \sum_{j=0}^{d_\nu-1} \brs{\sum_{k=1}^\infty r_{k/d_\nu}\eps_{d_\nu}^{kj}(z-\lambda)^{k/d_\nu}}
     \\ & \qquad\qquad\qquad
                    \brs{\sum_{m=1}^\infty r_{m/d_\nu}\eps_{d_\nu}^{mj} (z-\lambda)^{m/d_\nu}} 
                    \brs{\sum_{l=1}^{d_\nu-1} \eps_{d_\nu}^{-lj}(z-\lambda)^{-l/d_\nu}P_{-l/d_\nu}}
     \\ & = \lim_{z \to \lambda} \sum_{k=1}^\infty \sum_{m=1}^\infty \sum_{l=1}^{d_\nu-1} \sum_{j=0}^{d_\nu-1} 
                       \eps_{d_\nu}^{(k+m-l)j} (z-\lambda)^{(k+m-l)/d_\nu} r_{k/d_\nu} r_{m/d_\nu} P_{-l/d_\nu}.
  \end{split}
\end{equation*}
In this sum, there are finitely many terms with negative powers of $z-\lambda$ 
and the sum of positive powers of $z-\lambda$ converges absolutely in some neighbourhood of~$\lambda.$
Hence, all the interchanges of summations, which have been performed 
so far and which are about to follow, are justified. 

Let $x = k+m.$ Then
\begin{equation*}
  \begin{split}
    P_\lambda^{[\nu]}(r_\lambda)\bfA_\lambda^2(r_\lambda) 
        & = \lim_{z \to \lambda} \sum_{x=2}^\infty \sum_{m=1}^{x-1} \sum_{l=1}^{d_\nu-1} 
                               \sum_{j=0}^{d_\nu-1} \eps_{d_\nu}^{(x-l)j} (z-\lambda)^{(x-l)/d_\nu}\, r_{(x-m)/d_\nu} r_{m/d_\nu} P_{-l/d_\nu}
     \\ & = d_\nu \cdot \lim_{z \to \lambda} \sum_{x=2}^\infty \sum_{m=1}^{x-1} 
                               \sum_{l=1}^{d_\nu-1} [d_\nu|x-l] (z-\lambda)^{(x-l)/d_\nu}\, r_{(x-m)/d_\nu} r_{m/d_\nu} P_{-l/d_\nu},
  \end{split}
\end{equation*}
where in the last equality we have used (\ref{F: sum eps(jk)=[d|k]d}).
The terms with $x>l$
disappear after taking the limit~$z \to \lambda.$ 
Since $x\geq 2$ and $1\leq l \leq d_\nu-1,$ there are no non-zero terms
with $x<l$ and $d_\nu \mid x-l.$ 
Hence, the factor $[d_\nu|x-l]$ can be replaced by Kronecker's symbol $\delta_{xl}.$
This gives
\begin{equation*}
  \begin{split}
    P_\lambda^{[\nu]}(r_\lambda)\bfA_\lambda^2(r_\lambda) & = d_\nu \cdot \lim_{z \to \lambda} \sum_{x=2}^\infty \sum_{l=1}^{d_\nu-1} \delta_{x l}
                       (z-\lambda)^{(x-l)/d_\nu} \brs{\sum_{m=1}^{x-1} r_{(x-m)/d_\nu}\, r_{m/d_\nu}} P_{-l/d_\nu}
    \\ & = d_\nu \cdot \sum_{l=2}^{d_\nu-1} \brs{\sum_{m=1}^{l-1} r_{(l-m)/d_\nu} r_{m/d_\nu}} P_{-l/d_\nu}.
  \end{split}
\end{equation*}
\end{proof}

Two special cases of (\ref{F: fcuk}) are the formulas 
$$
  P_\lambda^{[\nu]}(r_\lambda)\bfA_{\lambda}(r_{\lambda}) = d_\nu \sum_{l=1}^{d_\nu-1} r_{l/d_\nu} P_{-l/d_\nu},
$$
and 
$$
  P_\lambda^{[\nu]}(r_\lambda)\bfA^{d_\nu-1}_{\lambda}(r_{\lambda}) = d_\nu \cdot r_{1/d_\nu}^{d_\nu-1} P_{\frac{1-d_\nu}{d_\nu}}.
$$

\begin{cor} 
For any $j=1,\ldots,d_\nu-1$ 
the operator~$P_\lambda^{[\nu]}(r_\lambda)\bfA_\lambda^j(r_\lambda)$ 
is a linear combination of operators 
$$
  P_{-\frac{j}{d_\nu}}, \ \ldots,  P_{-\frac{d_\nu-1}{d_\nu}}
$$
from the Puiseux expansion of $P_z(r_\nu^{(j)}(z))$ at $z = \lambda.$
\end{cor}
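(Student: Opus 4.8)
The plan is to deduce this immediately from Proposition~\ref{P: Prop 6.20}. I would take that proposition with $k = j$, which expresses $P_\lambda^{[\nu]}(r_\lambda)\bfA_\lambda^j(r_\lambda)$ as $d_\nu$ times a sum $\sum_{l=j}^{d_\nu-1} c_l\, P_{-l/d_\nu}$, where $c_l = \sum_{m_1+\ldots+m_j=l} r_{m_1/d_\nu}\cdots r_{m_j/d_\nu}$ and the inner sum runs over integers $m_1,\ldots,m_j \geq 1$. First I would note that the constraint $m_i \geq 1$ forces $l \geq j$, so the smallest index actually occurring is $l = j$, while the largest is $l = d_\nu - 1$ by the stated range. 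Hence the right-hand side is visibly a linear combination of the operators $P_{-j/d_\nu}, P_{-(j+1)/d_\nu}, \ldots, P_{-(d_\nu-1)/d_\nu}$, which are precisely the Puiseux coefficients of $P_z(r_\nu^{(j)}(z))$ in the expansion (\ref{F: Puiseux for Pz}) of index between $-j/d_\nu$ and $-(d_\nu-1)/d_\nu$, the latter being the most singular coefficient by Proposition~\ref{P: p=d-1}. This yields the corollary.

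The only point requiring a word of explanation is the combinatorial fact that the lower summation limit is exactly $j$ and not something smaller; everything else is direct bookkeeping from the already-established formula. If desired, I would additionally record that the coefficient of the leading term $P_{-j/d_\nu}$ is $d_\nu\, r_{1/d_\nu}^{\,j}$, which is non-zero by Proposition~\ref{P: first Puiseux coef is non-zero}, so the linear combination genuinely involves $P_{-j/d_\nu}$. I do not anticipate any obstacle: the corollary is essentially a verbal paraphrase of Proposition~\ref{P: Prop 6.20}.
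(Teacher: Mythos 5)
Your proof is correct and takes the same route the paper intends; the corollary is stated without proof precisely because it is, as you observe, an immediate reading-off of Proposition~\ref{P: Prop 6.20} with $k=j$. Your extra remark that the coefficient of $P_{-j/d_\nu}$ equals $d_\nu\,r_{1/d_\nu}^{\,j}\neq 0$ (by Proposition~\ref{P: first Puiseux coef is non-zero}) is a worthwhile addition, since it shows the leading Puiseux coefficient genuinely appears in the linear combination.
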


We summarise results of this section in the following theorem. 
\begin{thm}
Let~$r_\lambda$ be a real resonance point of the line $H_{\rlmb}+(s-\rlmb)V,$
corresponding to a point~$\lambda$ outside essential spectrum. 
Let~$N$ and~$m$ be respectively algebraic and geometric multiplicities 
of~$r_\lambda.$ 
Let $d_1, \ldots, d_m$ be the sizes of Jordan cells of the compact operator~$A_\lambda(s)$
corresponding to the eigenvalue $(s-r_\lambda)^{-1}.$ 
Let 
$
  r_z^1, \ldots, r_z^N
$
be resonance points of the group of~$r_\lambda$ corresponding to $z \approx \lambda \in \mbC.$ 
Then 
\begin{enumerate}
  \item As~$z$ makes one round about~$\lambda,$ the set of~$N$ resonance points undergoes a permutation
  which is the product of~$m$ disjoint cyclic permutations of resonance points  
  $$
    r_\nu^{(0)}(z), \ \ldots, \ r_\nu^{(d_\nu-1)}(z), \ \ \nu=1,\ldots,m,
  $$
  and, as the notation indicates, the sizes of these cyclic permutations are the same as the sizes 
  of the Jordan cells $d_1, \ldots, d_m.$
  \item For real values of~$z$ close to~$\lambda,$
  there is either one or two \emph{real} resonance points in each of these~$m$ cycles of resonance points. 
  In case there are two real resonance points in a cycle, one of them is greater than~$r_\lambda$
  and the other is smaller than~$r_\lambda;$ further, as~$z$ is shifted off the real axis, 
  these two real resonance points of a cycle shift off to different complex half-planes.
  \item The Puiseux series of the function $r_\nu^{(j)}(z)$ has the form 
  $$
    r_\nu^{(j)}(z) = \sum_{k=0}^\infty r_{k/d_\nu} e^{\frac{2\pi i kj}{d_\nu}} (z-\lambda)^{k/d_\nu}
  $$
  with $r_{1/d_\nu} \neq 0$ (where $r_0 = r_\lambda$) and all coefficients $r_{k/d_\nu}$ are real. 
  \item The Puiseux series of the idempotent $P_z(r_\nu^{(j)}(z))$ has the form 
  $$
     P_z(r_\nu^{(j)}(z))
             = \tilde P^{(j)}_\nu(z) 
                  + \sum_{l=0}^{d_\nu-1} e^{\frac{-2\pi i l j}{d_\nu}}(z-\lambda)^{-l/d_\nu}P_{-l/d_\nu}, 
  $$
  where $\tilde P^{(j)}_\nu(z)$ is continuous at $z = \lambda.$
  \item For each $\nu=1,\ldots,m$ and for all $k\geq 0,$ 
  $$
    \lim _{z \to \lambda} \sum_{j=0}^{d_\nu-1} (r_\nu^{(j)}(z)-r_\lambda)^k P_z(r_\nu^{(j)}(z)) 
          = P^{[\nu]}_\lambda(r_\lambda)\bfA_\lambda^k (r_\lambda),
  $$
  where $P^{[\nu]}_\lambda(r_\lambda)$ is an idempotent of rank~$d_\nu$ which commutes with 
 ~$\bfA_\lambda (r_\lambda)$ and which is given by 
  $$
    P^{[\nu]}_\lambda(r_\lambda) = \lim_{z \to \lambda} \sum_{j=0}^{d_\nu-1} P_z(r_\nu^{(j)}(z)).
  $$
\end{enumerate}
\end{thm}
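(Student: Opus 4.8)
This theorem is a recapitulation of the results established throughout the present section, so the plan is to assemble its five assertions from the preceding propositions and lemmas, the only genuine point of care being that the various indexing conventions refer consistently to one and the same decomposition. I would first fix the one-to-one correspondence (\ref{F: phi[mu] one-to-one r[nu]}) that ties together the cycle index $\nu$, the eigenpath label $\phi_\nu(s)$, the eigenvalue function $\lambda_\nu(s)$, and the $\nu$th Jordan cell of $\bfA_\lambda(r_\lambda)$, and keep this identification in force for the rest of the argument, so that all five items refer to the same splitting $\Upsilon_\lambda(r_\lambda)=\Upsilon_\lambda^{[1]}(r_\lambda)\dotplus\ldots\dotplus\Upsilon_\lambda^{[m]}(r_\lambda)$.

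For items~1 and~2: item~1 follows by combining Proposition~\ref{P: number of cycles = m} (the monodromy of the $N$ resonance points of the group of $r_\lambda$ about $z=\lambda$ decomposes into $m$ disjoint cycles, one per eigenpath $\phi_\nu(s)$) with Theorem~\ref{T: depth of phi[nu] is d[nu]-1} (the length of the $\nu$th cycle equals the order $\tilde d_\nu$ of $\phi_\nu(s)$, which equals the size $d_\nu$ of the $\nu$th Jordan cell of $\bfA_\lambda(r_\lambda)$). Item~2 is Theorem~\ref{T: about res cycles}(b), (c), (e), which I would simply cite: the parity of $d_\nu$ in the expansion $\lambda_\nu(s)=\lambda+\eps_\nu(s-r_\lambda)^{d_\nu}+O((s-r_\lambda)^{d_\nu+1})$ of (\ref{F: lambda(s)=lambda +eps(s-l)...}) decides one versus two real pre-images, the sign of $\eps_\nu$ together with the reflection symmetry $r_{\bar z}=\overline{r_z}$ places two real roots on opposite sides of $r_\lambda$, and continuity of $\lambda_\nu$ rules out real resonance points for non-real $z$.

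For items~3 and~4: item~3 collects the Puiseux expansion (\ref{F: Puiseux for rz}) of $r_\nu^{(\cdot)}(z)$, the reality of its coefficients (stability of the isolated eigenvalue $\lambda$), the normalisation $r_0=r_\lambda$ (continuity of $r_\nu^{(j)}$ at $z=\lambda$), and Proposition~\ref{P: first Puiseux coef is non-zero} for $r_{1/d_\nu}\neq 0$. Item~4 is Proposition~\ref{P: p=d-1}: the negative part of the Puiseux series (\ref{F: Puiseux for Pz}) of $P_z(r_\nu^{(j)}(z))$ is finite by \cite[Theorem~II.1.8]{Kato}, and the sharp upper index $d_\nu-1$ is pinned down by matching the leading fractional power of
$$
  A^\nu_{d_\nu-1}(z) = \prod_{k=0}^{d_\nu-2}\bigl(r_\nu^{(d_\nu-1)}(z)-r_\nu^{(k)}(z)\bigr)\,P_z\bigl(r_\nu^{(d_\nu-1)}(z)\bigr)
$$
against the limit $\bfA_\lambda^{d_\nu-1}(r_\lambda)P_\lambda^{[\nu]}(r_\lambda)\neq 0$ supplied by Lemma~\ref{L: Aj to bfA(j-1)} and the cyclicity of $\bfA_\lambda^{[\nu]}$ (Lemma~\ref{L: bfA[nu] is cyclic}).

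For item~5: the limit identity is precisely Proposition~\ref{P: what a croc}; existence and idempotency of $P_\lambda^{[\nu]}(r_\lambda)=\lim_{z\to\lambda}\sum_{j=0}^{d_\nu-1}P_z(r_\nu^{(j)}(z))$ come from Proposition~\ref{P: P[nu](z) is holomorphic} and relation (\ref{F: P[nu]P[mu]=0}); commutation with $\bfA_\lambda(r_\lambda)$ is Lemma~\ref{L: P[nu] and bfA commute}; and that its rank equals $d_\nu$ follows from Corollary~\ref{C: A(lamb)(nu) is reduced}, Lemma~\ref{L: dim(Ups[nu]cap clV)=1} and Lemma~\ref{L: bfA[nu] is cyclic}, which together yield $\dim\Upsilon_\lambda^{[\nu]}=d_\nu$. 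Since all the ingredients are already in hand, I do not expect any genuine obstacle; the only thing requiring attention — the ``hard part'' in a purely editorial sense — is to verify that the correspondence (\ref{F: phi[mu] one-to-one r[nu]}) is used coherently across all five items, after which the theorem follows at once.
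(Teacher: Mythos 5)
Your proposal is correct and matches the paper's own treatment: the theorem in question is explicitly introduced as a summary of the section's results, so no separate proof is given, and the paper intends precisely the assembly you describe — items (1)--(2) from Proposition~\ref{P: number of cycles = m}, Theorem~\ref{T: depth of phi[nu] is d[nu]-1} and Theorem~\ref{T: about res cycles}; item (3) from (\ref{F: Puiseux for rz}), the reality proposition and Proposition~\ref{P: first Puiseux coef is non-zero}; item (4) from Proposition~\ref{P: p=d-1}; and item (5) from Propositions~\ref{P: P[nu](z) is holomorphic} and~\ref{P: what a croc} together with Lemma~\ref{L: P[nu] and bfA commute} and the cyclicity/dimension lemmas. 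You also correctly identify the one genuine point of care, namely that the correspondence (\ref{F: phi[mu] one-to-one r[nu]}) must be fixed once and used consistently so that the same index $\nu$ refers to the same cycle, eigenpath, and Jordan cell throughout.
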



\subsection{Relation of $\Upsilon_\lambda^{[\nu]}$ to $\phi_\nu(s)$}
\label{SS: relation of Upsilon}

The total algebraic multiplicity~$N$ of a real resonance point~$r_\lambda$ 
can be naturally split into the sum of~$m$ integers $d_1, \ldots, d_m,$ 
where~$m$ is the geometric multiplicity of $r_\lambda.$ Namely,~$d_\nu$ is the size 
of the~$\nu$th Jordan cell of the compact operator~$A_\lambda(s)$ corresponding to the eigenvalue 
$(s-r_\lambda)^{-1}.$ To a triple $(\lambda; H_{\rlmb},V)$ we can also associate~$m$ eigenpaths $\phi_\nu(s),$
$s \in \mbR,$ of $H_s = H_0+sV.$ With each of these eigenpaths $\phi_\nu(s)$ we can also associate
an integer, its order. 
Finally, there is a third way to split~$N$ into the sum $d_1+\ldots+d_m,$ where~$d_\nu$ is the length 
of the~$\nu$th cycle of resonance points. 
Theorem~\ref{T: depth of phi[nu] is d[nu]-1} shows that these three sets of integers are identical, 
so our usage of the same notation~$d_\nu$ in all these instances is justified. 
We now show that~$d_\nu$ is equal to $\dim \Upsilon_\lambda^{[\nu]}.$
One can expect that there should be a deeper connection between this vector space and the eigenpath $\phi_\nu(s).$
The following theorem demonstrates this connection. 

\begin{thm} \label{T: (i) and (ii) bfA phi(j)=phi(j-1)} 
If $\phi_\nu(s)$ is an eigenpath of order~$d_\nu,$ then

\smallskip
(i) the vectors 
$
  \phi_\nu(r_\lambda), \ \phi'_\nu(r_\lambda), \ \ldots, \ \phi^{(d_\nu-1)}_\nu(r_\lambda)
$
belong to the vector space~$\Upsilon_\lambda^{[\nu]}(r_\lambda),$ and 

\smallskip
(ii) for each $j=1,\ldots,d_\nu-1,$ \ 
$
  \bfA_\lambda(r_\lambda) \phi_\nu^{(j)}(r_\lambda) = j \phi_\nu^{(j-1)}(r_\lambda).
$
\end{thm}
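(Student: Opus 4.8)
The plan is to dispatch (ii) immediately from Lemma~\ref{L: nice one}, and then to deduce (i) by differentiating, in the coupling constant $s$, a single identity that ties the eigenpath $\phi_\nu$ to the idempotent $P_\lambda^{[\nu]}(r_\lambda)$.

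\emph{Proof of (ii) and preliminary remarks.} Since $\phi_\nu(s)$ has order $d_\nu$ it has order at least $d_\nu$, so Lemma~\ref{L: nice one} applied with $k=d_\nu$ gives at once that $\phi_\nu^{(j)}(r_\lambda)$ is a resonance vector of order $j+1$ for $j=0,1,\ldots,d_\nu-1$, and that $\bfA_\lambda(r_\lambda)\phi_\nu^{(j)}(r_\lambda)=j\,\phi_\nu^{(j-1)}(r_\lambda)$ for $j=1,\ldots,d_\nu-1$, which is (ii). In particular these $d_\nu$ vectors have pairwise distinct orders, hence are linearly independent and lie in $\Upsilon_\lambda(r_\lambda)$, the direct sum of the subspaces $\Upsilon_\lambda^{[\mu]}(r_\lambda)$; it remains only to show each of them sits in the $\nu$-th summand.

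\emph{Setting up the identity.} By Proposition~\ref{P: number of cycles = m} together with Theorem~\ref{T: about res cycles}(a), there is a half-interval $I$ of the real axis with endpoint $r_\lambda$ on which $s=r_\nu^{(0)}(\lambda_\nu(s))$; that is, for real $s\in I$ the number $z=\lambda_\nu(s)$ is an eigenvalue of $H_s=H_0+sV$ with eigenvector $\phi_\nu(s)$, and $s$ is the resonance point $r_\nu^{(0)}(z)$ of the triple $(z;H_0,V)$ belonging to the cycle $\nu$. Hence $\phi_\nu(s)\in\Upsilon_z(r_\nu^{(0)}(z))=\im P_z(r_\nu^{(0)}(z))$, so $P_z(r_\nu^{(0)}(z))\phi_\nu(s)=\phi_\nu(s)$, while for $s\neq r_\lambda$ the other members $r_\nu^{(j)}(z)$, $j\neq0$, of the cycle are distinct from $r_\nu^{(0)}(z)$ (their Puiseux leading coefficient is non-zero, Proposition~\ref{P: first Puiseux coef is non-zero}), so $P_z(r_\nu^{(j)}(z))\phi_\nu(s)=P_z(r_\nu^{(j)}(z))P_z(r_\nu^{(0)}(z))\phi_\nu(s)=0$ by the orthogonality relation (\ref{F: Pz(1)Pz(2)=0}). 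Summing over the cycle, $P_{\lambda_\nu(s)}^{[\nu]}\phi_\nu(s)=\phi_\nu(s)$ for all $s\in I$, $s\neq r_\lambda$. Since $z\mapsto P_z^{[\nu]}$ is holomorphic near $z=\lambda$ (Proposition~\ref{P: P[nu](z) is holomorphic}) and $s\mapsto\lambda_\nu(s)$, $s\mapsto\phi_\nu(s)$ are analytic near $s=r_\lambda$, both sides of this identity are analytic in $s$, so by the identity theorem
$$
  \Phi(s)\,\phi_\nu(s)=\phi_\nu(s),\qquad \Phi(s):=P^{[\nu]}_{\lambda_\nu(s)},
$$
holds for all $s$ in a complex neighbourhood of $r_\lambda$.

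\emph{Differentiating.} Because the eigenpath has order $d_\nu$, formula (\ref{F: lambda(s)=lambda +eps(s-l)...}) (with $\tilde d_\nu=d_\nu$) gives $\lambda_\nu(s)-\lambda=\eps_\nu(s-r_\lambda)^{d_\nu}+O((s-r_\lambda)^{d_\nu+1})$, so $\lambda_\nu(s)-\lambda$ vanishes to order $d_\nu$ at $r_\lambda$; composing with the Taylor series of $P_z^{[\nu]}$ in $z-\lambda$ yields $\Phi(s)-P_\lambda^{[\nu]}(r_\lambda)=O((s-r_\lambda)^{d_\nu})$, i.e. $\Phi(r_\lambda)=P_\lambda^{[\nu]}(r_\lambda)$ and $\Phi^{(i)}(r_\lambda)=0$ for $i=1,\ldots,d_\nu-1$. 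Differentiating the identity $j$ times at $s=r_\lambda$ via the Leibniz rule, for every $j=0,1,\ldots,d_\nu-1$ only the $i=0$ term survives:
$$
  P_\lambda^{[\nu]}(r_\lambda)\,\phi_\nu^{(j)}(r_\lambda)
  =\sum_{i=0}^{j}{j\choose i}\Phi^{(i)}(r_\lambda)\,\phi_\nu^{(j-i)}(r_\lambda)
  =\phi_\nu^{(j)}(r_\lambda),
$$
which is precisely the statement $\phi_\nu^{(j)}(r_\lambda)\in\Upsilon_\lambda^{[\nu]}(r_\lambda)$ of (i). The main obstacle is organising this bridge correctly: the ``fibrewise'' fact that $\phi_\nu$ lies in the range of the $\nu$-th idempotent must be recast in a form analytic in the coupling constant $s$ (differentiating directly in the spectral parameter $z$ is blocked by the branching of $r_\nu^{(0)}(z)$), after which the flatness of $\lambda_\nu(s)$ to order $d_\nu$ at $r_\lambda$ — which is exactly the hypothesis that the eigenpath has order $d_\nu$ — kills all correction terms; everything else is Leibniz-rule bookkeeping.
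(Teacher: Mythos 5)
Your proof is correct, and for part (i) it takes a genuinely different route from the paper.

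For (ii) you simply cite Lemma~\ref{L: nice one}, which indeed already supplies the conclusion; the paper re-derives (ii) inside the same divided-difference machinery it uses for (i), so your observation that (ii) is immediate is more economical. For (i), the paper evaluates divided differences $\phi_\nu^{[j]}(r_\nu^{(0)}(z),\ldots,r_\nu^{(j-1)}(z))$ of the eigenpath at the $j$ coalescing resonance points of the cycle, applies $\sum_k P_z(r_\nu^{(k)}(z))$, and lets $z\to\lambda$, using the classical fact that divided differences converge to $\frac{1}{j!}\phi_\nu^{(j)}(r_\lambda)$; the same mechanism (with a weight $(s-r_\lambda)$ inserted, via Proposition~\ref{P: what a croc}) gives (ii). You instead encode the same ``fibrewise'' membership $\phi_\nu(s)\in\im P_{\lambda_\nu(s)}^{[\nu]}$ as a single identity analytic in the coupling constant $s$, continue it by the identity theorem, and then differentiate: the hypothesis that the eigenpath has order $d_\nu$ makes $\lambda_\nu(s)-\lambda$ flat to order $d_\nu$ at $r_\lambda$, so by composition $\Phi(s)=P_{\lambda_\nu(s)}^{[\nu]}$ is also flat to order $d_\nu-1$ and Leibniz kills all correction terms. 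Both routes rest on the same infrastructure (Propositions~\ref{P: P[nu](z) is holomorphic} and~\ref{P: number of cycles = m}, the orthogonality (\ref{F: Pz(1)Pz(2)=0}), and Lemma~\ref{L: nice one}); the paper's version produces the auxiliary limit formulas of Proposition~\ref{P: what a croc} en route, while yours is arguably tidier if one only wants the theorem itself, since it replaces the divided-difference bookkeeping with the identity theorem and a single Leibniz expansion.
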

\begin{proof}
This proof uses divided differences $\phi^{[j]}_\nu$ of the function~$\phi_\nu$ 
taken at resonance points of cycle~$\nu.$

(i)
The function $\phi_\nu(s)$ is a holomorphic function of $s$ in a neighbourhood of $s = \rlmb.$
For each $j=1,2,\ldots,d_\nu$ 
we consider the divided difference of order $j$ for this function evaluated at~$j$ resonance points
$r_\nu^{(0)}(z), \ldots, r_\nu^{(j-1)}(z):$ 
$$
  \phi^{[j]}_\nu(r_\nu^{(0)}(z), \ldots, r_\nu^{(j-1)}(z)).
$$
As $z \to \lambda,$ all resonance points $r_\nu^{(0)}(z), \ldots, r_\nu^{(j-1)}(z)$ 
approach $r_\lambda,$ and therefore, by a well-known property of 
high order divided differences, this function approaches the~$j$th derivative $\phi^{(j)}_\nu(r_\lambda)$
of $\phi_\nu(s)$ up to a constant factor: 
\begin{equation} \label{F: lim phi[j]=phi(j)/j!}
  \lim_{z \to \lambda} \phi^{[j]}_\nu(r_\nu^{(0)}(z), \ldots, r_\nu^{(j-1)}(z)) = \frac 1{j!}\phi_\nu^{(j)}(r_\lambda).
\end{equation}
Hence, using this equality, Proposition~\ref{P: P[nu](z) is holomorphic} and (\ref{F: P(lambda)[nu]:=}), we get
\begin{equation*}
  \begin{split}
     \lim_{z \to \lambda} 
        \sum_{k=0}^{d_\nu-1} P_z (r_\nu^{(k)}(z)) & \phi^{[j]}_\nu(r_\nu^{(0)}(z), \ldots, r_\nu^{(j-1)}(z))
     \\ & = \brs{\lim_{z \to \lambda} \sum_{k=0}^{d_\nu-1} P_z (r_\nu^{(k)}(z))} 
         \brs{\lim_{z \to \lambda}\phi^{[j]}_\nu(r_\nu^{(0)}(z), \ldots, r_\nu^{(j-1)}(z))}
     \\ & = \frac 1{j!} P_\lambda^{[\nu]}(r_\lambda) \phi^{(j)}_\nu(r_\lambda).
  \end{split}
\end{equation*}
On the other hand, since 
\begin{equation} \label{P(k)phi(l)=delta(kl)phi(l)}
  P_z (r_\nu^{(k)}(z)) \phi_\nu(r_\nu^{(l)}(z)) = \delta_{kl} \phi_\nu(r_\nu^{(l)}(z))
\end{equation}
and since $P_z (r_\nu^{(k)}(z))P_z (r_\nu^{(l)}(z)) = 0$ for $k\neq l,$ we also have 
\begin{equation*}
  \begin{split}
     \lim_{z \to \lambda} \sum_{k=0}^{d_\nu-1} P_z (r_\nu^{(k)}(z)) \phi^{[j]}_\nu(r_\nu^{(0)}(z), \ldots, r_\nu^{(j-1)}(z))
        & = \lim_{z \to \lambda} \phi^{[j]}_\nu(r_\nu^{(0)}(z), \ldots, r_\nu^{(j-1)}(z))
     \\ & = \frac 1{j!}\phi^{(j)}_\nu(r_\lambda).
  \end{split}
\end{equation*}
Hence, 
$
  P_\lambda^{[\nu]}(r_\lambda) \phi^{(j)}_\nu(r_\lambda) = \phi^{(j)}_\nu(r_\lambda)
$
and therefore $\phi^{(j)}_\nu(r_\lambda) \in \Upsilon_\lambda^{[\nu]}(r_\lambda)$.

\smallskip
(ii) This proof uses the same argument. We denote
by $M \phi$ the function $$(M \phi)(s) = (s-r_\lambda) \phi(s).$$
On one hand, using Proposition~\ref{P: what a croc} (with $k=1$) and (\ref{F: lim phi[j]=phi(j)/j!}), we have 
\begin{equation*}
  \begin{split}
   (E) := \lim_{z \to \lambda} 
        \sum_{k=0}^{d_\nu-1} (r_\nu^{(k)}(z) - r_\lambda ) 
               P_z (r_\nu^{(k)}(z)) \phi^{[j]}_\nu(r_\nu^{(0)}(z), \ldots, r_\nu^{(j-1)}(z))
     = \frac 1{j!}\bfA_\lambda^{[\nu]}(r_\lambda) \phi^{(j)}_\nu(r_\lambda ).
  \end{split}
\end{equation*}
On the other hand, using (\ref{P(k)phi(l)=delta(kl)phi(l)}) one can infer that 
\begin{equation*}
      \sum_{k=0}^{d_\nu-1} (r_\nu^{(k)}(z) - r_\lambda ) 
           P_z (r_\nu^{(k)}(z)) \phi^{[j]}_\nu(r_\nu^{(0)}(z), \ldots, r_\nu^{(j-1)}(z))
           = (M\phi_\nu)^{[j]}(r_\nu^{(0)}(z), \ldots, r_\nu^{(j-1)}(z)),
\end{equation*}
so that 
\begin{equation*}
  \begin{split}
    (E) & = \lim_{z \to \lambda} (M\phi_\nu)^{[j]}(r_\nu^{(0)}(z), \ldots, r_\nu^{(j-1)}(z)),
     \\ & = \frac 1{j!}\frac {d^j}{ds^j} (M\phi_\nu)(s)\Big|_{s=r_\lambda}
     \\ & = \frac 1{(j-1)!}\phi_\nu^{(j-1)}(r_\lambda),
  \end{split}
\end{equation*}
where the second equality follows from (\ref{F: lim phi[j]=phi(j)/j!}).
Hence, 
$
  \bfA_\lambda^{[\nu]}(r_\lambda) \phi^{(j)}_\nu(r_\lambda )
  = j \phi_\nu^{(j-1)}(r_\lambda).
$
\end{proof}

\begin{cor} \label{C: dim Upsilon[nu]=d(nu)} 
  Dimension of the vector space $\Upsilon_\lambda^{[\nu]}(r_\lambda)$ is equal to $d_\nu.$ 
\end{cor}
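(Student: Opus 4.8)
The plan is to sandwich $\dim \Upsilon_\lambda^{[\nu]}(r_\lambda)$ between $d_\nu$ from below and, after summing over $\nu$, to use the identity $\sum_\nu d_\nu = N$ to force equality in every $\nu$.

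First I would produce $d_\nu$ linearly independent vectors inside $\Upsilon_\lambda^{[\nu]}(r_\lambda)$. By part~(i) of Theorem~\ref{T: (i) and (ii) bfA phi(j)=phi(j-1)}, the vectors $\phi_\nu(r_\lambda), \phi'_\nu(r_\lambda), \ldots, \phi_\nu^{(d_\nu-1)}(r_\lambda)$ all belong to $\Upsilon_\lambda^{[\nu]}(r_\lambda)$ (the eigenpath $\phi_\nu$ has order $d_\nu$ by the Corollary preceding Theorem~\ref{T: depth of phi[nu] is d[nu]-1}, so Lemma~\ref{L: nice one} and Theorem~\ref{T: (i) and (ii) bfA phi(j)=phi(j-1)} apply). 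These vectors are linearly independent: suppose $\sum_{j=0}^{d_\nu-1} c_j \phi_\nu^{(j)}(r_\lambda) = 0$ with $c_k$ the nonzero coefficient of largest index~$k$. Applying $\bfA_\lambda^k(r_\lambda)$ and using part~(ii) of Theorem~\ref{T: (i) and (ii) bfA phi(j)=phi(j-1)} iterated, together with the fact that $\phi_\nu^{(j)}(r_\lambda)$ has order $j+1\le k$ for $j<k$ (Lemma~\ref{L: nice one}) and that $\bfA_\lambda(r_\lambda)$ lowers the order of a resonance vector by one, we obtain $c_k\,k!\,\phi_\nu(r_\lambda) = 0$; since $\phi_\nu(r_\lambda)$ is a nonzero eigenvector this is a contradiction. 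Hence $\dim \Upsilon_\lambda^{[\nu]}(r_\lambda) \ge d_\nu$.

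Next I would sum over $\nu$. From $P_\lambda(r_\lambda) = \sum_{\nu=1}^m P_\lambda^{[\nu]}(r_\lambda)$ together with $P_\lambda^{[\nu]}P_\lambda^{[\mu]} = \delta_{\nu\mu}P_\lambda^{[\nu]}$, one gets the direct sum $\Upsilon_\lambda(r_\lambda) = \bigoplus_{\nu=1}^m \Upsilon_\lambda^{[\nu]}(r_\lambda)$, so $\sum_{\nu=1}^m \dim \Upsilon_\lambda^{[\nu]}(r_\lambda) = \dim \Upsilon_\lambda(r_\lambda) = N$. On the other hand, by Theorem~\ref{T: depth of phi[nu] is d[nu]-1} the integers $d_\nu$ are the sizes of the Jordan cells of the nilpotent operator $\bfA_\lambda(r_\lambda)$ acting on $\Upsilon_\lambda(r_\lambda)$, whose sizes add up to $\dim \Upsilon_\lambda(r_\lambda) = N$ (equivalently, use Corollary~\ref{C: sum tilde d = N}); thus $\sum_{\nu=1}^m d_\nu = N$. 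Combining the two displays, $\sum_{\nu=1}^m\bigl(\dim \Upsilon_\lambda^{[\nu]}(r_\lambda) - d_\nu\bigr) = 0$ with all summands nonnegative, which forces $\dim \Upsilon_\lambda^{[\nu]}(r_\lambda) = d_\nu$ for every $\nu$.

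There is no serious obstacle here: the statement is a bookkeeping consequence of the already-established structural results, the only point needing a word of care being the linear independence of the iterated derivatives $\phi_\nu^{(j)}(r_\lambda)$, and that follows at once from part~(ii) of Theorem~\ref{T: (i) and (ii) bfA phi(j)=phi(j-1)}, which exhibits them as a single Jordan chain of length $d_\nu$ for $\bfA_\lambda(r_\lambda)$.
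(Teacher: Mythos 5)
Your proof is correct and uses essentially the same sandwich argument as the paper: lower bound $\dim \Upsilon_\lambda^{[\nu]}(r_\lambda)\ge d_\nu$ via Theorem~\ref{T: (i) and (ii) bfA phi(j)=phi(j-1)}, then force equality from $\sum_\nu \dim \Upsilon_\lambda^{[\nu]}(r_\lambda)=N=\sum_\nu d_\nu$. The only difference is that you make explicit the linear-independence of the Jordan chain $\phi_\nu(r_\lambda),\ldots,\phi_\nu^{(d_\nu-1)}(r_\lambda)$, which the paper treats as immediate.
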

\begin{proof} The sum of dimensions of the vector spaces 
$\Upsilon_\lambda^{[\nu]}(r_\lambda),$ $\nu=1,\ldots,m,$ is equal to the algebraic multiplicity~$N$ of the 
resonance point $r_\lambda.$ The sum of numbers $d_\nu,$ $\nu=1,\ldots,m$ is also equal to~$N.$
Since in addition to this, by Theorem~\ref{T: (i) and (ii) bfA phi(j)=phi(j-1)}, 
we have $\dim \Upsilon_\lambda^{[\nu]}(r_\lambda) \geq d_\nu$
for all $\nu,$ it follows that $\dim \Upsilon_\lambda^{[\nu]}(r_\lambda) = d_\nu.$
\end{proof}

Now we are in position to give a second proof of Theorem~\ref{T: phi(j,mu)(0) is self-dual}. 
\begin{cor} \label{C: second proof} Let $\phi_\nu(s)$ and $\phi_\mu(s)$ be two different eigenpaths of~$H_s,$ $s \in \mbR.$
For all $j = 0,1,\ldots,d_\nu-1$ and all $k = 0,1,\ldots,d_\mu-1,$
$$
  \scal{\phi_\nu^{(j)}(\rlmb)}{V\phi_\mu^{(k)}(\rlmb)} = 0.
$$
\end{cor}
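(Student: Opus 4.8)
The plan is to deduce Corollary~\ref{C: second proof} from the two main results that have just been assembled: Theorem~\ref{T: (i) and (ii) bfA phi(j)=phi(j-1)} (together with Corollary~\ref{C: dim Upsilon[nu]=d(nu)}), which identifies the vectors $\phi_\nu^{(j)}(r_\lambda)$, $j=0,\ldots,d_\nu-1$, as a basis of the reducing subspace $\Upsilon_\lambda^{[\nu]}(r_\lambda)$, and the already-proved Theorem~\ref{T: phi(j,mu)(0) is self-dual}, which gives the orthogonality $\scal{\phi_\nu^{(j)}(r_\lambda)}{V\phi_\mu^{(k)}(r_\lambda)}=0$ \emph{under the hypothesis that $\phi_\nu$ and $\phi_\mu$ have orders $\tilde d_\nu$ and $\tilde d_\mu$}. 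So at first glance there is nothing to prove. The genuine content, as the remark after Theorem~\ref{IT: (phi,V phi) =0} in the introduction points out, is that Theorem~\ref{T: phi(j,mu)(0) is self-dual} by itself only controls indices $j$ up to $\tilde d_\mu-1$ and $k$ up to $\tilde d_\nu-1$, and a priori one does not know that $\tilde d_\nu = d_\nu$ — i.e. that the orders of the eigenpaths exhaust the full Jordan structure and add up to $N$. That identification is precisely what Theorem~\ref{T: depth of phi[nu] is d[nu]-1} and Corollary~\ref{C: dim Upsilon[nu]=d(nu)} supply.

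Concretely, I would argue as follows. First invoke Theorem~\ref{T: depth of phi[nu] is d[nu]-1}: for each $\nu$ the order $\tilde d_\nu$ of the eigenpath $\phi_\nu(s)$ equals the size $d_\nu$ of the $\nu$th Jordan block of $\bfA_\lambda(r_\lambda)$; hence the ranges $j = 0,1,\ldots,d_\nu-1$ and $k = 0,1,\ldots,d_\mu-1$ in the statement are exactly the ranges $j = 0,\ldots,\tilde d_\nu-1$, $k = 0,\ldots,\tilde d_\mu-1$ appearing in Theorem~\ref{T: phi(j,mu)(0) is self-dual}. Then apply Theorem~\ref{T: phi(j,mu)(0) is self-dual} directly to the two distinct eigenpaths $\phi_\nu(s)$ and $\phi_\mu(s)$, with the roles of the two paths matched to the two arguments of the scalar product, to conclude $\scal{\phi_\nu^{(j)}(r_\lambda)}{V\phi_\mu^{(k)}(r_\lambda)} = 0$ for all the indices in question.

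Alternatively, and perhaps more in the spirit of this subsection, one can give a self-contained argument using the reducing-subspace picture: by Theorem~\ref{T: (i) and (ii) bfA phi(j)=phi(j-1)}(i) the vectors $\phi_\nu^{(j)}(r_\lambda)$ lie in $\Upsilon_\lambda^{[\nu]}(r_\lambda)$ and $\phi_\mu^{(k)}(r_\lambda)$ lies in $\Upsilon_\lambda^{[\mu]}(r_\lambda)$. Using $VP_\lambda^{[\mu]} = P_\lambda^{[\mu]}V$ (established as (\ref{F: VP[nu]=Q[nu]V}) together with the introduction's list of properties of $P_\lambda^{[\nu]}$, giving in fact $VP_\lambda^{[\mu]} = P_\lambda^{[\mu]}V$ for the limiting operators), one has $V\phi_\mu^{(k)}(r_\lambda) = VP_\lambda^{[\mu]}\phi_\mu^{(k)}(r_\lambda) = P_\lambda^{[\mu]}V\phi_\mu^{(k)}(r_\lambda) \in \Upsilon_\lambda^{[\mu]}$, while by (\ref{F: P[nu]P[mu]=0}) applied after taking adjoints, $\Upsilon_\lambda^{[\nu]}$ and $\Upsilon_\lambda^{[\mu]}$ are $V$-orthogonal for $\nu\neq\mu$ — one should check the orthogonality of the images of $P_\lambda^{[\nu]}$ and $Q_\lambda^{[\mu]} = (P_{\bar\lambda}^{[\mu]})^*$ using $P_\lambda^{[\nu]}P_\lambda^{[\mu]} = \delta_{\nu\mu}P_\lambda^{[\nu]}$ and reality of $\lambda$. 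This yields the result. The only delicate point, and the one I would be most careful about, is making sure the index ranges really do match up — that is, that one is entitled to use $\tilde d_\nu = d_\nu$ — which is why the proof must cite Corollary~\ref{C: dim Upsilon[nu]=d(nu)} (equivalently Theorem~\ref{T: depth of phi[nu] is d[nu]-1}) rather than Theorem~\ref{T: phi(j,mu)(0) is self-dual} alone; everything else is bookkeeping.

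\begin{proof}
By Theorem~\ref{T: depth of phi[nu] is d[nu]-1}, the order $\tilde d_\nu$ of the eigenpath $\phi_\nu(s)$ coincides with the size $d_\nu$ of the $\nu$th Jordan cell of the nilpotent operator $\bfA_\lambda(r_\lambda)$ (and likewise $\tilde d_\mu = d_\mu$). Hence the assertion is exactly the statement of Theorem~\ref{T: phi(j,mu)(0) is self-dual} applied to the two distinct eigenpaths $\phi_\mu(s)$ and $\phi_\nu(s)$, with indices running over $j = 0,1,\ldots,\tilde d_\nu-1$ and $k = 0,1,\ldots,\tilde d_\mu-1$.
\end{proof}
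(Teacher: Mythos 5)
Your formal proof establishes the stated orthogonality, and it is logically correct: Theorem~\ref{T: depth of phi[nu] is d[nu]-1} gives $\tilde d_\nu = d_\nu$, and then Theorem~\ref{T: phi(j,mu)(0) is self-dual} (with $V$ self-adjoint to swap it across the inner product) yields the claim over the stated index ranges. But this route is not the one the paper takes, and in this particular instance the difference matters: the paper introduces Corollary~\ref{C: second proof} explicitly as a \emph{second proof} of Theorem~\ref{T: phi(j,mu)(0) is self-dual}, so citing that very theorem defeats the point. The paper's own argument is the one you sketch only in passing in your prose as the ``alternative'' approach: by Theorem~\ref{T: (i) and (ii) bfA phi(j)=phi(j-1)} and Corollary~\ref{C: dim Upsilon[nu]=d(nu)} the vectors $\phi_\nu^{(j)}(\rlmb)$ span $\Upsilon_\lambda^{[\nu]}=\im P_\lambda^{[\nu]}$, and then one computes
\[
  \scal{\phi_\nu^{(j)}(\rlmb)}{V\phi_\mu^{(k)}(\rlmb)}
  = \scal{\phi_\nu^{(j)}(\rlmb)}{V P_\lambda^{[\mu]}\phi_\mu^{(k)}(\rlmb)}
  = \scal{P_\lambda^{[\mu]}\phi_\nu^{(j)}(\rlmb)}{V\phi_\mu^{(k)}(\rlmb)} = 0,
\]
using (\ref{F: VP[nu]=Q[nu]V}), the adjoint relation $\brs{Q_\lambda^{[\mu]}}^* = P_\lambda^{[\mu]}$ (valid since $\lambda$ and $r_\lambda$ are real), and (\ref{F: P[nu]P[mu]=0}) with $\nu\neq\mu$. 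That argument is completely independent of the divided-difference machinery of Theorem~\ref{T: phi(j,mu)(0) is self-dual}, which is what makes it worth recording as a second proof. You correctly identify the reducing-subspace picture and the need to track the adjoint $Q_\lambda^{[\mu]}=(P_{\bar\lambda}^{[\mu]})^*$, so the understanding is there; you should simply promote the alternative argument to the formal proof and demote the appeal to Theorem~\ref{T: phi(j,mu)(0) is self-dual}, which as written is circular for the purpose the corollary is meant to serve.

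One small adjustment in the alternative argument as you wrote it: the equality that the paper actually proves and that you should cite is $V P_\lambda^{[\mu]} = Q_\lambda^{[\mu]} V$, i.e.\ (\ref{F: VP[nu]=Q[nu]V}); the commutation $V P_\lambda^{[\mu]} = P_\lambda^{[\mu]} V$ quoted from the introduction's summary is stated there without proof and is not what the proof of the corollary rests on. Passing to the adjoint via $\brs{Q_\lambda^{[\mu]}}^* = P_\lambda^{[\mu]}$, as you noted one must, is the cleaner step, and combined with $P_\lambda^{[\mu]} P_\lambda^{[\nu]}=0$ it gives the result directly.
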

\begin{proof} By Theorem~\ref{T: (i) and (ii) bfA phi(j)=phi(j-1)}
and Corollary~\ref{C: dim Upsilon[nu]=d(nu)}, the vectors 
$
 \phi_\nu(r_\lambda), \ \phi'_\nu(r_\lambda), \ \ldots, \ \phi^{(d_\nu-1)}_\nu(r_\lambda)
$
span the vector space $\Upsilon_\lambda^{[\nu]} = \im P_\lambda^{[\nu]}.$ 
Hence, using formulas (\ref{F: VP[nu]=Q[nu]V}) and (\ref{F: P[nu]P[mu]=0}),
we have 
$$
  \scal{\phi_\nu^{(j)}(\rlmb)}{V\phi_\mu^{(k)}(\rlmb)} 
     = \scal{\phi_\nu^{(j)}(\rlmb)}{V P_\lambda^{[\mu]}\phi_\mu^{(k)}(\rlmb)}
     = \scal{P_\lambda^{[\mu]}\phi_\nu^{(j)}(\rlmb)}{V \phi_\mu^{(k)}(\rlmb)} = 0.
$$
\end{proof}

\subsection{Sign of a cycle}
Previous results show that the decomposition 
$$
  d_1 + d_2 + \ldots + d_m
$$
of algebraic multiplicity~$N$ admits a number of interpretations:
$d_\nu$ is the length of the~$\nu$th cycle, and the size of the~$\nu$th Jordan cell, 
and the largest of numbers such that $\lambda^{(d_\nu)}_\nu(r_\lambda) \neq 0.$
In this subsection we show that to each cycle one can assign a sign $\pm 1.$
This can be done in several equivalent ways.

\begin{thm} \label{T: sign of nu} For each $\nu = 1,\ldots,m$ and for all small enough $\eps>0$ and $y>0,$ signs 
of the following real numbers coincide:
\begin{enumerate}
  \item $\lambda_\nu(r_\lambda + \eps) - \lambda_\nu(r_\lambda),$
  \item $\scal{\phi_\nu(r_\lambda)}{V\phi_\nu^{(d_\nu-1)}(r_\lambda)},$
  \item $\Im r_\nu^{(0)}(z+iy),$ for all $0<y << 1,$ and for all $z \in I,$ where $I$ is one of the two 
intervals $(\lambda, \lambda+\eps)$ or $(\lambda-\eps, \lambda),$ on which the branch $r_\nu^{(0)}(z+iy)$ 
takes positive values (such an interval and such a branch exist and are unique).
\end{enumerate}
\end{thm}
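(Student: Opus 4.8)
The plan is to establish the coincidence of the three signs by linking them in a chain: (1) $\leftrightarrow$ (2), and (2) $\leftrightarrow$ (3), using the Puiseux analysis of the preceding subsections together with Theorem~\ref{T: TFAE for (Uk)}. Throughout, I write $d=d_\nu$ for the order of the eigenpath $\phi_\nu$, which by Theorem~\ref{T: depth of phi[nu] is d[nu]-1} equals the size of the $\nu$th cycle and of the $\nu$th Jordan cell, and I use the expansions
$$
  \lambda_\nu(s) = \lambda + \eps_\nu (s-\rlmb)^{d} + O((s-\rlmb)^{d+1}), \qquad
  r_\nu^{(j)}(z) = \sum_{k=1}^\infty r_{k/d}\,\eps_d^{kj}(z-\lambda)^{k/d},
$$
the first from (\ref{F: lambda(s)=lambda +eps(s-l)...}), the second from (\ref{F: Puiseux for rz}), with $r_{1/d}\neq 0$ real by Propositions~\ref{P: first Puiseux coef is non-zero} and the preceding proposition.

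First I would handle (1) $\leftrightarrow$ (2). By formula (\ref{F: (phi,V phi(k-1))=lambda(k)(0)(phi,phi)}), since $\phi_\nu$ has order exactly $d$, we have
$$
  \scal{\phi_\nu(\rlmb)}{V\phi_\nu^{(d-1)}(\rlmb)} = \tfrac{1}{d}\,\lambda_\nu^{(d)}(\rlmb)\,\|\phi_\nu(\rlmb)\|^2,
$$
so this number has the same sign as $\lambda_\nu^{(d)}(\rlmb)$, i.e. the same sign as $\eps_\nu$ (noting $\lambda_\nu^{(d)}(\rlmb) = d!\,\eps_\nu$). On the other hand $\lambda_\nu(\rlmb+\eps)-\lambda_\nu(\rlmb) = \eps_\nu \eps^d + O(\eps^{d+1})$, which for small $\eps>0$ also has the sign of $\eps_\nu$. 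This gives (1) $=$ (2).

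Next, (1) $\leftrightarrow$ (3), which is the geometric heart of the statement. The branch $r_\nu^{(0)}(z)$ is, on a suitable real half-interval, the inverse of $\lambda_\nu(s)$ near $\rlmb$ (Theorem~\ref{T: about res cycles}(a), Proposition~\ref{P: number of cycles = m}), so inverting the leading term of $\lambda_\nu$ gives, for $z = \lambda + t$ with $t$ real and small,
$$
  r_\nu^{(0)}(\lambda+t) - \rlmb = r_{1/d}\,t^{1/d} + O(t^{2/d}),
$$
where the $d$-th root is chosen consistently with the cycle. The sign of $\eps_\nu$ determines on which side of $\lambda$ (i.e. $t>0$ or $t<0$) the principal branch $t^{1/d}$ is real and, combined with the sign of $r_{1/d}$, whether that real value exceeds or falls below $\rlmb$ — this is exactly what identifies the interval $I$ and the branch in item (3). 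For the \emph{complex} value I would write $z+iy = \lambda + t + iy$ and expand $r_\nu^{(0)}(z+iy) - \rlmb = r_{1/d}(t+iy)^{1/d}+\dots$; since $0<y\ll 1\ll$ requires $y$ to be small relative to $t$, the argument $(t+iy)$ lies just above the real ray on which $t^{1/d}$ is positive, so $(t+iy)^{1/d}$ lies just above the positive real axis, hence has positive imaginary part scaled by $r_{1/d}\cdot(\text{sign factor})$; tracking this scaling shows $\Im r_\nu^{(0)}(z+iy)$ has the sign of $\eps_\nu$. Cross-checking with Theorem~\ref{T: about res cycles}(d),(e) — that the non-real resonance points split symmetrically and that two real ones go to opposite half-planes — confirms consistency.

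The main obstacle is bookkeeping of the root-of-unity choices: making precise, for a fixed cycle $\nu$, which branch index $j$ in the Puiseux series corresponds to the branch ``$r_\nu^{(0)}$'' that is real on the specified half-interval, and then checking that perturbing $z$ to $z+iy$ rotates that branch into the correct half-plane. This requires pinning down the relation between the sign of the real Puiseux coefficient $r_{1/d}$, the sign of $\eps_\nu$, and the choice of $d$-th root, consistently across items (1)--(3). Once the Puiseux series are written down and the branch bookkeeping is fixed (using uniqueness of the real branch on a half-interval, from Theorem~\ref{T: about res cycles}(b),(c)), the sign comparisons are immediate from the leading-order terms. I would organise the write-up so that all three signs are reduced to the single sign of $\eps_\nu$ (equivalently, of $\lambda_\nu^{(d_\nu)}(\rlmb)$), and then observe that this common value is precisely what we call $\sign(\nu)$.
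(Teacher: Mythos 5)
Your proposal is correct and follows the paper's route exactly: $(1)\leftrightarrow(2)$ via formula (\ref{F: (phi,V phi(k-1))=lambda(k)(0)(phi,phi)}), and $(1)\leftrightarrow(3)$ by reducing all three signs to that of $\lambda_\nu^{(d_\nu)}(r_\lambda)=d_\nu!\,\eps_\nu.$ The branch bookkeeping you flag as the remaining obstacle is precisely what the paper abbreviates as ``considering the four cases'' (sign of $\lambda_\nu^{(d_\nu)}(r_\lambda)$ against parity of $d_\nu$) and then omits as straightforward.
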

\begin{proof} That signs of the first and second numbers coincide 
follows from~(\ref{F: (phi,V phi(k-1))=lambda(k)(0)(phi,phi)}).
Equality of the signs of the first and third numbers can be inferred by considering 
the four cases: $\lambda^{(d_\nu)}(r_\lambda)>0$ and $\lambda^{(d_\nu)}(r_\lambda)<0$ 
for even and odd $d_\nu.$ This comparison is straightforward and therefore is omitted. 
\end{proof}

This sign will be called the \emph{sign of a cycle}~$\nu.$

If an eigenvalue $\lambda_\nu(s)$ crosses the threshold value~$\lambda$ from one side to the other,
as~$s$ crosses~$r_\lambda$ in positive direction, then the sign of the corresponding cycle~$\nu$ 
is the contribution of the eigenvalue~$\lambda_\nu(s)$ to the spectral flow through~$\lambda.$
If an eigenvalue~$\lambda_\nu(s)$ makes a U-turn at the threshold value~$\lambda,$
then there is a dichotomous ambiguity in the way of assigning a sign to the cycle~$\nu.$
Theorem~\ref{T: sign of nu} provides one way of choosing the sign of an eigenvalue making a U-turn,
though, since such an eigenvalue does not contribute to the spectral flow, it is not essential which way to 
choose. 

\subsection{Resonance index and intersection number}
Let $\sign(\nu)$ be the sign of cycle $\nu,$ and let 
$$
  b_\nu = \left\{ \begin{matrix} 0, & \text{if} \ d_\nu \ \text{is even}, \\
                                1, & \text{if} \ d_\nu \ \text{is odd}. \end{matrix} \right.
$$
\noindent
The intersection number through a resonance point $r_\lambda$
is equal to 
\begin{equation} \label{F: inters-n number}
  \sum_{\nu=1}^m b_\nu \sign(\nu).
\end{equation}
Indeed, the value of $b_\nu$ determines whether the corresponding eigenvalue function $\lambda_\nu(s)$
makes a U-turn or not at $s = r_\lambda,$ and if it does not, 
the value of $\sign(\nu)$ shows whether the eigenvalue $\lambda_\nu(s)$ crosses 
the threshold value~$\lambda$ in positive or in negative direction. Whatever the sign of cycle~$\nu$
is, it does not contribute to the intersection number, if $b_\nu = 0.$
Further, Theorems~\ref{T: about res cycles},~\ref{T: depth of phi[nu] is d[nu]-1} 
and~\ref{T: sign of nu} imply that cycles with even~$d_\nu$ do not contribute 
to the resonance index, while cycles with odd~$d_\nu$ contribute the number $\sign(\nu).$
Hence, we proved the following 
\begin{thm} \label{T: intersection number = TRI} 
The intersection number (\ref{F: inters-n number}) of eigenvalues 
of a path $H_r, \ r \in [0,1],$ through~$\lambda$ is equal to the total resonance index. 
\end{thm}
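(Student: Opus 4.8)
The plan is to prove the statement one resonance point at a time. Since the total resonance index is $\sum_{\rlmb \in [0,1]} \ind_{res}(\lambda; H_{\rlmb},V)$ and the intersection number of the path is the sum over $\rlmb \in [0,1]$ of the numbers $\sum_{\nu=1}^m b_\nu \sign(\nu)$ attached to each resonance point, it suffices to show that for a single resonance point $\rlmb$,
\[
  \ind_{res}(\lambda; H_{\rlmb},V) \;=\; \sum_{\nu=1}^m b_\nu \,\sign(\nu).
\]

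First I would split the resonance index along the cycles of $\rlmb$. By Theorem~\ref{T: r(z) has order 1}, every resonance point $r_z^{(j)}$ of the group of $\rlmb$ has order $1$, hence geometric and therefore algebraic multiplicity $1$, for all $z$ in a deleted neighbourhood of $\lambda$; there are exactly $N$ of them, and by Theorem~\ref{T: depth of phi[nu] is d[nu]-1} the monodromy around $\lambda$ decomposes them into $m$ cycles $r_\nu^{(0)}(z),\ldots,r_\nu^{(d_\nu-1)}(z)$, $\nu=1,\ldots,m$, of lengths $d_1,\ldots,d_m$. Writing $z=\lambda+iy$ with $y\to 0+$, and letting $N_\pm^{(\nu)}$ be the number of the $d_\nu$ points of cycle $\nu$ lying in $\mbC_\pm$, we have $N_\pm=\sum_\nu N_\pm^{(\nu)}$ and hence $\ind_{res}(\lambda; H_{\rlmb},V)=\sum_{\nu=1}^m\bigl(N_+^{(\nu)}-N_-^{(\nu)}\bigr)$.

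Next I would compute $N_+^{(\nu)}-N_-^{(\nu)}$ from the Puiseux series (\ref{F: Puiseux for rz}). By Proposition~\ref{P: first Puiseux coef is non-zero} the coefficient $r_{1/d_\nu}$ is real and non-zero, so with $z=\lambda+iy$ and the principal branch of $(z-\lambda)^{1/d_\nu}$,
\[
  \Im\bigl(r_\nu^{(j)}(\lambda+iy)-\rlmb\bigr)
     = r_{1/d_\nu}\,y^{1/d_\nu}\,\sin\!\Bigl(\tfrac{\pi}{2d_\nu}+\tfrac{2\pi j}{d_\nu}\Bigr)+O\bigl(y^{2/d_\nu}\bigr).
\]
The angle $\tfrac{\pi}{2d_\nu}+\tfrac{2\pi j}{d_\nu}$ is never an integer multiple of $\pi$ (that would force $1+4j=2kd_\nu$, impossible since the left-hand side is odd), so for all sufficiently small $y>0$ none of the $d_\nu$ points is real, and the sign of $\Im r_\nu^{(j)}(\lambda+iy)$ equals $\operatorname{sign}(r_{1/d_\nu})\cdot\operatorname{sign}\sin(\tfrac{\pi}{2d_\nu}+\tfrac{2\pi j}{d_\nu})$. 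If $d_\nu$ is even the $d_\nu$ equally spaced angles come in antipodal pairs, so exactly $d_\nu/2$ of them have positive sine; thus $N_+^{(\nu)}=N_-^{(\nu)}=d_\nu/2$ and the contribution is $0$. If $d_\nu$ is odd, $(d_\nu+1)/2$ of these angles lie in $(0,\pi)$ and $(d_\nu-1)/2$ in $(\pi,2\pi)$, so $N_+^{(\nu)}-N_-^{(\nu)}=\operatorname{sign}(r_{1/d_\nu})$. In both cases $N_+^{(\nu)}-N_-^{(\nu)}=b_\nu\operatorname{sign}(r_{1/d_\nu})$.

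Finally I would identify $\operatorname{sign}(r_{1/d_\nu})$ with $\sign(\nu)$. Since $r_\nu^{(\cdot)}(z)$ is the inverse of the eigenvalue function $\lambda_\nu(s)$ and, by (\ref{F: lambda(s)=lambda +eps(s-l)...}), $\lambda_\nu(s)=\lambda+\eps_\nu(s-\rlmb)^{d_\nu}+O((s-\rlmb)^{d_\nu+1})$ with $\eps_\nu$ real and non-zero, comparison of leading Puiseux coefficients gives $r_{1/d_\nu}^{\,d_\nu}\eps_\nu=1$; for odd $d_\nu$ this forces $\operatorname{sign}(r_{1/d_\nu})=\operatorname{sign}(\eps_\nu)=\operatorname{sign}\bigl(\lambda_\nu(\rlmb+\eps)-\lambda\bigr)=\sign(\nu)$ by item (1) of Theorem~\ref{T: sign of nu}, while for even $d_\nu$ the factor $b_\nu=0$ makes the identification irrelevant. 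Summing $N_+^{(\nu)}-N_-^{(\nu)}=b_\nu\sign(\nu)$ over $\nu$ and then over all $\rlmb\in[0,1]$ yields the theorem. The one step needing genuine care is this last one: one must check that the branch of $(z-\lambda)^{1/d_\nu}$ and the labelling $r_\nu^{(0)}$ used above agree with those in Theorem~\ref{T: sign of nu}, so that its chain of sign equalities — between $\lambda_\nu(\rlmb+\eps)-\lambda$, the pairing $\scal{\phi_\nu(\rlmb)}{V\phi_\nu^{(d_\nu-1)}(\rlmb)}$, and $\Im r_\nu^{(0)}(z+iy)$ — may be invoked consistently. Theorem~\ref{T: sign of nu} is formulated precisely to make this bookkeeping go through, which is also why the dichotomous choice of sign for U-turn cycles is immaterial here.
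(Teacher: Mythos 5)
Your proof is correct, and it takes a genuinely more computational route than the paper's. The paper's argument is a terse appeal to the qualitative facts in Theorem~\ref{T: about res cycles} --- how many real resonance points a cycle contains for $z$ in a one-sided neighbourhood of $\lambda$, that they split to opposite half-planes under $z\mapsto z+iy$, and that the non-real points of a cycle are balanced between $\mbC_+$ and $\mbC_-$ --- combined with item~(3) of Theorem~\ref{T: sign of nu} for the sign of the surviving real point. You instead bypass Theorem~\ref{T: about res cycles} entirely and read off $N_+^{(\nu)}-N_-^{(\nu)}$ directly from the Puiseux series (\ref{F: Puiseux for rz}): plugging $z=\lambda+iy$ shows $\Im\bigl(r_\nu^{(j)}(\lambda+iy)-\rlmb\bigr) \sim r_{1/d_\nu}\,y^{1/d_\nu}\sin\!\bigl(\tfrac{\pi}{2d_\nu}+\tfrac{2\pi j}{d_\nu}\bigr)$, the oddness of $1+4j$ guarantees no $\theta_j$ is a multiple of $\pi$, and an elementary count of the $d_\nu$ equally spaced angles yields $N_+^{(\nu)}-N_-^{(\nu)}=b_\nu\,\sign(r_{1/d_\nu})$; the inverse-function relation $r_{1/d_\nu}^{d_\nu}\eps_\nu=1$ then ties $\sign(r_{1/d_\nu})$ to $\sign(\nu)$ through the most elementary item~(1) of Theorem~\ref{T: sign of nu} rather than item~(3). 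The trade-off is as you already observe: the paper's route reuses previously established structural results but leaves the final implication implicit, while your route is self-contained apart from the branch/labelling convention under which $r_{1/d_\nu}$ is real for the principal root; for odd $d_\nu$ (the only case where this identification matters) the real-valued branch exists on $[\lambda,\lambda+\eps)$, so the convention does align with the one behind Proposition~\ref{P: first Puiseux coef is non-zero} and no gap remains. One small imprecision: ``order $1$'' in Theorem~\ref{T: r(z) has order 1} means $\Upsilon_z=\Upsilon_z^1$ (algebraic = geometric multiplicity), not that the geometric multiplicity is $1$; but the relevant conclusion --- that the $N$ points of the group are distinct and simple for $z$ near $\lambda$, so they can be apportioned to cycles --- is still what follows, so the argument is unaffected.
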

Since each cycle~$\nu$ contributes one of the three numbers $\pm 1$ or $0$ to the TRI,
it follows that 
\begin{equation} \label{F: U-turn inequality}
  \abs{\ind_{res}(\lambda; H_{r_\lambda},V)} \leq m.
\end{equation}
This is the U-turn inequality which holds for a.e.~$\lambda$ inside 
essential spectrum too, \cite[Theorem 10.1.6]{Az9}. 

\subsection{A representation of~$P_\lambda(r_\lambda)$}

Let $T$ be an operator of rank $N < \infty.$ If $b_1, \ldots, b_N$ is a basis of $\im(T)$ 
and if $\alpha$ is an invertible $N\times N$ matrix, then there exists a unique basis $a_1, \ldots, a_N$
of $\im(T^*),$ such that 
\begin{equation} \label{F: matrix repr-n for a finite-rank op-r}
  T = \sum_{i=1}^N \sum_{j=1}^N \alpha_{ij} \scal{a_i}{\cdot} b_j.
\end{equation}
Also, if $(a_i)$ and $(b_i)$ are bases of $\im(T^*)$ and $\im(T)$ respectively, 
then there exists a unique invertible matrix $\alpha$ such that the equality 
(\ref{F: matrix repr-n for a finite-rank op-r}) holds.
In the case of a finite-rank operator~$P_\lambda(r_\lambda),$ there exists one natural Jordan basis 
$$
  \frac 1{j!}\phi_\nu^{(j)}(r_\lambda), \ \nu=1,\ldots,m, \ j=0,1,\ldots,d_\nu-1,
$$
of the vector space $\im(P_\lambda(r_\lambda)) = \Upsilon_\lambda(r_\lambda),$ 
provided by part (i) of Theorem~\ref{T: (i) and (ii) bfA phi(j)=phi(j-1)} and Corollary~\ref{C: dim Upsilon[nu]=d(nu)}. 
Since by (\ref{F: VP = QV}) and (\ref{F: P*=Q})
$\im(P^*_\lambda(r_\lambda)) = V \im(P_\lambda(r_\lambda)),$ we also have a natural basis 
$$
  \frac 1{j!}V\phi_\nu^{(j)}(r_\lambda), \ \nu=1,\ldots,m, \ j=0,1,\ldots,d_\nu-1,
$$
of the vector space $\im(P^*_\lambda(r_\lambda)) = \Psi_\lambda(r_\lambda).$ 
Hence, there exists a unique invertible $N\times N$ matrix~$\alpha,$ such that 
\begin{equation} \label{F: P = sum sum ...}
  P_\lambda(r_\lambda) = \sum_{\mu=1}^m\sum_{\nu=1}^m \sum_{k=0}^{d_\mu-1} \sum_{j=0}^{d_\nu-1}
      \frac 1{k!j!} \alpha_{\mu\nu}^{kj} \scal{V \phi_\mu^{(k)}(r_\lambda)}{\cdot} \phi_\nu^{(j)}(r_\lambda).
\end{equation}
Since~$P_\lambda(r_\lambda)$ is an idempotent, we have 
$P_\lambda(r_\lambda)\phi_\nu^{(j)}(r_\lambda)= \phi_\nu^{(j)}(r_\lambda).$ 
Therefore, the matrix $\alpha$ is the transpose-inverse of the $N\times N$ matrix 
$$
  (b_{\mu\nu}^{kj}) := \brs{\frac 1{k!j!}\scal{V \phi_\mu^{(k)}(r_\lambda)}{\phi_\nu^{(j)}(r_\lambda)}}.
$$
Previous results imply that the numbers $b_{\mu\nu}^{kj}$ are real. 
By Corollary~\ref{C: second proof}, this matrix has the following property:
\begin{equation} \label{F: matrix b}
  b_{\mu\nu}^{kj} = \delta_{\mu\nu}b_{\mu\nu}^{kj},
\end{equation}
Hence, the matrix $b$ is a direct sum of~$m$ matrices $b_\nu^{kj}$ of size $d_\nu\times d_\nu.$ 

Further, by part (ii) of Theorem~\ref{T: (i) and (ii) bfA phi(j)=phi(j-1)}, the scalar product 
$$
  \frac 1{k!j!} \scal{V \phi_\nu^{(k)}(r_\lambda)}{\phi_\nu^{(j)}(r_\lambda)}
$$
depends only on $k+j,$ and if $k+j\leq d_\nu-2,$ then it is zero. 
Hence, the matrix $(b_\nu^{kj})$ is a Hankel matrix with zeros above the skew-diagonal. 
It follows that the inverse of $(b_\nu^{kj})$ is a Hankel matrix with zeros below the skew-diagonal. 
Thus, we have proved the following theorem.
\begin{thm} \label{T: P = sum sum} 
The idempotent operator~$P_\lambda(r_\lambda)$ can be written in the form 
(\ref{F: P = sum sum ...}), where the $N \times N$ matrix $\alpha$ is a direct 
sum of self-adjoint skew-upper triangular Hankel matrices of sizes $d_1, \ldots, d_m.$ 
\end{thm}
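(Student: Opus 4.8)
The plan is to let the elementary linear algebra remark recalled just above the statement carry the argument, feeding it two structural facts that are already proved. First I would take the natural Jordan basis
$$
  \frac 1{j!}\phi_\nu^{(j)}(r_\lambda),\qquad \nu=1,\ldots,m,\ \ j=0,\ldots,d_\nu-1,
$$
of the range $\Upsilon_\lambda(r_\lambda)=\im P_\lambda(r_\lambda)$; that this family really is a basis, and in particular that it has exactly $N$ members, is part~(i) of Theorem~\ref{T: (i) and (ii) bfA phi(j)=phi(j-1)} together with Corollary~\ref{C: dim Upsilon[nu]=d(nu)}. Since $\im P_\lambda^*(r_\lambda)=V\,\im P_\lambda(r_\lambda)$ by~(\ref{F: VP = QV}) and~(\ref{F: P*=Q}), the vectors $\frac 1{j!}V\phi_\nu^{(j)}(r_\lambda)$ form a basis of $\Psi_\lambda(r_\lambda)=\im P_\lambda^*(r_\lambda)$, so by the remark there is a unique $N\times N$ matrix $\alpha$ for which~(\ref{F: P = sum sum ...}) holds, and the whole statement reduces to computing $\alpha$ and reading off its shape.

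To compute $\alpha$ I would apply~(\ref{F: P = sum sum ...}) to a basis vector $\phi_\sigma^{(l)}(r_\lambda)$. Since $P_\lambda(r_\lambda)$ is an idempotent fixing each $\phi_\sigma^{(l)}(r_\lambda)$, comparing coefficients in the Jordan basis shows that $\alpha$ equals the transpose of the inverse of the resonance Gram matrix
$$
  b=(b_{\mu\nu}^{kj}),\qquad b_{\mu\nu}^{kj}=\frac 1{k!\,j!}\scal{V\phi_\mu^{(k)}(r_\lambda)}{\phi_\nu^{(j)}(r_\lambda)}.
$$
The entries $b_{\mu\nu}^{kj}$ are real (using~(\ref{F: (phi,V phi(k-1))=lambda(k)(0)(phi,phi)}) and the self-duality arguments of the previous section), and by Corollary~\ref{C: second proof}, the strengthened form of Theorem~\ref{T: phi(j,mu)(0) is self-dual}, they vanish whenever $\mu\neq\nu$. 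Hence $b=\bigoplus_{\nu=1}^m b_\nu$ with $b_\nu=(b_{\nu\nu}^{kj})_{k,j=0}^{d_\nu-1}$, and it is enough to analyse one block.

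For a fixed $\nu$ I would use the intertwining $V\bfA_\lambda(r_\lambda)=\bfA_\lambda(r_\lambda)^*V$ (coming from $\bfB_z(r_z)V=V\bfA_z(r_z)$ and $\bfA_z(r_z)^*=\bfB_{\bar z}(\bar r_z)$ at the real point $z=\lambda$, $r_z=r_\lambda$) together with the relations $\bfA_\lambda(r_\lambda)\phi_\nu^{(j)}(r_\lambda)=j\,\phi_\nu^{(j-1)}(r_\lambda)$ of part~(ii) of Theorem~\ref{T: (i) and (ii) bfA phi(j)=phi(j-1)}. Moving $\bfA_\lambda(r_\lambda)$ across $V$ gives $k\,\scal{V\phi_\nu^{(k-1)}}{\phi_\nu^{(j)}}=j\,\scal{V\phi_\nu^{(k)}}{\phi_\nu^{(j-1)}}$, which after the $1/k!\,j!$ normalisation says precisely that $b_\nu^{kj}$ depends only on $k+j$, i.e.\ that $b_\nu$ is Hankel. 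Writing $\phi_\nu^{(j)}(r_\lambda)=\frac{j!}{(d_\nu-1)!}\bfA_\lambda^{\,d_\nu-1-j}(r_\lambda)\phi_\nu^{(d_\nu-1)}(r_\lambda)$ and pushing the powers of $\bfA_\lambda(r_\lambda)$ across $V$ onto $\phi_\nu^{(k)}(r_\lambda)$, which has order $k+1$ and so is annihilated by $\bfA_\lambda^{k+1}(r_\lambda)$ (Lemma~\ref{L: nice one}), shows $b_\nu^{kj}=0$ as soon as $k+j\le d_\nu-2$; and the skew-diagonal entry $b_\nu^{k,\,d_\nu-1-k}$ equals $\frac 1{(d_\nu-1)!}\scal{\phi_\nu(r_\lambda)}{V\phi_\nu^{(d_\nu-1)}(r_\lambda)}$, a nonzero real number by~(\ref{F: (phi,V phi(k-1))=lambda(k)(0)(phi,phi)}) (up to a positive factor this is the sign of the cycle~$\nu$ of Theorem~\ref{T: sign of nu}). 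Thus each $b_\nu$ is a real Hankel matrix with zeros strictly above its skew-diagonal and with invertible skew-diagonal, hence invertible, and so is $b$.

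Finally I would transfer this to $\alpha$, the direct sum of the transposed inverses of the blocks. Letting $J$ be the coordinate-reversal permutation matrix of size $d_\nu$, the product $b_\nu J$ is a lower-triangular Toeplitz matrix with nonzero diagonal, so its inverse is again lower-triangular Toeplitz, whence $b_\nu^{-1}=J(b_\nu J)^{-1}$ is a Hankel matrix with zeros below its skew-diagonal, i.e.\ skew-upper-triangular. A real Hankel matrix is automatically symmetric, so $b_\nu^{-1}$ is self-adjoint and equals its own transpose; assembling the blocks yields~(\ref{F: P = sum sum ...}) with $\alpha$ a direct sum of self-adjoint skew-upper-triangular Hankel matrices of sizes $d_1,\ldots,d_m$, as claimed. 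All the conceptual input is already packaged in Theorems~\ref{T: (i) and (ii) bfA phi(j)=phi(j-1)} and~\ref{T: phi(j,mu)(0) is self-dual} and Corollary~\ref{C: dim Upsilon[nu]=d(nu)}; the step needing genuine care is the index bookkeeping around the skew-diagonal: getting the order/depth count right so that $b_\nu$ is truly anti-triangular, and checking that inversion turns ``zeros above the skew-diagonal'' into ``zeros below'' it, which is exactly the reduction to ordinary triangular Toeplitz matrices via $J$.
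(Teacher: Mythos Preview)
Your proof is correct and follows essentially the same route as the paper: both set up the representation~(\ref{F: P = sum sum ...}) via the Jordan basis from Theorem~\ref{T: (i) and (ii) bfA phi(j)=phi(j-1)} and Corollary~\ref{C: dim Upsilon[nu]=d(nu)}, identify $\alpha$ as the transpose-inverse of the Gram matrix $b$, block-diagonalise $b$ by Corollary~\ref{C: second proof}, and use part~(ii) of Theorem~\ref{T: (i) and (ii) bfA phi(j)=phi(j-1)} to see that each block $b_\nu$ is lower-anti-triangular Hankel, whence its inverse is upper-anti-triangular Hankel. Your version is simply more explicit where the paper is terse, spelling out the intertwining $V\bfA_\lambda=\bfA_\lambda^*V$ and the Toeplitz-via-$J$ trick for the inversion step.
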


\subsection{Signature of the resonance matrix $V P_\lambda(r_\lambda)$}
In this subsection we prove the equality
\begin{equation} \label{F: ind res=sign VP}
  \ind_{res}(\lambda; H_{r_\lambda},V) = \sign V P_\lambda(r_\lambda).
\end{equation}
This equality was proved in \cite{Az9} in a more general setting of~$\lambda$ from the essential spectrum. 
Here we give a new proof, which easily follows from previous results. 
The proof is based on the following two well-known lemmas. 

\begin{lemma} \label{L: L1} The signature of a self-adjoint skew-upper triangular $d\times d$ Hankel matrix 
is zero, if~$d$ is even, and is equal to the sign of the skew-diagonal entry, if $d$ is odd. 
\end{lemma}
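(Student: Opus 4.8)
The plan is to compute the signature of a self-adjoint skew-upper-triangular Hankel matrix directly by exhibiting its eigenvalue structure, or more cleanly, by reducing it to a congruence-normal form. Recall the matrix in question is a $d\times d$ matrix $H=(h_{kj})$ with $h_{kj}$ depending only on $k+j$, with $h_{kj}=0$ whenever $k+j\le d-2$, and self-adjoint (so the entries are real). Thus the only non-zero entries lie on the skew-diagonal $k+j=d-1$ and below it (i.e. $k+j\ge d-1$), and along each anti-diagonal $k+j=\mathrm{const}$ the entry is constant. Write $c:=h_{k,d-1-k}$ for the common value on the skew-diagonal; by hypothesis the matrix is non-trivial precisely when $c\ne 0$, and the claim is $\sign H = 0$ for $d$ even and $\sign H=\sign c$ for $d$ odd.

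First I would set up the structure explicitly: index rows and columns by $0,1,\dots,d-1$, so that $H$ has the block shape of an anti-triangular Hankel matrix, $H_{ij}=a_{i+j}$ with $a_\ell=0$ for $\ell<d-1$ and $a_{d-1}=c\ne0$. The key observation is that congruence $H\mapsto S^{\mathsf T}HS$ preserves signature, so I am free to simplify $H$ by simultaneous row/column operations. The cleanest route: pair up the basis vector $e_k$ with $e_{d-1-k}$. For $k<(d-1)/2$, the $2\times 2$ block of $H$ on the span of $e_k,e_{d-1-k}$ has the form $\left(\begin{smallmatrix} * & c \\ c & * \end{smallmatrix}\right)$ with $c\ne0$ (the off-skew-diagonal entries being $a_{2k}$ and $a_{2(d-1-k)}$, the latter possibly non-zero, the former zero since $2k<d-1$). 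A $2\times2$ real symmetric matrix with a non-zero off-diagonal entry has determinant $\le$ something — more to the point, a symmetric $2\times2$ matrix $\left(\begin{smallmatrix} 0 & c \\ c & b\end{smallmatrix}\right)$ with $c\ne0$ has determinant $-c^2<0$, hence signature $0$ (one positive, one negative eigenvalue). I would make this rigorous by an induction on $d$ peeling off such hyperbolic pairs.

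The induction: perform the congruence that uses the top-right corner entry $a_{d-1}=c$ in position $(0,d-1)$ as a pivot to clear the last row and last column except for that corner, i.e. clear entries $(0,d-2),(0,d-3),\dots$ and symmetrically $(1,d-1),(2,d-1),\dots$; since $H$ is Hankel-with-zeros-above-skew-diagonal, the remaining interior $(d-2)\times(d-2)$ block retains exactly the same structural properties (self-adjoint, Hankel, zero above its own skew-diagonal, with skew-diagonal entry again $c\ne0$ — one checks the anti-diagonal that becomes the new skew-diagonal carries value $a_{d}$, wait: actually the inner block sits on rows/columns $1,\dots,d-2$, and its skew-diagonal $i+j=d-1$ within that block still carries the value $c$). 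The pivot $\left(\begin{smallmatrix} 0 & c \\ c & 0\end{smallmatrix}\right)$ on $\mathrm{span}(e_0,e_{d-1})$ contributes signature $0$. Iterating, after $\lfloor d/2\rfloor$ steps we are left with nothing if $d$ is even (signature $0$), or a single $1\times 1$ block — the central entry $a_{d-1}=c$ sitting at position $((d-1)/2,(d-1)/2)$ — if $d$ is odd, contributing $\sign c$. Summing, $\sign H = 0$ for $d$ even and $\sign H=\sign c$ for $d$ odd.

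The main obstacle is bookkeeping: one must verify carefully that the congruence step that clears the last row/column using the corner pivot does not disturb the ``zero above skew-diagonal'' and Hankel shape of the interior block, and that the pivot submatrix is genuinely the anti-diagonal pair $\left(\begin{smallmatrix} 0 & c \\ c & 0\end{smallmatrix}\right)$ (up to the already-cleared entries) rather than something with a non-trivial diagonal that would spoil the clean $\pm$ count. In fact the corner entry $H_{0,0}=a_0=0$ and $H_{d-1,d-1}=a_{2d-2}$ need not vanish, so the pivot pair is $\left(\begin{smallmatrix} 0 & c \\ c & a_{2d-2}\end{smallmatrix}\right)$, which still has determinant $-c^2<0$ and thus signature $0$ — so the argument survives, but this is the point to state precisely. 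Alternatively, one can avoid the interior-structure tracking entirely by exhibiting an explicit triangular change of basis that brings $H$ to the standard anti-diagonal form $\mathrm{antidiag}(c,c,\dots,c)$ plus a strictly-lower-in-the-anti-order perturbation, and invoking that signature is locally constant / computed from the leading anti-diagonal; I would present whichever is shorter, most likely the two-step induction above.
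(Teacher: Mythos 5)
The paper offers no proof of this lemma; it is cited as ``well-known,'' so there is nothing internal to compare your argument against. Your argument is correct, and the bookkeeping you flag as the ``main obstacle'' in fact closes itself in one line. With $H_{ij}=a_{i+j}$ on indices $\{0,\dots,d-1\}$, $a_\ell=0$ for $\ell<d-1$ and $a_{d-1}=c\ne0$, reorder the index set so that $0$ and $d-1$ come first; in this ordering $H$ is permutation-congruent to
$$
  \begin{pmatrix} B & C \\ C^{\mathsf T} & D \end{pmatrix},
  \qquad
  B=\begin{pmatrix} 0 & c \\ c & a_{2d-2}\end{pmatrix},
  \qquad
  C=\begin{pmatrix} 0 & \cdots & 0 \\ a_d & \cdots & a_{2d-3}\end{pmatrix},
$$
where $D$ is the inner $(d-2)\times(d-2)$ Hankel block on indices $1,\dots,d-2$, which has exactly the same shape with the same skew-diagonal constant $c$. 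Block congruence by $\bigl(\begin{smallmatrix} I & -B^{-1}C \\ 0 & I\end{smallmatrix}\bigr)$ reduces this to $B\oplus(D-C^{\mathsf T}B^{-1}C)$. Now $B^{-1}=\bigl(\begin{smallmatrix}-a_{2d-2}/c^2 & 1/c\\ 1/c & 0\end{smallmatrix}\bigr)$ has vanishing bottom-right entry, and the first row of $C$ is zero; therefore $C^{\mathsf T}B^{-1}C=0$ identically, the Schur complement is $D$ itself, and the interior block is untouched. Hence $\sign H=\sign B+\sign D=\sign D$ since $\det B=-c^2<0$, and the induction closes on the base cases $d=1$ (signature $\sign c$) and $d=2$ (signature $0$), exactly as you computed. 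Two minor remarks: the paper's ``skew-upper triangular'' most naturally means zeros \emph{below} the skew-diagonal --- the mirror image of your convention, since it arises there as the inverse of a matrix with zeros above --- but the two variants are congruent via the exchange permutation, so the lemma is the same either way; and the alternative route you sketch (deform the sub-skew-diagonal entries linearly to zero within the invertible symmetric matrices and read off the signature of $c$ times the exchange matrix) is equally short and equally standard.
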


\begin{lemma} \label{L: L2} If $b_1, \ldots, b_d$ are linearly independent vectors in a Hilbert space
and $\alpha$ is a $d\times d$ self-adjoint matrix then the signature of the self-adjoint operator 
$$
  T = \sum_{i=1}^d \sum_{j=1}^d \alpha_{ij} \scal{b_i}{\cdot} b_j
$$
is equal to the signature of $\alpha.$ 
\end{lemma}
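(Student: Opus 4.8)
\textbf{Proof proposal for Lemma~\ref{L: L2}.}

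The plan is to reduce the statement to the standard fact that the signature of a finite self-adjoint matrix is a unitary invariant, by realising $T$ as a self-adjoint operator that is unitarily equivalent (on a suitable finite-dimensional space) to a block matrix built from $\alpha$. First I would observe that $T$ annihilates the orthogonal complement of $W := \mathrm{span}(b_1,\dots,b_d)$ and maps $W$ into itself, so it suffices to compute the signature of the restriction $T|_W$, a self-adjoint operator on the $d$-dimensional space $W$; the signature of $T$ on the whole Hilbert space is then $\sign(T|_W)$ together with zeros coming from the kernel on $W^\perp$, which do not affect the count.

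Next I would introduce the Gram matrix $G = (\scal{b_i}{b_j})_{i,j=1}^d$, which is self-adjoint and, since $b_1,\dots,b_d$ are linearly independent, strictly positive definite. Writing an arbitrary vector of $W$ as $x = \sum_i c_i b_i$ and computing $\scal{Tx}{x}$, one finds $\scal{Tx}{x} = \sum_{i,j,k,l} \bar c_k \alpha_{ij} \scal{b_i}{b_k}\scal{b_l}{b_j} c_l = (Gc)^* \alpha (Gc)$ with $c = (c_1,\dots,c_d)^{\mathrm{t}}$. Equivalently, in the (non-orthonormal) basis $b_1,\dots,b_d$ the quadratic form of $T|_W$ is represented by the matrix $G^* \alpha G = G \alpha G$. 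By Sylvester's law of inertia, $G\alpha G$ has the same signature as $\alpha$, because $G$ is invertible; more precisely, writing $G = G^{1/2} G^{1/2}$ with $G^{1/2}>0$, the matrices $\alpha$ and $G^{1/2}\alpha G^{1/2}$ are congruent hence have equal signature, and $G\alpha G = G^{1/2}(G^{1/2}\alpha G^{1/2})G^{1/2}$ is congruent to $G^{1/2}\alpha G^{1/2}$. Therefore $\sign(T|_W) = \sign(\alpha)$, which is what we want.

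The only mild subtlety — and the step I would be most careful about — is bookkeeping the two inner products correctly: $T$ is defined via $\scal{b_i}{\cdot}$, so when one expands $\scal{Tx}{x}$ there are two Gram-matrix factors, not one, and one must check that $G$ enters symmetrically so that the resulting matrix $G\alpha G$ is genuinely congruent to $\alpha$ (this uses self-adjointness of both $\alpha$ and $G$). Once this is set up, the argument is entirely standard linear algebra (Sylvester inertia), and since the lemma is stated as well known I would keep the write-up to a few lines, citing Sylvester's law of inertia rather than reproving it.
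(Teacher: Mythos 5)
The paper does not actually give a proof of Lemma~\ref{L: L2}: it is introduced as one of two ``well-known lemmas'' used in the proof of Theorem~\ref{T: ind res=sign VP}, and both are stated without proof. So there is nothing in the paper to compare against. Your argument --- restrict $T$ to $W=\mathrm{span}(b_1,\dots,b_d)$ (on $W^\perp$ the operator vanishes so only $0$'s are added to the spectrum), express the Hermitian form of $T|_W$ in the coordinates furnished by the basis $(b_i)$, recognise the Gram matrix $G$, and invoke Sylvester's law of inertia using invertibility of $G$ --- is the standard argument, and it is correct.

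One small bookkeeping remark, since you flagged exactly this step as the one to be careful about. With the paper's convention that $\scal{\cdot}{\cdot}$ is anti-linear in the first argument, write $x=\sum_k c_k b_k$ and $u=Gc$; then $\scal{b_i}{x}=(Gc)_i=u_i$ and $\scal{x}{b_j}=\overline{(Gc)_j}=\bar u_j$, so
$$
\scal{x}{Tx}=\sum_{i,j}\alpha_{ij}\,u_i\,\bar u_j = u^{*}\alpha^{\mathrm t}u
= c^{*}\,G\,\alpha^{\mathrm t}\,G\,c = c^{*}\,G\,\bar\alpha\,G\,c .
$$
Thus the form matrix of $T|_W$ in the basis $(b_i)$ is $G\bar\alpha\,G$, not $G\alpha G$; the two differ by the transpose $\alpha\mapsto\alpha^{\mathrm t}=\bar\alpha$, which for a Hermitian $\alpha$ preserves the (real) spectrum and hence the signature. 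So the conclusion $\sign T=\sign\alpha$ is unaffected, but the congruence you apply Sylvester's law to should be $G\bar\alpha G = G^{*}\bar\alpha G \sim \bar\alpha \sim \alpha$ (the last step being the spectral one, not a congruence). The rest of the write-up, including the reduction to $T|_W$ and the use of $G^{1/2}$, is fine.
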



\begin{thm} \label{T: ind res=sign VP} 
The equality (\ref{F: ind res=sign VP}) holds. 
\end{thm}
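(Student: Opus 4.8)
The plan is to combine the explicit representation of $P_\lambda(r_\lambda)$ from Theorem~\ref{T: P = sum sum} with the two signature lemmas (Lemmas~\ref{L: L1} and~\ref{L: L2}) and with the description of the sign of each cycle (Theorem~\ref{T: sign of nu}), then identify the resulting number with the intersection number, which by Theorem~\ref{T: intersection number = TRI} equals the resonance index. So the structure is: express $VP_\lambda(r_\lambda)$ as a finite-rank self-adjoint operator with an explicit matrix; compute its signature via the lemmas; and match the answer with $\sum_\nu b_\nu \sign(\nu)$.

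First I would apply $V$ on the left of the formula in Theorem~\ref{T: P = sum sum}. Writing $P_\lambda(r_\lambda)$ as in (\ref{F: P = sum sum ...}), we get
\[
  VP_\lambda(r_\lambda) = \sum_{\mu=1}^m\sum_{\nu=1}^m \sum_{k=0}^{d_\mu-1} \sum_{j=0}^{d_\nu-1}
      \frac 1{k!j!} \alpha_{\mu\nu}^{kj} \scal{V \phi_\mu^{(k)}(r_\lambda)}{\cdot} V\phi_\nu^{(j)}(r_\lambda).
\]
The vectors $\tfrac1{j!}V\phi_\nu^{(j)}(r_\lambda)$, $\nu=1,\dots,m$, $j=0,\dots,d_\nu-1$, are linearly independent (they form a basis of $\Psi_\lambda(r_\lambda)=\im P_\lambda^*(r_\lambda)$, as noted just before Theorem~\ref{T: P = sum sum}), and $VP_\lambda(r_\lambda)$ is self-adjoint (it is the resonance matrix). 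Hence Lemma~\ref{L: L2} applies with $b_i$ the vectors $\tfrac1{j!}V\phi_\nu^{(j)}(r_\lambda)$ and $\alpha$ the self-adjoint matrix of Theorem~\ref{T: P = sum sum}, giving $\sign VP_\lambda(r_\lambda) = \sign\alpha$. Since $\alpha$ is a direct sum of self-adjoint skew-upper triangular Hankel matrices of sizes $d_1,\dots,d_m$, the signature is additive over the blocks, so $\sign\alpha = \sum_{\nu=1}^m \sign\alpha_\nu$ where $\alpha_\nu$ is the $\nu$th block.

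Next I would evaluate $\sign\alpha_\nu$ using Lemma~\ref{L: L1}: it is $0$ if $d_\nu$ is even, and the sign of the skew-diagonal entry of $\alpha_\nu$ if $d_\nu$ is odd. The skew-diagonal entry of $\alpha_\nu$ is determined by the skew-diagonal entry of its inverse $b_\nu$, namely the entry $b_\nu^{0,\,d_\nu-1}=\tfrac1{(d_\nu-1)!}\scal{V\phi_\nu(r_\lambda)}{\phi_\nu^{(d_\nu-1)}(r_\lambda)}$ (up to the positive factorial normalisation), and for a skew-upper triangular Hankel matrix the sign of the inverse's skew-diagonal entry equals the sign of the matrix's own skew-diagonal entry — or, more carefully, I would just observe that since $\alpha_\nu b_\nu^{\mathrm t}=I$ and both are skew-triangular Hankel, the product of their skew-diagonal entries is $1$, so the signs agree. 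By the definition of $\sign(\nu)$ in Theorem~\ref{T: sign of nu}, $\sign\scal{\phi_\nu(r_\lambda)}{V\phi_\nu^{(d_\nu-1)}(r_\lambda)} = \sign(\nu)$. Therefore $\sign\alpha_\nu = b_\nu\,\sign(\nu)$ in all cases, and summing over $\nu$ gives $\sign VP_\lambda(r_\lambda) = \sum_{\nu=1}^m b_\nu\sign(\nu)$, which is exactly the intersection number (\ref{F: inters-n number}) at $r_\lambda$; it equals $\ind_{res}(\lambda;H_{r_\lambda},V)$ by (the single-resonance-point case of) Theorem~\ref{T: intersection number = TRI}, equivalently by the remark that cycles with even $d_\nu$ do not contribute to the resonance index while cycles with odd $d_\nu$ contribute $\sign(\nu)$.

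The main obstacle I anticipate is the bookkeeping step relating the skew-diagonal entry of $\alpha_\nu$ to that of $b_\nu$ and hence to $\scal{\phi_\nu(r_\lambda)}{V\phi_\nu^{(d_\nu-1)}(r_\lambda)}$: one must be careful that the Hankel/skew-triangular structure indeed forces the two skew-diagonal entries to be reciprocal (so equal in sign), and that the factorial normalisations $1/(k!j!)$ do not flip any signs — they are positive, so they cannot, but this should be stated cleanly. Everything else is a direct invocation of already-established results: Theorem~\ref{T: P = sum sum} for the shape of $\alpha$, Lemmas~\ref{L: L1} and~\ref{L: L2} for the signature computation, Theorem~\ref{T: sign of nu} for the identification of $\sign(\nu)$, and Theorem~\ref{T: intersection number = TRI} for the final equality with the resonance index.
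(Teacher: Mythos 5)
Your proof is correct and follows essentially the same route as the paper's: apply Lemma~\ref{L: L2} to the $VP_\lambda$ formula obtained from Theorem~\ref{T: P = sum sum}, reduce the block signatures via Lemma~\ref{L: L1} to the skew-diagonal entries, match signs with $b_\nu$ and hence with $\sign(\nu)$ via Theorem~\ref{T: sign of nu}, and invoke Theorem~\ref{T: intersection number = TRI}. You merely spell out two steps the paper leaves implicit — the reciprocity (hence equal sign) of the skew-diagonal entries of $\alpha_\nu$ and $b_\nu$, and the linear independence of the $V\phi_\nu^{(j)}(r_\lambda)$ needed to apply Lemma~\ref{L: L2} — both of which are accurate.
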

\begin{proof} 
For each $\nu=1,\ldots,m,$ signs of the main skew-diagonal entries of the $d_\nu\times d_\nu$ 
Hankel matrices (\ref{F: matrix b}) and 
$\brs{\alpha_{\mu\nu}^{kj}}$ from (\ref{F: P = sum sum ...}) are equal. 
Hence, by Lemma~\ref{L: L1} and Theorems~\ref{T: sign of nu} and~\ref{T: intersection number = TRI},
the total resonance index is equal to the signature of the matrix $\brs{\alpha_{\mu\nu}^{kj}}.$

We have from (\ref{F: P = sum sum ...})
$$
  V P_\lambda(r_\lambda) = \sum_{\mu,k}\sum_{\nu,j} \frac 1{k!j!} 
      \alpha_{\mu\nu}^{kj} \scal{V \phi_\mu^{(k)}(r_\lambda)}{\cdot} V\phi_\nu^{(j)}(r_\lambda).
$$
By Lemma~\ref{L: L2}, signature of this operator is equal to the signature of the matrix~$\alpha.$

\end{proof}

\section{On stability of resonance index}
\label{S: On stability of res-index}

In this section we study the behaviour of the resonance index $\ind_{res}(\lambda; H_{r_\lambda},V)$ 
as a function of the~perturbation~$V.$

\subsection{Topology of the vector space of directions}

In previous sections 
we worked with a fixed direction~$V$ and for this reason there was no 
need in having some topology in the real vector space of directions~$\clA_0.$
Now we are going to consider stability of resonance index $\ind_{res}(\lambda; H_{\rlmb},V)$ 
with respect to small perturbations of~$V$ and therefore we need to discuss the topology of~$\clA_0.$

The approach taken is to impose conditions on the topology 
of the affine space~$\clA$ which allow to prove stability results. 
The conditions imposed on the topology of~$\clA$ hold trivially if the vector space of directions~$\clA_0$ 
consists of bounded self-adjoint operators
and if the topology of~$\clA_0$ is the norm topology or stronger. 

\smallskip
\begin{assump} \label{A: norm of clA(0)}
The real vector space~$\clA_0$ is endowed with a norm $\norm{\cdot}_{\clA_0}$ such that 
for some non-real~$z$ and for some $H_0 \in \clA$ the following three conditions hold: 
\begin{enumerate}
  \item[(VR)] the function~$\clA_0 \ni V \mapsto V R_z(H_0) \in \clB(\hilb)$ is continuous. 
  \item[(VRV1)] the product $V_1 R_z(H_0)V_2$ is compact. 
  \item[(VRV2)] the product $V_1 R_z(H_0)V_2$ is a continuous function of~$V_1$ and~$V_2.$ 
\end{enumerate}
\end{assump}

Since Assumption \ref{A: Main Assumption}(2) implies compactness of $V_1 \Im R_z(H_0)V_2,$
the condition (VRV1) is equivalent to compactness of $V_1 \Re R_z(H_0)V_2.$

Topology in~$\clA_0$ induces a topology in the affine space~$\clA.$ We assume that $\clA$ 
is endowed with this topology. 

\begin{lemma} \label{L: VR} 
\begin{enumerate}
  \item[(i)] For any $V \in \clA_0$ and any $H \in \clA$ \ $\lim_{y \to \infty} \norm{V R_{\lambda+iy}(H)} = 0.$
  
  \noindent Further, if~$\clA_0$ is endowed with a norm which satisfies property~(VR), then 
  \item[(ii)] \label{item (B)} for any non-real~$z$ and for any $H \in \clA$ 
  the operator~$V R_z(H)$ jointly continuously depends 
  on~$z \in \mbC\setminus \mbR,$~$V \in \clA_0$ and~$H \in \clA,$ and 
  \item[(iii)] \label{item (A)} for any non-real~$z$ the operator~$R_z(H)$ 
    jointly continuously depends on~$z \in \mbC\setminus \mbR$ and on~$H \in \clA.$
\end{enumerate}
\end{lemma}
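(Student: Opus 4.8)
\textbf{Proof proposal for Lemma~\ref{L: VR}.}

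The plan is to treat the three items in the order stated, since each builds on the previous one. For item~(i), I would start from the first resolvent identity $R_{\lambda+iy}(H) = R_{\lambda+iy}(H_0) - R_{\lambda+iy}(H)(H-H_0)R_{\lambda+iy}(H_0)$, or more simply observe that $V R_{\lambda+iy}(H) = V R_{\lambda+iy}(H_0)\bigl(1 + (H-H_0)R_{\lambda+iy}(H_0)\bigr)^{-1}$ using the second resolvent identity~(\ref{F: second resolvent identity (2)}); since $H-H_0 \in \clA_0$ is relatively compact, the bracketed inverse is bounded uniformly for large $\abs{y}$ (indeed tends to $1$ in norm as $y\to\infty$ because $\norm{(H-H_0)R_{\lambda+iy}(H_0)}\to 0$). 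So it suffices to show $\norm{V R_{\lambda+iy}(H_0)}\to 0$. Here I would use that $V R_z(H_0)$ is compact by Assumption~\ref{A: Main Assumption}, write the spectral decomposition of $H_0$, and split the integral $\int \frac{1}{\mu-\lambda-iy}\,dE^{H_0}_\mu$ applied via $VR_z(H_0) = (VR_i(H_0))\cdot(H_0-i)R_{\lambda+iy}(H_0)$: the first factor is a fixed compact operator and $(H_0-i)R_{\lambda+iy}(H_0)$ is uniformly bounded and converges strongly to $0$ as $y\to\infty$; a fixed compact operator composed with a uniformly bounded, strongly-null net converges to $0$ in norm. That gives~(i).

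For item~(ii), the point is to bootstrap the norm $z$, $H$ from the single pair $(z_0,H_0)$ in Assumption~\ref{A: norm of clA(0)} to arbitrary non-real $z$ and arbitrary $H\in\clA$. I would first fix $H = H_0$ and vary $z$: the map $z\mapsto VR_z(H_0)$ extends the hypothesis at $z_0$ to all non-real $z$ via the resolvent identity $VR_z(H_0) = VR_{z_0}(H_0)\bigl(1+(z-z_0)R_z(H_0)\bigr)$, together with the fact that $R_z(H_0)$ is holomorphic (hence continuous) in $z$ on $\mbC\setminus\mbR$ with $\norm{R_z(H_0)}\le 1/\abs{\Im z}$; joint continuity in $(z,V)$ then follows because $VR_{z_0}(H_0)$ is jointly continuous in $V$ (property (VR), after noting the obvious fact that (VR) at one non-real point implies it at every non-real point, again by the resolvent identity) and the scalar-operator factor $\bigl(1+(z-z_0)R_z(H_0)\bigr)$ is continuous in $z$ and independent of $V$. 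Next I would vary $H$: write $VR_z(H) = VR_z(H_0)\bigl(1+(H-H_0)R_z(H_0)\bigr)^{-1}$; as $H\to H_0'$ in $\clA$ the perturbation $H-H_0 = V_H$ varies continuously in $\clA_0$, so $(H-H_0)R_z(H_0) = V_H R_z(H_0)$ varies continuously in norm by the part already proved, the inverse depends continuously on it wherever it exists (Neumann series / continuity of inversion), and $VR_z(H_0)$ is jointly continuous in $(z,V)$. Composing gives joint continuity of $VR_z(H)$ in $(z,V,H)$.

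Item~(iii) is then a corollary: write $R_z(H) = R_z(H_0) - R_z(H_0)(H-H_0)R_z(H) = R_z(H_0)\bigl(1+(H-H_0)R_z(H_0)\bigr)^{-1}$ using~(\ref{F: second resolvent identity (2)}). The factor $(H-H_0)R_z(H_0) = V_H R_z(H_0)$ is jointly continuous in $(z,H)$ by item~(ii) (taking $V = V_H$, which itself moves continuously in $\clA_0$ as $H$ moves in $\clA$), inversion is continuous on the open set where $1 + V_H R_z(H_0)$ is invertible, and $R_z(H_0)$ is continuous in $z$; so the product is jointly continuous. The main obstacle, and the step I would be most careful about, is item~(i): the norm-convergence $\norm{VR_{\lambda+iy}(H_0)}\to 0$ is a genuine use of compactness (it fails for, say, $V = H_0$ bounded), so I must make sure to factor out a fixed compact operator before invoking strong convergence to $0$ of the remaining uniformly bounded factor — the naive estimate $\norm{VR_{\lambda+iy}(H_0)}\le \norm{V}/y$ is unavailable since $V$ need not be bounded, and pointwise-in-$\mu$ decay of the resolvent kernel does not by itself give norm decay without the compactness input.
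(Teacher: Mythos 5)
Your proposal is correct and follows essentially the same route as the paper's proof: for item~(i) factor $V R_{\lambda+iy}$ into a fixed compact operator times a uniformly bounded, vanishing resolvent-quotient family; for items~(ii) and~(iii) reduce everything to the single pair from Assumption~\ref{A: norm of clA(0)} by means of the first and second resolvent identities and continuity of operator inversion.

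Two small remarks. First, the general principle you invoke in item~(i) --- ``a fixed compact operator composed with a uniformly bounded, strongly-null net converges to $0$ in norm'' --- is stated slightly too loosely: for a product $KT_n$ with $K$ compact on the \emph{left}, strong convergence $T_n \to 0$ does not suffice (take $K = e_1 \otimes e_1$ and $T_n = e_1 \otimes e_n$ on $\ell^2$; then $T_n \to 0$ strongly, $\sup_n\|T_n\| = 1$, yet $KT_n = T_n$ has norm~$1$). What is needed is $*$-strong convergence, i.e.\ $T_n^* \to 0$ strongly; the paper is careful to say ``$*$-strong'' and cites Yafaev for this. In your specific case $T_y = (H_0 - i)R_{\lambda+iy}(H_0)$ is a normal operator, so $T_y^*$ is again a bounded Borel function of $H_0$ and the same dominated-convergence argument applies, which rescues the step --- but the abstract statement should read $*$-strong. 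Second, the detour through $H_0$ in item~(i) is unnecessary: Assumption~\ref{A: Main Assumption}(4) makes $V$ relatively compact with respect to \emph{every} $H \in \clA$, so you can factor $V R_{\lambda+iy}(H) = \bigl(V R_{\lambda+i}(H)\bigr)\cdot (H-\lambda - i)R_{\lambda+iy}(H)$ directly, as the paper does, avoiding the bootstrapping step that otherwise risks looking circular.
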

\begin{proof} (i) We have 
$V R_{\lambda+iy}(H) = V R_{\lambda+i}(H) \frac{H-\lambda-i}{H-\lambda-iy}.$
The operator $V R_{\lambda+i}(H)$ is compact, and the operator 
$\frac{H-\lambda-i}{H-\lambda-iy}$ converges to zero in $*$-strong topology as $y \to \infty.$ 
Hence, \cite[Lemma 6.1.3]{Ya} completes proof. 

(ii) The first resolvent identity combined with (VR) implies that $VR_z(H_0)$ 
depends jointly continuously on~$z$ and~$V.$
Further, by the second resolvent identity we have
\begin{equation} \label{F: VR(H)=VR(H0)(...)(-1)}
  V R_z(H) = V R_z(H_0) (1+(H-H_0)R_z(H_0))^{-1}.
\end{equation}
For non-real $z$ the operator $1+(H-H_0)R_z(H_0)$ is invertible. 
Hence, this equality shows that the operator $VR_z(H)$ depends continuously on~$z,$ $V$ and~$H.$
The item (iii) is proved by the same argument. 
\end{proof}

\smallskip
\begin{lemma} \label{L: VRV}
If~$\clA_0$ is endowed with a norm which satisfies Assumption~\ref{A: norm of clA(0)}, then 
the operator $V_1 R_z(H) V_2$ is compact for any non-real $z$ and any $H\in\clA,$
and it jointly continuously depends on $z \in \mbC\setminus\mbR,$ $V_1, V_2 \in \clA_0$ and $H \in \clA.$ 
\end{lemma}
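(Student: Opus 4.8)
The plan is to reduce the assertion about the three-factor product $V_1 R_z(H) V_2$ to the two-factor estimates already established in Lemma~\ref{L: VR}, together with the hypotheses (VRV1) and (VRV2) of Assumption~\ref{A: norm of clA(0)}, which handle exactly the case $H = H_0$ at one fixed spectral parameter $z = z_0$. The bridge between the general $H, z$ and the special $H_0, z_0$ is, as usual, the second resolvent identity; the only slightly delicate point is that the "inner" resolvent sits between two directions, so I must be careful to keep the direction factors attached to the right of a compact operator and to the left of another compact operator.

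First I would reduce from arbitrary non-real $z$ to the fixed $z_0$. Writing the first resolvent identity $R_z(H) = R_{z_0}(H) + (z - z_0) R_z(H) R_{z_0}(H)$ and inserting it, one sees that it suffices to treat $z = z_0$, since $R_z(H)$ is bounded and jointly norm-continuous in $(z, H)$ by Lemma~\ref{L: VR}(iii), and $V_2$ times a bounded operator is still a bounded operator continuous in $V_2$; more cleanly, I would instead keep $z$ general and only replace $H$ by $H_0$: by the second resolvent identity,
\begin{equation*}
  V_1 R_z(H) V_2 = V_1 R_z(H_0)\bigl(1 + (H - H_0)R_z(H_0)\bigr)^{-1} V_2 .
\end{equation*}
This does not yet have the desired shape, because the direction $V_2$ is separated from $R_z(H_0)$ by the invertible factor. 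So the better route is to apply the second resolvent identity symmetrically and write, with $W = H - H_0 \in \clA_0$,
\begin{equation*}
  R_z(H) = R_z(H_0) - R_z(H_0)\, W\, R_z(H_0) + R_z(H_0)\, W\, R_z(H)\, W\, R_z(H_0),
\end{equation*}
or simply iterate $R_z(H) = R_z(H_0) - R_z(H_0) W R_z(H)$ once. Then
\begin{equation*}
  V_1 R_z(H) V_2 = V_1 R_z(H_0) V_2 - \bigl(V_1 R_z(H_0) W\bigr)\, R_z(H)\, \bigl(V_2\bigr)?
\end{equation*}
which again leaves a loose $V_2$. The honest fix is to factor $W$ out as $W = W \cdot 1$ and keep it adjacent to a resolvent: the key observation is that $V_1 R_z(H_0) W R_z(H) V_2$ can be written as $\bigl(V_1 R_z(H_0) W\bigr)\bigl(R_z(H) V_2\bigr)$, where the first bracket is compact (it is $V_1 R_z(H_0)$ times the bounded $W$, hence compact by Assumption~\ref{A: Main Assumption}(2)) and norm-continuous in $V_1, W$ by Lemma~\ref{L: VR}(ii), while the second bracket is bounded and norm-continuous in $z, H, V_2$ by Lemma~\ref{L: VR}(ii) again. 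A product of a compact operator and a bounded operator is compact, and norm-continuity of each factor gives joint norm-continuity of the product. Iterating this identity $n$ times produces a Neumann-type series whose first term is $V_1 R_z(H_0) V_2$—compact by (VRV1) and jointly continuous by (VRV2)—and whose remaining terms are each of the form (compact) $\times$ (bounded), controlled as above; the series converges in norm uniformly on compact subsets of the parameter space because $\|W R_z(H_0)\| < 1$ can be arranged by shrinking the neighbourhood of $H_0$ in $\clA$, and the general $H$ is reached by the standard connectedness/re-centering argument already used implicitly in Lemma~\ref{L: VR}.

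The main obstacle, and the step I would write out most carefully, is precisely this bookkeeping: ensuring that in every term of the expansion the two direction factors $V_1$ and $V_2$ each end up glued to a resolvent (so that compactness from Assumption~\ref{A: Main Assumption}(2), or from (VRV1) for the single three-factor seed term, applies), rather than floating free where only boundedness—not compactness—would be available. Once the algebra is arranged so that each summand is manifestly a norm-limit of products of the three guaranteed-continuous building blocks $V R_z(H)$, $R_z(H)$, and $V_1 R_{z_0}(H_0) V_2$, compactness follows since the compact operators form a closed two-sided ideal, and joint continuity follows since multiplication is jointly continuous on bounded sets in the norm topology; the uniform convergence needed to pass continuity through the infinite sum is handled exactly as in the proof of Lemma~\ref{L: VR}(ii), by localizing near $H_0$ and then covering $\clA$.
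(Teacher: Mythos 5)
Your overall route is the same as the paper's: reduce to the base case $H=H_0$ by expanding the second resolvent identity into a Neumann series, anchor the compactness and continuity of the seed term $V_1 R_z(H_0)V_2$ at the hypotheses (VRV1)/(VRV2), control the tail using the other building blocks, and then handle arbitrary $z$ via the first resolvent identity. The paper does exactly this, writing $V_1 R_z(H)V_2 = V_1 R_z(H_0)\sum_{k\geq 0}(-1)^k (V_3 R_z(H_0))^k V_2$ with $V_3 = H - H_0$ and regrouping each $k\geq 1$ term as $V_1 R_z(H_0)\cdot (V_3 R_z(H_0))^{k-1}\cdot \bigl[V_3 R_z(H_0)V_2\bigr]$, a product of a compact factor, a bounded factor, and another compact factor.

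There is, however, one genuine slip in your intermediate reasoning. In the single-iteration step you factor $V_1 R_z(H_0) W R_z(H) V_2$ as $\bigl(V_1 R_z(H_0) W\bigr)\bigl(R_z(H) V_2\bigr)$ and justify compactness of the first bracket by calling $W$ ``bounded''. But $W = H - H_0 \in \clA_0$ need not be bounded under Assumption~\ref{A: Main Assumption}: directions in $\clA_0$ are only required to be relatively compact with respect to $H_0$. If $\clA_0$ consisted of bounded operators there would be no need for the hypothesis (VRV1) at all, since compactness of $V_1 R_z(H_0) V_2$ would follow from compactness of $R_z(H_0)V_2$ and boundedness of $V_1$. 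For the same reason, continuity of $W \mapsto V_1 R_z(H_0) W$ in the $\clA_0$-norm is not granted by anything in Assumption~\ref{A: norm of clA(0)}. The regrouping that actually works, and that your closing paragraph gropes towards, keeps every occurrence of a direction glued on the left of a resolvent of $H_0$: the $k$-th Neumann term is $V_1 R_z(H_0) \cdot (W R_z(H_0))^{k-1} \cdot W R_z(H_0) V_2$, and here the left factor is compact and continuous in $V_1$ by Assumption~\ref{A: Main Assumption}(2) and (VR), the middle factor is a power of the compact-hence-bounded $W R_z(H_0)$ and continuous in $W$ by (VR), and the right factor is compact and jointly continuous in $W, V_2$ by (VRV1)/(VRV2). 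Norm convergence of the series is then guaranteed for $\Im z$ large (or $W$ small) by Lemma~\ref{L: VR}(i), and the general case follows from the paper's identity \eqref{F: good}.
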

\begin{proof} 
By the first resolvent identity we have 
\begin{equation} \label{F: good}
  V_1 R_z(H_0)V_2 - V_1 R_w(H_0)V_2 = (z-w) V_1 R_z(H_0)\,R_w(H_0)V_2.
\end{equation}
Since by Assumption~\ref{A: Main Assumption} the right hand side is compact, the operator $V_1 R_z(H_0)V_2$ is compact 
for any non-real value of~$z,$ provided it is compact for some value of~$z.$

For $H = H_0 + V_3$ we have 
\begin{equation} \label{F: Neumann for V1 R(H) V2}
  \begin{split}
     V_1 R_z(H)V_2 & = V_1 R_z(H_0) (1+V_3 R_z(H_0))^{-1} V_2
     \\ & = V_1 R_z(H_0) \sum_{k\geq 0} (-1)^k (V_3 R_z(H_0))^k V_2 
     \\ & = V_1 R_z(H_0)V_2 + \sum_{k\geq 1} (-1)^k (V_3 R_z(H_0))^{k-1}  \cdot \SqBrs{V_3 R_z(H_0)V_2}.
  \end{split}
\end{equation}
For large $y = \Im z$ convergence of the series and compactness of this operator follow 
from Lemma~\ref{L: VR}(i) and (VRV1). 
The first summand and the product in the pair of square brackets are continuous by the assumption (VRV2).
Since for large enough~$y$ the geometric series converges uniformly, 
for such $y$ the last series depends continuously on~$V_3$ by assumption (VR). 
For other values of $y$ the claim can now be inferred from (\ref{F: good}).
\end{proof}

Usually we denote a resonance point of a triple $(\lambda; H_0,V),$ where~$H_0$
is a $\lambda$-regular operator, by~$r_\lambda.$ But in this section for convenience we 
assume that~$r_\lambda = 0,$ so that the operator~$H_0$ itself is $\lambda$-resonant. 

\begin{lemma} \label{L: regular dir-s are open} 
If~$\clA_0$ is endowed with a norm which satisfies Assumption~\ref{A: norm of clA(0)}
then the sets of all (a) regular and (b) simple directions at a resonance point~$H_{0}$
are open in the norm of~$\clA_0.$ 
\end{lemma}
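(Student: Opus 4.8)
The plan is to characterize regularity and simplicity of a direction $V$ at $H_0$ in terms of an open condition on the compact operator-valued function $V R_z(H_0) V$, and then invoke the joint-continuity result Lemma~\ref{L: VRV}. Recall (from the lemma just before Corollary~\ref{C: criterion of regularity for V}) that in the $2\times 2$ representation a direction $V$ is regular at $H_0$ precisely when the finite-dimensional self-adjoint matrix $\aaa + (\rlmb - s)v^* R_\lambda(\hat H_s) v$ on $\clV_\lambda$ is invertible for some real $s$; equivalently $\euD_\lambda(s)$ is meromorphic (has a pole rather than being identically undefined) at $s = r_\lambda = 0$. A cleaner invariant reformulation, which is what I would use, is: $V$ is regular iff the meromorphic function $s \mapsto (1 + s R_\lambda(H_s) V)$ is genuinely meromorphic near $s=0$, i.e.\ the operator $1 + s R_z(H_s)V$ is invertible for some real $s$ and $z=\lambda$; and $V$ is simple iff in addition the pole of $A_\lambda(s)$ at $s=0$ has order $1$, which by the Laurent expansion is equivalent to $\bfA_\lambda(r_\lambda) = 0$, i.e.\ to the resonance matrix having the right rank.

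First I would fix a non-real $z$ close enough to $\lambda$ (we may move $\lambda$ slightly inside the gap, or simply work with $z=\lambda$ directly since $\lambda \notin \sigma_{ess}$ guarantees $H_0 - \lambda$ is Fredholm of index zero with finite-dimensional kernel). The key object is the analytic (in $s$) family of compact operators $s \mapsto R_\lambda(H_s) V = (1 + s R_\lambda(H_0)V)^{-1} R_\lambda(H_0) V$, which by the second resolvent identity (\ref{F: second resolvent identity (3)}) is rational in $s$ with coefficients built from the compact operator $R_\lambda(H_0) V$ and, after sandwiching, from $V_1 R_\lambda(H_0) V_2$ with $V_1, V_2 \in \{V, W, \dots\}$. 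By Lemma~\ref{L: VRV} the operator $V R_\lambda(H_0) V$ depends continuously on $V \in \clA_0$ in norm, and hence so does the whole finite-rank ``resonance data'' on the eigenspace $\clV_\lambda$. Regularity of $V$ says a certain determinant-type quantity (the determinant of the finite matrix $\aaa + (\rlmb-s)v^* R_\lambda(\hat H_s)v$, viewed as a nonzero polynomial in $s$) does not vanish identically; since the coefficients of this polynomial depend continuously on $V$, and ``not the zero polynomial'' is an open condition on the coefficient vector, (a) follows.

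For (b), simplicity means $\lambda$ has geometric multiplicity $1$ as an eigenvalue of $H_0$ — but that is a property of $H_0$ alone, not of $V$. Re-reading the definitions: a \emph{simple direction} at a $\lambda$-resonance point $H_0$ is a regular direction of order $1$ (see the definition on p.\,\pageref{Page: simple direction}), equivalently (Theorem~\ref{T: V simple iff transversal}) a transversal one, equivalently one for which $\bfA_\lambda(r_\lambda) = 0$. So I would show: on the already-open set of regular directions, the condition $\bfA_\lambda(H_0,V) = 0$ is open. Using the $2\times 2$ formula (\ref{F: A(j)=2x2}) with $j=1$ (or more directly, the characterization via the eigenpaths: $V$ has order $1$ iff each $V\phi_\nu(0)$ is \emph{not} orthogonal to $\clV_\lambda$, i.e.\ the crossing operator $\aaa$ together with $v$ detects every eigenvector at first order), one sees that order-$1$-ness is the non-vanishing of finitely many continuous functions of $V$ (the ``leading'' entries of the finite resonance data), hence open. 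The cleanest route: $V$ regular of order $1$ at $H_0$ iff the self-adjoint finite matrix $\aaa - s\, v^* R_\lambda(\hat H_s) v$ has a \emph{simple} pole structure, i.e.\ $\aaa$ is invertible for the shifted/limiting value, and invertibility of a self-adjoint matrix depending continuously on $V$ is manifestly open.

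The main obstacle I anticipate is \emph{not} the topology but the bookkeeping of passing from $V$ to its finite-dimensional ``resonance data'' $(\hat V, v, \aaa)$ and the sliced resolvent $R_\lambda(\hat H_s)$ in a way that makes the continuity in $V \in \clA_0$ transparent: the orthogonal projection $\hat P$ onto $\clV_\lambda$ is fixed (it depends only on $H_0$), so $v = \hat P V \hat P^\perp$ and $\aaa = \hat P^\perp V \hat P^\perp$ are continuous in $V$, but the composite $v^* R_\lambda(\hat H_s) v = \hat P^\perp V^* \hat P R_\lambda(\hat H_s) \hat P V \hat P^\perp$ involves $R_\lambda(\hat H_s)$ which itself depends on $V$ through $\hat H_s = \hat P H_s \hat P$; one must expand $R_\lambda(\hat H_s) = (1 + s\, R_\lambda(\hat H_0)\hat V)^{-1} R_\lambda(\hat H_0)$ by a Neumann series and note that $R_\lambda(\hat H_0)$ is a \emph{fixed} bounded operator on $\hat\hilb$ (since $\lambda \notin \sigma(\hat H_0)$ — this is exactly where regularity of $V$, hence the fact that $\hat H_0 - \lambda$ is invertible, enters) while $\hat V$ is continuous in $V$; so the composite is norm-continuous in $V$. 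I would state this as a short lemma (``the resonance matrix $VP_\lambda(H_0,V)$, and more generally the finite resonance data, depend continuously on $V$ on the set of regular directions'') — which is essentially Lemma~\ref{L: P(H0,V) is cont-s} referenced in the introduction — and then (a) and (b) drop out by openness of non-vanishing of continuous functions.
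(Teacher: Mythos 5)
Your proposal is correct, but it takes a different route from the paper's in both parts, and the comparison is instructive.

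For part (a), the paper argues directly: pick $s$ (after rescaling if necessary) so that $H_0+sV$ is $\lambda$-regular, use Lemma~\ref{L: VR}(ii) to make $\norm{R_\lambda(H_0+sV)\,s(V-W)}<1$ for $W$ near $V$, and conclude via the second resolvent identity that $R_\lambda(H_0+sW)$ exists, so $W$ is regular. You instead reduce to the finite-dimensional matrix $\aaa+(\rlmb-s)v^*R_\lambda(\hat H_s)v$ and its determinant, which is an analytic (not polynomial) function of $s$ whose Taylor coefficients depend continuously on $V$; your remark that ``not the zero polynomial is open on the coefficient vector'' should be sharpened to the standard form — regularity of $V$ gives a concrete $s_0$ with nonvanishing determinant, and the value at $s_0$ is continuous in $V$ — which in the end is essentially the same invertibility-at-$s_0$ observation the paper uses, just sandwiched through the $2\times 2$ bookkeeping you correctly flag as the main nuisance. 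The paper's version is cleaner because it avoids the block decomposition entirely.

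For part (b), you genuinely improve on the paper. The paper characterizes simplicity of a regular $V$ by $N=m$ (algebraic equals geometric multiplicity of the eigenvalue $s^{-1}$ of $R_\lambda(H_0+sV)V$) and then appeals to upper semicontinuity of spectrum to argue $N$ can neither increase nor decrease under small perturbations of $V$. Your characterization — a regular $V$ is simple iff the crossing operator $\aaa=\hat P^\perp V\hat P^\perp$ on $\clV_\lambda$ is invertible — is correct: from $\euD_\lambda(s)=(s-\rlmb)^{-1}\bigl(\aaa-(s-\rlmb)v^*R_\lambda(\hat H_s)v\bigr)^{-1}$ one reads off that the pole order $d$ of $\euD_\lambda$ at $s=\rlmb$ is $1$ precisely when $\aaa$ is invertible, and $d$ equals the order of $V$. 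Since $\hat P^\perp$ is fixed by $H_0$ alone, $V\mapsto\aaa$ is linear and norm-continuous, and invertibility of an $m\times m$ self-adjoint matrix is manifestly open. Intersecting with the open set of regular directions from (a) finishes the argument. This sidesteps the semicontinuity-of-spectrum machinery the paper invokes and yields a somewhat more transparent proof of (b).
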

\begin{proof} (a) Let~$V$ be a regular direction. 
By continuity of the mapping $V \mapsto R_z(H)V$ (Lemma~\ref{L: VR}(ii)), there exists a neighbourhood $O_V$ 
of~$V$ in~$\clA_0,$ such that for all $W \in O_V$ 
\begin{equation} \label{F: norm(R(V-W))<1}
  \norm{R_\lambda(H_{0}+V) (V-W)} < 1.
\end{equation}
Hence, by the second resolvent identity, for all $W \in O_V$ 
$$
  R_\lambda(H_{0}+W) = (1 + R_\lambda(H_{0}+V) (V-W))^{-1} R_\lambda(H_{0}+V),
$$
where the inverse exists due to (\ref{F: norm(R(V-W))<1}).
It follows that all directions from $O_V$ are regular. 

(b) 
In this proof we use this characterisation of simple directions: 
a regular direction~$V$ at a resonance point~$H_{0}$
is simple if and only if the algebraic multiplicity~$N$ of the eigenvalue $s^{-1}$ 
of the compact operator $R_{\lambda}(H_0+sV)V$ is equal to the geometric 
multiplicity~$m$ of that eigenvalue.

Since a simple direction is regular, by part (a) there exists a neighbourhood $O_V$ of 
a simple direction~$V$ such that all directions~$W$ from $O_V$ are regular. 
By Lemma~\ref{L: VR}(ii), the operator $R_\lambda(H_{0}+W)W$ depends 
continuously on~$W \in \clA_0$ in some neighbourhood of~$V.$
Further, since~$V$ is simple, the operator $R_\lambda(H_0+sV)V$ has~$s^{-1}$  as an eigenvalue 
of algebraic and geometric multiplicity~$m.$ Perturbation of the direction~$V$ may not change the geometric 
multiplicity of the eigenvalue~$s^{-1},$ 
since this number is the dimension of the eigenspace $\clV_\lambda$ of~$H_{0}$ 
and thus it depends only on~$H_{0}.$ Hence, a perturbation of~$V$ may not decrease 
the algebraic multiplicity of~$s^{-1},$ but it may increase it.

The operator $R_\lambda(H_{0}+sW)W$ also has~$s^{-1}$ 
as an eigenvalue of geometric multiplicity~$m.$ 
Since the operator $R_\lambda(H_{0}+s W)W$ is close to $R_\lambda(H_{0}+s V)V$ in the 
operator norm for all~$W$ close enough to~$V$ in the norm of~$\clA_0,$ 
it follows that there exists a neighbourhood~$\tilde O_V$ of~$V$ such that 
for all~$W$ from~$\tilde O_V$ the algebraic multiplicity~$m$ of~$s^{-1}$ does not increase 
for $R_\lambda(H_{0}+s W)W.$
Thus, all directions from $\tilde O_V$ are simple. 
\end{proof}

\subsection{Continuous dependence of $P_\lambda$ and $V P_\lambda$ on simple directions}

\begin{lemma} \label{L: P(H0,V) is cont-s} 
Let $H_{0}$ be a resonant point and let~$V$ be a regular direction
of order~$1.$ Then the idempotent $P_\lambda(H_{0},V)$ depends on the direction~$V$ continuously,
that is, for any $\eps>0$ there exists $\delta > 0$ such that if~$W$ is a regular direction 
with $\norm{V-W}_{\clA_0} < \delta$ then 
\begin{equation} \label{F: norm(P(V)-P(W))<eps}
  \norm{P_\lambda(H_{0},V) - P_\lambda(H_{0},W)} < \eps.
\end{equation}
\end{lemma}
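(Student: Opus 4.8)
The plan is to realize the idempotent $P_\lambda(H_0,V)$ as a contour integral of the resolvent-type operator $A_\lambda(s;H_0,V) = R_\lambda(H_0+sV)V$ and then to appeal to the continuity results of Lemmas~\ref{L: VR} and~\ref{L: VRV} to control the integrand uniformly along the contour. Recall from (\ref{F: Pz=oint A(s)}) that
$$
  P_\lambda(H_0,V) = \frac{1}{2\pi i}\oint_{C_0} A_\lambda(s;H_0,V)\,ds,
$$
where $C_0$ is a small circle around the resonance point $s=0$ which encloses no other resonance point of the triple $(\lambda;H_0,V)$. Since $V$ has order $1$, the pole at $s=0$ is simple, but that fact is not even needed for continuity: what matters is that $C_0$ can be chosen once and for all, and that for $W$ close to $V$ the only resonance points of $(\lambda;H_0,W)$ inside $C_0$ are those belonging to the group of $s=0$. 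So the first step is to fix $C_0$ and show that for $W$ in a sufficiently small $\clA_0$-neighbourhood of $V$, the operator-valued function $s\mapsto A_\lambda(s;H_0,W)$ is holomorphic on a neighbourhood of $C_0$ and
$$
  \frac{1}{2\pi i}\oint_{C_0} A_\lambda(s;H_0,W)\,ds = P_\lambda(H_0,W).
$$
This last equality is the usual Riesz-projection stability statement; it holds because the total resonance data (idempotents) inside $C_0$ vary continuously, and $W$ being regular (Lemma~\ref{L: regular dir-s are open}) guarantees $A_\lambda(\cdot;H_0,W)$ is a genuine meromorphic function.

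The second step is the uniform continuity of the integrand. For $s\in C_0$, write $H_s^W = H_0 + sW$ and use the second resolvent identity in the form
$$
  R_\lambda(H_s^W)W = R_\lambda(H_s^V)V\,\bigl(1 + s\,R_\lambda(H_s^V)(W-V)\bigr)^{-1} + R_\lambda(H_s^V)(W-V)\,\bigl(1 + s\,R_\lambda(H_s^V)(W-V)\bigr)^{-1},
$$
or more simply estimate $\norm{A_\lambda(s;H_0,W) - A_\lambda(s;H_0,V)}$ directly: it is bounded by a constant (depending only on $\sup_{s\in C_0}\norm{R_\lambda(H_s^V)}$, which is finite by regularity and compactness of $C_0$) times $\norm{R_\lambda(H_s^V)(W-V)}$, and the latter is small uniformly in $s\in C_0$ by Lemma~\ref{L: VR}(ii) together with the joint continuity of $R_\lambda(H_s^V)$ in $s$. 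Hence for any $\eps>0$ there is $\delta>0$ such that $\norm{W-V}_{\clA_0}<\delta$ implies
$$
  \sup_{s\in C_0}\norm{A_\lambda(s;H_0,W) - A_\lambda(s;H_0,V)} < \frac{\eps}{\mathrm{length}(C_0)}.
$$
Integrating over $C_0$ and dividing by $2\pi$ yields (\ref{F: norm(P(V)-P(W))<eps}).

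The main obstacle is the first step: showing that, for $W$ near $V$, the contour $C_0$ still captures exactly the projection $P_\lambda(H_0,W)$ rather than some larger or smaller piece of the spectral data. This requires that resonance points of $(\lambda;H_0,W)$ inside $C_0$ do not escape across $C_0$ and that no spurious resonance point enters from outside; equivalently, that the spectral radius estimate separating the group of $s=0$ from the rest is stable under small perturbation of $W$. This is handled by the uniform bound $\sup_{s\in C_0}\norm{R_\lambda(H_s^W) - R_\lambda(H_s^V)}\to 0$ as $W\to V$, which shows $1 + s A_\lambda(s;H_0,W)$ stays invertible on $C_0$ — so $C_0$ contains no resonance point of $(\lambda;H_0,W)$ — while inside $C_0$ the total algebraic multiplicity is preserved by the standard argument that the rank of a norm-continuous family of idempotents is locally constant. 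Once $C_0$ is legitimate for all nearby $W$, the contour-integral representation of $P_\lambda(H_0,W)$ holds and the uniform estimate of the second step finishes the proof. I would remark that order $1$ of $V$ is actually used only to keep the statement clean; the argument gives continuity of $P_\lambda(H_0,\cdot)$ on the open set of regular directions whose group of $s=0$ has locally constant algebraic multiplicity, and on the set of simple directions this multiplicity equals $m$ and hence is automatically locally constant by Lemma~\ref{L: regular dir-s are open}(b).
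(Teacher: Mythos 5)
Your proof is correct and takes essentially the same route as the paper: represent $P_\lambda(H_0,\cdot)$ by the Riesz contour integral (\ref{F: Pz=oint A(s)}), use simplicity of $V$ together with upper semicontinuity of the spectrum of the compact operator $A_\lambda(s)$ to ensure that for $W$ near $V$ the fixed contour $C_0$ still encircles only the resonance point $s=0$ of $(\lambda;H_0,W)$ with total multiplicity $m$, and then pass to the limit via compactness of $C_0$ and the joint continuity from Lemma~\ref{L: VR}(ii). One small caution about your closing remark: the order-one hypothesis is not merely cosmetic but essential — for $V$ of order $>1$ the resonance point $s=0$ has algebraic multiplicity $N>m$ and, under perturbation of $V$, up to $N-m$ resonance points can split off from $s=0$ while remaining inside $C_0$, so the contour integral then picks up strictly more than $P_\lambda(H_0,W)$; the condition ``locally constant algebraic multiplicity at $s=0$'' that you propose is in fact equivalent to simplicity and does not genuinely enlarge the set on which the argument applies.
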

\begin{proof} Let~$m$ be the geometric multiplicity of the resonance point $H_{0}.$ 
By definition of the idempotent~$P_\lambda,$ we have  
$$
  P_\lambda(H_{0},V) = \frac 1 {2\pi i} \oint_C R_\lambda(H_0+sV)V\,ds,
$$
where the contour $C$ encloses only the resonance point $s = 0$ of the path $H_0 + sV,$
and this resonance point has both geometric and algebraic multiplicity~$m,$ since~$V$ is a simple 
direction.
By upper semi-continuity of spectrum we can choose $\delta > 0$ small enough so that, 
for all~$W$ with $\norm{V-W}_{\clA_0} < \delta,$ inside the contour $C$ there will 
be only one resonance point $s = 0$ of the path $H_{0}+s W,$ and this resonance point 
will have geometric and algebraic multiplicities both equal to~$m$ 
(indeed, the total algebraic multiplicity of all resonance points inside $C$ is at least~$m$
since $H_{0}$ has geometric multiplicity~$m$ and the total algebraic multiplicity is at most~$m$
due to upper semi-continuity of spectrum).
Now, compactness of the contour~$C$ and joint continuity of~$R_\lambda(H_0+sV)W$ 
(Lemma~\ref{L: VR}(ii)) imply that there exists a possibly smaller $\delta>0,$ if necessary, such that 
(\ref{F: norm(P(V)-P(W))<eps}) holds as long as $\norm{V-W}_{\clA_0} < \delta.$
\end{proof}

For directions of order greater than~$1$ this proof does not work, since in this case the algebraic multiplicity
of the resonance point $s = 0$ is greater than~$m,$ and as a consequence of this,
while for the perturbed path $H_{0} + s W$ the point $s = 0$ will still have geometric 
multiplicity~$m,$ other resonance points can appear inside 
the contour~$C$ which could have split from the resonance point $s = 0.$ 

\begin{thm} \label{T: res matrix is V-stable}
Let~$H_{0}$ be a resonance point and let~$V$ be a regular direction
of order~$1.$ Then the resonance matrix $V P_\lambda(H_{0},V)$ depends on the direction~$V$ continuously,
that is, for any $\eps>0$ there exists $\delta > 0$ such that if~$W$ is a regular direction 
such that $\norm{V-W}_{\clA_0} < \delta$ then 
$$
  \norm{V P_\lambda(H_{0},V) - W P_\lambda(H_{0},W)} < \eps.
$$
\end{thm}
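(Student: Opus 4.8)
\textbf{Proof proposal for Theorem~\ref{T: res matrix is V-stable}.}
The plan is to reduce the statement about $V P_\lambda(H_{0},V)$ to the already-established continuity of the idempotent $P_\lambda(H_{0},V)$ on the set of simple directions (Lemma~\ref{L: P(H0,V) is cont-s}), combined with the topological hypotheses on $\clA_0$ collected in Assumption~\ref{A: norm of clA(0)}. First I would fix a regular direction $V$ of order~$1$ at the resonance point $H_{0}$; by Lemma~\ref{L: regular dir-s are open} the set of simple directions is open, so there is a neighbourhood $O_V$ of $V$ consisting entirely of simple directions, and we may assume $W \in O_V$ throughout.

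Next I would write the difference as a telescoping sum and estimate the two pieces separately:
\begin{equation*}
  V P_\lambda(H_{0},V) - W P_\lambda(H_{0},W)
   = V\brs{P_\lambda(H_{0},V) - P_\lambda(H_{0},W)} + (V-W) P_\lambda(H_{0},W).
\end{equation*}
For the first summand, Lemma~\ref{L: P(H0,V) is cont-s} gives $\norm{P_\lambda(H_{0},V) - P_\lambda(H_{0},W)} < \eps_1$ once $\norm{V-W}_{\clA_0} < \delta_1$; however, $V$ is unbounded, so I cannot simply pull $\norm{V}$ out in front. Instead I would recall that $P_\lambda(H_{0},W)$ is a contour integral $\frac{1}{2\pi i}\oint_C R_\lambda(H_0+sW)W\,ds$, hence $V P_\lambda(H_{0},W) = \frac{1}{2\pi i}\oint_C V R_\lambda(H_0+sW)\,W\,ds$, and by Lemma~\ref{L: VRV} the product $V R_\lambda(H_0+sW)W$ is compact and jointly continuous in $s$, $V$ and $W$ on the (compact) contour $C$. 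So it is really the operator-valued integrand $V R_\lambda(H_0+sV)V$ that I should compare with $W R_\lambda(H_0+sW)W$: the continuity statement of Lemma~\ref{L: VRV}, applied with $V_1=V$, $V_2=V$, together with the continuity of $R_\lambda(H_0+sW)$ in $W$ and $s$ (Lemma~\ref{L: VR}(iii)), gives $\norm{V R_\lambda(H_0+sV)V - W R_\lambda(H_0+sW)W} < \eps/L(C)$ uniformly for $s\in C$ once $\norm{V-W}_{\clA_0}$ is small enough, where $L(C)$ is the length of $C$. Integrating over $C$ and dividing by $2\pi$ yields $\norm{V P_\lambda(H_{0},V) - W P_\lambda(H_{0},W)} < \eps$.

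The one genuine subtlety — and the step I expect to be the main obstacle — is the same one already flagged in the remark following Lemma~\ref{L: P(H0,V) is cont-s}: I must be sure that the contour $C$ used in the integral representation of $P_\lambda(H_{0},W)$ encircles exactly the resonance points that split off from $s=0$, and that for $W$ close to $V$ the only such point is $s=0$ itself. This is precisely where order~$1$ (simplicity) of $V$ is used: the algebraic and geometric multiplicities of the eigenvalue $s^{-1}$ of $R_\lambda(H_0+sV)V$ both equal $m$, so by upper semicontinuity of the spectrum a sufficiently small perturbation $W$ cannot create extra resonance points inside $C$, and the resonance point $s=0$ of $H_0+sW$ retains multiplicity $m$. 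Once $C$ is pinned down this way — which is exactly the argument already carried out in the proof of Lemma~\ref{L: P(H0,V) is cont-s} — the integral representation is legitimate for all $W$ in a suitable neighbourhood, and the uniform estimate on the integrand from Lemma~\ref{L: VRV} finishes the proof.
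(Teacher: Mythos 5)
Your proof is correct and takes essentially the same approach as the paper: represent $WP_\lambda(H_{0},W)$ as $\frac{1}{2\pi i}\oint_C W R_\lambda(H_{0}+sW)W\,ds$, invoke Lemma~\ref{L: VRV} for uniform continuity of the integrand on the contour, and use simplicity of $V$ together with upper semicontinuity of the spectrum to keep $C$ free of stray resonance points for $W$ near $V$. (The initial telescoping decomposition you try and abandon does not appear in the paper, but your self-correction — that $V$ may be unbounded, so one must work with the full product $V R_\lambda(\cdot)V$ rather than pulling $V$ out front — is exactly the right move and lands you on the paper's argument.)
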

\begin{proof} 
The mapping $V \mapsto V R_\lambda(H_{0}+sV)V$ is continuous by Lemma~\ref{L: VRV}.
Since the direction~$V$ is simple, it has a neighbourhood consisting of simple directions.
Hence, inside a small enough contour~$C,$ enclosing the resonance point~$s=0,$
for all~$W$ from the neighbourhood there will be no other resonance points of the triple 
$(\lambda; H_{0},W).$ Hence, the formula
$$
  V P_\lambda(H_{0},V) = \frac 1 {2\pi i} \oint_C V R_\lambda(H_{0}+s V)V\,ds
$$
completes the proof. 
\end{proof}

\subsection{Homotopy stability of total resonance index}

We recall some definitions from previous sections.
A point (that is, a self-adjoint operator)~$H_{0}$ of the affine
space~$\clA$ is \emph{resonant} if a fixed real number~$\lambda$ which does not
belong to the common essential spectrum $\sigma_{ess}$ of operators from~$\clA$ is an eigenvalue of~$H_{0}.$
A resonant point~$H_{0}$ is \emph{simple}, if the eigenvalue~$\lambda$ has multiplicity one.

A regular direction~$V$ is \emph{simple} at a resonance point~$H_{0}$ if~$V$
is not tangent to~$\Rset$ at~$H_{0};$ by this we mean that~$V$ is not the tangent vector
of any differentiable path in~$\Rset$ which passes through~$H_{0}.$

\begin{thm} \label{T: stability of res index for order 1 V}
Let~$H_{0}$ be a resonance point and let~$V$ be a simple direction.
Resonance index $\ind_{res}(\lambda; H_{0},V)$ is stable under small perturbations of~$V$
within any finite-dimensional subspace of~$\clA_0.$ 
\end{thm}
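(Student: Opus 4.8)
The plan is to reduce the statement to a local analysis of the finite-rank resonance matrix and use the stability results already established. Since $V$ is a simple (that is, regular and order $1$) direction at $H_0$, Theorem~\ref{T: ind res=sign VP} gives $\ind_{res}(\lambda; H_0, V) = \sign V P_\lambda(r_\lambda)$ with $r_\lambda = 0$. So it suffices to show that the signature of the self-adjoint finite-rank operator $W \mapsto W P_\lambda(H_0, W)$ is locally constant on the set of simple directions, when $W$ ranges over a fixed finite-dimensional subspace $\clL$ of $\clA_0$ containing $V$.

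First I would restrict the topology: on the finite-dimensional space $\clL$ all norms are equivalent, so by Assumption~\ref{A: norm of clA(0)} the restriction of $\clL$ carries a norm with properties (VR), (VRV1), (VRV2); in particular Lemmas~\ref{L: VR} and~\ref{L: VRV} apply. Next, by Lemma~\ref{L: regular dir-s are open} the set of simple directions is open in $\clA_0$, hence open in $\clL$; so $V$ has a neighbourhood $\euU \subset \clL$ of simple directions. On $\euU$, Theorem~\ref{T: res matrix is V-stable} (equivalently Lemma~\ref{L: P(H0,V) is cont-s} together with Lemma~\ref{L: VRV}) shows that the map $W \mapsto W P_\lambda(H_0, W)$ is norm-continuous into the bounded operators. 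Moreover each such operator is a finite-rank self-adjoint operator of rank exactly $N = m$, where $m = \dim \clV_\lambda$ is the geometric multiplicity of the eigenvalue $\lambda$ of $H_0$; crucially $m$ does not depend on $W$, since $\clV_\lambda$ depends only on $H_0$, and for simple directions algebraic multiplicity equals geometric multiplicity.

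The key step is then the observation that the signature of a norm-continuous path of self-adjoint operators of \emph{constant finite rank} is locally constant. Indeed, if $T(W)$ has rank $m$ for all $W \in \euU$ and depends continuously on $W$, then the $m$ nonzero eigenvalues of $T(W)$ (listed with multiplicity) depend continuously on $W$ and stay bounded away from $0$ locally: a nonzero eigenvalue of $T(V)$ cannot collide with $0$ under a small perturbation without the rank dropping below $m$. Hence on a possibly smaller connected neighbourhood of $V$ the number of positive eigenvalues and the number of negative eigenvalues are each constant, so $\sign T(W) = \sign T(V)$. Applying this to $T(W) = W P_\lambda(H_0, W)$ and invoking Theorem~\ref{T: ind res=sign VP} once more gives $\ind_{res}(\lambda; H_0, W) = \ind_{res}(\lambda; H_0, V)$ for all $W$ in that neighbourhood, which is the assertion.

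The main obstacle is making the ``constant rank $\Rightarrow$ signature locally constant'' step clean: one must justify that the nonzero spectrum of $T(W)$ is uniformly bounded away from zero near $V$. The honest way is to use upper semicontinuity of the spectrum (as in the proof of Lemma~\ref{L: P(H0,V) is cont-s}): choose a contour in $\mathbb{C}$ enclosing exactly the $m$ nonzero eigenvalues of $T(V)$ and excluding $0$; continuity of $T(W)$ in norm keeps those eigenvalues inside the contour and the rest of the spectrum (which is $\{0\}$) outside it, for $W$ close to $V$, and the Riesz projection over that contour has constant rank $m$. Then within that spectral subspace the eigenvalues vary continuously and their signs cannot change. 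I would also remark that the restriction to a finite-dimensional subspace $\clA_0$ is what makes the norm on $\clL$ (hence the continuity input) harmless even when $\clA_0$ itself carries no topology compatible with Assumption~\ref{A: norm of clA(0)}.
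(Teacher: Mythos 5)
Your proposal is correct and takes essentially the same route as the paper's proof: invoke Theorem~\ref{T: ind res=sign VP} to reduce to the signature of the resonance matrix, use the openness of the set of simple directions together with Theorem~\ref{T: res matrix is V-stable} for norm-continuity of $W \mapsto W P_\lambda(H_0,W)$, note that the rank is identically $m$ on this set, and conclude that the signature is locally constant. Your explicit justification of the step ``constant rank plus norm continuity implies locally constant signature'' (via a Riesz contour separating the nonzero spectrum from $0$) is more detailed than the paper, which leaves this implicit; that is a welcome clarification. One caveat: your closing remark that restriction to a finite-dimensional subspace renders Assumption~\ref{A: norm of clA(0)} harmless even when $\clA_0$ carries no compatible topology is not justified for condition (VRV1), since compactness (or even boundedness) of $V_1 R_z(H_0) V_2$ is not automatic for possibly unbounded directions $V_1,V_2$ even when they range over a finite-dimensional subspace -- only the continuity conditions (VR), (VRV2) become free. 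Since your main argument already assumes Assumption~\ref{A: norm of clA(0)} (as the paper implicitly does via Theorem~\ref{T: res matrix is V-stable}), this side remark does not affect the correctness of the proof, but it should be dropped or qualified.
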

\begin{proof} 
Since order of~$V$ is equal to~$1,$ the rank~$N$ of the resonance matrix $VP_\lambda(H_{0},V)$
is equal to~$m,$ 
and, by Theorem~\ref{T: V simple iff transversal}, the direction~$V$ is transversal.
Hence, there exists a small enough convex neighbourhood of~$V$ in the finite-dimensional subspace 
such that all directions from that neighbourhood are also transversal, and therefore, by 
Theorem~\ref{T: V simple iff transversal}, have order $1.$ Hence, the ranks of the resonance matrices 
$W P_\lambda(H_{0},W)$ for all directions~$W$ from the neighbourhood are equal to~$m.$ 
Since by Theorem~\ref{T: res matrix is V-stable} the resonance matrix depends continuously on~$V$ 
for simple directions $V,$ it follows that signature of the resonance matrix 
$VP_\lambda(H_{0},V)$ is stable under small perturbations of~$V.$
\end{proof}

While the resonance index of a direction is stable if the direction is simple, 
in general this is not true. 
Geometrically, the reason is that a tangent direction 
may cross the resonance set however small a perturbation of that direction is. 
It leads to a sudden change of the intersection number of that direction. 
This connection of resonance index with intersection number of eigenvalues 
was discussed earlier. 
Analytically, the reason for the instability of the resonance index is that 
the resonance point may split into two or more resonance points as a non-simple 
direction~$V$ is perturbed. 
In other words, as a non-simple direction~$V$ is perturbed to a close direction $W,$ 
near a resonance point $s = 0$ there may appear other resonance points, 
which may ``take away'' part of the resonance index.
Thus, while the resonance index is not stable, the total resonance index is. 
In this subsection we prove the corresponding theorems. 

\begin{lemma} \label{L: if r then so is bar r} 
If a non-real complex number~$r_\lambda^j$ is a resonance point of the triple $(\lambda; H_{0},V)$
then the conjugate of~$r_\lambda^j$ is also a resonance point of the triple $(\lambda; H_{0},V)$
and moreover it has the same algebraic multiplicity. 
\end{lemma}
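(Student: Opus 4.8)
The plan is to exploit the symmetry of the resonance condition under complex conjugation, which is already built into the structure developed in the excerpt. Recall that $r_\lambda^j$ being a resonance point of $(\lambda; H_0, V)$ means, by condition (1) in the list of equivalent definitions, that $r_\lambda^j$ is a pole of the meromorphic function $\mathbb{C} \ni s \mapsto A_\lambda(s) = R_\lambda(H_s)V$. Since $H_0$ and $V$ are self-adjoint, we have $R_{\bar z}(H_s) = R_z(H_s)^*$ whenever $s$ is real, and more generally the operator-valued function $s \mapsto A_{\bar\lambda}(\bar s) = R_{\bar\lambda}(H_{\bar s})V$ is the ``conjugate'' of $s \mapsto A_\lambda(s)$ in the sense that $\brs{A_\lambda(s)}^* = B_{\bar\lambda}(\bar s)$, as recorded in the preliminaries (the identity $\brs{\bfA_z(r_z)}^* = \bfB_{\bar z}(\bar r_z)$ and $P^*_z(r_z) = Q_{\bar z}(\bar r_z)$ are the local versions of this). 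Here $\lambda$ is real, so $\bar\lambda = \lambda$.

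First I would observe that because $\lambda \in \mathbb{R}$, the second resolvent identity (\ref{F: second resolvent identity (3)}), namely $A_\lambda(s) = \brs{1+(s-r)A_\lambda(r)}^{-1}A_\lambda(r)$, has coefficients that are taken to their adjoints under $s \mapsto \bar s$: explicitly, taking adjoints of both sides and using that $A_\lambda(r)^* = B_\lambda(\bar r)$ for real... more carefully, one has $A_\lambda(s)^* = \brs{1 + (\bar s - \bar r) B_\lambda(\bar r)}^{-1} B_\lambda(\bar r)$, so $B_\lambda$ has a pole exactly at $\bar s$ when $A_\lambda$ has a pole at $s$. Then step two: invoke the equivalence of conditions (1) and (2) in the definition of a resonance point — $r_z$ is a pole of $A_z(s)$ iff it is a pole of $B_z(s)$ — to conclude that $\overline{r_\lambda^j}$ is a pole of $s \mapsto A_\lambda(s)$, hence a resonance point of $(\lambda; H_0, V)$. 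For the multiplicity claim, step three: the algebraic multiplicity of $r_\lambda^j$ is $\dim \Upsilon_\lambda(r_\lambda^j) = \rank P_\lambda(r_\lambda^j)$ where $P_\lambda(r_\lambda^j) = \frac{1}{2\pi i}\oint_{C} A_\lambda(s)\,ds$; taking the adjoint of this contour integral (with the contour conjugated and reversed, which cancels a sign) gives $P_\lambda(r_\lambda^j)^* = \frac{1}{2\pi i}\oint_{\bar C} B_\lambda(s)\,ds = Q_\lambda(\overline{r_\lambda^j})$, and $\rank Q_\lambda(\overline{r_\lambda^j}) = \rank P_\lambda(\overline{r_\lambda^j})$ since $Q$ and $P$ at the same point are transposes of each other via $V$ (formula (\ref{F: VP = QV})) and in any case an idempotent and its adjoint have equal rank. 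Thus $\overline{r_\lambda^j}$ has the same algebraic multiplicity.

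The cleanest packaging, which I would actually write, is: use the Schwarz-type reflection principle directly on the meromorphic function $A_\lambda$. Since $\lambda$ is real and $H_0, V$ are self-adjoint, for real $s$ in the resolvent set of $H_s$ we have $A_\lambda(s) = (H_s - \lambda)^{-1}V$ and the ``reflected'' function $\widetilde{A}(s) := \brs{A_\lambda(\bar s)}^*$ is meromorphic and agrees with $B_\lambda(s) = VR_\lambda(H_s)$ there, hence everywhere by analytic continuation. A pole of $A_\lambda$ at $s = r_\lambda^j$ therefore forces a pole of $B_\lambda$ at $s = \overline{r_\lambda^j}$ of the same order and with the residual idempotent $Q_\lambda(\overline{r_\lambda^j})$ of the same rank; and by the equivalence (1) $\iff$ (2) this is a resonance point, while the rank equals the algebraic multiplicity of $r_\lambda^j$. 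I expect the main obstacle to be purely bookkeeping: getting the orientation of the conjugated contour and the adjoint of the operator-valued contour integral to come out with the correct sign, and making sure that ``algebraic multiplicity'' is consistently read as the rank of the relevant Riesz idempotent (equivalently $\dim \Upsilon$) so that the transition $A \leftrightarrow B$, $P \leftrightarrow Q$ preserves it — there is no analytic difficulty, only the need to state the adjoint/reflection identity precisely.
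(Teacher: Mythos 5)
The paper itself does not prove this lemma; it simply cites \cite[Corollary~3.1.5]{Az9}, so there is no in-text argument to compare your proof against. Your route via the Schwarz reflection identity $A_\lambda(s)^* = B_\lambda(\bar s)$ (using that $H_0, V$ are self-adjoint and $\lambda$ real) is the natural one and the first half of your argument — that a pole of $A_\lambda$ at $r$ forces a pole of $B_\lambda$ at $\bar r$, and then condition (1)$\iff$(2) promotes this to a pole of $A_\lambda$ at $\bar r$ — is correct as written.

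The soft spot is your last step, $\rank Q_\lambda(\overline{r_\lambda^j}) = \rank P_\lambda(\overline{r_\lambda^j})$. Neither of the two justifications you offer actually closes it. The formula $VP_\lambda = Q_\lambda V$ (your \eqref{F: VP = QV}) only says that $V$ intertwines $P_\lambda$ and $Q_\lambda$; without injectivity of $V$ on $\Upsilon_\lambda(\overline{r_\lambda^j})$ that alone does not give equal ranks. And the remark that ``an idempotent and its adjoint have equal rank'' is circular here: by the same conjugation formula one has $Q_\lambda(\overline{r_\lambda^j})^* = P_\lambda(r_\lambda^j)$, so this observation just returns you to $\rank P_\lambda(r_\lambda^j)$, which is where the chain began. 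The fact you actually need is the standard one about $TS$ versus $ST$: with $T = R_\lambda(H_s)$ and $S = V$, the compact operators $A_\lambda(s) = TS$ and $B_\lambda(s) = ST$ have identical nonzero spectra with identical algebraic multiplicities, so the Riesz idempotents $P_\lambda(\rho)$ and $Q_\lambda(\rho)$ at \emph{any} given resonance point $\rho$ have equal rank. Inserting that as the justification (in place of your appeal to $VP = QV$) closes the chain cleanly:
\begin{equation*}
  \rank P_\lambda(r_\lambda^j) = \rank P_\lambda(r_\lambda^j)^* = \rank Q_\lambda(\overline{r_\lambda^j}) = \rank P_\lambda(\overline{r_\lambda^j}),
\end{equation*}
and with that single repair the proof is complete and self-contained.
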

This lemma is \cite[Corollary 3.1.5]{Az9}.

\smallskip
Let $H_{0}$ be a resonance point and let~$V$ be a regular direction. 
With every pair $(H_{0},V)$ we can associate the set of resonance points of the pair.
If the direction~$V$ is slightly perturbed 
and if a resonance point is degenerate then it can split. The resonance point $s = 0$ itself
will not move when~$V$ is changed, since $H_{0}$ is resonant, but some other resonance points 
may break away from $s = 0,$ if the direction~$V$ is not simple. The following figure shows one 
of the possible scenarios. 

\bigskip

\hskip 2.8 cm 
\begin{picture}(100,80)
\put(-85,70){\small Resonance points of $(H_{0},V):$}
\put(0,40){\vector(1,0){100}}
\put(50,0){\vector(0,1){80}}
\put(50,40){\circle*{5}}
\put(90,70){\circle*{3}}
\put(90,10){\circle*{3}}
\end{picture}
\hskip 4.2 cm 
\begin{picture}(100,80)
\put(-85,70){\small Resonance points of $(H_{0},W),$}
\put(-28,55){\small where $W\approx V:$}
\put(0,40){\vector(1,0){100}}
\put(50,0){\vector(0,1){80}}
\put(50,40){\circle*{3}}
\put(37,40){\circle*{3}}
\put(59,40){\circle*{3}}
\put(56,46){\circle*{3}}
\put(56,34){\circle*{3}}
\put(85,65){\circle*{3}}
\put(85,15){\circle*{3}}
\end{picture}

\begin{thm} \label{T: total res. index for (H0,W)} 
Let~$V$ be a regular direction at a resonance point~$H_{0}.$
Let~$W$ be a small perturbation of~$V$ and let 
$r^1_\lambda(H_{0},W), \ r^2_\lambda(H_{0},W), \ \ldots$
be resonance points of the triple $(\lambda; H_{0},W)$ which 
belong to the group of the resonance point $s = 0$ of the triple $(\lambda; H_0,V),$
where $H_r = H_{0} + r V.$
Then the resonance index 
$\ind_{res} (\lambda; H_{0},V)$
is equal to the sum of resonance indices
$$
  \sum_{j} \ind_{res} (\lambda; H_{r_\lambda^j},W),
$$
where the sum is taken over real resonance points of the group of $s = 0.$
\end{thm}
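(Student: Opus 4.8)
The plan is to reduce the claim to a local contour-integral statement about the idempotent $P_\lambda$ and its decomposition into the idempotents attached to the resonance points that split off from $s=0$. First I would fix a small circle $C$ in the complex $s$-plane, centred at $s=0$, that encloses no resonance point of the triple $(\lambda; H_0, V)$ other than $s=0$; this is possible since resonance points are isolated. By Lemma~\ref{L: VRV} the operator $V R_\lambda(H_0+sV)V$ depends jointly continuously on $s$ and on the direction, and by upper semicontinuity of the spectrum of a compact operator there is a norm-neighbourhood of $V$ such that for every $W$ in it the resonance points of $(\lambda; H_0,W)$ lying inside $C$ are exactly the ``group of $s=0$'', call them $r^1_\lambda(H_0,W), r^2_\lambda(H_0,W),\dots$, and their total algebraic multiplicity equals $N$, the algebraic multiplicity of $s=0$ for $(\lambda; H_0,V)$. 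The point $s=0$ itself remains a resonance point of $(\lambda; H_0,W)$ because $\lambda$ is an eigenvalue of $H_0$ regardless of $W$.

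Next I would use the resonance-index-equals-signature identity, Theorem~\ref{T: ind res=sign VP}, in the form $\ind_{res}(\lambda; H_{r_\lambda^j},W) = \sign\, V P_\lambda(H_{r_\lambda^j},W)$, for each \emph{real} $r_\lambda^j$ in the group; and for the non-real ones I would invoke Lemma~\ref{L: if r then so is bar r}, so that the non-real resonance points inside $C$ occur in conjugate pairs $r^j_\lambda, \overline{r^j_\lambda}$ with equal algebraic multiplicities. The resonance matrices of a conjugate pair are adjoint to each other up to the identifications already set up in the excerpt (using $P^*_z(r_z)=Q_{\bar z}(\bar r_z)$ and $VP_z(r_z)=Q_z(r_z)V$), hence have opposite signatures, so each conjugate pair contributes $0$ to $\sum_j \ind_{res}(\lambda; H_{r_\lambda^j},W)$. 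Thus it suffices to show that $\sign\, V P_\lambda(H_0,V)$ equals the sum of $\sign\, V P_\lambda(H_{r_\lambda^j},W)$ taken over \emph{all} resonance points $r_\lambda^j$ inside $C$ (real and non-real), since the non-real contributions vanish.

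For that last equality I would argue that the operator
$$
  \Pi_W := \frac{1}{2\pi i}\oint_C R_\lambda(H_0+sW)W\,ds
$$
is, for $W=V$, exactly $P_\lambda(H_0,V)$, and for $W$ near $V$ equals $\sum_j P_\lambda(H_{r_\lambda^j},W)$ (the sum over the group), because the contour integral of a meromorphic operator function picks out the sum of the residual idempotents at all poles inside $C$, and by the second resolvent identity these residual idempotents are precisely the $P_\lambda(H_{r_\lambda^j},W)$. Multiplying by $W$ on the left and using joint continuity of $V_1 R_\lambda(H_0+sV_2)V_3$ (Lemma~\ref{L: VRV}) on the compact contour $C$, I get $W\Pi_W \to V P_\lambda(H_0,V)$ in norm as $W\to V$. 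Now $V P_\lambda(H_0,V)$ is a self-adjoint finite-rank operator of some fixed rank, and $W\Pi_W = \sum_j W P_\lambda(H_{r_\lambda^j},W)$; the subtle point is that the summands for different $j$ act on complementary root subspaces (their images are the $\Upsilon_\lambda(r^j_\lambda)$, which are mutually ``orthogonal'' via the relations $P_z(r^{(1)}_z)P_z(r^{(2)}_z)=0$ of \eqref{F: Pz(1)Pz(2)=0}), so the signature of the sum is the sum of the signatures. Finally, since $W\Pi_W$ converges in norm to the self-adjoint operator $V P_\lambda(H_0,V)$ and the rank of $W\Pi_W$ is at most $N=\mathrm{rank}\,V P_\lambda(H_0,V)$ for $W$ close to $V$, a standard perturbation argument (the signature of a self-adjoint operator is locally constant under norm perturbations that do not increase the rank, cf.\ the reasoning in Theorem~\ref{T: stability of res index for order 1 V}) gives $\sign\, W\Pi_W = \sign\, V P_\lambda(H_0,V)$. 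Combining the three displays,
$$
  \ind_{res}(\lambda; H_0,V) = \sign\, V P_\lambda(H_0,V) = \sign\, W\Pi_W = \sum_j \sign\, V P_\lambda(H_{r_\lambda^j},W) = \sum_{j\ \mathrm{real}} \ind_{res}(\lambda; H_{r_\lambda^j},W),
$$
which is the assertion.

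The main obstacle I expect is the signature-additivity step: showing that $\sign\!\left(\sum_j W P_\lambda(H_{r_\lambda^j},W)\right) = \sum_j \sign\, W P_\lambda(H_{r_\lambda^j},W)$. This is not automatic for sums of self-adjoint operators; it relies on the images (and coimages) of the summands being in ``general position'' — precisely, that the root subspaces $\Upsilon_\lambda(r^j_\lambda)$ for distinct $r^j_\lambda$ are linearly independent and paired nondegenerately by $V$ only within each block, which is exactly the block structure exploited in Theorem~\ref{T: P = sum sum}. I would isolate this as a lemma: if $T = \sum_j T_j$ where the $T_j$ are finite-rank, $T_j = \sum_{i,k}\alpha^{ik}_j\scal{V b^{(j)}_i}{\cdot}V b^{(j)}_k$ with $\{b^{(j)}_i\}_j$ a basis adapted to the decomposition $\Upsilon_\lambda = \dotplus_j \Upsilon_\lambda(r^j_\lambda)$, and the $V$-pairing is block-diagonal across the $j$'s, then $\sign T = \sum_j \sign T_j$; this is an immediate consequence of Lemma~\ref{L: L2}. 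Establishing the block-diagonality of the $V$-pairing across the split resonance points is the real content, and it should follow from $P_z(r^{(1)}_z)P_z(r^{(2)}_z)=0$ together with $V P_z(r_z) = Q_z(r_z)V$ and $P^*_z = Q_{\bar z}$.
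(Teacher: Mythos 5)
Your proof is essentially correct in spirit, but it takes a genuinely different route from the paper's. The paper's proof is a direct counting/homotopy argument from the \emph{definition} of the resonance index: shift~$\lambda$ to~$\lambda+iy$ with small $y>0$, count the $N_\pm$ resonance points in $\mbC_\pm$ belonging to the group of $s=0$, then observe that deforming~$V$ to~$W$ at \emph{fixed} $y>0$ cannot change these counts (resonance points of a non-real~$z$ are never real, so they cannot cross $\mbR$), and finally invoke Lemma~\ref{L: if r then so is bar r} to see that the non-real resonance points of the $(H_0,W)$ group are symmetric under conjugation and so cancel. You instead push everything through the signature characterisation $\ind_{res}=\sign V P_\lambda$ (Theorem~\ref{T: ind res=sign VP}), the contour-integral definition of~$P_\lambda$, and rank/signature stability of self-adjoint operators under norm-small perturbation. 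Your route buys an algebraic reformulation of the splitting but uses considerably heavier machinery (the full content of Section~\ref{S: res points as f-ns of s} behind Theorem~\ref{T: ind res=sign VP}); the paper's is shorter and relies only on the definition.

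A few points in your proposal need repair. First, the resonance matrix of the triple $(\lambda; H_{r_\lambda^j},W)$ is $W P_\lambda(H_{r_\lambda^j},W)$, not $V P_\lambda(\cdots)$ — the direction in the matrix must match the direction of the triple; this recurs in your final display. Second, the conjugate-pair argument is misphrased: $\ind_{res}(\lambda; H_{r_\lambda^j},W)$ and $\sign W P_\lambda(H_{r_\lambda^j},W)$ are not defined for non-real $r_\lambda^j$, since $H_{r_\lambda^j}=H_0+r_\lambda^j W$ is then not self-adjoint and $W P_\lambda(r_\lambda^j)$ is not self-adjoint; the statement you actually need is that the self-adjoint block $W P_\lambda(r)+W P_\lambda(\bar r)$ (self-adjoint because $(WP_\lambda(r))^*=WP_\lambda(\bar r)$ via (\ref{F: P*=Q}) and (\ref{F: VP = QV})) has zero signature. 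That requires showing the diagonal blocks vanish, i.e.\ $\scal{\xi}{W\eta}=0$ for $\xi,\eta\in\Upsilon_\lambda(r)$ when $r$ is non-real; this does follow from $\scal{WP_\lambda(r)\eta}{\xi}=\scal{W\eta}{P_\lambda(\bar r)\xi}$ together with $P_\lambda(\bar r)P_\lambda(r)=0$, so the form restricted to $\Upsilon(r)\dotplus\Upsilon(\bar r)$ is anti-block-diagonal and its signature is zero — but your draft skips this and instead asserts ``adjoint $\Rightarrow$ opposite signatures,'' which is not a valid inference. Third, the rank-stability step should explicitly invoke lower semicontinuity of rank under norm limits to conclude $\rank(W\Pi_W)=N$ for $W$ close to $V$; you correctly identify the upper bound but need the matching lower bound before the eigenvalue-tracking argument applies. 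With these corrections the argument goes through.
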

\begin{proof} The resonance index 
$
  \ind_{res} (\lambda; H_{0},V)
$
is equal to the difference $N_+ - N_-,$ where $N_\pm$ is the number of resonance points 
of the triple $(\lambda + iy; H_{0},V)$ for small enough~$y$ which belong to the group of~$s = 0$ 
and lie in~$\mbC_\pm.$ 
If the direction~$V$ is deformed to~$W,$ the resonance point $s = 0$ of the pair $(H_{0},V)$ 
will in general split to some number of resonance points including the original resonance point $s = 0;$
the algebraic multiplicity of this resonance point may decrease but the geometric multiplicity will stay the same. 
We shall also refer to these resonance points of the pair $(H_{0},W)$ as resonance points of the group of $s = 0.$ 
Some of these resonance points can be real and some can be non-real.

If~$\lambda$ is perturbed slightly to~$\lambda+iy$ with small positive~$y$ the non-real resonance points 
of the pair $(H_{0},W)$ which belong to the group of $s = 0$ will stay in the same half-plane, and 
the real resonance points of the group of $s = 0$ 
$$
  r^1_\lambda(H_{0},W), \ r^2_\lambda(H_{0},W), \ \ldots
$$
will shift from the real axis, thus giving a sum of resonance indices for the pair $(H_{0},W).$ 
We have to show that this sum is equal to $N_+ - N_-.$ 

Let $M_+$ (respectively, $M_-$) be the number of resonance points of the pair $(H_{0},W)$ 
which belong to the group of $s = 0$ and which appear in the upper 
(respectively, lower) half plane, as~$\lambda$ is shifted to~$\lambda+iy,$ with small $y>0.$
The difference $M_+ - M_-$ is equal to the resonance index of the triple $(\lambda; H_{0},V),$ 
since we can deform~$V$ to~$W$ with $y>0$ fixed, and as we do so resonance points of~$V$ will get deformed 
to resonance points of~$W$ without crossing~$\mbR.$
Hence, equality of the total resonance index of the pair $(H_{0},W)$ to the resonance index of $(H_{0},V)$
follows from Lemma~\ref{L: if r then so is bar r} according to which the numbers of non-real resonance points 
of the group of $s = 0$ in both half-planes are the same.
\end{proof}

\begin{thm} \label{T: total res. index for (H1,V)} 
Let~$V$ be a regular direction at a resonance point~$H_{0}.$
Let~$H_{0}'$ be a small perturbation of~$H_{0}$ and let 
$r^1_\lambda(H'_{0},V), \ r^2_\lambda(H'_{0},V), \ \ldots$
be resonance points of the triple $(\lambda; H'_{0},V)$ which belong to the group of the resonance
point $s = 0$ of the triple $(\lambda; H'_0,V),$ where $H'_r = H'_{0} + r V.$ 
Then the resonance index 
$\ind_{res} (\lambda; H_{0},V)$
is equal to the sum of resonance indices
$$
  \sum_{j} \ind_{res} (\lambda; H'_{r_\lambda^j},V),
$$
where the sum is over real resonance points of the group of $s = 0.$
\end{thm}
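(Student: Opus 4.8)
The plan is to repeat the argument of Theorem~\ref{T: total res. index for (H0,W)}, deforming the base point rather than the direction. Put $\Delta = H'_0 - H_0 \in \clA_0$ and consider the homotopy $H_0^{(t)} = H_0 + t\Delta$, $t \in [0,1]$, with $H_s^{(t)} = H_0^{(t)} + sV$. Since $V$ is a regular direction at $H_0$, the point $s = 0$ is an isolated resonance point of $(\lambda; H_0, V)$, so there is a small circle $C$ in the $s$-plane around $0$ such that $(\lambda; H_0, V)$ has no resonance point on $C$ or in the closed annulus between $C$ and a slightly larger circle $C'$. For a fixed non-real reference value $r$, the resonance points of a triple $(z; K, V)$ are exactly the numbers $r - \sigma^{-1}$ with $\sigma$ a non-zero eigenvalue of the compact operator $R_z(K + rV)V$ (cf.\ (\ref{F: second resolvent identity (3)})), counted with algebraic multiplicity; hence they depend continuously and jointly on $z \notin \mbR$ and on $K \in \clA$, the continuity of $R_z(K)V = \bigl(V R_{\bar z}(K)\bigr)^*$ in $K$ being exactly what Assumption~\ref{A: norm of clA(0)} and Lemma~\ref{L: VR}(ii) provide. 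Thus I would fix $\delta > 0$ so that, whenever $\norm{K - H_0}_{\clA_0} < \delta$ and $\abs{z - \lambda} < \delta$, the triple $(z; K, V)$ has no resonance point in the closed annulus between $C$ and $C'$; the resonance points inside $C$ are then the \emph{group of $s = 0$}. Take $\norm{\Delta}_{\clA_0} < \delta$ once and for all.

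First I would evaluate $\ind_{res}(\lambda; H_0, V)$ by the definition: shifting $\lambda$ to $z = \lambda + iy$ with $0 < y < \delta$, it equals $N_+ - N_-$, where $N_\pm$ counts, with algebraic multiplicity, the resonance points of $(z; H_0, V)$ inside $C$ lying in $\mbC_\pm$. Now keep this $z$ fixed and let $t$ run from $0$ to $1$. Because $z$ is non-real and every $H_s^{(t)}$ is self-adjoint, $z$ is never an eigenvalue of $H_s^{(t)}$ for real $s$; by the list of equivalent conditions defining resonance points, no resonance point of $(z; H_0^{(t)}, V)$ is ever real. Combined with the continuity in $t$ just described, the resonance points of $(z; H_0^{(t)}, V)$ inside $C$ move continuously and never meet $\mbR$, so the difference of their counts in $\mbC_+$ and $\mbC_-$ is constant in $t$. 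Therefore $\ind_{res}(\lambda; H_0, V) = M_+ - M_-$, where $M_\pm$ counts the resonance points of $(z; H'_0, V)$ inside $C$ lying in $\mbC_\pm$.

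Second, I would identify $M_+ - M_-$ with $\sum_j \ind_{res}(\lambda; H'_{r^j_\lambda}, V)$. The resonance points of $(\lambda; H'_0, V)$ inside $C$ — the group of $s = 0$ for $(H'_0, V)$ — split into the real ones $r^1_\lambda(H'_0, V), r^2_\lambda(H'_0, V), \dots$ and the non-real ones. As $\lambda$ is nudged to $z = \lambda + iy$ with $y$ small, each non-real resonance point of the group keeps its half-plane, while each real $r^j_\lambda$ breaks up, inside a small sub-contour, into resonance points whose signed count is by definition $\ind_{res}(\lambda; H'_{r^j_\lambda}, V)$; this is the same bookkeeping as in the proof of Theorem~\ref{T: total res. index for (H0,W)}. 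By Lemma~\ref{L: if r then so is bar r}, the non-real resonance points of $(\lambda; H'_0, V)$ in the group of $s = 0$ occur in conjugate pairs of equal algebraic multiplicity, so their net contribution to $M_+ - M_-$ vanishes. Hence $M_+ - M_- = \sum_j \ind_{res}(\lambda; H'_{r^j_\lambda}, V)$, and combining with the previous paragraph completes the proof.

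I expect the only real work to be the bookkeeping of the order of quantifiers: the isolating contour $C$ and the admissible radius $\delta$ must be pinned down before $y$ is chosen, and $y < \delta$ must be taken uniformly in $t \in [0,1]$, so that ``the group of $s = 0$'' stays unambiguous throughout both the $t$-homotopy and the $y$-limit. With that in place, the proof is a transcription of that of Theorem~\ref{T: total res. index for (H0,W)}, the single new ingredient being the norm-continuity of $K \mapsto R_z(K)V$ on $\clA$ furnished by Assumption~\ref{A: norm of clA(0)}.
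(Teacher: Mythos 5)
Your proposal is correct and takes essentially the same approach as the paper, which omits the proof of this theorem saying only that it follows almost verbatim that of Theorem~\ref{T: total res. index for (H0,W)}, with the sole new phenomenon being that the resonance point $s=0$ may shift as well as split. Your write-up is a careful transcription of that adaptation, replacing continuity in $V$ by the continuity of $K \mapsto R_z(K)V$ from Lemma~\ref{L: VR}(ii), and using the isolating contour $C$ in the $s$-plane to make the ``group of $s=0$'' unambiguous despite the shift.
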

Proof of this theorem follows almost verbatim the argument used 
in the proof of Theorem~\ref{T: total res. index for (H0,W)},
and therefore is omitted. The only difference is that the resonance point $s = 0$ 
itself may not only split but also shift. 

\smallskip

\hskip 2.8 cm 
\begin{picture}(100,80)
\put(-85,70){\small Resonance points of $(H_{0},V):$}
\put(0,40){\vector(1,0){100}}
\put(50,0){\vector(0,1){80}}
\put(50,40){\circle*{5}}
\put(90,70){\circle*{3}}
\put(90,10){\circle*{3}}
\end{picture}
\hskip 4.2 cm 
\begin{picture}(100,80)
\put(-85,70){\small Resonance points of $(H'_{0},V),$}
\put(-28,55){\small \!\!\!where $H'_{0}\approx H_{0}:$}
\put(0,40){\vector(1,0){100}}
\put(50,0){\vector(0,1){80}}
\put(44,40){\circle*{3}}
\put(37,40){\circle*{3}}
\put(59,40){\circle*{3}}
\put(56,46){\circle*{3}}
\put(56,34){\circle*{3}}
\put(85,65){\circle*{3}}
\put(85,15){\circle*{3}}
\end{picture}


\begin{thm} \label{T: homotopy stability of res index} Let $H_0, H_1$ be two operators from~$\clA$ such that
$H_0$ and $H_1$ are not resonant at~$\lambda \notin \sigma_{ess}.$ Then there exist neighbourhoods
$\euU_0$ and $\euU_1$ of~$H_0$ and $H_1$ respectively such that for all $H'_0 \in \euU_0$
and all $H'_1 \in \euU_1$
$$
  \sum_{r \in [0,1]} \ind_{res}(\lambda; H_r,V) = \sum_{r \in [0,1]} \ind_{res}(\lambda; H'_r,V'),
$$
where~$V' = H'_1 - H'_0$ and $H'_r = H'_0+rV'.$
\end{thm}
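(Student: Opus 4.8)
The statement says that if $H_0,H_1$ are both non-resonant at $\lambda$, then the total resonance index of the straight-line path $H_r=H_0+rV$ is locally constant under perturbation of both endpoints. The plan is to combine the two single-endpoint stability theorems, Theorems~\ref{T: total res. index for (H0,W)} and~\ref{T: total res. index for (H1,V)}, with a compactness argument along the path, together with the elementary observation that the endpoints themselves contribute nothing because they are not resonant and hence can be surrounded by neighbourhoods of non-resonant operators.

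First I would set up the bookkeeping: the path $H_r$, $r\in[0,1]$, meets the resonance set $\euR(\lambda)$ in a finite set of parameter values $0<r_1<r_2<\dots<r_n<1$ (finite because $\euR(\lambda)$ is an analytic set of codimension one and $V$ is a regular direction, so the intersection is discrete and hence finite on the compact interval $[0,1]$; the endpoints are excluded since $H_0,H_1\notin\euR(\lambda)$). For each $r_i$ the triple $(\lambda;H_{r_i},V)$ has a resonance index $\ind_{res}(\lambda;H_{r_i},V)$, and the total resonance index is $\sum_{i=1}^n\ind_{res}(\lambda;H_{r_i},V)$. Then I would pick for each $r_i$ a small contour $C_i$ in the complex $s$-plane enclosing only the resonance point $s=r_i$ of the path $H_0+sV$ and no other; using upper semicontinuity of the spectrum (equivalently, Lemma~\ref{L: VRV} applied to $R_s$ along the contour) I choose neighbourhoods $\euU_0$, $\euU_1$ small enough that for every $H'_0\in\euU_0$, $H'_1\in\euU_1$, setting $V'=H'_1-H'_0$ and $H'_r=H'_0+rV'$, the perturbed path $H'_s$ is still regular, is still non-resonant at $s=0$ and $s=1$, has no resonance points outside $\bigcup_i(\text{interior of }C_i)$, and inside each $C_i$ the total algebraic multiplicity of resonance points of $H'_s$ is bounded by that of $H_{r_i}$.

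Next comes the core step: inside a single contour $C_i$ I want to show that the sum of resonance indices of the (possibly several, possibly complex) resonance points of $(\lambda;H'_{\cdot},V')$ lying inside $C_i$ equals $\ind_{res}(\lambda;H_{r_i},V)$. I would obtain this by a two-stage deformation. Stage one: keep $H'_0$ fixed and deform the direction from $V$ back to $V$'s ``original'' slope while simultaneously translating — more cleanly, I first use Theorem~\ref{T: total res. index for (H1,V)} to pass from the pair $(H_{r_i},V)$ to a nearby pair $(H'_{r_i},V)$, which replaces $\ind_{res}(\lambda;H_{r_i},V)$ by the sum of resonance indices over the real resonance points inside $C_i$ of the perturbed base point; stage two: use Theorem~\ref{T: total res. index for (H0,W)} with base point $H'_{r_i}$ and direction perturbed from $V$ to $V'$ to absorb the change of direction. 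The key mechanism in both theorems, recalled from their proofs, is Lemma~\ref{L: if r then so is bar r}: the non-real resonance points of the group of $s=r_i$ come in conjugate pairs of equal algebraic multiplicity, so they contribute zero to the count $N_+-N_-$; only the real ones, which split off the real axis under $\lambda\mapsto\lambda+iy$, carry the index, and deforming the data with $y>0$ held fixed never lets a resonance point cross $\mathbb R$. Summing the per-contour equalities over $i=1,\dots,n$ gives the theorem, since the contributions from outside $\bigcup C_i$ are zero (no resonance points there) and the endpoints contribute nothing.

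The main obstacle I anticipate is the uniformity needed to guarantee that the finitely many contours $C_1,\dots,C_n$ simultaneously ``capture'' all resonance points of the perturbed path and that no new resonance points appear on $\partial C_i$ or drift between contours; this is exactly where one needs joint continuity of $R_s(H'_s)V'$ in $(s,H'_0,H'_1)$ on the compact set $\bigcup_i\partial C_i\times\overline{\euU_0}\times\overline{\euU_1}$, which is supplied by Lemmas~\ref{L: VR} and~\ref{L: VRV} under Assumption~\ref{A: norm of clA(0)}. A secondary point requiring care is that the straight-line homotopy $V\rightsquigarrow V'$ together with $H_0\rightsquigarrow H'_0$ must stay inside the regular directions at the relevant base points so that resonance points vary analytically/continuously and the contour-integral formula $P_\lambda=\frac1{2\pi i}\oint_C R_\lambda(H_0+sV)V\,ds$ remains valid throughout — this is handled by shrinking $\euU_0,\euU_1$ and invoking Lemma~\ref{L: regular dir-s are open}. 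Everything else is a routine assembly of the already-proven single-endpoint statements.
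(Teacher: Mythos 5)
Your proposal is correct and takes essentially the same approach as the paper: the paper's proof is simply the one-line assertion that the theorem ``follows immediately from Theorems~\ref{T: total res. index for (H0,W)} and~\ref{T: total res. index for (H1,V)},'' and you have reconstructed exactly that combination (perturb the base point, then perturb the direction) while additionally spelling out the compactness, contour-isolation, and regularity bookkeeping that the paper leaves implicit.
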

\begin{proof}
This theorem follows immediately from Theorems~\ref{T: total res. index for (H0,W)}
and~\ref{T: total res. index for (H1,V)}.
\end{proof}

\subsection{Robbin-Salamon axioms for spectral flow and resonance index}

In \cite{RoSa} it was shown that for operators with compact resolvent
the spectral flow of a path of operators can be uniquely characterised as a mapping
which satisfies five axioms: Homotopy, Constancy, Catenation, Direct Sum, and Normalisation.
In this subsection we show that the total resonance index satisfies these Robbin-Salamon axioms,
following closely their paper~\cite{RoSa}.

Assume Assumptions~\ref{A: Main Assumption} and~\ref{A: norm of clA(0)}. 

For any real numbers $a$ and $b,$ $a<b,$ and any two operators $H_a$ and $H_b$ from~$\clA$
let $PC^1([a,b], H_a,H_b)$ be the set of all continuous piecewise-$C^1$ paths $\set{H_s, s \in [a,b]},$ 
of operators from~$\clA$ such that (1)
$\lambda$ does not belong to the spectrum of $H_a$ and $H_b,$ (2) 
$\lambda$ does not belong to the spectrum of the vertex points 
of the $C^1$-subpaths of~$H_s,$
and (3) all operators $H_s, s \in [a,b],$ belong to a finite dimensional subspace of~$\clA.$ 
The conditions (2) and (3) are not necessary, but they will allow
to avoid unnecessary technical complications. 

By definition, the total resonance index of a path $\set{H_s, s \in [a,b]}$ from $PC^1[a,b]$
is the total resonance index of a continuous piecewise linear path $K_s$ from $PC^1[a,b]$ which has 
the same end-points $H_a$ and $H_b$ and such that $\sup_{s \in [a,b]}\norm{H_s - K_s}_{\clA_0}$ is small enough. 
Using a standard compactness argument and the homotopy invariance of the total resonance index,
one can show that this definition is correct. 

\smallskip
Let~$\mu$ be a mapping
$PC^1([a,b], H_a, H_b) \to \mbZ,$
which satisfies the following Robbin-Salamon axioms for spectral flow:

\begin{enumerate}
  \item[(i)] (Homotopy) \ If paths~$f$ and~$g$ from $PC^1([a,b], H_a,H_b)$ are homotopic, then $\mu(f) = \mu(g).$
  \item[(ii)] (Constancy) \ If a path~$f$ from $PC^1([a,b], H_a,H_b)$ is constant then $\mu(f) = 0.$
  \item[(iii)] (Catenation) \ If $f \in PC^1([a,b], H_a,H_b)$ and $g \in PC^1([b,c], H_b,H_c)$ 
    and $f(b) = g(b),$ then
  $
    \mu(f \catn g) = \mu(f) + \mu(g),
  $
  where $\catn$ denotes concatenation of paths. 
  \item[(iv)] \ (Direct Sum) \ If $f \in PC^1([a,b], H_a,H_b)$ and if $g \in PC^1([a,b], \tilde H_a,\tilde H_b),$
  where $\tilde H_a, \ \tilde H_b$ are operators from another affine space $\tilde \clA,$ satisfying the above conditions,
  then
  $\mu(f \oplus g) = \mu(f) + \mu(g).$
  \item[(v)] \ (Normalisation) \ Let $\hilb = \mbC$
  be a one-dimensional Hilbert space, let $a < \lambda < b$ and let 
  $f(t) = t,$ $f \in PC^1([a,b], a,b).$ Then
  $
    \mu(f) = 1.
  $
\end{enumerate}

\begin{thm} \label{T: Tot res index meets R-S axioms} 
Total resonance index satisfies all five Robbin-Salamon axioms.
\end{thm}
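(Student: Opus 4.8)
The plan is to verify each of the five Robbin--Salamon axioms in turn for the total resonance index, drawing on the machinery built up in the earlier sections. The overall strategy is routine once the right earlier results are quoted; the work is mostly bookkeeping, but the Homotopy axiom is where the real content sits.

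\medskip
First I would dispose of the easy axioms. \emph{Constancy}: a constant path $f\equiv H_a$ has, by hypothesis, $\lambda\notin\sigma(H_a)$, so there are no resonance points at all in the relevant coupling interval, hence the sum defining the total resonance index is empty and equals $0$. \emph{Catenation}: for $f\in PC^1([a,b],H_a,H_b)$ and $g\in PC^1([b,c],H_b,H_c)$ with $f(b)=g(b)$, replace $f$ and $g$ by nearby piecewise linear paths $K$ and $L$ and note that the resonance points of $f\catn g$ split disjointly into the resonance points of $f$ and those of $g$ (the common endpoint $H_b$ contributes nothing since $\lambda\notin\sigma(H_b)$); summing $\ind_{res}$ over the two disjoint families gives $\mu(f\catn g)=\mu(f)+\mu(g)$. \emph{Direct Sum}: for $f$ acting on $\hilb$ and $g$ on $\tilde\hilb$, the operator $(H_s\oplus\tilde H_s-\lambda)^{-1}(V\oplus\tilde V)$ is block-diagonal, so its eigenvalues---and therefore the resonance points and their algebraic multiplicities---are the disjoint union of those of the two summands; resonance index is additive over this disjoint union, giving $\mu(f\oplus g)=\mu(f)+\mu(g)$. \emph{Normalisation}: on $\hilb=\mbC$ with $f(t)=t$, the path $H_s=s$ is resonant exactly at $s=\lambda$; there $V=1$, $H_0=0$, and the single resonance point has resonance index $+1$ because $(s-\lambda)^{-1}$ as an eigenvalue of $R_z(H_s)V=(s-z)^{-1}$ rotates into $\mbC_+$ when $z=\lambda$ is pushed to $\lambda+iy$, $y>0$ (equivalently, by Theorem~\ref{T: sign of nu} the eigenvalue $\lambda_1(s)=s$ is increasing at $s=\lambda$). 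Hence $\mu(f)=1$.

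\medskip
The main obstacle is the \emph{Homotopy} axiom. The point is that the total resonance index of an arbitrary path in $PC^1([a,b],H_a,H_b)$ is \emph{defined} as the total resonance index of a sufficiently close piecewise-linear approximant, so one must first check this definition is unambiguous and then that it is invariant under homotopies rel endpoints. For well-definedness and homotopy invariance I would argue as in \cite{RoSa}: given a homotopy $\set{f_u}_{u\in[0,1]}$ in $PC^1([a,b],H_a,H_b)$, by compactness of $[a,b]\times[0,1]$ and openness of the non-resonance condition (here one uses Lemma~\ref{L: VR} and Lemma~\ref{L: VRV}, which guarantee that $\lambda\notin\sigma(H)$ is an open condition on $H$ in the finite-dimensional subspace where everything lives) one can subdivide $[0,1]$ into finitely many steps $u_0=0<u_1<\dots<u_n=1$ so that consecutive $f_{u_{i}}$ and $f_{u_{i+1}}$ differ by less than the tolerance in Theorem~\ref{T: homotopy stability of res index}. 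That theorem, together with Theorems~\ref{T: total res. index for (H0,W)} and~\ref{T: total res. index for (H1,V)}, then yields equality of the total resonance indices at consecutive stages, and chaining these gives $\mu(f_0)=\mu(f_1)$. The same subdivision argument, applied to the ``homotopy'' that linearly interpolates a given path with two different piecewise-linear approximants, shows the definition does not depend on the choice of approximant. Combining the Homotopy axiom with the four elementary axioms completes the proof.

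\medskip
Once Theorem~\ref{T: Tot res index meets R-S axioms} is in hand, one invokes the Robbin--Salamon uniqueness theorem (Theorem~\ref{T: Robbin-Salamon uniqueness theorem}) to conclude that the total resonance index coincides with the spectral flow, which is the payoff advertised in the introduction. I do not expect any surprises beyond the careful compactness/subdivision argument for Homotopy; the rest is a direct transcription of the axiom checks using the stability results already proved.
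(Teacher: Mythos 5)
Your proof is correct and follows essentially the same route as the paper: you dispose of Constancy, Catenation, Direct Sum, and Normalisation by direct inspection of the definition of the total resonance index (the Normalisation calculation being word-for-word what the paper does), and you reduce the Homotopy axiom to Theorem~\ref{T: homotopy stability of res index} (itself resting on Theorems~\ref{T: total res. index for (H0,W)} and~\ref{T: total res. index for (H1,V)}) via a compactness-and-subdivision argument; the paper states this reduction more tersely but relies on the same ingredients. The only difference is that you spell out the verifications of the four elementary axioms, which the paper dismisses as trivial.
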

\begin{proof} All axioms except Homotopy Axiom are trivially satisfied by the total resonance index.
The Homotopy Axiom is satisfied by Theorem~\ref{T: homotopy stability of res index}.

The Normalisation Axiom is also trivially satisfied, but nevertheless we shall check it.
So, let $\hilb = \mbC,$ $a < \lambda < b,$ $H_s = s.$ Then $V = 1,$ 
and 
$$
  A_{\lambda+i y}(s) = R_{\lambda+i y}(H_s)V = \brs{s - \lambda - i y}^{-1}.
$$
For $y>0$ the only pole $s = \lambda + iy$ of this meromorphic function, which belongs 
to the group of the pole $s = \lambda$ of $A_{\lambda}(s),$ is situated 
in the upper half-plane, and therefore $\ind_{res}(\lambda; H_\lambda,V) = 1.$
\end{proof}

\begin{thm} \label{T: Robbin-Salamon uniqueness theorem}
A mapping~$\mu$ which satisfies Robbin-Salamon axioms exists and is unique.
\end{thm}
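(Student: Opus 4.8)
The plan is to prove Theorem~\ref{T: Robbin-Salamon uniqueness theorem} in two halves: existence and uniqueness. Existence is already in hand --- Theorem~\ref{T: Tot res index meets R-S axioms} exhibits the total resonance index as a mapping satisfying all five axioms (one might alternatively cite the spectral flow itself, but having the total resonance index available is cleaner in the present context). So the real content is uniqueness: any two mappings $\mu_1,\mu_2$ satisfying the five axioms must coincide on every path in $PC^1([a,b],H_a,H_b).$

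First I would reduce to the case of piecewise-linear paths lying in a finite-dimensional subspace of $\clA.$ By the Homotopy axiom it suffices to show $\mu_1$ and $\mu_2$ agree on one representative of each homotopy class, and by the definition of $PC^1$ every path is homotopic (rel endpoints, staying away from $\lambda$) to a piecewise-linear one in a finite-dimensional subspace --- this is a standard compactness-and-perturbation argument, the same one already invoked to make the total resonance index well-defined on $PC^1[a,b].$ Next, using the Catenation axiom I would cut such a piecewise-linear path at its vertices and at all points where an eigenvalue of the ambient finite-dimensional family crosses $\lambda,$ so that $\mu(f)$ becomes a finite sum of contributions of short linear segments $H_s = H_0 + sV,$ each of which contains exactly one simple resonance point in its interior (or none). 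Segments containing no resonance point have $\mu = 0$ by the Constancy axiom composed with Homotopy (they are homotopic rel endpoints to a constant path, since nothing obstructs the deformation). So everything comes down to computing $\mu$ on a single linear segment crossing $\lambda$ transversally at one simple eigenvalue.

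For such a segment the key step is a normal-form reduction: near the crossing, by the $2\times 2$ machinery of Section~\ref{S: 2x2} (or directly by analytic perturbation theory for an isolated simple eigenvalue), the path is homotopic rel endpoints --- via a homotopy keeping $\lambda$ out of the spectrum at the endpoints --- to the direct sum of a one-dimensional path $t \mapsto t$ on $\mbC$ (after an affine reparametrisation so the crossing is at $\lambda$, with $a<\lambda<b$) and a path on the orthogonal complement that never meets $\lambda.$ The Direct Sum axiom then gives $\mu(f) = \mu(\text{1-dim crossing}) + \mu(\text{complement}),$ the second term is $0$ by Constancy/Homotopy, and the first is $+1$ (or $-1$ for a crossing in the opposite direction, obtained from the model by the orientation-reversing reparametrisation, whose effect is pinned down again by Catenation: concatenating a crossing with its reverse gives a null-homotopic loop, forcing the two to be negatives of each other). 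Hence $\mu(f)$ is determined on every segment, and therefore on every path, by the five axioms alone, so $\mu_1 = \mu_2.$

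The main obstacle I anticipate is the normal-form reduction at a crossing: making precise that the local splitting into a one-dimensional model summand plus a harmless complementary summand can be realised by a homotopy \emph{within} $PC^1([a,b],H_a,H_b)$ --- i.e. through paths that keep $\lambda$ off the spectrum at $a$ and $b$ and stay in a finite-dimensional subspace. This requires care because the complementary summand's spectrum must be controlled uniformly along the homotopy; the simplicity of the eigenvalue and compactness of the segment are what make it go through, following the treatment in \cite{RoSa} essentially verbatim. Everything else (the cutting argument, the vanishing on resonance-free segments, the sign bookkeeping) is bookkeeping with the axioms.
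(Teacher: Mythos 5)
Your proposal is correct and recognisably Robbin--Salamon in spirit, but it takes a genuinely different route from the paper's at the decisive step. After reducing (by Homotopy and Catenation) to a short linear segment with a single simple transversal crossing, you try to \emph{split the path itself} into a direct sum of a one-dimensional model crossing on~$\mbC\chi$ and a resonance-free complementary path on~$\hat\hilb$, i.e. you homotope the off-diagonal blocks $sv,\,sv^*$ of $H_s$ to zero while keeping the endpoints non-resonant. The paper does the converse: it leaves $H_s$ alone, attaches an \emph{external} one-dimensional compensating path $B(s)=-s,$ and turns off-diagonal blocks \emph{on} between $H_s$ and the new coordinate, forming
$\tilde H_{s,t}=\left(\begin{smallmatrix} H_s & t\phi(s)\\ t\phi^*(s) & -s\end{smallmatrix}\right).$
A short algebraic argument shows $\tilde H_{s,t}$ is invertible for every $s$ once $t>0,$ so $H\oplus B=\tilde H_{\cdot,0}$ is null-homotopic; then the Direct Sum axiom gives $\mu(H)=-\mu(B),$ and $\mu(B)$ is pinned down by diagonalisation, Direct Sum and Normalisation. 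What each buys: the paper's ``capping'' construction avoids the control you correctly flag as the main obstacle --- invertibility of $\tilde H_{s,t}$ for $t>0$ holds for \emph{all} $s$ simultaneously by a one-line Schur-complement computation, whereas your splitting requires shrinking the segment so that $|sv^*\hat H_s^{-1}v|<|\aaa|$ at the endpoints uniformly along the homotopy. Your version, on the other hand, computes the local contribution $\pm 1$ of each crossing directly rather than through the detour $\mu(H)=-\mu(B),$ which is arguably more transparent geometrically. Both are valid; just note that ``cutting at the crossings'' via Catenation must be phrased as cutting at non-resonant points so that each resulting segment contains exactly one crossing in its interior, and that your vanishing argument for resonance-free segments uses Homotopy in the form that allows endpoints to move through non-resonant operators (which is also how the paper uses it).
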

\begin{proof} 
Original proof of this theorem from \cite{RoSa} was given for operators with compact resolvent,
but an inspection of that proof shows that with minor obvious changes it applies verbatim for this case too.
Nevertheless, for readers' convenience here we outline this proof. 

In \cite{RoSa} proof of the existence part is based on checking that the intersection number satisfies 
Robbin-Salamon axioms. Here for the existence part we can also refer 
to Theorem~\ref{T: Tot res index meets R-S axioms}.

For the uniqueness, in \cite{RoSa} it is shown that any putative spectral flow mapping coincides with the 
intersection number. Again, here we show that any putative spectral flow mapping coincides with total resonance
index. 

(A) For every path $H(s)$ from $PC^1([a,b],H_a,H_b)$ 
there exist an integer $n$ and a path $B(s)$ of self-adjoint $n \times n$
matrices such that $H \oplus B$ is homotopic to a constant path. 

Proof. Firstly, the homotopy axiom allows to replace the path $H(s)$ by a path with only simple crossings
(that is, resonance points with algebraic multiplicity $N=1$).  
Further, the catenation axiom allows to reduce the problem to the case where $H(s)$ has only one simple crossing. 

Without loss of generality we assume that $a=-1,$ $b=1,$ $\lambda = 0,$ 
and that the crossing point is $s = 0.$ 
Let~$\chi$ be the eigenvector of $H_{0},$ that is, $H_{0} \chi = 0.$ 
Let $B(s) = -s,$ $B \in PC^1([-1,1],\mbC),$ and let $\phi(s), s \in [-1,1],$ 
be an eigenpath of $H_s,$ that is, $H_s \phi(s) = 0.$
Let 
$$
  \tilde H_{s,t} = 
  \left(\begin{matrix}
    H_{s} & t \phi(s) \\
    t \phi^*(s) & - s 
  \end{matrix}\right)
$$
Then $\tilde H_{s,0} = H_s \oplus B(s)$ and for $t>0$ the operator $\tilde H_{s,t}$ is invertible
for all $s \in [-1,1].$ Indeed, assume the contrary. 
Then, since $\lambda = 0$ does not belong to the essential spectrum,
there exists a non-zero vector $\tilde f = f \oplus x \in \hilb \oplus \mbC$ such that $\tilde H_{s,t}\tilde f = 0.$
It follows that $t\scal{\phi(s)}{f} = s x,$ and $H_s f = - t x \phi(s).$ The latter equality implies that 
$$
  - tx \scal{\phi(s)}{\phi(s)} = \scal{H_s \phi(s)}{f} = 0,
$$
and hence, $x = 0.$ Combining this with the former equality gives $\phi(s) \perp f.$ Also, $H_s f = 0,$
and therefore, since~$\lambda$ is a simple eigenvalue,~$f$ is co-linear with $\phi(s).$ Hence, $f = 0.$


So, the path $H_s \oplus B(s)$ is homotopic to a path $K_s$ without resonance points. 
Such a path can be continuously deformed to a constant path.

(B)
Let $\tilde \mu$ be a putative spectral flow mapping. A piecewise linear path of self-adjoint matrices
$B(s), s \in [a,b],$ is homotopic to a path of diagonal matrices.
Hence, by the homotopy, direct sum and normalisation
axioms both $\tilde \mu$ and the total resonance index of the path $B(s)$ are equal to 
$$
  \frac 12 \sign B(b) - \frac 12 \sign B(a).
$$
Now let $H(s)$ be any curve from $PC^1([a,b],H_a,H_b)$ 
and choose $B(s)$ as in part (A). Then it follows from homotopy and constant axioms 
that $\tilde \mu(H \oplus B) = 0.$ 
Hence, by the direct sum axiom, 
$$
  \tilde \mu(H) = - \tilde \mu(B) = - \mathrm{TRI}(B)= \mathrm{TRI}(H).
$$
\end{proof}

The proof in \cite{RoSa} does not use the catenation axiom, which therefore follows from the other four axioms.
In the proof above we used the catenation axiom for simplicity, though it is not necessary. 

Since spectral flow is deemed to be characterised by Robbin-Salamon axioms,
Theorem~\ref{T: Tot res index meets R-S axioms} shows that 
total resonance index and spectral flow are identical notions. 
Nevertheless, in subsection~\ref{SS: res ind and Fred ind} we give 
a direct proof of the equality
\begin{center}
``total resonance index = total Fredholm index''. 
\end{center}

\subsection{Geometric properties of the resonance set}

In this subsection we give proofs of some well-known geometric properties of the resonance set,
with the aim to provide an intuitive interpretation of spectral flow in terms of resonance set. 
\begin{thm} \label{T: codim = 1} Assume Assumption~\ref{A: Main Assumption}.
The resonance set $\Rset$ has co-dimension~1.
\end{thm}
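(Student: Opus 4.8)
The plan is to reduce the statement to a local computation near an arbitrary point of $\Rset$ and to read off the codimension from a Schur complement (Feshbach reduction). Fix $H_{\rlmb}\in\Rset$ and decompose $\hilb=\hat\hilb\oplus\clV_\lambda$ as in Section~\ref{S: 2x2}, where $\clV_\lambda$ is the $\lambda$-eigenspace of $H_{\rlmb}$, of dimension $m$; since $\lambda\notin\sigma_{ess}$, the number $\lambda$ is an isolated eigenvalue of finite multiplicity, so $\hat H_{\rlmb}-\lambda$ has bounded inverse on $\hat\hilb$. For a finite-dimensional subspace $\clL_0\subseteq\clA_0$ and $W\in\clL_0$ of small norm, the top-left block $\hat H_{\rlmb}-\lambda+\hat P W\hat P$ of the $2\times2$ representation of $H_{\rlmb}+W-\lambda$ is invertible, so by Lemma~\ref{L: inverse of block matrix} the number $\lambda$ is an eigenvalue of $H_{\rlmb}+W$ if and only if the $m\times m$ matrix
\[
  M(W)=\hat P^\perp W\hat P^\perp-\hat P^\perp W\hat P\,R_\lambda\brs{\hat H_{\rlmb}+\hat P W\hat P}\hat P W\hat P^\perp
\]
on $\clV_\lambda$ is singular. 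Thus, in the finite-dimensional slice $\clL=H_{\rlmb}+\clL_0$, the resonance set $\Rset$ coincides, in a neighbourhood of $H_{\rlmb}$, with the zero set of the single real-analytic function $f(W)=\det M(W)$, and $f(0)=0$ because $M(0)=0$.

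Next I would establish the two inequalities. For $\mathrm{codim}\,\Rset\le 1$ I would invoke Theorem~\ref{T: intersection is a curve}: choosing a regular direction $V$ at $H_{\rlmb}$ and $W=\scal{\chi}{\cdot}\chi$ with $\chi$ an eigenvector in $\clV_\lambda$, the two-dimensional plane $\alpha=H_{\rlmb}+\mbR V+\mbR W$ meets $\Rset$, in a deleted neighbourhood of $H_{\rlmb}$, in a set containing a one-dimensional analytic curve, so $\Rset$ has codimension at most $1$ in $\alpha$; hence its codimension cannot exceed $1$. (Equivalently one reads this off the local model: the locus in $\clL$ where $\lambda$ has multiplicity $\ge2$ is a proper analytic subset of $\Rset\cap\clL$, and at a point where $\lambda$ is simple the differential of the scalar Schur complement is the functional $W\mapsto\scal{\chi}{W\chi}$, which is nonzero once a simple direction is present, so the implicit function theorem exhibits $\Rset$ there as an analytic hypersurface; the multiplicity is reduced to $1$ by the eigenvalue-splitting perturbation $G_t=H_{\rlmb}+tW$ of the proof of Theorem~\ref{T: chi(0) is exactly that}.) For $\mathrm{codim}\,\Rset\ge 1$ I would show that $\Rset$ has empty interior in every slice it meets properly, i.e. $f\not\equiv0$ near $0$: expanding $M(W)=\hat P^\perp W\hat P^\perp+O(\norm{W}^2)$, the degree-$m$ homogeneous part of $f$ equals $\det\brs{\hat P^\perp W\hat P^\perp}$, so $f\not\equiv0$ as soon as some admissible direction has invertible compression to $\clV_\lambda$; if not, one replaces $\clL_0$ by a larger finite-dimensional subspace containing a regular direction $V$ at $H_{\rlmb}$, and along the regular line $H_{\rlmb}+sV$ the resonance points are discrete by definition of a regular direction, so $f$ is not identically zero there. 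Combining the two bounds yields $\mathrm{codim}\,\Rset=1$.

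The genuinely delicate point is the lower bound — excluding the possibility that $\Rset$ locally fills a finite-dimensional slice — which is precisely the non-degeneracy expressed by the presence of a regular direction at $H_{\rlmb}$ (automatic in the classical setting of operators with compact resolvent and a rich space of bounded perturbations, and tacitly assumed when one speaks of the codimension of $\Rset$). The determinant expansion above pinpoints where this non-degeneracy enters: either the crossing operator $\aaa=\hat P^\perp W\hat P^\perp$ is invertible on $\clV_\lambda$ for some admissible $W$, or, in the degenerate case, the quadratic off-diagonal term $\hat P^\perp W\hat P\,R_\lambda(\cdot)\,\hat P W\hat P^\perp$ together with discreteness of resonance points on a regular line saves the argument. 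Everything else is routine: analyticity of $W\mapsto R_\lambda(\hat H_{\rlmb}+\hat P W\hat P)$ for small $W$, Lemma~\ref{L: inverse of block matrix}, and Theorem~\ref{T: intersection is a curve}.
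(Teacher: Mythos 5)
Your approach is genuinely different from the paper's. The paper argues by contradiction: if $\mathrm{codim}\,\Rset\ge 2$, a two-dimensional plane meets $\Rset$ only at $H_{0}$; every direction on a small circle around $H_{0}$ in that plane is then simple, so by Theorem~\ref{T: stability of res index for order 1 V} the resonance index is constant along the circle, contradicting $\ind_{res}(\lambda;H_{0},-V)=-\ind_{res}(\lambda;H_{0},V)$. You instead write $\Rset$ locally as the zero set of the scalar $f(W)=\det M(W)$ coming from a Feshbach/Schur reduction via Lemma~\ref{L: inverse of block matrix}, and analyse this real-analytic function. Your method is closer to what one would do in a finite-dimensional matrix model, while the paper's is a short topological argument that trades the local computation for homotopy invariance of the resonance index. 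Your $\mathrm{codim}\ge1$ step is fine (a regular direction gives $f(sV)\ne0$ for small $s\ne0$, so $f\not\equiv0$; that a regular direction exists is also tacitly assumed in the paper), and the identification of the degree-$m$ part of $f$ as $\det(\hat P^\perp W\hat P^\perp)$ is correct.

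The genuine gap is in your $\mathrm{codim}\le1$ step. Producing the curve $\gamma_\chi$ in the particular plane $\alpha$ (Theorem~\ref{T: intersection is a curve}) shows $\Rset$ is not discrete near $H_{\rlmb}$, but it does not bound the codimension from above: a codimension-two set can meet a specially placed two-plane in a curve, and, more to the point, the zero set of a single real-analytic scalar can have codimension $\ge 2$ (consider $f(x,y)=x^2+y^2$). Since $M(W)$ is a real symmetric $m\times m$ matrix, $\det M$ need not change sign, and you must actually show that $f$ takes both signs near $W=0$ — equivalently, that $\{f=0\}$ separates a neighbourhood of $H_{\rlmb}$ — to conclude it is a hypersurface. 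Your implicit-function-theorem argument establishes this at simple resonance points ($m=1$), where the differential $W\mapsto\scal{\chi}{W\chi}$ is nonzero by regularity, but the reduction from multiplicity $m$ to multiplicity $1$ via $G_t=H_{\rlmb}+tW$ is not carried out: $G_t$ is still $\lambda$-resonant of multiplicity $m-1$, so at best this is an induction, and one still needs a separate semicontinuity argument to transport the codimension-one conclusion from the nearby simple points of $\gamma_\chi$ back to $H_{\rlmb}$ itself. This is exactly the step the paper's sign-flip argument replaces in one stroke, and it is not a routine omission — it is where the proof lives.
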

\begin{proof} 
If codimension of $\Rset$ is $\geq 2,$ then there exists a two-dimensional affine plane
in~$\clA$ which intersects the resonance set transversally at a $\lambda$-resonant point~$H_{0}.$
A point $H_{0}+V$ on a small circle neighbourhood of~$H_{0}$ in this plane can be deformed
to $H_{0}-V$ along the circle.
By Theorem~\ref{T: stability of res index for order 1 V},
all deformations are simple and have constant resonance index.
This contradicts the equality 
$
 \ind_{res}(\lambda; H_{0},-V) = -\ind_{res}(\lambda; H_{0},V).
$
\end{proof}
At the same time, for~$\lambda$ inside essential spectrum the spectral flow
is not path-independent, see \cite[\S 8.3]{Az3v6}. This indicates that
the resonance set may have co-dimensions greater than~1 for~$\lambda$ inside essential spectrum.

Theorem~\ref{T: codim = 1} implies that the intersection of the resonance set~$\Rset$ with
any $k$-dimensional affine space which passes through a resonance point~$H_{0}$ and which 
is parallel to a simple direction~$V$ has dimension~$k-1.$

\smallskip

According to Theorem~\ref{T: codim = 1},
the resonance set $\Rset$ divides a small enough neighbourhood of any simple resonance point~$H_{0}$
into two parts. The operators in one of those parts have an eigenvalue slightly larger than~$\lambda,$
the operators in the other part have an eigenvalue slightly smaller than~$\lambda.$
We shall call these parts positive and negative. 
Resonance hyper-surfaces divide a small enough neighbourhood of a resonant operator~$H_{0}$
into several parts, which will be called \emph{cells}.
If~$V$ is a regular direction at a resonance point $H_{0},$
then it belongs to one and only one of those cells, by which we mean 
$\exists \eps>0\,\forall s \in (0,\eps)$ the operator $H_{0} + s V$
belongs to the cell.







\begin{thm} If a plane section of the resonance set consists of only simple 
curves then the number of curves in a neighbourhood of $H_{0}$ is not greater 
than the geometric multiplicity~$m.$
\end{thm}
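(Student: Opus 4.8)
The plan is to push the question down to the $m$-dimensional eigenspace $\clV_\lambda$ of $H_0$ and then count analytic branches of a single real-analytic plane curve. Write the given plane as $\alpha=H_0+\mbR V+\mbR W$ with $V,W\in\clA_0$ and parametrise its points by $H(s,t)=H_0+sV+tW$ for $(s,t)$ near $(0,0)$. Using the orthogonal splitting $\hilb=\hat\hilb\oplus\clV_\lambda$ and the Schur-complement formula of Lemma~\ref{L: inverse of block matrix} (the two-parameter analogue of the computation that produced $\euD_\lambda(s)$ in (\ref{F: euDz})), one obtains, in a neighbourhood of the origin, an analytic family $M(s,t)$ of self-adjoint operators on $\clV_\lambda$ for which $\lambda$ is an eigenvalue of $H(s,t)$ exactly when $\det M(s,t)=0$, with $M(0,0)=0$ (because $\clV_\lambda$ is the \emph{full} $\lambda$-eigenspace of $H_0$) and linear part $s\aaa_V+t\aaa_W$, where $\aaa_V=\hat P^\perp V\hat P^\perp$ and $\aaa_W=\hat P^\perp W\hat P^\perp$ are the crossing operators. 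Put $F(s,t)=\det M(s,t)$, a real-analytic function whose zero set is $\alpha\cap\euR(\lambda)$ near $H_0$; after shrinking the neighbourhood the finitely many simple curves of the section all pass through $H_0$, and since $M$ is an $m\times m$ matrix vanishing at the origin, $F$ vanishes there to order $\ge m$.

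The general counting step is Weierstrass preparation: as $\{F=0\}$ is a curve, $F$ vanishes identically on no line through the origin, so after a linear change of $(s,t)$ we may take $F(0,t)$ to have a zero of order $\mathrm{ord}_0F$ at $t=0$; then $F=U\cdot P$ with $U$ a unit and $P$ a Weierstrass polynomial in $t$ of degree $\mathrm{ord}_0F$, which factors into at most $\mathrm{ord}_0F$ irreducible germs over the convergent Puiseux series, each of which carries at most one embedded non-cuspidal real branch. Hence the number $k$ of curves of the section satisfies $k\le\mathrm{ord}_0F$. In the non-degenerate case $\det(s\aaa_V+t\aaa_W)\not\equiv0$ the leading homogeneous form of $F$ is precisely this degree-$m$ polynomial, so $\mathrm{ord}_0F=m$ and $k\le m$ follows at once.

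The remaining case, which I expect to be the main obstacle, is the degenerate one $\det(s\aaa_V+t\aaa_W)\equiv0$ — equivalently, $\alpha$ is tangent to the hypersurface $\euR(\lambda)$ (of codimension $1$ by Theorem~\ref{T: codim = 1}) at $H_0$ — where $\mathrm{ord}_0F>m$ and the bound above is too weak. Here I would invoke the hypothesis that the section \emph{still} consists of simple curves: each curve $\gamma_i$ is a resonant path, hence by \cite{Kato} carries an analytic resonant eigenpath $\chi_i(u)$ with $\chi_i(0)\in\clV_\lambda\setminus\{0\}$, and the goal becomes to show $\chi_1(0),\dots,\chi_k(0)$ are linearly independent in $\clV_\lambda$, which forces $k\le\dim\clV_\lambda=m$. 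This reduces to excluding two distinct simple curves through $H_0$ with proportional initial eigenvectors, say both equal to $\chi$; in that event one takes $W'=\scal{\chi}{\cdot}\chi\ge0$, passes to an auxiliary plane $H_0+\mbR V'+\mbR W'$, and uses the uniqueness of the curve $\gamma_\chi$ furnished by Theorem~\ref{T: intersection is a curve} together with the monotonicity of eigenvalues under the non-negative perturbation $W'$ (the mechanism behind simplicity in that theorem) to conclude that the two arcs coincide. Making this comparison precise when $\alpha$ is not itself of the form used in Theorem~\ref{T: intersection is a curve}, and when the two arcs have different tangent lines at $H_0$, is the technical heart of the argument; an alternative is a Newton-polygon analysis of $F$ showing that the surplus vanishing order caused by $\det(s\aaa_V+t\aaa_W)\equiv0$ cannot be accompanied by additional embedded non-cuspidal real branches.
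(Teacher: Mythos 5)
Your approach via Schur complement, the determinant function $F(s,t)=\det M(s,t)$, and Weierstrass preparation is genuinely different from the paper's, and it works cleanly in the non-degenerate case where $\det(s\aaa_V+t\aaa_W)\not\equiv 0$ gives $\mathrm{ord}_0 F=m$. But as you yourself flag, the degenerate (tangent) case $\det(s\aaa_V+t\aaa_W)\equiv 0$ is not resolved: there the bound $k\le\mathrm{ord}_0 F$ is vacuous since $\mathrm{ord}_0 F>m$, and your fallback --- that the initial eigenvectors $\chi_1(0),\dots,\chi_k(0)$ of the $k$ resonant eigenpaths are linearly independent in $\clV_\lambda$ --- is stated as a goal, not established. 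Theorem~\ref{T: intersection is a curve} gives uniqueness only for the very special plane $H_0+\mbR V+\mbR W$ with $W=\scal{\chi}{\cdot}\chi$, and it does not directly forbid two distinct arcs in an \emph{arbitrary} plane $\alpha$ from sharing a limiting eigenvector; the monotonicity mechanism there relies on $W\ge 0$, which a general $\alpha$ does not provide. So the ``technical heart'' you identify is a genuine gap, not merely a detail to be filled in.

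The paper sidesteps all of this with a topological argument that never opens up the local analytic structure: each of the $M$ simple curves separates the plane locally into a positive and a negative side; if $M>m$, one picks a point negative for all curves and a point positive for all curves, joins them by a path that crosses each curve once (giving total resonance index $>m$), then joins them by a second path through $H_0$ alone, and invokes homotopy stability of the total resonance index (Theorem~\ref{T: homotopy stability of res index}) to transfer the count to the second path --- contradicting the U-turn inequality $\abs{\ind_{res}(\lambda; H_0,V)}\le m$ of (\ref{F: U-turn inequality}). This uses machinery already built (homotopy stability, U-turn bound) and is insensitive to tangency and branch multiplicity, which is exactly where your approach gets stuck. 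If you want to salvage your route, the Newton-polygon alternative you mention is more promising than the eigenvector-independence reduction, but it still requires an argument that excess vanishing order of $F$ produced by $\det(s\aaa_V+t\aaa_W)\equiv 0$ cannot contribute additional real non-cuspidal branches, and that is not obviously true without invoking something like the U-turn inequality anyway.
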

\begin{proof} Each curve divides the plane into two parts: positive and negative.
If there are $M>m$ curves then some points of the plane section will be positive for all~$M$
curves and resonance negative for none, and some points 
will be negative for all~$M$ curves and positive for none.
The total resonance index of a continuous piece-wise linear path
starting from one of the latter points to one of the former points will be greater than~$m.$
Those two points can also be connected by a continuous piece-wise linear path which has only 
one resonance point~$H_{0}.$ By homotopy stability of the total resonance index, this path 
will have total resonance index greater than~$m$ too.
This contradicts to the U-turn inequality (\ref{F: U-turn inequality}). 
\end{proof}

We say that a plane section of the resonance set is \emph{simple}, if the section 
does not have non-simple resonance curves.

\begin{cor} In any simple plane section of the resonance set there are no more 
than~$2m$ resonance cells in a neighbourhood of a resonance point~$H_{0},$ where~$m$ is the geometric 
multiplicity of~$H_{0}.$
\end{cor}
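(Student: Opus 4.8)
The plan is to read the corollary off the theorem just proved, which already bounds by $m$ the number of simple resonance curves near $H_0$, together with the elementary fact that $k$ simple arcs through a point cut a small disc into exactly $2k$ sectors.

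First I would fix a convex neighbourhood $O$ of $H_0$ in the plane section, small enough that $\euR(\lambda)$ meets $O$ in a finite union of simple analytic arcs, each passing through $H_0$, with distinct arcs meeting only at $H_0$. That such an $O$ exists is the content of the local picture built up earlier: by Theorem~\ref{T: intersection is a curve} the plane meets $\euR(\lambda)$ near $H_0$ in analytic curves, and by Theorem~\ref{T: depth of phi[nu] is d[nu]-1} these curves are the zero loci $\lambda_\nu(\cdot)=\lambda$ of the $m$ eigenvalue functions, each of which takes the value $\lambda$ at $H_0$; since distinct analytic arcs intersect in isolated points, shrinking $O$ removes all crossings other than the one at $H_0$. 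By the theorem just proved (applied to the simple plane section), the number $k$ of these arcs is at most $m$.

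Next comes the combinatorial count. Each of the $k$ arcs, being simple, analytic and passing through the centre $H_0$ of the disc $O$, crosses $\partial O$ transversally in exactly two points, so the arcs cut $\partial O$ into $2k$ circular arcs. I would then observe that assigning to each cell the portion of $\partial O$ lying on its boundary is a bijection onto these $2k$ circular arcs: a single cell cannot meet $\partial O$ in two disjoint circular arcs, because every resonance arc runs through the centre and the centre therefore separates any such putative pair. Hence the number of cells equals $2k$, and $2k \le 2m$, which is the assertion.

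The only genuine obstacle is the first step — establishing cleanly that a sufficiently small plane neighbourhood of $H_0$ contains no resonance curve other than the finitely many simple arcs issuing from $H_0$, and that these meet pairwise only at $H_0$. Once this local normal form is granted, the count is immediate. In keeping with the informal level of this subsection, which argues about ``cells'' and ``neighbourhoods'' on the strength of Theorem~\ref{T: codim = 1} and the theorem preceding this corollary, I would keep the justification of the local picture brief, citing those results together with the splitting analysis of Section~\ref{S: res points as f-ns of s}.
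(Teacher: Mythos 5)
Your argument is correct and is the natural one: the paper gives no explicit proof for this corollary, treating it as an immediate consequence of the preceding theorem (at most $m$ simple curves near $H_0$) together with the elementary fact that $k$ simple arcs through the centre of a disc, pairwise meeting only at the centre, cut the disc into exactly $2k$ sectors. Your acknowledgement that the local normal form (arcs issuing from $H_0$, meeting pairwise only there) is what needs justification is the right thing to flag, and citing the local structure of analytic sets together with the results of Sections~\ref{S: char-n order k dir-ns} and~\ref{S: res points as f-ns of s} is consistent with the level of rigour the paper uses in this subsection.
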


\begin{thm} The resonance set does not have cusps. 
\end{thm}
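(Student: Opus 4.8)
The plan is to show that a cusp on the resonance set would force the local intersection-number behaviour to violate the U-turn inequality, exactly in the spirit of the preceding theorem on the number of curves in a plane section. First I would make precise what "cusp" means in our setting: a cusp would be a point $H_0 \in \euR(\lambda)$ such that in some two-dimensional affine plane section $\alpha$ through $H_0$, the section $\alpha \cap \euR(\lambda)$ near $H_0$ looks like the curve $t \mapsto (t^2, t^3)$ (in suitable affine coordinates), i.e. a branch which approaches $H_0$ from one side only and along which the two "halves" of the curve lie on the same side of the tangent line. The key structural input is Theorem~\ref{T: intersection is a curve}: if $W = \scal{\chi}{\cdot}\chi$ for an eigenvector $\chi$ of $H_0$, then in the plane $\alpha = H_0 + \mbR V + \mbR W$ (with $V$ a regular direction) the intersection with the resonance set near $H_0$ is a single simple analytic curve $\gamma_\chi$, and moreover the proof of that theorem shows that along this curve the relevant perturbed eigenvalue can only move in the positive $W$-direction as the curve parameter increases through $H_0$.

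The main step is then to argue that an analytic simple curve cannot have a cusp at $H_0$ while being consistent with the monotonicity property extracted from Theorem~\ref{T: intersection is a curve}. A cusp would mean that a parametrisation $H(s)$ of $\gamma_\chi$ has $H'(\rlmb) = 0$ but $H''(\rlmb) \neq 0$ (after reducing to a standard form), and that the leading behaviour of $H(s) - H_0$ in the $W$-direction is of even order in $(s - \rlmb)$ while the behaviour in the $V$-direction is of odd order, so that the curve returns to the same side. I would combine this with Theorem~\ref{T: codim = 1}: the resonance set has codimension~$1$, so it locally separates a neighbourhood of $H_0$ into a positive cell and a negative cell, determined by the sign of the perturbed eigenvalue relative to $\lambda$. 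A cusp would produce a point arbitrarily close to $H_0$ in $\alpha$ which lies in the positive cell with respect to the branch on one side of the cusp and simultaneously in the positive cell with respect to the branch on the other side, yet no point on the "concave" side is separated from $H_0$ by the curve at all. Following the argument in the proof of the theorem on the number of curves in a simple plane section, one connects a point deep in the convex region to a point deep in the concave region by a continuous piecewise-linear path that crosses the resonance set once near $H_0$; the total resonance index of this path, computed via the intersection number (Theorem~\ref{T: intersection number = TRI}), would exceed the local geometric multiplicity, contradicting the U-turn inequality (\ref{F: U-turn inequality}).

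More directly, I expect the cleanest route is: reduce to a plane section $\alpha$ as above so that $\alpha \cap \euR(\lambda)$ is a single simple analytic curve $\gamma$; choose the transversal direction $W = \scal{\chi}{\cdot}\chi$ to be nonnegative; recall from the proof of Theorem~\ref{T: intersection is a curve} that along $\gamma$ the distinguished eigenvalue of $H_0 + sW + \eps V|_{\gamma}$ is monotone in the $W$-coordinate. An analytic curve with a genuine cusp would, by the Puiseux/Taylor expansion of its parametrisation, fail this monotonicity: near the cusp point both branches would have the $W$-coordinate increasing (or both decreasing) while the curve is not graph-like over the $V$-axis, forcing the separating hyper-surface to fold back and hence two distinct resonance points of the line $H_0 + sV + \eps W$ to coincide in the wrong way, contradicting that $V$ is a regular (indeed simple) direction and the local analyticity of resonance points established in Theorem~\ref{T: r(z) has order 1}.

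The hard part will be pinning down the precise definition of "cusp" so that the argument is not circular — one wants a statement strong enough to exclude the pathological return-to-the-same-side behaviour but weak enough to actually hold — and then translating the purely geometric cusp condition into the analytic language of resonance points and perturbed eigenvalues so that Theorem~\ref{T: intersection is a curve}, the codimension-one property (Theorem~\ref{T: codim = 1}), and the U-turn inequality (\ref{F: U-turn inequality}) can all be brought to bear. Once the cusp is expressed via a parametrisation $H(s) = H_0 + (s - \rlmb)^a U + (s - \rlmb)^b U' + \dots$ with $U, U'$ independent and $a$ even, $b$ odd, the contradiction with the monotone motion of the eigenvalue in the $W$-direction should be a short computation, so I expect the genuine obstacle to be the setup and definitions rather than any estimate.
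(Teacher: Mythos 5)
Your proposal takes a genuinely different route from the paper's argument, and as written it has gaps that I don't think are just a matter of filling in details.

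The paper's proof is much more direct and relies on a different mechanism: at the vertex of a cusp one can pick a simple (transversal) direction~$V$ and continuously deform it to $-V$ \emph{within the set of simple directions}, because a cusp is a one-sided singularity and leaves an open half-neighbourhood of directions untouched by the resonance set. By Theorem~\ref{T: stability of res index for order 1 V} the resonance index is locally constant on simple directions, so this deformation forces $\ind_{res}(\lambda;H_0,V)=\ind_{res}(\lambda;H_0,-V)$, which contradicts the trivial sign-flip identity $\ind_{res}(\lambda;H_0,-V)=-\ind_{res}(\lambda;H_0,V)$. Nothing in your plan exploits this deformation-to-$-V$ idea, which is the actual engine of the argument.

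The two ideas you do propose both have specific difficulties. Your route via the U-turn inequality~(\ref{F: U-turn inequality}) does not obviously produce a contradiction for a cusp: a cusp does not add extra branches to a plane section (that is the content of the preceding theorem about the number of curves being at most $m$); it only makes a single branch singular, and a path crossing both arms of the cusp contributes two crossings whose intersection-number contributions can cancel, so the bound $\abs{\ind_{res}} \le m$ need not be violated. Your route via the monotonicity extracted from Theorem~\ref{T: intersection is a curve} is constrained to the very special planes $\alpha = H_0 + \mbR V + \mbR W$ with $W = \scal{\chi}{\cdot}\chi$; that theorem says nothing directly about an arbitrary two-dimensional affine section in which a putative cusp might live, so knowing that the $W$-plane section is a nice graph does not by itself rule out a cusp in some other plane. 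You candidly flag the definitional problem of what ``cusp'' should mean, and that concern is legitimate (the paper itself is informal here), but the paper's proof sidesteps it by working with the direction space at $H_0$ rather than with a parametrisation of the curve: the essential geometric content of ``cusp'' that it uses is precisely that $V$ and $-V$ lie in the same connected component of simple directions, and that is what you would want to formalise.
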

\begin{proof} If a cusp exists then at a vertex of a cusp 
there exists a simple direction~$V$ which can be continuously deformed to the direction 
$-V$ in the set of simple directions. This implies that resonance indices of~$V$ and $-V$
are equal, which is false. 
\end{proof}

\section{Resonance index and Fredholm index}
\label{S: res index and Fredholm index}
In this section we consider relationship of the total resonance index with a traditional 
definition of spectral flow, the total Fredholm index.

\subsection{Resonance matrix as direction reduction}
\label{SS: res matrix as dir-n reduction} 

In this subsection for convenience we denote a resonance point by~$H_0,$
instead of usual~$H_{r_\lambda}.$ 

Given a~$\lambda$-resonant operator~$H_0$ and a regular direction~$V,$
to a triple $(\lambda; H_0,V)$ we can assign a finite-rank
self-adjoint operator $VP_\lambda.$ Results of this subsection demonstrate
that it is the operator $VP_\lambda$ which is responsible for spectral flow generated by 
the direction~$V.$

\begin{thm} \label{T: VP is regular}
If~$V$ is a regular direction at a resonance point~$H_{0},$
then the direction~$V P_\lambda$ is also regular.
\end{thm}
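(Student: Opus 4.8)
The statement to prove is Theorem~\ref{T: VP is regular}: if $V$ is regular at the resonance point $H_0$ (with $r_\lambda = 0$ in this section), then $VP_\lambda$ is also regular. Recall that a direction $W$ is regular at $H_0$ exactly when the straight line $H_0 + sW$ is a regular path, i.e.\ has only isolated resonance points; equivalently, by the Lemma following~(\ref{F: euD lambda(r)}), when the crossing-type matrix $\aaa_W + (\rlmb - s)\, v_W^* R_\lambda(\hat H_{0,s}) v_W$ is invertible for some real $s$, where now the decomposition $\hilb = \hat\hilb \oplus \clV_\lambda$ is taken with respect to $H_0$ and $\hat H_{0,s}$ is built from $W$. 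The cleanest route is to compute the $2\times2$ block structure of $VP_\lambda$ relative to this same decomposition and compare it with that of $V$. From formula~(\ref{F: A(0)=2x2}), $P_\lambda$ has $(2,2)$-entry equal to $1$ and $(1,2)$-entry equal to $0$; writing $P_\lambda = \left(\begin{smallmatrix} \pi & 0 \\ \rho & 1\end{smallmatrix}\right)$ and $V = \left(\begin{smallmatrix}\hat V & v \\ v^* & \aaa\end{smallmatrix}\right)$, one gets
$$
  VP_\lambda = \left(\begin{matrix} \hat V \pi + v\rho & v \\ v^*\pi + \aaa\rho & \aaa \end{matrix}\right).
$$
So the off-diagonal block $\hat P (VP_\lambda)\hat P^\perp$ is still $v$ and the corner block $\hat P^\perp (VP_\lambda)\hat P^\perp$ is still $\aaa$: the ``$v$'' and ``$\aaa$'' of $VP_\lambda$ coincide with those of $V$. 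This is the key structural fact and the first step I would write down.

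\textbf{Second step.} Given that the corner and off-diagonal data agree, I would appeal to the criterion of regularity in terms of the meromorphic function $\euD_\lambda(s)$ — but now for the direction $VP_\lambda$. The subtlety is that $VP_\lambda$ has a different diagonal block $\hat W := \hat V\pi + v\rho$ in $\hat\hilb$, so $\hat H_{0,s}$ changes, and hence $R_\lambda(\hat H_{0,s})$ changes. However, what Theorem~\ref{T: R(H0+sVP)VP=R(H0+sV)VP} (stated later in the excerpt, which I am allowed to assume) asserts is precisely $R_\lambda(H_0 + sVP_\lambda)VP_\lambda = R_\lambda(H_0+sV)VP_\lambda$. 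This means $A_\lambda(s; H_0, VP_\lambda) = A_\lambda(s; H_0, V)P_\lambda$ (up to the bookkeeping of how $P_\lambda$ sits in things), so the pole structure of the meromorphic function $s \mapsto R_\lambda(H_0+sVP_\lambda)VP_\lambda$ is controlled by that of $s \mapsto R_\lambda(H_0+sV)V$, which is meromorphic (not identically singular) precisely because $V$ is regular. Thus $R_\lambda(H_0+sVP_\lambda)VP_\lambda$ is a genuine meromorphic function of $s$, which is exactly the statement that $s=0$ is an isolated resonance point of $H_0 + sVP_\lambda$, i.e.\ $VP_\lambda$ is regular.

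\textbf{Alternative / safeguard.} If invoking Theorem~\ref{T: R(H0+sVP)VP=R(H0+sV)VP} feels circular (it is proved later, and one should check it does not itself depend on this theorem), I would instead argue directly: regularity of $V$ gives, by Corollary~\ref{C: criterion of regularity for V}, that for every nonzero eigenvector $\chi \in \clV_\lambda$ at least one of $\aaa\chi$, $v\chi$ is nonzero; since $VP_\lambda$ has the same $\aaa$ and the same $v$, the same necessary condition holds for $VP_\lambda$. But as the text warns, this is only necessary. For sufficiency I would use the block-inverse Lemma~\ref{L: inverse of block matrix} applied to $H_0 + sVP_\lambda - \lambda$: invertibility of that operator off a discrete set is equivalent to invertibility of $\aaa + (\rlmb-s)\,v^* R_\lambda(\hat H_{0,s}^{(W)})\,v$ off a discrete set, where $\hat H_{0,s}^{(W)} = \hat H_0 + s\hat W$. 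The function $s \mapsto v^* R_\lambda(\hat H_{0,s}^{(W)}) v$ is meromorphic in $s$ as long as $\hat H_{0,s}^{(W)} - \lambda$ is invertible for some $s$, which holds because $\lambda$ is an isolated point of the spectrum of $\hat H_0$ and compact perturbations move it only a little; hence $\aaa + (\rlmb-s)v^*R_\lambda(\hat H_{0,s}^{(W)})v$ is a non-trivial meromorphic matrix-valued function and so invertible except at a discrete set — unless it is \emph{identically} singular. Ruling out the identically-singular case is the one place that needs the hypothesis that $V$ (not just its corner data) is regular, and this is the main obstacle: I expect to close it by showing an identically-singular $\aaa + (\rlmb-s)v^*R_\lambda(\hat H_{0,s}^{(W)})v$ would force, via the same block computation read backwards together with the equality of corner data, that $\aaa + (\rlmb-s)v^*R_\lambda(\hat H_{0,s})v$ is identically singular too, contradicting regularity of $V$. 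In the write-up I would prefer the short argument via Theorem~\ref{T: R(H0+sVP)VP=R(H0+sV)VP} and only fall back to this direct computation if the dependency order forbids it.
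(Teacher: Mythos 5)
Your route has a genuine circularity. Theorem~\ref{T: R(H0+sVP)VP=R(H0+sV)VP}, which you want to invoke, already presupposes---in its very statement and in the first line of its proof, where the second resolvent identity is applied to the pair $H_0+sV$ and $H_0+sVP_\lambda$---that $R_\lambda(H_0+sVP_\lambda)$ exists for some $s$. That is precisely regularity of $VP_\lambda$, the thing to be proved. Your fallback is explicitly left open (``this is the main obstacle: I expect to close it by\ldots''), and ``reading the block computation backwards'' does not obviously close it: the regularity criterion for $VP_\lambda$ involves the resolvent $R_\lambda(\hat H_0 + s\hat W)$ with $\hat W = \hat V\pi + v\rho$, not $R_\lambda(\hat H_0+s\hat V)$, and matching the corner and off-diagonal blocks only recovers the \emph{necessary} condition of Corollary~\ref{C: criterion of regularity for V}, which the paper explicitly flags as insufficient. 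So neither branch of your argument actually lands.

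The paper's own proof is short, independent of section~\ref{S: res index and Fredholm index}'s later material, and avoids the $2\times 2$ decomposition entirely. Assume the contrary: for every $s$ there is a nonzero $f(s)$ with $(H_0+sVP_\lambda)f(s)=\lambda f(s)$. Writing this as $(H_0-\lambda)f(s)=-sVP_\lambda f(s)$ shows that $VP_\lambda f(s)\perp\clV_\lambda$ and, applying the sliced resolvent, that $f(s)=-sS_\lambda P_\lambda f(s) + (\text{order }1\text{ vector})$. Since $P_\lambda f(s)$ is a resonance vector and $VP_\lambda f(s)\perp\clV_\lambda$, Theorem~\ref{T: TFAE depth 1 criterion} makes $S_\lambda P_\lambda f(s)$ a resonance vector, hence so is $f(s)$, hence $P_\lambda f(s)=f(s)$, hence $(H_0+sV)f(s)=\lambda f(s)$, contradicting regularity of $V$. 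You should aim for a self-contained contrapositive argument of this kind, where the key tool is the depth-$1$ criterion of Theorem~\ref{T: TFAE depth 1 criterion} rather than the block representation.
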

\begin{proof}
Assume the contrary. Then for any $s \in \mbR$ there exists non-zero vector $f(s)$
such that 
\begin{equation} \label{F: (H0+sVP)f=lambda f}
  (H_{0} + sV P_\lambda)f(s) = \lambda f(s).
\end{equation}
This equality can be rewritten as 
$
  (H_{0}-\lambda)f(s) = - s V P_\lambda f(s).
$
This equality implies that the vector $V P_\lambda f(s)$
is orthogonal to the eigenspace $\clV_\lambda.$ 
It also implies that 
$$
  f(s) = - s S_\lambda P_\lambda f(s) + \text{order 1 vector}.
$$
Since $V P_\lambda f(s) \perp \clV_\lambda$ and since $P_\lambda f(s)$
is a resonance vector, it follows from Theorem~\ref{T: TFAE depth 1 criterion} that the vector 
$S_\lambda P_\lambda f(s)$ is a resonance vector. Hence, by the last equality, so is the vector $f(s).$ 
That is, $P_\lambda f(s) = f(s).$
Combined with (\ref{F: (H0+sVP)f=lambda f}), this gives 
$
  (H_{0} + s V) f(s) = \lambda f(s).
$
This equality contradicts the regularity of~$V.$
\end{proof}

\begin{thm} \label{T: R(H0+sVP)VP=R(H0+sV)VP}
  For any regular direction $V$ at a resonance point $H_{0}$ and for any non-resonant $s \in \mbC$
\begin{equation}
  R_\lambda (H_{0} + sV P_\lambda) V P_\lambda = R_\lambda (H_{0}+sV) V P_\lambda,
\end{equation}
where $P_\lambda=P_\lambda(H_{0},V).$ 
\end{thm}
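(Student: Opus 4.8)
The plan is to prove the operator identity by manipulating the second resolvent identity together with the defining property $VP_\lambda = Q_\lambda V$ (formula (\ref{F: VP = QV}), here with $r_\lambda=0$, so $VP_\lambda = Q_\lambda V$). First I would note that both sides make sense: $V$ is regular, hence by Theorem~\ref{T: VP is regular} the direction $VP_\lambda$ is also regular, so $R_\lambda(H_0+sVP_\lambda)$ exists for all non-resonant $s\in\mbC$; and $R_\lambda(H_0+sV)$ exists for non-resonant $s$ by regularity of $V$. For $s$ outside a discrete set both resolvents are defined simultaneously, and by analytic continuation it suffices to prove the identity there.

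The key computational step is the following. Write $K = VP_\lambda$. The second resolvent identity (\ref{F: second resolvent identity (1)}) applied to the pair $H_0+sV$ and $H_0 + sK$ (whose difference is $sV - sK = sV(1-P_\lambda)$) gives
\begin{equation*}
  R_\lambda(H_0+sK) - R_\lambda(H_0+sV) = - R_\lambda(H_0+sK)\,\bigl(sV(1-P_\lambda)\bigr)\,R_\lambda(H_0+sV).
\end{equation*}
Multiplying on the right by $K = VP_\lambda$, the claim is equivalent to showing that the correction term vanishes, i.e.
\begin{equation*}
  R_\lambda(H_0+sK)\,V(1-P_\lambda)\,R_\lambda(H_0+sV)\,VP_\lambda = 0.
\end{equation*}
So the heart of the matter is to show $(1-P_\lambda)\,R_\lambda(H_0+sV)\,VP_\lambda = 0$, that is, $R_\lambda(H_0+sV)\,VP_\lambda$ has range inside $\im P_\lambda = \Upsilon_\lambda(r_\lambda)$. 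But $R_\lambda(H_0+sV)V = A_\lambda(s)$ (in the notation $z=\lambda$, $r_\lambda=0$), and $A_\lambda(s)P_\lambda = P_\lambda A_\lambda(s)$ since $P_\lambda$ is the Riesz idempotent attached to the pole of $A_\lambda(s)$ at $s=0$ (it commutes with $A_\lambda(s)$ because it is a contour integral of $A_\lambda$ and the Laurent coefficients commute with $A_\lambda(s)$; cf. Lemma~\ref{L: P[nu] and bfA commute}). Hence $A_\lambda(s)P_\lambda = P_\lambda A_\lambda(s) P_\lambda$ has range in $\im P_\lambda$, so $(1-P_\lambda)A_\lambda(s)P_\lambda = 0$, which is exactly what is needed.

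With that, the correction term is $R_\lambda(H_0+sK)\cdot sV\cdot 0 = 0$ (after inserting $(1-P_\lambda)$ appropriately), and the identity $R_\lambda(H_0+sVP_\lambda)VP_\lambda = R_\lambda(H_0+sV)VP_\lambda$ follows. I expect the main obstacle to be purely bookkeeping: making sure $P_\lambda$ genuinely commutes with $A_\lambda(s)$ as operators on all of $\hilb$ (not merely on $\Upsilon_\lambda$), which is where one invokes that $P_\lambda = \frac1{2\pi i}\oint_{C_0} A_\lambda(s)\,ds$ and that $A_\lambda(s)A_\lambda(s') = A_\lambda(s')A_\lambda(s)$ for all $s,s'$ by the resolvent identity (\ref{F: second resolvent identity (3)}) — so $A_\lambda(s)$ commutes with every contour integral of $A_\lambda$, in particular with $P_\lambda$. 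Once this commutation is recorded, the rest is a one-line application of the second resolvent identity, and analytic continuation in $s$ removes the "discrete set" caveat.
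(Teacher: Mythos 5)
Your proof is correct and rests on essentially the same two ingredients as the paper's: the second resolvent identity relating the two paths $H_0+sV$ and $H_0+sVP_\lambda$, and the commutation of $P_\lambda$ with $A_\lambda(s)$, which is precisely what makes the correction term collapse via $(1-P_\lambda)A_\lambda(s)P_\lambda = (1-P_\lambda)P_\lambda A_\lambda(s) = 0$. The only stylistic difference is that the paper starts from the multiplicative form (\ref{F: second resolvent identity (2)}), inverts a factor as a Neumann series valid only for small~$s$, and then appeals to analyticity, whereas your use of the additive form (\ref{F: second resolvent identity (1)}) makes the cancellation visible directly and needs analytic continuation only to cover the discrete set where one of the two resolvents fails to exist.
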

\begin{proof} Applying the second resolvent identity (\ref{F: second resolvent identity (2)}) 
to the pair of self-adjoint operators
$H_{0} + sV$ and $H_{0}+sV P_\lambda = H_{0} + sV - sV(1-P_\lambda)$ gives 
$$
  (E) := R_\lambda (H_{0}+sV P_\lambda) V P_\lambda = \SqBrs{1-s R_\lambda(H_{0}+sV)V(1-P_\lambda)}^{-1} R_\lambda (H_{0}+sV) V P_\lambda.
$$
Using the notation $A_\lambda(s) = R_\lambda(H_s)V,$ we can rewrite this equality as follows:
$$
  (E) = \SqBrs{1-s A_\lambda(s)(1-P_\lambda)}^{-1} A_\lambda(s)P_\lambda.
$$
It follows from (\ref{F: Az (3.3.16)}) that 
$
  A_\lambda(s)(1-P_\lambda) = \tilde A_\lambda(s),
$
where $\tilde A_\lambda(s)$ is the holomorphic (at $s = 0$) part of the meromorphic function $A_\lambda(s).$
Hence, for all small enough~$s$ we can write
\begin{equation*}
  \begin{split}
     (E) & = \SqBrs{1-s \tilde A_\lambda(s)}^{-1} A_\lambda(s)P_\lambda 
       \\ & = \SqBrs{1 + s \tilde A_\lambda(s) + s^2\tilde A^2_\lambda(s)+\ldots} A_\lambda(s)P_\lambda.
  \end{split}
\end{equation*}
Since the operators~$A_\lambda(s)$ and $P_\lambda$ commute and since $\tilde A_\lambda(s)P_\lambda = 0,$
it follows that 
$(E) = A_\lambda(s)P_\lambda.$
Since both sides of this equality are holomorphic, this equality holds for all, not necessarily small,~$s.$
This is what is required. 
\end{proof}

\begin{thm} \label{T: P(H0,VP)=P and A(H0,VP)=A} 
For any regular direction $V$ at a resonance point $H_{0}$ 
$$
  P_\lambda(H_{0},VP_\lambda(H_{0},V)) = P_\lambda(H_{0},V)
$$
and 
$$
  \bfA_\lambda(H_{0},VP_\lambda(H_{0},V)) = \bfA_\lambda(H_{0},V).
$$
\end{thm}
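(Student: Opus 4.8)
The plan is to reduce both equalities to Theorem~\ref{T: R(H0+sVP)VP=R(H0+sV)VP} together with the defining contour integrals (\ref{F: Pz=oint A(s)}) and (\ref{F: A(lamb)=oint s A(s)}). Throughout, recall that $r_\lambda=0$ in this section and that $A_\lambda(s;H_0,W)=R_\lambda(H_0+sW)W.$ Writing $P_\lambda=P_\lambda(H_0,V),$ Theorem~\ref{T: R(H0+sVP)VP=R(H0+sV)VP} gives, for every non-resonant $s,$
$$
  A_\lambda(s;H_0,VP_\lambda)=R_\lambda(H_0+sVP_\lambda)VP_\lambda=R_\lambda(H_0+sV)VP_\lambda=A_\lambda(s;H_0,V)\,P_\lambda .
$$
So the meromorphic operator function attached to the triple $(\lambda;H_0,VP_\lambda)$ is the one attached to $(\lambda;H_0,V),$ multiplied on the right by the fixed idempotent $P_\lambda.$

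First I would record the bookkeeping about contours. Multiplying a meromorphic operator function on the right by a constant operator cannot create new poles, so the resonance points of $(\lambda;H_0,VP_\lambda)$ form a subset of those of $(\lambda;H_0,V)$ (by Theorem~\ref{T: VP is regular} the direction $VP_\lambda$ is regular, so these are genuinely isolated points). Hence a contour $C_0$ encircling the resonance point $s=0$ of $(\lambda;H_0,V)$ and no other resonance point of that triple also encircles $s=0$ and no other resonance point of $(\lambda;H_0,VP_\lambda),$ and may therefore be used in the integral representations of both $P_\lambda(H_0,VP_\lambda)$ and $\bfA_\lambda(H_0,VP_\lambda).$

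Then I would simply integrate. From (\ref{F: Pz=oint A(s)}),
$$
  P_\lambda(H_0,VP_\lambda)=\frac1{2\pi i}\oint_{C_0}A_\lambda(s;H_0,VP_\lambda)\,ds
   =\brs{\frac1{2\pi i}\oint_{C_0}A_\lambda(s;H_0,V)\,ds}P_\lambda=P_\lambda(H_0,V)P_\lambda=P_\lambda ,
$$
the last step because $P_\lambda$ is idempotent. From (\ref{F: A(lamb)=oint s A(s)}) with $r_\lambda=0,$
$$
  \bfA_\lambda(H_0,VP_\lambda)=\frac1{2\pi i}\oint_{C_0}s\,A_\lambda(s;H_0,VP_\lambda)\,ds
   =\brs{\frac1{2\pi i}\oint_{C_0}s\,A_\lambda(s;H_0,V)\,ds}P_\lambda=\bfA_\lambda(H_0,V)P_\lambda .
$$
To finish I would invoke $\bfA_\lambda(H_0,V)P_\lambda=\bfA_\lambda(H_0,V),$ which follows from the Laurent expansion (\ref{F: Az (3.3.16)}): since $A_\lambda(s;H_0,V)(1-P_\lambda)=\tilde A_\lambda(s)$ is holomorphic at $s=0$ (as already used in the proof of Theorem~\ref{T: R(H0+sVP)VP=R(H0+sV)VP}), all singular Laurent coefficients $P_\lambda,\bfA_\lambda,\ldots,\bfA^{d-1}_\lambda$ of $A_\lambda(s;H_0,V)$ annihilate $1-P_\lambda$ on the right; in particular $\bfA_\lambda(H_0,V)(1-P_\lambda)=0.$

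Since Theorem~\ref{T: R(H0+sVP)VP=R(H0+sV)VP} is already in hand the computation is short, and I do not anticipate a genuine obstacle; the only point deserving a sentence of care is the contour bookkeeping above. Equivalently, one could observe at once that the displayed identity $A_\lambda(s;H_0,VP_\lambda)=A_\lambda(s;H_0,V)P_\lambda$ together with $\bfA^j_\lambda(H_0,V)P_\lambda=\bfA^j_\lambda(H_0,V)$ shows that the two triples have the \emph{same} singular part of $A_\lambda(s)$ at $s=0,$ whence the same Laurent coefficients $P_\lambda$ and $\bfA_\lambda,$ which is exactly the assertion.
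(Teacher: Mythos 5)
Your proof is correct and takes essentially the same approach as the paper's: both reduce to Theorem~\ref{T: R(H0+sVP)VP=R(H0+sV)VP} and then apply the contour integral formulas (\ref{F: Pz=oint A(s)}) and (\ref{F: A(lamb)=oint s A(s)}). You in fact fill in two small points that the paper leaves implicit — the contour bookkeeping, and the identity $\bfA_\lambda(H_0,V)P_\lambda=\bfA_\lambda(H_0,V)$ needed to close the second equality (the paper simply says the second proof is ``the same,'' even though the final step is $\bfA_\lambda P_\lambda$ rather than $P_\lambda^2$).
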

\begin{proof} Using definition (\ref{F: Pz=oint A(s)}) of the idempotent $P_\lambda$ 
and Theorem~\ref{T: R(H0+sVP)VP=R(H0+sV)VP}, we have 
\begin{equation*}
  \begin{split}
      P_\lambda(H_{0},VP_\lambda(H_{0},V)) & = \frac 1{2 \pi i}\oint_C R_\lambda(H_{0}+sV P_\lambda)V P_\lambda\,ds
     \\ & = \frac 1{2 \pi i}\oint_C R_\lambda(H_{0}+sV)VP_\lambda\,ds
        \  = \ P^2_\lambda = P_\lambda,
  \end{split}
\end{equation*}
where~$C$ is a contour enclosing the resonance point~$s = 0.$
Proof of the second equality is the same, but uses (\ref{F: A(lamb)=oint s A(s)}) 
instead of (\ref{F: Pz=oint A(s)}).
\end{proof}

\begin{thm} \label{T: res matrices of V and VP}
Let~$V$ be a regular direction at a resonance point~$H_0.$ 
Resonance matrices of directions~$V$ and~$VP_\lambda$ are equal. 
\end{thm}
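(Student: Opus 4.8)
The statement to prove is Theorem~\ref{T: res matrices of V and VP}: the resonance matrices of the directions $V$ and $VP_\lambda$ at a resonance point $H_0$ coincide, i.e. $VP_\lambda(H_0,V) = (VP_\lambda)P_\lambda(H_0,VP_\lambda)$. The plan is to reduce everything to the two preceding theorems. By Theorem~\ref{T: P(H0,VP)=P and A(H0,VP)=A} we already know that $P_\lambda(H_0,VP_\lambda(H_0,V)) = P_\lambda(H_0,V)$; write $P_\lambda$ for this common idempotent and $W := VP_\lambda$ for the reduced direction. The resonance matrix of the direction $W$ at $H_0$ is by definition $W P_\lambda(H_0,W)$, which by the cited theorem equals $W P_\lambda = VP_\lambda P_\lambda = VP_\lambda^2$.

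The key remaining step is then simply that $P_\lambda$ is an idempotent, so $VP_\lambda^2 = VP_\lambda$, and the latter is precisely the resonance matrix of the original direction $V$ at $H_0$. So the proof is a two-line deduction: first invoke Theorem~\ref{T: P(H0,VP)=P and A(H0,VP)=A} to identify the idempotent associated with $(H_0, VP_\lambda)$ with the one associated with $(H_0,V)$; then use $P_\lambda^2 = P_\lambda$. One should also note at the outset, via Theorem~\ref{T: VP is regular}, that $VP_\lambda$ is itself a regular direction, so that the objects $P_\lambda(H_0, VP_\lambda)$ and the resonance matrix of $VP_\lambda$ are well-defined — otherwise the statement would be vacuous or ill-posed.

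There is essentially no obstacle here; the work has all been done in Theorems~\ref{T: VP is regular}, \ref{T: R(H0+sVP)VP=R(H0+sV)VP} and~\ref{T: P(H0,VP)=P and A(H0,VP)=A}, and this theorem is a corollary packaging those facts. If I wanted to be slightly more self-contained I could bypass Theorem~\ref{T: P(H0,VP)=P and A(H0,VP)=A} and argue directly from Theorem~\ref{T: R(H0+sVP)VP=R(H0+sV)VP}: taking a contour $C$ around the resonance point $s=0$ and using the definition (\ref{F: Pz=oint A(s)}),
\[
  (VP_\lambda) P_\lambda(H_0,VP_\lambda)
    = \frac{1}{2\pi i}\oint_C (VP_\lambda) R_\lambda(H_0+sVP_\lambda)(VP_\lambda)\,ds
    = \frac{1}{2\pi i}\oint_C V R_\lambda(H_0+sV) V P_\lambda\,ds = VP_\lambda,
\]
where the middle equality is Theorem~\ref{T: R(H0+sVP)VP=R(H0+sV)VP} (applied after moving one $P_\lambda$ past $V$ using $VP_\lambda = Q_\lambda V$ if needed, or simply noting $R_\lambda(H_0+sVP_\lambda)VP_\lambda = R_\lambda(H_0+sV)VP_\lambda$ and that $P_\lambda$ commutes with $A_\lambda(s)$), and the last equality is the reproducing property $\frac{1}{2\pi i}\oint_C A_\lambda(s)P_\lambda\,ds = P_\lambda^2 = P_\lambda$ together with left-multiplication by $V$. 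Either route is short; I would present the version that cites Theorem~\ref{T: P(H0,VP)=P and A(H0,VP)=A} for brevity, mentioning regularity of $VP_\lambda$ from Theorem~\ref{T: VP is regular} first.
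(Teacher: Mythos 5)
Your proof is correct and is essentially identical to the paper's: both reduce the claim to Theorem~\ref{T: P(H0,VP)=P and A(H0,VP)=A}, which gives $P_\lambda(H_0,VP_\lambda)=P_\lambda(H_0,V)$, and then use $P_\lambda^2=P_\lambda$. The extra contour-integral route you sketch is a fine alternative but just unfolds what Theorem~\ref{T: P(H0,VP)=P and A(H0,VP)=A} already packages.
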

\begin{proof} The resonance matrix of the direction~$V$ is equal to 
$
  VP_\lambda
$
and the resonance matrix of the direction $VP_\lambda $ is equal to 
$
  VP_\lambda \cdot P_\lambda(H_{0},VP_\lambda).
$
By Theorem~\ref{T: P(H0,VP)=P and A(H0,VP)=A}, these two operators are equal. 
\end{proof}
Since the resonance index of a direction is equal to the signature 
of its resonance matrix (Theorem~\ref{T: ind res=sign VP}), we have 
the following corollary of Theorem~\ref{T: res matrices of V and VP}.

\begin{thm} \label{T: ind(V)=ind(VP)} 
Let~$V$ be a regular direction. Resonance indices of directions~$V$ and~$VP_\lambda$
are equal. 
\end{thm}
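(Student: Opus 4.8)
The final statement to prove is Theorem~\ref{T: ind(V)=ind(VP)}: for a regular direction $V$ at a resonance point $H_0$, the resonance indices of $V$ and $VP_\lambda$ coincide. The plan is to derive this as an immediate consequence of the two theorems that precede it, namely Theorem~\ref{T: ind res=sign VP} (resonance index equals the signature of the resonance matrix) and Theorem~\ref{T: res matrices of V and VP} (the resonance matrices of $V$ and $VP_\lambda$ are equal). So the argument is a one-line deduction: apply Theorem~\ref{T: ind res=sign VP} to each of the two triples and then invoke Theorem~\ref{T: res matrices of V and VP} to identify the two signatures.

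First I would recall that, by definition, the resonance matrix of $V$ at $H_0$ is $VP_\lambda(H_0,V)$, and the resonance matrix of $VP_\lambda(H_0,V)$ at $H_0$ is $VP_\lambda(H_0,V)\cdot P_\lambda(H_0,VP_\lambda(H_0,V))$. Theorem~\ref{T: res matrices of V and VP} asserts these two finite-rank self-adjoint operators are equal; its proof in turn rests on Theorem~\ref{T: P(H0,VP)=P and A(H0,VP)=A}, which says $P_\lambda(H_0,VP_\lambda(H_0,V)) = P_\lambda(H_0,V)$, so that $VP_\lambda\cdot P_\lambda(H_0,VP_\lambda) = VP_\lambda\cdot P_\lambda = VP_\lambda$. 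Since I am allowed to assume all earlier results, I would simply cite Theorem~\ref{T: res matrices of V and VP} directly. One caveat to check: Theorem~\ref{T: ind res=sign VP} must apply to the triple $(\lambda; H_0, VP_\lambda)$, which requires $VP_\lambda$ to be a regular direction at $H_0$; this is precisely the content of Theorem~\ref{T: VP is regular}, so the hypothesis is met.

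There is no real obstacle here — the theorem is a corollary, and all the substantive work (regularity of $VP_\lambda$, the identity $R_\lambda(H_0+sVP_\lambda)VP_\lambda = R_\lambda(H_0+sV)VP_\lambda$, equality of the idempotents, hence equality of the resonance matrices, and the signature formula for the resonance index) has already been carried out in Theorems~\ref{T: VP is regular}, \ref{T: R(H0+sVP)VP=R(H0+sV)VP}, \ref{T: P(H0,VP)=P and A(H0,VP)=A}, \ref{T: res matrices of V and VP} and \ref{T: ind res=sign VP}. If anything merits a sentence of care, it is making explicit that $\sign$ of the common resonance matrix is well-defined (it is a finite-rank self-adjoint operator) and that both resonance indices are being computed against the same operator $VP_\lambda(H_0,V)$.

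Concretely, the proof I would write is: \emph{By Theorem~\ref{T: VP is regular} the direction $VP_\lambda$ is regular at $H_0$, so $\ind_{res}(\lambda;H_0,VP_\lambda)$ is defined and, by Theorem~\ref{T: ind res=sign VP}, equals $\sign\bigl(VP_\lambda\cdot P_\lambda(H_0,VP_\lambda)\bigr)$, i.e.\ the signature of the resonance matrix of $VP_\lambda$. Likewise $\ind_{res}(\lambda;H_0,V) = \sign\bigl(VP_\lambda(H_0,V)\bigr)$, the signature of the resonance matrix of $V$. By Theorem~\ref{T: res matrices of V and VP} these two resonance matrices are equal, hence their signatures are equal, and therefore $\ind_{res}(\lambda;H_0,V) = \ind_{res}(\lambda;H_0,VP_\lambda)$.} That is the whole argument.
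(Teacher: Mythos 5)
Your proof is correct and matches the paper's argument exactly: the paper also deduces this as an immediate corollary of Theorem~\ref{T: ind res=sign VP} (resonance index equals signature of the resonance matrix) and Theorem~\ref{T: res matrices of V and VP} (the resonance matrices of $V$ and $VP_\lambda$ coincide). Your explicit invocation of Theorem~\ref{T: VP is regular} to justify that $\ind_{res}(\lambda;H_0,VP_\lambda)$ is even defined is a small but worthwhile bit of extra care.
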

This theorem is important in that aspect that it often allows to replace a direction~$V$ by a finite rank
direction $VP_\lambda.$

\begin{defn}
We say that two directions $V_1$ and $V_2$ at a resonance point~$H_0$ are \emph{plane homotopic},
if $\exists \eps>0 \ \forall (s_1,s_2) \in [0,\eps]^2 \setminus \set{(0,0)}$
the operators 
$H_{0} + s_1 V_1 + s_2 V_2$ and $H_{0} - s_1 V_1 - s_2 V_2$ are regular at~$\lambda.$
\end{defn}
\noindent 
Geometrically, two directions $V_1$ and $V_2$ are plane homotopic, if one of them can be deformed to another
within the affine plane they generate so that the half-interval being deformed stays outside the resonance set.

\begin{thm} \label{T: V and VP are plane homotopic}
If~$V$ is a regular direction at a resonance point~$H_{0},$ then 
the directions~$V$ and~$V P_\lambda$ are plane homotopic.
\end{thm}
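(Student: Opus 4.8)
The goal is to show that a regular direction $V$ at a resonance point $H_0$ can be deformed within the plane $H_0 + \mbR V + \mbR(VP_\lambda)$ to the finite-rank direction $VP_\lambda$ so that the half-interval being moved stays outside $\euR(\lambda)$. Concretely, I need to produce $\eps>0$ such that for every $(s_1,s_2)\in[0,\eps]^2\setminus\{(0,0)\}$ both operators $H_0 + s_1 V + s_2 VP_\lambda$ and $H_0 - s_1 V - s_2 VP_\lambda$ are non-resonant at $\lambda$, i.e. $\lambda$ is not an eigenvalue of them. The natural reparametrisation is to write $s_1 V + s_2 VP_\lambda = s\,(tV + (1-t)VP_\lambda)$ with $s = s_1+s_2 > 0$ and $t = s_1/(s_1+s_2)\in[0,1]$, so it suffices to prove: there is $\eps>0$ so that for all $t\in[0,1]$ and all real $s$ with $0<|s|\le\eps$ the operator $H_0 + s\,W_t$, where $W_t := tV + (1-t)VP_\lambda$, is regular at $\lambda$. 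Since for each fixed $t$ the operator $W_t$ is itself a relatively compact perturbation, the point $s=0$ is the only potential resonance point near $0$ provided $W_t$ is a regular direction; the uniform-in-$t$ statement is what needs the work.

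**Key steps.** First I would observe that $W_t$ is a regular direction at $H_0$ for every $t\in[0,1]$: for $t=1$ this is the hypothesis, for $t=0$ it is Theorem~\ref{T: VP is regular}, and for intermediate $t$ one runs the same argument as in the proof of Theorem~\ref{T: VP is regular}, noting that $W_t = V - (1-t)V(1-P_\lambda)$, so that if $(H_0 + sW_t)f = \lambda f$ then $(H_0-\lambda)f = -s W_t f$, whence $W_t f \perp \clV_\lambda$ and $f = -s\,S_\lambda W_t f + (\text{order }1)$; since $W_t f = tVf + (1-t)VP_\lambda f$ and both $f$ and $P_\lambda f$ are handled by Theorem~\ref{T: TFAE depth 1 criterion}, one concludes $P_\lambda f = f$, hence $(H_0+sV)f=\lambda f$, contradicting regularity of $V$. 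Second, I would upgrade this pointwise statement to a uniform one using compactness of $[0,1]$ together with the continuity results of Section~\ref{S: On stability of res-index}: the map $t\mapsto W_t$ is a continuous (indeed affine) path in a two-dimensional subspace of $\clA_0$, and by Lemma~\ref{L: regular dir-s are open} the set of regular directions is open in the norm of $\clA_0$; hence $\{W_t : t\in[0,1]\}$ is a compact subset of the open set of regular directions, and one can extract, for each $t$, a neighbourhood on which the resolvent $R_\lambda(H_0 + sW)$ exists for a fixed punctured disc of $s$-values, then pass to a finite subcover to obtain one $\eps>0$ good for all $t$. Third, translating back: for $0<|s|\le\eps$ and $t\in[0,1]$, $\lambda\notin\sigma(H_0+sW_t)$, which for $s>0$ gives the first family $H_0 + s_1V + s_2VP_\lambda$ and for $s<0$ (equivalently $W_t\mapsto W_t$, $s\mapsto -s$) gives the second family $H_0 - s_1V - s_2VP_\lambda$; this is exactly plane homotopy.

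**Main obstacle.** The delicate point is the uniformity in $t$ near $s=0$: each $W_t$ being a regular direction only tells us that $s=0$ is an \emph{isolated} resonance point for the line $H_0 + sW_t$, but a priori the size of the resonance-free punctured neighbourhood could shrink to $0$ as $t$ ranges over $[0,1]$. The way around this is precisely the compactness-plus-openness argument above, combined with the joint continuity of $R_\lambda(H_0 + sW)W$ in $(s,W)$ from Lemma~\ref{L: VR}(ii) (or Lemma~\ref{L: VRV}); concretely, for each $t_0$ one fixes a small circle $C$ around $s=0$ avoiding $\sigma$-points of the line through $W_{t_0}$, and by continuity the same circle works for all $W_t$ with $t$ near $t_0$ and simultaneously encloses no resonance point other than $s=0$, since the only eigenvector mechanism forcing $\lambda\in\sigma(H_0+sW_t)$ for small $s\ne0$ has just been excluded. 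A finite subcover of $[0,1]$ then yields the desired $\eps$. The rest is bookkeeping with the substitution $(s_1,s_2)\leftrightarrow(s,t)$, which I would not spell out in detail.
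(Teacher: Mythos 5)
Your route is genuinely different from the paper's, but it has two gaps, the second of which is fatal to the method as sketched.

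First, the pointwise regularity of $W_t = tV + (1-t)VP_\lambda$ is not established by your argument. Adapting the proof of Theorem~\ref{T: VP is regular}, you only obtain $W_t f \perp \clV_\lambda$ from the putative eigenvalue equation; you do \emph{not} obtain $Vf \perp \clV_\lambda$ and $VP_\lambda f \perp \clV_\lambda$ separately, and Theorem~\ref{T: TFAE depth 1 criterion} requires its argument to be a resonance vector that is individually $V$-orthogonal to $\clV_\lambda$. So the step ``both $f$ and $P_\lambda f$ are handled by Theorem~\ref{T: TFAE depth 1 criterion}'' does not go through, and you cannot conclude $P_\lambda f = f$.

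Second, and more seriously, the compactness-plus-openness argument cannot deliver the uniform punctured neighbourhood you need. Even granting that each $W_t$ is a regular direction, so that for each fixed $t$ the resonance point $s=0$ of the line $H_0 + sW_t$ is isolated, the size of the resonance-free punctured disc can collapse as $t$ varies: resonance points can \emph{split off} from $s=0$ when the direction is perturbed. This is exactly the phenomenon the paper discusses in Section~\ref{S: On stability of res-index}, in the paragraph preceding Theorem~\ref{T: total res. index for (H0,W)} and in the figures there. Your local continuity argument (fix a circle $C$ around $0$, use joint continuity of $R_\lambda(H_0+sW)W$) only shows that for $t$ near $t_0$ no resonance point sits \emph{on} $C$; it says nothing about resonance points appearing \emph{inside} $C$ but away from $s=0$, which is precisely what happens when the algebraic multiplicity $N(t)$ of $s=0$ drops below $N(t_0)$. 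Openness of the set of regular directions (Lemma~\ref{L: regular dir-s are open}) does not exclude this, since a regular direction is allowed to have resonance points at small nonzero $s$. Note also that plane homotopy is a strictly stronger property than ``every intermediate direction is regular'': ruling out the splitting requires a structural fact about the pair $(V, VP_\lambda)$, and generic topological arguments cannot supply it.

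The paper avoids all of this by a single algebraic identity: writing $H_0 + sV + tVP_\lambda - \lambda = (H_s-\lambda)\bigl(1 + tA_\lambda(s)P_\lambda\bigr)$ and using the commutation $A_\lambda(s)P_\lambda = P_\lambda A_\lambda(s)$ to show that any kernel vector of $1 + tA_\lambda(s)P_\lambda$ lies in $\im P_\lambda$ and hence is already a kernel vector of $1 + tA_\lambda(s)$, which would contradict $s+t$ being a non-resonance point of the original line $H_0 + rV$. This handles the full two-parameter family at once, with no compactness step and no need for regularity of intermediate directions. If you want to salvage your approach, you would essentially have to prove that $N(t)$ is constant in $t$, and the natural route to that is the same resolvent identity the paper uses, which makes the reparametrisation and compactness overhead superfluous.
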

\begin{proof}
Firstly, by Theorem~\ref{T: VP is regular}, the direction $V P_\lambda$ is regular.
Let 
$$
  H_{s,t} = H_{0} + s V + t V P_\lambda.
$$
Since~$V$ is regular, for all small enough~$s$ the operator $H_{0} + s V - \lambda$
is invertible. We need to show that 
for all small enough~$s$ and $t$ the operator $H_{0} + s V + t V P_\lambda - \lambda$
is also invertible.
By the second resolvent identity, we have 
\begin{equation}
   \begin{split}
      R_\lambda(H_{s,t}) & = \brs{1 + t R_\lambda(H_s) V P_\lambda}^{-1} R_\lambda(H_s)
         \\ & = \brs{1 + t A_\lambda(s) P_\lambda}^{-1} R_\lambda(H_s),
   \end{split}
\end{equation}
whenever the right hand side makes sense. 
From this equality we can see that to prove the claim it is enough to prove the following 
assertion: if $1+ t A_\lambda(s)$ is invertible (which is equivalent to existence of 
$R_\lambda(H_{0} + (s+t)V)$), then $1+ t A_\lambda(s) P_\lambda$
is also invertible. Assume the contrary. Then, since $A_\lambda(s) P_\lambda$ is compact,
there exists a non-zero vector~$\chi$ such that 
$$
  (1+ t A_\lambda(s) P_\lambda) \chi = 0.
$$
Since~$A_\lambda(s)$ and $P_\lambda$ commute, this equality implies 
$
 \chi = - t P_\lambda A_\lambda(s) \chi.
$
It follows that $P_\lambda \chi = \chi$ and therefore
$
 (1+ t A_\lambda(s)) \chi = 0.
$
Hence, $1+ t A_\lambda(s)$ is not invertible. 
\end{proof}

Theorem~\ref{T: V and VP are plane homotopic} provides another proof of the equality of the resonance index
and of the signature of the resonance matrix for the case where~$\lambda$ does not belong to the essential spectrum.

\begin{prop} \label{P: positive directions are plane-homotopic}
Any two regular non-negative (or-non-positive) directions are plane homotopic. 
\end{prop}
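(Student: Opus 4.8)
The plan is to pass to the $2\times2$ block representation of all operators with respect to the orthogonal splitting $\hilb=\hat\hilb\oplus\clV_\lambda$, where (since $r_\lambda=0$) $\clV_\lambda=\clV_\lambda(H_0)$ is the eigenspace of $H_0$ at $\lambda$, and then to reduce the non-resonance of $H_0\pm s_1V_1\pm s_2V_2$ to invertibility of a Schur complement living on the finite-dimensional space $\clV_\lambda$. It is enough to treat two regular non-negative directions $V_1,V_2$; the non-positive case follows by replacing each $V_i$ by $-V_i$, which merely interchanges the two sign choices.

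First I would show that regularity together with positivity forces the crossing operators to be bounded below by a positive constant. By Corollary~\ref{C: criterion of regularity for V}, regularity of $V_i$ at $H_0$ gives $V_i\chi\neq0$ for every nonzero $\chi\in\clV_\lambda$; since $V_i\ge0$, this says $\scal{\aaa_i\chi}{\chi}=\scal{V_i\chi}{\chi}>0$, where $\aaa_i=\hat P^\perp V_i\hat P^\perp$ is the crossing operator of $V_i$ on the finite-dimensional space $\clV_\lambda$. Hence there is $c_0>0$ with $\aaa_i\ge c_0 I$ on $\clV_\lambda$ for $i=1,2$, and consequently, for every $(s_1,s_2)$ with $s_1,s_2\ge0$ and $t:=s_1+s_2>0$, the crossing operator $\aaa_W=s_1\aaa_1+s_2\aaa_2$ of $W:=s_1V_1+s_2V_2$ satisfies $\aaa_W\ge c_0\,t\,I>0$.

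Next I would compute the inverse of $H_0\pm W-\lambda$ via Lemma~\ref{L: inverse of block matrix}, using the block representation of $W$ as in (\ref{F: V as 2x2 matrix}) with crossing operator $\aaa_W$ and off-diagonal part $w=\hat P W\hat P^\perp=s_1w_1+s_2w_2$, so that $\norm{w}=O(t)$. For $t$ small the $(1,1)$ block equals $\hat H_0\pm\hat W-\lambda=(\hat H_0-\lambda)(1\pm R_\lambda(\hat H_0)\hat W)$, which is boundedly invertible since the operators $R_\lambda(\hat H_0)\hat V_i$ are bounded and $\norm{R_\lambda(\hat H_0)\hat W}\to0$. Lemma~\ref{L: inverse of block matrix} then says $H_0\pm W-\lambda$ is invertible if and only if the self-adjoint Schur complement $\pm\aaa_W-w^*(\hat H_0\pm\hat W-\lambda)^{-1}w$ is invertible; since $\aaa_W\ge c_0\,t\,I$ while $\norm{w^*(\hat H_0\pm\hat W-\lambda)^{-1}w}=O(t^2)$, this Schur complement is $\pm\aaa_W$ plus an $O(t^2)$ self-adjoint term, hence $\pm(\text{positive definite})$ — in particular invertible — as soon as $t$ lies below a threshold $t_0>0$ depending only on $c_0,V_1,V_2,H_0$. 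Taking $\eps=t_0/2$ makes $H_0+s_1V_1+s_2V_2$ and $H_0-s_1V_1-s_2V_2$ both non-resonant at $\lambda$ for all $(s_1,s_2)\in[0,\eps]^2\setminus\{(0,0)\}$, which is exactly the assertion that $V_1$ and $V_2$ are plane homotopic.

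I expect the substance of the argument to be the first step: the content of Corollary~\ref{C: criterion of regularity for V} is only a necessary condition for regularity, but in the presence of non-negativity it upgrades to the uniform lower bound $\aaa_i\ge c_0I$ on $\clV_\lambda$, and it is precisely this strict positivity, of size comparable to $s_1+s_2$, that dominates the $O((s_1+s_2)^2)$ Schur correction uniformly in $(s_1,s_2)$. The naive monotonicity picture — a non-negative perturbation can only move the eigenvalues of $H_0$ upward — explains why the statement ought to hold, but it does not by itself guarantee that the perturbed eigenvalues leave $\lambda$ strictly, so the block-matrix computation is what makes the argument rigorous and at the same time delivers both the required uniform $\eps$ and the second ($-$) sign.
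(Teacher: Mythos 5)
Your proof is correct, and it takes a genuinely different route from the paper's. The paper's argument is essentially two lines of eigenvalue monotonicity: regularity and non-negativity of $V_1$ force the eigenvalues of $H_0+s_1V_1$ near $\lambda$ to sit strictly above $\lambda$ for all small $s_1>0$; adding $s_2V_2\ge 0$ can, by Weyl monotonicity, only push them further up; and for $(s_1,s_2)$ small no other eigenvalue arrives at $\lambda$. The minus sign is symmetric, and the degenerate case $s_1=0,\ s_2>0$ is handled by starting from $V_2$. You instead work entirely in the $2\times2$ block representation of Section~\ref{S: 2x2} and, via Lemma~\ref{L: inverse of block matrix}, reduce non-resonance of $H_0\pm(s_1V_1+s_2V_2)$ at $\lambda$ to sign-definiteness of a Schur complement on the finite-dimensional eigenspace $\clV_\lambda.$ The step that carries your argument is the upgrade of Corollary~\ref{C: criterion of regularity for V} in the presence of $V_i\ge 0$: $V_i\chi\neq 0$ becomes $\langle V_i\chi,\chi\rangle>0$, so the crossing operators satisfy $\aaa_i\ge c_0 I$ with a single $c_0>0$ for both $i$, and the resulting $O(t)$ lower bound $\aaa_W\ge c_0 t I$ dominates the $O(t^2)$ Schur correction uniformly in $(s_1,s_2)$ with $t=s_1+s_2.$ This buys a quantitative, sign-symmetric statement at the cost of a block-matrix computation, whereas the paper leans on the qualitative min-max picture and analytic eigenvalue perturbation theory; both are valid, but yours stays inside the Schur-complement machinery the paper has already assembled. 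One small point: your closing remark that the monotonicity picture ``does not by itself guarantee that the perturbed eigenvalues leave $\lambda$ strictly'' slightly undersells the paper's argument --- regularity of $V_1$ is precisely what secures $\lambda_\nu(s_1)>\lambda$ for $s_1>0$ --- but it is a fair indication of why the explicit, quantitative control your proof supplies is worth having.
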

\begin{proof} 
Let $V_1$ and $V_2$ be two regular and non-negative directions at a resonance point~$H_0.$
Since $V_1$ is regular, for all small enough $s>0$ the operator $H_{0} + s V_1$ 
is non-resonant and near~$\lambda$ there are only eigenvalues of $H_{0} + s V_1$ which are larger than~$\lambda.$
Adding $t V_2$ can only increase these eigenvalues. Choosing~$s$ and $t$ small enough we can also ensure 
that there are no other eigenvalues $H_{0} + s V_1 + tV_2$ near~$\lambda.$
\end{proof}

\begin{thm}
Plane homotopic directions have equal resonance indices. 
\end{thm}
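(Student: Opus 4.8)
The plan is to reduce the statement to a situation where the resonance index is manifestly constant, namely to a one-parameter deformation within the plane generated by two plane homotopic directions, along which the resonance point at $s=0$ does not split off any real resonance point. First I would set up the situation: let $V_1$ and $V_2$ be plane homotopic at a resonance point $H_0$, so that by definition there exists $\eps>0$ such that for all $(s_1,s_2)\in[0,\eps]^2\setminus\set{(0,0)}$ the operators $H_0+s_1V_1+s_2V_2$ and $H_0-s_1V_1-s_2V_2$ are $\lambda$-regular. Introduce the family of directions $V_\tau = (1-\tau)V_1 + \tau V_2$ for $\tau\in[0,1]$; since the segment from $V_1$ to $V_2$ lies in the plane generated by $V_1$ and $V_2$, plane homotopy guarantees that each $V_\tau$ is a regular direction at $H_0$, and moreover that for small $s>0$ the operator $H_0 + sV_\tau$ has no eigenvalue $\lambda$, uniformly in $\tau$ — so the only resonance point of the triple $(\lambda; H_0, V_\tau)$ in a fixed small disc around $s=0$ is $s=0$ itself, with the fixed geometric multiplicity $m$ determined by $H_0$ alone.

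The key step is then to invoke Theorem~\ref{T: total res. index for (H0,W)} in the following way: as $V_1$ is perturbed to a nearby direction $W$, the resonance index $\ind_{res}(\lambda; H_0, V_1)$ equals the sum of the resonance indices over the real resonance points of the group of $s=0$ of the triple $(\lambda; H_0, W)$. But plane homotopy forces that for $W = V_\tau$ with $\tau$ small there are \emph{no} real resonance points of the group of $s=0$ other than $s=0$ itself — any such point $r$ would give a $\lambda$-resonant operator $H_0 + rV_\tau$ with $r\neq 0$ small, contradicting regularity of $H_0+sV_\tau$ for $0<|s|<\eps$ (handling both signs of $r$ via the two cases $H_0\pm s_1V_1\pm s_2V_2$ in the definition). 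Hence the sum collapses to the single term $\ind_{res}(\lambda; H_0, V_\tau)$, giving local constancy of $\tau\mapsto \ind_{res}(\lambda; H_0, V_\tau)$ along $[0,1]$. A standard connectedness argument on the interval $[0,1]$ then yields $\ind_{res}(\lambda; H_0, V_1) = \ind_{res}(\lambda; H_0, V_2)$.

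I expect the main obstacle to be the uniformity in $\tau$: Theorem~\ref{T: total res. index for (H0,W)} is stated for a \emph{single} small perturbation $W$ of a \emph{single} regular direction $V$, so to run the connectedness argument I need that at each $\tau_0\in[0,1]$ there is a neighbourhood of $\tau_0$ in which the theorem applies with $V = V_{\tau_0}$ and $W = V_\tau$, and that the ``no extra real resonance points'' conclusion persists throughout. The first point follows because $V_{\tau_0}$ is itself regular (plane homotopy of $V_1,V_2$ passes to every intermediate $V_{\tau_0}$, as the segment lies in the generating plane), so Lemma~\ref{L: regular dir-s are open} and the continuity lemmas of Section~\ref{S: On stability of res-index} give a neighbourhood of $V_{\tau_0}$ of regular directions and upper semicontinuity of the spectrum confines the resonance points of the group of $s=0$ to a small disc. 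The second point — absence of extra real resonance points — again uses the plane homotopy hypothesis directly: any real resonance point of $(\lambda; H_0, V_\tau)$ close to $0$ with $\tau$ close to $\tau_0$ produces a $\lambda$-resonant operator of the form $H_0 + r(1-\tau)V_1 + r\tau V_2$ with small coefficients of mixed sign allowed, contradicting the definition of plane homotopy. Once these two uniform facts are in place, the open-closed argument on $[0,1]$ finishes the proof with no further computation.
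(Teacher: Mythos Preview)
Your proof is correct and follows essentially the same route as the paper: the paper's entire proof is the single sentence ``This is a special case of homotopy stability of resonance index, Theorem~\ref{T: homotopy stability of res index}.'' You have simply unpacked that citation by going one level lower, invoking Theorem~\ref{T: total res. index for (H0,W)} directly (which is the main ingredient in the proof of Theorem~\ref{T: homotopy stability of res index}) and running the open--closed argument along the segment $V_\tau=(1-\tau)V_1+\tau V_2$ yourself. One minor wording quibble: your phrase ``small coefficients of mixed sign allowed'' is misleading, since for $\tau\in[0,1]$ the coefficients $r(1-\tau)$ and $r\tau$ always share the sign of $r$; the point is rather that both cases $r>0$ and $r<0$ are covered by the two halves of the plane homotopy definition, exactly as you noted earlier in your argument.
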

This is a special case of homotopy stability of resonance index, Theorem~\ref{T: homotopy stability of res index}.

\subsection{Resonance index and Fredholm index}
\label{SS: res ind and Fred ind}
In this section we consider relationship of the resonance index with the Fredholm index.
For reader's convenience we recall here some well-known definitions and theorems. 

A bounded operator~$T$ acting from a Hilbert space
$\hilb$ to a Hilbert space $\clK$ is \emph{Fredholm}, if $T$ has closed range
and if the kernel of $T$ and the kernel of $T^*$ are finite-dimensional.
In this case the \emph{index} of $T$ is the integer number 
$$
  \ind(T) = \dim\ker(T) - \dim\ker(T^*).
$$
The word ``bounded'' in the definition of a Fredholm operator can be replaced by the word ``closed'',
and ``Hilbert'' can be replaced by ``Banach'', but we do not need this.
Since the index is sensitive to the choice of domain and range, one may also write $\ind_{\hilb,\clK}(T).$

We denote by $\clF(\hilb,\clK)$ the set of all (bounded) Fredholm operators from~$\hilb$ to~$\clK.$ 
If $\hilb=\clK$ then one writes $\clF(\hilb)$ for $\clF(\hilb,\hilb).$ 
The set $\clF(\hilb,\clK)$ has the following properties, proofs of which can be found 
in e.g. \cite[Chapter XIX]{Hoer3}.

(1) For any compact operator $K$ on a Hilbert space $\hilb$ the operator $1+K$ is Fredholm
and $\ind(1+K) = 0.$ 

(2) The set of Fredholm operators is open in the norm topology. The index $\ind(T)$ is stable 
in the norm topology, that is, $\ind$ is a locally constant function on $\clF(\hilb,\clK).$ 

(3) If $T \in \clF(\hilb,\clK)$ and $K \colon \hilb \to \clK$ is compact,
then $T+K \in \clF(\hilb,\clK)$ and $\ind(T) = \ind(T+K).$ (1) is a special case of this property. 

(4) If $T \in \clF(\hilb_1,\hilb_2)$ and $S \in \clF(\hilb_2,\hilb_3),$ 
then $ST \in \clF(\hilb_1,\hilb_3)$ and 
$\ind(ST) = \ind(S)+\ind(T).$ 

(5) $T \in \clF(\hilb,\clK)$ if and only if 
there exists a bounded operator $S\colon \clK \to \hilb$ such that $ST - 1_\hilb$ and $TS-1_\clK$ are compact.
Such an operator $S$ is called \emph{parametrix} of $T$ and it is also Fredholm. Moreover, $\ind(S) = - \ind(T).$ 

\smallskip
A pair of orthogonal projections $(P,Q)$ is called a \emph{Fredholm pair}, if 
the operator $PQ \colon Q\hilb \to P \hilb$ is Fredholm. 
The \emph{Essential co-dimension} of the pair $(P,Q)$ is the index 
of the operator~$PQ,$ it is denoted by $\ec(P,Q).$ 

A pair $(P,Q)$ is Fredholm iff $\norm{\pi(P-Q)}<1,$ where $\pi \colon \clB(\hilb) \to \clQ(\hilb)$ 
is the canonical epimorphism of the $C^*$-algebra of bounded operators $\clB(\hilb)$ 
onto the Calkin $C^*$-algebra $\clQ(\hilb) = \clB(\hilb)/\clK(\hilb),$ 
where $\clK(\hilb)$ is the norm-closed ideal of compact operators, for a proof 
see e.g. \cite[Lemma 4.1]{BCPRSW}. 
If $(P_1,P_2)$ and $(P_2,P_3)$ are Fredholm pairs, then so are the pairs $(P_1,P_3)$ and $(P_2,P_1)$
and 
$$
  \ec(P_1,P_3) = \ec(P_1,P_2) + \ec(P_2,P_3), \quad \ec(P_2,P_1)=-\ec(P_1,P_2).
$$

Now we proceed to a discussion of J.\,Phillips' definition \cite{Ph96CMB,Ph97FIC} of spectral flow 
as total Fredholm index. The theory 
of spectral flow was developed for self-adjoint operators $H_{0}$ with compact resolvent
and with some summability condition such as $p$-summability and $\theta$-summability
(though $p$-summability implies $\theta$-summability which in its turn implies compactness of resolvent,
we choose to mention both).
This assumes that the spectrum of $H_{0}$ is discrete and so it has no essential spectrum.
The spectral flow theory originates in the analysis of elliptic differential operators $\clD$ 
acting on sections of vector bundles over compact manifolds, such as Dirac operators on spin manifolds,
and these operators satisfy the condition of compact resolvent and some summability assumptions.
Our aim here is to demonstrate directly that the spectral flow as total Fredholm index and the total resonance index
are identical notions. But since the theory of the former was developed for operators with compact resolvent 
we assume here this condition. An inspection of proofs of basic theorems of spectral flow theory shows
that the summability conditions are used essentially. It is quite possible that double operator 
integral techniques may allow to adjust the theory so that it becomes applicable to operators with essential spectrum
as long as zero (or more generally a point~$\lambda$) does not belong to it, but carrying out this plan 
may or may not be straightforward. 
In any case, as it was demonstrated in \cite{ACS},
it is sufficient to assume only compactness of resolvent without summability conditions.

\smallskip
For the rest of this subsection, we assume that $H_0$ has compact resolvent 
and that~$\clA_0$ is a subspace of the algebra of bounded operators with the operator norm. 
If $H_s$ is a continuous path in~$\clA$ such that $H_0$ and $H_1$ are not~$\lambda$-resonant,
then the \emph{spectral flow} of $\set{H_s}_{s \in [0,1]}$ through~$\lambda,$ 
by definition of J.\,Phillips, is the number
$$
  \sflow(\set{H_s}) = \sum_{j=1}^n \ec(P_{s_{j-1}},P_{s_j}),
$$
where $P_s$ is the spectral projection $E_{[\lambda,\infty)}^{H_s},$ and $\set{s_j}_{j=0}^n$ is 
a partition of the interval~$[0,1].$ The spectral flow is well-defined for all partitions
with small enough diameter $\max_{j}\abs{s_j-s_{j-1}},$ and it does not depend on the choice of such a partition,
which easily follows from the additivity property of the essential co-dimension. 
Moreover, spectral flow is homotopy invariant. 
Since this is one of several definitions of spectral flow, we shall call it here \emph{total Fredholm index}. 

This preliminary material can be found 
in e.g. \cite{Ph96CMB,Ph97FIC,BCPRSW}, \cite[\S\S 1.5, 1.6]{Azbook} and \cite[Chapter XIX]{Hoer3}.

\smallskip

We give a direct proof of the equality of the total resonance index and total Fredholm index,
which does not allude to the Robbin-Salamon uniqueness Theorem~\ref{T: Robbin-Salamon uniqueness theorem}. 

By \emph{Fredholm index of a regular direction} $V$ at $H_{0}$ we mean the number
$$
  \ec\brs{E^{H_{0}-\eps V}_{[\lambda,\infty)},E^{H_{0}+\eps V}_{[\lambda,\infty)}},
$$
where~$\eps>0$ is a small enough number. General theory \cite{Ph96CMB,Ph97FIC} shows 
that this essential co-dimension is independent from the choice of sufficiently small~$\eps>0.$

\begin{thm} \label{T: TRI=TFI} 
Total resonance index coincides with total Fredholm index.
\end{thm}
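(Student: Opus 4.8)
The plan is to prove the equality locally and then piece it together, exploiting that both quantities are additive over sub-paths. Since both total resonance index and total Fredholm index are defined as sums over a partition of $[0,1]$ of contributions that localise at individual resonance points (crossings), and since away from the resonance set both contributions vanish, it suffices to show that at a single resonant point $H_0$ on the path, with a regular direction $V = H_1 - H_0$ locally, the \emph{Fredholm index of the direction} $V$ equals the \emph{resonance index} $\ind_{res}(\lambda; H_0, V)$. More precisely, first I would reduce, by a homotopy/compactness argument exactly as in the definition of TRI on $PC^1$ paths, to a piecewise-linear path whose crossings are isolated, and use additivity of $\ec$ (the cocycle property $\ec(P_1,P_3) = \ec(P_1,P_2) + \ec(P_2,P_3)$) on the Fredholm side together with additivity of $\ind_{res}$ over resonance points on the TRI side, so that everything comes down to one crossing at $s = 0$ along a straight line $H_s = H_0 + sV$.

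Next I would compute the local Fredholm index $\ec\bigl(E^{H_{-\eps}}_{[\lambda,\infty)}, E^{H_{\eps}}_{[\lambda,\infty)}\bigr)$ directly in terms of the eigenvalue functions. Using Theorem~\ref{T: depth of phi[nu] is d[nu]-1}, the eigenvalue $\lambda$ of multiplicity $m$ of $H_0$ splits into $m$ analytic eigenvalue branches $\lambda_\nu(s)$ with $\lambda_\nu(s) = \lambda + \eps_\nu (s - r_\lambda)^{d_\nu} + O((s-r_\lambda)^{d_\nu+1})$. Near the crossing the spectral projections $E^{H_s}_{[\lambda,\infty)}$ differ only by the finitely many eigenvectors whose eigenvalues cross $\lambda$, so the essential codimension is the net count of branches that move from below $\lambda$ to above minus those that move from above to below, as $s$ goes from $-\eps$ to $+\eps$. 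A branch $\lambda_\nu(s)$ with $d_\nu$ even makes a U-turn and contributes $0$; a branch with $d_\nu$ odd crosses transversally and contributes $\operatorname{sign}$ of the crossing, which is exactly $b_\nu \operatorname{sign}(\nu)$ in the notation preceding Theorem~\ref{T: intersection number = TRI}. Hence the local Fredholm index equals $\sum_{\nu=1}^m b_\nu \operatorname{sign}(\nu)$, the intersection number through $\lambda$ at $r_\lambda$. By Theorem~\ref{T: intersection number = TRI} this intersection number equals the contribution of $r_\lambda$ to the total resonance index, so the local identities match and summing over crossings gives the theorem.

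The main obstacle I expect is the careful bookkeeping of the essential codimension at a single crossing: one must check that $\bigl(E^{H_{-\eps}}_{[\lambda,\infty)}, E^{H_{\eps}}_{[\lambda,\infty)}\bigr)$ is genuinely a Fredholm pair and that its index is insensitive to small $\eps$, and that the only contributions to $\ec$ come from the finite-rank perturbation supplied by the crossing eigenvectors. This is where the compact-resolvent hypothesis (in force throughout this subsection) and stability of the Fredholm index under compact perturbations are used: writing $E^{H_{\eps}}_{[\lambda,\infty)} - E^{H_{-\eps}}_{[\lambda,\infty)}$ as a finite-rank operator modulo a controlled error, one identifies $\ec$ with the signed count of branches crossing $\lambda$. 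I would also want to note, for completeness, that the path may be taken to have only simple crossings by a preliminary homotopy (the same device used in the proof of Theorem~\ref{T: Robbin-Salamon uniqueness theorem}), which makes the branch-counting at each crossing as transparent as possible, though strictly this reduction is not needed once Theorem~\ref{T: intersection number = TRI} is invoked. With these points in place the proof is short; alternatively one may simply remark that total Fredholm index satisfies the Robbin--Salamon axioms and invoke Theorems~\ref{T: Tot res index meets R-S axioms} and~\ref{T: Robbin-Salamon uniqueness theorem}, but the point here is to give the direct argument.
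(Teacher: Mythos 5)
Your proof is correct, but it takes a genuinely different route from the paper at the crucial local step. After the same homotopy/catenation reduction to a single crossing, the paper further reduces to a \emph{simple} resonance point and a \emph{simple} direction~$V,$ uses plane homotopy (Theorem~\ref{T: V and VP are plane homotopic}) to replace~$V$ by the rank-one direction $VP_\lambda,$ then plane-deforms $VP_\lambda$ to $\pm\scal{\chi}{\cdot}\chi$ and reads off the Fredholm index $\pm 1$ of a rank-one perturbation directly. You instead compute the essential codimension $\ec\bigl(E^{H_{-\eps}}_{[\lambda,\infty)},E^{H_\eps}_{[\lambda,\infty)}\bigr)$ at an arbitrary (not necessarily simple) crossing as the signed count of transversally crossing eigenvalue branches $\lambda_\nu(s),$ identify that count with the intersection number $\sum_\nu b_\nu\,\sign(\nu),$ and close by invoking Theorem~\ref{T: intersection number = TRI}. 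Both are valid; the trade-off is that your argument skips the preparatory perturbation to a simple crossing and the finite-rank reduction machinery of Section~\ref{S: res index and Fredholm index}, but it leans on Theorem~\ref{T: intersection number = TRI}, which carries the full weight of the Puiseux/cycle analysis of Section~\ref{S: res points as f-ns of s}. The paper's route is more self-contained modulo that analysis, relying only on the algebra of the resonance matrix and plane homotopy; yours is arguably the more conceptually transparent one, since it identifies both TRI and TFI with the classical intersection number by separate, parallel local computations. Your caveat about the bookkeeping of $\ec$ at a crossing is appropriate---one does need that for small $\eps$ the pair of spectral projections is Fredholm and that $\ec$ is detected only by the finitely many branches emanating from~$\lambda,$ which is where compact resolvent and stability of the index under compact perturbation enter, but this is standard in Phillips' framework and not a gap.
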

\begin{proof} Let $H_r$ be a continuous piecewise linear path which connects 
two~$\lambda$-regular operators~$H_0$ and~$H_1.$ Using a small perturbation of this path,
we can modify it in such a way that all crossings of this path with the resonance set will 
occur at simple points and at simple directions. Since both 
the total resonance index and the total Fredholm index are homotopy 
stable (Theorem~\ref{T: homotopy stability of res index}), 
the Catenation Axiom (which trivially holds for both the total resonance index and the total Fredholm index)
reduces the matter to the case of a path~$H_r$ which intersects the resonance set only once at a simple point
and at a simple direction. Let~$V$ be the direction. 
By Theorem~\ref{T: V and VP are plane homotopic},
the direction~$V$ is plane homotopic to~$VP_\lambda.$
By the Homotopy Axiom, it follows that directions~$V$ and~$VP_\lambda$ have 
the same resonance index and Fredholm index. 
Since~$V$ is a simple direction at a simple point, the resonance matrix~$VP_\lambda$ 
is a rank one self-adjoint operator. Depending on whether the resonance index of~$V$ is~$+1$
or~$-1,$ the operator~$V P_\lambda$ is plane deformable to~$\scal{\chi}{\cdot}\chi$ or~$-\scal{\chi}{\cdot}\chi,$
where~$\chi$ is an eigenvector of the simple point where~$H_r$ crosses the resonance set. 
Finally, it is left to note that the Fredholm index of the regular direction~$\scal{\chi}{\cdot}\chi$ 
(respectively, $-\scal{\chi}{\cdot}\chi$) is obviously equal to~$1$ (respectively, $-1$).
\end{proof}

\section{Resonance index and spectral shift function}
\label{S: res ind=SSF}
The aim of this section is to demonstrate the equality 
$$
  \text{spectral shift function} = \text{total resonance index}
$$
outside the essential spectrum. Since this is a special case of an essentially stronger result
which asserts that the total resonance index is equal to the singular spectral shift function for a.e.~$\lambda$
\cite[\S 6]{Az9}, \cite{Az7}, 
we do not formulate any theorems here. 
It is also well-known that the spectral shift function outside essential spectrum satisfies Robbin-Salamon axioms,
and therefore the total resonance index and spectral shift function coincide 
by uniqueness Theorem~\ref{T: Robbin-Salamon uniqueness theorem}. 
Nevertheless, here we demonstrate the argument of the proof of the above-mentioned more general result
in this special case, where it simplifies quite significantly while retaining one of the key points 
of the proof. 

Given two self-adjoint operators $H_0$ and $H_1$ with trace class difference $V = H_1 - H_0,$
the \emph{spectral shift function} of the pair $(H_0,H_1)$ is a unique real-valued 
integrable function $\xi \in L_1(\mbR),$
such that for all compactly supported functions $\phi$ of class $C^2$ the \emph{Lifshitz-Krein trace formula}
\cite{Kr53MS,Li52UMN} holds:
$$
  \Tr\brs{\phi(H_1)-\phi(H_0)} = \int_{-\infty}^\infty \phi'(\lambda)\xi(\lambda)\,d\lambda.
$$
The Birman-Solomyak formula \cite{BS75SM} gives another remarkable representation for the spectral shift function,
but this formula treats the SSF as a distribution:
$$
  \xi(\phi) = \int_0^1 \Tr\brs{V \phi(H_r)}\,dr,
$$
where $H_r = H_0+rV.$ This formula indicates that the SSF is an integral of a one-form 
$\Tr\brs{V \phi(H_r)}$ on the affine space of trace class perturbations of~$H_0.$ This form is \emph{exact},
so that the straight line $H_0+rV$ connecting the operators $H_0$ and $H_1$ can be replaced by a piecewise 
smooth path \cite{AS2}. In the spectral flow theory there exist analytic integral formulas for spectral flow 
(due to Getzler and Carey-Phillips)
which can be considered as analogues of the Birman-Solomyak formula with specifically chosen distribution 
$\phi,$ though the settings of the operator theoretic spectral shift function and the differential geometric 
spectral flow were different (the former with relatively trace class conditions imposed on 
the perturbation $V$ with more or less arbitrary self-adjoint~$H_0,$ while in the latter 
the perturbation~$V$ is an arbitrary bounded self-adjoint operator with summability conditions imposed on~$H_0$).
Alan Carey indicated many times that the idea of possibility of expressing the spectral flow as an integral of one-form
belongs to I.\,M.\,Singer (1974). 

Starting with the Birman-Solomyak formula as the definition of the SSF, 
one can show that it satisfies Krein's trace formula
in the form 
$$
  \Tr\brs{\phi(H_1)-\phi(H_0)} = \int_{-\infty}^\infty \phi'(\lambda)\,dm_\xi(\lambda),
$$
where $m_\xi$ is the spectral shift measure. 
Though proof of the Birman-Solomyak formula is somewhat simpler than that of the Lifshitz-Krein formula,
it does not allow to prove that the SSF is absolutely continuous, so the original proof 
of Lifshitz-Krein formula is indispensable. Nevertheless, there are sufficient indications, 
some of which were mentioned above,
that the Birman-Solomyak formula is more fundamental.

\smallskip
Outside the essential spectrum the spectral shift function coincides with the spectral flow. This connection
was demonstrated in \cite{ACS}, though it seems unlikely that this connection was not known in some form before. 
Since the spectral flow is an inherently integer-valued function, the SSF is also integer-valued outside the essential 
spectrum, but inside the essential spectrum this is not the case. 
The celebrated Birman-Krein formula, which connects the SSF and the scattering matrix,
$$
  \det S(\lambda; H_1,H_0) = e^{-2\pi i \xi(\lambda)},
$$
indicates that the reason for non-integrality of SSF is existence of a non-trivial scattering matrix.
It turns out however \cite{Az,Az3v6} that the SSF admits a natural decomposition as a sum of two components,
the \emph{absolutely continuous}~$\xia$ and \emph{singular spectral shift functions}~$\xis,$
such that the second component~$\xis$ is integer-valued almost everywhere inside the essential spectrum too. 
The definitions of the functions $\xia$ and $\xis$ are obtained by modified Birman-Solomyak formulas:
$$
  \xia(\phi) = \int_0^1 \Tr\brs{V \phi(H^{(a)}_r)}\,dr 
$$
and 
$$  
  \xis(\phi) = \int_0^1 \Tr\brs{V \phi(H^{(s)}_r)}\,dr,
$$
where $H^{(a)}$ and $H^{(s)}$ stand for the absolutely continuous 
and singular parts of a self-adjoint operator~$H$ respectively. 
It is shown in \cite{Az3v6} that for trace class perturbations the measure $\xia$ is absolutely continuous
and its density $\xia(\lambda)$ satisfies a modified Birman-Krein formula
$$
  \det S(\lambda; H_1,H_0) = e^{-2\pi i \xia(\lambda)}.
$$
This formula combined with Birman-Krein formula implies integrality of $\xis.$

Definitions of functions $\xia$ and $\xis$ indicate that the generalised spectral flow one-form
$\Tr(V\delta(H))$ naturally splits into two components: absolutely continuous 
$\Tr(V\delta(H^{(a)}))$ and singular $\Tr(V\delta(H^{(s)}))$ spectral flow one-forms.
But unlike the spectral flow one-form, these two components are not exact, see a counter-example in 
\cite[\S 8.3]{Az3v6}. Outside the essential spectrum the absolutely continuous spectral flow one-form drops out,
and thus in this case the remaining singular part becomes exact. In terms of the resonance set $\euR(\lambda)$
exactness of the singular spectral flow one-form is equivalent 
to the equality $\mathrm{codim}\,\euR(\lambda) = 1,$ and while outside the essential spectrum 
the resonance set $\euR(\lambda)$ always has co-dimension one, inside the essential spectrum this is not the case. 

The definition of $\xis$ is hardly suitable for calculation of this function. 
In \cite{Az7} (see also \cite[Section 6]{Az9}) it was shown that the singular spectral shift function
is equal to the total resonance index, which is something far easier to work with. Proof of this equality
for values of~$\lambda$ inside essential spectrum 
cannot be explained in a short space, but if~$\lambda$ is outside the essential spectrum then the proof
simplifies sufficiently to present it here.

It is probably worthwhile to stress here that the notion of resonance index was discovered in the course
of work on the singular spectral shift function and scattering theory. Since the singular spectral shift function
coincides with spectral shift function outside the essential spectrum, it was immediately clear that the total 
resonance index coincides with spectral flow, and this is how this paper was originated. 

We assume that a self-adjoint perturbation~$V$ of a self-adjoint operator~$H_0$ is trace class. 
This assumption is not necessary for the equality in question:
$$
  \text{if \ \ $\lambda \notin \sigma_{ess}$ \ \ then } \ \ \xi(\lambda) = \sum_{r \in [0,1]} \ind_{res}(\lambda; H_r,V),
$$ 
but it will significantly simplify the proof. For relatively trace class perturbations the proof, which includes
the main essential spectrum case too, will appear
in \cite{AzDa}. We conjecture that this equality holds as long as~$V$ is a relatively compact perturbation of~$H_0.$ 

The Birman-Solomyak formula implies the equality 
$$
  \frac 1 \pi \int_\mbR \Im R_{\lambda+iy}(x)\xi(x)\,dx = \frac 1 \pi \int_0^1 \Tr \brs{V \Im R_{\lambda+iy}(H_r)}\,dr.
$$
The left hand side of this equality is the Poisson integral of the function $\xi.$
Hence, a well-known property of the Poisson integral implies that as $y \to 0^+,$
the left hand side converges to $\xi(\lambda)$ for a.e.~$\lambda.$ 
Hence, we will be done if we show that the right hand side converges to the total resonance index. 
Since~$\lambda$ lies outside the common essential spectrum of operators~$H_r,$ the operator
$\Im R_{\lambda+iy}(H_r)$ has zero limit for all values of~$r$ from the interval $[0,1],$
except some special values of $r$ for which the resolvent $R_{\lambda+i0}(H_r)=R_{\lambda}(H_r)$ does not exist. 
Since~$\lambda$ is outside the essential spectrum, these values of $r$ are those for which~$\lambda$
is an eigenvalue of~$H_r,$ that is, they are resonance points. 
In other words, a hindrance for taking the limit $y \to 0^+$ in the right hand side is presence of resonance points.
If there were no resonance points, the limit of the right hand side would be zero, 
which agrees with the fact that no eigenvalue of~$H_r$ reached the point~$\lambda$ 
and therefore spectral flow through~$\lambda$ is zero. 

So, the presence of resonance points in the domain $[0,1]$ of integration 
is thus a hindrance, but it is the presence of resonance points which makes 
the RHS non-zero and interesting. To overcome this hindrance we note that the integrand
of the RHS
$$
  \Tr \brs{V \Im R_{\lambda+iy}(H_r)}
$$
is a meromorphic function of~$r.$ A small neighbourhood of the interval $[0,1]$
in the coupling constant complex plane for sufficiently small $y>0$ contains poles of the functions
$VR_{\lambda+iy}(H_s)$ and $VR_{\lambda-iy}(H_s),$ and these poles converge to the poles $r_\lambda^{j}$
of $V R_{\lambda}(H_s)$ from $[0,1]$ as $y \to 0.$ We represent the path of integration $[0,1]$
as the sum of a path $L_1,$ shown below, which circumvents the poles from above, and closed contours~$C_+^j,$
encircling poles of the group of $r_\lambda^j$ lying in~$\mbC_+.$ Clearly, as $y \to 0^+$ the integral 
over~$L_1$ vanishes: 
there are no obstructions in the form of poles on $L_1$ in the limit and the integrand is zero when $y=0.$
(The following figures are taken from \cite{Az11})

\begin{picture}(200,100)
\put(280,65){\small $s$-plane for $y \ll 1.$}
\put(60,75){\small Contour $L_1$}
\put(0,50){\vector(1,0){330}}

\put(20,20){\vector(0,1){60}}
\put(290,50){\line(0,1){4}}
\put(289,40){$1$}

\put(70,50){\circle*{2}}
\put(67,55){\circle*{3}} \put(67,45){\circle{3}}

\put(160,50){\circle*{2}}
\put(154,58){\circle*{3}} \put(154,42){\circle{3}}
\put(157,67){\circle*{3}} \put(157,33){\circle{3}}
\put(172,55){\circle*{3}} \put(172,45){\circle{3}}
\put(166,60){\circle{3}}  \put(166,40){\circle*{3}}

\put(220,50){\circle*{2}}
\put(227,56){\circle{3}} \put(227,44){\circle*{3}}
\put(218,58){\circle{3}} \put(218,42){\circle*{3}}

\thicklines
\put(235,50){\vector(1,0){35}}
\put(220,50){\oval(30,30)[t]}
\put(220,65){\vector(1,0){4}}
\put(185,50){\vector(1,0){10}}
\put(160,50){\oval(50,50)[t]}
\put(160,75){\vector(1,0){4}}
\put(75,50){\vector(1,0){35}}
\put(70,50){\oval(20,20)[t]}
\put(70,60){\vector(1,0){4}}
\put(20,50){\vector(1,0){20}}
\end{picture}

\begin{picture}(200,100)
\put(280,65){\small $s$-plane for $y \ll 1.$}
\put(0,50){\vector(1,0){330}}

\put(20,20){\vector(0,1){60}}
\put(290,50){\line(0,1){4}}
\put(289,40){$1$}

\put(70,50){\circle*{2}}
\put(67,55){\circle*{3}} \put(67,45){\circle{3}}

\put(160,50){\circle*{2}}
\put(154,58){\circle*{3}} \put(154,42){\circle{3}}
\put(157,67){\circle*{3}} \put(157,33){\circle{3}}
\put(172,55){\circle*{3}} \put(172,45){\circle{3}}
\put(166,60){\circle{3}}  \put(166,40){\circle*{3}}

\put(220,50){\circle*{2}}
\put(227,56){\circle{3}} \put(227,44){\circle*{3}}
\put(218,58){\circle{3}} \put(218,42){\circle*{3}}

\thicklines
\put(215,73){\small $C_+^3$}
\put(205,50){\vector(1,0){25}}
\put(220,50){\oval(30,30)[t]}
\put(220,65){\vector(-1,0){4}}

\put(145,82){\small $C_+^2$}
\put(135,50){\vector(1,0){40}}
\put(160,50){\oval(50,50)[t]}
\put(160,75){\vector(-1,0){4}}

\put(60,70){\small $C_+^1$}
\put(70,50){\oval(20,20)[t]}
\put(70,60){\vector(-1,0){4}}
\put(60,50){\vector(1,0){18}}
\end{picture}

We show that 
$$
  \frac 1\pi \oint_{C_+^j} \Tr \brs{V \Im R_{\lambda+iy}(H_s)}\,ds = \ind_{res}(\lambda; H_{r_\lambda^j},V)
$$
and this will complete the proof. It is enough to do this for one contour, so we omit the superscript
index. We have 
\begin{equation*}
  \begin{split}
     LHS & = \frac 1{2\pi i} \oint_{C_+} \Tr (V R_{\lambda+iy}(H_s) - V R_{\lambda-iy}(H_s))\,ds 
      \\ & = \Tr\brs{\frac 1{2\pi i} \oint_{C_+} R_{\lambda+iy}(H_s)V\,ds} - \Tr\brs{\frac 1{2\pi i} \oint_{C_+} R_{\lambda-iy}(H_s)V\,ds}.
  \end{split}
\end{equation*}
We note that these two integrals are equal to  $P^\uparrow_{\lambda+iy}(r_\lambda)$ 
and $P^\uparrow_{\lambda-iy}(r_\lambda).$ It is left to note that traces of these idempotents are $N_+$
and $N_-$ respectively. 


\mathsurround 0pt


\bigskip

\begin{center} \large Index
\end{center}

\noindent
$\aaa,$ restriction of $V$ to the eigenspace $\clV_\lambda,$ p.\,\pageref{Page: aaa}\\
$\clA,$ real affine space of self-adjoint operators $H,$ p.\,\pageref{Page: clA}\\
$\clA_0,$ real vector space of self-adjoint operators $V,$ p.\,\pageref{Page: clA0}\\
$A_z(s),$ the operator $R_z(H)V$ \\
$\bfA_z(r_z),$ nilpotent operator, p.\,\pageref{Page: bfA(z)} \\
$\bfA_z(r_\lambda),$ nilpotent operator, p.\,\pageref{Page: Az(r lmd)}\\
$\bfA_\lambda^{[\nu]},$ restriction of $\bfA_\lambda(r_\lambda)$ 
     to the vector space $\Upsilon_\lambda^{[\nu]},$ p.\,\pageref{Page: bfA(nu)1}, \pageref{Page: bfA(nu)2} \\
$B_z(s),$ the operator $V R_z(H)$ \\
$d,$ order of a resonance point \\
$d_\nu,$ size of a $\nu$th Jordan cell, size of a $\nu$th resonance cycle \\
$\tilde d_\nu,$ order of eigenpath $\phi_\nu(s)$ \\
$\euD_\lambda(s),$ the $(2,2)$ entry of resolvent $(H_r-\lambda)^{-1},$ (\ref{F: euD lambda(r)}), p.\,\pageref{Page: euD lambda(r)} \\
$D_j,$ Laurent coefficients of $\euD_\lambda(s),$ (\ref{F: Laurent for euD}), p.\,\pageref{Page: Laurent for euD} \\
$\euF_z(s),$  (\ref{F: euF(r)=1-r Az(r)}), p.\,\pageref{Page: euF(r)=1-r Az(r)} \\
$\hilb,$ Hilbert space \\
$\hat \hilb,$ Hilbert space, orthogonal complement of eigenspace $\clV_\lambda,$ p.\,\pageref{Page: hat hilb} \\
$H,$ self-adjoint operator from an affine space $\clA$ \\
$\hat H,$ restriction of $H$ to $\hat \hilb,$ p.\,\pageref{Page: hat H0} \\
$H_s,$ a path of operators $H_0 + sV$ \\
$H(s),$ a path of operators for which~$\lambda$ is an eigenvalue, pp.\,\pageref{Page: H(s) 1},\,\pageref{Page: H(s) 2},\,\pageref{Page: H(s) 3} \\ 
$\ind_{res}(\lambda; H_{r_\lambda},V)),$ resonance index \\
$m,$ geometric multiplicity of eigenvalue~$\lambda$ \\
$N,$ algebraic multiplicity of eigenvalue~$\lambda$ \\
$\hat P,$ orthogonal projection onto $\hat \hilb,$ p.\,\pageref{Page: hat P}\\
$P_z(r_z),$ idempotent operator, p.\,\pageref{Page: Pz(rz)}\\
$P_\lambda^{[\nu]},$ idempotent, (\ref{F: P(lambda)[nu]:=}), p.\,\pageref{Page: P(lambda)[nu]:=} \\
$P_z(r_\lambda),$ idempotent operator, p.\,\pageref{Page: Pz(r lmd)}\\
$\euR(\lambda),$ the resonance set, the set of operators $H$ from $\clA$ for which~$\lambda$ is an eigenvalue \\
$r,$ coupling constant, usually a real number \\
$r_\lambda,$ a resonance point, p.\,\pageref{Page: res point} \\
$r_z,$ a resonance point, p.\,\pageref{Page: res point} \\
$r_\nu^{(j)}(z),$ a resonance point from $\nu$th cycle, p.\,\pageref{Page: r(nu)(.)(z)} \\
$r_\nu^{(\cdot)}(z),$ a cycle of resonance points, p.\,\pageref{Page: r(nu)(.)(z)} \\
$s,$ coupling constant, a complex number \\
$S_\lambda,$ the operator $R_\lambda(\hat H_{\rlmb})V,$ (\ref{F: S(lambda)}), p.\,\pageref{Page: S(lambda)} \\
$V,$ self-adjoint operator from the real vector space $\clA_0$ \\
$\hat V,$ restriction of $V$ to $\hat \hilb,$ p.\,\pageref{Page: hat V}\\
$\clV_\lambda,$ the eigenspace of a resonant operator, p.\,\pageref{Page: clV(lamb)}\\
$v,$ the operator $\hat P V \hat P^\perp,$ the $(1,2)$ matrix element of $V,$ p.\,\pageref{Page: v}\\
$y,$ the imaginary part of spectral parameter $z$ \\
$Y_j,$ p.\,\pageref{Page: Y(j)} \\ 
$z,$ spectral parameter, a complex number outside the common essential spectrum $\sigma_{ess}$ \\
$\gamma_\chi,$ a curve of resonant operators, p.\,\pageref{Page: gamma chi} \\
$\lambda,$ an eigenvalue, a real number outside the common essential spectrum $\sigma_{ess}$ \\
$\lambda_\nu(s),$ a path of eigenvalues of $H_0+sV$ \\ 
$\nu,$ eigenvalue function index, resonance cycle index, Jordan cell index \\
$\sigma_\lambda(s),$ equal to $(s-r_\lambda)^{-1},$ eigenvalue of $A_\lambda(s)$ \\
$\Upsilon_z(r_z),$ the range of $P_z(r_z),$ p.\,\pageref{Page Upsilon z rz} \\
$\Upsilon_\lambda^{[\nu]},$ the range of $P_\lambda^{[\nu]},$ p.\pageref{Page: Upsilon lambda nu} \\
$\phi_\nu(s),$ a path of eigenvectors of $H_s=H_0+sV$ \\
$\chi(s),$ a path of operators for which~$\lambda$ is an eigenvalue 

\smallskip
\noindent Curve, \\
\mbox{ }\quad --- regular, p.\,\pageref{Page: regular path} \\
\mbox{ }\quad --- resonant, p.\,\pageref{Page: resonant path} \\
\noindent Direction, \\
\mbox{ }\quad --- of order~$d$, p.\,\pageref{Page: simple direction} \\
\mbox{ }\quad --- regular, p.\,\pageref{Page: regular direction} \\
\mbox{ }\quad --- simple, p.\,\pageref{Page: simple direction} \\
\mbox{ }\quad --- tangent, p.\,\pageref{Page: tangent direction(0)},\,\pageref{Page: tangent direction} \\
\mbox{ }\quad --- tangent to order~$k,$ pp.\,\pageref{Page: tangent to order k direction} \\
\mbox{ }\quad --- transversal, pp.\,\pageref{Page: transversal direction(0)},\,\pageref{Page: transversal direction} \\
\noindent Path, \\
\mbox{ }\quad --- regular, p.\,\pageref{Page: regular path} \\
\mbox{ }\quad --- resonant, p.\,\pageref{Page: resonant path} \\
\mbox{ }\quad --- standard, p.\,\pageref{Page: standard path(0)},\,\pageref{Page: standard path} \\
\noindent Point, \\
\mbox{ }\quad --- of geometric multiplicity~$m$ \\
\mbox{ }\quad --- resonance, p.\,\pageref{Page: resonance point} \\ 
\mbox{ }\quad --- simple, p.\,\pageref{Page: simple point} \\
\noindent Order of eigenpath \ p.\,\pageref{Page: property U(k)} \\
\mbox{ }\quad --- strict, p.\,\pageref{Page: strict U(k)}\\
\noindent Resonance index, \ p.\pageref{Page: res index}\\
\mbox{ }\quad --- total, p.\,\pageref{Page: total res index}\\
\noindent Resonance vector, \ p.\,\pageref{Page: res vector}\\
\mbox{ }\quad --- of depth $k$, p.\,\pageref{Page: depth of vector},\,\pageref{Page: depth of vector(2)}\\
\mbox{ }\quad --- of order~$k$, p.\,\pageref{Page: order of res vector}\\
TRI, total resonance index, p.\,\pageref{Page: total res index}\\

\end{document}